\tikzset{ext/.style={circle, draw,inner sep=1pt},int/.style={circle,draw,fill,inner sep=1pt},nil/.style={inner sep=1pt}}
\tikzset{exte/.style={circle, draw,inner sep=3pt},inte/.style={circle,draw,fill,inner sep=3pt}}
\tikzset{diagram/.style={matrix of math nodes, row sep=3em, column sep=2.5em, text height=1.5ex, text depth=0.25ex}}
\tikzset{diagram2/.style={matrix of math nodes, row sep=0.5em, column sep=0.5em, text height=1.5ex, text depth=0.25ex}}
\theoremstyle{plain}
\newtheorem{thm}{Theorem}[section]
\newtheorem{defn}[thm]{Definition}
\newtheorem{prop}[thm]{Proposition}
\newtheorem{cor}[thm]{Corollary}
\newtheorem{lemm}[thm]{Lemma}
\theoremstyle{definition}
\newtheorem{ex}[thm]{Example}
\newtheorem{rem}[thm]{Remark}
\newtheorem{const}[thm]{Construction}
\DeclareMathOperator{\mymod}{mod}
\newcommand{\alg}[1]{\mathfrak{{#1}}}
\newcommand{\eref}[1]{\eqref{#1}} 
\newcommand{\ad}{{\text{ad}}}
\newcommand{\Hom}{\mathop{Hom}}
\newcommand{\R}{{\mathbb{R}}}
\newcommand{\N}{{\mathbb{N}}}
\newcommand{\Z}{{\mathbb{Z}}}
\newcommand{\K}{{\mathbb{K}}}
\newcommand{\Q}{{\mathbb{Q}}}
\newcommand{\D}{{\mathbb{D}}}
\newcommand{\HGC}{{\mathrm{HGC}}}
\newcommand{\Graphs}{{\mathsf{Graphs}}}
\newcommand{\fGraphs}{{\mathsf{fGraphs}}}
 \newcommand{\KSi}{\mathsf{K}}
\newcommand{\fHGC}{{\mathrm{fHGC}}}
\newcommand{\mF}{\mathcal{F}}
\newcommand{\stG}{{}^*\Graphs}
\newcommand{\hoPoiss}{\mathsf{hoPois}}
\newcommand{\Gra}{{\mathsf{Gra}}}
\newcommand{\Tw}{\mathit{Tw}}
\newcommand{\Def}{\mathrm{Def}}
\newcommand{\Poiss}{\mathsf{Pois}}
\newcommand{\op}{\mathcal}
\newcommand{\Lie}{\mathsf{Lie}}
\newcommand{\hoAss}{\mathsf{hoAss}}
\newcommand{\hoLie}{\mathsf{hoLie}}
\newcommand{\CoDer}{\mathrm{CoDer}}
\newcommand{\Ass}{\mathsf{Assoc}}
\newcommand{\Com}{\mathsf{Com}}
\newcommand{\FM}{\mathsf{FM}}
\newcommand{\Der}{\mathrm{Der}}
\newcommand{\DerL}{\mathit{L}}
\newcommand{\bpm}{\begin{pmatrix}}
\newcommand{\epm}{\end{pmatrix}}
\newcommand{\GC}{\mathrm{GC}}
\newcommand{\fGC}{\mathrm{fGC}}
\renewcommand{\Hom}{\mathrm{Hom}}
\newcommand{\MC}{\mathsf{MC}}
\newcommand{\grt}{\alg {grt}}
\newcommand{\e}{\mathsf{e}}
\newcommand{\Op}{\mathrm{Op}}
\newcommand{\La}{\Lambda}
\newcommand{\Hopf}{\mathrm{Hopf}}
\DeclareMathOperator{\Emb}{Emb}
\DeclareMathOperator{\Imm}{Imm}
\DeclareMathOperator{\Embbar}{\overline{Emb}}
\DeclareMathOperator{\Inj}{Inj}
\DeclareMathOperator{\Diff}{Diff}
\newcommand{\Seq}{\mathrm{Seq}}
\newcommand{\lo}{\longrightarrow}
\newcommand{\lD}{\mathsf{D}}
\newcommand{\mO}{\mathcal{O}}
\newcommand{\Map}{\mathrm{Map}}
\newcommand{\Mor}{\mathrm{Mor}}
\newcommand{\gra}{\mathrm{gra}}
\newcommand{\gr}{\mathrm{gr}}
\newcommand{\dg}{\mathrm{dg}}
\newcommand{\BiDer}{\mathrm{BiDer}}
\newcommand{\GRT}{\mathrm{GRT}}
\newcommand{\bS}{\mathsf{N}}
\newcommand{\bN}{\mathsf{N}}
\newcommand{\stp}{{{}^*\mathfrak{p}}}
\newcommand{\xCoDer}{\widehat{\CoDer}}
\newcommand{\xDer}{\widehat{\Der}}
\newcommand{\Vect}{{\mathrm{Vect}}}
\DeclareMathOperator{\TopCat}{Top}
\DeclareMathOperator{\SetCat}{Set}
\newcommand{\CoDef}{\mathsf{CoDef}}
\newcommand{\bo}{{\mathbf{1}}}
\newcommand{\oW}{\mathring{W}} 
\newcommand{\beq}[1]{\begin{equation}\label{#1}}
\newcommand{\eeq}{\end{equation}}
\newcommand{\mT}{{\mathbf T}}
\newcommand{\mTc}{{\mathcal T}}
\newcommand{\E}{\mathsf{E}}
\newcommand{\Linfty}{\texorpdfstring{$L_{\infty}$}{L-infinity}}
\newcommand{\dgcAlg}{\dg\mathrm{Com}}
\newcommand{\ev}{\mathit{ev}}
\newcommand{\APL}{A_{\rm PL}}
\newcommand{\mG}{{\mathcal G}}
\newcommand{\mE}{{\mathcal E}}
\DeclareMathOperator*{\holim}{holim}
\newcommand{\ar}{\mathrm{ar}}
\DeclareMathOperator*{\colim}{\mathrm{colim}}
\begin{document}
\title{The rational homotopy of mapping spaces of E${}_n$ operads}

\author{Benoit Fresse}
\address{Universit\' e Lille 1 - Sciences et Technologies \\
Laboratoire Painlev\' e\\
Cit\' e Scientifique - B\^ atiment M2\\
F-59655 Villeneuve d'Ascq Cedex, France}
\email{Benoit.Fresse@math.univ-lille1.fr}

\author{Victor Turchin}
\address{Department of Mathematics\\
Kansas State University\\
138 Cardwell Hall\\
Manhattan, KS 66506, USA}
\email{turchin@ksu.edu}

\author{Thomas Willwacher}
\address{Department of Mathematics \\ ETH Zurich \\
R\"amistrasse 101 \\
8092 Zurich, Switzerland}
\email{thomas.willwacher@math.ethz.ch}

\date{January 30, 2017}

\thanks{B.F. acknowledges support by grant ANR-11-BS01-002 \lq\lq{}HOGT\rq\rq{} and by Labex ANR-11-LABX-0007-01 \lq\lq{}CEMPI\rq\rq{}.
V.T. acknowledges the University of Lille, the MPIM, and the IHES where he started to work on this project during his sabbatical.
T.W. has been partially supported by the Swiss National Science foundation, grant 200021-150012, the NCCR SwissMAP funded by the Swiss National Science foundation, and the ERC starting grant GRAPHCPX.}


\begin{abstract}
We express the rational homotopy type of the  mapping spaces $\Map^h(\lD_m,\lD_n^{\Q})$ of the little discs operads in terms of graph complexes.
Using known facts about the graph homology this allows us to compute the rational homotopy groups in low degrees, and construct infinite series of non-trivial homotopy classes in higher degrees.
Furthermore we show that for $n-m>2$, the spaces $\Map^h(\lD_m,\lD_n^{\Q})$ and $\Map^h(\lD_m,\lD_n)$ are simply connected and rationally equivalent. As application we determine the rational homotopy type of the deloopings of spaces of long embeddings. Some of the results hold also for mapping spaces $\Map_{\leq k}^h(\lD_m,\lD_n^{\Q})$, $\Map_{\leq k}^h(\lD_m,\lD_n)$, $n-m\geq 2$, of the truncated little discs operads, which
allows one to determine  rationally the delooping of the Goodwillie-Weiss tower for the spaces of long embeddings.
\end{abstract}

\maketitle

\setcounter{tocdepth}{1}
\tableofcontents

\sloppy

\part*{Introduction}

\renewcommand{\thethm}{\arabic{thm}}

Recall that an $E_n$-operad denotes an operad in topological spaces which is (weakly) homotopy equivalent to the operad
of little $n$-discs $\lD_n$ (equivalently, to the operad of little $n$-cubes).
In what follows, we also consider $E_n$-operads in chain complexes, which are defined analogously as operads
that are quasi-isomorphic to the chain operad of little $n$-discs $C_*(\lD_n)$.

The $E_n$-operads have played a growing role in various problems of algebra, topology and mathematical physics during the last decades.
To cite one significant application, we have a second generation of proofs of the existence of deformation-quantizations
that relies on the formality of the chain operad of little $2$-discs
over the rationals.
This approach has hinted the existence of an action of the Grothendieck-Teichm\"uller group
on the moduli spaces of deformation-quantizations of Poisson manifolds (see~\cite{K2} for a survey).
In fact, one can prove that the Grothendieck-Teichm\"uller group represents the group
of rational homotopy automorphisms
of $E_2$-operads in topological spaces (see~\cite{Fr})
and an analogous result holds in the category of chain complexes (see~\cite{Will}).

In another domain, the embedding calculus developed by Goodwillie and Weiss \cite{GW,Weiss}
allowed to express the homotopy of spaces of smooth embeddings of manifolds
in terms of mapping spaces associated to modules
over the little discs operads (see \cite{Turchin2,WBdB,Turchin4}).
Let $\Emb_c(\R^m,\R^n)$ denote the space of embeddings $f: \R^m\hookrightarrow\R^n$
which agree with the inclusion $\R^m\times\{0\}\subset\R^n$
outside a compact domain of $\R^m$ (the space of embeddings with compact support).
Let $\Imm_c(\R^m,\R^n)$ denote the analogously defined space of immersions with compact support $f: \R^m\looparrowright\R^n$.
By pushing the connection with the Goodwillie calculus further, it has been established
that the homotopy fiber of the map $\Emb_c(\R^m,\R^n)\rightarrow\Imm_c(\R^m,\R^n)$
is homotopy equivalent to the $(m+1)$-fold iterated loop space
of the space of operad maps $\Map^h(\lD_m,\lD_n)$
associated to the little discs operads $\lD^m$ and $\lD^n$
as soon as $n-m>2$ (see~\cite{DwyerHess0,Turchin5,DwyerHess,WBdB2,Weiss2,DucT}).
Let us mention, to be more precise, we consider the space of operad maps $\Map^h(\lD_m,\lD_n)$
in the derived (homotopical) sense
in this statement.
This space can be defined properly by using that the category of topological operads is equipped with a model structure
and by using a space of operad maps (in the ordinary sense)
associated to appropriate resolutions
of the little discs operads. We go back to this subject later on.

The goal of this paper is to compute the homotopy of mapping spaces $\Map^h(\lD_m,\lD_n^{\Q})$,
where we consider a rationalization of the operad of little discs as a target object $\lD_n^{\Q}$
instead of the ordinary operad $\lD_n$.
We more precisely establish that these mapping spaces $\Map^h(\lD_m,\lD_n^{\Q})$
are equivalent in the homotopy category to combinatorial objects
which we define in terms of the nerve of Lie algebras
of (hairy) graphs.
We give a more detailed description of our statement in the next section of this introduction.
We also prove that the space $\Map^h(\lD_m,\lD_n^{\Q})$
is rationally equivalent to $\Map^h(\lD_m,\lD_n)$
when $n-m>2$.
Thus, our results provide a solution to the problem of computing the rational homotopy type of the delooping
of the spaces of embeddings with compact support.
In the case $m=n$, we also get a description of the space of homotopy automorphisms
of the rational little $n$-discs operad $\lD_n^{\Q}$.

\subsection*{Main results}
In our constructions, we deal with mapping spaces with values in the category of simplicial sets
rather than with values in the category of topological spaces.
Therefore, we more generally adopt the terminologies of the homotopy theory of simplicial sets in what follows.
In particular, we call weak equivalence a map of simplicial sets whose geometric realization defines a homotopy equivalence
in the category of topological spaces.
Besides the category of simplicial sets, we consider objects defined in the category of (unbounded) chain complexes over a fixed ground field $\K$,
but we  use the language of differential graded algebra
rather than the chain complex terminology. We generally call ``dg vector space'' (where the prefix ``dg'' stands for ``differential graded'')
the objects of our base category of chain complexes,
and we reserve the chain complex terminology for specific objects which we may form
in this category, like the complexes of hairy graphs.
We similarly use the expressions ``commutative dg algebra'', ``dg Lie algebra'', {\dots}
for the standard generalizations of the classical categories of algebras
which we may form in the category of chain complexes.
We also take the field of rational numbers as a ground field $\K = \Q$
all through this introduction for simplicity.

The hairy graph complex $\HGC_{m,n}$, which we consider in our computations, is a dg Lie algebra spanned by combinatorial graphs
with external legs (or hairs), as the examples depicted in the following figure:
\[
\begin{tikzpicture}[scale=.5]
\draw (0,0) circle (1);
\draw (-180:1) node[int]{} -- +(-1.2,0);
\end{tikzpicture},
\quad\begin{tikzpicture}[scale=.6]
\node[int] (v) at (0,0){};
\draw (v) -- +(90:1) (v) -- ++(210:1) (v) -- ++(-30:1);
\end{tikzpicture}
{\,},
\quad\begin{tikzpicture}[scale=.5]
\node[int] (v1) at (-1,0){};\node[int] (v2) at (0,1){};\node[int] (v3) at (1,0){};\node[int] (v4) at (0,-1){};
\draw (v1)  edge (v2) edge (v4) -- +(-1.3,0) (v2) edge (v4) (v3) edge (v2) edge (v4) -- +(1.3,0);
\end{tikzpicture}
{\,},
\quad\begin{tikzpicture}[scale=.6]
\node[int] (v1) at (0,0){};\node[int] (v2) at (180:1){};\node[int] (v3) at (60:1){};\node[int] (v4) at (-60:1){};
\draw (v1) edge (v2) edge (v3) edge (v4) (v2)edge (v3) edge (v4)  -- +(180:1.3) (v3)edge (v4);
\end{tikzpicture}
{\,}.
\]
The degree of a graph is determined by a rule which depends on the dimension indices $m,n\in\N$
and the differential is defined through vertex splitting.
The Lie bracket of two graphs is computed by connecting a hair of one graph to vertices of the other as indicated in the following picture:
\[
\left[\begin{tikzpicture}[baseline=-.8ex]
\node[draw,circle] (v) at (0,.3) {$\alpha$};
\draw (v) edge +(-.5,-.7) edge +(0,-.7) edge +(.5,-.7);
\end{tikzpicture},
\begin{tikzpicture}[baseline=-.65ex]
\node[draw,circle] (v) at (0,.3) {$\beta$};
\draw (v) edge +(-.5,-.7) edge +(0,-.7) edge +(.5,-.7);
\end{tikzpicture}\right]
= \sum\begin{tikzpicture}[baseline=-.8ex]
\node[draw,circle] (v) at (0,1) {$\alpha$};
\node[draw,circle] (w) at (.8,.3) {$\beta$};
\draw (v) edge +(-.5,-.7) edge +(0,-.7) edge (w);
\draw (w) edge +(-.5,-.7) edge +(0,-.7) edge +(.5,-.7);
\end{tikzpicture}
\pm\sum\begin{tikzpicture}[baseline=-.8ex]
\node[draw,circle] (v) at (0,1) {$\beta$};
\node[draw,circle] (w) at (.8,.3) {$\alpha$};
\draw (v) edge +(-.5,-.7) edge +(0,-.7) edge (w);
\draw (w) edge +(-.5,-.7) edge +(0,-.7) edge +(.5,-.7);
\end{tikzpicture}.
\]
We review the definition of the hairy graph complexes and of this dg Lie algebra structure with more details in Section \ref{subsec:graphs:HGC}.

We consider the nerve $\MC_\bullet(\DerL)$ of the dg Lie algebra $\DerL = \HGC_{m,n}$, which is the simplicial set formed by the sets
of flat $\DerL$-valued polynomial connections on the simplices $\Delta^n$, $n\in\N$.
The main result of this paper reads as follows.

\begin{thm}\label{thm:main topological statement}
The mapping space $\Map^h(\lD_m,\lD_n^{\Q})$ is weakly equivalent to the nerve of the hairy graph complex $\HGC_{m,n}$
when $n\geq m\geq 2$.
\end{thm}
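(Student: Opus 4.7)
The plan is to exhibit a zigzag of weak equivalences between $\Map^h(\lD_m,\lD_n^{\Q})$ and the Maurer--Cartan nerve $\MC_\bullet(\HGC_{m,n})$ by interpreting the mapping space on both sides as the nerve of a common deformation dg Lie algebra. I would begin by invoking the rational homotopy theory for operads developed in the first author's monograph: the homotopy category of topological operads whose components are nilpotent of finite type is modelled by a suitable category of Hopf cooperads, and under this equivalence $\Map^h(\lD_m,\lD_n^{\Q})$ is computed as a derived mapping space in Hopf cooperads, from a cofibrant model of $C^*(\lD_m)$ to a fibrant model of $C^*(\lD_n)$. The standard consequence of the bar/cobar adjunction, together with the general principle that derived maps out of a cobar construction form the Maurer--Cartan set of a convolution dg Lie algebra, identifies this derived mapping space with the simplicial Maurer--Cartan nerve of a deformation dg Lie algebra $\Def(\mathcal{P}\to\mathcal{Q})$ attached to any compatible pair of resolutions.

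Second, I would use the Kontsevich formality theorem to replace the target by a graph cooperad: the Hopf cooperad $\Graphs_n$ provides both a fibrant model of $C^*(\lD_n)$ and a quasi-isomorphism to the cooperadic dual $\Ger_n^\vee$ of the Poisson operad. On the source side, I would take the Koszul resolution $\hoe_m\twoheadrightarrow\Ger_m$, equivalent to a cobar construction on $\Ger_m^\vee$. This reduces the statement to computing the convolution dg Lie algebra $\Def(\Ger_m^\vee\to\Graphs_n)$ and matching it with the hairy graph complex, while keeping track of the degree shifts depending on $m$ and $n$ that are built into the Koszul duality between $\Ger_m$ and $\Ger_m^\vee$.

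Third, I would unwind this deformation complex combinatorially. A cooperad morphism out of (the cobar of) $\Ger_m^\vee$ valued in $\Graphs_n$ is prescribed by a family of graphs with marked external legs, one family per arity, and the convolution bracket acts by grafting a hair of one graph onto a vertex of the other; this is precisely the hair-insertion bracket drawn in the introduction. A quasi-isomorphism $\Def(\Ger_m^\vee\to\Graphs_n)\simeq \HGC_{m,n}$ would then be established by filtering out graphs with internal bivalent and univalent vertices, by a spectral sequence argument in the spirit of the acyclicity results from the third author's earlier work on the Kontsevich graph complex, together with the unpacking of the Koszul co-derivation encoding the Gerstenhaber cobracket as the vertex-splitting differential on $\HGC_{m,n}$.

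Finally, to promote this quasi-isomorphism of complete dg Lie algebras to a weak equivalence of Maurer--Cartan nerves, I would appeal to the Dolgushev--Rogers invariance theorem, which requires verifying that the descending filtration by number of internal vertices is compatible on both sides and that the pieces are complete. The main obstacle will be the identification of the abstract operadic deformation complex with the concrete $\HGC_{m,n}$: matching the differentials, signs, and bigradings demands a careful unpacking of the bar--cobar bicomplex on graphs, and one must also check that the chosen Hopf cooperad models of $\lD_m$ and $\lD_n$ are of a fibrant/cofibrant type for which the Hopf-cooperadic derived mapping space coincides with $\Map^h(\lD_m,\lD_n^{\Q})$; this is the place where the hypothesis $n\geq m\geq 2$ enters, via the simply-connectedness required for the rationalization functor to behave as expected.
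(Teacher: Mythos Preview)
Your broad strategy---reduce the topological mapping space to an algebraic deformation complex via rational homotopy theory of operads, then identify that complex with $\HGC_{m,n}$---matches the paper's. But the execution has a genuine gap, and a direction error that signals it. By contravariance of the Sullivan functor, $\Map^h(\lD_m,\lD_n^{\Q})$ corresponds to the Hopf $\La$-cooperad mapping space $\Map^h(\e_n^c,\e_m^c)$: the cofibrant replacement goes on $\e_n^c$ and the fibrant one on $\e_m^c$, not the other way around. Your placement of $\Graphs_n$ as the fibrant target and a Koszul resolution of $\e_m$ as the cofibrant source is therefore backwards, and in any case the graph cooperad $\stG_n$ is not fibrant in the relevant model category.

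The more serious issue is your appeal to ``the general principle that derived maps out of a cobar construction form the Maurer--Cartan set of a convolution dg Lie algebra.'' That principle holds for plain (co)operads, but the mapping space here lives in \emph{Hopf} $\La$-cooperads, carrying both an aritywise commutative algebra structure and a cooperad structure. The paper's deformation complex is the dg Lie algebra of \emph{biderivations} $\BiDer_{\dg\La}(C(\stp_n),W(\e_m^c))$, and showing that its nerve models the mapping space is one of the paper's main technical contributions: it requires specific resolutions (the Chevalley--Eilenberg model $C(\stp_n)$ on the source, a new $W$-construction on the target), an explicit simplicial frame, and the $L_\infty$ machinery of Part~\ref{part:biderivation complexes}. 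Your convolution complex $\Def(\Ger_m^\vee\to\Graphs_n)$ is essentially the paper's $\Def_{\dg\Op}(\hoPoiss_m\stackrel{*}{\to}\Graphs_n)$, which governs plain cooperad maps and is quasi-isomorphic to the \emph{full} hairy graph complex $\fHGC_{m,n}$, not the connected part $\HGC_{m,n}$. The reduction from $\fHGC_{m,n}$ to $\HGC_{m,n}$ is exactly what the Hopf structure buys, and the paper carries it out through a long zigzag (the diagram of Figure~\ref{eq:bigdiagram}) comparing biderivation, coderivation, and bicomodule-coefficient complexes; this is not a routine spectral-sequence cleanup but the heart of the proof of Theorem~\ref{thm:main algebraic statement}.
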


This result admits an extension for $m=1$.
In this case, we have to consider an $L_{\infty}$~structure on $\HGC_{1,n}$,
the Shoikhet $L_{\infty}$~structure,
which occurs as a deformation of the natural dg Lie structure
of the hairy graph complex.

\begin{thm}\label{thm:main topological statement:Shoikhet case}
The space $\Map^h(\lD_1,\lD_n^{\Q})$ is weakly equivalent to the nerve of the graph complex $\HGC_{1,n}$
equipped with the Shoikhet $L_{\infty}$~structure
for any $n\geq 2$.
\end{thm}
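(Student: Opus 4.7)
The plan is to adapt the proof of Theorem \ref{thm:main topological statement} to the case $m=1$, identifying where the natural dg Lie structure on $\HGC_{1,n}$ must be enriched to the Shoikhet $L_\infty$-structure.

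First, I would present $\Map^h(\lD_1, \lD_n^\Q)$ as the Maurer--Cartan nerve of an $L_\infty$-algebra of biderivations in a suitable rational Hopf cooperad model for operads, using exactly the same machinery that establishes Theorem \ref{thm:main topological statement}. One replaces $\lD_n^\Q$ by a model for the dual of the Poisson cooperad $\Poiss_n^c$ (using Kontsevich formality of $\lD_n$ for $n \geq 2$), and resolves the source side by a cofibrant replacement of $\Ass$, which is the homology model of $\lD_1$. The resulting convolution deformation complex $\DerL$ then computes the mapping space in question via $\MC_\bullet(\DerL)$.

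Second, I would identify $\DerL$ with $\HGC_{1,n}$ as a graded vector space by the same combinatorial matching that proves the linear statement for $m \geq 2$. The decisive new feature comes from the fact that, unlike in the case $m \geq 2$ where $\Poiss_m$ is Koszul self-dual and the transferred operations reduce to the standard bracket on $\HGC_{m,n}$, for $m=1$ one must account for the nontrivial operad map $\Ass \to \Poiss_1$ (sending the associative product to the commutative product and annihilating the Lie bracket). Treating this map as an auxiliary Maurer--Cartan--type twisting datum in the convolution complex produces a sequence of higher operations on $\HGC_{1,n}$ that I would identify with Shoikhet's prescription, either by comparing explicit graph formulas or by characterizing both structures axiomatically as the unique compatible deformation of the standard dg Lie bracket on $\HGC_{1,n}$ that is controlled by a choice of Drinfeld associator.

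Finally, passage from the $L_\infty$-quasi-isomorphism of complete filtered $L_\infty$-algebras to the weak equivalence of Maurer--Cartan nerves is standard (Getzler, Hinich, Berglund), assuming the filtration hypotheses verified in the proof of Theorem \ref{thm:main topological statement} continue to hold for the Shoikhet perturbation, which they do since the perturbation respects the graph-genus filtration. The principal obstacle is the explicit identification in the second step: matching the $L_\infty$-brackets produced by homotopy transfer on an abstract convolution complex with Shoikhet's construction, originally defined through configuration space integrals and Kontsevich-type graphs, is a delicate bookkeeping problem. Once this identification is achieved, the theorem follows.
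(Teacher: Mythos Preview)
Your high-level strategy---reduce to a Maurer--Cartan nerve via Theorem~\ref{thm:nerve to mapping spaces}, establish an $L_\infty$-quasi-isomorphism to $\HGC_{1,n}$, then invoke Goldman--Millson---is exactly the paper's route. But the middle step, where you propose to ``identify'' the Shoikhet structure, contains a genuine gap and some misconceptions.

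First, a variance issue: in the Hopf cooperad model the mapping space $\Map^h(\lD_1,\lD_n^\Q)$ becomes $\Map^h(\e_n^c,\e_1^c)$, so the \emph{source} cooperad is $\e_n^c=\Poiss_n^c$ (resolved cofibrantly by $C(\stp_n)$) and the \emph{target} is $\e_1^c=\Ass^c$ (resolved fibrantly by $W(\e_1^c)$). You have these reversed. More importantly, the Shoikhet structure does not arise from ``treating the map $\Ass\to\Poiss_1$ as a Maurer--Cartan twisting datum,'' nor is it characterized axiomatically via a Drinfeld associator. The reason $m=1$ is special is simply that the dg Lie bracket on $\xCoDer_{\dg\La}(\stG_n,B(\e_m\{m\}))$ depends on the full operad structure of $\e_m$; for $m\geq 2$ the relevant structure is that of $\Poiss_m$, while for $m=1$ it is that of $\Ass$, and these differ (see Remark~\ref{rem:Shoikhet structure explanations}).

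The paper does not produce the Shoikhet structure by homotopy transfer and then match formulas. Instead it imports Theorem~\ref{thm:HGC Shoikhet structure} from \cite{WillDefQ} as a black box: that reference \emph{constructs} the $L_\infty$-structure on $\HGC_{1,n}$ together with an $L_\infty$-quasi-isomorphism $\fHGC'_{1,n}\to\xCoDer_{\dg\La}(\stG_n,B(\Ass\{1\}))$ and the projection $\fHGC'_{1,n}\to\HGC'_{1,n}$, all compatible with the weight grading. The comparison to the biderivation complex $\BiDer_{\dg\La}(C(\stp_n),W(\e_1^c))$ then goes through the lengthy zigzag \eqref{eq:m1zigzag} and the large commutative diagram of Figure~\ref{eq:bigdiagram}, involving several intermediate coderivation complexes and the graph cooperad $\stG_n$ as an alternate model for $\e_n^c$. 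Your sketch collapses this to a single convolution complex, which hides exactly the place where the work is done. Finally, the filtration used is the weight $\omega=(\text{edges})-(\text{internal vertices})$, not loop order; compatibility of the Shoikhet operations with this grading is part of what is cited from \cite{WillDefQ}.
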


The Shoikhet $L_{\infty}$~structure is described with full details by the third author in the spin-off paper~\cite{WillDefQ}.
We give a brief reminder on the definition of this $L_{\infty}$~structure in Section \ref{subsec:biderivations to HGC:morphism Shoikhet case}.

The above theorems admit the following corollary:

\begin{cor}\label{cor:mapping spaces homotopy}
If $n-2\geq m\geq 1$, then the space $\Map^h(\lD_m,\lD_n^{\Q})$ is simply connected and its homotopy groups are given by:
\[
\pi_k(\Map^h(\lD_m,\lD_n^{\Q}))\cong H_{k-1}(\HGC_{m,n}),
\]
for all $k\geq 1$.
\end{cor}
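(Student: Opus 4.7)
The plan is to combine Theorems~\ref{thm:main topological statement} and \ref{thm:main topological statement:Shoikhet case} with the general formula that relates the homotopy of the Maurer--Cartan simplicial set of a dg Lie (or $L_\infty$) algebra to its cohomology. Under the hypothesis $n-2\geq m\geq 1$, both theorems give a weak equivalence
\[
\Map^h(\lD_m,\lD_n^{\Q})\simeq \MC_\bullet(\HGC_{m,n}),
\]
where for $m=1$ the nerve is computed with the Shoikhet $L_\infty$ structure. Hence the corollary reduces to statements about $\MC_\bullet(\HGC_{m,n})$, and the remaining work is purely algebraic.

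First I would verify the hypotheses that make the standard nerve-to-homotopy dictionary applicable. The hairy graph complex $\HGC_{m,n}$ is naturally filtered by, say, loop order (or total number of vertices plus hairs), and the associated filtration is complete and exhaustive, so $\HGC_{m,n}$ is a pronilpotent dg Lie algebra (respectively, pronilpotent $L_\infty$ algebra in the Shoikhet case), which is exactly the setup in which Getzler's theorem and its $L_\infty$ generalization by Berglund apply. That result identifies the based homotopy groups of $\MC_\bullet(\DerL)$ at the zero Maurer--Cartan element with the cohomology of $\DerL$ in a shifted degree; with the degree conventions used in the paper this reads
\[
\pi_k\bigl(\MC_\bullet(\HGC_{m,n}),0\bigr)\cong H_{k-1}(\HGC_{m,n})
\qquad (k\geq 1).
\]

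The step I would flag as the main obstacle is the simple connectivity, i.e.\ the vanishing $H_0(\HGC_{m,n})=0$ together with the fact that $\MC_\bullet(\HGC_{m,n})$ is connected, when $n-2\geq m$. For connectedness I would argue that the codimension hypothesis forces the component in degree $-1$ of $\HGC_{m,n}$ to be trivial (no graphs can be arranged to live there), so the only Maurer--Cartan element is zero and hence $\MC_\bullet(\HGC_{m,n})$ is path-connected; this also means every homotopy group can safely be computed at the basepoint $0$. For the vanishing of $H_0$, one inspects the degree formula for a hairy graph with $v$ vertices, $e$ edges and $h$ hairs: a direct count shows that in the range $n-m\geq 2$ any graph of degree $0$ must be either trivial or an exact tree, so that $H_0(\HGC_{m,n})=0$. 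This is the only place where the bound $n-2\geq m$ is actually used.

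Putting these ingredients together yields the corollary: the weak equivalence of the mapping space with $\MC_\bullet(\HGC_{m,n})$, the connectedness and simple-connectivity argument, and the general identification $\pi_k(\MC_\bullet(\DerL),0)\cong H_{k-1}(\DerL)$. The first two steps are essentially checks of hypotheses that can be stated once and then invoked; the degree-counting lemma showing $H_0(\HGC_{m,n})=0$ in codimension $\geq 2$ is the only genuinely new computation, and it is the step I would spell out with care.
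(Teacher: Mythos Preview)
Your proposal is correct and follows essentially the same route as the paper: invoke Theorems~\ref{thm:main topological statement}--\ref{thm:main topological statement:Shoikhet case} to reduce to $\MC_\bullet(\HGC_{m,n})$, then apply Berglund's theorem (Theorem~\ref{thm:berglund}) at the basepoint $0$, and use a degree count on hairy graphs to handle connectedness and simple connectedness.

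The one place where the paper is sharper is the degree count itself. You split it into two separate checks (degree $-1$ vanishes, and then separately ``any degree $0$ graph is trivial or an exact tree''), whereas the paper shows in one stroke that $\HGC_{m,n}$ is concentrated in degrees $\geq 1$ when $n-m\geq 2$ (via the formula of Remark~\ref{rem:HGC degrees} and the trivalence condition on internal vertices). This stronger vanishing immediately gives both that $0$ is the only Maurer--Cartan element and that $H_0(\HGC_{m,n})=0$ trivially, with no need to argue about exactness of degree-$0$ cycles. Your ``exact tree'' formulation is not wrong, but it understates what is true: there are simply no graphs in degree $\leq 0$ at all.
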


The simply-connectedness of the space $\Map^h(\lD_m,\lD_n^{\Q})$ follows from the vanishing of the hairy graph complex in degree $\leq 1$,
and this result can also be deduced from the spectral sequence methods
of \cite{FW}.
We have no full description of the homology of the hairy graph complex, but a significant amount of information is now available on this homology \cite{KWZ2,TW2}.
We can for instance establish that the part of the hairy graph homology spanned by graphs of loop order $g$
is concentrated in degrees $\geq g(n-3)+1$
when $n-m\geq 2$.
We have the following explicit description of the part of loop order $g=0$:
\begin{align}
\label{eq:HGC0loop}
H(\HGC_{m,n}^{0-loop}) &=
\begin{cases}
\Q L, & \text{if $m\equiv n\mymod{2}$}, \\
\Q Y, & \text{if $m\not\equiv n\mymod{2}$},
\end{cases}
\intertext{where $L$ is the line graph $L := \begin{tikzpicture}[scale=.5, baseline=-.65ex]
\draw (0,0)--(1,0);
\end{tikzpicture}$ in degree $n-m-1$, and $Y$ is the tripod graph $Y := \begin{tikzpicture}[scale=.3, baseline=-.65ex]
\node[int] (v) at (0,0){};
\draw (v) -- +(90:1) (v) -- ++(210:1) (v) -- ++(-30:1);
\end{tikzpicture}$ in degree $2(n-m)-3$,
while we have the following description for the part of loop order $g=1$:}
\label{eq:HGC1loop}
H(\HGC_{m,n}^{1-loop}) & =
\begin{cases}
\prod_{k=1,3,5,\dots} \Q H_k, & \text{if $m$ and $n$ are even}, \\
\prod_{k=2,4,6,\dots} \Q H_k, & \text{if $m$ and $n$ are odd}, \\
\prod_{k=1,5,9,\dots} \Q H_k, & \text{if $m$ is odd and $n$ is even}, \\
\prod_{k=3,7,11,\dots} \Q H_k, & \text{if $m$ is even and $n$ is odd},
\end{cases}
\end{align}
where $H_k := \begin{tikzpicture}[baseline=-.65ex, scale=.6]
\node[int] (v1) at (0:1) {};
\node[int] (v2) at (72:1) {};
\node[int] (v3) at (144:1) {};
\node[int] (v4) at (216:1) {};
\node (v5) at (-72:1) {$\cdots$};
\draw (v1) edge (v2) edge (v5) (v3) edge (v2) edge (v4) (v4) edge (v5);
\draw (v1)  edge +(0:.6) ;
\draw (v2) edge +(72:.6) ;
\draw (v3) edge +(144:.6) ;
\draw (v4)  edge +(226:.6) ;
\end{tikzpicture}$
($k$ hairs) lies degree $k(n-m-2)+m$.
We also have the following formula for the generating function of the homology in loop order $g=2$:
\begin{align}\label{eq:HGC2loop}
\sum_{j,k} s^j t^k \text{dim} H^j(\HGC_{m,n}^{2-loop, k-hair}) & =
\begin{cases}
\frac{s^{n-3+m}T^6+s^{n-2+m}T^7}{(1-T^2)(1-T^6)}, & \text{if $m$ and $n$ are even}, \\
s^{n-3+m}\left( \frac{1}{(1-T^2)(1-T^6)}-1\right)+\frac{s^{n-2+m}T}{(1-T^2)(1-T^6)}, & \text{if $m$ and $n$ are odd}, \\
\frac{s^{n-3+m}(T^3+T^{11}+T^{14}-T^{15}) + s^{n-2+m}(T+T^{16})}{(1-T^4)(1-T^{12})}, & \text{if $m$ is even and $n$ is odd}, \\
\frac{s^{n-3+m}(T^2+T^{11}) + s^{n-2+m}(T^4+T^{13})}{(1-T^4)(1-T^{12})}, & \text{if $m$ odd and $n$ is even}, \\
\end{cases}
\end{align}
where $T:= s^{n-m-2}t$. (We refer to \cite{CCT} for a more explicit result.)

Let us record that the above computations are enough to determine the homotopy of our operadic mapping spaces in low degrees,
when the homology of the hairy graph complex reduces to this part
of loop order $\leq 2$.
To be explicit, we have the following statement.

\begin{cor}\label{cor:mapping spaces homotopy:low degrees}
Let $n-2\geq m\geq 1$. We have
\[
\pi_k(\Map^h(\lD_m,\lD_n^{\Q}))= H_{k-1}(\HGC_{m,n}^{\leq 2-loop})
\]
for $k\leq 3n-8$.
\end{cor}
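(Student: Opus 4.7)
The plan is to derive the statement as a direct consequence of Corollary~\ref{cor:mapping spaces homotopy} and the connectivity bound on the loop-order summands of the hairy graph homology that was recalled in the introduction just above.

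First I would invoke Corollary~\ref{cor:mapping spaces homotopy} to replace $\pi_k(\Map^h(\lD_m,\lD_n^{\Q}))$ by $H_{k-1}(\HGC_{m,n})$. Next, I would observe that the differential of $\HGC_{m,n}$, which is given by vertex splitting, preserves the loop order (the first Betti number of the underlying internal graph): splitting a vertex adds one vertex and one edge, leaving the Euler characteristic unchanged. Consequently $\HGC_{m,n}$ splits as a direct product over loop order,
\[
\HGC_{m,n} = \prod_{g\geq 0} \HGC_{m,n}^{g-loop},
\]
and the homology decomposes accordingly in each fixed homological degree.

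The remaining input is the connectivity estimate recalled in the introduction (cf.\ \cite{TW2}): when $n-m\geq 2$, the loop-$g$ part $H(\HGC_{m,n}^{g-loop})$ is concentrated in degrees $\geq g(n-3)+1$. Specializing to $g\geq 3$ gives vanishing in all degrees strictly below $3(n-3)+1 = 3n-8$. Thus for $k\leq 3n-8$ one has $k-1 \leq 3n-9 < 3n-8$, so the contributions from loop orders $\geq 3$ to $H_{k-1}(\HGC_{m,n})$ all vanish. This yields $H_{k-1}(\HGC_{m,n}) = H_{k-1}(\HGC_{m,n}^{\leq 2-loop})$, which together with Corollary~\ref{cor:mapping spaces homotopy} gives the claim.

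There is essentially no obstacle in this corollary beyond carefully collating two inputs already in hand. The only points that need checking are routine: that vertex splitting really preserves the loop count, so that the loop-order grading splits the complex rather than merely filtering it, and that the quoted connectivity bound from \cite{TW2} applies throughout our dimension range $n-m\geq 2$. Neither requires any genuinely new argument.
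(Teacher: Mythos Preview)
Your proposal is correct and follows essentially the same route as the paper. The only difference is cosmetic: where you invoke the connectivity bound $H(\HGC_{m,n}^{g\text{-loop}})$ concentrated in degrees $\geq g(n-3)+1$ as a fact recalled in the introduction, the paper's proof re-derives this bound on the spot via an explicit degree count on a connected hairy graph (using $k-l=g-1$, the trivalence inequality $3l\leq 2k+h$, and $h\geq 1$ to get $|\alpha|\geq (n-3)(g-1)+(n-2)$, hence $|\alpha|\geq 3n-8$ for $g\geq 3$). One minor remark: the attribution to \cite{TW2} is not quite what the paper does---the introduction states the bound without citing a specific reference, and the actual proof supplies the elementary combinatorial argument itself.
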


In the case $n-m<2$, we get a different picture for the homotopy of our operadic mapping spaces.
Recall that the homology of the little $n$-discs operad $H(\lD_n)$
is identified with the operad governing $n$-Poisson algebras $\Poiss_n$,
for any $n\geq 2$,
where an $n$-Poisson algebra consists of a graded commutative algebra
equipped with a Poisson bracket operation
of degree $n-1$.
In the case $m=n$, we consider the locally constant function $F: \Map^h(\lD_n,\lD_n^{\Q})\rightarrow\Q$
which, to any homotopy class of maps $[\phi]\in\pi_0\Map^h(\lD_n,\lD_n^{\Q})$,
associates the constant $\lambda\in\Q$
such that we have the relation $\phi_*([x_1,x_2]) = \lambda [x_1,x_2]$
in $H(\lD_n(2),\Q)$, where $[x_1,x_2]\in\Poiss_n(2)$ is the generating operation of the operad $\Poiss_n$
that represents the Poisson bracket operation
in our category of algebras. We then have the following statement.

\begin{cor}\label{cor:codimension zero case}
Let $n\geq 2$. For $\lambda\neq 0$, we have a weak equivalence
\[
\Map^h(\lD_n,\lD_n^{\Q})\supset F^{-1}(\lambda)\simeq\MC_\bullet(H(\GC_n^2)[1]),
\]
where we consider the homology of the non-hairy graph complex $H(\GC_n^2)$, we use the notation $V[1]$ to mark a degree shift $V[1]_* = V_{*-1}$,
and we regard this graded vector space $H(\GC_n^2)[1]$ as an abelian graded Lie algebra (a Lie algebra with a trivial Lie bracket).
In particular the following properties hold:
\begin{itemize}
\item If $n>2$ and $n+1$ is not divisible by $4$, then the space $F^{-1}(\lambda)\subset\Map^h(\lD_n,\lD_n^{\Q})$ is connected.
\item If $n+1$ is divisible by $4$, then $F^{-1}(\lambda)$ contains $\Q$-many connected components.
\item If $n=2$, then the connected components of $F^{-1}(\lambda)$ are in bijection with elements
of the Grothendieck-Teichm\"uller group $\GRT_1(\Q)$ (see also Fresse \cite{Fr}).
\end{itemize}
In all cases, we have the identity:
\[
\pi_k(\Map^h(\lD_n,\lD_n^{\Q}),p_{\lambda}) = H_k(\GC_n^2),
\]
where $p_\lambda$ is any basepoint in $F^{-1}(\lambda)$ and $k\geq 1$.
\end{cor}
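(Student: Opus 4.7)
\emph{Proof plan.} The plan is to specialize Theorem~\ref{thm:main topological statement} to the diagonal case $m=n$ and then to unfold the fiber $F^{-1}(\lambda)$ through a Maurer--Cartan twist. Theorem~\ref{thm:main topological statement} provides a weak equivalence $\Map^h(\lD_n,\lD_n^{\Q})\simeq\MC_\bullet(\HGC_{n,n})$. Under this equivalence, the locally constant function $F$ corresponds to the linear functional that extracts the coefficient of the line graph $L$ in a Maurer--Cartan element, because $L$ represents, via the formality identification $H(\lD_n)=\Poiss_n$, the Poisson bracket generator $[x_1,x_2]\in\Poiss_n(2)$. Consequently an operad map scaling the bracket by $\lambda$ corresponds to a Maurer--Cartan element whose $L$-component equals $\lambda$.

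Fix $\lambda\neq 0$ and let $z_\lambda$ be the Maurer--Cartan element in $F^{-1}(\lambda)$ represented by $\lambda L\in\HGC_{n,n}$; a direct inspection of the bracket formula shows $[L,L]=0$, so $z_\lambda$ is indeed Maurer--Cartan. By the standard principle of pointed deformation theory, the connected component of $z_\lambda$ in $\MC_\bullet(\HGC_{n,n})$, pointed at $z_\lambda$, is weakly equivalent to $\MC_\bullet\bigl((\HGC_{n,n})^{z_\lambda}\bigr)$, the nerve of the dg Lie algebra obtained by twisting $\HGC_{n,n}$ at $z_\lambda$.

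The hard step is to identify this twisted complex, up to $L_\infty$~quasi-isomorphism, with $H(\GC_n^2)[1]$ viewed as an abelian graded Lie algebra. The twist adds to the differential the operator $[z_\lambda,\cdot\,]$, which by the bracket formula plants an extra edge where a hair used to be. A spectral sequence filtered by the number of hairs then collapses the complex onto its hairless stratum, whose cohomology is $\GC_n^2$ up to the degree shift recorded by~$L$; the surviving hairy strata are acyclic through a standard hair-pushing argument. One must further verify that the transferred higher $L_\infty$ operations on cohomology vanish, which follows from a bookkeeping argument since any nonzero such operation would require hairs from each argument, but none survive on cohomology. Combining these steps produces the required quasi-isomorphism $(\HGC_{n,n})^{z_\lambda}\simeq H(\GC_n^2)[1]$, and hence the claimed weak equivalence $F^{-1}(\lambda)\simeq\MC_\bullet(H(\GC_n^2)[1])$. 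This identification of twisted hairy graph complexes with the ordinary graph complex is the central technical content, paralleling analogous results in the deformation theory of the little discs operad.

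The enumerated consequences then reduce to known facts about $H(\GC_n^2)$. The formula $\pi_k = H_k(\GC_n^2)$ for $k\geq 1$ is immediate from identifying the Deligne--Getzler--Hinich nerve of an abelian dg Lie algebra with a generalized Eilenberg--MacLane construction. The set of connected components is controlled by the relevant degree-zero piece of $H(\GC_n^2)$, which vanishes when $n>2$ and $4\nmid n+1$ by a standard parity count on wheel-type classes, is one-dimensional (generated by such a wheel class) when $4\mid n+1$, and equals $\grt_1(\Q)$ in the critical dimension $n=2$ by Willwacher's theorem~\cite{Will}, in agreement with Fresse's computation~\cite{Fr}.
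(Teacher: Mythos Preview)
Your overall strategy---twist by the Maurer--Cartan element $\lambda L$ and compare the twisted hairy graph complex to $\GC_n^2$---is the same as the paper's, but there is a genuine logical gap in how you pass from the twist to $F^{-1}(\lambda)$, and your sketch of the ``hard step'' does not match the actual mechanism.

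The gap is in your sentence ``the connected component of $z_\lambda$ \dots\ is weakly equivalent to $\MC_\bullet\bigl((\HGC_{n,n})^{z_\lambda}\bigr)$.'' Twisting does not cut down to a connected component: translation by $z_\lambda$ gives an \emph{isomorphism} of the whole simplicial set $\MC_\bullet(\HGC_{n,n})\cong\MC_\bullet((\HGC_{n,n})^{z_\lambda})$, not an identification of one component. Moreover $F^{-1}(\lambda)$ is generally \emph{not} a single connected component---the very statement you are proving says it can have $\Q$-many or $\GRT_1(\Q)$-many components. What the paper actually does is observe that under the translation $\gamma\mapsto\gamma+\lambda L$, the preimage $F^{-1}(\lambda)$ corresponds to $\MC_\bullet(\HGC_{n,n}^{\sim})$, where $\HGC_{n,n}^{\sim}\subset(\HGC_{n,n})^{\lambda L}$ is the sub-dg Lie algebra of series with vanishing $L$-coefficient. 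One then checks that the line graph spans the entire loop-order-zero part of $\HGC_{n,n}$ (trees with a vertex carrying $\geq 2$ hairs vanish by symmetry), so $\HGC_{n,n}^{\sim}=(\HGC_{n,n}^{\lambda L})^{\geq 1\text{-loop}}$. This step is missing from your argument.

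Your sketch of the quasi-isomorphism with $\GC_n^2[1]$ is also off. There is no ``hairless stratum'' in $\HGC_{n,n}$ (every graph has at least one hair), and the twist $[\lambda L,-]$ \emph{adds} a hair rather than removing one. The actual map goes the other way: $\gamma\in\GC_n^2$ is sent to the sum of all one-hair attachments $\gamma_1$, and proving this is a quasi-isomorphism onto the $\geq 1$-loop part of the twisted complex is a non-trivial result (Proposition~\ref{prop:GC to HGC quasi-iso}, from \cite{FW}). Your argument for the vanishing of transferred $L_\infty$ operations (``would require hairs from each argument, but none survive'') does not work either, since the cohomology classes \emph{are} represented by one-hair graphs and their bracket is again a one-hair graph; the abelianness is a separate theorem (Theorem~\ref{thm:GC abelian Lie structure}, from \cite{WillTriv}) that the paper invokes as a black box. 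Finally, the paper also passes from $\HGC_{n,n}$ to $\HGC_{n,n}^2$ (allowing bivalent vertices) before applying these results, and handles the change of filtration from the weight grading to the loop-order grading via Theorem~\ref{thm:nerve equivalences}.
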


The graph complex $\GC_n^2$ was first defined by Kontsevich \cite{Kformal}, and consists of formal series of isomorphism classes of undirected graphs
whose vertices are at least bivalent,
like:
\[
\begin{tikzpicture}[baseline=-.65ex, scale=.5]
\node[int] (v1) at (0:1) {};
\node[int] (v2) at (120:1) {};
\node[int] (v3) at (240:1) {};
\draw (v1) edge (v2) edge (v3) (v3) edge (v2) ;
\end{tikzpicture},
\quad\begin{tikzpicture}[baseline=-.65ex, scale=.5]
\node[int] (c) at (0,0){};
\node[int] (v1) at (0:1) {};
\node[int] (v2) at (72:1) {};
\node[int] (v3) at (144:1) {};
\node[int] (v4) at (216:1) {};
\node[int] (v5) at (-72:1) {};
\draw (v1) edge (v2) edge (v5) (v3) edge (v2) edge (v4) (v4) edge (v5)
      (c) edge (v1) edge (v2) edge (v3) edge (v4) (c) edge (v5);
\end{tikzpicture},
\quad\begin{tikzpicture}[baseline=-.65ex, scale=.5]
\node[int] (c) at (0.7,0){};
\node[int] (v1) at (0,-1) {};
\node[int] (v2) at (0,1) {};
\node[int] (v3) at (2.1,-1) {};
\node[int] (v4) at (2.1,1) {};
\node[int] (d) at (1.4,0) {};
\draw (v1) edge (v2) edge (v3)  edge (d) edge (c) (v2) edge (v4) edge (c) (v4) edge (d) edge (v3) (v3) edge (d) (c) edge (d);
\end{tikzpicture}.
\]
The degree of a graph in $\GC_n$ is determined by a rule which depends on the dimension index $n\in\N$
and the differential is defined through vertex splitting.
as in the case of the hairy graph complex. (The definition of the Kontsevich graph complex is also reviewed with full details
later on in this article.)

We currently do not know the full graph homology $H(\GC_n^2)$, nevertheless big families of non-trivial classes,
which appear in the homotopy groups of the mapping space $\Map^h(\lD_n,\lD_n^{\Q})$
by the above corollary, are known.
We review a selection of known facts about the graph homology from the literature with no claim of completeness (we refer to \cite{KWZ} for a more detailed overview).
In what follows, we use the notation $V[m]$ for the $m$-fold suspension of a graded vector space $V$,
which we define by the shift $V[m]_* = V_{*-m}$ in the grading of $V$,
for any $m\in\Z$.
In the case $V = \K$, where we identify the ground field $\K$ with a graded vector space concentrated in degree $0$,
we get that $\K[m]$ represents the graded vector space of rank one
concentrated in (homological) degree $m$. (Recall that we take $\K = \Q$ in this introduction.)

\begin{itemize}
\item
The graph complexes split into subcomplexes according to loop order. The part of loop order $g=1$ is infinite dimensional, but well understood.
We explicitly have:
\[
H(\GC_n^{2,1-loop}) = \bigoplus_{\substack{k\equiv 2n+1\mymod{4}\\
k\geq 1}}\Q[n-k],
\]
with homology classes $L_k$ represented by loop graphs with $k$ vertices, for $k\geq 1$.
We moreover have (see \cite[Proposition 3.4]{Will}):
\[
H(\GC_n^2)\cong H(\GC_n^{2,1-loop})\oplus H(\GC_n)
\]
where $\GC_n\subset\GC_n^2$ is the complex formed by graphs whose vertices are at least trivalent, and the factors $\GC_n^{g-loop}$ of loop order $g\geq 2$
in $\GC_n$ form finite dimensional complexes.
\item
The graph complexes $\GC_n$ associated to even dimension indices $n$ (respectively, to odd dimension indices $n$)
are isomorphic modulo degree shifts:
\[
\GC_n^{g-loop}\cong\GC_{n+2}^{g-loop}[2g],
\]
so that the homology of our complexes have non-trivial periodicity properties.
\item
For $n=2$, we have $H_0(\GC_2)\cong\grt_1$, where $\grt_1$ denotes the graded Grothendieck-Teichm\"uller Lie algebra again (see \cite{Will}).
By a result of Francis Brown \cite{Br-2}, this Lie algebra contains a free Lie algebra $L(\sigma_3,\sigma_5,\dots)\subset\grt_1$.
The generators of this Lie algebra $\sigma_{2j+1}$ corresponds to graphs of loop order $2j+1$ in $H(\GC_2)$,
which, by periodicity, give non-trivial classes in the homology of the graph complexes $H(\GC_n)$,
for all even $n$.
\item
For $n=3$, the bottom homology group $H_3(\GC_3)$ is the space of trivalent graphs modulo IHX relations,
and we have a map
\[
\Q[t,\omega_0,\omega_1,\ldots]/(\omega_p\omega_q-\omega_0\omega_{p+q},P)\to H_3(\GC_3),
\]
for a certain polynomial $P$ (see \cite{vogel, kneissler1,kneissler2,kneissler3}),
which is conjecturally an isomorphism up to one class represented by the ``$\Theta$-graph'' $\begin{tikzpicture}[baseline=-.65ex, scale=1]
\node[int](v) at (0,0) {};
\node[int](w) at (0.5,0) {};
\draw (v) edge (w) edge [bend left](w) edge[bend right] (w);
\end{tikzpicture}$.
(The map is known to be an isomorphism in low loop orders and the image is known to form an infinite dimensional vector space.)
\end{itemize}
We do not know a topological construction of any of the families of classes of this list.

We can also use the result of Corollary \ref{cor:codimension zero case} to compute the homotopy groups of the mapping spaces $\Map^h(\lD_n,\lD_n)$ in low degrees.
We use that the piece of the graph homology $H(\GC_n^{g-loop})$ of loop order $g$
is concentrated in homological degrees $\geq g(n-3)+2$.
We already mentioned that the complex $\GC_n^{g-loop})$ is finite dimensional.
We can therefore compute the homology of this complex
numerically, for low values of $g$
at least.
For $n$ even and $g\leq 10$ (respectively, for $n$ odd and $g\leq 8$), we explicitly get:
\begin{equation}\label{eq:GC_lowloop}
\begin{aligned}[t]
H(\GC_n^{2,\leq 10-loop}) = & \underbrace{\oplus_{k\equiv 1\mymod{4}}\Q[n-k]}_{1\text{ loop}}
\oplus\underbrace{\Q[3n-6]}_{3\text{ loop}}
\oplus\underbrace{\Q[5n-10]}_{5\text{ loop}}
\oplus\underbrace{\Q[6n-15]}_{6\text{ loop}}
\\
\oplus & \underbrace{\Q[7n-14]}_{7\text{ loop}}
\oplus\underbrace{\Q[8n-16]\oplus\Q[8n-19]}_{8\text{ loop}}
\oplus\underbrace{\Q[9n-18]\oplus\Q[9n-21]}_{9\text{ loop}}
\\
\oplus & \underbrace{\Q[10n-14]\oplus\Q^2[10n-23]\oplus\Q[10n-27]}_{10 \text{ loop}}
\quad\text{for $n$ even} 
\\
H(\GC_n^{2,g \leq 8-loop}) = & \underbrace{\oplus_{k\equiv 3\mymod{4}}\Q[n-k]}_{1\text{ loop}}
\oplus\underbrace{\Q[2n-3]}_{2\text{ loop}}
\oplus\underbrace{\Q[3n-6]}_{3\text{ loop}}
\oplus\underbrace{\Q[4n-9]}_{4\text{ loop}}
\\
\oplus & \underbrace{\Q^2[5n-12]}_{5\text{ loop}}
\oplus\underbrace{\Q^2[6n-15]\oplus\Q[6n-12]}_{6\text{ loop}}
\oplus\underbrace{\Q^3[7n-18]\oplus\Q[7n-15]}_{7\text{ loop}}
\\
\oplus & \underbrace{\Q^4[8n-21]\oplus\Q^2[8n-18]}_{8\text{ loop}}
\quad\text{for $n$ odd}.
\end{aligned}
\end{equation}
We then get the following result for the low degree homotopy groups of our (rational) mapping space.

\begin{cor}\label{cor:codimension zero case:low degrees}
In the case $n=2$, we have $\pi_1(\Map^h(\lD_2,\lD_2^{\Q}),p_\lambda) = \Q$ and $\pi_k(\Map^h(\lD_2,\lD_2^{\Q}),p_\lambda) = 0$ when $k\geq 2$.
In the case where $n$ is even and $n\geq 4$, we have
\[
\pi_k(\Map^h(\lD_n,\lD_n^{\Q}),p_\lambda) = H_k(\GC_n^{2,\leq 10-loop})
\]
for $k\leq 11(n-3)+2$.
In the case where $n$ is odd and $n\geq 3$, we have
\[
\pi_k(\Map^h(\lD_n,\lD_n^{\Q}),p_\lambda)=H_k(\GC_n^{2,\leq 8-loop})
\]
for $k\leq 9(n-3)+2$.
In each case, we consider the homology of the complex computed in \eqref{eq:GC_lowloop},
and $p_\lambda$ is an arbitrary base point in $F^{-1}(\lambda)$
with $\lambda\neq 0$.
\end{cor}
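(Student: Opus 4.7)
The starting point is Corollary \ref{cor:codimension zero case}, which gives an identification
\[
\pi_k(\Map^h(\lD_n,\lD_n^{\Q}),p_\lambda)\cong H_k(\GC_n^2)
\]
for any $\lambda\neq 0$ and any $k\geq 1$, independently of the chosen basepoint $p_\lambda$ inside $F^{-1}(\lambda)$. The plan is to combine this identification with the three structural inputs collected in the preceding paragraphs: the splitting $H(\GC_n^2)\cong H(\GC_n^{2,1-loop})\oplus H(\GC_n)$ with its explicit description of the $1$-loop factor; the further decomposition of $H(\GC_n)$ by loop order into finite-dimensional summands $H(\GC_n^{g-loop})$; and most crucially the connectivity estimate stating that $H(\GC_n^{g-loop})$ is concentrated in homological degrees $\geq g(n-3)+2$.

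For $n\geq 4$ even, the connectivity estimate ensures that contributions from loop orders $g\geq 11$ are excluded in the range $k\leq 11(n-3)+2$, so that $H_k(\GC_n^2)$ collapses to the truncation $H_k(\GC_n^{2,\leq 10-loop})$; the analogous argument with the cutoff $g\geq 9$ covers the odd case in the range $k\leq 9(n-3)+2$. Note that the $1$-loop part is automatically included in this truncation, so no separate treatment of the $L_k$ classes is needed. The explicit numerical data in \eqref{eq:GC_lowloop} is then the only remaining input; it is produced by direct (machine-assisted) computation of the small finite-dimensional complexes $\GC_n^{g-loop}$ via the vertex-splitting differential.

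The exceptional case $n=2$ has to be handled separately because the connectivity bound $g(n-3)+2=2-g$ degenerates and permits higher-loop contributions in arbitrary degrees. Here one identifies $\pi_1=\Q$ with the unique $1$-loop homology class (the tadpole $L_1$ in degree $n-1=1$), and the vanishing $\pi_k=0$ for $k\geq 2$ is extracted from the known structure of $H_{\geq 1}(\GC_2)$ in positive homological degrees, which is where the argument draws on external results rather than the bounds used for $n\geq 3$.

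The main obstacle is thus not conceptual but bookkeeping: one must verify that the first excluded loop order contributes no class exactly at the upper endpoint of the stated range, and one must carry out (or import) the explicit numerical computation of the graph homology in loop orders up to $10$ or $8$ respectively. Everything else is formal once Corollary \ref{cor:codimension zero case} is in place.
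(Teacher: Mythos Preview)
Your overall strategy for $n\geq 3$ matches the paper's: invoke Corollary~\ref{cor:codimension zero case} and then bound the degrees in which a given loop order can contribute. However, the connectivity bound you quote, $\geq g(n-3)+2$, is one unit too weak and is exactly the source of the ``edge case at the upper endpoint'' you flag. The paper does not cite this bound but derives it on the spot: from $|\alpha|=(n-1)k-nl+n=(n-1)(g-1)-l+n$ and the trivalence inequality $3l\leq 2k$ (equivalently $l\leq 2(g-1)$) one gets $|\alpha|\geq (n-3)g+3$. With this sharper bound, loop order $d+1$ first appears in degree $(n-3)(d+1)+3$, which strictly exceeds $(n-3)(d+1)+2$, so the endpoint issue disappears and no extra verification is needed. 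You should replace the cited bound by this short computation.

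For $n=2$ your proposal is genuinely different from the paper and also vaguer. You want to read off $H_1(\GC_2^2)=\Q$ from the loop class $L_1$ and then import the vanishing of $H_{\geq 1}(\GC_2)$ as an external fact; but note that the degree-counting bound above degenerates for $n=2$ and gives no such vanishing, so you would need to cite a specific source. The paper instead avoids graph-complex input entirely: it uses the quasi-isomorphism $\HGC_{2,2}\simeq K(\stp_2[-1],\e_2\{2\})$ established in the proof of Theorem~\ref{thm:main algebraic statement}, observes that $\stp_2(r)$ sits in degree $0$ while $\e_2\{2\}(r)$ sits in degrees $\leq 1-r$, so the deformation complex is concentrated in degrees $\leq 0$ with degree-$0$ part equal to $\Q$, and then transfers this to the twisted complex $\HGC_{2,2}^\alpha$ by a spectral sequence argument. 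This is more self-contained than your route.
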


We have little information about $F^{-1}(0)$. We know that there is one connected component
corresponding to the map $\lD_n\stackrel{*}{\lo}\lD_n^{\Q}$
that factors through the trivial topological operad
with one point in each arity $*(r)\equiv *$. We can use the hairy graph complex to compute the homotopy groups of this particular connected component at least.
To be explicit, we can record the following statement.

\begin{cor}\label{cor:codimension zero null component}
Let $n\geq 2$. We have
\[
\pi_k(\Map^h(\lD_n,\lD_n^{\Q}),*) = H_{k-1}(\HGC_{n,n}),
\]
for all $k\geq 1$, and the homotopy groups of any other connected components of the space $F^{-1}(0)$
are subquotients of this vector space.\footnote{However, we do not know whether there are any other connected component and which subquotients to take.}
\end{cor}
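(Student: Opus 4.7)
The plan is to apply Theorem \ref{thm:main topological statement} in the case $m=n\geq 2$, which yields a weak equivalence
\[
\Map^h(\lD_n,\lD_n^{\Q})\simeq\MC_\bullet(\HGC_{n,n}).
\]
The first task is to verify that the null map $*\colon\lD_n\to\lD_n^{\Q}$ (factoring through the terminal operad with one point in each arity) corresponds, under this equivalence, to the zero Maurer--Cartan element of $\HGC_{n,n}$. This should follow from the explicit construction of the equivalence in Theorem \ref{thm:main topological statement}: at the level of graph cochains the null morphism records no operation on any generator, so it is represented by the vanishing cocycle, and hence $*$ lifts to the base vertex $0\in\MC_\bullet(\HGC_{n,n})$.

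Granting this, the first formula of the corollary becomes the standard computation of the homotopy groups of the Deligne--Hinich--Getzler nerve of a complete dg Lie algebra $L$ at the zero Maurer--Cartan element,
\[
\pi_k(\MC_\bullet(L),0)\cong H_{k-1}(L),\qquad k\geq 1,
\]
applied to $L=\HGC_{n,n}$. The required completeness of $\HGC_{n,n}$ is provided by the descending filtration by the number of internal vertices (or hairs), which is respected by the Lie bracket and has finite-dimensional pieces in each bidegree.

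For the second assertion, let $\tau\in\HGC_{n,n}$ be a Maurer--Cartan element representing a connected component of $F^{-1}(0)$. The homotopy groups of $\MC_\bullet(\HGC_{n,n})$ at $\tau$ are given by $H_{k-1}(\HGC_{n,n}^{\tau})$, where $\HGC_{n,n}^{\tau}$ is the twisted dg Lie algebra with differential $d+[\tau,-]$. The condition $F(\tau)=0$ forces $\tau$ to lie in a positively-filtered part of the canonical filtration on $\HGC_{n,n}$ (by, say, the total count of vertices and hairs), so that $[\tau,-]$ strictly raises this filtration. The associated spectral sequence then has $E_{1}$-page $H(\HGC_{n,n})$ and, by completeness, converges to $H(\HGC_{n,n}^{\tau})$. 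Since every page of a convergent spectral sequence is a subquotient of the preceding one, the abutment $H(\HGC_{n,n}^{\tau})$ is abstractly a subquotient of $H(\HGC_{n,n})$, which yields the desired conclusion on the other components.

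The main obstacle is the basepoint identification in the first paragraph: one must trace the null morphism $\lD_n\to *\to\lD_n^{\Q}$ carefully through the chain of quasi-isomorphisms and resolutions underlying Theorem \ref{thm:main topological statement}, verifying that it corresponds to the zero cocycle and not, for instance, to a nontrivial Maurer--Cartan element that might arise from choices of formality quasi-isomorphisms made in the construction. Once this is settled, the rest is an application of general nerve formalism and a standard filtration argument.
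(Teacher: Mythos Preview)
Your proposal is correct and follows essentially the same route as the paper: invoke Theorem~\ref{thm:main topological statement}, identify the trivial map $*$ with the zero Maurer--Cartan element, apply Berglund's Theorem~\ref{thm:berglund} for the first assertion, and use a spectral sequence on the twisted complex for the second.

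Two small points of clarification. First, your ``main obstacle'' is not really one: the biderivation complex $\Def(\E_n^c,\E_m^c)$ is constructed from the outset relative to the canonical morphism $*$ (see Remark~\ref{rem:Shoikhet structure explanations} and Definition~\ref{defn:canonical Lambda-biderivation complex}), so the zero Maurer--Cartan element corresponds to $*$ by design, and this is preserved through the zigzag of $L_\infty$~quasi-isomorphisms to $\HGC_{n,n}$ since all of those maps are built relative to the trivial morphism. Second, in your spectral sequence argument you attribute the filtration-raising property of $[\tau,-]$ to the hypothesis $F(\tau)=0$, but this is unnecessary: the complete weight grading on $\HGC_{n,n}$ satisfies $\omega\geq 1$ on every graph, so \emph{every} Maurer--Cartan element lies in $\mF^1$ and $[\tau,-]$ automatically raises filtration. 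The condition $F(\tau)=0$ plays no role in this step; it only serves to locate the component in the statement of the corollary.
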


We now consider the case $n=m+1$. We then have the same results as in the case $n=m$:

\begin{cor}\label{cor:codimension one case}
Let $n\geq 2$. We have a locally constant function $J: \Map^h(\lD_{n-1},\lD_n^{\Q})\rightarrow\Q$
such that, for each $\lambda\neq 0$, we have the relation:
\[
J^{-1}(\lambda)\simeq\MC_\bullet(H(\GC_n^2)[1]),
\]
where we consider the homology of the non-hairy graph complex $H(\GC_n^2)$ together with a degree shift $V[1]_* = V_{*-1}$,
and we regard this object $H(\GC_n^2)[1]$ as an abelian graded Lie algebra again.
\end{cor}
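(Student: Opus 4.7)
The strategy is to parallel the proof of Corollary \ref{cor:codimension zero case} in the codimension one setting $m = n-1$. By Theorem \ref{thm:main topological statement}, the mapping space $\Map^h(\lD_{n-1},\lD_n^{\Q})$ is weakly equivalent to the simplicial nerve $\MC_\bullet(\HGC_{n-1,n})$, so all questions reduce to computations inside the hairy graph dg Lie algebra. Since $n-(n-1)=1$ is odd, formula \eqref{eq:HGC0loop} gives $H(\HGC_{n-1,n}^{0-loop}) = \Q\, Y$, with the tripod $Y$ sitting in degree $2(n-m)-3 = -1$, which is precisely the Maurer-Cartan degree. Consequently, for any MC element $\mu$ (viewed as a polynomial form on a simplex with values in $\HGC_{n-1,n}$), its zero-loop, degree-$(-1)$ cohomology class is necessarily a simplicially locally constant multiple $\lambda\cdot[Y]$. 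Setting $J(\mu) := \lambda$ and transporting the definition across Theorem \ref{thm:main topological statement} produces the desired locally constant function $J \colon \Map^h(\lD_{n-1},\lD_n^{\Q}) \to \Q$.

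For $\lambda \neq 0$, the affine substitution $\mu \mapsto \mu - \lambda Y$ identifies $J^{-1}(\lambda)$ with the nerve of the twisted dg Lie algebra $\bigl(\HGC_{n-1,n}\bigr)^{\lambda Y}$, restricted to MC elements whose zero-loop component vanishes. The central step is then an $L_\infty$-quasi-isomorphism
\[
\bigl(\HGC_{n-1,n}\bigr)^{\lambda Y} \;\simeq\; \GC_n^2[1],
\]
where the right-hand side is regarded as an \emph{abelian} dg Lie algebra. The combinatorial intuition is that the additional differential term $[\lambda Y, -\,]$ attaches the tripod along one of its hairs to a hair of the given graph; iterating such attachments trades pairs of hairs for internal edges, and the resulting ``closed-up'' graphs are exactly those counted by $\GC_n^2$, with the shift $[1]$ accounting for the degree bookkeeping in the hair-to-edge trade. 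Passing to homology via the homotopy invariance of $\MC_\bullet$ under $L_\infty$-quasi-isomorphisms, and using that any abelian dg Lie algebra over a field is formal, yields the equivalence $J^{-1}(\lambda) \simeq \MC_\bullet(H(\GC_n^2)[1])$.

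The principal obstacle is the identification of the twisted complex with $\GC_n^2[1]$ equipped with \emph{trivial} higher brackets: beyond the chain-level quasi-isomorphism, one must verify that all higher transferred $L_\infty$-brackets vanish up to homotopy. I expect this to follow by the same mechanism that underlies the codimension zero case, namely a symmetry/parity argument on the underlying graphs together with the loop-order filtration, which forces the would-be obstructions to be exact. A useful alternative route, which I would pursue in parallel, is to import the result from the codimension zero case directly by interpreting $\lambda Y$ as the algebraic shadow of a canonical (rescaled) operad map $\lD_{n-1} \to \lD_n^{\Q}$ factoring through $\lD_n \to \lD_n^{\Q}$, and then transferring Corollary \ref{cor:codimension zero case} along this factorization.
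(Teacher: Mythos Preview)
Your outline has the right overall shape but contains a genuine gap at the twisting step. The element $\lambda Y$ is \emph{not} a Maurer--Cartan element in $\HGC_{n-1,n}$: one has $[Y,Y]\neq 0$ (it is the dumbbell with hairs, which is only exact, not zero), so the substitution $\mu\mapsto\mu-\lambda Y$ does not identify $J^{-1}(\lambda)$ with the nerve of a twisted dg Lie algebra. The paper instead twists by the full series
\[
\alpha=\sum_{k\geq 1}\lambda^{k}\underbrace{\begin{tikzpicture}[baseline=-.65ex]\node[int] (v) at (0,0) {};\draw (v) edge +(-.5,-.5) edge +(-.3,-.5) edge +(0,-.5) edge +(.3,-.5) edge +(.5,-.5);\end{tikzpicture}}_{2k+1},
\]
a rescaling of the element $T$ of~\eqref{eq:tripodMC}, which \emph{is} Maurer--Cartan; the higher stars are precisely the corrections needed to kill $[Y,Y]$ and its descendants. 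Relatedly, your combinatorial picture is inverted: the bracket with $T$ (or $Y$) \emph{adds} hairs rather than closing them up. The actual comparison map of Theorem~\ref{thm:cerf lemma quasi-iso} goes the other way, sending $\gamma\in\GC_n^2$ to the series obtained by attaching any odd number of hairs to $\gamma$, and the fact that this extends to an $L_\infty$-quasi-isomorphism with \emph{abelian} source is Theorem~\ref{thm:cerf lemma abelian Lie structure}, a nontrivial result from~\cite{WillTriv} that is not a formal consequence of parity or loop-order bookkeeping.

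Two further points. First, for $n=2$ (so $m=1$) one must work with the Shoikhet $L_\infty$-structure on $\HGC_{1,2}$, and the relevant Maurer--Cartan element and $L_\infty$-formality statement are the deformed versions of Theorems~\ref{thm:cerf lemma:Shoikhet case} and~\ref{thm:cerf lemma abelian Lie structure:Shoikhet case}; your sketch does not account for this. Second, your alternative route of ``transferring Corollary~\ref{cor:codimension zero case} along the factorization $\lD_{n-1}\to\lD_n\to\lD_n^{\Q}$'' does not work as stated: that composite picks out a single point of $\Map^h(\lD_{n-1},\lD_n^{\Q})$, and there is no evident mechanism by which the codimension-zero statement about $\Map^h(\lD_n,\lD_n^{\Q})$ propagates to the whole fiber $J^{-1}(\lambda)$.
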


\begin{rem}\label{rem:1/4}
Let us mention that the locally constant function $J$ takes the value $J(i) = 1/4$ on the connected component
of the canonical map $i: \lD_{n-1}\hookrightarrow\lD_n$ (with $n\geq 2$).
\end{rem}

We still have little information about $J^{-1}(0)$. We know that there is one connected component
corresponding to the trivial map $\lD_{n-1}\stackrel{*}{\lo}\lD_n^{\Q}$
and we can also compute the homotopy groups of this connected component
in terms of the hairy graph complex again.
We record this observation in the following statement.

\begin{cor}\label{cor:codimension one null component}
Let $n\geq 2$. We have
\[
\pi_k(\Map^h(\lD_{n-1},\lD_n^{\Q}), *) = H_{k-1}(\HGC_{n-1,n}),
\]
for all $k\geq 1$, and the homotopy groups of any other connected components of the space $J^{-1}(0)$
are subquotients of this vector space.\footnote{However, we do not know whether there are any other connected component and which subquotients to take in this case yet.}
\end{cor}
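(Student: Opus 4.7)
The plan is to transport the computation to the hairy graph dg Lie algebra and then exploit a loop-order spectral sequence. First, applying Theorem~\ref{thm:main topological statement} when $n\geq 3$ (so $m = n-1\geq 2$), and Theorem~\ref{thm:main topological statement:Shoikhet case} when $n = 2$ (so $m = 1$), yields a weak equivalence
\[
\Map^h(\lD_{n-1},\lD_n^{\Q})\simeq\MC_\bullet(\HGC_{n-1,n}),
\]
where $\HGC_{n-1,n}$ carries its natural dg Lie structure (respectively, the Shoikhet $L_{\infty}$ structure when $n = 2$). Under this equivalence, the trivial operad map $*$ corresponds to the zero Maurer--Cartan element.

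Next, I would invoke the standard formula for the homotopy groups of a Maurer--Cartan nerve: for any (sufficiently complete) $L_{\infty}$ algebra $L$ and any Maurer--Cartan element $\tau\in L$, one has $\pi_k(\MC_\bullet(L),\tau)\cong H_{k-1}(L^\tau)$, where $L^\tau$ denotes the $L_{\infty}$ algebra obtained by twisting the brackets of $L$ by $\tau$. Applied at $\tau = 0$, where $L^0 = L$, this directly gives the asserted isomorphism $\pi_k(\Map^h(\lD_{n-1},\lD_n^{\Q}),*)\cong H_{k-1}(\HGC_{n-1,n})$.

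For the subquotient statement, consider a Maurer--Cartan element $\tau$ with $J(\tau)=0$ representing some other connected component of $J^{-1}(0)$. The locally constant function $J$ detects, up to normalization, the coefficient of the tripod class $Y$ generating $H(\HGC_{n-1,n}^{0\text{-loop}})$, so the vanishing $J(\tau)=0$ allows me to gauge $\tau$ to a representative whose tree (loop-order-zero) component is zero. I then filter $\HGC_{n-1,n}$ by loop order; the intrinsic vertex-splitting differential preserves loop order, and because loop orders add under the hairy graph Lie bracket, twisting by any graph of loop order $\geq 1$ strictly raises the filtration. Hence in the associated spectral sequence for the twisted complex $\HGC_{n-1,n}^\tau$, the $E_1$-page equals $H(\HGC_{n-1,n})$, and the abutment $H(\HGC_{n-1,n}^\tau)$ is a subquotient of it. The main difficulty will be verifying convergence of this spectral sequence, since Maurer--Cartan elements are formal series of graphs across all loop orders, and rigorously justifying the gauge-fixing step that reduces the condition $J(\tau)=0$ to the vanishing of the tree part of $\tau$; this is essentially the same subtlety that appears in the proof of the parallel statement Corollary~\ref{cor:codimension zero null component}.
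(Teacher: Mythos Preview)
Your argument for the first assertion is essentially identical to the paper's: invoke Theorems~\ref{thm:main topological statement}--\ref{thm:main topological statement:Shoikhet case} to identify the mapping space with $\MC_\bullet(\HGC_{n-1,n})$, observe that the trivial map corresponds to the zero Maurer--Cartan element, and apply Berglund's Theorem~\ref{thm:berglund}.

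For the subquotient assertion, your route diverges from the paper's and is unnecessarily complicated. The paper (in its parallel proof of Corollary~\ref{cor:codimension zero null component}, to which it refers) does not filter by loop order and does not gauge away the tree part. Instead it uses the complete weight grading $\omega(\gamma)=(\text{edges})-(\text{internal vertices})$ already present on $\HGC_{n-1,n}$: the untwisted differential preserves $\omega$, the bracket is additive in $\omega$, and every element has $\omega\geq 1$. Hence for \emph{any} Maurer--Cartan element $\tau$ the twisting term $[\tau,-]$ (and higher $L_\infty$ terms in the $n=2$ case) strictly raises the weight filtration, so the $E_1$-page of the associated spectral sequence is $H(\HGC_{n-1,n})$ on the nose, with no preliminary normalization of $\tau$ required. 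Convergence is immediate from completeness of the weight grading, which is built into the definition of the complex. This is what the paper means by saying that any Maurer--Cartan element ``forms a deformation of the null element.''

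Your loop-order approach would also give the result, but only after the gauge-fixing step, and that step is genuinely nontrivial: knowing $J(\tau)=0$ only kills the tripod coefficient, and you still need to argue that the remaining tree part (the higher star graphs and their descendants) can be gauged to zero inside the tree sub--Lie algebra. This is plausible from $H(\HGC_{n-1,n}^{0\text{-loop}})=\K Y$, but it requires a separate Goldman--Millson type argument. The weight filtration sidesteps this entirely and also handles the Shoikhet $L_\infty$ structure in the $n=2$ case uniformly, since that structure is compatible with $\omega$ by Theorem~\ref{thm:HGC Shoikhet structure}.
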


\subsection*{The algebraic results}
To establish our results we use the rational homotopy theory of operads developed by the first author in \cite[Part II]{Fr}.

Let us observe, in a preliminary step, that we can reduce our computations
about mapping spaces of topological operads
to computations about mapping spaces of simplicial operads,
because these categories of operads
inherit Quillen equivalent
model structures.

For the applications of rational homotopy methods, we consider the model category of simplicial $\La$-operads of \cite[Section II.8.4]{Fr}.
Briefly recall that this category is isomorphic to the subcategory formed by the simplicial operads
whose component of arity zero is reduced to a one-point set (the category of unitary operads
in the terminology of \cite{Fr}).
The idea of a $\La$-operad is to use restriction operators, which we may associate to any injective map between finite ordinals,
in order to model operadic composites with an element
of arity zero.
The category of simplicial $\La$-operads is precisely defined as the category of operads which have no element in arity zero,
but whose underlying collection is equipped with such restriction operators
that we can use to recover the composition operations
with an element of arity zero.
To apply the results of \cite{Fr}, we have to deal with simplicial $\La$-operads
whose component of arity one reduces to a point.
Let us mention that we have models of $E_n$-operads
which do satisfy this condition
though this is not the case of the operad of little $n$-discs.

In the previous statement of results, we consider mapping spaces $\Map^h(\lD_m,\lD_n)$
that we may form in the category of all operads
by using that we can equip this category with a model structure.
But, once we pick models of the operads $\lD_m$ and $\lD_n$ in the category of simplicial $\La$-operads,
we can actually reduce these mapping spaces to mapping spaces
of simplicial $\La$-operads (see~\cite{FNote}).

The category of simplicial $\La$-operads is used in \cite[Chapter II.12]{Fr} to define an operadic upgrading
of the Sullivan functor of piecewise linear differential forms. We precisely have a functor of the form:
\[
\Omega_{\sharp} : \{\text{simplicial $\La$-operads}\}\to\{\text{dg Hopf $\La$-cooperads}\},
\]
where we use the phrase dg Hopf $\La$-cooperad to refer to structures which are dual (in the categorical sense) to $\La$-operads
and which we form in the category of commutative dg algebras.
The category of dg Hopf $\La$-cooperads inherits a model structure by the results \cite[Section II.11.4]{Fr}.
Furthermore, one can use mapping spaces associated to dg Hopf $\La$-cooperads
to compute the mapping spaces associated to the rationalization
of objects in the category of simplicial operads. We use this correspondence to reduce the proof of our main results,
about the topological mapping spaces associated to the operad of little discs,
to a computation of mapping spaces
in the category of dg Hopf $\La$-cooperads.

To be explicit, we assume that $\Omega_{\sharp}(\lD_n)$ denotes, by abuse of notation, the dg Hopf $\La$-cooperad
which we obtain by applying the above functor to a good (cofibrant) model of the operad
of little $n$ discs
in the category of simplicial $\La$-operads.
In what follows, we also denote by $\E_n^c$ a dg Hopf cooperad quasi-isomorphic to $\Omega_{\sharp}(\lD_n)$.
In fact, we will use a dg Lie algebra model of the mapping space
associated to these objects
in the category of dg Hopf $\La$-cooperads.
To be precise, we are going to consider a dg Lie algebra of biderivations
of dg Hopf $\La$-cooperads:
\[
\Def(\E_n^c,\E_m^c) := \BiDer_{\dg\La}(\check{\E}_n^c,\hat{\E}_m^c),
\]
for a suitably chosen fibrant replacement $\hat{\E}_m^c$ of $\E_m^c$,
a cofibrant replacement $\check{\E}_n^c$ of $\E_n^c$,
and we will prove that we have the following statement:

\begin{thm}\label{thm:nerve to mapping spaces}
We have a weak equivalence of simplicial sets
\[
\Map^h(\lD_m,\lD_n^{\Q})\simeq\MC_\bullet(\Def(\E_n^c,\E_m^c)).
\]
\end{thm}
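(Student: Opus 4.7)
The plan is to realize $\Map^h(\lD_m,\lD_n^{\Q})$ in successive stages: first reduce from topological operads to simplicial $\Lambda$-operads, then pass to their dg Hopf $\Lambda$-cooperad models via $\Omega_{\sharp}$, and finally identify the mapping space in that algebraic setting with the Maurer--Cartan nerve of the biderivation dg Lie algebra.

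\emph{Step 1 (Reduction to simplicial $\Lambda$-operads).} The Quillen equivalence between topological operads and simplicial operads identifies the derived mapping spaces on both sides. I will fix a cofibrant model $\check\lD_m$ and a fibrant one $\hat\lD_n$ in the category of simplicial $\Lambda$-operads whose arity-$1$ component is a point, using the models of $E_n$-operads available in \cite{Fr}. The spaces $\Map^h$ may then be computed as $\Map_{\Lambda\mathrm{Op}}(\check\lD_m,\hat\lD_n^{\Q})$.

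\emph{Step 2 (Passage to dg Hopf $\Lambda$-cooperads).} Fresse's functor
\[
\Omega_{\sharp} : \{\text{simplicial $\Lambda$-operads}\}\to\{\text{dg Hopf $\Lambda$-cooperads}\}
\]
models Sullivan rationalization in the operadic setting: there is an adjunction whose right adjoint realizes a dg Hopf $\Lambda$-cooperad as a simplicial $\Lambda$-operad, and the unit computes the rationalization of good models. Consequently we obtain
\[
\Map^h(\lD_m,\lD_n^{\Q})\;\simeq\;\Map^h_{\dg\Hopf\Lambda\mathrm{cOp}}\!\bigl(\Omega_{\sharp}(\hat\lD_n),\Omega_{\sharp}(\check\lD_m)\bigr),
\]
with directions of arrows reversed by duality. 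Replacing both sides by our chosen cofibrant resolution $\check\E_n^c$ of $\Omega_{\sharp}(\lD_n)$ and fibrant resolution $\hat\E_m^c$ of $\Omega_{\sharp}(\lD_m)$ preserves the mapping space up to weak equivalence. This reduces the theorem to producing a weak equivalence between the derived mapping space $\Map^h_{\dg\Hopf\Lambda\mathrm{cOp}}(\check\E_n^c,\hat\E_m^c)$ and the nerve of the biderivation complex.

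\emph{Step 3 (Biderivations as a deformation complex).} The standard deformation-theoretic principle says that, for a quasi-free (cofibrant) object $\check\E_n^c$, a morphism $\check\E_n^c\to\hat\E_m^c$ is equivalent to a Maurer--Cartan element in a convolution dg Lie algebra controlling its deformations. In the dg Hopf $\Lambda$-cooperad setting the appropriate object is $\Def(\E_n^c,\E_m^c)=\BiDer_{\dg\Lambda}(\check\E_n^c,\hat\E_m^c)$, whose elements respect both the commutative algebra structure (through the ``bi'' condition) and the cooperadic coproducts together with the $\Lambda$-restriction operators. I will show that $\Def(\E_n^c,\E_m^c)$ is a dg Lie algebra with a complete filtration induced by the weight/arity filtration of $\check\E_n^c$, and that its Maurer--Cartan set is in natural bijection with the set of morphisms $\check\E_n^c\to\hat\E_m^c$ covering a fixed base morphism. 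Tensoring $\hat\E_m^c$ with $\APL(\Delta^\bullet)$ and applying the same recipe simplex-wise upgrades this bijection to a simplicial isomorphism between $\MC_\bullet(\Def(\E_n^c,\E_m^c))$ and the simplicial mapping space in the dg Hopf $\Lambda$-cooperad category. Combining with Steps 1 and 2 gives the desired weak equivalence.

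\emph{Expected main obstacle.} The essential difficulty lies in Step 3: one must verify that the convolution dg Lie algebra of biderivations is the correct control object in the \emph{Hopf} cooperad setting with $\Lambda$-structure, and that path-object-style arguments with $\hat\E_m^c\otimes\APL(\Delta^\bullet)$ produce precisely the $\MC_\bullet$ functor. The Hopf (commutative algebra) layer makes the definition of biderivations subtle, and the $\Lambda$-structure requires compatibility with the restriction operators throughout the construction. One must also ensure completeness of the natural filtration on $\Def(\E_n^c,\E_m^c)$ so that $\MC_\bullet$ is well-defined; this is where the choice of a sufficiently small cofibrant model $\check\E_n^c$ with a finite arity-wise filtration becomes important, and where the argument most closely follows the Sullivan framework adapted to operads in \cite[Chapters II.10--12]{Fr}.
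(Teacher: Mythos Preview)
Your Steps~1--2 match the paper's Proposition~6.1 (labelled \texttt{prop:Hopf En-cooperad mapping spaces}): the Quillen adjunction $G_\bullet\dashv\Omega_\sharp$ reduces the computation to a mapping space in $\dg^*\Hopf\La\Op_{01}^c$, and one then replaces source and target by suitable resolutions.

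Step~3, however, has two genuine gaps compared to the paper. First, the Maurer--Cartan/morphism correspondence (Proposition~3.5) requires the source to have the specific shape $\check\E_n^c=C(\alg g)$ for a $\La$-cooperad $\alg g$ in graded Lie coalgebras, and the target to be $B(\op P)$ for some $B\La$-operad $\op P$; an arbitrary cofibrant/fibrant pair does not suffice. The paper uses $C(\stp_n)$ (Theorem~4.4) and a bespoke fibrant replacement $W(\e_m^c)$ (Section~5.2) which is proved to be of the form $B(\oW(\e_m^c)_{\bo})$ (Lemma~5.4). You should make these choices explicit.

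Second, and more seriously, the claim that ``tensoring $\hat\E_m^c$ with $\APL(\Delta^\bullet)$ \dots\ upgrades this bijection to a simplicial isomorphism'' does not work as stated. The aritywise tensor product $\op C(r)\otimes\APL(\Delta^n)$ is not obviously a simplicial frame in $\dg^*\Hopf\La\Op_{01}^c$: the cooperad coproducts $\op C(k{+}l{-}1)\to\op C(k)\otimes\op C(l)$ would force a diagonal on the $\APL$-factor, and one must verify the Reedy fibrancy conditions. The paper instead builds a genuine simplicial frame $W\op C^{\Delta^\bullet}$ (Construction~5.6) by decorating \emph{vertices} of trees with separate copies of $\APL(\Delta^n)$; this keeps each $W\op C^{\Delta^n}$ cofree as a cooperad (Lemma~5.8) and makes the matching maps fibrations (Lemma~5.9). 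Lemma~6.2 then gives an \emph{isomorphism} $\Mor(C(\alg g),W\op C^{\Delta^\bullet})\cong\MC(\BiDer_{\dg\La}(C(\alg g),W\op C^{\Delta^\bullet}))$, and a separate comparison map (Lemma~6.3) to $\BiDer_{\dg\La}(C(\alg g),W\op C)\hat\otimes\APL(\Delta^\bullet)$ is only a \emph{dimensionwise quasi-isomorphism}, after which one invokes the filtered Goldman--Millson theorem to obtain a weak equivalence of nerves. Your proposal collapses these two distinct steps into one and thereby misses the need for the $W\op C^{\Delta^\bullet}$ construction and the Goldman--Millson argument.
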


Then we establish the following algebraic counterpart of our main Theorem \ref{thm:main topological statement}\footnote{This algebraic statement
is in fact stronger than our result about mapping spaces because we also deal with negative degree components
of the deformation complex which vanish when we pass to the nerve of our dg Lie algebras.}:

\begin{thm}\label{thm:main algebraic statement}
For $n\geq m\geq 2$, the dg Lie algebra $\Def(\E_n^c,\E_m^c)$ is $L_{\infty}$~quasi-isomorphic to the dg Lie algebra $\HGC_{m,n}$.
For $n\geq 2$, the dg Lie algebra $\Def(\E_n^c,\E_1^c)$ is $L_{\infty}$~quasi-isomorphic to the hairy graph complex $\HGC_{1,n}$
equipped with the Shoikhet $L_{\infty}$~structure.
\end{thm}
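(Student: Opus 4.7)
The strategy is to realize the deformation complex $\Def(\E_n^c,\E_m^c)$ combinatorially by choosing explicit graph cooperad models, and then to extract the hairy graph complex $\HGC_{m,n}$ from it by a spectral sequence argument. To begin with, I would replace $\E_n^c$ by a Kontsevich-style graph cooperad $\Graphs_n$ as cofibrant model $\check{\E}_n^c$: as a $\dg\La$-cooperad it is coaugmented, cogenerated by the $\Ger_n$-cogenerators decorated by internal vertices and edges, with differential given by vertex splitting and edge contraction. Dually, I would take a fibrant model $\hat{\E}_m^c$ built from the analogous graph construction in dimension $m$, adapted so as to be fibrant in the model structure of \cite[II.11.4]{Fr}.

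Second, I would unravel the definition of $\La$-biderivations $\check{\E}_n^c \to \hat{\E}_m^c$. By cofreeness on the source side and the cooperad structure on the target, such a biderivation is determined by a map from the graph cogenerators of $\Graphs_n$ into $\hat{\E}_m^c$, compatible with $\La$-restrictions. The result is a "large" graph complex $\bigGra_{m,n}$ built from two-colored graphs with $n$-type internal vertices (coming from the source), $m$-type internal vertices (coming from the target), and external vertices which play the role of the operadic arity-inputs and serve as gluing hairs between the two pieces. The differential combines the vertex-splitting differentials on both colors and the coaction which glues the $n$-part of a graph onto the $m$-part along a common external vertex, while the convolution Lie bracket on biderivations translates into a hair-attachment bracket.

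Third, I would run a spectral sequence on $\bigGra_{m,n}$ filtered by the number of $n$-type internal vertices and edges. On the $E_1$-page, the $n$-part cohomology collapses to its Poisson cohomology by Kontsevich's formality theorem for $\lD_n$, and the surviving differential on the remaining $m$-decorated "hairy" part is precisely that of $\HGC_{m,n}$. Comparing the induced bracket with the hair-attachment bracket on $\HGC_{m,n}$ (reviewed in Section \ref{subsec:graphs:HGC}) identifies the two dg Lie structures, and the $L_{\infty}$-quasi-isomorphism is then produced by homotopy transfer along the chosen deformation retract; for $n \geq m \geq 2$ the higher transferred operations vanish for degree reasons coming from the connectivity of $\HGC_{m,n}$.

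The main obstacle, and the only genuinely nontrivial computational step, is the case $m=1$. In this regime the source side of the deformation problem is modelled on $\Ass$ rather than on $\Ger_1$, so the graphs of the target cooperad are not merely commutative products but carry the ordering data of associative chains; consequently the transfer of structure along the above deformation retract produces a nonzero tower of higher $L_{\infty}$-brackets on $\HGC_{1,n}$. I would compute these brackets explicitly from the graph formulas, using the wheel-type decorations coming from the $\Ass$-generators, and match them with Shoikhet's $L_\infty$-structure as axiomatized in \cite{WillDefQ}. Since both sides are natural deformations of the strict Lie bracket supported on the same underlying graph complex, the matching reduces to comparing the leading wheel contributions, which is the computational heart of the statement.
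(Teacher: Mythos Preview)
Your proposal takes a different route from the paper and contains a concrete gap.

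The paper does not take the graph cooperad as its cofibrant model of $\E_n^c$. It uses $\check\E_n^c = C(\stp_n)$, the Chevalley--Eilenberg cochain complex on the Drinfeld--Kohno Lie coalgebra cooperad (Theorem~\ref{thm:Drinfeld Kohno model}), together with $\hat\E_m^c = W(\e_m^c)$ (Section~\ref{sec:fibrant resolutions}). This choice is forced: the biderivation dg Lie algebra of Proposition~\ref{prop:Lambda-biderivation Lie algebra} is only set up for sources of the form ${\op B}=C(\alg g)$; the graph cooperad $\stG_n$ enters the argument later, and only as a source for \emph{coderivation} complexes. Instead of a single two-coloured graph complex plus a spectral sequence, the paper runs the explicit zigzag~\eqref{eq:m2zigzag}: the forgetful $L_\infty$-map from biderivations to coderivations (Proposition~\ref{prop:biderivations to coderivations forgetful morphism}), a chain of honest quasi-isomorphisms of coderivation complexes obtained by swapping source and target through $\e_n^c$, $\stG_n$, $B(\Omega(\e_m^c))$, $B(\e_m\{m\})$, then $\fHGC_{m,n}$ via Theorem~\ref{thm:HGC to extended coderivations quasi-iso}, and finally the projection $\fHGC_{m,n}\to\HGC_{m,n}$. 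Neither the first nor the last map is a quasi-isomorphism on its own; Lemma~\ref{lemm:biderivations to HGC quasi-iso} shows the composite is one by chasing the diagram of Figure~\ref{eq:bigdiagram}, built from Harrison complexes $B_{Harr}(-)$ and the bicomodule deformation complexes $K(-,-)$. No homotopy transfer appears.

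The gap in your outline is the claim that for $n\geq m\geq 2$ the higher transferred operations on $\HGC_{m,n}$ vanish ``for degree reasons coming from the connectivity of $\HGC_{m,n}$''. That connectivity only holds when $n-m\geq 2$ (Section~\ref{subsec:HGC nerve:homotopy}); for $m=n$ and $m=n-1$ the complex $\HGC_{m,n}$ has components in arbitrarily negative degree (Remark~\ref{rem:HGC degrees}), so no degree argument is available. The strict dg Lie structure on $\HGC_{m,n}$ is not recovered by vanishing of transferred brackets but is the explicit hair-attachment bracket of Construction~\ref{const:HGC}, and the paper connects it to $\Def(\E_n^c,\E_m^c)$ through strict dg Lie maps together with the one genuine $L_\infty$-map of Proposition~\ref{prop:biderivations to coderivations forgetful morphism}. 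For $m=1$ the paper likewise does not compute the Shoikhet brackets from scratch: it imports the $L_\infty$-quasi-isomorphism $\fHGC'_{1,n}\stackrel{\simeq}{\to}\xCoDer_{\dg\La}(\stG_n,B(\Ass\{1\}))$ from Theorem~\ref{thm:HGC Shoikhet structure} (proved in~\cite{WillDefQ}) and inserts it into the same zigzag~\eqref{eq:m1zigzag}.
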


The quasi-isomorphisms of this theorem will be realized by relatively explicit zigzags.

Theorem \ref{thm:main topological statement} follows from Theorem \ref{thm:nerve to mapping spaces} and Theorem~\ref{thm:main algebraic statement}.
We are similarly going to derive Corollaries \ref{cor:mapping spaces homotopy}-\ref{cor:codimension one null component},
from algebraic counterparts
of these results.

Thus, we mostly work in the setting of dg Hopf $\La$-cooperads in the rest of this paper.
We focus on the proof of our algebraic statement, Theorem \ref{thm:main algebraic statement},
and we only tackle the topological applications of our constructions
in a second step.

\begin{rem}\label{rem:Shoikhet structure explanations}
The algebraic counterpart of the trivial morphism $\lD_m\stackrel{*}{\lo}\lD_n^{\Q}$ is a canonical morphism
of dg Hopf $\La$-cooperads $\E_n^c\stackrel{*}{\lo}\E_m^c$
that factors through the unit object $\K = \Q$
in each arity $r$.
In the definition of the complex $\BiDer_{\dg\La}(\check{\E}_n^c,\hat{\E}_m^c)$, we actually consider biderivations
of this morphism $\check{\E}_n^c\stackrel{*}{\lo}\hat{\E}_m^c$,
for the fibrant and cofibrant replacements of our objects $\E_n^c$ and $\E_m^c$.
The formality of the little discs operads~\cite{LVformal,FW} implies that the Hopf $\La$-cooperads $\E_n^c$ and $\E_m^c$
are quasi-isomorphic to their cohomology $\e_n^c$ and $\e_m^c$,
as dg Hopf $\La$-cooperads.
The complex $\Def(\E_n^c,\E_m^c)$ can therefore be understood as the biderivation complex (in the derived sense)
of the morphism of Hopf $\La$-cooperads $\e_n^c\stackrel{*}{\lo}\e_m^c$
associated to these cohomology cooperads.

This identity explains why the case $m=1$ is special in the result of Theorem \ref{thm:main algebraic statement}.
Indeed, we have $\e_m^c=H(\E_m^c)$ is the associative cooperad in the special case  $m=1$, and is the
dual to the operad governing graded Poisson algebras in the case $m\geq 2$.
\end{rem}

\subsection*{Connection with the study of embedding spaces}
We mentioned at the beginning of this introduction that one of the main motivations of our work is given by the occurrence
of the mapping spaces associated to the little discs operads
in the study of embedding spaces.
We describe these connections with more details in order to complete the account of this introduction.

We consider smooth embeddings $f: \D^m\hookrightarrow\D^n$ of the $m$-discs $\D^m$ into the $n$-discs $\D^n$ from now on (rather than embeddings
of Euclidean space $f: \R^m\hookrightarrow\R^n$).
We then form the space $\Emb_{\partial}(\D^m,\D^n)$ of the smooth embeddings $f: \D^m\hookrightarrow\D^n$
that coincide with the ``equatorial'' inclusion $\D^m = \D^m\times\{0\}\subset\D^n$
near $\partial\D^m\subset\D^m$.
Let $\Embbar_{\partial}(\D^m,\D^n)$ denote the homotopy fiber of the Smale-Hirsch map
\begin{equation}\label{eq:SmHirsch}
\Emb_{\partial}(\D^m,\D^n)\to\Omega^m\Inj(\R^m,\R^n),
\end{equation}
where $\Inj(\R^m,\R^n)$ is the space of linear injections $\R^m\hookrightarrow\R^n$ based at the canonical inclusion $\R^m = \R^m\times\{0\}\subset\R^n$.
Note that $ \Inj(\R^m,\R^n)$ is homotopy equivalent to the Stiefel manifold $V_m(\R^n)$.
In the case $n>m$, the space $\Omega^m\Inj(\R^m,\R^n)$ is weakly equivalent to the space $\Imm_{\partial}(\D^m,\D^n)$
formed by the smooth immersions $f: \D^m\looparrowright\D^n$
which coincide with the equatorial inclusion $\D^m = \D^m\times\{0\}\subset\D^n$
near $\partial\D^m\subset\D^m$
like our embeddings.
For this reason, this space $\Embbar_{\partial}(\D^m,\D^n)$ is often called the space of smooth embeddings (with compact support) modulo immersions
in the literature.

Let $\mO_{\partial}(\D^m)$ denote the poset of open subsets of the $m$-discs $U\subset\D^m$ satisfying $\partial\D^m\subset U$.
The definition of the space $\Embbar_{\partial}(\D^m,\D^n)$ admits an obvious generalization $\Embbar_{\partial}(U,\D^n)$
for the open sets $U\in\mO_{\partial}(\D^m)$,
so that the mapping $\Embbar_{\partial}(-,\D^n): U\mapsto\Embbar_{\partial}(U,\D^n)$
defines a presheaf with values in the topology of topological spaces:
\begin{equation}\label{eq:emb_presh}
\Embbar_{\partial}(-,\D^n): \mO_{\partial}(\D^m)\to\TopCat.
\end{equation}
The Goodwillie-Weiss manifold calculus~\cite{GW} produces a series of polynomial (or Taylor) approximations $T_k\Embbar_{\partial}(-,\D^n)$, $k\geq 0$,
of this presheaf $\Embbar_{\partial}(-,\D^n)$.
In the case $n-m>2$, the maps $T_k\Embbar_{\partial}(-,\D^n)\to T_{k-1}\Embbar_{\partial}(-,\D^n)$
that link the stages of the Goodwillie-Weiss Taylor tower become higher and higher connected when $k$ increases,
so that the limit $T_{\infty}\Embbar_{\partial}(-,\D^n) = \holim_k T_k\Embbar_{\partial}(-,\D^n)$
defines a presheaf which is homotopy equivalent to our initial object
pointwise~\eqref{eq:emb_presh}.
In particular, we have:
\begin{equation}\label{eq:converge}
T_{\infty}\Embbar_{\partial}(\D^m,\D^n)\simeq\Embbar_{\partial}(\D^m,\D^n)
\end{equation}
as long as we assume $n-m>2$.

In fact, we have a comparison between this Goodwillie-Weiss Taylor tower $T_k\Embbar_{\partial}(\D^m,\D^n)$
and a tower of mapping spaces $\Map_{\leq k}^h(\lD_m,\lD_n)$, $k\geq 0$,
which we associated to our operadic mapping space $\Map^h(\lD_m,\lD_n)$.
In short, we set $\Map_{\leq k}^h(\lD_m,\lD_n) = \Map^h(\lD_m|_{\leq k},\lD_n|_{\leq k})$, where ${\op P}|_{\leq k} = \lD_m|_{\leq k},\lD_n|_{\leq k}$
denote truncated operads which we merely obtain by forgetting the components ${\op P}(r)$ of arity $r>K$
in our object ${\op P} = \lD_m,\lD_n$.
Then we have the following comparison statement:

\begin{thm}[see \cite{DwyerHess0,Turchin5,DwyerHess,WBdB2,Weiss2,DucT}]\label{thm:tower delooping}
For any $n\geq m\geq 1$, we have a homotopy equivalence of towers:
\begin{equation}\label{eq:towers}
\begin{tikzcd}
*\ar{d}{\simeq} &
\Omega^{m+1}\Map_{\leq 1}^h(\lD_m,\lD_n)\ar{d}{\simeq}\ar{l} &
\cdots\ar{l} &
\Omega^{m+1}\Map_{\leq k}^h(\lD_m,\lD_n)\ar{d}{\simeq}\ar{l} &
\cdots\ar{l}\\
T_0\Embbar_{\partial}(\D^m,\D^n) &
T_1\Embbar_{\partial}(\D^m,\D^n)\ar{l} &
\cdots\ar{l} &
T_k\Embbar_{\partial}(\D^m,\D^n)\ar{l} &
\cdots\ar{l}
\end{tikzcd},
\end{equation}
where $\Map_{\leq k}^h(-,-)$ denotes the (derived) mapping space associated to the $k$-truncated operads ${\op P}|_{\leq k} = \lD_m|_{\leq k},\lD_n|_{\leq k}$,
and we have the weak homotopy equivalence:
\begin{equation}\label{eq:delooping}
T_{\infty}\Embbar_{\partial}(\D^m,\D^n)\simeq\Omega^{m+1}\Map^h(\lD_m,\lD_n)
\end{equation}
when we pass to the limit.
\end{thm}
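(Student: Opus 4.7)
The plan is to import the two main inputs that are already available in the literature and combine them, with the detailed verifications left to the cited references. The first input is the Boavida de Brito--Weiss model for the Goodwillie--Weiss stages: each polynomial approximation $T_k\Embbar_\partial(\D^m,\D^n)$ can be written as a homotopy limit of a presheaf over a Weiss-type category whose objects are configurations of at most $k$ disjoint open discs in $\D^m$ and whose morphisms are isotopy classes of embeddings. This category is closely related to the restricted arity portion of the little discs operad $\lD_m|_{\leq k}$, and the presheaf values are given by configuration-like spaces associated with $\lD_n|_{\leq k}$.

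The second input is the operadic delooping theorem of Dwyer--Hess, extended/reproved by Turchin, Boavida--Weiss, Weiss and Ducoulombier--Turchin. It identifies, for any (truncated) pair of nice operads, the derived mapping space of right modules (equivalently, the derived $\Map^h$ of operads shifted in a standard way) with an $(m+1)$-fold loop space of the operad mapping space. Concretely, one uses a cofibrant resolution of $\lD_m|_{\leq k}$ as a right module over itself and then applies the functor $\Map^h_{\lD_m|_{\leq k}\text{-}\mathrm{Mod}}(-, \lD_n|_{\leq k})$; the result is $\Omega^{m+1}\Map^h(\lD_m|_{\leq k},\lD_n|_{\leq k})$.

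First I would align the indexing category of the Weiss polynomial approximation with the bar/cobar resolution governing right-module mapping spaces, so that both sides appear as homotopy limits over the same simplicial diagram. Second I would verify compatibility with the truncation index $k$, i.e.\ check that the tower maps $T_k \to T_{k-1}$ correspond under the equivalence to the maps induced by restriction along $\lD_m|_{\leq k-1}\hookrightarrow\lD_m|_{\leq k}$; this is formal once the diagrams are identified, but it has to be done level by level to obtain a map of towers rather than just levelwise equivalences. Third I would pass to the limit, invoking the Goodwillie--Weiss convergence theorem in the range $n-m>2$ to identify $T_\infty\Embbar_\partial(\D^m,\D^n)$ with $\Embbar_\partial(\D^m,\D^n)$ when needed, and using that the delooping is compatible with inverse limits on the operadic side (both $\Omega^{m+1}$ and $\Map^h$ commute with homotopy limits in the second variable) to obtain the equivalence $T_\infty\Embbar_\partial(\D^m,\D^n)\simeq \Omega^{m+1}\Map^h(\lD_m,\lD_n)$.

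The main obstacle is the careful matching of categorical frameworks: the Goodwillie--Weiss tower is most naturally set up in terms of presheaves on a manifold category, while the operadic mapping space lives in a model category of (truncated) symmetric operads or right modules. Bridging these requires producing functorial equivalences between (i) the Weiss presheaf model, (ii) a cosimplicial model built from infinitesimal bimodule/right-module resolutions, and (iii) the bar construction computing $\Omega^{m+1}\Map^h$. Each bridge exists in the cited literature, but tracking that the three stages assemble into a \emph{single} tower equivalence, rather than merely stage-by-stage equivalences, is the genuinely delicate point. The low-codimension endpoints ($m=n$ and $n-m\leq 2$) are also subtle because Goodwillie--Weiss convergence fails, but the tower equivalence \eqref{eq:towers} itself still holds and can be proved by the same operadic delooping argument applied uniformly in~$k$.
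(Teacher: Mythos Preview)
The paper does not prove this theorem; it is quoted from the cited references, and the surrounding discussion merely records which papers handle which case (limit vs.\ tower, $m=1$ vs.\ general $m$) and notes that the proofs in \cite{DwyerHess0,Turchin5,DwyerHess,DucT} pass through a description of the tower stages as derived mapping spaces of (truncated) \emph{infinitesimal bimodules}, relying on \cite{Sinha,Turchin2,Turchin4}. Your outline is in the right spirit as a literature summary, and your identification of the main difficulty---assembling levelwise equivalences into a genuine equivalence of towers---is accurate.

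There is one substantive imprecision worth flagging. You describe the delooping input as identifying a \emph{right module} mapping space with $\Omega^{m+1}$ of the operad mapping space. That is not the structure in the cited proofs. The chain is two-step: first one identifies $T_k\Embbar_\partial(\D^m,\D^n)$ with a derived mapping space of (truncated) \emph{infinitesimal bimodules} over $\lD_m$ (this is the content of \cite{Sinha,Turchin2,Turchin4}); then the Dwyer--Hess/Turchin delooping yields $\mathrm{Ibimod}^h \simeq \Omega^m\,\mathrm{Bimod}^h$ and $\mathrm{Bimod}^h \simeq \Omega\,\Map^h_{\mathrm{Op}}$, giving the $(m+1)$-fold loop space in total. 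Right modules over $\lD_m$ do appear in related contexts, but they are not the intermediary that produces the $(m+1)$-fold delooping here. Your later parenthetical ``infinitesimal bimodule/right-module resolutions'' conflates the two notions; for a correct sketch you should keep them separate and make the two-step delooping explicit.
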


This comparison statement was first established in the case $m=1$, at the level of the limit \eqref{eq:delooping} in~\cite{DwyerHess0},
and at the level of the towers in~\cite{Turchin5}.
The generalization to the case of arbitrary $m\geq 1$ is given in \cite{DwyerHess,WBdB2,Weiss2,DucT} (in \cite{WBdB2,Weiss2,DucT} for the case of the towers).
The proofs of the references~\cite{DwyerHess0,Turchin5,DwyerHess,DucT} go through a description of the (limit of the) tower in terms of mapping spaces
of (truncated) infinitesimal bimodules obtained in~\cite{Sinha,Turchin2,Turchin4}.
This approach can be adapted to other similar situations (for instance, to check rational analogues of these comparison statements,
such as the ones which we formulate soon).
The approach of the article~\cite{WBdB2} is more focused on the case depicted in the above theorem,
but implies that the equivalence of this statement is compatible with a natural action
of the $(m+1)$-discs operad on our objects.

The Goodwillie-Weiss calculus can also be applied to the presheaf
\[
\Embbar_{\partial}(-,\D^n)^{\Q}: \mO_{\partial}(\D^m)\to\TopCat
\]
which we get by taking the rationalization of the spaces $\Embbar_{\partial}(U,\D^n)$, for $U\in\mO_{\partial}(\D^m)$.
The results of~\cite{DwyerHess0,Turchin5,DwyerHess,DucT} imply that we have a weak homotopy equivalence
\begin{equation}\label{eq:rational delooping}
T_{\infty}\Embbar_{\partial}(\D^m,\D^n)^{\Q}\simeq\Omega^{m+1}\Map^h(\lD_m,\lD_n^{\Q}),
\end{equation}
for any $m\geq 1$ (with the case $m=1$ addressed in~\cite{DwyerHess0,Turchin5}, the case $m\geq 1$ covered by \cite{DwyerHess,DucT}),
and  we have a counterpart of this homotopy equivalence
at the tower level~\cite{DucT}:
\begin{equation}\label{eq:rational towers}
\begin{tikzcd}
*\ar{d}{\simeq} &
\Omega^{m+1}\Map_{\leq 1}^h(\lD_m,\lD_n^{\Q})\ar{d}{\simeq}\ar{l} &
\cdots\ar{l} &
\Omega^{m+1}\Map_{\leq k}^h(\lD_m,\lD_n^{\Q})\ar{d}{\simeq}\ar{l} &
\cdots\ar{l}\\
T_0\Embbar_{\partial}(\D^m,\D^n)^{\Q} &
T_1\Embbar_{\partial}(\D^m,\D^n)^{\Q}\ar{l} &
\cdots\ar{l} &
T_k\Embbar_{\partial}(\D^m,\D^n)^{\Q}\ar{l} &
\cdots\ar{l}
\end{tikzcd}.
\end{equation}

We establish the following result to compare the mapping spaces $\Map_{\leq k}(\lD_m,\lD_n)$ related to the Goodwillie-Weiss Taylor tower for $\Embbar_{\partial}(\D^m,\D^n)$
to the mapping spaces $\Map_{\leq k}(\lD_m,\lD_n^{\Q})$ which we can compute by our methods:

\begin{thm}\label{thm:rational mapping space towers}
The spaces $\Map^h(\lD_m,\lD_n)$ and $\Map_{\leq k}^h(\lD_m,\lD_n)$ are $(n-m-1)$-connected (and, in particular, simply connected)
like the spaces $\Map^h(\lD_m,\lD_n^{\Q})$ and $\Map_{\leq k}^h(\lD_m,\lD_n^{\Q})$, for any $n,m\geq 1$,
as soon as $n-m\geq 2$.
Moreover, the map
\[
\Map^h(\lD_m,\lD_n)\to\Map^h(\lD_m,\lD_n^{\Q}),
\]
induced by the rationalization $\lD_n\to\lD_n^{\Q}$, is a rational homotopy equivalence when $n-m\geq 3$,
whereas the maps
\[
\Map_{\leq k}^h(\lD_m,\lD_n)\to\Map_{\leq k}^h(\lD_m,\lD_n^{\Q})
\]
are rational homotopy equivalences as soon as $n-m\geq 2$.
\end{thm}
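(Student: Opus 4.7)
The plan is to translate everything through the equivalences of Theorem~\ref{thm:tower delooping} and \eqref{eq:rational towers}. Up to an $(m+1)$-fold delooping, the spaces $\Map_{\leq k}^h(\lD_m,\lD_n)$ and $\Map_{\leq k}^h(\lD_m,\lD_n^{\Q})$ are identified with the stages $T_k\Embbar_{\partial}(\D^m,\D^n)$ and $T_k\Embbar_{\partial}(\D^m,\D^n)^{\Q}$ of the Goodwillie--Weiss Taylor tower, and via \eqref{eq:rational delooping} an analogous identification holds at the level of the homotopy limits when $n-m\geq 3$. Each assertion of the theorem is then derived from connectivity estimates on the Goodwillie--Weiss layers, combined with a rationalization argument applied stage by stage.

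First, I would pin down the connectivity of $\Map_{\leq k}^h(\lD_m,\lD_n)$. The $k$-th layer $L_k$, namely the homotopy fiber of $T_k\Embbar_{\partial}(\D^m,\D^n)\to T_{k-1}\Embbar_{\partial}(\D^m,\D^n)$, is described in the standard treatments of embedding calculus as a total homotopy fiber of a cubical diagram built from Stiefel-type mapping spaces over configuration spaces. Its connectivity grows at least like $k(n-m-2)$ plus a constant when $n-m\geq 2$. Starting from the contractible $T_0$ and climbing the finite tower, one obtains that every $T_k\Embbar_{\partial}(\D^m,\D^n)$ is at least $(n-m-2)$-connected, which after delooping yields the $(n-m-1)$-connectedness of $\Map_{\leq k}^h(\lD_m,\lD_n)$. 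The same argument applies verbatim to the rational tower and, when $n-m\geq 3$, passes to the homotopy limit to cover $\Map^h(\lD_m,\lD_n)$ as well.

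Next, I would compare each truncated stage with its rationalization. The layers $L_k$ are iterated loop spaces of sufficiently connected section spaces, hence simply connected nilpotent spaces whose rational homotopy is of finite type in each degree. By induction on $k$, using that rationalization preserves principal fibrations of simply connected nilpotent finite-type spaces, the canonical map
\[
T_k\Embbar_{\partial}(\D^m,\D^n)\to T_k\Embbar_{\partial}(\D^m,\D^n)^{\Q}
\]
is a rational homotopy equivalence at every finite stage when $n-m\geq 2$. Delooping via the two towers in \eqref{eq:towers} and \eqref{eq:rational towers} then gives the rational equivalence of $\Map_{\leq k}^h(\lD_m,\lD_n)$ with $\Map_{\leq k}^h(\lD_m,\lD_n^{\Q})$. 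When $n-m\geq 3$, the connectivity of $L_k$ grows to infinity with $k$, so both homotopy limits converge and the rational equivalences at finite stages pass to the limit through a Milnor-sequence argument, giving the statement for $\Map^h(\lD_m,\lD_n)\to\Map^h(\lD_m,\lD_n^{\Q})$.

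The main obstacle will be verifying the required homotopical control over the Goodwillie--Weiss layers, in particular their nilpotence and $\Q$-finite type in each degree, so that rationalization commutes with the principal fibrations assembling each truncated stage and, in the convergent range $n-m\geq 3$, with the homotopy limit reconstructing the full mapping space. Once these structural properties are secured, the rest of the proof is a standard chain of five-lemma arguments on rational homotopy groups.
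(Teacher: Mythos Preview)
Your approach has a genuine directionality gap. The equivalences of Theorem~\ref{thm:tower delooping} and \eqref{eq:rational towers} express $T_k\Embbar_{\partial}(\D^m,\D^n)$ as the $(m+1)$-fold loop space of $\Map_{\leq k}^h(\lD_m,\lD_n)$, not the other way around. Passing to an iterated loop space discards $\pi_0,\dots,\pi_m$ of the original space; consequently, no amount of connectivity information about $T_k$ (or about its layers $L_k$) can ever establish that $\Map_{\leq k}^h(\lD_m,\lD_n)$ is connected, let alone $(n-m-1)$-connected. Your sentence ``which after delooping yields the $(n-m-1)$-connectedness of $\Map_{\leq k}^h(\lD_m,\lD_n)$'' is exactly the step that fails: delooping is not a functor on spaces, and the equivalence $T_k\simeq\Omega^{m+1}X$ only constrains $\pi_i(X)$ for $i\geq m+1$, at a single basepoint. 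The same objection applies to the rational equivalence claim: a rational equivalence $\Omega^{m+1}X\to\Omega^{m+1}Y$ does not imply one between $X$ and $Y$.

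The paper proceeds in the opposite direction, working intrinsically with the operadic tower. It chooses $\check{\lD}_m=\FM_m$ (Fulton--MacPherson) and a fibrant target $\hat{\KSi}_n$ built from the Kontsevich--Sinha operad, which is multiplicative via $\Ass\to\hat{\KSi}_n$. The fiber of $\Map_{\leq k}\to\Map_{\leq k-1}$ is then identified with a space of $\Sigma_k$-equivariant sections over $\FM_m(k)$, relative to $\partial\FM_m(k)$, of a fibration with fiber $\bN{\op O}(k)$ (the fiber of the matching map). A computation of $\bN\pi_*{\op O}(k)$ via the Drinfeld--Kohno Lie algebras (rationally) and Hilton's theorem (integrally) shows this fiber is $((n-2)(k-1))$-connected, which against the $(m(k-1)-1)$-dimensional base gives the $((n-m-2)(k-1)+1)$-connectivity of each tower map. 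Induction from $\Map_{\leq 1}^h\simeq *$ then yields the connectivity and simple connectedness of every stage directly, and the rational comparison is done fiberwise on these same section spaces. Only after Theorem~\ref{thm:rational mapping space towers} is established does the paper feed the result back through Theorem~\ref{thm:tower delooping} to draw conclusions about $T_k\Embbar_{\partial}$.
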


We then get as a corollary of the results of Theorem~\ref{thm:nerve to mapping spaces} and Theorem~\ref{thm:main algebraic statement}:

\begin{thm}\label{thm:nerve to rational mapping spaces}
The rational homotopy type of the space $\Map^h(\lD_m,\lD_n)$ is described by the $L_{\infty}$~algebra of homotopy biderivations
of the morphism of dg Hopf $\La$-cooperads $\e_m^c\stackrel{*}{\lo}\e_n^c$
when $n-m\geq 3$.
Furthermore, the hairy graph complex $\HGC_{m,n}$, which we equip with its standard dg Lie algebra structure in the case $m\geq 2$,
with the Shoikhet $L_{\infty}$~structure in the case $m=1$,
defines an explicit model of this $L_{\infty}$~algebra.
\end{thm}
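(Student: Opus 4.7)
The argument is essentially a concatenation of three results already established in the paper, so the plan is to thread them together and then verify that the Maurer-Cartan nerve behaves well at the last step. First, Theorem \ref{thm:rational mapping space towers} asserts that the rationalization map $\Map^h(\lD_m,\lD_n)\to\Map^h(\lD_m,\lD_n^{\Q})$ is a $\Q$-equivalence when $n-m\geq 3$, a range in which both spaces are simply connected. Consequently, any model of $\Map^h(\lD_m,\lD_n^{\Q})$ automatically models the rational homotopy type of $\Map^h(\lD_m,\lD_n)$.

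Second, I would invoke Theorem \ref{thm:nerve to mapping spaces} together with Theorem \ref{thm:main algebraic statement}. The former provides a weak equivalence $\Map^h(\lD_m,\lD_n^{\Q})\simeq\MC_\bullet(\Def(\E_n^c,\E_m^c))$, identifying the rationalized mapping space with the Maurer-Cartan nerve of the biderivation dg Lie algebra of the trivial morphism between suitable cofibrant/fibrant replacements of the dg Hopf $\La$-cooperad models $\E_n^c$ and $\E_m^c$ of the little discs operads. By the formality of these operads (\cite{LVformal,FW}, see Remark \ref{rem:Shoikhet structure explanations}), applied at the Hopf $\La$-cooperad level, one has quasi-isomorphisms $\E_n^c\simeq\e_n^c$ and $\E_m^c\simeq\e_m^c$, so that $\Def(\E_n^c,\E_m^c)$ is $L_\infty$ quasi-isomorphic to the homotopy biderivation algebra of the morphism $\e_n^c\stackrel{*}{\lo}\e_m^c$. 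This already yields the first assertion of the theorem. The second assertion then follows from the explicit zigzag of $L_\infty$ quasi-isomorphisms between $\Def(\E_n^c,\E_m^c)$ and $\HGC_{m,n}$ furnished by Theorem \ref{thm:main algebraic statement} (with the dg Lie structure for $m\geq 2$ and the Shoikhet $L_\infty$ structure for $m=1$), combined with the fact that $\MC_\bullet$ preserves $L_\infty$ quasi-isomorphisms in the nilpotent/complete setting.

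The main obstacle to be checked is precisely this last point: that $\MC_\bullet$ correctly transports rational homotopy type through the zigzag of Theorem \ref{thm:main algebraic statement}. The hypothesis $n-m\geq 3$ is essential here, since it gives the simple-connectedness of both sides and guarantees that the hairy graph complex, the deformation complex, and every intermediate $L_\infty$ algebra in the zigzag carry compatible loop-order (equivalently weight) filtrations which are complete and whose graded pieces are degreewise finite in the relevant range. This places us in the standard nilpotent Maurer-Cartan framework in which $L_\infty$ quasi-isomorphisms induce weak equivalences of nerves. Verifying that the zigzag of Theorem \ref{thm:main algebraic statement} respects these filtrations is thus the only genuine bookkeeping required beyond citing the three underlying theorems, and this compatibility is essentially built into the graph-complex constructions used throughout the paper.
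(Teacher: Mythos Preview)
Your proposal is correct and follows the same three-step concatenation as the paper's proof: Theorem~\ref{thm:rational mapping space towers} for the rational equivalence $\Map^h(\lD_m,\lD_n)\simeq_{\Q}\Map^h(\lD_m,\lD_n^{\Q})$, Theorem~\ref{thm:nerve to mapping spaces} for the identification with $\MC_\bullet(\Def(\E_n^c,\E_m^c))$, and Theorem~\ref{thm:main algebraic statement} (together with the nerve-level compatibility of Lemma~\ref{lemm:biderivation to HGC nerve equivalence}) for the passage to $\HGC_{m,n}$. One small correction: the hypothesis $n-m\geq 3$ is needed only for the first step; the weight-grading compatibility along the zigzag and the resulting nerve equivalence hold for all $n\geq m\geq 1$, $n\geq 2$, as established in Section~\ref{subsec:biderivations to HGC:main topological theorem}.
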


We can also establish the following analogue of the first claim of this theorem for the truncated mapping spaces $\Map_{\leq k}^h(\lD_m,\lD_n)$
which appear in our tower:

\begin{thm}\label{thm:nerve to truncated mapping spaces1}
The rational homotopy type of the spaces $\Map_{\leq k}^h(\lD_m,\lD_n)$ is described by the $L_{\infty}$~algebras of homotopy biderivations
of the morphism of truncated dg Hopf $\La$-cooperads $\e_m^c|_{\leq k}\stackrel{*}{\lo}\e_n^c|_{\leq k}$
as soon as $n-m\geq 2$.
\end{thm}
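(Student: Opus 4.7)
The plan is to run the proof of Theorem \ref{thm:nerve to rational mapping spaces} in parallel in the $k$-truncated setting. Three ingredients are required: (i) a truncated version of Theorem \ref{thm:nerve to mapping spaces}, which identifies $\Map_{\leq k}^h(\lD_m,\lD_n^{\Q})$ with the nerve of a biderivation complex of truncated dg Hopf $\La$-cooperads; (ii) the formality of the little discs operads, applied after truncation; and (iii) Theorem \ref{thm:rational mapping space towers}, which states that $\Map_{\leq k}^h(\lD_m,\lD_n)\to\Map_{\leq k}^h(\lD_m,\lD_n^{\Q})$ is a rational equivalence of simply connected spaces when $n-m\geq 2$.

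For (i), I would verify that the whole operadic rational homotopy machinery of \cite[Part II]{Fr}---the model structures on simplicial $\La$-operads and on dg Hopf $\La$-cooperads, the Quillen adjunction $\Omega_\sharp$, and the calculation of derived mapping spaces by $\MC_\bullet$ of a biderivation complex---restricts to the $k$-truncated subcategories. The truncation functor ${\op P}\mapsto{\op P}|_{\leq k}$ admits both adjoints on each side (the left Kan extension by the free operad/cooperad on arities $\leq k$, and the restriction in the other direction), and the truncated categories inherit cofibrantly generated model structures by transferring the generating sets. The functor $\Omega_\sharp$ commutes with these truncations, and the proof of Theorem \ref{thm:nerve to mapping spaces} goes through verbatim after truncation. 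This yields
\[
\Map_{\leq k}^h(\lD_m,\lD_n^{\Q})\simeq\MC_\bullet\bigl(\BiDer_{\dg\La}(\check{\E}_n^c|_{\leq k},\hat{\E}_m^c|_{\leq k})\bigr),
\]
for a suitable cofibrant replacement $\check{\E}_n^c|_{\leq k}$ and fibrant replacement $\hat{\E}_m^c|_{\leq k}$ of the truncated dg Hopf $\La$-cooperads.

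For (ii), the formality quasi-isomorphism $\E_n^c\simeq\e_n^c$ of dg Hopf $\La$-cooperads (from Kontsevich and Lambrechts--Voli\'c, refined in \cite{FW}) is arity-wise, hence truncation yields a quasi-isomorphism $\E_n^c|_{\leq k}\simeq\e_n^c|_{\leq k}$ in the category of $k$-truncated dg Hopf $\La$-cooperads. The derived biderivation complex is invariant under quasi-isomorphism of both source and target, so $\BiDer_{\dg\La}(\check{\E}_n^c|_{\leq k},\hat{\E}_m^c|_{\leq k})$ agrees, as an $L_{\infty}$~algebra (up to $L_\infty$~quasi-isomorphism), with the complex of homotopy biderivations of $\e_m^c|_{\leq k}\stackrel{*}{\lo}\e_n^c|_{\leq k}$ appearing in the statement. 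Combining with (i) gives the description of the rational homotopy type of $\Map_{\leq k}^h(\lD_m,\lD_n^{\Q})$. Finally, invoking (iii) transfers the conclusion from $\Map_{\leq k}^h(\lD_m,\lD_n^{\Q})$ to $\Map_{\leq k}^h(\lD_m,\lD_n)$, since both are simply connected under $n-m\geq 2$.

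The main obstacle is step (i): checking that all the intermediate results of \cite[Part II]{Fr} used in the proof of Theorem \ref{thm:nerve to mapping spaces}---cofibrant/fibrant replacement, the identification of $\Omega_{\sharp}$ of a cofibrant simplicial $\La$-operad, and the interpretation of derived mapping spaces in dg Hopf $\La$-cooperads via $\MC_\bullet$ of biderivations---carry over to the $k$-truncated setting. The potential subtlety is that truncation, while harmless on the cooperadic side (arities $>k$ are simply discarded), can interact non-trivially with cofibrant replacement on the operadic side, since a cofibrant replacement of $\lD_m|_{\leq k}$ in truncated $\La$-operads need not be the truncation of a cofibrant replacement of $\lD_m$. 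One must verify that derived mapping spaces agree either way. This is the point where the argument requires the most care, but it is a routine extension of the framework of \cite{Fr} rather than a conceptual obstacle, and no new graph-theoretic input beyond Theorem \ref{thm:main algebraic statement} is needed for the statement as written.
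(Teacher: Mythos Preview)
Your proposal is correct and follows essentially the same route as the paper. The paper derives Theorem~\ref{thm:nerve to truncated mapping spaces1} as a corollary of a truncated analogue of Theorem~\ref{thm:nerve to mapping spaces} (stated as Theorem~\ref{thm:nerve to truncated mapping spaces}), combined with truncated formality and Theorem~\ref{thm:rational mapping space towers}---exactly your steps (i), (ii), (iii). The only minor difference is that the paper is more concrete about step (i): rather than arguing abstractly that $\Omega_\sharp$ and the model-categorical framework truncate, it observes directly that the specific resolutions $C(\stp_n)|_{\leq k}$ and $W(\e_m^c)|_{\leq k}$ remain cofibrant/fibrant after truncation (invoking Proposition~\ref{prop:truncation cofibrant operads} and checking that the $W$-construction and its simplicial framing truncate to fibrant objects), which also dissolves the subtlety you flag at the end.
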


\begin{rem}\label{rem:nerve towers}
We can not obtain an explicit $L_{\infty}$~algebra modeling the homotopy invariant biderivations of $\e_m|_{\leq k}\stackrel{*}{\lo}\e_n|_{\leq k}$
by any kind of truncation from the hairy graph complex $\HGC_{m,n}$.
We just take the biderivation dg Lie algebras $\BiDer_{\dg\La}(\check{\E}_n^c|_{\leq k},\hat{\E}_m^c|_{\leq k})$
associated to the $k$-truncations $\check{\E}_n^c|_{\leq k}$ and $\hat{\E}_m^c|_{\leq k}$
of the cofibrant and fibrant replacements
of our dg Hopf cooperads $\E_n^c = \e_n^c$ and $\E_m^c$
to get such a model.
We can still produce a small complex quasi-isomorphic to this dg Lie algebra $\BiDer_{\dg\La}(\check{\E}_n^c|_{\leq k},\hat{\E}_m^c|_{\leq k})$ (see Theorem~\ref{thm:truncated deformation complex}).
In the case $m=1$, we just retrieve the $k$-th (normalized) partial totalization $(\prod_{i=0}^k\bS{\alg p}_n(i),\partial)$
of the cosimplicial chain complex ${\mathfrak p}_n(-)$
defined in~\cite{ScannellSinha}
(up to a shift in degrees due to delooping).
We accordingly have a small complex that can be used to compute the rational homotopy groups of the spaces $T_k\Embbar_{\partial}(\D^m,\D^n)$
and $\Map_{\leq k}^h(\lD_m,\lD_n)$ when $n-m\geq 2$ (see Corollary~\ref{cor:embedding spaces tower}),
but we do not have any obvious way to define the $L_{\infty}$~structure
associated to this small complex.
\end{rem}

\begin{rem}\label{rem:previous results}
In fact, it was proved in the article~\cite{Turchin3} that the hairy graph complex $\HGC_{m,n}$
computes the rational homotopy of the space $\Embbar_{\partial}(\D^m,\D^n)$
when $n\geq 2m+2$,
and that this statement is actually equivalent to the rational collapse at the second term of the Goodwillie-Weiss homotopy spectral sequence.
The results of Theorem~\ref{thm:tower delooping} and Theorem~\ref{thm:nerve to rational mapping spaces} imply that these results hold in the range $n\geq m+3$,
and hence, in the whole range of convergence of the Goodwillie-Weiss Taylor tower for the spaces $\Embbar_{\partial}(\D^m,\D^n)$.
\end{rem}

After the delloping, the hairy graph cocycles with $k$ hairs and of loop order $g$ contribute to the
rational homotopy of $\Embbar_{\partial}(\D^m,\D^n)$, $n-m>2$, only in degrees $\geq k(n-m-2)
+(g-1)(n-3)$. Therefore  only loop order zero cocycles~\eqref{eq:HGC0loop} can contribute
to $\Q\otimes \pi_0 \Embbar_{\partial}(\D^m,\D^n)$, $n-m>2$.

\begin{cor}\label{cor:pi0_embbar}
For $n-m>2$, $\pi_0\Embbar_{\partial}(\D^m,\D^n)$ is a finitely generated abelelian group of rank $\leq 1$. It is infinite if either $m=2k+1$, $n=4k+3$, $k\geq 1$, or $m=4k-1$, $n=6k$, $k\geq 1$. \footnote{This result should not be difficult to obtain using Haefliger\rq{}s approach~\cite{Haefliger2}.
For a similar statement about
$\pi_0\Emb_{\partial}(\D^m,\D^n)=\pi_0\Emb(S^m,S^n)$, $n-m>2$, see~\cite[Corollary~6.7]{Haefliger2}.}
\end{cor}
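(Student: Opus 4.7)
The plan is to combine the convergence \eqref{eq:converge} of the Goodwillie-Weiss tower with the delooping statement of Theorem \ref{thm:tower delooping} to obtain, under the hypothesis $n-m>2$, the isomorphism
\[
\pi_0\Embbar_{\partial}(\D^m,\D^n) \;\cong\; \pi_{m+1}\Map^h(\lD_m,\lD_n).
\]
Since $n-m\geq 3$, Theorem \ref{thm:rational mapping space towers} gives that the rationalization map $\Map^h(\lD_m,\lD_n)\to\Map^h(\lD_m,\lD_n^{\Q})$ is a rational equivalence, and Corollary \ref{cor:mapping spaces homotopy} then identifies
\[
\pi_0\Embbar_{\partial}(\D^m,\D^n)\otimes\Q \;\cong\; H_m(\HGC_{m,n}).
\]
For the integral claim that $\pi_0\Embbar_{\partial}(\D^m,\D^n)$ is finitely generated I would exploit the fact that the connectivity of the tower map $T_k\Embbar_{\partial}(\D^m,\D^n)\to T_{k-1}\Embbar_{\partial}(\D^m,\D^n)$ grows linearly in $k$ under the hypothesis $n-m>2$, so $\pi_0\Embbar_{\partial}\cong\pi_0 T_K$ for some finite stage $K$; this group in turn identifies with $\pi_{m+1}\Map_{\leq K}^h(\lD_m,\lD_n)$, which is finitely generated because each truncated mapping space is built from the finite-type arity components of the little-discs operads.

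Having reduced the rational computation to $H_m(\HGC_{m,n})$, I would invoke the degree bound recalled in the paragraph preceding the corollary: a class with $k$ hairs and $g$ loops contributes to $\pi_*\Embbar_{\partial}\otimes\Q$ only in degrees at least $k(n-m-2)+(g-1)(n-3)$. Under our hypotheses $k\geq 1$, $n-m-2\geq 1$, and $n-3\geq 1$, so this bound is strictly positive whenever $g\geq 1$. Consequently only loop-order-zero classes can contribute to $\pi_0\Embbar_{\partial}\otimes\Q$, and by the explicit description \eqref{eq:HGC0loop} the loop-order-zero hairy graph homology is one-dimensional, spanned by the line class $L$ in degree $n-m-1$ when $m\equiv n\pmod 2$ and by the tripod class $Y$ in degree $2(n-m)-3$ when $m\not\equiv n\pmod 2$. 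In particular the rational rank of $\pi_0\Embbar_{\partial}(\D^m,\D^n)$ is at most one.

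It remains to determine when this rank equals one by solving the two degree equations under the relevant parity constraints. The equation $n-m-1=m$ together with $m\equiv n\pmod 2$ forces $n=2m+1$ with $m$ odd, which reparametrizes as $m=2k+1$, $n=4k+3$, with the condition $n-m>2$ becoming $k\geq 1$. The equation $2(n-m)-3=m$ together with $m\not\equiv n\pmod 2$ forces $n=(3m+3)/2$ with $m$ odd and $n$ even, which reparametrizes as $m=4k-1$, $n=6k$ with $k\geq 1$. These are precisely the two families appearing in the statement. The main technical obstacle in the plan is the integral finite-generation step, since the hairy graph complex methods of the paper yield only rational information; the other steps are direct applications of the main theorems combined with \eqref{eq:HGC0loop}.
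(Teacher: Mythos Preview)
Your proposal is correct and follows essentially the same route as the paper: reduce to $H_m(\HGC_{m,n})$ via delooping and the rational comparison, invoke the degree bound to kill loop order $\geq 1$, and then match the degrees of $L$ and $Y$ from \eqref{eq:HGC0loop} against $m$ under the appropriate parity constraints. The integral finite-generation step you flagged as an obstacle is handled in the paper exactly along the lines you sketch (see Remark~\ref{rem:finiteness of pi}): the tower stabilises at a finite stage and each $\pi_*\Map_{\leq k}^h(\lD_m,\lD_n)$ is finitely generated because the fibers of the tower are section spaces over finite complexes with fibers whose homotopy groups are finitely generated.
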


This corollary follows from the explicit description of the loop order~0 hairy graph-homology~\eqref{eq:HGC0loop}. The case $m=2k+1$, $n=4k+3$, corresponds to the line graph
$L = \begin{tikzpicture}[scale=.5, baseline=-.65ex]
\draw (0,0)--(1,0);
\end{tikzpicture}$. Geometrically it appears as the image of the $SO(n-m)$ Euler class
under the map $\Omega^{m+1} \Inj(\R^m,\R^n)\to \Embbar_{\partial}(\D^m,\D^n)$.\footnote{To
recall $\Inj(\R^m,\R^n)\simeq V_m(\R^n)=SO(n)/SO(n-m)$.} The second case $m=4k-1$, $n=6k$,
corresponds to the tripod $Y = \begin{tikzpicture}[scale=.3, baseline=-.65ex]
\node[int] (v) at (0,0){};
\draw (v) -- +(90:1) (v) -- ++(210:1) (v) -- ++(-30:1);
\end{tikzpicture}$, which geometrically is the Haefliger invariant~\cite{Haefliger1}.

\begin{rem}\label{rem:cerf lemma}
Our results  also provide interesting information about the embedding calculus for codimension~$\leq 1$.
The results of Corollary~\ref{cor:codimension zero case} and Corollary~\ref{cor:codimension one case} imply that the inclusion $i: \lD_{n-1}\to\lD_n$
induces a weak-equivalence
\begin{equation}\label{eq:algebraic cerf lemma}
\Map^h(\lD_n,\lD_n^{\Q})_{id}\stackrel{\simeq}{\to}\Map^h(\lD_{n-1},\lD_n^{\Q})_{i},
\end{equation}
where we consider the connected components of our mapping spaces associated to the obvious elongations of the identity map ${id}: \lD_n\to\lD_n$
and of this inclusion $i: \lD_{n-1}\to\lD_n$ to the rationalization $\lD_n^{\Q}$.
We call this result the \emph{Algebraic Cerf Lemma}.

The original result of Cerf asserts that the natural scanning map
\begin{equation}\label{eq:cerf}
\Diff_{\partial}(\D^n)\stackrel{\simeq}{\lo}\Omega\Emb_{\partial}(\D^{n-1},\D^n)
\end{equation}
is a weak homotopy equivalence (see \cite[Appendix, Section~5, Proposition~5]{Cerf}, \cite[Proposition~5.3]{Budney}).
By the above equivalence~\eqref{eq:algebraic cerf lemma} and the result of Theorem~\ref{thm:rational mapping space towers},
a similar statement holds for the limit of the Goodwillie-Weiss tower, at least rationally:
\begin{equation}\label{eq:goodwillie-weiss cerf lemma}
T_{\infty}\Embbar_{\partial}(\D^n,\D^n)^{\Q}\stackrel{\simeq}{\lo}\Omega T_{\infty}\Embbar_{\partial}(\D^{n-1},\D^n)^{\Q}.
\end{equation}
Thus, comparing~\eqref{eq:cerf} with~\eqref{eq:goodwillie-weiss cerf lemma},
we get that the embedding calculus still reflects the codimension one versus codimension zero rigidity of embeddings
though the Goodwillie-Weiss Taylor towers do not converge in codimensions $n-m\leq 1$.\footnote{Let us mention that a previous result
of the second and third authors \cite[Theorem~2]{TW}
implies that a similar connection holds at the chain complex level,
when we take the limit of the Goodwillie-Weiss Taylor tower associated to the functors $C_*(\Embbar_{\partial}(-,\D^n),\Q): U\mapsto C_*(\Embbar_{\partial}(U,\D^n),\Q)$
for $U\in\mO_{\partial}(\lD^{n-1})$ (respectively, $U\in\mO_{\partial}(\lD^{n})$).
The homology of the complexes $T_{\infty}C_*(\Embbar_{\partial}(\D^n,\D^n),\Q)$
and $T_{\infty}C_*(\Embbar_{\partial}(\D^{n-1},\D^n),\Q)$
form graded symmetric algebras
on graded vector spaces
that are isomorphic up to a shift of degree (see~\cite[Section~9]{TW}).}
\end{rem}

\subsection*{Organization of the paper}
This paper is organized as follows. In a preliminary step, we give a short account of our general conventions
on chain complexes, operads and cooperads. Then we divide our study in four main parts:
\begin{itemize}
\item
In Part~\ref{part:biderivation complexes}, we explain the definition of $L_{\infty}$~algebra structures on the complexes of biderivations
associated to dg Hopf cooperads and we prove that the solutions of the Maurer--Cartan equation
in these $L_{\infty}$~algebras are equivalent
to morphisms of dg Hopf cooperads.
\item
In Part~\ref{part:mapping spaces}, we examine the definition of mapping spaces on the category of dg Hopf cooperads.
We mainly prove that these mapping spaces of dg Hopf cooperads are weakly equivalent, as simplicial sets,
to the nerve of the $L_{\infty}$~algebras of biderivations
of dg Hopf cooperads.
We apply this result to $E_n$-operads in order to get the claim of Theorem~\ref{thm:nerve to mapping spaces},
the equivalence between our mapping spaces of $E_n$-operads
and the nerve of the $L_{\infty}$~algebras of biderivations
associated to our dg Hopf cooperad models
of $E_n$-operads.
\item
In Part~\ref{part:graph complexes}, we check that the $L_{\infty}$~algebras of biderivations associated to $E_n$-operads
are equivalent to the Lie (or $L_{\infty}$) algebras of hairy graphs.
Then we can use the result of the previous part in order to establish the claims of Theorem \ref{thm:main algebraic statement}
and of Theorem \ref{thm:main topological statement},
the equivalence between the mapping spaces of $E_n$-operads
and the nerve of the Lie (respectively, $L_{\infty}$) algebras of hairy graphs.
To complete this result, we explain the proof of the other statements on the mapping spaces of $E_n$-operads
stated in the first part of this introduction.
\item
In Part~\ref{part:rationalization}, we prove the claims of Theorem~\ref{thm:rational mapping space towers}, Theorem~\ref{thm:nerve to rational mapping spaces}, Theorem~\ref{thm:nerve to truncated mapping spaces1},
which we use in the applications of our work
to the study of embedding spaces.
\end{itemize}

\subsection*{Acknowledgement}
The second author is grateful to Paolo Salvatore for discussions in particular for asking the question about the rational homotopy type of the deloopings of spaces of long embeddings.

\setcounter{part}{-1}
\renewcommand{\thethm}{\thesection.\arabic{thm}}

\part{Background}\label{part:background}
We give a brief summary of our conventions on operads and cooperads in this preliminary part.
We mainly work in the category of dg vector spaces in order to form our model
for the rational homotopy of operads.
Therefore, we explain our conventions on dg vector spaces in the next paragraph.
We review the definition of an operad and of a cooperad afterwards.

\subsection{Conventions on vector spaces, graded vector spaces and dg vector spaces}\label{subsec:dg vector spaces}
We take a field of \emph{characteristic zero} $\K$ as a ground ring all through this article and we use the notation $\Vect$
for the category of vector spaces over $\K$.
We also use the notation $\dg\Vect$ for the category of differential graded vector spaces (the category of dg vector spaces for short),
where a dg vector space consists, for us, of a vector space $V$
equipped with a lower $\Z$-grading $V = V_*$
together with a differential,
usually denoted by $d: V\rightarrow V$,
which lowers degrees by one $d: V_*\rightarrow V_{*-1}$.

Note that our dg vector spaces are equivalent to unbounded chain complexes, but we prefer to use the terminology of dg vector space
in general. We just use the phrase ``chain complex'' in the case of specific examples
or to refer to specific constructions of objects
in the category of dg vector spaces (for instance, we may use the phrase ``complex of biderivations''
for the ``dg vector space of biderivations'' associated to certain morphisms
of dg Hopf cooperads).
Note also that, in some of our bibliographical references, authors assume that a dg vector space is an object,
equivalent to an unbounded cochain complex,
which is equipped with an upper grading (rather than a lower grading) and with a differential
that increases degrees (instead of decreasing degrees).
In what follows, we use the standard rule $V_n = V^{-n}$ to identify an upper grading with a lower grading
and to identify an object of this category of upper graded dg vector spaces
with a dg vector space in our sense.
In general, we prefer to deal with lower gradings. Therefore, we tacitely adopt this convention to consider lower gradings
when there is no indication of the contrary in the context.
But, this rule $V_n = V^{-n}$ enables us to use lower and upper grading interchangeably, and we can therefore apply the rule the other way around,
in order to convert a lower grading into an upper grading,
when upper gradings are more natural.

To be specific, we use upper gradings when we deal with upper graded dg vector spaces that are concentrated in non-negative degrees.
We adopt the notation $\dg^*\Vect$ for this category of non-negatively upper graded dg vector spaces.
We also adopt the convention to use the expression ``cochain dg vector space''
in order to refer to an object
of this category $V\in dg^*\Vect$.
We adopt similar notations and conventions when we deal with categories of algebras in dg vector spaces.
We can identify this category of cochain dg vector spaces $\dg^*\Vect$
with the full subcategory of our category of lower graded dg vector spaces $\dg\Vect$
generated by the objects $V$ which satisfy $V_n = V^{-n} = 0$ for $n>0$
according to our rule.

In what follows, we also consider the category of graded vector spaces which we can identify with the full subcategory
of the category of dg vector spaces generated by the dg vector spaces
equipped with a trivial differential $d = 0$.
The other way around, we have the obvious forgetful functor from the category of dg vector spaces
to the category of graded vector spaces.
We denote the degree of a homogeneous element $v$ of a graded vector space by $|v|\in\Z$.
Note that we will (ab)use the same notation $|S|$ for the number of elements in a finite set.
However, in practice this will not cause confusion.

In our constructions, we use the standard symmetric monoidal structure of the category of dg vector spaces,
with the tensor product $\otimes: dg\Vect\times dg\Vect\rightarrow dg\Vect$
inherited from the category of vector spaces over $\K$
and with the symmetry operator $c: V\otimes W\rightarrow W\otimes V$
that reflects the (Koszul) sign rule of differential graded algebra.
Recall that the category of dg vector spaces is also enriched over itself with hom-objects $\Hom_{\dg}(V,W)$
defined, for every $V,W\in dg\Vect$, by the vector spaces
of linear maps $f: V\rightarrow W$ satisfying $f(V_*)\subset W_{*+|f|}$
for some degree $|f|\in\Z$.
This dg vector space is equipped with the differential such that $d f = d_W f - \pm f d_V$,
where $d_V$ (respectively, $d_W$) denotes the internal differential
of the dg vector space $V$ (respectively, $W$)
and $\pm = (-1)^{|f|}$ is the sign determined by the sign rule of differential graded algebra.
Note that the tensor product $\otimes: dg\Vect\times dg\Vect\rightarrow dg\Vect$
preserves the category of cochain dg vector spaces $\dg^*\Vect$
inside $\dg\Vect$, which inherits a symmetric monoidal structure as well.
But the dg vector spaces $\Hom_{\dg}(V,W)$
may have components in all degrees
even when we assume $V,W\in dg^*\Vect$.
In what follows, we also consider the tensor product operation
on graded vector spaces, and an internal hom-bifunctor with values in this category $\Hom_{gr}(-,-)$
which we define by forgetting about the differentials
in our constructions.



Recall that we use the notation $V[m]$ for the $m$-fold suspension $V[m]_n = V_{n-m}$
of a graded vector space $V$, for any $m\in\Z$.
In the case $V = \K$, where we identify the ground field $\K$ with a graded vector space of rank one
concentrated in degree $0$,
we get that $\K[m]$ represents the graded vector space of rank one
concentrated in lower degree $m$,
as in \cite{Fr}.
Note that we get the rule $V[m]^n = V^{n+m}$ when we use upper gradings
instead of lower gradings
and this convention is also used in the reference~\cite{Will}.
If $V\in dg\Vect$, then we have $V[m]\in dg\Vect$.
To make this definition more precise, we can use the obvious identity $V[m] = V\otimes\K[m]$,
where we regard $\K[m]$ as a dg vector space equipped with a trivial differential,
and we consider the tensor product of our symmetric monoidal structure
on the category of dg vector spaces.

We also use the notation $V^*$ for the obvious extension of the duality functor of vector spaces to the category of dg vector spaces.
We actually have $V^* = \Hom_{\dg}(V,\K)$, for any object $V\in dg\Vect$, where we use that the ground field $\K$
is identified with a dg vector space of rank one concentrated in degree $0$,
and we consider our internal hom-bifunctor on the category of dg vector spaces.
If each graded component of a graded (respectively, dg) vector space $V$ is finite dimensional, then we say that $V$ is of finite type.

\subsection{Operads}\label{subsec:operads}
We refer to the textbook of Loday and Vallette \cite{LV} or to the upcoming book \cite{Fr} of the first author
for an introduction to the theory of operads.
The operads of little discs are defined in the category of topological spaces,
but we mainly deal with operads (and cooperads)
defined in the category of dg vector spaces (or in the subcategory of cochain dg vector spaces)
in what follows.
Therefore, we briefly review the definition of an operad (and of a cooperad) in the category of dg vector spaces
in order to make our conventions explicit
and in order to fix notation.
In what follows, we also use the expression ``dg operad'' when we consider operads
defined in the base category of dg vector spaces.

We have several equivalent definitions of the notion of an operad. Let $\N$ denote the set of non-negative
integers.
We always consider operads equipped with a symmetric structure in this article.
Thus, we generally assume that the components of a dg operad ${\op O}$
are dg vector spaces ${\op O}(r)$
equipped with an action of the symmetric groups $\Sigma_r$
for $r\in\N$.
The composition structure of our operad is determined by composition products $\circ_i: {\op O}(m)\otimes{\op O}(n)\rightarrow{\op O}(m+n-1)$,
which are defined for all $m,n\in\N$, for any $i\in\{1,\dots,m\}$,
and which satisfy natural equivariance, unit and associativity relations.
The unit of our operad is defined by a morphism $\eta: \K\to{\op O}(1)$ or, equivalently, by a distinguished element
which we denote by $1\in{\op O}(1)$.

The symmetric collection underlying an operad is also equivalent to a collection ${\op O}(S)$
indexed by the finite sets $S$
such that the mapping ${\op O}: S\mapsto{\op O}(S)$
defines a functor on the category formed by the finite sets as objects
together with the bijections of finite sets
as morphisms.
Indeed, we can regard a symmetric collection ${\op O}(r)$, $r\in\N$, as the restriction
of such a functor ${\op O}: S\mapsto{\op O}(S)$
to the category generated by the finite ordinals $\underline{r} = \{1<\dots<r\}$
inside the category of finite sets
and bijections. To be more explicit, we just set ${\op O}(r) = {\op O}(\{1<\dots<r\})$ to get our correspondence
between the structure of an operad with components indexed by the natural numbers
and the structure of an operad with components
indexed by finite sets.
In the context where we consider components indexed by arbitrary finite sets,
the partial composition products of an operad are equivalent to operations
of the form
\[
\circ_* : {\op O}(S')\otimes{\op O}(S'')\to{\op O}((S'\setminus\{*\})\sqcup S''),
\]
defined for any pair of finite sets $S',S''$, and where $*$
is a formal composition mark.
Informally, we regard the elements of the object ${\op O}(S)$ as operations with $|S|$ inputs
labelled by elements of the set $S$,
where we use the notation $|S|$ for the cardinal of any finite set $S$.
In the context of operads, we also use the classical terminology of ``arity'' to refer to this number $|S|$.

Recall also that the partial composition products of an operad can be used to determine
treewise composition operations
\[
\nabla_T: \bigotimes_{v\in VT} {\op O}(star(v))\to{\op O}(S),
\]
which we form by taking a tensor product of components of our operad over the vertex set $VT$ of a rooted tree $T$
whose leafs are indexed by $S$. We then use the notation $star(v)$
for the set of ingoing edges of any vertex $v$
in such a tree $T$.
In what follows, we also use the notation $\mT_S$ for the set of rooted trees with leafs indexed by $S$
which we use in this construction.
Intuitively, the structure of these trees $T\in\mT_S$ materialize general composition schemes
in the operad ${\op O}$.

The unit morphism $\eta: \K\to{\op O}(1)$ is equivalent to a morphism of operads $\eta: {\op I}\to{\op O}$
where $\op I$ denotes the collection such that
\[
\op I(r) = \begin{cases} \K, & \text{if $r=1$}, \\
0, & \text{otherwise},
\end{cases}
\]
which inherits an obvious operad structure.
This collection $\op I$ accordingly represents the initial object of the category of operads.
(In what follows, we also use the name ``unit operad'' for this operad $\op I$.)
We say that an operad ${\op O}$ is augmented (over this initial object) when we have an operad morphism $\epsilon: {\op O}\to{\op I}$
in the converse direction as the operadic unit $\eta: {\op I}\to{\op O}$.
We necessarily have $\epsilon\eta = \mathit{id}$.
We can also determine such a morphism by giving a morphism of dg vector spaces $\epsilon: {\op O}(1)\to\K$
satisfying $\epsilon(1) = 1$ when we consider the unit element of our operad $1 = \eta(1)\in\op O(1)$
and $\epsilon(p\circ_1 q) = \epsilon(p)\epsilon(q)$, for any composite of operations
of arity one $p,q\in\op O(1)$.
We then define the augmentation ideal of our operad $\op O$ by $\overline{\op O}(1) = \ker(\epsilon: {\op O}(1)\to\K)$
and ${\overline{\op O}}(r) = {\op O}(r)$ for $r\not=1$.
We mainly use augmented operads in the bar duality of operads. We also assume $\op O(0) = 0$
when we use this construction.

\subsection{Cooperads}\label{subsec:cooperads}
We dualize the definitions of the previous subsection in the context of cooperads. We still mainly work in the category of dg vector spaces when we deal with cooperads
and we also use the expression ``dg cooperad'' in this context.

In short, we define a dg cooperad $\op C$ as a symmetric collection of dg vector spaces $\op C(r)$, $r\in\N$,
together with coproducts $\Delta_i: \op C(m+n-1)\rightarrow\op C(m)\otimes\op C(n)$,
which are defined for all $m,n\in\N$, for any $i\in\{1,\dots,m\}$,
and which satisfy some natural equivariance, counit and coassociativity relations.
We can equivalently consider coproducts $\Delta_*: \op C(S)\rightarrow\op C(S')\otimes\op C(S'')$,
defined for all partitions $S = (S'\setminus\{*\})\sqcup S''$
of a finite set $S$,
when we use that $\op C$ is given by a collection $\op C(S)$
indexed by arbitrary finite sets $S$.
The counit of our cooperad is determined by a morphism $\epsilon: \op C(1)\rightarrow\K$.
In what follows, we also consider general coproduct operations
\[
\Delta_T : \op C(S)\to\bigotimes_{v\in VT}\op C(star(v)),
\]
shaped on rooted trees $T\in\mT_S$.

Recall that the dual dg vector spaces ${\op O}(r)^*$ of the components ${\op O}(r)$ of an operad ${\op O}$
form a cooperad as soon as each of these components ${\op O}(r)$
forms a dg vector space of finite type. We denote this cooperad by~${\op O}^c$.

We often need some conilpotence assumptions when we deal with cooperads. (For instance, we use cofree objects
of the category of conilpotent cooperads
in the expression of the bar duality of operads.)
We make such a conilpotence condition explicit in order to complete the account of this paragraph.
We go back to this subject later on, when we explain the definition
of the category of the category of $\La$-cooperads
which we use in the subsequent constructions
of this article.

In all cases, we restrict our attention to cooperads such that $\op C(0) = 0$.
We also need to assume that our cooperads $\op C$
are equipped with a coaugmentation
in order to formalize our conilpotence condition.
To be explicit, let us observe that the counit morphism of our cooperad $\epsilon: \op C(1)\to\K$
is equivalent to a morphism of cooperads $\epsilon: \op C\to\op I$,
where we again consider the collection $\op I$ such that $\op I(1) = \K$ and $\op I(r) = 0$ for $r\not=1$,
which inherits a natural cooperad structure (in addition to an operad structure).
In what follows, we also use the name ``unit cooperad'' when we regard this collection $\op I$
as an object of the category of cooperads.
Then we say that a cooperad $\op C$ is coaugmented (over the unit cooperad $\op I$)
precisely when the counit morphism $\epsilon: \op C\to\op I$
admits a section $\eta: \op I\to\op C$
in the category of cooperads.
We equivalently assume that we have a unit element $1 = \eta(1)\in\op C(1)$
which satisfies the identity $\Delta_1(1) = 1\otimes 1$
in our cooperad $\op C$ (and which forms a cycle of degree zero in the context of dg vector spaces).
We then have a natural splitting formula $\op C(1) = \K 1\oplus\overline{\op C}(1)$, where we set $\overline{\op C}(1) = \ker(\epsilon: \op C(1)\to\K)$.
We can also consider the collection $\overline{\op C}$ such that $\overline{\op C}(1) = \ker(\epsilon: \op C(1)\to\K)$
and $\overline{\op C}(r) = \op C(r)$
for $r>1$.
We just use the splitting $\op C(1) = \K 1\oplus\overline{\op C}(1)$ to identify this collection $\overline{\op C}$
with the cokernel of the coaugmentation morphism of our cooperad $\eta: \op I\to\op C$. (We also say that $\overline{\op C}$
represents the coaugmentation coideal of $\op C$ in this context.)

We now consider the reduced treewise coproducts
\[
\overline{\Delta}_T : {\op C}(S)\to\bigotimes_{v\in VT}\overline{\op C}(star(v)),
\]
which we obtain by taking the composite of the above treewise coproducts with our projection ${\op C}(star(v))\rightarrow{\overline C}(star(v))$,
for all $v\in VT$.
The counit relation of cooperads implies that the full treewise coproducts $\Delta_T$
are determined by these reduced treewise coproducts.
Furthermore, the assumption that $\eta: \op I\to\op C$ is a morphism of cooperads implies that the reduced treewise coproducts
vanish over the unit element $1\in{\op C}(S)$
when $|S| = 1$.
We then say that our coaugmented cooperad $\op C$ is conilpotent if, for any element $c\in{\op C}$, we have $\overline{\Delta}_T(c) = 0$
for all but a finite number of trees $T$.

We mainly deal with reduced treewise composition coproducts rather than with the full treewise composition coproducts in what follows.
In passing, let us observe that the condition $\op C(0) = 0$
implies that the treewise composition coproducts $\Delta_T : \op C(S)\to\bigotimes_{v\in VT}\op C(star(v))$
are trivial
when $T$ contains vertices with no ingoing edges.
Thus, we restrict ourselves to the subcategory of trees $T$ which satisfy the condition $|star(v)|\geq 1$ for all $v\in VT$
when we consider the reduced treewise composition coproducts
associated to a coaugmented cooperad.

Let us also observe that if the cooperad $\op C$ satisfies the relation $\op C(1) = \K$ in addition to this condition $\op C(0) = 0$
(we then say that the cooperad ${\op C}$ is reduced),
then we have $\overline{\op C}(1) = 0$,
and hence, we only have to consider reduced treewise coproducts over trees
whose vertices have at least two ingoing edges
each.
The number of trees $T$ which satisfy this condition $|star(v)|\geq 2$ for all $v\in VT$
and have a prescribed set of leafs $S$
is finite.
From this observation, we conclude that any cooperad $\op C$
which satisfies the relation $\op C(1) = \K$ (in addition to the condition $\op C(1) = 0$)
is automatically conilpotent in our sense.

\subsection{On $\La$-operads}\label{subsec:Lambda-operads}
The notion of a $\La$-operad has been introduced by the first author in~\cite[Chapter I.2]{Fr}.
This category of $\La$-operads is actually isomorphic to the full subcategory of the category of operads generated by the objects ${\op O}$
whose component of arity zero is reduced to the ground field ${\op O}(0) = \K$ (when we take the category of vector spaces
or the category of dg vector spaces
as a base category).
The idea of the definition of a $\La$-operad is that we can forget about the component of arity zero in this case ${\op O}(0) = \K$
and consider, instead, that our operads are equipped with restriction operators which reflect the composition operations
with this component ${\op O}(0) = \K$ in the definition
of the operad $\op O$.

The notation $\Lambda$ the definition of the notion of a $\La$-operad refers to the category formed by the finite ordinals $\underline{r}=\{1<\cdots<r\}$
as objects together with all injective maps on these ordinals (possibly non-monotonous)
as morphisms.
The main observation of~\cite{Fr} is that the composition operations with a distinguished arity zero element in an operad ${\op O}$
such that ${\op O}(0) = \K$ are equivalent to a contravariant action of this category $\La$
on the collection of dg vector spaces
underlying our object.
The restriction operator $u^*: {\op O}(s)\rightarrow{\op O}(r)$,
which we associate to any injective map $u: \{1<\dots<r\}\rightarrow\{1<\dots<s\}$
in this action of the category $\La$,
corresponds to the composition products with the distinguished arity zero element of our operad
at the inputs such that $j\not\in\{u(1),\dots,u(r)\}$
together with the re-indexing $i = u^{-1}(j)$ of the remaining inputs $j\in\{u(1),\dots,u(r)\}$.
In the context where we consider operads with components indexed by arbitrary finite sets,
we may also consider restriction operators of the form
\[
\epsilon_S: {\op O}(S\sqcup\{*\})\rightarrow{\op O}(S)
\]
which represent the composites with our distinguished arity zero element
at the composition mark $*$
in the component ${\op O}(S\sqcup\{*\})$
of our operad ${\op O}$.
In addition to these restriction operators, we have augmentation morphisms $\epsilon: {\op O}(r)\rightarrow\K$,
which correspond to full composition products with an arity zero factor $\op O(0) = \K$
at all positions in our operad.

In order to define the $\La$-operad equivalent to an operad ${\op O}$ with ${\op O}(0) = \K$,
we just forget about this component of arity zero,
we take a zero object as a component of arity zero instead,
and we assume that our operad is equipped with restriction operators $u^*: {\op O}(s)\rightarrow{\op O}(r)$,
which give a contravariant action of the category of $\La$
on our object,
together with the augmentation morphisms $\epsilon: {\op O}(r)\rightarrow\K$
that correspond to the full composition products
with arity zero operations.
In fact, one can observe that the collection of these augmentation morphisms $\epsilon: {\op O}(r)\rightarrow\K$, $r>0$,
is equivalent to an operad morphism with values the commutative operad $\Com$,
the operad, associated to the category of commutative algebras (without unit),
which satisfies $\Com(r) = \K$ for any $r>0$.

The category of $\La$-operads is precisely defined as the category formed by these operads ${\op O}$,
satisfying ${\op O}(0) = 0$,
which are equipped with a contravariant action of the category of $\La$
as above,
together with an augmentation morphism
with values in the commutative operad $\epsilon: {\op O}\rightarrow\Com$.
Let us mention that we also assume that the composition products and the restriction operators of a $\La$-operad
satisfy some compatibility relation which mimes the associativity relations of the composition products
of an operad with respect to an arity zero operation.
In~\cite{Fr}, the more precise expression ``augmented $\La$-operad'', where the adjective ``augmented'' refers to this structure augmentation $\epsilon: {\op O}\rightarrow\Com$
which we attach to our objects, is adopted for this category of $\La$-operads. We just forget about the adjective ``augmented''
in order to simplify our terminology in what follows.

Let us mention that the commutative operad $\Com$ is equipped with an augmentation over the initial operad $\op I$,
and as a consequence, any $\La$-operad $\op O$ canonically forms an augmented operad
in the sense defined in Section~\ref{subsec:operads}
when we forget about $\La$-structures.

\subsection{On $\La$-cooperads}\label{subsec:Lambda-cooperads}
We dualize the definitions of the previous subsection in order to define the notion of a $\La$-cooperad.
Thus, a dg $\La$-cooperad consists of a cooperad $\op C$
satisfying $\op C(0) = 0$
together with a covariant action of the category $\La$, which associates a corestriction operator $u_*: \op C(r)\rightarrow\op C(s)$
to any injective map between finite ordinals $u: \{1<\dots<r\}\rightarrow\{1<\dots<s\}$,
and a coaugmentation morphism $\eta: \Com^c\rightarrow\op C$,
where $\Com^c$ is the dual cooperad in dg vector spaces
of the operad of commutative algebras (without unit).
In the context where we consider cooperads with components indexed by arbitrary finite sets,
we also consider corestriction operators
of the form
\[
\eta_S: \op C(S)\rightarrow\op C(S\sqcup\{*\})
\]
which are dual to the partial composition products with a distinguished arity zero element
in an operad.
We obviously assume that the composition coproducts and the corestriction operators satisfy some compatibility relations when we define a $\La$-cooperad.
We then see that our category of dg $\La$-cooperads is isomorphic to the full subcategory of the category of plain dg cooperads
generated by the objects $\op C$ such that $\op C(0) = \K$ (dualize the observations of the previous subsection).

In~\cite{Fr}, the more precise expression ``coaugmented dg $\La$-cooperad'',
where the adjective ``coaugmented'' refers to the coaugmentation $\eta: \Com^c\rightarrow{\op C}$
which we attach to our objects,
is used for the objects in the category of dg $\La$-cooperads. We again forget about the adjective ``coaugmented''
in order to simplify our terminology in what follows (as in the context of operads).

Let us observe that any dg $\La$-cooperad $\op C$ inherits a canonical coaugmentation
over the unit cooperad $\op I$ (when we forget about the $\La$-structure
attached to our object)
since the commutative cooperad $\Com^c$ comes itself equipped with a canonical coaugmentation morphism $\eta: {\op I}\to\Com^c$
that is right inverse to the counit morphism $\epsilon: \Com^c\to{\op I}$.
Thus, we can give a sense to the reduced treewise composition coproducts
of Section~\ref{subsec:cooperads}
in the context of dg $\La$-cooperads. We can also consider a straightforward extension
of the conilpotence condition
of Section~\ref{subsec:cooperads}
in the context of dg $\La$-cooperads. We precisely say that a dg $\La$-cooperad
is conilpotent
if the coaugmented dg cooperad underlying our object is conilpotent in the sense defined in Section~\ref{subsec:cooperads}
when we forget about $\La$-structures.
To be explicit, we get that $\op C$ is conilpotent if, for each element $c\in\op C$, all but a finite number of the reduced treewise composition coproducts $\overline{\Delta}_T(c)$
vanish, where $T$ runs over the category of trees satisfying $|star(v)|\geq 1$
for all vertices $v\in VT$.

In this article, we always consider dg $\La$-cooperads that are conilpotent in this sense.
Therefore, in what follows, we adopt the convention to add this conilpotence condition
to the definition of the objects of our category of dg $\La$-cooperads.
Let us mention that the conilpotence assumption is automatically satisfied when we assume $\op C(1) = \K$
since this is the case in the category of plain cooperads.
In~\cite{Fr}, the convention is to consider the subcategory of dg $\La$-cooperads that satisfy this condition $\op C(1) = \K$
in addition to $\op C(0) = 0$,
but this is not the case of certain cooperads that we consider in this article.
Therefore, we do not adopt the convention to assume this connectedness condition $\op C(1) = \K$
in the definition of a dg $\La$-cooperad
in general.
We simply say that a dg $\La$-cooperad ${\op C}$ is reduced when we have $\op C(1) = \K$ (as in the case of plain cooperads).

Note that we allow non trivial corestriction operators $u^*(c)\not=0$ for an infinite number of maps $u: \{1<\dots<r\}\to\{1<\dots<s\}$
in this definition of a conilpotent $\La$-cooperad, though these operators
reflect non trivial composition coproducts
in the cooperad ${\op C}_+$ satisfying ${\op C}_+(0) = \K$
which we associate to our object.
Thus, this cooperad ${\op C}_+$ is not conilpotent in the sense specified in Section~\ref{subsec:cooperads}.
In fact, this cooperad ${\op C}_+$ is not even equipped with a coaugmentation
over the unit cooperad ${\op I}$
in general.
so that the conilpotence condition of Section~\ref{subsec:cooperads}
does not even make sense for this cooperad.

For our purpose, we make explicit the compatibility between the corestriction operators
and the reduced treewise composition coproducts.
Let $T\in \mT_{S\sqcup\{*\}}$ be a tree. Let $v$ be the vertex of $T$
connected to the leaf indexed by the mark $*$.
Let $T'$ be the tree obtained by removing this leaf indexed by $*$.
If $v$ has exactly $2$ ingoing edges in $T$, then we also consider the tree $T''$
which we obtain by deleting both the vertex $v$ and the ingoing edge
indexed by $*$,
and by merging the remaining ingoing edge with the outgoing edge of $v$.
Let $star'(v)$ be the set of outgoing edges of $v$ in $T'$.
We fix an ordering of the vertices of $T$ with the vertex $v$ in the first position.
We then have the following relation, for any element $x\in\overline{\op C}(S)$:
\beq{eq:Lacompatibility}
\overline{\Delta}_T(\eta_S x) = (\eta_{star'(v)}\otimes\mathit{id}\otimes\cdots\otimes\mathit{id})(\overline{\Delta}_{T'}x)
+
\begin{cases}
b\otimes\overline{\Delta}_{T''}(x), & \text{if $v$ has two ingoing edges in $T$}, \\
0, & \text{otherwise},
\end{cases}
\eeq
where $b\in\overline{\op C}(2)$ is the image of the generator of $\Com^c(2)$ under the coaugmentation $\eta: \Com^c\rightarrow\op C$.

We mentioned in the introduction of this paper that we deal with dg Hopf cooperads (cooperads in the category of commutative dg algebras)
when we define the Sullivan model of an operad in topological spaces. We go back to this subject
later on. Let us simply observe, for the moment, that in the case of a dg Hopf $\La$-cooperad,
the coaugmentation $\eta: \Com^c\to\op C$, which we associate to our object $\op C$,
is actually given by the unit morphism $\eta: \K\rightarrow\op C(r)$
which we associate to the commutative dg algebra $\op C(r)$
in each arity $r>0$.
We therefore forget about the coaugmentation when we specify the structure of a dg Hopf $\La$-cooperad.

\subsection{On $\Sigma$-collections and $\La$-collections}\label{subsec:collections}
Recall that  $\Lambda$ denotes the category with the finite ordinals $\underline{r} = \{1<\dots<r\}$
as objects and all injective maps (not necessarily monotonous)
between such ordinals as morphisms.
In what follows, we also consider the category $\Sigma\subset\La$
with the same collection of objects
as the category $\Lambda$
but where we only consider the bijective maps as morphisms.
We equivalently have $\Sigma = \coprod_{r\in\N}\Sigma_r$, where we regard the symmetric groups $\Sigma_r$
as categories with a single object of which we take the coproduct
in the category of categories.

We now call $\Sigma$-collection in dg vector spaces the structure defined by a contravariant functor on this category $\Sigma$
and with values in the category of vector spaces ${\op M}: \Sigma\to\dg{\Vect}^{op}$.
We similarly call $\La$-collection the structure defined by a contravariant functor on the whole category of finite ordinals
and injections ${\op M}: \La\to\dg\Vect^{op}$.
We denote the category of $\Sigma$-collections in dg vector spaces by $\dg\Sigma\Seq$ and the category of $\La$-collections by $\dg\La\Seq$.
We also consider the full subcategories of these categories $\dg\Sigma\Seq_{>0}$ and $\dg\La\Seq_{>0}$
generated by the objects which vanish in arity zero. We adopt similar conventions
when we deal with other instances of base categories
than the category of dg vector spaces.
We obviously get a functor from the category of $\La$-operads in dg vector spaces to the overcategory $\dg\La\Seq_{>0}/\Com$
when we forget about the composition products of $\La$-operads.

We dually consider a category of covariant $\La$-collections in dg vector spaces, where we use the phrase `covariant $\La$-collection'
to refer to a covariant functor ${\op M}: \La\to\dg\Vect$
on the category $\La$.
We adopt the notation $\dg\La\Seq^c$ for this category, and we similarly use the notation $\dg\La\Seq^c_{>0}$
for the subcategory of covariant $\La$-collections
which vanish in arity zero.
We immediately get again that we have a functor from the category of $\La$-cooperads in dg vector spaces
to the undercategory $\Com^c/\dg\La\Seq^c_{>0}$
when we forget about the composition coproducts of $\La$-cooperads.
In what follows, we also consider the functor with values in the undercategory $\overline{\Com}{}^c/\dg\La\Seq^c_{>0}$
which we obtain by taking the coaugmentation coideal
of dg $\La$-cooperads. In our study of the rational homotopy of operads,
we moreover consider the full subcategory $\dg\La\Seq^c_{>1}$
of the category of covariant $\La$-collections $\dg\La\Seq^c$
formed by the objects such that ${\op M}(0) = {\op M}(1) = 0$,
and the associated category of coaugmented objects $\overline{\Com}{}^c/\dg\La\Seq^c_{>1}$.
We get a functor with values in $\overline{\Com}{}^c/\dg\La\Seq^c_{>1}$
when we take the restriction of the coaugmentation coideal
functor to the subcategory of reduced dg $\La$-cooperads (see Section~\ref{subsec:Lambda-cooperads}).
Note that $\Com^c$ represents the constant object such that $\Com^c(r) = \K$
in the category of covariant $\La$-collections $\dg\La\Seq^c_{>0}$,
while the covariant $\La$-collections $\overline{\Com}{}^c$
satisfies $\overline{\Com}{}^c(1) = 0$ and $\overline{\Com}{}^c(r) = 0$ for $r>1$.

In what follows, we usually omit the word ``covariant'' when we deal with this category of $\La$-collections.
In fact, we most often deal with covariant $\La$-structures in our constructions,
and otherwise, the context makes clear whether we consider a contravariant $\La$-structure
or a covariant one.

The categories of $\Sigma$-collections and $\La$-collections in dg vector spaces are enriched over the category of dg vector spaces.
To be explicit, let $\Xi = \Sigma,\Lambda$ be any of our indexing categories, and let $\op M$, $\op N$
be a pair of $\Xi$-collections in dg vector spaces.
Then we set:
\[
\Hom_{\dg\Xi}(\op M,\op N) = \int_{\underline{r}\in\Xi}\Hom_{\dg}(\op M(r),\op N(r)),
\]
where we take the end of the hom-objects of dg vector spaces $\Hom_{\dg}(\op M(r),\op N(r))$
over the category $\Xi$.
Thus, an element of homogeneous degree of $\Hom_{\dg\Xi}(\op M,\op N)$ consists of a collection of linear maps $f: \op M(r)\rightarrow\op N(r)$
such that $f(\op M(r)_*)\subset\op N(r)_{*+|f|}$ and which preserve the action
of the morphisms of the category $\Xi = \Sigma,\La$
on our objects.
In the case of the category $\Xi = \Sigma$, we may equivalently set:
\[
\Hom_{\dg\Sigma}(\op M,\op N) = \prod_{r\in\N}\Hom_{\dg\Sigma_r}(\op M(r),\op N(r)),
\]
where $\Hom_{\dg\Sigma_r}(\op M(r),\op N(r))$ denotes the dg vector space of $\Sigma_r$-equivariant linear maps $f: \op M(r)\rightarrow\op N(r)$
inside $\Hom_{\dg}(\op M(r),\op N(r))$.
In the context of $\Xi$-collections in cochain dg vector spaces, where we again set $\Xi = \Sigma,\La$,
we consider the same hom-object by using that the category of the cochain dg vector spaces $\dg^*\Vect$
is identified with the subcategory of the category of dg vector spaces
concentrated in negative degrees. Note that this hom-object $\Hom_{\dg\Xi}(\op M,\op N)$
may still have components in all degrees,
even if this is not the case of the collections $\op M$ and $\op N$.

In the sequel, we also use the notation $\Hom_{\gr\Sigma}(\op M,\op N)$ to denote the graded vector space,
underlying $\Hom_{\dg\Sigma}(\op M,\op N)$, which we define
by forgetting about the differential
in our construction,
whereas we use the notation $\Mor_{\dg\Sigma\Seq}(\op M,\op N)$ to denote the actual morphism sets
of the category $\dg\Sigma\Seq$.
We adopt similar conventions in the context of $\La$-collections.
Let us observe that we can identify the set of morphisms $\Mor_{\dg\Sigma\Seq}(\op M,\op N)$
associated to any pair of $\Sigma$-collections in dg vector spaces $\op M$ and $\op N$
with the vector space formed by the cycles of degree zero in the dg vector space $\Hom_{\dg\Sigma}(\op M,\op N)$,
and similarly in the context of $\La$-collections.


\subsection{The main examples of operads considered in the paper and the Koszul duality}\label{subsec:Koszul duality}
We already mentioned that we use the notation $\Com$ for the operad of commutative algebras (the commutative operad).
We use the notation $\Ass$ for the associative operad and the notation $\Lie$ for the Lie operad.
We consider, besides, the $n$-Poisson operad $\Poiss_n$, which is generated by a commutative product operation $\cdot\wedge\cdot\in\Poiss_n(2)$
of degree $0$, and a Lie bracket operation $[-,-]\in\Poiss_n(2)$ of degree $n-1$
which is anti-symmetric for $n$ odd, symmetric for $n$ even,
and which satisfies an obvious graded generalization
of the usual Poisson distribution relation
with respect to the product.

We also use the notation $\Lie_n$ for the graded variant of the Lie operad where the Lie bracket operation $[-,-]$ has degree $n-1$
so that we have the relation $\Lie_n\subset\Poiss_n$
when $n\geq 2$.
We obviously have $\Lie = \Lie_1$, and we still get an operad embedding $\Lie_1 = \Lie\hookrightarrow\Ass$
in this case. (We consider the standard map which carries the Lie bracket $[-,-]\in\Lie(2)$
to the symmetrization of the generating product operation of the associative operad $\Ass$.)
We have in general $\Lie_n = \Lie\{1-n\}$, where we use the notation ${\op O}\{\ell\}$ for the $\ell$th operadic suspension
of any cooperad ${\op O}$ (see~\cite{GetzJones}).
Briefly recall that this operad ${\op O}\{\ell\}$ is defined by the identity ${\op O}\{\ell\}(r) = {\op O}(r)[(1-r)\ell]$, for each arity $r\in\N$,
with the convention that the action of $\Sigma_r$ on ${\op O}\{\ell\}(r)$
is twisted by the signature
when $\ell$ is odd.

We use the bar duality of operads and the Koszul duality in our computation of operadic mapping spaces.
We adopt the notation $B({\op O})$ for the bar construction of an operad $\op O$
equipped with an augmentation over the initial operad ${\op I}$.
We dually use the notation $\Omega(\op C)$ for the cobar construction of a cooperad $\op C$
equipped with a coaugmentation over the final cooperad $\op I$.
We denote the Koszul dual cooperad of a quadratic operad ${\op O}$ by ${\op O}^\vee$
and we use the same notation for the Koszul dual operad
of a quadratic cooperad.
We have $\Ass^{\vee} = \Ass^c\{-1\}$ and $\Poiss_n^{\vee} = \Poiss_n^c\{-n\}$, for $n\geq 2$,
where $\Ass^c$ (respectively, $\Poiss_n^c$) denotes the dual cooperad
in graded vector spaces of the associative (respectively, of the $n$-Poisson) operad.
We deduce from the Koszul duality that the operads $\hoAss = \Omega(\Ass^c\{-1\})$
and $\hoPoiss_n = \Omega(\Poiss_n^c\{-n\})$
form resolutions of the associative operad $\Ass$
and of the $n$-Poisson operad $\Poiss_n$
respectively. We similarly have $\Lie^{\vee} = \Com^c\{-1\}$, where $\Com^c$
denotes the dual cooperad in graded vector spaces
of the commutative operad,
and $\Lie_n^{\vee} = \Com^c\{-n\}$, for $n\geq 1$, so that $\hoLie_n = \Omega(\Com^c\{-n\})$
defines a resolution of the operad $\Lie_n$
by the Koszul duality.

In what follows, we use the notation $\e_n$ for the homology of the little discs operad $\lD_n$
with coefficients in our ground field $\K$.
We have:
\begin{equation}\label{eq:e_n}
\e_n = H_*(\lD_n,\K) =
\begin{cases}
\Ass, & \text{for $n=1$}, \\
\Poiss_n, & \text{for $n\geq 2$}.
\end{cases}
\end{equation}
The dual cooperad $\e_n^c$ of this operad $\e_n$ also represents the cohomology cooperad of the operad of little discs
since we trivially have $\e_n^c(r) = \e_n(r)^* = H^*(\lD_n(r),\K)$
for each arity $r\in\N$.
We deduce from the relations $\Ass^{\vee} = \Ass^c\{-1\}$ and $\Poiss_n^{\vee} = \Poiss_n^c\{-n\}$
that we have the identity $\e_n^{\vee} = \e_n^c\{-n\}$,
for every $n\geq 1$.
We dually have $(\e_n^c)^\vee = \e_n\{n\}$, when we consider the Koszul dual operad of the cooperad $\e_n^c$.

\part{The \Linfty-structure of biderivation complexes}\label{part:biderivation complexes}
Recall that we denote by dg Hopf cooperad the structure of a cooperad
in the category of commutative dg algebras.
The goal of this part is to prove that the sets of morphisms associated to certain dg Hopf cooperads
of a particular shape are identified with the sets of solutions of the Maurer--Cartan equation
in an $L_{\infty}$~algebra of biderivations
associated to our dg Hopf cooperads.
To be more precise, we address an analogue of this question for dg Hopf $\La$-cooperads,
since we deal with Hopf cooperads endowed with a $\La$-structure throughout this work.

In a preliminary section, we revisit a definition of $L_{\infty}$~structures on dg vector spaces
from a geometric viewpoint.
In a second step, we explain the definition of an $L_{\infty}$~structure
on dg vector spaces of coderivations
associated to dg cooperads. We explain the definition of an extension of this $L_{\infty}$~structure
to dg vector spaces of biderivations associated to dg Hopf $\La$-cooperads afterwards,
and then we tackle the correspondence with morphisms
between dg Hopf $\La$-cooperads.
This dg vector space of biderivations represents, up to extra term, the deformation complex associated to our objects.

\section{Preliminaries: the geometric definition of \Linfty~structures and of \Linfty~maps}\label{sec:geometric Linfty}
The goal of this part, as we just explained, is to define $L_{\infty}$~structures on the dg vector spaces of coderivations
associated to dg $\La$-cooperads and on the dg vector spaces of biderivations
associated to dg Hopf $\La$-cooperads.
For our purpose, we also examine the definition of $L_{\infty}$~morphisms connecting such $L_{\infty}$~algebras.
We address these constructions in the next sections.

The complexity of the structure of a dg Hopf $\La$-cooperad makes a direct definition of such $L_{\infty}$~structures hard to handle.
Therefore, we explain a geometrical approach which eases the definition of these $L_{\infty}$~structures in this section
before tackling the applications to dg Hopf $\La$-cooperads.

\subsection{A way of defining \Linfty~structures}\label{subsec:geometric Linfty:structures}
Recall that an $L_{\infty}$~structure on a graded vector space $\DerL$
is defined by giving a differential (a square zero coderivation of degree $-1$) $D: S^+(W)\to S^+(W)$
on the cofree cocommutative coalgebra without counit $S^+(W)$
cogenerated by the shifted graded vector space $W = \DerL[1]$.
Equivalently, we can define an $L_{\infty}$~structure on the graded vector space $\DerL$ such that $W = \DerL[1]$
as a differential $D$
on the cofree counital cocommutative coalgebra $S(W)$
whose composite with the coaugmentation $\K\to S(W)$
vanishes. We say that $D$ forms a coaugmented differential on $S(W)$ when this condition holds. (Note that the composite of $D$
with the counit $S(W)\to\K$
tautologically vanishes too by the coderivation property of differentials.)
The following elementary proposition will provide us with a way of defining such a structure.

\begin{prop}\label{prop:geometric Linfty:structures}
Suppose that $(V,d)$ is dg vector space and $W$ is a graded vector space.
Suppose we have morphisms of graded vector spaces $F: S(W)\to V$, $\lambda: V\to W$, with $d F(1) = 0$,
and let $D: S(W)\to S(W)$ be the unique coaugmented coderivation on $S(W)$
such that $\pi\circ D = \lambda\circ d\circ F$,
where $\pi: S(W)\to W$ is the canonical projection.
Suppose that the following diagrams commute:
\begin{itemize}
\item (Projection condition)
\[
\begin{tikzcd}
S(W)\ar{d}{F}\ar{dr}{\pi} & \\
V\ar{r}{\lambda} & W \\
\end{tikzcd},
\]
\item (Tangentiality condition)
\[
\begin{tikzcd}
S(W)\ar{d}{F}\ar{r}{D} & S(W)\ar{d}{F} \\
V\ar{r}{d} & V \\
\end{tikzcd}.
\]
\end{itemize}
Then we have $D^2 = 0$ and hence the map $D$ defines an $L_{\infty}$~algebra structure on the graded vector space $\DerL$
such that $W = \DerL[1]$.
\end{prop}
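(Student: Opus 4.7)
My plan is to exploit the universal property of the cofree conilpotent cocommutative coalgebra $S(W)$: a coaugmented coderivation on $S(W)$ is uniquely determined by its projection onto the cogenerators via $\pi\colon S(W)\to W$. Hence I would reduce the identity $D^2=0$ to the weaker identity $\pi\circ D^2=0$, after checking that $D^2$ is itself a coaugmented coderivation.

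First, I would verify that $D$ is well-defined in the sense of the statement. The hypothesis $dF(1)=0$ ensures that the composite $\lambda\circ d\circ F\colon S(W)\to W$ vanishes on the coaugmentation $1\in S(W)$, so by the universal property it extends uniquely to a coaugmented coderivation $D$ on $S(W)$ with $\pi\circ D=\lambda\circ d\circ F$. Since $D$ has odd degree $-1$, the composite $D^2=\tfrac12[D,D]$ is automatically a coderivation (of degree $-2$), and $D^2(1)=D(0)=0$, so $D^2$ is again a coaugmented coderivation. It therefore suffices to prove $\pi\circ D^2=0$.

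The computation is then immediate: combining the defining identity of $D$ with the tangentiality assumption $F\circ D=d\circ F$ gives
\[
\pi\circ D^2 \;=\; (\pi\circ D)\circ D \;=\; \lambda\circ d\circ F\circ D \;=\; \lambda\circ d\circ d\circ F \;=\; 0,
\]
because $d^2=0$ on $V$. Consequently $D^2=0$, and $D$ defines the announced $L_\infty$-structure on $\DerL$ with $W=\DerL[1]$.

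I do not expect any serious obstacle here; the whole point of the proposition is that the universal property of cofree cocommutative coalgebras absorbs the combinatorial complexity of the $L_\infty$-relations, so that verifying the single identity $d^2=0$ on $V$ is enough. The role of the projection condition $\lambda\circ F=\pi$ is one of consistency: projecting the tangentiality square by $\lambda$ on the right yields $\lambda\circ F\circ D=\lambda\circ d\circ F$, which under the projection condition reduces to the defining relation $\pi\circ D=\lambda\circ d\circ F$. Thus the projection condition guarantees that the tangentiality square genuinely extends the definition of $D$ rather than overdetermining it, which is the natural hypothesis to record for the applications to cooperadic (bi)derivation complexes developed in the sequel.
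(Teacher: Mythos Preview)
Your proof is correct and follows essentially the same route as the paper: reduce $D^2=0$ to $\pi D^2=0$ by the coderivation property, then chase the identities. The only cosmetic difference is that you invoke the defining relation $\pi D=\lambda dF$ as the first step, whereas the paper uses the projection condition $\pi=\lambda F$ to write $\pi D^2=\lambda F D^2$ and then applies tangentiality twice; both chains pass through $\lambda dFD=\lambda d^2F=0$.
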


\begin{proof}
The map $D^2 = \frac{1}{2} [D,D]$ is a coderivation.
Hence, it suffices to check that we have the relation $\pi D^2=0$ in order get our conclusion,
and this result follows from the sequence of identities
\[
\pi D^2 = \lambda F D^2 = \lambda d F D = \lambda d^2 F = 0,
\]
where we just use the projection and tangentiality conditions
of the proposition.
\end{proof}

\begin{rem}\label{rem:geometric Linfty:structures:interpretation}
We want to remark on the geometric meaning underlying the above proposition.
(The content of this remark is somewhat vague, and will not be used in any mathematical arguments below.)
We may think of the graded vector spaces $W$ and $V$ as graded varieties and the map $F$ as a non-linear map between these graded varieties.
More concretely, for a nilpotent graded ring $R$,
the $R$-points of $W$ are the degree 0 elements of $R\otimes W$,
and the image of such a point $w$ is $F(\exp(w))$.
The projection condition implies that the map $F$ is an embedding of graded varieties.
The differential $d$ on $V$ should be thought of as a linear homological vector field on $V$.
The tangentiality condition states that this vector field is tangent to the image of $W$,
and hence can be pulled back to a homological vector field on $W$,
which is given by $D$.

The special element $F(1)=F(\exp(0))$ may be called the \emph{basepoint}. It must necessarily be $d$-closed and killed by $\lambda$.
The choice of a basepoint does not affect the definition of the $L_{\infty}$~structure,
and in fact the basepoint may be removed by using $S^+(W)$ instead of $S(W)$ in the above Proposition,
and also in Proposition \ref{prop:geometric Linfty:morphisms} below.
However, for later notational convenience we keep it here.
\end{rem}

\subsection{A way of defining $L_{\infty}$~morphisms}\label{sec:geometric Linfty:morphisms}
Recall that an $L_{\infty}$~morphism between $L_{\infty}$~algebras with $\DerL$ (respectively, $\DerL'$) as underlying graded vector space
and $D$ (respectively, $D'$) as associated differential
is defined by a morphism of dg coalgebras $\phi: (S^+(W),D)\to(S^+(W'),D')$,
where we set $W = \DerL[1]$ (respectively, $W' = \DerL'[1]$)
and we consider the commutative dg coalgebra $(S^+(W),D)$ (respectively, $(S^+(W'),D')$)
determined by our $L_{\infty}$~structure on $\DerL$ (respectively, $\DerL'$).
Equivalently, we can define an $L_{\infty}$~morphism by giving a morphism of dg coalgebras $\phi: (S(W),D)\to(S(W'),D')$
that intertwines the counit and the coaugmentation
on $S(W)$ and $S(W')$.
The following proposition will provide us with a way of defining such morphisms.

\begin{prop}\label{prop:geometric Linfty:morphisms}
Suppose that $(V,d)$ is a differential graded vector space and let $W,W'$ be graded vector spaces.
Suppose that $F: S(W)\to V$ (respectively, $F': S(W')\to V$) and $\lambda: V\to W$ (respectively, $\lambda': V\to W'$)
are linear maps such that the conditions of Proposition \ref{prop:geometric Linfty:structures}
hold
and let $D: S(W)\rightarrow S(W)$ (respectively, $D': S(W')\rightarrow S(W')$)
denote the differential
returned by the result of this proposition.
Suppose further that we have a map of coaugmented counital cocommutative coalgebras $G: S(W)\to S(W')$
such that the following diagram commutes:
\[
\begin{tikzcd}
S(W)\ar{r}{\pi}\ar{d}{F}\ar[bend right,swap]{dd}{G} & W \\
V \ar{ur}{\lambda}\ar{dr}{\lambda'} & \\
S(W')\ar{r}{\pi'}\ar[swap]{u}{F'} & W'
\end{tikzcd}.
\]
Then we have $D' G = G D$ and, hence, the map $G$ defines an $L_{\infty}$~morphism $G: (S(W),D)\to(S(W'),D')$
between the $L_{\infty}$~algebras $\DerL$ and $\DerL'$ defined such that $W = \DerL[1]$
and $W' = \DerL'[1]$.
\end{prop}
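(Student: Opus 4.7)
The plan is to mimic the argument of Proposition~\ref{prop:geometric Linfty:structures}, exploiting the universal property of cofree cocommutative coalgebras. Recall that a coalgebra map $G: S(W)\to S(W')$ is determined by $\pi'\circ G$, and more generally any $G$-coderivation $K: S(W)\to S(W')$ (i.e., a linear map satisfying $\Delta'\circ K = (K\otimes G + G\otimes K)\circ\Delta$) is determined by $\pi'\circ K$. Since $D$ and $D'$ are coderivations and $G$ is a coalgebra map, the difference $D' G - G D$ is automatically a $G$-coderivation of degree $-1$, so proving $D' G = G D$ reduces to verifying $\pi'(D' G - G D) = 0$.

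For the first term I would use the defining equation $\pi'\circ D' = \lambda'\circ d\circ F'$ from Proposition~\ref{prop:geometric Linfty:structures}, together with the compatibility $F'\circ G = F$ extracted from the hypothesis diagram, to obtain $\pi' D' G = \lambda' d F' G = \lambda' d F$. For the second term, the projection condition $\pi' = \lambda'\circ F'$ combined once more with $F'\circ G = F$ yields $\pi'\circ G = \lambda'\circ F$, and then the tangentiality condition $F\circ D = d\circ F$ gives $\pi' G D = \lambda' F D = \lambda' d F$. The two expressions coincide, hence $\pi'(D' G - G D) = 0$ and the $G$-coderivation argument forces $D' G = G D$.

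Since $G$ already respects the coaugmentation and counit by assumption, and now also commutes with the differentials by the computation above, it is a morphism of coaugmented counital dg cocommutative coalgebras $(S(W), D)\to(S(W'), D')$, which is precisely the datum of an $L_{\infty}$-morphism between the $L_{\infty}$-algebras on $\DerL$ and $\DerL'$ defined by $W = \DerL[1]$ and $W' = \DerL'[1]$. The one delicate point is the $G$-coderivation principle that reduces the verification to the projection $\pi'$; the remainder is a short diagram chase combining the projection condition, the tangentiality condition, and the factorization $F = F'\circ G$, so I do not anticipate any serious obstacle beyond careful bookkeeping of the cofree coalgebra structure.
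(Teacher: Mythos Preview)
Your proof is correct and follows essentially the same route as the paper: reduce the identity $D'G = GD$ to its projection under $\pi'$ using the coderivation property, then verify $\pi' D' G = \lambda' d F = \pi' G D$ via the projection condition, the tangentiality condition, and the factorization $F = F'G$. The only cosmetic difference is that the paper presents the computation as a single chain of equalities and phrases the reduction as ``$D'G$ and $GD$ are both coderivations'' rather than your (slightly more precise) $G$-coderivation language.
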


\begin{proof}
The maps $D'G$ and $G D$ are both coderivations.
Hence, it suffices to check that we have the relation $\pi' D' G = \pi' G D$ in order get our conclusion,
and this result follows from the sequence of identities
\[
\pi' D' G = \lambda' F' D' G = \lambda' d F' G = \lambda' d F = \lambda' F D = \lambda' F' G D = \pi' G D,
\]
where we use the assumption of our proposition together with the projection and tangentiality conditions
of Proposition \ref{prop:geometric Linfty:structures}.
\end{proof}

\subsection{Remark:
}\label{subsec:linear maps remark}
Suppose that $V$ and $W$ are graded vector spaces and let $F: S(W)\to V$ be a map of graded vector spaces.
Then, for any augmented graded commutative algebra $R$ whose augmentation ideal $\overline{R}\subset R$
only contains nilpotent elements (in what follows, we just say that $R$ is a nilpotent graded ring for short),
the map $F$
determines a nonlinear map
\begin{equation}\label{eq:expmap}
\begin{aligned}[t]
\Phi_{F,R}: (W\otimes\overline{R})^0\to V\otimes\overline{R} \\
w\mapsto F(\exp(w))
\end{aligned},
\end{equation}
where $(W\otimes\overline{R})^0\subset W\otimes R$ is the space of degree 0 elements in $W\otimes\overline{R}$,
and we set
\[
\exp(w) = \sum_{n\geq 0}\frac{1}{n!} w^n.
\]
This construction is functorial in $R$.

Let us observe that the knowledge of the above map \eqref{eq:expmap} for any nilpotent graded ring $R$
uniquely determines $F$.
Indeed, suppose we want to recover $F$ on $S^n(W)$ from $\phi_{F,R}$,
where $S^n(W)$ denotes the component of weight $n$
of the cofree cocommutative coalgebra $S(W)$.
Let $w_1,\dots,w_n\in W$ be homogeneous elements of degrees $d_1,\dots,d_n$.
We set
\[
w = \epsilon_1 w_1 + \cdots + \epsilon_n w_n,
\]
where $\epsilon_1,\dots,\epsilon_n$ denote formal variables of degrees $-d_1,\dots,-d_n$,
and we take the nilpotent graded ring such that $R = \K[\epsilon_1,\dots,\epsilon_n]/(\epsilon_1^2,\dots,\epsilon_n^2)$.
We readily see that the coefficient of the monomial $\epsilon_1\cdots\epsilon_n$
in the image of this element
under our map
\[
\Phi_{F,R}(w) = \sum_{n\geq 0} \frac{1}{n!} F(w^n)
\]
is given by:
\[
\frac{1}{n!}\sum_{\sigma\in\Sigma_n}\pm F(w_{\sigma(1)}\cdots w_{\sigma(n)})
= (-1)^{\sum_{i<j} d_i d_j} F(w_1\cdots w_n).
\]
The conclusion follows. Note that we can restrict ourselves to the case where the nilpotent graded ring $R$
is finite dimensional over the ground field
to get this correspondence, since this is the case of the ring $R = \K[\epsilon_1,\dots,\epsilon_n]/(\epsilon_1^2,\dots,\epsilon_n^2)$
which we use in the above construction.

In what follows, we often provide a description of the map $\Phi_{F,R}$ in order to define our maps $F$
on the cofree cocommutative coalgebra, with the implicit understanding
that we use the polarization process explained in this paragraph
to recover $F$ from $\Phi_{F,R}$.
We mainly use this correspondence to avoid tedious sign calculations.
Indeed, the signs occurring in $\Phi_{F,R}$ are generally simpler than those in $F$ itself since the elements
which we consider in the definition of this map $\Phi_{F,R}$
have degree zero by construction.

\subsection{Remark: A way of verifying the tangentiality condition}\label{subsec:tangentiality condition}
In practice, the hardest part in the verification of the conditions of Proposition \ref{prop:geometric Linfty:structures}
is the tangentiality condition $d F = F D$.
We record a small trick which we may use to simplify this verification.
We first see, by using the polarization argument of the previous subsection,
that the verification of our tangential condition is equivalent
to the verification of the identity:
\[
d F(\exp(w)) = F D(\exp(w)),
\]
for any $w\in(W\otimes\overline{R})^0$, and for any nilpotent graded ring $R$.
We now have the following observation:

\begin{lemm}\label{lemm:tangentiality trick}
If $d F(\exp(w))$ and $F D(\exp(w))$ both lie in a subspace of $V$ that is isomorphic to $W$
under the map $\lambda: V\to W$,
then the tangentiality condition $d F(\exp(w)) = F D(\exp(w))$ holds.
\end{lemm}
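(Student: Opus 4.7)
The plan is to exploit the two defining identities we already have in hand: the projection condition $\lambda \circ F = \pi$ and the definition of $D$ as the unique coaugmented coderivation satisfying $\pi \circ D = \lambda \circ d \circ F$. Together these give, for any $w \in (W \otimes \overline{R})^0$ and any nilpotent graded ring $R$, the equality
\[
\lambda\bigl(d F(\exp(w))\bigr) \;=\; \pi\bigl(D(\exp(w))\bigr) \;=\; \lambda\bigl(F D(\exp(w))\bigr),
\]
where the first equality uses $\pi D = \lambda d F$ and the second uses $\lambda F = \pi$. In particular, the difference
\[
\Delta(w) \;:=\; d F(\exp(w)) - F D(\exp(w)) \;\in\; V
\]
is always annihilated by $\lambda$.

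Now I would invoke the hypothesis of the lemma: by assumption both $d F(\exp(w))$ and $F D(\exp(w))$ lie in a single subspace $V_0 \subseteq V$ on which the restriction $\lambda|_{V_0}: V_0 \to W$ is an isomorphism. Consequently $\Delta(w) \in V_0$ as well, and $\lambda(\Delta(w)) = 0$ combined with the injectivity of $\lambda|_{V_0}$ forces $\Delta(w) = 0$. This is exactly the tangentiality condition evaluated on the group-like element $\exp(w)$, and by the polarization argument of Section \ref{subsec:linear maps remark} this suffices to conclude that $d F = F D$ as maps $S(W) \to V$.

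There is essentially no obstacle here; the lemma is a bookkeeping device designed to make the tangentiality verifications in the forthcoming sections lightweight. The only point to be attentive to is that the subspace $V_0$ in applications will typically depend on $w$ only through its ``linear part,'' and that one must make sure the hypothesis of the lemma is checked uniformly in $w$ (or at least separately for the two expressions $d F(\exp(w))$ and $F D(\exp(w))$) so that their difference genuinely lands in a common $V_0$ with $\lambda|_{V_0}$ injective. Once this is in place, the argument reduces to the three-line chain of equalities above.
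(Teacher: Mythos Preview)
Your proof is correct and follows essentially the same approach as the paper: apply $\lambda$ to both sides, use the projection condition $\lambda F = \pi$ and the defining relation $\pi D = \lambda d F$ to get $\lambda(dF(\exp(w))) = \lambda(FD(\exp(w)))$, then conclude via injectivity of $\lambda$ on the common subspace. The paper's proof is slightly terser and does not phrase it via the difference $\Delta(w)$, but the content is identical.
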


\begin{proof}
By the assumption of the lemma, we just have to check that we have the relation $\lambda d F(\exp(w)) = \lambda F D(\exp(w))$ in $W$.
Recall that we have the identity $\lambda d F = \pi D$ by definition of our differential $D$.
Now, this relation, together with the projection condition of Proposition~\ref{prop:geometric Linfty:structures},
implies that we have the identities
\[
\lambda d F(\exp(w)) = \pi D(\exp(w)) = \lambda F D(\exp(w))
\]
and hence we are done.
\end{proof}

\begin{rem}\label{rem:tangentiality trick}
Note that, in general, we have the identity
\[
D\exp(w) = \frac{d}{d\epsilon}\exp(w + \epsilon\eta)
\]
where $\epsilon$ is a formal variable of degree $1$ such that $\epsilon^2 = 0$, and we set
\[
\eta = \pi D(\exp(w)).
\]
This observation will be useful when we have to check the condition of Lemma \ref{lemm:tangentiality trick}.
\end{rem}

\section{Coderivations and the deformation complex of dg cooperads }\label{sec:coderivation complexes}
We study the complexes (the dg vector spaces) of coderivations associated to dg cooperads in this section.
We also consider deformation complexes which correspond to the dg vector spaces
of coderivations with values in certain coresolutions
of our cooperads.
We consider dg cooperads satisfying $\op C(0) = 0$. We always assume that our cooperads are conilpotent in the sense explained in the preliminary part
of this article (but we do not assume that the component of arity one
of our cooperad is reduced to the ground field
in general).
We also deal with dg cooperads equipped with a $\La$-structure.

We recall the definition of the dg vector spaces of coderivations
associated to a cooperad in the first subsection
of this section.
We then study the dg vector spaces of coderivations
with values in the bar construction
of an operad.
We first check that the dg vector space of coderivations
associated to the usual bar construction
of an operad
inherits a dg Lie algebra structure.
We explain this construction in the second subsection of this section.
We study an extension of this construction in the context of $\La$-cooperads afterwards.
We explain the definition of $\La$-structures on the bar construction in the third subsection of the section
and we prove in the fourth subsection that the dg space of coderivations
associated to this bar construction in the category of $\La$-cooperads
inherits the structure of a dg Lie algebra
as well.

The deformation complexes of cooperads, which we consider at the beginning of this introduction,
actually consist of the dg vector spaces of coderivations
with values in certain coresolution of cooperads
given by the bar construction
associated to an operad.
We study an extension of these deformation complexes
to the case where we take a bicomodule
as coefficients
in the fifth subsection
of this section.

\subsection{Coderivations of cooperads}\label{subsec:coderivation complexes:coderivations}
Let $\op B$ and $\op C$ be dg cooperads and let $\phi: \op B\to\op C$ be a morphism of dg cooperads.
For our purpose, we assume that $\op B$ and $\op C$ are both coaugmented over the unit cooperad, conilpotent, and that $\phi: \op B\to\op C$
preserves this coaugmentation.
Let ${\op B}^{\circ}$ (respectively, ${\op C}^{\circ}$) be the graded cooperad underlying ${\op B}$ (respectively, $\op C$)
which we obtain by forgetting about the differential of our object.
For a formal variable $\epsilon = \epsilon_{-k}$ of degree $-k$, we consider the algebra of dual numbers $\K_{\epsilon} := \K[\epsilon]/(\epsilon^2)$.
We then define a degree $k$ coderivation of the cooperad morphism $\phi$
as a map of graded $\Sigma$-collections $\theta: {\op B}^{\circ}\to{\op C}^{\circ}$
of degree $k$ such that the map
\[
\phi+\epsilon\theta: {\op B}^{\circ}\otimes\K_{\epsilon}\to{\op C}^{\circ}\otimes\K_{\epsilon}
\]
defines a morphism of graded cooperads over $\K_\epsilon$. We also assume that this morphism
preserves the coaugmentation which we attach to our objects.
We equivalently have the vanishing relation $\theta(1) = 0$ when we consider the unit element $1 = \eta(1)\in{\op B}(1)$
which we associated to the coaugmentation morphism $\eta: \K\to{\op B}(1)$
of the cooperad ${\op B}$.
We equip this graded vector space of coderivations with the differential
defined by the usual commutator formula $d\theta = d_{\op C}\circ\theta-(-1)^k\theta\circ d_{\op B}$,
where we consider the internal differential of our dg cooperads.
We adopt the notation
\[
\CoDer_{\dg\Sigma}({\op B}\stackrel{\phi}{\lo}{\op C})
\]
for this dg vector space of coderivations associated to a morphism of conilpotent dg cooperads $\phi: {\op B}\to{\op C}$.

If the cooperads $\op B$ and $\op C$ are equipped with a coaugmentation over $\Com^c$ and carry a $\La$-structure
so that the map $\phi: {\op B}\to \op C$ defines a morphism of $\La$-cooperads,
then we also consider the dg vector space
\[
\CoDer_{\dg\La}({\op B}\stackrel{\phi}{\lo}{\op C})\subset\CoDer_{\dg\Sigma}({\op B}\stackrel{\phi}{\lo}{\op C})
\]
formed by the coderivations that intertwine the action of the category $\Lambda$ and vanish
over the coaugmentation $\eta: \Com^c\to{\op B}$.
Note that the map $\phi+\epsilon\theta: {\op B}^{\circ}\otimes\K_{\epsilon}\to{\op C}^{\circ}\otimes\K_{\epsilon}$
defines a morphism of graded $\La$-cooperads over $\K_\epsilon$.
when $\theta$ is a coderivation of $\La$-cooperads.


In what follows, we also consider an extended version of this dg vector space of coderivations
which we associate to a certain extension ${\op C}^+$
of the cooperad $\op C$.

For the sake of clarity, we explain the dual construction of an extension ${\op P}^+$
of an object of the category of dg operads $\op P$
first.
In short, this operad ${\op P}^+$ is defined by freely adjoining a unary operation $D$ of degree $1$
to our operad $\op P$,
and by providing this operad ${\op P}^+$
with the differential such that
\begin{align*}
d_{{\op P}^+}(D) & := -D^2 \\
\text{and}\quad
d_{{\op P}^+}(p) & := d_{\op P}(p) + D\circ p - (-1)^{|p|}\sum_{i=1}^r p\circ_i D,
\end{align*}
for any $p\in{\op P}(r)$. Formally, this operad $\op P^+$ is defined by taking the coproduct of the object $\op P$ with the free graded operad $\op F(D)$
generated by the unary operation $D$ in the category of operads,
and by providing this graded operad with the differential determined by the above formulas
on the summand $\op P$ and on the extra generating element $D\in\op F(D)$
of our object.
We refer to \cite[Section A.5]{Fr} for an explicit description, in terms of semi-alternate two-colored trees, of a coproduct of this form $\op P\vee\op F(D)$
in the category of operads.
Recall simply that these semi-alternate two-colored trees
represent formal composites of elements of the operad $\op P$
with the operation $D$ inside the coproduct $\op P\vee\op F(D)$.
The differential $d_{{\op P}^+}(p)$ of an element $p\in{\op P}(r)$ in $\op P\vee\op F(D)$
has the following description
\[
d_{{\op P}^+}(p)\;=\;\begin{tikzpicture}[baseline=-.65ex]
 \node (a) at (0,.25) {$d_{\op P}(p)$};
 \draw (a) edge +(0,.5) edge+(-0.25,-.5) edge+(0.25,-.5) edge +(-0.75,-.5) edge +(0.75,-.5);
\end{tikzpicture}\;-\;\pm \begin{tikzpicture}[baseline=-.65ex]
 \node (a) at (0,.25) {$p$};
 \node (b) at (0.25,-.5) {$D$};
 \draw (a) edge +(0,.5) edge (b) edge +(-0.25,-.5) edge +(-0.75,-.5) edge +(0.75,-.5);
 \draw (b) edge +(0,-.5);
\end{tikzpicture}
\;+\;\begin{tikzpicture}[baseline=-.65ex]
 \node (a) at (0,.25) {$D$};
 \node (b) at (0,-.5) {$p$};
 \draw (a) edge (b) edge +(0,.5);
 \draw (b) edge +(-0.75,-.5) edge +(-0.25,-.5) edge +(0.25,-.5) edge +(0.75,-.5);
\end{tikzpicture}
\]
when we use this tree-wise representation of our coproduct of cooperads.

If $\op P$ is equipped with the structure of a $\La$-operad, then so is the extended operad ${\op P}^+$. Indeed, we can provide the free operad $\op F(D)$
with a trivial $\La$-structure,
which we determine by the relation $\epsilon(D) = 0$ when we take the image of the generating operation $D$
under the augmentation $\epsilon: \op F(D)\to\Com$.
Then we merely use that the coproduct $\op P\vee\op F(D)$ of the objects $\op P$ and $\op F(D)$ in the category of ordinary operads
also represents the coproduct of these objects $\op P$ and $\op F(D)$
in the category of $\La$-operads (see \cite[Remark A.5.11]{Fr})
and hence inherits a natural $\La$-operad structure
to get our result.

If $(V,d)$ is a dg vector space equipped with an action of the operad ${\op P}^+$,
then we readily see that the object $(V,d+D)$, defined by adding the action of the operation $D$ on $V$
to the internal differential $d$, forms a dg vector space
and that this dg vector space inherits an action of the operad ${\op P}$.
Thus, we can identify an algebra over the operad ${\op P}^+$ with the structure formed by an algebra over the operad $\op P$
together with a deformation of the differential.


We proceed as follows to dualize the above construction in the context where we have a dg cooperad $\op C$.
We first consider the cofree (conilpotent) graded cooperad ${\op F}^c(D)$
cogenerated by the operation $D$ in arity one.
We can also identify this cooperad given by the cofree (conilpotent) coalgebra cogenerated by $D$
in arity one
and which vanishes otherwise.
We explicitly have ${\op F}^c(D)(1) = T^+(D)$ and ${\op F}^c(D)(r) = 0$ for $r>1$,
where we use the notation $T^+(D) = \bigoplus_{n\geq 1} \K D^{\otimes n}$
for the graded tensor coalgebra without counit cogenerated by $D$.
We then define the cooperad ${\op C}^+$ by taking the cartesian product ${\op C}\times{\op F}^c(D)$ of the objects ${\op C}$ and ${\op F}^c(D)$
in the category of cooperads
and by providing this graded cooperad with the differential $d_{{\op C}^+}: {\op C}\times{\op F}^c(D)\to{\op C}\times{\op F}^c(D)$
whose projection onto $\K D$ is induced by the map $T^+(D)\rightarrow\K D$
such that
\[
D^{\otimes m}\mapsto\begin{cases} -D, & \text{if $m=2$}, \\
0, & \text{otherwise},
\end{cases}
\]
and whose projection onto ${\op C}$ is given by the following mapping
\[
\begin{tikzpicture}[baseline=-.65ex]
 \node (a) at (0,.25) {$c$};
 \draw (a) edge +(0,.5) edge+(-0.25,-.5) edge+(0.25,-.5) edge +(-0.75,-.5) edge +(0.75,-.5);
\end{tikzpicture}
\mapsto d_{\op C}(c),
\qquad\begin{tikzpicture}[baseline=-.65ex]
 \node (a) at (0,.25) {$c$};
 \node (b) at (0.25,-.5) {$D$};
 \draw (a) edge +(0,.5) edge (b) edge +(-0.25,-.5) edge +(-0.75,-.5) edge +(0.75,-.5);
 \draw (b) edge +(0,-.5);
\end{tikzpicture}
\mapsto - \pm c,
\qquad\begin{tikzpicture}[baseline=-.65ex]
 \node (a) at (0,.25) {$D$};
 \node (b) at (0,-.5) {$c$};
 \draw (a) edge (b) edge +(0,.5);
 \draw (b) edge +(-0.75,-.5) edge +(-0.25,-.5) edge +(0.25,-.5) edge +(0.75,-.5);
\end{tikzpicture}
\mapsto c,
\]
for any $c\in\op C(r)$, $r>0$, where we use a tree-wise description of the product ${\op C}\times{\op F}^c(D)$
which is the analogue, for cooperads, of the tree-wise description of coproducts
which we consider in the context of operads (we also refer to \cite[Paragraph C.1.16]{Fr} for this construction of products in the category of cooperads).
To be more explicit, we use that the trees in our picture correspond to certain summands of the expansion of the product ${\op C}\times{\op F}^c(D)$
in the base category of dg vector spaces.
Then we just assume that the composite of the differential $d_{{\op C}^+}: {\op C}\times{\op F}^c(D)\to{\op C}\times{\op F}^c(D)$
with the canonical projection ${\op C}\times{\op F}^c(D)\to{\op C}$
is given by the maps depicted in our figure on these summands
and vanishes otherwise.
Let us note that we use the coderivation relation to determine the value of the differential $d_{{\op C}^+}$
on the whole cartesian product ${\op C}\times{\op F}^c(D)$
from the above assignments.

If $\op C$ is equipped with the structure of a $\La$-cooperad, then so is ${\op C}^+$ with the coaugmentation $\eta: \Com\to{\op C}^+$
and the action of the category $\Lambda$ determined by the coaugmentation $\eta: \Com\to\op C$
and the $\La$-structure of our coaugmented $\La$-cooperad $\op C$.

To any morphism of cooperads (respectively, of coaugmented $\La$-cooperads) $\phi: {\op B}\to{\op C}$,
we associate the extended morphism $\phi^+: {\op B}\to{\op C}^+$
given by $\phi$ on the factor $\op C$ of the cartesian product ${\op C}\times{\op F}^c(D)$
and by the null map on the cofree cooperad ${\op F}^c(D)$.
Then we define our extended coderivation complexes by:
\[
\xCoDer_{\dg\Sigma}({\op B}\stackrel{\phi}{\to}{\op C}) = \CoDer_{\dg\Sigma}({\op B}\stackrel{\phi^+}{\to}{\op C}^+)
\quad\text{and}
\quad\xCoDer_{\dg\La}({\op B}\stackrel{\phi}{\to}{\op C}) = \CoDer_{\dg\La}({\op B}\stackrel{\phi^+}{\to}{\op C}^+).
\]

\subsection{The dg Lie algebra structure of the coderivation complexes of dg cooperads}\label{subsec:coderivation complexes:plain cooperads}
We now assume that the dg cooperad ${\op C}$ in the construction of the previous subsection
is given by the bar construction of an augmented dg operad ${\op P}$:
\[
{\op C} = B({\op P}).
\]
Recall that the bar construction of an augmented operad ${\op P}$
is a dg cooperad $B({\op P})$
which is cofree when we forget about differentials.
We explicitly have $B({\op P})^{\circ} = {\op F}^c(\overline{\op P}{}^{\circ}[1])$, where $\overline{\op P}$
denotes the augmentation ideal of our operad, and where we again use the notation ${\op F}^c(-)$
for the conilpotent cofree cooperad functor on the category of symmetric collections
such that $\op M(0) = 0$.
Recall also that the differential of the bar construction
is the coderivation
$d: {\op F}^c(\overline{\op P}{}^{\circ}[1])\to{\op F}^c(\overline{\op P}{}^{\circ}[1])$
given,
on the cogenerating collection $\overline{\op P}{}^{\circ}[1]$ of this cofree cooperad ${\op F}^c(\overline{\op P}{}^{\circ}[1])$,
by the internal differential of our operad $d_{\op P}: {\op P}\to{\op P}$
and the restriction of the partial composition products $\circ_i: {\op P}(k)\otimes{\op P}(l)\to{\op P}(k+l-1)$
to the augmentation ideal $\overline{\op P}\subset{\op P}$,
after observing that the domains of these operations
correspond to summands of weight two
in the cofree cooperad.

We have the following well-known statement:

\begin{prop}\label{prop:coderivation lie algebra}
If we assume ${\op C} = B({\op P})$ as above, then the morphisms of conilpotent coaugmented graded cooperads $\phi: {\op B}^{\circ}\to{\op C}^{\circ}$
are in bijection with the morphisms of graded $\Sigma$-collections
\[
\alpha_{\phi}: \overline{\op B}{}^{\circ}\to\overline{\op P}{}^{\circ}[1]
\]
or, equivalently, with elements of degree $-1$ in the graded hom-object
of $\Sigma$-collections
\[
\DerL = \Hom_{\gr\Sigma}(\overline{\op B},\overline{\op P})
\]
such as defined in Section~\ref{subsec:collections}.
This map $\alpha_{\phi}$, which we associate to a morphism of conilpotent graded cooperads $\phi: {\op B}^{\circ}\to{\op C}^{\circ}$,
is explicitly defined by taking by the projection of our morphism $\phi: {\op B}^{\circ}\to{\op C}^{\circ}$
onto the cogenerating collection of the cofree cooperad $B({\op P})^{\circ} = {\op F}^c(\overline{\op P}{}^{\circ}[1])$
on the target and its restriction to the coaugmentation coideal of the cooperad ${\op B}$
on the source.

The hom-object $\DerL = \Hom_{\gr\Sigma}(\overline{\op B},\overline{\op P})$ moreover inherits a dg Lie algebra structure
such that the above correspondence restricts to a bijection between the set
of dg cooperad morphisms $\phi: {\op B}\to{\op C}$
and the set of Maurer-Cartan elements
in $\DerL$.

Besides, we have an isomorphism
\[
\CoDer_{\dg\Sigma}({\op B}\stackrel{\phi}{\to}{\op C})[-1]\cong\DerL^{\alpha_{\phi}},
\]
where we consider the (degree shift of the) coderivation complex associated to our morphism of dg cooperads $\phi: {\op B}\to{\op C}$
on the left-hand side and the twisted dg Lie algebra associated to the Maurer-Cartan element $\alpha_{\phi}$
corresponding to $\phi$ on the right-hand side.
This shifted coderivation complex accordingly inherits a dg Lie algebra structure by transport of structure
from the twisted dg Lie algebra $\DerL^{\alpha_{\phi}}$.
\end{prop}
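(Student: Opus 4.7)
The plan is to build on the universal property of the conilpotent cofree cooperad. Since $B(\op P)^\circ = \op F^c(\overline{\op P}{}^\circ[1])$ and $\op B$ is assumed conilpotent, any coaugmentation-preserving morphism of conilpotent graded cooperads $\phi: \op B^\circ\to B(\op P)^\circ$ is uniquely determined by its composite with the projection $B(\op P)^\circ\to\overline{\op P}{}^\circ[1]$ onto the cogenerators, and the coaugmentation preservation forces this composite to vanish on the image of the coaugmentation $\eta:\op I\to\op B$; this yields the bijection between such morphisms $\phi$ and maps of graded $\Sigma$-collections $\alpha_\phi: \overline{\op B}{}^\circ\to\overline{\op P}{}^\circ[1]$ and hence with degree $-1$ elements of $\DerL=\Hom_{\gr\Sigma}(\overline{\op B},\overline{\op P})$. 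The same universal property applied to the cofree cooperad $B(\op P)^\circ$, together with the coderivation condition relative to $\phi$, shows that a degree $k$ coderivation $\theta$ is uniquely determined by $\pi\circ\theta: \overline{\op B}{}^\circ\to\overline{\op P}{}^\circ[1]$, establishing a graded isomorphism $\CoDer_{\dg\Sigma}(\op B\stackrel{\phi}{\to}\op C)\cong\Hom_{\gr\Sigma}(\overline{\op B},\overline{\op P})[1]$.

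The second step is to equip $\DerL$ with its convolution dg Lie algebra structure. I would define a pre-Lie product $\alpha\star\beta$ using the partial infinitesimal decomposition coproducts of the cooperad $\op B$, followed by $\alpha$ and $\beta$ on the two factors, followed by the partial composition products of the operad $\op P$; explicitly, one picks out the summand of the decomposition of an element $c\in\overline{\op B}$ into two factors, applies $\alpha$ to the ``upper'' factor and $\beta$ to the ``lower'' factor, and composes at the distinguished input. The co-associativity of the partial coproducts together with the associativity of the partial composition products yields the pre-Lie relation (this is the standard verification for the convolution operad of a cooperad into an operad), and antisymmetrization furnishes the Lie bracket $[-,-]$. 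The internal differentials of $\op B$ and $\op P$ induce a differential on $\DerL$ in the standard way, which is easily seen to be a Lie derivation.

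The third step is the Maurer--Cartan correspondence. Writing out the condition that $\phi$ commutes with differentials and projecting onto the cogenerators $\overline{\op P}{}^\circ[1]$, one gets three contributions: the one from the internal differential of $B(\op P)$ coming from $d_{\op P}$ matches $d\alpha_\phi$; the one from the bar differential, which is built from the partial composition products of $\op P$ applied to the two-vertex summand of the cofree cooperad, matches $\tfrac12[\alpha_\phi,\alpha_\phi]$ after one uses the decomposition of $\op B$ to produce the two factors; and the internal differential of $\op B$ contributes the remaining $d\alpha_\phi$ term. Putting these together yields $d\alpha_\phi+\tfrac12[\alpha_\phi,\alpha_\phi]=0$. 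Conversely any Maurer--Cartan element defines by the cofreeness a coaugmentation preserving morphism which the same calculation shows to be compatible with the differentials.

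Finally, for part 3, I would verify that under the graded isomorphism $\CoDer[-1]\cong\DerL$ established in the first paragraph, the commutator differential $\theta\mapsto d_{B(\op P)}\theta-(-1)^{|\theta|}\theta\, d_{\op B}$ is carried to the twisted differential $\beta\mapsto d\beta+[\alpha_\phi,\beta]$ on $\DerL^{\alpha_\phi}$: the internal pieces give $d\beta$, while the contribution from the bar differential interacting with $\phi$ on one tensor factor and $\theta$ on the other produces exactly the bracket $[\alpha_\phi,\beta]$, by the same two-vertex calculation as in the MC step. This transports the dg Lie structure from $\DerL^{\alpha_\phi}$ to $\CoDer_{\dg\Sigma}(\op B\stackrel{\phi}{\to}\op C)[-1]$. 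The only real difficulty throughout is the careful bookkeeping of the Koszul signs introduced by the degree shift $[1]$ on $\overline{\op P}$ and by the coderivation convention; once one adopts a consistent convention these reduce to the standard signs for the convolution (pre-)Lie algebra structure.
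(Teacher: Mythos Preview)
Your proof is correct and follows the standard convolution Lie algebra approach: you build the pre-Lie product $\alpha\star\beta$ directly from the infinitesimal decomposition of $\op B$ and the partial compositions of $\op P$, antisymmetrize, and then verify the Maurer--Cartan and twisted-differential identities by projecting onto cogenerators. This is essentially the argument one finds in Loday--Vallette.

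The paper takes a genuinely different route. Rather than constructing the bracket by hand, it invokes the abstract machinery of Proposition~\ref{prop:geometric Linfty:structures}: it sets $V=\Hom_{\dg\Sigma}(\overline{\op B},\overline{\op C})$, $W=\Hom_{\gr\Sigma}(\overline{\op B},\overline{\op P}[1])$, defines $F:S(W)\to V$ via the polarization trick (so that $F(\exp(w))$ is the cooperad morphism determined by $w$), checks the projection and tangentiality conditions (the latter via Lemma~\ref{lemm:tangentiality trick}, using that both $dF(\exp(w))$ and $F(D\exp(w))$ are coderivations of the morphism $F(\exp(w))$), and then reads off from the explicit form of $F$ and of the bar differential that the resulting $L_\infty$~structure has no terms beyond the quadratic one. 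What your direct approach buys is transparency and minimality of input for this particular statement. What the paper's approach buys is a reusable template: the authors say explicitly that these arguments ``will later on serve as a blueprint for analogous results about Hopf cooperads'' (Proposition~\ref{prop:biderivation Lie algebra} and Proposition~\ref{prop:Lambda-biderivation Lie algebra}), where one must simultaneously handle a commutative algebra structure and a cooperad structure and the convolution pre-Lie formula is no longer available in any obvious form. The geometric framework also makes the construction of the $L_\infty$~morphism in Proposition~\ref{prop:biderivations to coderivations forgetful morphism} essentially automatic.
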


This proposition can be found in \cite{LV} for instance. Nevertheless, we prefer to revisit the proof of this statement.
The arguments which we give in our proof will later on serve as a blueprint
for analogous results about Hopf cooperads.

\begin{proof}
The first statement of the Proposition is an immediate consequence of the cofree structure of the bar construction
in the category of graded cooperads.
The same argument implies that the space of coderivations $\CoDer_{\dg\Sigma}({\op B}\stackrel{\phi}{\to}{\op C})$
is isomorphic to the hom-object $\Hom_{\gr\Sigma}(\overline{\op B},\overline{\op P}[1])$
as a graded vector space.
We revisit the definition of this bijection later on in this proof.
We focus on the construction of our dg Lie algebra structure on the graded vector space $\DerL = \Hom_{\gr\Sigma}(\overline{\op B},\overline{\op P})$
for the moment.
We first apply the construction of Proposition \ref{prop:geometric Linfty:structures} to produce an $L_{\infty}$~structure on this object.
We will check afterwards that this $L_{\infty}$~structure reduces to an ordinary dg Lie algebra structure.

We keep the notation of Proposition \ref{prop:geometric Linfty:structures}.
We first set
\[
V := \Hom_{\dg\Sigma}(\overline{\op B},\overline{\op C}).
\]
We provide this object with its natural differential, which is defined by the commutator formula
\[
d f = d_{\op C}\circ f - (-1)^{|f|}f\circ d_{\op B},
\]
for any homogeneous map $f\in\Hom_{\dg\Sigma}(\overline{\op B},\overline{\op C})$.
We then set
\[
W := \underbrace{\Hom_{\gr\Sigma}(\overline{\op B},\overline{\op P})}_{= \DerL}[1] = \Hom_{\gr\Sigma}(\overline{\op B},\overline{\op P}[1])
\]
and we consider the map $\lambda : V\to W$ induced by the projection onto the cogenerators ${\op C}^{\circ}\to\overline{\op P}{}^{\circ}[1]$
in the cofree cooperad ${\op C}^{\circ} = {\op F}^c(\overline{\op P}{}^{\circ}[1])$.

We use the trick of Section \ref{subsec:linear maps remark} to define the map $F: S(W)\to V$.
We proceed as follows.
Let $R$ be a nilpotent graded ring of finite dimension over $\K$.
In the context of the present proposition, we can interpret a degree zero element $w\in(W\otimes\overline{R})^0$
as an $R$-linear morphism of graded $\Sigma$-collections $w: \overline{\op B}{}^{\circ}\otimes R\to\overline{\op P}{}^{\circ}[1]\otimes R$
such that $w(\overline{\op B}{}^{\circ}\otimes R)\subset\overline{\op P}{}^{\circ}[1]\otimes\overline{R}$.
This morphism determines a morphism of conilpotent graded cooperads (defined over $R$)
\[
\widetilde{w}: {\op B}^{\circ}\otimes R\to{\op C}^{\circ}\otimes R,
\]
such that $\widetilde{w}(\overline{\op B}{}^{\circ}\otimes R)\subset\overline{\op C}^{\circ}\otimes\overline{R}$,
and which we can regard as a degree zero element of the dg vector space $V\otimes\overline{R}$
when we forget about the counit (and the composition coproducts) of our cooperads.
We then set $\Phi_{F,R}(w) = \widetilde{w}$ and we consider the map $F$, such that $F(\exp(w)) = \Phi_{F,R}(w)$,
which we determine from this relation.

We can use the polarization construction of Section \ref{subsec:linear maps remark} to retrieve the explicit formula of this map $F$
on the symmetric algebra.
To be explicit, let $w_1,\dots,w_n\in W$ and $b\in{\op B}(r)$.
If we use the standard construction of the cofree cooperad ${\op C}^{\circ} = {\op F}^c(\overline{\op P}{}^{\circ}[1])$
in terms of tree-wise tensor products,
then we get that the components of the element $F(w_1\cdots w_n)(b)\in{\op C}(r)$
can be obtained by applying the symmetrized sum
$\sum_{\sigma\in\Sigma_n}\pm w_{\sigma(1)}\otimes\cdots\otimes w_{\sigma(n)}\in W^{\otimes n}$
to the treewise coproducts $\Delta_T(b)\in\bigotimes_{v\in VT}{\op B}(star(v))$,
for all trees $T$ such that $n = |VT|$.
In particular, the linear component of $F$ is just the inclusion $W\subset V$. We readily deduce from this observation that our map $F$
fulfills the projection condition of Proposition \ref{prop:geometric Linfty:structures}.

We use the trick of Lemma \ref{lemm:tangentiality trick} to check the tangentiality condition of this proposition.
Recall that $F(\exp(w)) = \widetilde{w}$ is a morphism of graded cooperads
from ${\op B}^{\circ}\otimes R$ to ${\op C}^{\circ}\otimes R$
by definition of our map $F$.
We take the space of coderivations associated to this morphism $F(\exp(w)): {\op B}^{\circ}\otimes R\to{\op C}^{\circ}\otimes R$
as the subspace of $V$, isomorphic to $W$ under the map $\lambda$,
which we need for the construction of Lemma \ref{lemm:tangentiality trick}.
We just use that any such coderivation is uniquely determined by its projection onto the cogenerators
of the cofree cooperad ${\op C}^{\circ} = {\op F}^c(\overline{\op P}{}^{\circ}[1])$
to check the requested isomorphism condition.
We readily see that $d F(\exp(w))$ automatically belongs to this space of coderivations of $F(\exp(w))$ too,
and so does $F(D\exp(w))$ since this map is the derivative of a morphism
of graded cooperads
by Remark \ref{rem:tangentiality trick}.
Hence, the requirements of Lemma \ref{lemm:tangentiality trick}
are fully satisfied, and we can therefore apply this lemma to conclude that our map $F$
satisfies the tangentiality projection condition of Proposition \ref{prop:geometric Linfty:structures}
in addition to the projection condition.

We can now use the result of Proposition \ref{prop:geometric Linfty:structures} to finish the construction of the $L_{\infty}$~structure on $W[-1]$.
We can use the explicit formula of the map $F$ to get an explicit description of the coderivation $D$
which determines this $L_{\infty}$~structure.
Recall that this coderivation is characterized by the relation $\pi D = \lambda d F$,
where $d$ is the differential of the dg vector space $V = \Hom_{\dg\Sigma}(\overline{\op B},\overline{\op C})$
and $\pi: S(W)\to W$ denotes the obvious projection onto the graded vector space $W$
in the symmetric algebra $S(W)$ (see Proposition \ref{prop:geometric Linfty:structures}).
To go further, we have to go back to the definition of the differential of the bar construction ${\op C} = B({\op P})$
in the expression of the differential $df = d_{\op C} f - \pm f d_{\op B}$
of the dg vector space $V$.
Recall simply that the composite of this differential $d_{\op C}$
with the projection ${\op C}^{\circ} = {\op F}^c(\overline{\op P}{}^{\circ}[1])\to\overline{\op P}{}^{\circ}[1]$,
which determines our map $\lambda: V\to W$,
reduces to a linear component,
which is induced by the internal differential of the operad ${\op P}$,
plus a quadratic component,
which we associate to the partial composition products $\circ_i: \overline{\op P}(m)\otimes\overline{\op P}(n)\to\overline{\op P}(m+n-1)$.
From this observation, we readily deduce that all components of our $L_{\infty}$~structure on the graded vector space $\DerL$
vanish except the linear and the quadratic one,
so that this $L_{\infty}$~structure actually reduces to a dg Lie algebra structure
as stated in the proposition. We also get, by the way, that the linear part of this $L_{\infty}$~structure is identified with the differential
that the dg hom-object $\Hom_{\dg\Sigma}(\overline{\op B},\overline{\op P})$
inherits from the collections $\overline{\op B}$ and $\overline{\op P}$.

If $w\in W$ is a Maurer-Cartan element in this dg Lie algebra,
then we have $\pi D\exp(w) = 0$ (by definition of the notion of a Maurer-Cartan element),
and $\lambda d F(\exp(w)) = 0$ by definition of our coderivation $D$,
where we again consider the element $F(\exp(w))\in V\otimes R$
associated to $w$, for some choice of nilpotent graded ring $R$ of finite dimension over $\K$.
We equivalently have $d F(\exp(w)) = 0$
since we observed that the map $\lambda$
restricts to an isomorphism
on the subspace of coderivations which contains this element $d F(\exp(w)$
inside $V$.
This relation $d F(\exp(w)) = 0$ is equivalent to the assumption that $\widetilde{w} = F(\exp(w))\in W\otimes R$
correspond to a differential preserving morphism $\widetilde{w}: \overline{\op B}\otimes R\to\overline{\op C}\otimes R$
when we use the identity $W\otimes R = \Hom_{\gr\Sigma}(\overline{\op B}\otimes R,\overline{\op C}\otimes R)$.
Note that, if we identify $w\in W$ with a morphism of graded collections $f: \overline{\op B}{}^{\circ}\to\overline{\op P}{}^{\circ}[1]$,
then $\widetilde{w} = F(\exp(w))$ is identified with the (restriction to coaugmentation coideals
of the) obvious scalar extension of the morphism
of conilpotent graded cooperads $\phi: {\op B}^{\circ}\to{\op C}^{\circ}$
associated to $f$
by definition of our map $F$.
Hence, the assumption that $\widetilde{w}: \overline{\op B}\otimes R\to\overline{\op C}\otimes R$
preserves differentials is equivalent to the assumption that this map
defines a morphism of conilpotent dg cooperads $\phi: {\op B}\to{\op C}$.

Recall that a coderivation $\theta\in\CoDer_{\dg\Sigma}({\op B}\stackrel{\phi}{\to}{\op C})$
of a morphism of graded cooperads $\phi$
is equivalent to a morphism
of the form $\phi+\epsilon\theta: {\op B}\otimes\K_{\epsilon}\to{\op C}\otimes\K_{\epsilon}$,
where we consider the graded ring $\K_{\epsilon} = \K[\epsilon]/(\epsilon^2)$ (see Section \ref{subsec:coderivation complexes:coderivations}).
The map of graded $\Sigma$-collections which we associate to such a morphism satisfies $\alpha_{\phi+\epsilon\theta} - \alpha_{\phi} = \epsilon f$,
where $f: \overline{\op B}\to\overline{\op P}[1]$ is a map,
associated to our coderivation $\theta$,
and which we can also explicitly obtain by taking the composite of $\theta$
with the projection onto the cogenerating collection
of the cofree cooperad ${\op C}^{\circ} = {\op F}^c(\overline{\op P}{}^{\circ}[1])$.
The bijectivity of our general correspondence between the morphisms of graded cooperads with values in the bar construction ${\op C} = B({\op P})$
and the morphisms of graded $\Sigma$-collections with values in $\overline{\op P}$
now implies that this mapping gives an isomorphism
of graded vector spaces:
\[
\CoDer_{\dg\Sigma}({\op B}\stackrel{\phi}{\to}{\op C})\cong\Hom_{\gr\Sigma}(\overline{\op B},\overline{\op P}[1]).
\]
To complete our verifications, we readily check from the definition of our dg Lie algebra structure
on $\DerL = \Hom_{\gr\Sigma}(\overline{\op B},\overline{\op P})$
that the differential of $\theta$
in our dg vector space of coderivations
corresponds to the differential
of the element $\epsilon f = \alpha_{\phi+\epsilon\theta} - \alpha_{\phi}$
in the twisted dg Lie algebra $\DerL^{\alpha_{\phi}}\otimes\K_{\epsilon}$.
The isomorphism of the proposition between our dg vector space of coderivation
and the underlying dg vector space twisted dg Lie algebra
follows.
\end{proof}

\begin{rem}\label{rem:extended coderivation Lie algebra}
Note that, for ${\op C}$ as in the above proposition, we have an identity:
\[
{\op C}^+ = B({\op P}_{\bo}),
\]
where ${\op P}_{\bo}$ is the augmented operad obtained by freely adjoining a (second) unit to ${\op P}$.

If we replace ${\op C}$ by ${\op C}^+$ in the proposition, then we obtain that the graded vector space
\[
\xCoDer_{\dg\Sigma}({\op B}\stackrel{\phi}{\to}{\op C})[-1] = \Hom_{\gr\Sigma}(\overline{\op B},{\op P})^{\alpha_f}
\]
also carries a dg Lie algebra structure.
\end{rem}

\subsection{Bar and cobar constructions for $\La$-cooperads}\label{subsec:coderivation complexes:BLambda-structures}
The purpose of this subsection is to explain the definition of $\La$-structures on the bar construction $B({\op P})$
from structures that we associate to the augmented dg operad ${\op P}$.
To be explicit, we use the following notion:

\begin{defn}\label{defn:BLambda-structures}
We assume that:
\begin{itemize}
\item
the augmentation ideal of our operad $\overline{\op P}$ is equipped with a covariant $\La$-diagram structure,
which we can determine by giving generating corestriction operators $\eta_S : \overline{\op P}(S)\to\overline{\op P}(S\sqcup\{*\})$,
for every finite set $S$,
as we explained in the case of cooperads in Section \ref{subsec:Lambda-cooperads};
\item
and we have a map of $\La$-collections $u\in\Hom_{\gr\La}(\overline{\Com}{}^c,\overline{\op P})$, of degree $-1$,
where we regard the coaugmentation coideal of the commutative cooperad $\overline{\Com}{}^c$
as a constant object in the category of $\La$-collections.
\end{itemize}
We also assume that the following properties hold:
\begin{itemize}
\item
The operators $\eta_S$ are derivations with respect to the composition operations of our operad
in the sense that we have a commutative diagram:
\[
\begin{tikzcd}[column sep=3cm]
\overline{\op P}(S\sqcup\{\diamond\})\otimes\overline{\op P}(T)
\ar{d}{\circ_{\diamond}}\ar{r}{\eta_{S\sqcup\{\diamond\}}\otimes\mathit{id}\pm\mathit{id}\otimes\eta_T} &
\overline{\op P}(S\sqcup\{\diamond,*\})\otimes\overline{\op P}(T)\oplus\overline{\op P}(S\sqcup\{\diamond\})\otimes\overline{\op P}(T\sqcup\{*\})
\ar{d}{\circ_{\diamond}} \\
\overline{\op P}(S\sqcup T)\ar{r}{\eta_{S\sqcup T}} &
\overline{\op P}(S\sqcup T\sqcup\{*\}),
\end{tikzcd}
\]
for each pair of finite sets $S,T$.
\item
The operators $\eta_S$ satisfy the following relation:
\[
d_{\op P}\circ\eta_S - \eta_S\circ d_{\op P} = \ad_b,
\]
where $b\in\overline{\op P}(2)$ is the image of the generator of $\Com^c(2)$ under our map $u: \overline{\Com}{}^c\to\overline{\op P}[1]$,
and $\ad_b: \overline{\op P}(S)\to\overline{\op P}(S\sqcup\{*\}$
is given by the following composition operations
\[
\ad_b x =
\begin{tikzpicture}[baseline=-.65ex]
\node (v) at (0,0) {$x$};
\node (b) at (0.5,.5) {$b$};
\node (s) at (1,0) {$*$};
\draw (v) edge (b) edge +(-.5,-.5) edge +(0,-.5) edge (.5,-.5)
      (b) edge +(0,.5) edge (s);
\end{tikzpicture}
-
\sum\begin{tikzpicture}[baseline=-.65ex]
\node (v) at (0,0) {$x$};
\node (b) at (-0,-.5) {$b$};
\node (s) at (.5,-1) {$*$};
\draw (v) edge (b) edge +(0,.5) edge +(-0.5,-.5) edge (.5,-.5)
      (b) edge +(-0.5,-.5) edge (s);
\end{tikzpicture}
\]
in the augmentation ideal of our operad $\op P$.
\item
The map $u: \overline{\Com}{}^c\to\overline{\op P}$ satisfies the Maurer-Cartan equation
in the dg Lie algebra $\DerL = \Hom_{\dg\Sigma}(\overline{\Com}{}^c,\overline{\op P})$
of Proposition~\ref{prop:coderivation lie algebra}
when we forget about the $\La$-diagram structures attached to our objects.
Hence, this map is associated to a morphism of dg cooperads $\phi_u: \Com^c\to B({\op P})$
in the correspondence of Proposition~\ref{prop:coderivation lie algebra}.
\end{itemize}
We then say that ${\op P}$ forms a $B\La$ operad.
\end{defn}

We have the following statement:

\begin{prop}\label{prop:BLambda-operad bar construction}
The bar construction $B({\op P})$ of a $B\La$ operad ${\op P}$ inherits the structure of a dg $\La$-cooperad
such that $B({\op P})^{\circ} = {\op F}^c(\overline{\op P}{}^{\circ}[1])$
is identified with the cofree object cogenerated by the coaugmented $\La$-collection $\overline{\op P}[1]$
in the category of graded $\La$-cooperads
when we forget about differentials.
\end{prop}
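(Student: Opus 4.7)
The strategy is to build the $\La$-cooperad structure on $B({\op P})$ explicitly from the $B\La$-operad data, and then check compatibility with the bar differential.

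First, I would extend the $\La$-collection structure on $\overline{\op P}[1]$ to a graded $\La$-cooperad structure on the cofree graded cooperad $B({\op P})^\circ = {\op F}^c(\overline{\op P}{}^\circ[1])$. Using the standard tree-indexed model of the cofree cooperad, define the corestriction $\eta_S : B({\op P})^\circ(S) \to B({\op P})^\circ(S\sqcup\{*\})$ on a treewise tensor indexed by a tree $T\in\mT_S$ as the sum of two kinds of contributions: (i) for each vertex $v$ of $T$, attach a new ingoing edge at $v$ labelled by $*$ and apply the operator $\eta_{star(v)}$ to the decoration at $v$; (ii) for each edge $e$ of $T$, subdivide $e$ by inserting a fresh bivalent vertex decorated by $b\in\overline{\op P}(2)$, with the original edge as one input and a new leaf labelled $*$ as the other. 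The coaugmentation $\eta : \Com^c \to B({\op P})^\circ$ is taken to be the morphism $\phi_u$ of graded cooperads associated to the Maurer--Cartan element $u$ via Proposition~\ref{prop:coderivation lie algebra}. A direct check against the compatibility relation~\eqref{eq:Lacompatibility} shows that these two sources of terms correspond precisely to the two cases appearing in that formula, so $B({\op P})^\circ$ becomes a graded $\La$-cooperad. The cofree property in the category of graded $\La$-cooperads then follows from the universal property of ${\op F}^c(-)$ at the cooperad level, once one observes that a cooperad map into $B({\op P})^\circ$ is a $\La$-cooperad map iff its projection onto the cogenerating $\La$-collection $\overline{\op P}{}^\circ[1]$ is $\La$-equivariant.

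Second, the main technical step is to show that the bar differential $d_B$ commutes with $\eta_S$, so that $B({\op P})$ becomes a \emph{dg} $\La$-cooperad. Decompose $d_B = d_1 + d_2$, where $d_1$ is induced by $d_{\op P}$ and $d_2$ by the partial compositions of ${\op P}$, and similarly split $\eta_S = \eta_S^{(1)} + \eta_S^{(2)}$ into the contributions (i) and (ii) above. The axiom $d_{\op P}\eta_S - \eta_S d_{\op P} = \ad_b$ of Definition~\ref{defn:BLambda-structures} produces, in the commutator $[d_1,\eta_S^{(1)}]$, precisely an $\ad_b$ term at each vertex of the underlying tree. This is cancelled by $[d_2,\eta_S^{(2)}]$: splitting a freshly inserted bivalent $b$-vertex into a neighbouring vertex via $d_2$ reproduces exactly that $\ad_b$ operation with the opposite sign. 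The remaining cross terms $[d_1,\eta_S^{(2)}]$ and $[d_2,\eta_S^{(1)}]$ match up using the derivation property of $\eta$ with respect to composition, and any leftover terms involving two $b$-factors or the differential of $b$ cancel via the Maurer--Cartan equation satisfied by $u$.

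The main obstacle is the bookkeeping in the second step: the commutator $[d_B,\eta_S]$ produces many treewise contributions that must be organized according to the geometric position of the new leaf $*$ relative to the place where the differential acts. All cancellations are ultimately dictated by the three axioms of Definition~\ref{defn:BLambda-structures}, so no input beyond the $B\La$-operad data is required.
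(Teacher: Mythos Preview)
Your proposal is correct and follows essentially the same route as the paper. Your description of the corestriction operators via ``attach a new leaf at a vertex and apply $\eta_{star(v)}$'' plus ``subdivide an edge by a $b$-vertex'' is exactly the formula~\eqref{eq:temp9} of the paper, phrased from the input-tree side rather than the output-tree side; the paper's worked example with seven terms matches your count of two vertex-terms plus five edge-terms. The paper then simply asserts that compatibility with the bar differential ``easily follows from the constraints of Definition~\ref{defn:BLambda-structures}'', whereas you spell this out via the decomposition $d_B = d_1 + d_2$, $\eta_S = \eta_S^{(1)} + \eta_S^{(2)}$, which is a reasonable way to organize that check.

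Two small imprecisions in your write-up: the cross term $[d_2,\eta_S^{(1)}]$ vanishes on its own by the derivation axiom, and $[d_1,\eta_S^{(2)}]$ vanishes on its own because $d_{\op P}(b)=0$ (this is the arity-two component of the Maurer--Cartan equation for $u$) --- they do not need to ``match up'' with each other. There are also no genuine two-$b$-factor terms in $[d_B,\eta_S]$, so that clause is vacuous. Neither point affects the correctness of the argument.
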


\begin{proof}[Explanations and proof]
In this proposition, we use the general observation that the cofree cooperad ${\op F}^c({\op M})$
inherits the structure of a $\La$-cooperad when ${\op M}$ is a covariant $\La$-collection
equipped with a coaugmentation over $\overline{\Com}{}^c$.
To be more precise, one can observe that the usual cofree cooperad functor with values in the category of ordinary cooperads
lifts to a cofree object functor from this category of coaugmented covariant $\La$-collections
to the category of $\La$-cooperads.
We refer to~\cite[Proposition C.2.18]{Fr} for the proof of an analogous result in the context of the cobar-bar resolution
of operads satisfying ${\op P}(1) = \K$.

We just give an explicit description of the corestriction operators $\eta_S$
which we associate to the $\La$-structure of the bar construction,
in the case ${\op M} = \overline{\op P}[1]$.
We then use the expansion of the cofree cooperad ${\op F}^c(\overline{\op P}[1])$
as a direct sum of treewise tensor products ${\op F}^c_T(\overline{\op P}[1]) = \bigotimes_{v\in VT}\overline{\op P}(star(v))[1]$,
where we again use the notation $star(v)$ for the set of ingoing edges
which we attach to any vertex $v\in VT$
in a rooted tree $T$ (see Section~\ref{subsec:operads}).
Recall that the expansion of a component of the cofree cooperad ${\op F}^c(\overline{\op P}[1])(S)$, where $S$ is any finite set,
ranges over the set of rooted trees $T$ with leafs indexed by $S$,
and that we also adopt the notation $\mT_S$
for this set of trees.

Let $x\in B({\op P})(S)$. We use the notation $x_T$ for the component of this element $x$
in the treewise tensor product ${\op F}^c_T(\overline{\op P}[1])$,
for any tree $T\in\mT_S$.
Let $T\in\mT_{S\sqcup\{*\}}$. We aim to define the component $\eta_S(X)_T\in{\op F}^c_T(\overline{\op P}[1])$
of the image of our element $x\in{\op F}^c(\overline{\op P}[1])(S)$
under the corestriction operator $\eta_S$ on the cofree cooperad ${\op F}^c_T(\overline{\op P}[1])$.
We assume that $v$ is the vertex of $T$ connected to the leaf indexed by the mark $*$.
We consider the tree $T'$ which we obtain removing this leaf indexed by $*$
in the tree $T$
as in the expression of the compatibility relation between the treewise coproducts
and the corestriction operators of a cooperad
in Section~\ref{subsec:Lambda-cooperads}.
If $v$ has exactly $2$ ingoing edges in $T$, then we also consider the tree $T''$
which we obtain by deleting both the vertex $v$ and the ingoing edge
indexed by $*$,
and by merging the remaining ingoing edge with the outgoing edge of $v$.
Let $star'(v)$ be the set of outgoing edges of $v$ in $T'$.
We also fix an ordering of the vertices of $T$ with the vertex $v$ in the first position.
We then set:
\beq{eq:temp9}
\eta_S(x)_T = (\eta_{star'(v)}\otimes\mathit{id}\otimes\cdots\otimes\mathit{id})(x_{T'})
+
\begin{cases}
b\otimes x_{T''}, & \text{if $v$ has two ingoing edges in $T$}, \\
0, & \text{otherwise},
\end{cases}
\eeq
where $b\in\overline{\op P}(2)$ denotes the image of the generator of $\Com^c(2)$
under our map $u: \overline{\Com}{}^c\rightarrow{\op P}[1]$ (as in Definition \ref{defn:BLambda-structures}).
The following picture gives an example of application of this construction:
\[
\eta
\ \
\begin{tikzpicture}[baseline=-.65ex]
\node (a) at (0,.25) {$\alpha$};
\node (b) at (.5,-.25) {$\beta$};
\draw (a) edge (b) edge +(0,.5) edge +(-.5,-.5)
      (b) edge +(.5,-.5) edge +(-.5,-.5);
\end{tikzpicture}
=
\begin{tikzpicture}[baseline=-.65ex]
\node (a) at (0,.4) {$\eta(\alpha)$};
\node (b) at (.5,-.25) {$\beta$};
\node (s) at (0,-.25) {$*$};
\draw (a) edge (b) edge (s) edge +(0,.5) edge +(-.5,-.5)
      (b) edge +(.5,-.5) edge +(-.5,-.5);
\end{tikzpicture}
+
\begin{tikzpicture}[baseline=-.65ex]
\node (a) at (0,.4) {$\alpha$};
\node (b) at (.5,-.25) {$\eta(\beta)$};
\node (s) at (0.5,-.95) {$*$};
\draw (a) edge (b) edge +(0,.5) edge +(-.5,-.5)
      (b) edge +(.5,-.5) edge +(-.5,-.5) edge (s);
\end{tikzpicture}
+
\begin{tikzpicture}[baseline=-.65ex]
\node (a) at (0,.25) {$\alpha$};
\node (b) at (.5,-.25) {$\beta$};
\node (bb) at (-.5,-.25) {$b$};
\node (s) at (-.2,-.95) {$*$};
\draw (a) edge (b) edge +(0,.5) edge (bb)
      (b) edge +(.5,-.5) edge +(-.3,-.5)
      (bb) edge +(-.5,-.5) edge (s);
\end{tikzpicture}
+
(\text{4 similar terms}).
\]

We use the adjunction properties of the cofree cooperad ${\op F}^c(\overline{\op P}{}^{\circ}[1])$
to get a coaugmentation morphism $\eta = \phi_u: \Com^c\to{\op F}^c(\overline{\op P}{}^{\circ}[1])$
from the map $u: \overline{\Com}{}^c\to\overline{\op P}{}^{\circ}[1]$
given with our $B\La$ operad $\op P$. (In fact, we already implicitly used this construction in Definition \ref{defn:BLambda-structures}
when we form the coaugmentation with values in the bar construction $B({\op P})$
associated to our map $u: \overline{\Com}{}^c\to\overline{\op P}{}^{\circ}[1]$.)

We easily check that the above corestriction operators and this coaugmentation morphism define a valid $\La$-cooperad structure
on the cofree cooperad $B({\op P})^{\circ} = {\op F}^c(\overline{\op P}{}^{\circ}[1])$.
We easily deduce from the constraints of Definition \ref{defn:BLambda-structures}
that the corestriction operators preserve the differential
of the bar construction too, whereas we already observed that the morphism $\eta = \phi_u$
which we associate to our map $u: \overline{\Com}{}^c\to\overline{\op P}{}^{\circ}[1]$
defines a morphism of dg cooperads with values in the bar construction $\eta = \phi_u: \Com^c\to B({\op P})$
as soon as we assume that this map satisfies the Maurer-Cartan equation
in the dg Lie algebra $\DerL = \Hom_{\dg\Sigma}(\overline{\Com}{}^c,\overline{\op P})$
of Proposition~\ref{prop:coderivation lie algebra}.
Hence, our construction provides the cobar construction $B({\op P})$ with a $\La$-cooperad structure
which is compatible with the differential structure
of our object.
\end{proof}

We record a consequence of the cofree $\La$-cooperad structure of this proposition in the following statement:

\begin{prop}\label{prop:morphisms to bar Lambda-cooperads}
We equip the bar construction $B({\op P})$ associated to a $B\La$ operad $\op P$ with the $\La$-cooperad structure
of Proposition~\ref{prop:BLambda-operad bar construction}.
We assume that $\op B$ is a dg $\La$-cooperad and we consider the morphism of dg cooperads $\phi: {\op B}\to B({\op P})$,
which we associate to a Maurer-Cartan element $\alpha_{\phi}$
of the dg Lie algebra $\DerL = \Hom_{\dg\Sigma}(\overline{\op B},\overline{\op P})$
in the correspondence of Proposition~\ref{prop:coderivation lie algebra}.
Then this morphism $\phi$ preserves the $\La$-cooperad structures associated to our objects
and hence defines a morphism $\phi: {\op B}\to B({\op P})$
in the category of dg $\La$-cooperads
if and only if the corresponding map of $\Sigma$-collections $\alpha_{\phi}: \overline{\op B}\to\overline{\op P}[1]$
\begin{itemize}
\item
preserves the covariant $\La$-diagram structure
associated to these collections (thus, this map belongs to the graded vector space $\Hom_{\gr\Lambda}(\overline{\op B},\overline{\op P})$
inside $\Hom_{\gr\Sigma}(\overline{\op B},\overline{\op P})$),
\item
and makes the following diagram commute
\[
\begin{tikzcd}
\overline{\Com}{}^c\ar{r}{=}\ar{d} & \overline{\Com}{}^c\ar{d}{u} \\
\overline{\op B} \ar{r}{\alpha_{\phi}} & \overline{\op P}[1]
\end{tikzcd}
\]
where we consider the morphism $\eta: \overline{\Com}{}^c\to\overline{\op B}$ induced by the coaugmentation of our dg $\La$-cooperad ${\op B}$.
\end{itemize}
\end{prop}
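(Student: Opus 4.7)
The plan is to deduce the proposition from the cofree description of $B({\op P})^\circ$ as a graded $\Lambda$-cooperad, combined with the correspondence already established in Proposition~\ref{prop:coderivation lie algebra}. That proposition tells us that the morphism $\phi^\circ: {\op B}^\circ \to B({\op P})^\circ$ of graded cooperads (compatible with the coaugmentations over $\op{I}$) is uniquely determined by the map of $\Sigma$-collections $\alpha_\phi: \overline{\op B} \to \overline{\op P}[1]$, obtained by projection onto the cogenerators, and that the Maurer-Cartan condition on $\alpha_\phi$ is equivalent to $\phi$ being a morphism of dg cooperads. Since the statement of the proposition assumes $\phi$ is already a morphism of dg cooperads, the only remaining question is to detect, in terms of $\alpha_\phi$, when the underlying graded morphism $\phi^\circ$ respects the $\Lambda$-cooperad structures on ${\op B}^\circ$ and $B({\op P})^\circ$; this is a purely graded question, independent of the differentials.

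For this, I would invoke the cofree property established in the proof of Proposition~\ref{prop:BLambda-operad bar construction}: as a graded $\Lambda$-cooperad, $B({\op P})^\circ$ is cofree cogenerated by the coaugmented covariant $\Lambda$-collection $(\overline{\op P}[1],u)$, where $u: \overline{\Com}{}^c \to \overline{\op P}[1]$ is the structure map provided with our $B\La$ operad. By the corresponding adjunction, morphisms of coaugmented graded $\Lambda$-cooperads ${\op B} \to B({\op P})^\circ$ are in bijection with morphisms of coaugmented covariant $\Lambda$-collections $\overline{\op B} \to \overline{\op P}[1]$. The latter set is, by unwinding the definitions, exactly the set of $\Sigma$-collection maps that lie in $\Hom_{\gr\Lambda}(\overline{\op B}, \overline{\op P}[1])$ and make the coaugmentation square of the proposition commute. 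A diagram chase, using that the projection to cogenerators realizes both the correspondence of Proposition~\ref{prop:coderivation lie algebra} and the cofree adjunction, identifies the map of $\Lambda$-collections thus produced with our $\alpha_\phi$, and the equivalence of the proposition follows.

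As a more hands-on alternative (and as a sanity check on the cofree property), one may verify the commutation $\phi^\circ \circ \eta_S = \eta_S \circ \phi^\circ$ directly on each treewise component ${\op F}^c_T(\overline{\op P}[1])$ of $B({\op P})(S \sqcup \{*\})$, using formula~\eqref{eq:temp9} for the corestriction on $B({\op P})$ and the compatibility relation~\eqref{eq:Lacompatibility} on ${\op B}$. The summand involving $\eta_{star'(v)}$ on the target matches the corresponding $\Lambda$-action term on the source precisely when condition~(i) holds, while the extra $b$-summand in~\eqref{eq:temp9} matches the $b \otimes \overline{\Delta}_{T''}(x)$ term in~\eqref{eq:Lacompatibility} precisely when condition~(ii) holds. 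The main obstacle, in either approach, is really this matching of the extra $b$-term: it is ultimately a reformulation of the fact that $\phi_u: \Com^c \to B({\op P})$ is, under the correspondence of Proposition~\ref{prop:coderivation lie algebra}, the morphism of dg cooperads associated to the Maurer-Cartan element $u$, so that requiring $\phi \circ \eta_{\op B} = \phi_u$ on coaugmentations translates exactly to $\alpha_\phi \circ \eta = u$.
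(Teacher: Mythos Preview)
Your proposal is correct and follows essentially the same approach as the paper: both arguments rest on the observation (from the proof of Proposition~\ref{prop:BLambda-operad bar construction}) that the cofree cooperad functor lifts to a cofree object functor from coaugmented covariant $\Lambda$-collections to $\Lambda$-cooperads, so that the adjunction bijection translates the $\Lambda$-morphism condition on $\phi$ into the two conditions on $\alpha_\phi$. Your write-up is more detailed than the paper's terse one-sentence appeal to this adjunction, and your hands-on alternative via the explicit formulas~\eqref{eq:temp9} and~\eqref{eq:Lacompatibility} is a useful sanity check not present in the paper.
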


\begin{proof}
We mentioned in the proof of Proposition~\ref{prop:BLambda-operad bar construction}
that the cofree cooperad functor lifts to a cofree object functor
from the category of coaugmented covariant $\La$-collections
to the category of $\La$-cooperads.
We just use this general observation in order to establish that our morphism of dg cooperads $\phi: {\op B}\to B({\op P})$
defines a morphism of $\La$-cooperads
as soon as this morphism is induced by a map of coaugmented covariant $\La$-collections $\alpha_{\phi}: \overline{\op B}\to\overline{\op P}{}^c[1]$
in the correspondence of Proposition~\ref{prop:coderivation lie algebra}.
\end{proof}

We review examples of applications of the constructions of these propositions in the next paragraphs.

\begin{ex}\label{ex:BLambda-operads}
First, we can see that the cobar construction $\Omega({\op C})$ of a conilpotent dg $\La$-cooperad ${\op C}$
is equipped with a natural $B\La$ structure.

Indeed, recall that the cobar construction forms a free operad when we forget about differentials.
We explicitly have $\Omega({\op C})^{\circ} = {\op F}(\overline{\op C}{}^{\circ}[-1])$,
where we consider the degree shift of the coaugmentation coideal
of our cooperad $\overline{\op C}[-1]$.
We consider the restriction of the corestriction operators of our cooperad $\eta_S: {\op C}(S)\to{\op C}(S\sqcup\{*\})$
to the coaugmentation coideal $\overline{\op C}$,
by using the identity $\overline{\op C}(S) = \ker(\epsilon: {\op C}(S)\to{\op I}(S))$, where ${\op I}$ is the unit cooperad,
and the relation $\overline{\op C}(T) = {\op C}(T)$,
which is valid as soon as $|T|\geq 2$ (see Section \ref{subsec:Lambda-cooperads}).
We apply the derivation relation in our definition of a $B\La$ operad
to extend these reduced corestriction operators $\eta_S: \overline{\op C}(S)\to\overline{\op C}(S\sqcup\{*\})$
to our free operad $\Omega({\op C})^{\circ} = {\op F}(\overline{\op C}{}^{\circ}[-1])$.
We use the adjunction between the bar and the cobar construction to get a morphism $\phi_u: \Com^c\to B(\Omega{\op C})$,
corresponding to a map of $\La$-collections $u: \overline{\Com}{}^c\to\overline{\Omega({\op C})}$
of degree $-1$,
from the coaugmentation $\eta: \Com^c\to{\op C}$ of our $\La$-cooperad ${\op C}$.
We can more explicitly define this map $u: \overline{\Com}{}^c\to\overline{\Omega({\op C})}$ as the composite
of the map of $\La$-collections $\overline{\Com}{}^c\to\overline{\op C}[-1]$
yielded by our coaugmentation $\eta: \Com^c\to{\op C}$
with the canonical embedding of the object $\overline{\op C}[-1]$
in the free operad $\Omega({\op C})^{\circ} = {\op F}(\overline{\op C}{}^{\circ}[-1])$.
We immediately deduce from this construction that our map $u: \overline{\Com}{}^c\to\overline{\Omega({\op C})}$
intertwines the action of the corestriction operators
since we provide the free operad $\Omega({\op C})^{\circ} = {\op F}(\overline{\op C}{}^{\circ}[-1])$ with a $\La$-diagram structure
which lifts the one that we associate to the collection $\overline{\op C}{}[-1]$.

Recall that the differential of the cobar construction is defined, on the generating collection $\overline{\op C}{}^{\circ}[-1]$
of our object $\Omega({\op C})^{\circ} = {\op F}(\overline{\op C}{}^{\circ}[-1])$,
by the internal differential of the cooperad ${\op C}$
plus the reduced composition coproducts $\overline{\Delta}_*: \overline{\op C}(S)\rightarrow\overline{\op C}(S')\otimes\overline{\op C}(S'')$,
by using that the tensor products $\overline{\op C}(S')\otimes\overline{\op C}(S'')$,
which represent the target of these operations,
are identified with quadratic terms in the expansion of the free operad ${\op F}(\overline{\op C}{}^{\circ}[-1])$.
The corestriction operators commute with the internal differential of our cooperad by construction,
and our relation $d_{\op P}\circ\eta_S - \eta_S\circ d_{\op P} = \ad_b$
in the definition of a $B\La$ structure
actually reduces, on $\overline{\op C}{}^{\circ}[-1]\subset\Omega({\op C})^{\circ}$,
to our compatibility relation between the corestriction operators
and the reduced composition coproducts
of our $\La$-cooperad ${\op C}$
(see Section~\ref{subsec:Lambda-cooperads}).
This observation finishes the verification of the validity of our $B\La$ operad structure
on the cobar construction.

We can then use the result of Proposition~\ref{prop:BLambda-operad bar construction} to provide the bar-cobar construction $B(\Omega({\op C})$
with the structure of a dg $\La$-cooperad.
We can use the result of Proposition~\ref{prop:morphisms to bar Lambda-cooperads} to establish further that the quasi-isomorphism
of the bar duality $\phi: {\op C}\stackrel{\sim}{\to}B(\Omega({\op C}))$
forms a quasi-isomorphism of dg $\La$-cooperads,
because the natural map $\alpha_{\phi}: \overline{\op C}\to\overline{\Omega({\op C})}[1]$, which we associate to this morphism
and which is identified with the canonical embedding of the collection $\overline{\op C}{}^{\circ}[-1]$
in the free operad $\Omega({\op C})^{\circ} = {\op F}(\overline{\op C}{}^{\circ}[-1])$ (up to an obvious shift of degrees),
preserves the $\La$-diagram structure and the coaugmentation associated to our objects
by definition of our $B\La$ structure on the cobar construction.

Let us mention that this dg $\La$-cooperad structure on the bar-cobar construction $B(\Omega({\op C}))$ of a cooperad
is dual to the dg $\La$-operad structure of the cobar-bar resolution of a $\La$-operad
given in \cite[Propositions C.2.18]{Fr}.
\end{ex}

\begin{ex}\label{ex:BLambda-en-operads}
Recall that we use the notation $\e_n^c$ for the dual cooperad of the operad
\[
\e_n = \begin{cases} \Ass, & \text{if $n=1$}, \\
\Poiss_n, & \text{otherwise},
\end{cases}
\]
which represents the homology of the operad of little $n$-discs operad, $\e_n = H(\lD_n,\K)$.
Recall also that this cooperad is Koszul, with the $n$-fold operadic suspension of the operad $\e_n$
as Koszul dual operad. We explicitly have $(\e_n^c)^{\vee} = \e_n\{n\}$, for any $n\geq 1$.

We can check that the natural $B\La$ structure of the cobar construction $\Omega(\e_n^c)$
descends to this operad ${\op P} = \e_n\{n\}$
through the quasi-isomorphism $\kappa: \Omega(\e_n^c)\stackrel{\sim}{\to}\e_n\{n\}$
of the Koszul duality equivalence $(\e_n^c)^{\vee} = \e_n\{n\}$.
To be more explicit, we may check that the corestriction operators of the cobar construction restrict to trivial maps $\eta_S = 0$
on our object $(\e_n^c)^{\vee} = \e_n\{n\}$
through this quasi-isomorphism $\kappa: \Omega(\e_n^c)\stackrel{\sim}{\to}\e_n\{n\}$.
Then we just take the obvious prolongment of the coaugmentation of the cobar construction $u: \overline{\Com}{}^c\to\overline{\Omega(\e_n^c)}$
to get a coaugmentation map $u: \overline{\Com}{}^c\to\overline{\e_n\{n\}}$
and to complete the definition of this $B\La$ structure.
In fact, we can see that this coaugmentation map $u: \overline{\Com}{}^c\to\overline{\e_n\{n\}}$
which we associate to the operad $(\e_n^c)^{\vee} = \e_n\{n\}$
is null in arity $r\not=2$,
and is given by the obvious embedding $\Lie\{1\}(2)\subset\e_n\{n\}(2)$ in arity $r=2$,
where we use the identity $\Lie\{1\}(2) = \K = \Com^c(2)$
for the $1$-fold suspension of the Lie operad $\Lie$.
Thus, the element $b\in\e_n\{n\}(2)$, in our previous definitions, corresponds to the bracket operation
of the Poisson operad $\e_n = \Poiss_n$
in the case $n\geq 2$
and to the antisymmetrization of the generating product operation of the associative operad $\e_1 = \Poiss_1$
in the case $n=1$ (the antisymmetrization becomes a symmetrization when we take the operadic suspension).

We can again use the result of Proposition~\ref{prop:BLambda-operad bar construction} to provide the bar construction $B(\e_n\{n\})$
associated to this operad $(\e_n^c)^{\vee} = \e_n\{n\}$
with the structure of a dg $\La$-cooperad.
We moreover get that the cooperadic counterpart of the quasi-isomorphism $\kappa: \e_n^c\stackrel{\sim}{\to} B(\e_n\{n\})$
of the Koszul duality equivalence $(\e_n^c)^{\vee} = \e_n\{n\}$
forms a quasi-isomorphism of dg $\La$-cooperads.
We can again deduce this claim from the result of Proposition~\ref{prop:morphisms to bar Lambda-cooperads}
or from the observation that this morphism $\kappa: \e_n^c\stackrel{\sim}{\to} B(\e_n\{n\})$
is given by an obvious prolongment of the quasi-isomorphism of the bar duality $\phi: \e_n^c\stackrel{\sim}{\to} B(\Omega(\e_n^c))$
to the bar Koszul construction $B((\e_n^c)^{\vee}) = B(\e_n\{n\})$
through the already considered operadic version
of the Koszul duality quasi-isomorphism $\kappa: \Omega(\e_n^c)\stackrel{\sim}{\to}\e_n\{n\}$.

Let us mention that this dg $\La$-cooperad structure on the bar-Koszul construction $B(\e_n\{n\})$ of the cooperad $\e_n^c$
is dual to the dg $\La$-operad structure considered in \cite[Paragraph III.4.1.6]{Fr}
for the cobar-Koszul construction of the operad $\e_n = \Poiss_n$
in the case $n\geq 2$ (see also \cite[Proposition C.3.5]{Fr} for a generalization of this construction
to all Koszul operads).
\end{ex}

\begin{ex}\label{ex:extended BLambda-operads}
If ${\op P}$ is equipped with a $B\La$ structure, then so is the augmented dg operad ${\op P}_{\bo}$,
which we obtain by freely adjoining an operadic unit to ${\op P}$.
In short, we just extend the corestriction operators on $\overline{\op P}$ by zero on the extra unit of ${\op P}_{\bo}$
to provide $\overline{\op P}_{\bo}$ with a covariant $\La$-diagram structure
and we take the obvious composite $\overline{\Com}{}^c\to\overline{\op P}\to\overline{\op P}_{\bo}$
to prolong the coaugmentation $u: \overline{\Com}{}^c\to\overline{\op P}$ associated to our operad ${\op P}$
to this object ${\op P}_{\bo}$.

The morphism ${\op P}\to{\op P}_{\bo}$ trivially induces a morphism of dg $\La$-cooperads $B({\op P})\to B({\op P}_{\bo})$
when we apply the result of Proposition~\ref{prop:morphisms to bar Lambda-cooperads}
to this $B\La$ operad structure on ${\op P}_{\bo}$.
\end{ex}

\subsection{The dg Lie algebra structure of the coderivation complexes of dg $\La$-cooperads}\label{subsec:coderivation complexes:Lambda cooperads}
We now examine an extension of the results of Proposition~\ref{prop:coderivation lie algebra}
in the context of dg $\La$-cooperads.
In our applications, we actually deal with source objects $\op B$ and target objects $\op C$
of particular shapes in the category of dg $\La$-cooperads.
To be explicit, we use the following definitions.

\begin{defn}\label{defn:good Lambda-cooperads}
We first say that the dg $\La$-cooperad ${\op B}$ is a \emph{good source $\La$-cooperad} when the following properties hold.
\begin{itemize}
\item
The coaugmentation $\eta: \Com^c\to{\op B}$ admits a retraction in the category of dg $\La$-cooperads $\epsilon: {\op B}\to\Com^c$.
Thus, we have the relation
\[
{\op B} = \Com^c\oplus I{\op B}
\]
in the category of $\La$-collections, where we set $I{\op B} = \ker(\epsilon: {\op B}\to\Com^c)$.
\footnote{In the sequel, the dg coperad ${\op B}$ consists of a collection of augmented commutative algebras,
and $I{\op B}(r)$ represents the augmentation ideal of the dg algebra ${\op B}(r)$,
for each $r>0$.}
Note also that we trivially have $I{\op B}\subset\overline{\op B}$, and that the above decomposition relation
admits the following obvious restriction
\[
\overline{\op B} = \overline{\Com}{}^c\oplus I{\op B}
\]
when we consider the coaugmentation coideal of our cooperad $\overline{\op B}\subset{\op B}$.
\item
The $\La$-collection $I{\op B}$ is freely generated by a $\Sigma$-collection $\bS{\op B}$.
To be more explicit, we have the identity $I{\op B} = \La\otimes_{\Sigma}\bS{\op B}$,
where we use notation $\La\otimes_{\Sigma}-$
for the usual Kan extension functor from the category of $\Sigma$-collections
to the category of $\La$-collections.\footnote{In our applications, our free $\La$-collections have also a natural cosimplicial structure
and their generating $\Sigma$-collections are isomorphic to their conormalized parts.
This relationship motivates our notation $\bS$ for these objects.}
\end{itemize}
We secondly say that the dg $\La$-cooperad ${\op C}$ is a \emph{good target $\La$-cooperad}
when we have ${\op C} = B({\op P})$ for an augmented dg operad ${\op P}$
equipped with a $B\La$ structure.
\end{defn}

Note that for dg $\La$-cooperads ${\op B}$ and ${\op C}$ as in this definition we have a canonical map $*: {\op B}\to{\op C}$,
which is given by the composite
\[
{\op B}\to\Com^c\to{\op C},
\]
where we take the augmentation of the good source $\La$-cooperad ${\op B}$, followed by the coaugmentation of the cooperad ${\op C}$.

We can now establish our extension of the result of Proposition \ref{prop:coderivation lie algebra}.

\begin{prop}\label{prop:Lambda-coderivation Lie algebra}
If we assume that ${\op B}$ is a good source $\La$-cooperad and that ${\op C} = B({\op P})$ is a good target $\La$-cooperad in the sense
of Definition \ref{defn:good Lambda-cooperads},
then the morphisms of graded $\La$-cooperads $\phi: {\op B}^{\circ}\to{\op C}^{\circ}$
are in bijection with the morphisms of graded $\Sigma$-collections
\[
\alpha_{\phi}': \bS{\op B}^{\circ}\to\overline{\op P}{}^{\circ}[1]
\]
or, equivalently, with elements of degree $-1$ in the graded hom-object
of $\Sigma$-collections
\[
\DerL' = \Hom_{\gr\Sigma}(\bS{\op B},\overline{\op P})
\]
such as defined in Section~\ref{subsec:collections}.
This map $\alpha_{\phi}$, which we associate to a morphism of graded $\La$-cooperads $\phi: {\op B}^{\circ}\to{\op C}^{\circ}$,
is explicitly defined by taking by the projection of our morphism $\phi: {\op B}^{\circ}\to{\op C}^{\circ}$
onto the cogenerating collection of the cofree cooperad $B({\op P})^{\circ} = {\op F}^c(\overline{\op P}{}^{\circ}[1])$
on the target and its restriction to the generating collection $\bS{\op B}$
of the $\La$-collection $I{\op B}\subset{\op B}$
on the source.

The hom-object $\DerL' = \Hom_{\gr\Sigma}(\bS{\op B},\overline{\op P})$ moreover inherits a dg Lie algebra structure
such that the above correspondence restricts to a bijection between the set
of dg $\La$-cooperad morphisms $\phi: {\op B}\to{\op C}$
and the set of Maurer-Cartan elements
in $\DerL'$.
This dg Lie algebra structure on the hom-object $\DerL' = \Hom_{\gr\Sigma}(\bS{\op B},\overline{\op P})$
is actually identified with a restriction
of the twisted dg Lie algebra structure $\DerL^{\alpha_*}$
on the hom-object $\DerL = \Hom_{\gr\Sigma}(\overline{\op B},\overline{\op P})$
of Proposition \ref{prop:coderivation lie algebra},
where $\alpha_*\in\DerL$ is the Maurer-Cartan element of this hom-object which we associate to our canonical morphism
of dg $\La$-cooperads $*: {\op B}\to{\op C}$.
We then use the relations
\[
\Hom_{\gr\Sigma}(\bS{\op B},\overline{\op P})\cong\Hom_{\gr\Lambda}(I{\op B},\overline{\op P})
\subset\Hom_{\gr\Sigma}(I{\op B},\overline{\op P})
\subset\Hom_{\gr\Sigma}(\overline{\op B},\overline{\op P})
\]
to exhibit $\DerL'$ as a graded vector subspace of the latter graded vector space $\DerL$. (The first isomorphism of this sequence of relations
follows from the universal property of the free object $I{\op B} = \La\otimes_{\Sigma}\bS{\op B}$,
whereas we use the decomposition $\overline{\op B} = \overline{\Com}{}^c\oplus I{\op B}$
and the composition of maps $f: I{\op B}\to\overline{\op P}$
with the projection $\overline{\op B}\to I{\op B}$
to get the last inclusion relation.)

Besides, we have an isomorphism of dg vector spaces
\[
\CoDer_{\dg\La}({\op B}\stackrel{*}{\to}{\op C})[-1]\cong\DerL',
\]
where we consider the (degree shift of the) coderivation complex associated to our canonical morphism of dg $\La$-cooperads $*: {\op B}\to{\op C}$
on the left-hand side, and the underlying dg vector space of our dg Lie algebra $\DerL'$,
on the right hand side.
This isomorphism can be obtained as a restriction,
through the obvious inclusion $\CoDer_{\dg\La}({\op B}\stackrel{*}{\to}{\op C})\subset\CoDer_{\dg\Sigma}({\op B}\stackrel{*}{\to}{\op C})$
and the already considered inclusion of dg Lie algebras $\DerL'\subset\DerL^{\alpha_*}$,
of the isomorphism
\[
\CoDer_{\dg\Sigma}({\op B}\stackrel{*}{\to}{\op C})[-1]\cong\Hom_{\gr\Sigma}(\overline{\op B},\overline{\op P})^{\alpha_*},
\]
which is given by the result of Proposition~\ref{prop:coderivation lie algebra}
in the case $\phi = *$.
The shifted coderivation complex $\CoDer_{\dg\La}({\op B}\stackrel{*}{\to}{\op C})[-1]$ accordingly forms a dg Lie subalgebra
of the dg Lie algebra $\CoDer_{\dg\Sigma}({\op B}\stackrel{*}{\to}{\op C})[-1]$
considered in Proposition~\ref{prop:coderivation lie algebra}.
\end{prop}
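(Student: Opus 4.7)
The plan is to derive this proposition from the analogous plain cooperadic statement in Proposition~\ref{prop:coderivation lie algebra}, by identifying the $\La$-equivariant structures as suitable restrictions of the $\Sigma$-equivariant ones.

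First, I would establish the bijection between morphisms $\phi: {\op B}^{\circ}\to{\op C}^{\circ}$ of graded $\La$-cooperads and the claimed maps $\alpha_{\phi}': \bS{\op B}^{\circ}\to\overline{\op P}{}^{\circ}[1]$. By a graded version of Proposition~\ref{prop:morphisms to bar Lambda-cooperads}, such a morphism $\phi$ corresponds to a map $\alpha_{\phi}: \overline{\op B}\to\overline{\op P}[1]$ of graded $\La$-collections whose restriction to $\overline{\Com}{}^c$ coincides with the coaugmentation map $u$ of the $B\La$ operad structure on ${\op P}$. Using the splitting $\overline{\op B} = \overline{\Com}{}^c\oplus I{\op B}$ of $\La$-collections from Definition~\ref{defn:good Lambda-cooperads}, such $\alpha_{\phi}$ is determined by its restriction to $I{\op B}$, which by the freeness $I{\op B} = \La\otimes_{\Sigma}\bS{\op B}$ and the corresponding adjunction is in bijection with a $\Sigma$-equivariant map $\alpha_{\phi}': \bS{\op B}\to\overline{\op P}[1]$. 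This simultaneously exhibits $\DerL' = \Hom_{\gr\Sigma}(\bS{\op B},\overline{\op P})$ as the subspace of $\DerL = \Hom_{\gr\Sigma}(\overline{\op B},\overline{\op P})$ formed by the maps that vanish on $\overline{\Com}{}^c$ and are $\La$-equivariant on $I{\op B}$.

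Next, I would verify that this subspace is stable under both the Lie bracket and the twisted differential of $\DerL^{\alpha_*}$. Stability under the bracket uses that the bracket in $\DerL$ is constructed from the reduced composition coproducts of $\overline{\op B}$, which respect the corestriction operators by the compatibility relation~\eqref{eq:Lacompatibility}, and from the partial composition products of $\overline{\op P}$, which respect the corestriction operators by the derivation condition of Definition~\ref{defn:BLambda-structures}. Stability under $d^{\alpha_*} = d + [\alpha_*,-]$ reduces to the commutation relation $d_{\op P}\circ\eta_S - \eta_S\circ d_{\op P} = \ad_b$ from the same definition, where the contribution $[\alpha_*,-]$ to the twisted differential precisely accounts for the failure of $\eta_S$ to commute with $d_{\op P}$ on the nose.

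For the Maurer-Cartan correspondence, I would observe that a Maurer-Cartan element $f\in\DerL'$ yields the Maurer-Cartan element $\alpha_* + f$ of the untwisted algebra $\DerL$ and hence a morphism of dg cooperads ${\op B}\to{\op C}$ by Proposition~\ref{prop:coderivation lie algebra}. Because $\alpha_* + f$ restricts to $u$ on $\overline{\Com}{}^c$ and is $\La$-equivariant by construction, Proposition~\ref{prop:morphisms to bar Lambda-cooperads} further ensures that this morphism is a morphism of dg $\La$-cooperads. For the isomorphism with the coderivation complex, I would repeat the argument of Proposition~\ref{prop:coderivation lie algebra}: a $\La$-equivariant coderivation $\theta$ of the canonical morphism $*: {\op B}\to{\op C}$ is uniquely determined by its composite with the projection ${\op C}^{\circ}\to\overline{\op P}{}^{\circ}[1]$ onto the cogenerators (now invoking the cofree-on-$\La$-collections property of the bar construction from Proposition~\ref{prop:BLambda-operad bar construction}), and the vanishing on the coaugmentation image $\Com^c\subset{\op B}$ combined with $\La$-equivariance forces this composite to be precisely a $\Sigma$-map $\bS{\op B}\to\overline{\op P}[1]$. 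The main technical obstacle is the careful verification of the stability of $\DerL'$ under the twisted dg Lie operations, where one must unpack and combine all the $B\La$ operad compatibility relations.
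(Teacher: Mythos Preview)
Your proposal is correct and arrives at the same conclusions, but the route you take for the central step---showing that $\DerL'$ is closed under the dg Lie structure of $\DerL^{\alpha_*}$---differs from the paper's. You propose a direct verification: unpack the bracket and twisted differential in terms of the reduced composition coproducts of $\overline{\op B}$ and the composition products of $\overline{\op P}$, and then invoke the $B\La$ compatibility relations of Definition~\ref{defn:BLambda-structures} and the relation~\eqref{eq:Lacompatibility} to check $\La$-equivariance and vanishing on $\overline{\Com}{}^c$. The paper instead stays within the geometric framework of Proposition~\ref{prop:geometric Linfty:structures}: it introduces the shifted map $F^{\alpha_*}: S(W)\to V$ defined by $\Phi_{F^{\alpha_*},R}(\exp(w)) = \widetilde{\alpha_*+w} - \widetilde{\alpha_*}$, observes that $w\in W'$ implies $\Phi_{F^{\alpha_*},R}(\exp(w))\in V'$ (because $\widetilde{\alpha_*+w}$ is then a morphism of graded $\La$-cooperads by the first part of the argument), and concludes from the resulting commutative diagram that the coderivation $D^{\alpha_*}$ restricts to $S(W')$. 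This avoids ever unpacking the explicit formula for the bracket or the twisted differential, and the $B\La$ compatibility relations are used only implicitly, through Proposition~\ref{prop:morphisms to bar Lambda-cooperads}. Your approach is more hands-on and would give explicit control over the operations, at the cost of the careful sign-and-relation chase you acknowledge; the paper's approach is more conceptual and reuses the machinery already built, but is less transparent about why the individual relations in Definition~\ref{defn:BLambda-structures} are exactly what is needed.
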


\begin{proof}
By Proposition \ref{prop:coderivation lie algebra}, the morphisms of graded cooperads $\phi: {\op B}^{\circ}\to{\op C}^{\circ}$
are in bijection with the morphisms
of graded $\Sigma$-collections $\alpha_{\phi}: \overline{\op B}{}^{\circ}\to\overline{\op P}{}^{\circ}[1]$.
Proposition \ref{prop:morphisms to bar Lambda-cooperads} implies that the morphisms of graded $\La$-cooperads
correspond,
under this bijection, to the extension $\alpha_{\phi} = \eta + \alpha_{\phi}'$
of the morphisms of graded $\La$-collections $\alpha_{\phi}': I{\op B}{}^{\circ}\to\overline{\op P}{}^{\circ}[1]$,
where we consider the coaugmentation morphism $\eta: \overline{\Com}{}^c\to\overline{\op C}$
associated to the coaugmentation coideal of the dg $\La$-cooperad ${\op C}$.
Then we just use the free structure $I{\op B} = \La\otimes_{\Sigma}\bS{\op B}$
to get that any such morphism $\alpha_{\phi}': I{\op B}{}^{\circ}\to\overline{\op P}{}^{\circ}[1]$
in the category of graded $\La$-collections
is equivalent to a morphism $\alpha_{\phi}': \bS{\op B}{}^{\circ}\to\overline{\op P}{}^{\circ}[1]$
in the category of graded $\Sigma$-collections,
Equivalently, we use the isomorphism
of graded hom-objects $\Hom_{\gr\Sigma}(\bS{\op B},\overline{\op P})\cong\Hom_{gr\Lambda}(I{\op B},\overline{\op P})$,
already considered in our statement,
to get this map of $\Sigma$-collections equivalent to our morphism $\alpha_{\phi}': I{\op B}{}^{\circ}\to\overline{\op P}{}^{\circ}[1]$.
This proves the first claim of the proposition.

Note that the mapping $\alpha_{\phi}'\mapsto\alpha_{\phi} = \eta + \alpha_{\phi}'$, which we use in this correspondence,
is an affine map, as opposed to the inclusion of graded vector spaces $\DerL\subset\DerL'$,
considered in the statement of our proposition.
In short, this affine map differs from the latter inclusion $\DerL\subset\DerL'$ by the consideration
of the coaugmentation morphism $\eta: \overline{\Com}{}^c\to\overline{\op C}$
instead of the zero map on the summand $\overline{\Com}{}^c$
of the object $\overline{\op B} = \overline{\Com}{}^c\oplus I{\op B}$.
In what follows, we use that we can associate this coaugmentation morphism $\eta: \overline{\Com}{}^c\to\overline{\op C}$
to the map $\alpha_*$ that corresponds to our canonical morphism of dg $\La$-cooperads $\phi = *$
in the dg Lie algebra $\DerL$. We equivalently have the identities $\alpha_* = \eta\Leftrightarrow\alpha_*' = 0$
when we apply our correspondence to this morphism $\phi = *$.


We now examine the definition of the dg Lie algebra structure on the graded vector space $\DerL' = \Hom_{\gr\Sigma}(\bS{\op B},\overline{\op P})$.
We could proceed as in the proof of Proposition \ref{prop:coderivation lie algebra}
and deduce this construction from the general result of Proposition \ref{prop:geometric Linfty:structures}.
We then consider the dg hom-object of maps of $\La$-collections
\[
V' = \Hom_{\dg\La}(I{\op B},\overline{\op C})
\]
and the graded hom-object
\[
W' = \Hom_{\gr\La}(I{\op B},\overline{\op P}[1])
\]
in analogy with the dg hom-object $V = \Hom_{\dg\Sigma}(\overline{\op B},\overline{\op C})$
and the graded hom-object $W = \Hom_{\dg\Sigma}(\overline{\op B},\overline{\op P}[1])$
considered in the proof of Proposition \ref{prop:coderivation lie algebra}.
We also have
\[
W'\cong\underbrace{\Hom_{\gr\Sigma}(\bS{\op B},\overline{\op P})}_{= \DerL'}[1]
\]
by the already considered isomorphism
of hom-objects $\Hom_{\gr\Sigma}(\bS{\op B},\overline{\op P})\cong\Hom_{gr\Lambda}(I{\op B},\overline{\op P})$.

We still have an obvious morphism of graded vector spaces $\lambda: V'\to W'$, analogous to the map $\lambda: V\to W$
considered in the proof of Proposition \ref{prop:coderivation lie algebra},
which is induced by the projection onto the cogenerators ${\op C}^{\circ}\to\overline{\op P}{}^{\circ}[1]$
in the cofree cooperad ${\op C}^{\circ} = {\op F}^c(\overline{\op P}{}^{\circ}[1])$,
but we follow another route from this step on.
We are actually going to give an indirect definition of the other map $F': S(W')\to V'$,
which we need to complete the construction of Proposition \ref{prop:geometric Linfty:structures}.

Indeed, we claim in our statement that the outcome of our construction will make the object $\DerL'$
a dg Lie subalgebra of the twisted dg Lie algebra $\DerL^{\alpha_*}$
which we obtain by applying the results of Proposition \ref{prop:coderivation lie algebra}
to the morphism $\phi = *$.
Therefore, we construct our map $F': S(W')\to V'$ by using a restriction process from a twisted counterpart of the map $F: S(W)\to V$
which we use to determine the dg Lie algebra structure on $\DerL$.

To be explicit, we use the obvious embedding $V'\subset V$ (respectively, $W'\subset W$),
which identifies the elements of $V'$ (respectively, $W'$)
with the maps that preserve $\La$-diagram structures and vanish over the summand $\overline{\Com}{}^c$
of the object $\overline{\op B} = \overline{\Com}{}^c\oplus I{\op B}$
inside $V = \Hom_{\dg\Sigma}(\overline{\op B},\overline{\op C})$ (respectively, $W = \Hom_{\gr\Sigma}(\overline{\op B},\overline{\op P}[1])$).
Then we consider the map $F^{\alpha_*}: S(W)\to V$ such that $\Phi_{F^{\alpha_*},R}(\exp(w)) = \widetilde{\alpha_*+w} - \widetilde{\alpha_*}$,
for any element $w\in(W\otimes\overline{R})^0$, for any nilpotent graded ring $R$,
and where, with the notation of the proof of Proposition \ref{prop:coderivation lie algebra},
we consider the morphisms of cooperads $\widetilde{\alpha_*+w},\widetilde{\alpha_*}: {\op B}\to{\op C}$
associated to $\alpha_*+w\in(W\otimes R)^0$
and to $\alpha_*\in(W\otimes R)^0$. (We identify the element $\alpha_*\in W$ with $\alpha_*\otimes 1\in(W\otimes R)^0$
when we take the extension of scalars to the nilpotent ring $R$.)
We easily see that we have the implication $w\in(W'\otimes\overline{R})^0\Rightarrow\Phi_{F^{\alpha_*},R}(\exp(w))\in(V'\otimes\overline{R})^0$
from which we deduce that this map $F^{\alpha_*}: S(W)\to V$
admits a restriction through our embeddings $W'\subset W$
and $V'\subset V$.
We accordingly have a commutative diagram:
\[
\begin{tikzcd}
S(W')\ar{r}{\subset}\ar{d}{\exists F'} & S(W)\ar{d}{F^{\alpha_*}} \\
V'\ar{r}{\subset}\ar{d}{d} & V\ar{d}{d} \\
V'\ar{r}{\subset}\ar{d}{\lambda'} & V\ar{d}{\lambda} \\
W'\ar{r}{\subset} & W
\end{tikzcd},
\]
where $F'$ denotes this map that we obtain by restriction from $F^{\alpha_*}$.
We easily check that the vertical composite on the right hand side of this diagram is the map of graded vector spaces $\pi D^{\alpha}: S(W)\to W$
associated to the coderivation $D^{\alpha}: S(W)\to S(W)$
that reflects the structure of the twisted dg Lie algebra $\DerL^{\alpha_*}$.
We conclude that this twisted dg Lie algebra structure on $\DerL = W[-1]$
admits a restriction to the graded vector space $\DerL' = W'[-1]$,
which therefore forms a dg Lie subalgebra of $\DerL^{\alpha_*}$,
as expected.

To complete the verifications of the claims of our proposition, we can argue as in the proof of Proposition \ref{prop:coderivation lie algebra}
in order to establish that we have an isomorphism
of graded vector spaces
\[
\CoDer_{\dg\La}({\op B}\stackrel{*}{\to}{\op C})\cong\Hom_{\gr\La}(I{\op B},\overline{\op P}[1])
\]
which carries any coderivation $\theta\in\CoDer_{\dg\La}({\op B}\stackrel{*}{\to}{\op C})$
to its projection onto the generating collection of the dg cooperad ${\op C}$
and its restriction to the object $I{\op B}$.
We immediately see that our inclusion of graded vector spaces $W'\subset W$
corresponds to the inclusion of the graded vector spaces
of coderivations $\CoDer_{\dg\La}({\op B}\stackrel{*}{\to}{\op C})\subset\CoDer_{\dg\Sigma}({\op B}\stackrel{*}{\to}{\op C})$
when we forget about differentials.
We can now deduce the last assertions of our proposition
from this observation,
from the definition of our dg Lie algebra structure on $\DerL' = W'[-1]$
by restriction of the twisted dg Lie algebra structure $\DerL^{\alpha_*}$ on $\DerL = W[-1]$,
and from the identity of dg vector spaces $\CoDer_{\dg\Sigma}({\op B}\stackrel{*}{\to}{\op C})[-1] = \DerL^{\alpha_*}$
given by the results of Proposition \ref{prop:coderivation lie algebra}.
\end{proof}


\begin{rem}\label{rem:extended Lambda-coderivation Lie algebra}
The observations of Example \ref{ex:extended BLambda-operads} imply that ${\op C}^+ = B({\op P}_{\bo})$
forms a good target $\La$-cooperad
as soon as ${\op C} = B({\op P})$
is so.
Hence, we can apply the above proposition to this cooperad ${\op C}^+$.
In this case, our results imply that the graded hom-object $\DerL' = \Hom_{\gr\Sigma}(\bS{\op B},{\op P})$ inherits a dg Lie algebra structure
and that the dg vector space of extended coderivations $\xCoDer_{\dg\La}({\op B}\stackrel{*}{\to}{\op C})[-1]$
(with the usual degree shift) is isomorphic to the underlying dg vector space
of this dg Lie algebra structure.
\end{rem}

We record the following comparison result between the extended coderivation dg Lie algebra
and the ordinary coderivation dg Lie algebra.

\begin{lemm}\label{lemm:extended Lambda-coderivation complex:acyclicity}
We assume that ${\op B}$ is a good source $\La$-cooperad and that ${\op C}$
is a good target $\La$-cooperad as in Proposition~\ref{prop:Lambda-coderivation Lie algebra}.
We assume further that $\bS{\op B}(1)$ is acyclic as a dg vector space.
The canonical morphism
\[
\CoDer_{\dg\La}({\op B}\stackrel{*}{\to}{\op C})\to\CoDer_{\dg\La}({\op B}\stackrel{*}{\to}{\op C}^+)
= \xCoDer_{\dg\La}({\op B}\stackrel{*}{\to}{\op C}),
\]
induced by the composition of coderivations with the morphism $\mathit{id}^+: {\op C}\to{\op C}^+$,
is a quasi-isomorphism.
\end{lemm}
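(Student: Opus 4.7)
The plan is to identify the cofiber of the given morphism and verify that it is acyclic under the stated hypothesis. Combining Proposition~\ref{prop:Lambda-coderivation Lie algebra} with Remark~\ref{rem:extended Lambda-coderivation Lie algebra}, we have natural isomorphisms of dg vector spaces
\[
\CoDer_{\dg\La}({\op B}\stackrel{*}{\to}{\op C})[-1]\cong\Hom_{\gr\Sigma}(\bS{\op B},\overline{\op P})
\quad\text{and}\quad
\xCoDer_{\dg\La}({\op B}\stackrel{*}{\to}{\op C})[-1]\cong\Hom_{\gr\Sigma}(\bS{\op B},{\op P}),
\]
with respect to which the morphism in the statement becomes the inclusion induced by $\overline{\op P}\hookrightarrow{\op P}$. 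The cokernel $Q$ in the resulting short exact sequence of complexes fits, after the shift, into
\[
0\to\Hom_{\gr\Sigma}(\bS{\op B},\overline{\op P})\to\Hom_{\gr\Sigma}(\bS{\op B},{\op P})\to Q[-1]\to 0,
\]
and satisfies $Q[-1]\cong\Hom_{\gr\Sigma}(\bS{\op B},\K\cdot 1)$, where $\K\cdot 1\subset{\op P}(1)$ denotes the line spanned by the operadic unit. Since this line is concentrated in arity $1$ and in degree $0$, the cokernel collapses to the graded dual $\bS{\op B}(1)^*$.

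The key step is to check that the differential on $Q$ is simply (a shift and sign of) the dual of $d_{\bS{\op B}(1)}$. By Proposition~\ref{prop:Lambda-coderivation Lie algebra}, the differential on $\Hom_{\gr\Sigma}(\bS{\op B},{\op P})$ is the sum of the hom-complex commutator $d_{{\op P}}\circ f-(-1)^{|f|}f\circ d_{\bS{\op B}}$ and the convolution bracket $[\alpha_*,-]$ with the Maurer-Cartan element $\alpha_*$ corresponding to $*$. For a map $f: \bS{\op B}(1)\to\K\cdot 1$, the term $d_{{\op P}}\circ f$ vanishes since $d_{{\op P}}(1)=0$, so the commutator reduces to $-(-1)^{|f|}f\circ d_{\bS{\op B}(1)}$. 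The twist $[\alpha_*,f]|_{\bS{\op B}(1)}$ vanishes by an arity argument: the reduced composition coproducts acting on $\bS{\op B}(1)$ can only produce tensor factors in $\overline{\op B}(1)\otimes\overline{\op B}(1)$, where $\overline{\op B}(1)=\bS{\op B}(1)$ thanks to the decomposition $\overline{\op B}=\overline{\Com}{}^c\oplus I{\op B}$ together with the vanishing $\overline{\Com}{}^c(1)=0$; but $\alpha_*=\eta$ is the coaugmentation, which vanishes identically on the summand $\bS{\op B}\subset\overline{\op B}$. Hence the differential on $Q$ is, up to an overall sign and shift, exactly the dual of the differential of $\bS{\op B}(1)$.

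Since we work over a field and $\bS{\op B}(1)$ is acyclic by hypothesis, it is in fact contractible, so its graded dual $\bS{\op B}(1)^*$ is acyclic as well. Consequently $Q$ is acyclic, and the long exact sequence in homology attached to the above short exact sequence gives the required quasi-isomorphism. The main obstacle in this plan is the verification in the middle paragraph that the Maurer-Cartan twist $[\alpha_*,-]$ contributes nothing to the differential on $Q$; all the remaining ingredients are routine identifications and sign bookkeeping.
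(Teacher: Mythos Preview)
Your argument is correct, but it differs from the paper's proof. The paper uses a spectral sequence: it filters both coderivation complexes by arity, setting $\mF^m = \prod_{r\geq m}\Hom_{\gr\Sigma_r}(\bS{\op B}(r),-)$, checks that on the associated graded the differential reduces to the internal hom-differential (the twist by $\alpha_*$ strictly raises the arity filtration because $\alpha_*$ factors through $\overline{\Com}^c$, which vanishes in arity $1$), and then observes that the induced map on $E^0$-pages is the inclusion $\Hom_{\dg\Sigma_m}(\bS{\op B}(m),\overline{\op P}(m))\hookrightarrow\Hom_{\dg\Sigma_m}(\bS{\op B}(m),{\op P}(m))$, which is an isomorphism except in arity $1$ where the cokernel is $\bS{\op B}(1)^*$; acyclicity of $\bS{\op B}(1)$ then gives an $E^1$-isomorphism.

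Your approach bypasses the spectral sequence entirely by passing directly to the cokernel and analysing its differential. This is more elementary and arguably cleaner for this particular statement. Your verification that $[\alpha_*,f]$ vanishes on $\bS{\op B}(1)$ is correct, though there is an even shorter route: since $\alpha_*$ takes values in $\overline{\op P}$ and composition with the operadic unit sends $\overline{\op P}$ to $\overline{\op P}$, the bracket $[\alpha_*,f]$ lands in $\Hom_{\gr\Sigma}(\bS{\op B},\overline{\op P})$ for \emph{any} $f$, hence always projects to zero in $Q$. The paper's filtration argument has the advantage of being more robust (it would adapt immediately to variants where the cokernel is spread over several arities), whereas your direct method makes the role of the hypothesis on $\bS{\op B}(1)$ completely transparent.
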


\begin{proof}
We endow $\CoDer_{\dg\La}({\op B}\stackrel{*}{\to}{\op C})$ and $\xCoDer_{\dg\La}({\op B}\stackrel{*}{\to}{\op C})$
with the descending complete filtration
that we deduce from the isomorphisms
of graded vector spaces $\CoDer_{\dg\La}({\op B}\stackrel{*}{\to}{\op C})^{\circ}\cong\Hom_{\gr\Sigma}(\bS{\op B},\overline{\op P})$
and $\xCoDer_{\dg\La}({\op B}\stackrel{*}{\to}{\op C})^{\circ}\cong\Hom_{\gr\Sigma}(\bS{\op B},{\op P})$
and from the natural arity filtration $F^m\Hom_{\gr\Sigma}({\op M},{\op N}) = \prod_{r\geq m}\Hom_{gr\Sigma_r}({\op M}(r),{\op N}(r))$
of hom-objects
in the category of graded $\Sigma$-collections
$\Hom_{\gr\Sigma}(\op M,\op N) = \prod_{r\in\N}\Hom_{\gr\Sigma_r}({\op M}(r),{\op N}(r))$.
We easily check that the differential of $\CoDer_{\dg\La}({\op B}\stackrel{*}{\to}{\op C})$
(respectively, $\xCoDer_{\dg\La}({\op B}\stackrel{*}{\to}{\op C})$)
preserves this filtration, and that the $E^0_{m *}$ term of the corresponding spectral sequence is identified with the dg hom-object
of $\Sigma_m$-equivariant maps $\Hom_{\dg\Sigma_m}(\bS{\op B}(m),\overline{\op P}(m))$
(respectively, $\Hom_{\dg\Sigma_m}(\bS{\op B}(m),{\op P}(m))$
with the notation of Section \ref{subsec:collections}.
We immediately deduce from the acyclicity of the dg vector space $\bS{\op B}(1)$
that the morphism $\mathit{id}^+: {\op C}\to{\op C}^+$
induces an isomorphism on the $E^1$ page
of this spectral sequence. The conclusion follows.
\end{proof}

For later reference, we want to give a name to the dg Lie algebra constructed in this section.

\begin{defn}\label{defn:canonical Lambda-coderivation complex}
For ${\op B}$ a good source $\La$-cooperad and ${\op C}$ a good target $\La$-cooperad, we adopt the notation
\[
\CoDer_{\dg\La}({\op B},{\op C}) := \CoDer_{\dg\La}({\op B}\stackrel{*}{\to}{\op C})[-1]
\]
for the shifted coderivation complex associated to the canonical map $*: {\op B}\to{\op C}$,
equipped with the dg Lie algebra structure
that we deduce from the result of Proposition~\ref{prop:Lambda-coderivation Lie algebra}.
We similarly set
\[
\xCoDer_{\dg\La}({\op B},{\op C}) := \xCoDer_{\dg\La}({\op B}\stackrel{*}{\to}{\op C})[-1],
\]
when we deal with the complex of extended coderivations.
\end{defn}

Note that we drop the degree shift in the notation of this definition.

We may note that $\CoDer_{\dg\La}({\op B},{\op C})$ satisfies obvious bifunctoriality properties
and so does the extended coderivation complex $\xCoDer_{\dg\La}({\op B},{\op C})$.
We record the following homotopy invariance properties of these bifunctors:

\begin{prop}\label{prop:Lambda-coderivation complex homotopy invariance}
If $\phi: {\op B}\stackrel{\simeq}{\to}{\op B}'$ is a quasi-isomorphism of good source $\La$-cooperads,
and ${\op C}$ is a good target $\La$-cooperad,
then $\phi$ induces a quasi-isomorphism at the coderivation complex level:
\[
\phi^*: \CoDer_{\dg\La}({\op B}',{\op C})\stackrel{\simeq}{\to}\CoDer_{\dg\La}({\op B},{\op C}),
\]
and similarly when we pass to the extended coderivation complex.
If we symmetrically assume that ${\op B}$ is a good source $\La$-cooperad, and $\psi: {\op C}\to{\op C}'$
is a morphism of good target $\La$-cooperads ${\op C} = B({\op P})$, ${\op C}' = B({\op P}')$
induced by a structure preserving quasi-isomorphism of $B\La$-operads $\psi: {\op P}\stackrel{\simeq}{\to}{\op P}'$,
then $\psi$ also induces a quasi-isomorphism at the coderivation complex level:
\[
\psi_*: \CoDer_{\dg\La}({\op B},{\op C})\stackrel{\simeq}{\to}\CoDer_{\dg\La}({\op B},{\op C}'),
\]
and similarly when we pass to the extended coderivation complex.
%
\end{prop}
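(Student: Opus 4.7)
The plan is to reduce both statements to quasi-isomorphisms on the associated graded of a suitable arity filtration, in the spirit of the proof of Lemma~\ref{lemm:extended Lambda-coderivation complex:acyclicity}. Recall from Proposition~\ref{prop:Lambda-coderivation Lie algebra} that we have an identification of graded vector spaces
\[
\CoDer_{\dg\La}({\op B},{\op C})^\circ\cong\Hom_{\gr\Sigma}(\bS{\op B},\overline{\op P})=\prod_{r}\Hom_{\gr\Sigma_r}(\bS{\op B}(r),\overline{\op P}(r)),
\]
with total differential of the form $d = d_{\mathrm{int}} + [\alpha_*, -]$, where $d_{\mathrm{int}}$ is induced by the internal differentials of ${\op B}$ and ${\op P}$ and the bracket is the twisting by the Maurer--Cartan element corresponding to the canonical map $*: {\op B} \to {\op C}$. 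The analogous identification holds for $\xCoDer_{\dg\La}({\op B}, {\op C})$, with $\overline{\op P}$ replaced by ${\op P}$.

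I will first endow these complexes with the descending complete filtration
\[
F^m := \prod_{r \geq m}\Hom_{\gr\Sigma_r}(\bS{\op B}(r),\overline{\op P}(r)).
\]
Unpacking the Lie bracket $[\alpha_*, f]$ shows that if $f$ is supported in arities $\geq m$, then so is $[\alpha_*, f]$: for $b \in \bS{\op B}(r)$, the bracket uses only components $f_s$ with $s \leq r$, because the composition coproducts $\Delta_i$ on ${\op B}$ decompose arity $r$ into arities $m' + n' - 1 = r$ with $m', n' \leq r$, and then $\circ_i$ on ${\op P}$ returns to arity $r$. Hence $d$ preserves $F^{\bullet}$, the associated graded at level $m$ is $E^0_{m,*} = \Hom_{\dg\Sigma_m}(\bS{\op B}(m),\overline{\op P}(m))$ equipped with a possibly self-twisted differential, and completeness of the filtration guarantees strong convergence of the spectral sequence. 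The induced maps $\phi^*$ and $\psi_*$ are both filtration-preserving by construction.

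For the source case, the quasi-isomorphism $\phi: {\op B} \stackrel{\simeq}{\to} {\op B}'$ of good source $\La$-cooperads is compatible with the canonical retractions onto $\Com^c$ that are part of the good source structure, and hence restricts to a quasi-isomorphism $I{\op B} \stackrel{\simeq}{\to} I{\op B}'$ of $\La$-collections. Since $I{\op B} = \La \otimes_\Sigma \bS{\op B}$ is freely generated and $\bS{\op B}$ is recovered functorially from $I{\op B}$ (as the conormalized part of its natural cosimplicial structure, in line with the footnote to Definition~\ref{defn:good Lambda-cooperads}), we obtain an arity-wise quasi-isomorphism $\bS\phi: \bS{\op B}(m) \stackrel{\simeq}{\to} \bS{\op B}'(m)$. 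Since the ground field $\K$ has characteristic zero, $\K[\Sigma_m]$ is semisimple, so every dg $\Sigma_m$-module is arity-wise projective and the functor $\Hom_{\dg\Sigma_m}(-, \overline{\op P}(m))$ preserves quasi-isomorphisms. Consequently the map induced by $\phi^*$ on the $E^0$ pages is a quasi-isomorphism in every bi-degree, and the classical comparison theorem for strongly convergent spectral sequences produces the desired quasi-isomorphism on total complexes. The target case is handled by the symmetric argument: a structure preserving quasi-isomorphism $\psi: {\op P} \stackrel{\simeq}{\to} {\op P}'$ of $B\La$-operads restricts to a quasi-isomorphism $\overline{\op P} \stackrel{\simeq}{\to} \overline{\op P}'$, and $\Hom_{\dg\Sigma_m}(\bS{\op B}(m), -)$ preserves quasi-isomorphisms by the same semisimplicity argument. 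The extended coderivation complexes $\xCoDer_{\dg\La}$ are treated verbatim with ${\op P}$ in place of $\overline{\op P}$.

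The main obstacle in this plan is the precise verification that the twisted differential preserves the arity filtration, so that the reduction to arity-wise quasi-isomorphisms actually applies; this relies on the upper-triangular structure of the bracket $[\alpha_*, -]$ observed above. Once this is in place, the remainder of the argument is essentially formal, combining the semisimplicity of $\K[\Sigma_m]$ and the functorial extraction of $\bS$ from $I{\op B}$ with the spectral sequence comparison theorem.
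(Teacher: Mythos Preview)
Your proof is correct and follows the same spectral-sequence-on-the-arity-filtration approach as the paper. One small sharpening worth making: the twisting term $[\alpha_*,-]$ actually \emph{strictly} raises the arity filtration (because $\alpha_*$ vanishes on $I{\op B}$ and is nonzero only on $\overline{\Com}^c$ in arities $\geq 2$, forcing the other tensor factor in any coproduct term to have arity $\leq r-1$), so the $E^0$-differential is purely the internal one --- this is precisely what your appeal to ``$\Hom_{\dg\Sigma_m}(-,\overline{\op P}(m))$ preserves quasi-isomorphisms'' needs, and it dissolves your ``possibly self-twisted'' hedge.
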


We can actually establish the homotopy invariance properties of the coderivation complex for morphisms
of good target $\La$-cooperads $\psi: {\op C}\to{\op C}'$
of a more general form,
but we only use the case stated in this proposition.

\begin{proof}
This proposition follows from spectral sequence arguments. To be explicit, we can provide
the coderivation complex $\CoDer_{\dg\La}({\op B},{\op C})\cong\DerL'$ associated to any pair $({\op B},{\op C})$,
where ${\op B}$ is a good source $\La$-cooperad and ${\op C}$ is a good target $\La$-cooperad,
with the complete descending filtration
such that
\[
\mF^k\DerL' = \prod_{r\geq k}\Hom_{\gr\Sigma_r}(\bS{\op B}(r),\overline{\op P}(r)),
\]
where we use the identity $\DerL' = \Hom_{\gr\Sigma}(\bS{\op B},\overline{\op P})$
Then we may check that the differential of the dg Lie algebra $\DerL'$ reduces to the map induced by the internal differential
of the collection $\bS{\op B}$ and of the operad ${\op P}$
on the hom-object $E^0_k\DerL' = \Hom_{\gr\Sigma_k}(\bS{\op B}(k),\overline{\op P}(k))$,
which defines the $E^0$-page of the spectral sequence
associated to this filtration.

If $\phi: {\op B}\to{\op B}'$ is a quasi-isomorphism of good source $\La$-cooperads, and ${\op C} = B({\op P})$ is a good target $\La$-cooperad,
then we immediately get that $\phi$ preserves our filtration, and induces an isomorphism on the $E^1$-page
of the associated spectral sequence. Thus, we conclude that $\phi$
induces a quasi-isomorphism on our coderivation complexes which represent the abutment
of this spectral sequence. We use similar arguments when we have a morphism $\psi: {\op C}\to{\op C}'$
of good target $\La$-cooperads ${\op C} = B({\op P})$, ${\op C}' = B({\op P}')$
induced by a structure preserving quasi-isomorphism of $B\La$-operads $\psi: {\op P}\stackrel{\simeq}{\to}{\op P}'$.
%
\end{proof}

\subsection{Deformation complexes of cooperads with coefficients in bicomodules}\label{subsec:coderivation complexes:bicomodules}
In \cite[Sections 1.3.2, 1.4.2]{FW}, we consider deformation complexes with coefficients in bi-comodules over cooperads,
which give generalizations of the coderivation complexes considered in the previous subsections.
We use this construction later on in this article.

Let ${\op C}$ be a dg $\La$-cooperad. Briefly recall that dg $\La$-bicomodule over ${\op C}$
is a $\La$-collection in dg vector spaces ${\op M}$
equipped with coproduct operations $\Delta_i: {\op M}(k+l-1)\to{\op M}(k)\otimes{\op C}(l)$
and $\Delta_i: {\op M}(k+l-1)\to{\op C}(k)\otimes{\op M}(l)$
which satisfy a natural extension of the equivariance, counit and coassociativity properties
attached to the composition coproducts of a cooperad (see \cite{FW} for details).

The deformation complex of ${\op C}$ with coefficients in ${\op M}$
is defined by
\[
\CoDef_{\dg\La}({\op M},{\op C}) = (\Hom_{\dg\La}({\op M},\overline\Omega({\op C})),\partial),
\]
where $\overline{\Omega}({\op C})$ is the augmentation ideal of the cobar construction of our dg cooperad,
whereas $\partial$ denotes a differential,
determined by the coaction of the cooperad ${\op C}$ on ${\op M}$,
which we add to the natural differential of the dg hom-object $\Hom_{\dg\La}({\op M},\overline{\Omega}({\op C})$.

In the case where ${\op C} = B({\op P})$ is the bar construction of an augmented dg operad ${\op P}$
equipped with a $B\La$ structure,
then we can replace the cobar construction $\Omega({\op C})$
in the above definition by the object ${\op P}$.
We then use the canonical quasi-isomorphism of dg operads $\Omega({\op C})\to{\op P}$
to extend the twisting differential of our deformation complex $\partial$
to the hom-object $\Hom_{\dg\La}({\op M},{\op P})$.
We adopt the following notation for this complex
\[
K({\op M},{\op P}) = (\Hom_{\dg\La}({\op M},\overline{\op P}),\partial).
\]
We have in particular:
\[
\CoDef_{\dg\La}({\op M},{\op C})\cong K({\op M},\Omega({\op C})).
\]
We note that these objects are functorial in both arguments in some natural sense.

We can identify the cooperadic coderivation complexes of the previous section
with particular examples of such deformation complexes.
We can consider the general case where we have a good source $\La$-cooperad ${\op B}$
together with a morphism of dg $\La$-cooperads $\phi: {\op B}\to{\op C}$ (not necessarily the canonical one $\phi = *$).
We use this morphism to provide the dg $\La$-collection $I{\op B}$
with the structure of a dg $\La$-bicomodule over ${\op C}$.
We still assume that ${\op C}$ is a good target $\La$-cooperad, so that we have ${\op C} = B({\op P})$
for some augmented dg operad equipped with a $B\La$ structure ${\op P}$.
We can generalize the isomorphism construction of Proposition~\ref{prop:Lambda-coderivation Lie algebra}
to identify the coderivation complex $\CoDer_{\dg\La}({\op B}\stackrel{\phi}{\to}B({\op P}))$
with a twisted version of the dg hom-object $\Hom_{\dg\La}(I{\op B},\overline{\op P})$.
We actually have:
\[
\CoDer_{\dg\La}({\op B}\stackrel{\phi}{\to} B({\op P}))[-1]\cong K(I{\op B},{\op P}).
\]
We similarly get:
\[
\xCoDer_{\dg\La}({\op B}\stackrel{\phi}{\to}B({\op P}))[-1]\cong K(I{\op B},{\op P}_{\bo})
\]
when we consider the extended coderivation complex.

%
%

\section{Hopf cooperads and biderivations}\label{sec:biderivation complexes}
Recall that a Hopf operad is an operad in the category of cocommutative counital coalgebras.
We dually define a Hopf cooperad as a cooperad in the category of unital commutative algebras.
To be precise, we still work in the base category of dg vector spaces in what follows
and therefore we mainly consider dg Hopf cooperads which are Hopf cooperads
in the category of unital commutative dg algebras.
For our purpose, we also deal with dg Hopf $\La$-cooperads,
which are dg Hopf cooperads equipped with a compatible $\La$-structure.

The goal of this section is to extend the results of the previous section about the complexes of coderivations
to the Hopf cooperad setting. We then consider complexes of biderivations formed by maps
which are both coderivations with respect to the cooperad structure
and derivations with respect to the commutative algebra
structure.
We review the definition of this notion in the first subsection of this section.
We then prove that the complexes of biderivations associated to good dg Hopf cooperads
and to good dg Hopf $\La$-cooperads inherit a dg Lie algebra structure
just like the complexes of coderivations associated to good dg cooperads
and to good dg $\La$-cooperads
which we considered in the previous section.
We devote the second and the third subsections of this section to this subject.
We then examine the definition of deformation complexes with coefficients in bicomodules
that generalize the deformation complexes
of dg cooperads studied in the previous section.
We tackle this subject in the fourth subsection.

\subsection{Biderivations of Hopf cooperads}\label{subsec:biderivation complexes:biderivations}
Let $\op B$ and $\op C$ be dg Hopf cooperads and let $\phi: \op B\to\op C$ be a morphism of dg Hopf cooperads.
Let ${\op B}^{\circ}$ (respectively, ${\op C}^{\circ}$) denote the graded Hopf cooperad underlying ${\op B}$ (respectively, $\op C$)
which we obtain by forgetting about the differential of our object (as in Section \ref{subsec:coderivation complexes:coderivations}).
We again consider the algebra of dual numbers $\K_{\epsilon} := \K[\epsilon]/(\epsilon^2)$,
where $\epsilon = \epsilon_{-k}$ is a formal variable of degree $-k$,
and we define a degree $k$ biderivation of the morphism $\phi$
as map of graded $\Sigma$-collections $\theta: {\op B}^{\circ}\to{\op C}^{\circ}$
of degree $k$ such that the map
\[
\phi+\epsilon\theta: {\op B}^{\circ}\otimes\K_{\epsilon}\to{\op C}^{\circ}\otimes\K_{\epsilon}
\]
defines a morphism of graded Hopf cooperads over $\K_\epsilon$.
We readily see that $\theta$ is a biderivation if $\theta$ forms a cooperad coderivation in the sense
of Section \ref{subsec:coderivation complexes:coderivations}
when we forget about the commutative algebra structure of our Hopf cooperads
and each map $\theta: {\op B}^{\circ}(r)\to{\op C}^{\circ}(r)$
defines a derivation of the commutative algebra morphism $\phi: {\op B}(r)\to{\op C}(r)$
when we forget about cooperad structures.

Recall that the unit morphisms of the commutative algebras ${\op B}(r)$ (respectively, ${\op C}(r)$)
are equivalent to a cooperad morphism $\eta: \Com^c\to{\op B}$ (respectively, $\eta: \Com^c\to{\op C}$)
whose restriction to the unit cooperad ${\op I}$
defines the canonical coaugmentation $\eta: {\op I}\to{\op B}$ (respectively, $\eta: {\op I}\to{\op C}$)
associated to our object.
We may observe that the derivation relation with respect to the commutative algebra products
implies that our biderivation satisfies $\theta(1) = 0$ (as usual for a derivation),
and hence automatically preserves this cooperad coaugmentation,
as we require for a coderivation of cooperads
in general (see Section \ref{subsec:coderivation complexes:coderivations}).

The graded vector space of biderivations inherits a differential such that $d\theta = d_{\op C}\circ\theta-(-1)^k\theta\circ d_{\op B}$
as usual,
and we adopt the notation
\[
\BiDer_{\dg\Sigma}({\op B}\stackrel{\phi}{\lo}{\op C})
\]
for this dg vector space of biderivations which we associate to a morphism of Hopf dg cooperads $\phi: {\op B}\to{\op C}$.
%
%
%
If the Hopf cooperads $\op B$ and $\op C$ carry a $\La$-structure and the map $\phi: {\op B}\to{\op C}$
defines a morphism of Hopf $\La$-cooperads, then we also consider the dg vector space
\[
\BiDer_{\dg\La}({\op B}\stackrel{\phi}{\lo}{\op C})\subset\BiDer_{\dg\Sigma}({\op B}\stackrel{\phi}{\lo}{\op C})
\]
formed by the biderivations that intertwine the action of the category $\Lambda$ on our objects.
These biderivations obviously form coderivations of $\La$-cooperads
in the sense of Section \ref{subsec:Lambda-cooperads}. Note simply, again, that the derivation relation
with respect to the products implies that a biderivation
of Hopf $\La$-cooperads
automatically vanishes over the coaugmentation morphism $\eta: \Com^c\to{\op B}$
that we associate to the Hopf cooperad ${\op B}$.
Note also that the map $\phi+\epsilon\theta: {\op B}^{\circ}\otimes\K_{\epsilon}\to{\op C}^{\circ}\otimes\K_{\epsilon}$
defines a morphism of graded Hopf $\La$-cooperads over $\K_\epsilon$.
when $\theta$ is a biderivation of Hopf $\La$-cooperads.

\subsection{The dg Lie algebra structure of the biderivation complexes of dg Hopf cooperads}\label{subsec:biderivation complexes:plain Hopf cooperadsHopf cooperads}
We now assume that the dg Hopf cooperad ${\op C}$
in the construction of the previous subsection
is given, as a dg cooperad,
by the bar construction
\[
{\op C} = B({\op P})
\]
of an augmented dg operad ${\op P}$. We also restrict ourselves to the case where the dg Hopf cooperad ${\op B}$
is given by a Chevalley-Eilenberg cochain complex
\[
{\op B} = C({\alg g})
\]
of a cooperad in the category of graded Lie coalgebras ${\alg g}$.

To be explicit, we assume that ${\alg g}$
is a $\Sigma$-collection of graded Lie coalgebras ${\alg g}(r)$, $r>0$,
equipped with graded Lie algebra morphisms $\Delta_i: {\alg g}(m+n-1)\to{\alg g}(m)\oplus{\alg g}(n)$
which fulfill an additive analogue of the equivariance, counit and coassociativity properties
of the composition coproducts of cooperads.
If we forget about the Lie coalgebra structure of our objects, then we can identify these additive
composition coproducts
with the sum of the left $\Delta_i: {\alg g}(m+n-1)\to\Com^c(m)\otimes{\alg g}(n)$
and right coproduct operation $\Delta_i: {\alg g}(m+n-1)\to{\alg g}(m)\otimes\Com^c(n)$
of a $\Com^c$-bicomodule structure on the $\Sigma$-collection
underlying ${\alg g}$.
To each graded Lie coalgebra ${\alg g}(r)$, we then associate the Chevalley-Eilenberg complex $C({\alg g}(r))$,
which is the graded symmetric algebra $S({\alg g}(r)[-1])$,
generated by the desuspension of the graded vector space underlying ${\alg g}(r)$,
together with the differential induced by the Lie cobracket
on this graded vector space of generators.
The collection $C({\alg g})$ of these commutative dg algebras $C({\alg g}(r))$, $r>0$,
inherits a Hopf cooperad structure
with the composition coproducts
\[
C({\alg g}(m+n-1))\stackrel{(\Delta_i)_*}{\to}C({\alg g}(m)\oplus{\alg g}(n))\cong C({\alg g}(m))\otimes C({\alg g}(n))
\]
induced by the cocomposition coproduct operations of the graded Lie coalgebra cooperad ${\alg g}$.
This Hopf cooperad ${\op B} = C({\alg g})$
is equipped with a canonical augmentation over the commutative cooperad $\epsilon: C({\alg g})\to\Com^c$
that is induced by the zero map ${\alg g}\to 0$
at the Lie coalgebra level. If we take the composite of this morphism with the unit $\eta: \Com^c\to{\op C}$
of our dg Hopf cooperad, then we get a canonical morphism $*: C({\alg g})\to{\op C}$
that connects our dg Hopf cooperads ${\op B} = C({\alg g})$
and ${\op C}$.

We then have the following extension of the results of Proposition \ref{prop:coderivation lie algebra}.

\begin{prop}\label{prop:biderivation Lie algebra}
If we assume ${\op C} = B({\op P})$ and ${\op B} = C({\alg g})$ as above,
then the morphisms of graded Hopf cooperads $\phi: {\op B}^{\circ}\to{\op C}^{\circ}$
are in bijection with the morphisms of graded $\Sigma$-collections
\[
\alpha_{\phi}: {\alg g}[-1]\to\overline{\op P}{}^{\circ}[1]
\]
or, equivalently, with elements of degree $-1$ in the graded hom-object
of $\Sigma$-collections
\[
\DerL = \Hom_{\gr\Sigma}({\alg g}[-1],\overline{\op P}).
\]
This map $\alpha_{\phi}$, which we associate to a morphism of graded Hopf cooperads $\phi: {\op B}^{\circ}\to{\op C}^{\circ}$,
is explicitly defined by taking by the projection of our morphism $\phi: {\op B}^{\circ}\to{\op C}^{\circ}$
onto the cogenerating collection of the cofree cooperad $B({\op P})^{\circ} = {\op F}^c(\overline{\op P}{}^{\circ}[1])$
on the target and its restriction to the subobject ${\alg g}[-1]\subset C({\alg g})$
on the source.

This hom-object $\DerL = \Hom_{\gr\Sigma}(\overline{\op B},\overline{\op P})$ moreover inherits a dg Lie algebra structure
such that the above correspondence restricts to a bijection between the set
of morphisms of dg Hopf cooperads $\phi: {\op B}\to{\op C}$
and the set of Maurer-Cartan elements
in $\DerL$.

Besides, we have an isomorphism of dg vector spaces
\[
\BiDer_{\dg\Sigma}({\op B}\stackrel{*}{\to}{\op C})[-1]\cong\DerL,
\]
where we consider the (degree shift of the) biderivation complex associated to the canonical morphism $*: C({\alg g})\to{\op C}$
on the left-hand side and the underlying dg vector space
of our dg Lie algebra $\DerL$
on the right-hand side.
We consequently have a dg Lie algebra structure on this shifted biderivation complex.
\end{prop}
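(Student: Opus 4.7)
My plan is to mimic closely the strategy of Proposition~\ref{prop:coderivation lie algebra}, but now exploiting two universal properties simultaneously: the cofree (graded) cooperad structure of $B({\op P})^{\circ}={\op F}^c(\overline{\op P}{}^{\circ}[1])$ on the target, and the free graded commutative algebra structure $C({\alg g})(r)^{\circ}=S({\alg g}(r)[-1])$ of each arity component of $C({\alg g})$ on the source.

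First I would establish the bijection between morphisms of graded Hopf cooperads $\phi: C({\alg g})^{\circ}\to B({\op P})^{\circ}$ and maps of graded $\Sigma$-collections $\alpha_{\phi}: {\alg g}[-1]\to\overline{\op P}{}^{\circ}[1]$. One direction is immediate: we take the composite of $\phi$ with the projection $B({\op P})^{\circ}\to\overline{\op P}{}^{\circ}[1]$ onto the cogenerators of the cofree cooperad and restrict to the subobject ${\alg g}[-1]\subset C({\alg g})$ of the generators of the free commutative algebra. In the other direction, any $\alpha: {\alg g}[-1]\to\overline{\op P}{}^{\circ}[1]$ extends uniquely, in each arity $r$, to a commutative algebra morphism $\phi_r: S({\alg g}(r)[-1])\to B({\op P})(r)^{\circ}$ by the universal property of the symmetric algebra. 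The non-trivial point to verify is that the resulting collection $(\phi_r)_r$ is a cooperad morphism: this follows from the fact that the cooperad structure of $C({\alg g})$ is entirely determined by the Lie coalgebra cooperad structure of ${\alg g}$ on the cogenerators, and the cooperad structure of $B({\op P})$ on the cogenerators is trivial.

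Next I would apply Proposition~\ref{prop:geometric Linfty:structures} to construct an $L_{\infty}$ structure on $\DerL$. I would take
\[
V := \Hom_{\dg\Sigma}({\alg g}[-1],\overline{B({\op P})}),
\qquad W := \Hom_{\gr\Sigma}({\alg g}[-1],\overline{\op P}[1]) = \DerL[1],
\]
with $\lambda: V\to W$ induced by the cogenerator projection $\overline{B({\op P})}\to\overline{\op P}[1]$. The map $F: S(W)\to V$ is defined, as in Section~\ref{subsec:linear maps remark}, by $\Phi_{F,R}(w):=\widetilde{w}|_{{\alg g}[-1]}$, where $\widetilde{w}: C({\alg g})^{\circ}\otimes R\to B({\op P})^{\circ}\otimes R$ is the morphism of graded Hopf cooperads associated to the degree zero element $w\in(W\otimes\overline{R})^0$ by the bijection above. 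The linear component of $F$ is the canonical inclusion $W\hookrightarrow V$, so the projection condition holds. For the tangentiality condition I would use Lemma~\ref{lemm:tangentiality trick}: the relevant subspace of $V$ isomorphic to $W$ via $\lambda$ is the space of Hopf cooperad biderivations of the morphism $\widetilde{w}$, which contains both $dF(\exp(w))$ (since it is the image under restriction of the commutator of $\widetilde{w}$ with the differentials, which is a biderivation) and $FD(\exp(w))$ (as a derivative of a family of Hopf cooperad morphisms, by Remark~\ref{rem:tangentiality trick}).

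Then I would argue, exactly as in the proof of Proposition~\ref{prop:coderivation lie algebra}, that the resulting $L_{\infty}$ structure reduces to a dg Lie algebra: the differential of $B({\op P})$, when composed with the projection onto cogenerators $\overline{\op P}[1]$, is the sum of a linear part (the internal differential of ${\op P}$) and a quadratic part (the partial composition products), and the cobracket of ${\alg g}$ gives the only other quadratic contribution coming from the source side. Hence only the unary and binary operations of the $L_{\infty}$ structure are non-zero. The Maurer-Cartan elements then correspond to morphisms of dg Hopf cooperads, since the condition $dF(\exp(w))=0$ translates, after passing to the equivalent biderivation subspace via $\lambda$, to the assertion that $\widetilde{w}$ preserves differentials. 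Finally, the isomorphism $\BiDer_{\dg\Sigma}({\op B}\stackrel{*}{\to}{\op C})[-1]\cong\DerL$ is obtained, as in the coderivation case, by associating to a biderivation $\theta$ its projection onto $\overline{\op P}{}^{\circ}[1]$ restricted to ${\alg g}[-1]$, using that the $\K_{\epsilon}$-linear Hopf cooperad morphism $*+\epsilon\theta$ is determined by this restriction.

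The main obstacle I anticipate is the verification, in the bijection step, that an arbitrary commutative algebra extension $(\phi_r)_r$ of $\alpha$ assembles into a genuine cooperad morphism; this requires a careful compatibility check between the coproducts of $C({\alg g})$ induced by the Lie cobracket and the cofree composition coproducts of $B({\op P})$, at the level of symmetric algebra generators. All other steps are rather formal transpositions of the arguments used in Proposition~\ref{prop:coderivation lie algebra}.
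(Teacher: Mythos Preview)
Your choice of $V := \Hom_{\dg\Sigma}({\alg g}[-1],\overline{B({\op P})})$ is the source of a genuine gap. With this $V$, the differential is $df = d_{\op C}\circ f$ only (since ${\alg g}$ is a \emph{graded} Lie coalgebra with zero internal differential), and your map $F(\exp(w))=\widetilde{w}|_{{\alg g}[-1]}$ is \emph{linear} in $w$: the cooperad morphism $\psi: C^{\leq 1}({\alg g})\to{\op C}^{\circ}$ determined by $w$ applies $w$ to exactly one tensor factor in each treewise component. Consequently $\lambda\, dF(\exp(w))=\pi\circ d_{\op C}\circ\psi_w$ is linear in $w$, and the construction of Proposition~\ref{prop:geometric Linfty:structures} would produce only a differential and no bracket. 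This contradicts your own claim that the cobracket of ${\alg g}$ contributes a quadratic term: in your setup there is no source-side differential through which the cobracket could enter.

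The paper takes $V := \Hom_{\dg\Sigma}(\overline{\op B},\overline{\op C})$ with $\overline{\op B}=\overline{C({\alg g})}$, so that the Chevalley--Eilenberg differential $d_{\op B}$ is present in $d_V$, and defines $F(\exp(w))=\widetilde{w}$ as the \emph{full} Hopf cooperad morphism on all of $\overline{\op B}$. The map $\lambda$ then both restricts to ${\alg g}[-1]$ on the source and projects to $\overline{\op P}[1]$ on the target. The term $\pi\circ\widetilde{w}\circ d_{\op B}|_{{\alg g}[-1]}$ now involves $\widetilde{w}$ evaluated on $S^2({\alg g}[-1])$, which is quadratic in $w$ and yields the Lie bracket (see Remark~\ref{rem:biderivation dg Lie structure formulas}). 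A secondary issue: in your bijection step, extending $\alpha$ multiplicatively from the naive inclusion $\overline{\op P}[1]\hookrightarrow B({\op P})$ does \emph{not} directly give a cooperad morphism; the paper first uses the cofree property to build a cooperad morphism $\psi$ on the sub-cooperad $C^{\leq 1}({\alg g})=\Com^c\oplus{\alg g}[-1]$, and only then extends multiplicatively, checking cooperad compatibility via the Hopf relation.
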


\begin{proof}
First note that the $\Sigma$-collection $C^{\leq 1}(\alg g)\subset C({\alg g})^{\circ}$
formed by the components of weight $\leq 1$
of the symmetric algebras $C(\alg g(r))^{\circ} = S({\alg g}(r)[-1])$
forms a subobject of $C({\alg g})^{\circ}$
in the category of graded cooperads. We actually have an identity $C^{\leq 1}(\alg g) = \Com^c\oplus{\alg g}[-1]$,
where we take the left $\Delta_i: {\alg g}(m+n-1)\to\Com^c(m)\otimes{\alg g}(n)$
and right coproduct operation $\Delta_i: {\alg g}(m+n-1)\to{\alg g}(m)\otimes\Com^c(n)$
equivalent to the additive cooperad composition coproducts
of our Lie coalgebra cooperad ${\alg g}$
to prolong the natural composition coproducts of the commutative cooperad $\Com^c$
to this object $C^{\leq 1}(\alg g)$.

Let $\alpha: {\alg g}[-1]\to\overline{\op P}{}^{\circ}[1]$ be a morphism of graded $\Sigma$-collections.
We use the morphism $\overline{\Com}{}^c\to\overline{\op P}{}^{\circ}[1]$
induced by the canonical coaugmentation of the dg Hopf cooperad ${\op C}$
to extend $\alpha$ to the object $C^{\leq 1}(\alg g)$.
We then use the cofree structure of the graded cooperad ${\op C}^{\circ} = {\op F}^c(\overline{\op P}{}^{\circ}[1])$
to get a morphism of graded cooperads $\psi: C^{\leq 1}(\alg g)\to{\op C}^\circ$
extending $\alpha$. This morphism preserves the coaugmentation
over the commutative cooperad $\Com^c$
by construction.
We can now extend $\psi$ to a map $\phi: C({\alg g})^\circ\to{\op C}^\circ$
by using the formula
\[
\phi(x_1\cdots x_n) := \psi(x_1)\cdots\psi(x_n)\in{\op C}(r),
\]
for any product of generators $x_1,\dots,x_n\in{\alg g}(r)$, so that $\phi$ defines a morphism of commutative algebras
in each arity $r>0$.
We claim that this map $\phi$ also preserves the (augmentation and) the composition coproducts of our Hopf cooperads
and hence forms a morphism in the category of graded Hopf cooperads.
Indeed, this is the case when we restrict ourselves to the subobject $C^{\leq 1}({\alg g})\subset C({\alg g})^{\circ}$
by construction of $\phi$.
Now, let $\Delta_T$ be a $k$-fold composition coproduct shaped on a tree $T$
with $r$ leafs and $k$ vertices,
and let again $x_1,\dots,x_n$ denote a collection of elements
of the Lie coalgebra $\alg g(r)$.
By the compatibility between the composition coproducts and the products
in a Hopf cooperad, we get the relations:
\begin{align*}
\phi^{\otimes k}(\Delta_T(x_1\cdots x_n)) & = \phi^{\otimes k}(\Delta_T(x_1)\cdots\Delta_T(x_n)) \\
& = \psi^{\otimes k}(\Delta_T(x_1))\cdots\psi^{\otimes k}(\Delta_T(x_n)) \\
& = \Delta_T(\psi(x_1))\cdots\Delta_T(\psi(x_n)) \\
& = \Delta_T(\psi(x_1)\cdots\psi(x_n)) = \Delta_T(\phi(x_1\cdots x_n)),
\end{align*}
and this verification proves the validity of our claim for any element $x_1\cdots x_n\in C({\alg g})$
of our Hopf cooperad $C({\alg g})$.
We easily check that this construction enables use to retrieve every graded Hopf operad morphism $\phi: C({\alg g})^\circ\to{\op C}^\circ$
from its restriction to the Lie algebra cooperad ${\alg g}$ and its composite with the projection onto the cogenerating collection
of the bar cooperad $B({\op P})^{\circ} = {\op F}^c(\overline{\op P}[1])$.

Note that we can carry out this construction of a morphism of graded Hopf cooperads $\phi$
from a morphism of graded collections $\alpha$
over any graded ground ring $R$.
In particular, we can work over the ground ring $\K_\epsilon$ and argue as in the proof of Proposition~\ref{prop:coderivation lie algebra}
to establish that we have an isomorphism of graded vector spaces
at the level of the space of biderivations
associated to our morphism $\phi$:
\[
\BiDer_{\dg\Sigma}({\op B}\stackrel{\phi}{\to}{\op C})\cong\Hom_{\gr\Sigma}({\alg g}[-1],\overline{\op P}[1]).
\]

We now examine the definition of a dg Lie algebra structure on the graded vector space $\DerL = \Hom_{\gr\Sigma}({\alg g}[-1],\overline{\op P})$.
We rely on the result of Proposition~\ref{prop:geometric Linfty:structures} as in the proof of Proposition \ref{prop:coderivation lie algebra}.
We set again $V := \Hom_{\dg\Sigma}(\overline{\op B},\overline{\op C})$,
and
\[
W := \underbrace{\Hom_{\gr\Sigma}({\alg g}[-1],\overline{\op P})}_{= \DerL}[1] = \Hom_{\gr\Sigma}({\alg g}[-1],\overline{\op P}[1]).
\]
We now consider the map $\lambda: V\to W$ defined by taking the restriction of maps to the collection ${\alg g}[-1]$
on the source ${\op B} = C({\alg g})$
and the projection onto the cogenerating collection of our object $\overline{\op P}[1]$
on the target ${\op C} = B({\op P})$.
We define the map $F: S(W)\to V$ by using the polarization trick of Section \ref{subsec:linear maps remark}.
We just adapt the construction of Proposition \ref{prop:coderivation lie algebra}.
We now regard an element $w\in(W\otimes\overline{R})^0$, where $R$ is any nilpotent graded ring,
as a morphism of graded $\Sigma$-collections $w: {\alg g}[-1]\otimes R\to\overline{\op P}{}^{\circ}[1]\otimes R$
such that $w({\alg g}[-1]\otimes\overline{R})\subset\overline{\op P}{}^{\circ}[1]\otimes\overline{R}$.
We then set $\Phi_{F,R}(\exp(w)) = \widetilde{w}$, where $\widetilde{w}: {\op B}^\circ\otimes R\to{\op C}^\circ\otimes R$
is the morphism of graded Hopf cooperads
associated to $w$.

We immediately check that our maps fulfill the projection condition of Proposition \ref{prop:geometric Linfty:structures}
and we can use the same arguments as in the proof of Proposition \ref{prop:coderivation lie algebra}
to check the tangentiality condition.
We can therefore use the construction of Proposition \ref{prop:geometric Linfty:structures}
to produce an $L_{\infty}$~structure on $\DerL = W[-1]$.

We claim again that this $L_{\infty}$~structure reduces to a dg Lie algebra structure.
To extract the $n$-ary $L_{\infty}$~product of the elements $w_1,\dots,w_n\in W$ in $\DerL = W[-1]$,
we have to polarize the expression
\beq{eq:temp5}
\pi\circ d F(\exp w)\circ\iota = \pi\circ d_{\op C}\circ F(\exp(w))\circ\iota - \pi\circ F(\exp(w))\circ d_{\op B})\circ\iota
\eeq
where we set $w = \sum_i\epsilon_i w_i$ (see Section \ref{subsec:linear maps remark}),
we again regard this element as a map $w: {\alg g}[-1]\otimes R\to{\overline P}{}^{\circ}[1]\otimes R$,
and we use the notation $\pi$ for the projection onto the collection $\overline{\op P}{}^{\circ}[1]$
in the bar construction $B({\op P})^{\circ} = {\op F}^c(\overline{\op P}{}^{\circ}[1])$,
while $\iota$ is the inclusion of the collection ${\alg g}[-1]$
in the Chevalley-Eilenberg cochain complex $C({\alg g})^{\circ} = S({\alg g}[-1])$.

We deduce from our definition of the map $\phi(\exp(w)) = \widetilde{w}$ that the terms of our expression \eqref{eq:temp5}
involving $k$ factors $w_i$
occur exactly when we apply $F(\exp w)$ to a $k$ fold tensor products
of elements of ${\alg g}$.
We deduce from this observation that the first term $\pi\circ d_{\op C}\circ F(\exp(w))\circ\iota$
of our expression only produces a linear expression
of these maps $w_i$,
whereas the second term $\pi\circ F(\exp(w))\circ d_{\op B})\circ\iota$
produces a quadratic expression,
since the Chevalley-Eilenberg differential produces $2$-fold tensors
from ${\alg g}$. (Recall that ${\alg g}$ is defined in the category of graded Lie coalgebras by assumption,
and hence has a trivial differential.)
We therefore conclude that the $n$-ary $L_{\infty}$~product of the elements $w_1,\dots,w_n\in W$
vanish for $n>2$, as expected.

We can argue as in the proof of Proposition \ref{prop:coderivation lie algebra}
to check that the Maurer-Cartan elements
of this dg Lie algebra $\DerL$
correspond to the morphisms of dg Hopf cooperads $\phi: {\op B}\to{\op C}$.

We already observed that the vector space of biderivations associated to a morphism $\BiDer_{\dg\Sigma}({\op B}\stackrel{\phi}{\to}{\op C})$
is isomorphic to $W = \DerL[1]$ as a graded vector space.
We now assume $\phi = *$.
We immediately get from the above analysis of the $L_{\infty}$~structure on $\DerL$
that the differential of an element in this dg Lie algebra $w\in\DerL$,
which is given by the linear part of this $L_{\infty}$~structure, is yielded by the differential $d_{\op C}$
of the cooperad ${\op C} = B({\op P})$
after taking the cooperad morphism $\psi: C^{\leq 1}({\alg g})\to B({\op P})^{\circ}$
associated to $\alpha = w$.
We easily check that this differential agrees with the differential
of the biderivation associated to $w$
in the biderivation complex $\BiDer_{\dg\Sigma}({\op B}\stackrel{*}{\to}{\op C})$ (just observe
that the morphism $\psi: C^{\leq 1}({\alg g})\to B({\op P})^{\circ}$
is given by the same formula as this biderivation
on ${\alg g}[-1]\subset{\op B}$).
\end{proof}

\begin{rem}\label{rem:biderivation dg Lie structure formulas}
For the sake of concreteness, we extract an explicit expression for the differential and bracket
on the dg Lie algebra of this proposition (modulo signs).

Let $\alpha\in\DerL$. We observed in the proof of our proposition that the linear part of our $L_{\infty}$~algebra
structure, which determines the differential $d\alpha$ of the element $\alpha$ in $\DerL$,
is given by the composite $\pi\circ d_{\op C}\circ\psi\circ\iota$,
where $\psi: C^{\leq 1}(\alg g)\to{\op C}^\circ$
is the cooperad morphism
associated to our map $\alpha: {\alg g}[-1]\to\overline{\op P}[1]$,
and $\pi\circ d_{\op C}$ denotes the composite of the differential of the bar construction ${\op C} = B({\op P})$
with the canonical projection $\pi: {\op F}^c(\overline{\op P}^{\circ}[1])\to\overline{\op P}^{\circ}[1]$,
while $\iota$ denotes the obvious embedding of the collection ${\alg g}[-1]$
into $C^{\leq 1}({\alg g})$.
We therefore have to identify this cooperad morphism $\psi: C^{\leq 1}(\alg g)\to{\op C}^\circ$
first.

Let $x\in{\alg g}(r)[-1]$. We again use the expansion of the cofree cooperad $B({\op P})^{\circ} = {\op F}^c(\overline{\op P}^{\circ}[1])$
as a direct sum of treewise tensor products ${\op F}^c_T(\overline{\op P}^{\circ}[1])$
as in the proof of Proposition~\ref{prop:BLambda-operad bar construction}.
We still use the notation $y_T$ for the component of an element $y\in{\op F}^c(\overline{\op P}^{\circ}[1])(S)$
in this treewise tensor product ${\op F}^c_T(\overline{\op P}^{\circ}[1])$
for a tree $T\in\mT_S$, where $S$ is any fixed finite set.
We assume that the tree $T$ has $k$ vertices, numbered from $1$ to $k$.
We then have
\beq{eq:temp6}
\psi(x)_T = \left(\sum_{j=1}^k\pi_*^{\otimes j-1}\otimes\alpha\otimes\pi_*^{\otimes k-j}\right)\overline{\Delta}_T(x),
\eeq
where we consider the reduced treewise coproduct of $x$ in the cooperad $C^{\leq 1}({\alg g})$,
and $\pi_*$ denotes the composite $C^{\leq 1}({\alg g})\to\Com^c\to{\op C}\to\overline{\op P}[1]$.
We may note that the treewise composition coproduct $\overline{\Delta}_T(x)$
is a sum of tensors with exactly one factor in ${\alg g}$,
whereas the remaining factors belong to $\Com^c$.
We apply our map $\alpha$ to this distinguished factor, whereas we apply the canonical map $\Com^c\to{\op C}\to\overline{\op P}[1]$
to the other factors of our treewise tensor product.

Recall that the projection onto $\overline{\op P}[1]$ of the differential $d_{\op C}$ of the bar construction ${\op C} = B({\op P})$
is induced by the internal differential of the operad ${\op P}$
on the linear part of the cofree cooperad $B({\op P})^c = {\op F}^c(\overline{\op P}[1])$,
which we associate to the trees with one vertex (the corollas),
and by composition products of ${\op P}$
on the quadratic part,
which we associate to the trees with two vertices.
To form this quadratic part of the differential, we use that the structure of trees with two vertices $T$
with leafs indexed by $S$
can be determined by giving a partition $S = (S'\setminus\{*\})\sqcup S''$
on which we shape the composition products
of our operad.
For such a tree $T$, we can also identify the value of the treewise composition coproduct $\Delta_T$
on the collection ${\alg g}$
with the sum of the coproduct operations $\Delta_*: {\alg g}(S)\to{\alg g}(S')\otimes\Com^c(S'')$
and $\Delta_*: {\alg g}(S)\to\Com^c(S')\otimes{\alg g}(S'')$
that make this collection ${\alg g}$
into a bicomodule over the commutative cooperad.
Eventually, we get the formula
\[
d\alpha = \pm d_{\op P}\circ\alpha + \sum_{S = S'\setminus\{*\}\amalg S''}(\pi_*\otimes\alpha + \alpha\otimes\pi_*)\Delta_*
\]
for the value of our differential on $\alpha\in\DerL$, where we consider these coproduct operations $\Delta_*$
associated to ${\alg g}$.

For the bracket $[\alpha_1,\alpha_2]$ of elements $\alpha_1,\alpha_2\in\DerL$, we get the formula
\[
[\alpha_1,\alpha_2] = \pi\circ\mu\circ(\alpha_1\otimes\alpha_2)\circ\Delta,
\]
where $\Delta$ is the cobracket on $\alg g$ and $\mu$ is the commutative algebra product in ${\op C}$.
\end{rem}



In analogy to Definition \ref{defn:canonical Lambda-coderivation complex}, we introduce the following notation.

\begin{defn}\label{defn:canonical biderivation complex}
For ${\op B}$, ${\op C}$ as in Proposition \ref{prop:biderivation Lie algebra}, we adopt the notation
\[
\BiDer_{\dg\Sigma}({\op B},{\op C}) := \BiDer_{\dg\Sigma}({\op B}\stackrel{*}{\to}{\op C})[-1]
\]
for the shifted biderivation complex associated to the canonical map $*: {\op B}\to{\op C}$, equipped with the dg Lie algebra structure
that we deduce from the result of Proposition \ref{prop:biderivation Lie algebra}.
\end{defn}

\begin{rem}
We may see that a variant of Proposition \ref{prop:biderivation Lie algebra} holds
if ${\op B}$ is just free as Hopf sequence
and ${\op C}$ is cofree as a cooperad,
but we get a general $L_{\infty}$~algebra structure on our object in this case
instead of a dg Lie algebra structure. (We shall not use this fact in the present paper.)
\end{rem}

\subsection{The dg Lie algebra structure of the biderivation complexes of dg Hopf $\La$-cooperads}\label{subsec:biderivation complexes:Hopf Lambda-cooperads}
We now address an extension of the results of Proposition~\ref{prop:biderivation Lie algebra}
in the context of dg Hopf cooperads equipped with a $\La$-structure.
We again restrict ourselves to source objects $\op B$ and target objects $\op C$
of particular shapes when we work in the category of dg Hopf $\La$-cooperads.
We record these conditions in the following definition.

\begin{defn}\label{defn:good Hopf Lambda-cooperads}
We first say that a dg Hopf $\La$-cooperad ${\op B}$ is a \emph{good source Hopf $\La$-cooperad} when the following properties hold.
\begin{itemize}
\item
We have ${\op B} = C({\alg g})$, for a $\La$-cooperad in graded Lie coalgebras $\alg g$.\footnote{We obviously define a $\La$-cooperad
in graded Lie coalgebras as a cooperad in graded Lie coalgebras
equipped with a $\La$-diagram structure,
such that the corestriction operators $\eta_S: {\alg g}(S)\to{\alg g}(S\sqcup\{*\})$
satisfy an obvious additive analogue of the compatibility relations of Section \ref{subsec:Lambda-cooperads}
with respect to the composition coproduct operations
of our object. We also take the zero map as a coaugmentation $\eta: 0\to{\alg g}(r)$
when we use this definition.
We may easily see that the category of $\La$-cooperads in graded Lie coalgebras
is isomorphic to the subcategory of the category of cooperads
in graded Lie coalgebras whose component of arity zero
is the zero object.}
We then require that the $\La$-diagram structure of the object ${\op B}$
is induced by the $\La$-diagram structure of the $\La$-cooperad $\alg g$
by the functoriality of the Chevalley-Eilenberg complex $C({\alg g})$.
\item
The $\La$-cooperad ${\alg g}$ is, as a $\La$-collection, freely generated by a $\Sigma$-collection $\bS{\alg g}$.
To be more explicit, we have the identity ${\alg g} = \La\otimes_{\Sigma}\bS{\alg g}$,
where we again consider the Kan extension functor $\La\otimes_{\Sigma}-$
from the category of $\Sigma$-collections
to the category of $\La$-collections.
\end{itemize}
We secondly say that a dg Hopf $\La$-cooperad ${\op C}$ is a \emph{good target Hopf $\La$-cooperad}
when ${\op C}$ is, as a dg $\La$-cooperad, given by the bar construction ${\op C} = B({\op P})$
of an augmented dg operad ${\op P}$ equipped with a $B\La$ structure.
\end{defn}

We can now establish our extension of the result of Proposition \ref{prop:biderivation Lie algebra}.

\begin{prop}\label{prop:Lambda-biderivation Lie algebra}
If we assume that ${\op B} = C(\alg g)$ is a good source Hopf $\La$-cooperad
and that ${\op C} = B({\op P})$ is a good target Hopf $\La$-cooperad
in the sense of the above definition,
then the morphisms of graded Hopf $\La$-cooperads $\phi: {\op B}^{\circ}\to{\op C}^{\circ}$
are in bijection with the morphisms of graded $\Sigma$-collections
\[
\alpha_{\phi}'': \bS{\alg g}[-1]\to\overline{\op P}{}^{\circ}[1]
\]
or, equivalently, with elements of degree $-1$ in the graded hom-object
of $\Sigma$-collections
\[
\DerL'' = \Hom_{\gr\Sigma}(\bS{\alg g}[-1],\overline{\op P}).
\]
This map $\alpha_{\phi}''$, which we associate to a morphism of graded Hopf $\La$-cooperads $\phi: {\op B}^{\circ}\to{\op C}^{\circ}$,
is explicitly defined by taking by the projection of our morphism $\phi: {\op B}^{\circ}\to{\op C}^{\circ}$
onto the cogenerating collection of the cofree cooperad $B({\op P})^{\circ} = {\op F}^c(\overline{\op P}{}^{\circ}[1])$
on the target side and its restriction to the generating collection $\bS{\alg g}[-1]$
of the $\La$-collection ${\alg g}[-1]\subset C({\alg g})$
on the source side.

The hom-object $\DerL'' = \Hom_{\gr\Sigma}(\bS{\alg g}[-1],\overline{\op P})$ moreover inherits a dg Lie algebra structure
such that the above correspondence restricts to a bijection between the set
of dg Hopf $\La$-cooperad morphisms $\phi: {\op B}\to{\op C}$
and the set of Maurer-Cartan elements
in $\DerL''$.

This dg Lie algebra structure on the hom-object $\DerL'' = \Hom_{\gr\Sigma}(\bS{\alg g}[-1],\overline{\op P})$
is actually identified with a restriction
of the dg Lie algebra structure $\DerL$
on the hom-object $\DerL = \Hom_{\gr\Sigma}({\alg g}[-1],\overline{\op P})$
of Proposition \ref{prop:biderivation Lie algebra}.
We then use the relations
\[
\Hom_{\gr\Sigma}(\bS{\alg g}[-1],\overline{\op P})\cong\Hom_{\gr\Lambda}({\alg g}[-1],\overline{\op P})
\subset\Hom_{\gr\Sigma}({\alg g}[-1],\overline{\op P})
\]
to exhibit $\DerL''$ as a graded vector subspace of the latter graded vector space $\DerL$. (The first isomorphism of this sequence of relations
follows from the universal property of the free object ${\alg g} = \La\otimes_{\Sigma}\bS{\alg g}$.)

Besides, we have an isomorphism of dg vector spaces
\[
\BiDer_{\dg\La}({\op B}\stackrel{*}{\to}{\op C})[-1]\cong\DerL'',
\]
where we consider the (degree shift of the) biderivation complex associated to the canonical morphism
of dg Hopf $\La$-cooperads $*: {\op B}\to{\op C}$
on the left-hand side, and the underlying dg vector space of our dg Lie algebra $\DerL''$,
on the right hand side.
This isomorphism can be obtained as a restriction,
through the obvious inclusion $\BiDer_{\dg\La}({\op B}\stackrel{*}{\to}{\op C})\subset\BiDer_{\dg\Sigma}({\op B}\stackrel{*}{\to}{\op C})$
and the already considered inclusion of dg Lie algebras $\DerL''\subset\DerL$,
of the isomorphism
\[
\BiDer_{\dg\Sigma}({\op B}\stackrel{*}{\to}{\op C})[-1]\cong\DerL
\]
given by the result of Proposition~\ref{prop:BLambda-operad bar construction}.
The shifted biderivation complex $\BiDer_{\dg\La}({\op B}\stackrel{*}{\to}{\op C})[-1]$ accordingly forms a dg Lie subalgebra
of the dg Lie algebra $\BiDer_{\dg\Sigma}({\op B}\stackrel{*}{\to}{\op C})[-1]$
considered in Proposition~\ref{prop:BLambda-operad bar construction}.
\end{prop}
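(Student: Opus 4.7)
The plan is to adapt the proof strategy of Proposition~\ref{prop:Lambda-coderivation Lie algebra} to the Hopf setting, using Proposition~\ref{prop:biderivation Lie algebra} as the non-$\La$ input in place of Proposition~\ref{prop:coderivation lie algebra}. The main simplification compared with Proposition~\ref{prop:Lambda-coderivation Lie algebra} is that biderivations automatically vanish on the algebra unit, so the coaugmentation $\eta \colon \overline{\Com}{}^c \to \overline{\op C}$ does not give an affine shift here; the embedding $\DerL'' \hookrightarrow \DerL$ is a straightforward inclusion of graded vector spaces rather than an affine section.

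First I would establish the bijective correspondence between graded Hopf $\La$-cooperad morphisms $\phi \colon {\op B}^\circ \to {\op C}^\circ$ and elements $\alpha_{\phi}'' \in \Hom_{\gr\Sigma}(\bS{\alg g}[-1],\overline{\op P}[1])$. By Proposition~\ref{prop:biderivation Lie algebra}, such morphisms $\phi$ correspond bijectively to maps $\alpha_{\phi} \colon {\alg g}[-1] \to \overline{\op P}{}^{\circ}[1]$ of graded $\Sigma$-collections, and the construction of $\phi$ from $\alpha_\phi$ in that proof is manifestly natural in $\alpha_\phi$. The content of this step is therefore to check that $\phi$ is $\La$-equivariant if and only if $\alpha_\phi$ is. One direction is immediate by restriction. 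For the converse, one uses that both the $\La$-structure on $C({\alg g})$ and that on $B({\op P})$ are inherited from $\La$-structures on the generating (respectively cogenerating) collections, namely ${\alg g}[-1]$ and $\overline{\op P}{}^{\circ}[1]$ respectively (by Proposition~\ref{prop:BLambda-operad bar construction} and the functoriality of Chevalley-Eilenberg); compatibility of $\phi$ with products then propagates $\La$-equivariance from generators to all of $C({\alg g})$, and compatibility with composition coproducts propagates it from cogenerators to all of $B({\op P})$. Freeness ${\alg g} = \La \otimes_{\Sigma} \bS{\alg g}$ gives the final reduction to $\alpha_\phi'' \in \Hom_{\gr\Sigma}(\bS{\alg g}[-1],\overline{\op P}[1])$.

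For the dg Lie algebra structure, I would repeat the setup of Proposition~\ref{prop:geometric Linfty:structures} with $V, W, \lambda, F$ replaced by their $\La$-equivariant versions $V' := \Hom_{\dg\La}(\overline{\op B},\overline{\op C})$, $W' := \Hom_{\gr\La}({\alg g}[-1],\overline{\op P}[1])$, and $\lambda' \colon V' \to W'$, $F' \colon S(W') \to V'$ obtained by restricting the maps $\lambda, F$ of Proposition~\ref{prop:biderivation Lie algebra} along the natural inclusions $W' \subset W$ and $V' \subset V$. The key point is that the map $F$, described in that proof by $\Phi_{F,R}(\exp w) = \widetilde{w}$ with $\widetilde{w}$ the graded Hopf cooperad morphism associated to $w$, sends $\La$-equivariant $w$ to $\La$-equivariant morphisms $\widetilde{w}$ — this is precisely the content of step one applied over a nilpotent base $R$. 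The projection condition is inherited from that of $F$. For the tangentiality condition I would apply Lemma~\ref{lemm:tangentiality trick} exactly as in the proof of Proposition~\ref{prop:biderivation Lie algebra}, now using the subspace of $V'$ of biderivations of $F'(\exp w)$ in the category of $\La$-cooperads. Proposition~\ref{prop:geometric Linfty:structures} then produces an $L_\infty$-structure on $\DerL''$, and the linear/quadratic vanishing argument from the proof of Proposition~\ref{prop:biderivation Lie algebra} (only one and two-fold tensor terms survive, since the Chevalley-Eilenberg differential is quadratic and the differential on the bar construction is at most quadratic after projection onto cogenerators) shows this reduces to a dg Lie algebra structure. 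By construction the inclusion $\DerL'' \subset \DerL$ intertwines coderivations $D''$ and $D$, so $\DerL''$ is a dg Lie subalgebra.

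The Maurer-Cartan correspondence then follows by combining the characterization of dg Hopf cooperad morphisms from Proposition~\ref{prop:biderivation Lie algebra} with step one. Finally, for the biderivation complex identification, I would observe that a biderivation $\theta \in \BiDer_{\dg\La}({\op B} \stackrel{*}{\to} {\op C})$ is equivalently a Hopf $\La$-cooperad morphism $*+\epsilon\theta \colon {\op B} \otimes \K_\epsilon \to {\op C} \otimes \K_\epsilon$ over the dual numbers, hence by the morphism bijection of step one, is determined by a map $\bS{\alg g}[-1] \to \overline{\op P}[1]$, producing the graded isomorphism $\BiDer_{\dg\La}({\op B} \stackrel{*}{\to} {\op C}) \cong \Hom_{\gr\Sigma}(\bS{\alg g}[-1],\overline{\op P}[1])$. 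Compatibility of this isomorphism with the differentials, and identification with the restriction of the analogous isomorphism from Proposition~\ref{prop:biderivation Lie algebra}, follow from the same compatibility already verified in the non-$\La$ setting. The main obstacle I anticipate is verifying cleanly that the $\La$-equivariance of $w$ propagates to $\La$-equivariance of the Hopf cooperad morphism $\widetilde w$ (i.e.\ the bulk of step one); the rest of the argument is essentially formal transport of structure.
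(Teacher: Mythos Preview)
Your proof is correct and follows essentially the same strategy as the paper: reduce to Proposition~\ref{prop:biderivation Lie algebra}, verify that $\La$-equivariance of $\alpha_\phi$ is equivalent to $\La$-equivariance of $\phi$ (the paper invokes Proposition~\ref{prop:morphisms to bar Lambda-cooperads} for the cooperadic direction, which is exactly your ``propagation from cogenerators'' step), and then restrict the geometric data $(V,W,F,\lambda)$ to their $\La$-equivariant parts. The only cosmetic difference is that the paper performs a basepoint translation $F^0(\exp w)=\widetilde{w}-\widetilde{0}$ so as to land in $V''=\Hom_{\dg\La}(I{\op B},\overline{\op C})$, in closer analogy with Proposition~\ref{prop:Lambda-coderivation Lie algebra}, whereas you work directly with $\Hom_{\dg\La}(\overline{\op B},\overline{\op C})$ and the untranslated $F$; the paper itself remarks that this change of basepoint does not alter the resulting dg Lie structure, so your more direct route is equally valid.
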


\begin{proof}
We rely on the result of Proposition \ref{prop:biderivation Lie algebra} which gives a bijection between the set of morphisms
of graded Hopf cooperads $\phi: {\op B}^\circ\to{\op C}^\circ$
and the set of morphisms of graded $\Sigma$-collections $\alpha_{\phi}: {\alg g}[-1]\to\overline{\op P}{}^{\circ}[1]$.

We immediately see that $\alpha_{\phi}: {\alg g}[-1]\to\overline{\op P}{}^{\circ}[1]$
defines a morphism of graded $\La$-collections
if $\phi: {\op B}^\circ\to{\op C}^\circ$
is a morphism of graded Hopf $\La$-cooperads,
because we obtain this map $\alpha_{\phi}$ by taking the composite of our morphism $\phi$
with the canonical inclusion ${\alg g}[-1]\subset C({\alg g})$
on the source,
with the projection $\overline{\op C}{}^{\circ}\to\overline{\op P}{}^{\circ}[1]$
on the target,
and both maps define morphisms of graded $\La$-collections
by definition of our $\La$-structure on the cooperads ${\op B} = C({\alg g})$
and ${\op C}^{\circ} = {\op F}^c(\overline{\op P}{}^{\circ}[1])$.\footnote{Note in particular
that the map $\overline{\op C}{}^{\circ}\to\overline{\op P}{}^{\circ}[1]$
is identified with the augmentation of the adjunction between $\La$-cooperads and coaugmented $\La$-collections
that we use in the proof of Proposition \ref{prop:biderivation Lie algebra}.}

We assume, in the converse direction, that the map $\alpha_{\phi}: {\alg g}[-1]\to\overline{\op P}{}^{\circ}[1]$
is a morphism of graded $\La$-collections.
We revisit the construction of Proposition \ref{prop:biderivation Lie algebra} to check that $\phi: {\op B}^\circ\to{\op C}^\circ$
forms a morphism of graded Hopf $\La$-cooperads
in this case.
We immediately get that the extension $\alpha_{\phi}': \overline{\Com}{}^c\oplus{\alg g}[-1]\to\overline{\op P}{}^{\circ}[1]$
of our map $\alpha_{\phi}$ to the coaugmentation coideal of the cooperad $C^{\leq 1}({\alg g}) = \Com^c\oplus{\alg g}[-1]$
defines a morphism of $\La$-collections that preserves the coaugmentation of our objects
over the commutative cooperad.
We deduce from this observation and the result of Proposition \ref{prop:morphisms to bar Lambda-cooperads} that the morphism
of graded cooperads $\psi: C^{\leq 1}({\alg g})\to B({\op P})^{\circ}$
which we associate to this map in the construction of Proposition \ref{prop:biderivation Lie algebra}
preserves $\La$-cooperad structures
and then we readily check that this is also the case of the morphism $\phi: C({\alg g})^{\circ}\to{\op C}^{\circ}$
which we obtain by using the free commutative algebra structure
of the Chevalley-Eilenberg complex $C({\alg g})$.

We conclude from these verifications that the correspondence of Proposition \ref{prop:biderivation Lie algebra}
restricts to a bijection between the set of morphisms
of graded Hopf $\La$-cooperads $\phi: {\op B}^\circ\to{\op C}^\circ$
and the set of morphisms of graded $\La$-collections $\alpha_{\phi}: {\alg g}[-1]\to\overline{\op P}{}^{\circ}[1]$.
We just use the universal property of the $\La$-collection ${\alg g} = \La\otimes_{\Sigma}\bS{\alg g}$
to deduce the assertion of our proposition
from this result.

To establish the next assertions of our proposition, we can adapt the arguments given in the proof of Proposition \ref{prop:Lambda-coderivation Lie algebra},
where we establish parallel results about the hom-object $\DerL' = \Hom_{\gr\Sigma}(\bS{\op B},\overline{\op P})$
associated to a good source $\La$-cooperad ${\op B}$
and to a good target $\La$-cooperad ${\op C} = B({\op P})$.

To be explicit, in parallel to the dg hom-object $V  = \Hom_{\dg\Sigma}(\overline{\op B},\overline{\op C})$
considered in the proof of Proposition \ref{prop:biderivation Lie algebra},
we consider the dg hom-object of maps of $\La$-collections
\[
V'' = \Hom_{\dg\La}(I{\op B},\overline{\op C})
\]
We may note that the object $I{\op B}$ in the expression of this dg vector space $V''$
is given by the augmentation ideal of the Chevalley-Eilenberg cochain
complex ${\op B}(r) = C({\alg g}(r))$
in each arity $r>0$.
We explicitly have $I{\op B}^{\circ}(r) = S^+({\alg g}(r)[-1])$,
where $S^+(-)$ denotes the symmetric algebra without unit
as in Section \ref{subsec:geometric Linfty:structures}.
We again consider the embedding $V'\subset V$ which identifies the elements of $V'$
with the maps that preserve $\La$-diagram structures and vanish over the summand $\overline{\Com}{}^c$
of the object $\overline{\op B} = \overline{\Com}{}^c\oplus I{\op B}$
inside $V = \Hom_{\dg\Sigma}(\overline{\op B},\overline{\op C})$.

In the proof of Proposition \ref{prop:biderivation Lie algebra},
we also consider the graded hom-object such that $W = \Hom_{\gr\Sigma}({\alg g}[-1],\overline{\op P}[1])$.
We now set
\[
W'' = \Hom_{\gr\La}({\alg g}[-1],\overline{\op P}[1])
\]
and we consider the obvious embedding $W''\subset W$.
Note that we also have an isomorphism
\[
W''\cong\underbrace{\Hom_{\gr\Sigma}(\bS{\alg g},\overline{\op P})}_{= \DerL'}[1],
\]
where we consider the hom-object of our proposition,
by the already considered universal property of the $\La$-collection ${\alg g} = \La\otimes_{\Sigma}\bS{\alg g}$.

Then we consider the map $F^0: S(W)\to V$ such that $\Phi_{F^0,R}(\exp(w)) = \widetilde{w} - \widetilde{0}$,
for any element $w\in(W\otimes\overline{R})^0$, for any nilpotent graded ring $R$,
and where, with the notation of the proof of Proposition \ref{prop:biderivation Lie algebra},
we consider the morphisms of dg Hopf cooperads $\widetilde{w},\widetilde{0}: {\op B}\to{\op C}$
associated to $w\in(W\otimes R)^0$
and to the zero map $0\in(W\otimes R)^0$. Note that this morphism $\widetilde{0}: {\op B}\to{\op C}$
is actually identified with the canonical morphism $*: {\op B}\to\Com^c\to{\op C}$
which we associate to our objects.
Now, we deduce from our first verifications that the morphism $\widetilde{w}$
preserves the $\La$-cooperad structures
when $w$ is a morphism of graded $\La$-collections.
In particular, the restriction of $\widetilde{w}$ to the collection $\Com^c\subset{\op B}$
is given by the canonical unit morphism $\eta: \Com^c\to{\op C}$
of the Hopf cooperad ${\op C}$.
We deduce from these observations that we have the implication
$w\in(W''\otimes\overline{R})^0\Rightarrow\Phi_{F^0,R}(\exp(w))\in(V'\otimes\overline{R})^0$,
from which we obtain that our map $F^0: S(W)\to V$
admits a restriction $F'': S(W'')\to V''$.
We can now conclude, by the same arguments as in the proof of Proposition~\ref{prop:Lambda-coderivation Lie algebra},
that our dg Lie algebra structure on $\DerL = W[-1]$
admits a restriction to $\DerL'' = W''$. Note simply that the translation operation $\Phi_{F^0,R}(\exp(w)) = \widetilde{w} - \widetilde{0}$,
which is equivalent to a change of basepoint in the definition of the map $F: S(W)\to V$
of Proposition~\ref{prop:biderivation Lie algebra} (see Remark~\ref{rem:geometric Linfty:structures:interpretation}),
does not modify our dg Lie algebra structure on $W$.

We also use the same arguments as in the proof of Proposition~\ref{prop:Lambda-coderivation Lie algebra}
to check the rest of our assertions,
about the biderivation complex of the morphism $*: {\op B}\to{\op C}$
in the category of dg Hopf $\La$-cooperads.
\end{proof}


The correspondence of Proposition~\ref{prop:Lambda-biderivation Lie algebra}
between the dg Lie structure on $\DerL''$ and the dg Lie structure
of Proposition~\ref{prop:biderivation Lie algebra}
implies that we can still use the observations of Remark~\ref{rem:biderivation dg Lie structure formulas}
to get the explicit expression of the differential and of the Lie bracket on our dg Lie algebra.

\begin{rem}\label{rem:Lambda-biderivation Lie algebra filtration}
In what follows, we often deal with the situation where our $\La$-cooperad in graded Lie coalgebras ${\alg g}$
is equipped with a weight decomposition ${\alg g}(r) = \bigoplus_{m\geq 1}{\alg g}_m(r)$ in each arity $r>0$
which is preserved by the structure operations
attached to our object.
To be explicit, we assume that the Lie cobracket $\Delta: {\alg g}(r)\to{\alg g}(r)\otimes{\alg g}(r)$ carries ${\alg g}(r)_m$
into the sum $\bigoplus_{p+q=m}{\alg g}(r)_m$ for each weight $m\geq 1$. We similarly assume
that the action of permutations $\sigma: {\alg g}(r)\to{\alg g}(r)$, the corestriction operators $u_*: {\alg g}(k)\to{\alg g}(l)$
and the composition coproducts $\Delta_i: {\alg g}(k+l-1)\to{\alg g}(k)\oplus{\alg g}(l)$
define weight preserving morphisms on our objects.
We also require that the $\Sigma$-collection $\bS{\alg g}$ which generates our object ${\alg g}$
forms a subobject of homogeneous weight inside ${\alg g}$.
We explicitly have $\bS{\alg g}(r) = \bigoplus_{m\geq 1}\bS{\alg g}_m(r)$ for each arity $r>0$,
where we set $\bS{\alg g}_m(r) = \bS{\alg g}(r)\cap{\alg g}_m(r)$.

In this situation, we can equip the dg Lie algebra $\DerL''$ constructed in the previous proposition
with a complete weight grading $\DerL'' = \prod_{m\geq 1}\DerL''_m$
in the sense of Definition~\ref{defn:complete graded Linfty algebras}.
To be explicit, in order to define this structure, we can use the expression $\DerL'' = \Hom_{\gr\Sigma}(\bS{\alg g},\overline{\op P})$
of our dg Lie algebra $\DerL''$.
Then we merely define $\DerL''_m$ as the graded vector space formed by the homomorphisms $f\in\Hom_{\gr\Sigma}(\bS{\alg g},\overline{\op P})$
that vanish on the components of weight $n\not=m$
of the $\Sigma$-collection $\bS{\alg g}(r) = \bigoplus_{m\geq 1}\bS{\alg g}_m(r)$.
We easily deduce from the description of Remark~\ref{rem:biderivation dg Lie structure formulas}
that this complete grading $\DerL'' = \prod_{m\geq 1}\DerL''_m$
is preserved by the differential and the Lie bracket
of our dg Lie algebra structure
on $\DerL''$,
as we require in the definition of a complete weight graded dg Lie algebra.

Let us mention that we can extend the weight grading of our graded Lie coalgebras ${\alg g}(r)$
to the Chevalley-Eilenberg complex $C({\alg g}(r))$ by assuming that a product $x_1\cdots x_r$
of elements of homogeneous weight $x_1\in{\alg g}_{m_1}(r),\dots,x_1\in{\alg g}_{m_n}(r)$
defines an element of weight $m_1+\dots+m_n$
in $C({\alg g}(r))$.
The component of weight $m$ of our dg Lie algebra actually corresponds to the vector subspace
of the biderivation complex $\BiDer_{\dg\La}(C({\alg g})\stackrel{*}{\to}{\op C})$
spanned by the biderivations that vanish on the components of weight $n\not=m$
of the Chevalley-Eilenberg complex $C({\alg g}(r))$.
This observation follows again from a straightfoward inspection
of our constructions in Remark~\ref{rem:biderivation dg Lie structure formulas}.
\end{rem}

We again introduce a shorthand notation for the dg Lie algebra constructed in Proposition~\ref{prop:Lambda-biderivation Lie algebra}:

\begin{defn}\label{defn:canonical Lambda-biderivation complex}
For ${\op B}$, ${\op C}$ as in Proposition \ref{prop:Lambda-biderivation Lie algebra}, we adopt the notation
\[
\BiDer_{\dg\La}({\op B},{\op C}) := \BiDer_{\dg\La}({\op B}\stackrel{*}{\to}{\op C})[-1]
\]
for the shifted biderivation complex associated to the canonical map $*: {\op B}\to{\op C}$,
equipped with the dg Lie algebra structure
that we deduce from the result of Proposition \ref{prop:Lambda-biderivation Lie algebra}.
\end{defn}

We may note again that this biderivation dg Lie algebra $\BiDer_{\dg\La}({\op B},{\op C})$ satisfies obvious bifunctoriality properties.
We record the following homotopy invariance property of this bifunctor:

\begin{prop}\label{prop:Lambda-biderivation complex homotopy invariance}
If ${\op B} = C({\alg g})$ be a good source Hopf $\La$-cooperad, and $\psi: {\op C}\to{\op C}'$ is a morphism
of good target Hopf $\La$-cooperads ${\op C} = B({\op P})$, ${\op C}' = B({\op P}')$
induced by a structure preserving quasi-isomorphism of $B\La$-operads $\psi: {\op P}\stackrel{\simeq}{\to}{\op P}'$,
then $\psi$ induces a quasi-isomorphism at the biderivation complex level:
\[
\psi_*: \BiDer_{\dg\La}(C({\alg g}),{\op C})\stackrel{\simeq}{\to}\BiDer_{\dg\La}(C({\alg g}),{\op C}').
\]
In the situation of Remark~\ref{rem:Lambda-biderivation Lie algebra filtration},
where we have a complete weight graded structure on our biderivation dg Lie algebras,
we also get that our quasi-isomorphism preserves the weight grading, and hence
induces a weak-equivalence of simplicial sets at the nerve level
\[
\psi_*: \MC_{\bullet}(\BiDer_{\dg\La}({\op B},{\op C}))\stackrel{\simeq}{\to}\MC_{\bullet}(\BiDer_{\dg\La}({\op B},{\op C}'))
\]
by the observations of Theorem~\ref{thm:GoldmanMillson}.
\end{prop}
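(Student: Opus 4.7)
The plan is to adapt the spectral sequence argument from the proof of Proposition~\ref{prop:Lambda-coderivation complex homotopy invariance} to the biderivation setting, and then to invoke the Goldman--Millson type statement of Theorem~\ref{thm:GoldmanMillson} to deduce the assertion about nerves.

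First I will use the identification of Proposition~\ref{prop:Lambda-biderivation Lie algebra}, namely $\BiDer_{\dg\La}(C({\alg g}),{\op C})\cong\DerL''=\Hom_{\gr\Sigma}(\bS{\alg g}[-1],\overline{\op P})$, and equip $\DerL''$ (and its analogue built from ${\op P}'$) with the complete descending filtration by arity
\[
\mF^k\DerL'' := \prod_{r\geq k}\Hom_{\gr\Sigma_r}(\bS{\alg g}(r)[-1],\overline{\op P}(r)).
\]
Using the explicit description of the differential in Remark~\ref{rem:biderivation dg Lie structure formulas}, one checks that $d$ preserves this filtration: the linear terms $\pm d_{\op P}\circ\alpha$ and $\pm\alpha\circ d_{\bS{\alg g}[-1]}$ are arity-preserving, while the quadratic term coming from the Lie cobracket of ${\alg g}$ evaluates $\alpha$ on inputs of strictly smaller arity than the output, so that if $\alpha$ vanishes in arities $<k$ then so does $d\alpha$. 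On the $E^0$-page the only surviving part of the differential is the internal Hom differential, giving
\[
E^0_{k,*}\cong\Hom_{\dg\Sigma_k}(\bS{\alg g}(k)[-1],\overline{\op P}(k)).
\]

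The hypothesis that $\psi:{\op P}\to{\op P}'$ is a quasi-isomorphism implies the componentwise statement that $\overline{\op P}(k)\to\overline{\op P}'(k)$ is a quasi-isomorphism of $\Sigma_k$-equivariant dg vector spaces. Since $\K$ has characteristic zero the group ring $\K[\Sigma_k]$ is semisimple, so the functor $\Hom_{\dg\Sigma_k}(\bS{\alg g}(k)[-1],-)$ preserves quasi-isomorphisms and $\psi_*$ induces an isomorphism on $E^1$. The filtration $\mF^\bullet$ is complete and Hausdorff, and the comparison of spectral sequences yields the first claim, by exactly the same reasoning as in the coderivation case of Proposition~\ref{prop:Lambda-coderivation complex homotopy invariance}.

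For the weight-graded statement, I appeal to Remark~\ref{rem:Lambda-biderivation Lie algebra filtration}: the weight decomposition of $\DerL''$ depends only on $\bS{\alg g}$, so $\psi_*$ respects it. The arity filtration restricts to each weight component $\DerL''_m$, which is again of the form $\prod_r\Hom_{\gr\Sigma_r}(\bS{\alg g}_m(r)[-1],\overline{\op P}(r))$, so the preceding spectral sequence argument applies verbatim component by component and exhibits $\psi_*$ as a weight-preserving quasi-isomorphism of complete weight-graded dg Lie algebras. Theorem~\ref{thm:GoldmanMillson} then yields the asserted weak equivalence of nerves. The main subtle point I anticipate is ensuring convergence of the arity spectral sequence in this unbounded filtered setting; this is handled in the proof of Proposition~\ref{prop:Lambda-coderivation complex homotopy invariance} and the same verification transfers, since the arity-filtration structure of $\DerL''$ is formally identical to that of its coderivation analogue.
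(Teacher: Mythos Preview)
Your approach is essentially the same as the paper's: both use the arity filtration on $\DerL''\cong\Hom_{\gr\Sigma}(\bS{\alg g}[-1],\overline{\op P})$, identify the $E^0$-differential with the internal differential of ${\op P}$, and conclude via comparison of spectral sequences, then invoke Remark~\ref{rem:Lambda-biderivation Lie algebra filtration} and Theorem~\ref{thm:GoldmanMillson} for the nerve statement.

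One small inaccuracy worth correcting: the non-$d_{\op P}$ part of the differential on $\DerL''$ does \emph{not} come from the Lie cobracket of ${\alg g}$ (that enters the Lie bracket of $\DerL''$, see Remark~\ref{rem:biderivation dg Lie structure formulas}), nor is there any term $\alpha\circ d_{\bS{\alg g}[-1]}$ since ${\alg g}$ is a \emph{graded} Lie coalgebra with no internal differential. The extra term is the one built from the $\Com^c$-bicomodule coproducts $\Delta_*$ on ${\alg g}$, and it is this term that strictly lowers arity in the source and hence drops out on $E^0$. Your conclusion about the $E^0$-page is nonetheless correct, so the argument goes through unchanged.
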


We may again establish the homotopy invariance properties of the biderivation complex for morphisms
of good target Hopf $\La$-cooperads $\psi: {\op C}\to{\op C}'$
of a more general form,
but we only use the case stated in this proposition.

\begin{proof}
This proposition follows from spectral sequence arguments, as in the case of the parallel result
of Proposition~\ref{prop:Lambda-coderivation complex homotopy invariance}.
To be explicit, we provide the biderivation complex $\BiDer_{\dg\La}({\op B},{\op C})\cong\DerL''$
associated to any pair $({\op B},{\op C})$,
where ${\op B}$ is a good source Hopf $\La$-cooperad ${\op B} = C({\alg g})$
and ${\op C}$ is a good target Hopf $\La$-cooperad ${\op C} = B({\op P})$,
with the complete descending filtration
such that
\[
\mF^k\DerL'' = \prod_{r\geq k}\Hom_{\gr\Sigma_r}(\bS{\alg g}(r),\overline{\op P}(r)),
\]
where we use the identity $\DerL'' = \Hom_{\gr\Sigma}(\bS{\alg g},\overline{\op P})$
Then we readily deduce from the explicit constructions of Remark~\ref{rem:biderivation dg Lie structure formulas}
that the differential of this dg Lie algebra $\DerL''$
reduces to the map induced by the internal differential of the operad ${\op P}$
on the hom-object $E^0_k\DerL'' = \Hom_{\gr\Sigma_k}(\bS{\alg g}(k),\overline{\op P}(k))$,
which defines the $E^0$-page of the spectral sequence
associated to this filtration.

We immediately see that a morphism $\psi: {\op C}\to{\op C}'$ of good target Hopf $\La$-cooperads ${\op C} = B({\op P})$, ${\op C}' = B({\op P}')$
induced by a structure preserving quasi-isomorphism of $B\La$-operads $\psi: {\op P}\stackrel{\simeq}{\to}{\op P}'$
preserve the filtration.
We immediately get that $\psi$ induces an isomorphism on the $E^1$-page of our spectral sequence too,
and hence induces a quasi-isomorphism at the biderivation complex level.

The last assertion of the proposition is immediate from the definition of our weight decomposition
in Remark~\ref{rem:Lambda-biderivation Lie algebra filtration}
in the case where ${\alg g}$ is equipped with a weight grading.
%
\end{proof}

We also want to be able to compare the complexes of biderivations and of coderivations.
To this end we show the following result.

\begin{prop}\label{prop:biderivations to coderivations forgetful morphism}
Let ${\op B} = C(\alg g)$ and ${\op C} = B({\op P})$ be as in the previous proposition.
We assume in particular that ${\op B}$ is a good source Hopf $\La$-cooperad. Then ${\op B}$ also forms
a good source $\La$-cooperad (when we forget about the Hopf structure),
and we have an $L_{\infty}$~morphism
\[
U: \BiDer_{\dg\La}({\op B}\stackrel{*}{\to}{\op C})\to\CoDer_{\dg\La}({\op B}\stackrel{*}{\to} {\op C})
\]
such that the induced map of Maurer-Cartan elements agrees with the obvious forgetful map
that re-interprets a morphism of dg Hopf $\La$-cooperads
as a morphism dg $\La$-cooperads.
\end{prop}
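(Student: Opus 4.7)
The first step is to verify that ${\op B} = C({\alg g})$ is a good source $\La$-cooperad when we forget about the Hopf structure. The required retraction $\epsilon: {\op B} \to \Com^c$ of the coaugmentation is provided by the canonical augmentation of the Chevalley-Eilenberg complex $C({\alg g})\to S^0({\alg g}[-1]) = \K = \Com^c$ sending all generators to zero; this is both a commutative algebra morphism and compatible with the $\La$-action, since the $\La$-action on $\Com^c$ is trivial and the augmentation kills every element of positive symmetric weight. For the freeness condition $I{\op B}\cong\La\otimes_\Sigma\bS{\op B}$, I would define $\bS{\op B}(r)\subset S^+({\alg g}(r)[-1])$ as the subspace spanned by products $x_1\cdots x_n$ whose combined supports as elements of ${\alg g}(r) = \La\otimes_\Sigma\bS{\alg g}(r)$ cover the whole set $\underline{r}$. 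Decomposing $S^+({\alg g}(r)[-1])$ into summands according to the total support of products of generators, one verifies that the canonical map $\La\otimes_\Sigma\bS{\op B}\to I{\op B}$ is an isomorphism.

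Next, I apply Proposition~\ref{prop:geometric Linfty:morphisms} to construct $U$. Set $V := \Hom_{\dg\La}(I{\op B},\overline{\op C})$, $W'' := \Hom_{\gr\La}({\alg g}[-1],\overline{\op P}[1])$ (biderivation side) and $W' := \Hom_{\gr\La}(I{\op B},\overline{\op P}[1])$ (coderivation side). Define $\lambda'': V\to W''$ as restriction to ${\alg g}[-1]\subset I{\op B}$ followed by projection to cogenerators, and $\lambda': V\to W'$ as the projection to cogenerators. Define $F'': S(W'')\to V$ using the ``associated Hopf cooperad morphism'' construction from the proof of Proposition~\ref{prop:Lambda-biderivation Lie algebra}, so that $\Phi_{F'',R}(\exp(w)) = \widetilde{w}^{\mathrm{Hopf}} - *$, and analogously define $F': S(W')\to V$ from the proof of Proposition~\ref{prop:Lambda-coderivation Lie algebra}. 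Finally define $G: S(W'')\to S(W')$ as the unique coaugmented counital coalgebra morphism determined by the formula $\pi' G := \lambda' F''$.

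The compatibility condition $F'' = F'\circ G$ required by Proposition~\ref{prop:geometric Linfty:morphisms} holds because both $F''(\exp(w))$ and $F'(G(\exp(w)))$ are morphisms of graded $\La$-cooperads ${\op B}^\circ\to{\op C}^\circ$ (the former because a Hopf cooperad morphism is in particular a cooperad morphism, the latter by construction of $F'$), and any such morphism into $B({\op P})$ is uniquely determined by its projection to the cogenerators $\overline{\op P}[1]$ according to Proposition~\ref{prop:Lambda-coderivation Lie algebra}; the two projections agree by definition of $G$. Proposition~\ref{prop:geometric Linfty:morphisms} then yields the desired $L_\infty$-morphism $U$. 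For the Maurer-Cartan correspondence: if $w\in W''$ is a Maurer-Cartan element, then $\widetilde{w}^{\mathrm{Hopf}}$ is a morphism of dg Hopf $\La$-cooperads, and $\pi' G(\exp(w)) = \lambda'\widetilde{w}^{\mathrm{Hopf}}$ is the datum recovering, via Proposition~\ref{prop:Lambda-coderivation Lie algebra}, the underlying morphism of dg $\La$-cooperads. Higher components of $G$ encode the nonlinear contributions $\widetilde{w}^{\mathrm{Hopf}}(x_1\cdots x_n) = \widetilde{w}^{\mathrm{Hopf}}(x_1)\cdots\widetilde{w}^{\mathrm{Hopf}}(x_n)$ projected to cogenerators (which are nontrivial because of the commutative algebra structure on $B({\op P})$), making $U$ a genuinely $L_\infty$ (not strict) morphism in general.

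\textbf{Main obstacle.} The technically delicate step is verifying that ${\op B} = C({\alg g})$ is a good source $\La$-cooperad, specifically the identification $I{\op B}\cong\La\otimes_\Sigma\bS{\op B}$. This requires a careful analysis of how the $\La$-action on symmetric powers $S^+({\alg g}[-1])$ interacts with the free structure ${\alg g} = \La\otimes_\Sigma\bS{\alg g}$, and amounts to a support decomposition of symmetric products of injectively indexed summands. A more abstract alternative would use that the symmetric algebra functor interacts appropriately with Kan extensions along $\Sigma\hookrightarrow\La$, but the support-based approach is the most concrete and directly verifiable.
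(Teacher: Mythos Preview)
Your proposal is correct and follows essentially the same approach as the paper. The paper outsources the verification that $C({\alg g})$ is a good source $\La$-cooperad to \cite[Proposition A.4]{FW} rather than sketching the support-decomposition argument you give, and then constructs $U$ exactly as you do: via Proposition~\ref{prop:geometric Linfty:morphisms}, with the same $V$, $W''$, $W'$, $F''$, $F'$, the same definition $\pi' G = \lambda' F''$, and the same verification of $F'G = F''$ by uniqueness of cooperad morphisms into a cofree target given their projection to cogenerators.
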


\begin{proof}
We refer to~\cite[Proposition A.4]{FW} for the proof that ${\op B}$ forms a good source $\La$-cooperad
when ${\op B}$ is a good source Hopf $\La$-cooperad.
We rely on the construction of Proposition \ref{prop:geometric Linfty:morphisms}
to define this $L_{\infty}$~morphism.
We set $V'' = V' = \Hom_{\dg\La}(I{\op B},\overline{\op C})$,
$W'' = \Hom_{\dg\La}({\alg g}[-1],\overline{\op P}[1])$,
and $W' = \Hom_{\dg\La}(I{\op B},\overline{\op C}[1])$,
as in the proof of Proposition \ref{prop:Lambda-coderivation Lie algebra}
and of Proposition \ref{prop:Lambda-biderivation Lie algebra}.
We also consider the maps $F'': S(W'')\to V''$, $F': S(W')\to V'$, $\lambda'': W''\to V''$ and $\lambda': W'\to V'$
defined in the proof of these Propositions.
We moreover use the notation $\pi'': S(W'')\to W''$ (respectively, $\pi': S(W')\to W'$) for the canonical projection
onto the cogenerating graded vector space of the symmetric coalgebra $S(W'')$ (respectively, $S(W')$).

Then let $G: S(W'')\to S(W')$ be the a unique morphism of coaugmented counital cocommutative coalgebras such that $\pi' G = \lambda' F''$.
This relation is actually equivalent to the identity $\lambda' F' G = \lambda' F'' G$
since we have $\lambda' F' = \pi'$ by the projection condition
of Proposition~\ref{prop:geometric Linfty:structures}.
We aim to check that the morphism $G$ further satisfies the condition $F' G = F''$ expressed by the commutative diagram
of Proposition \ref{prop:geometric Linfty:morphisms}.
By polarization, it is sufficient to check this relation for elements of the form $\exp(w'')$,
where $w''\in(W''\otimes\overline{R})^0$ and $R$ is a graded nilpotent ring.
We necessarily have $G(\exp(w'')) = \exp(w')$ for some $w'\in(W'\otimes\overline{R})^0$,
by preservation of the coalgebra structures.
Recall that $F''(\exp(w''))$ (respectively, $F'(\exp(w'))$) is defined by the restriction to $I{\op B}\otimes R$
of the morphism of graded Hopf $\La$-cooperads (respectively, of graded $\La$-cooperads) $\widetilde{w}: {\op B}^{\circ}\to{\op C}^{\circ}$
associated to the map $w = w''$ (respectively, $w = w'$,
and $\lambda'$ is defined by taking the composite of this morphism with the canonical projection ${\op C}^{\circ}\to\overline{\op P}{}^{\circ}[1]$
in the (coaugmentation coideal of the) cofree cooperad ${\op C}^{\circ} = {\op F}^c(\overline{\op P}{}^{\circ}[1])$.
The identity $\lambda' F' G = \lambda' F'' G$ which we use to define our coalgebra morphism $G$
implies that these projections of the morphisms $\widetilde{w'},\widetilde{w''}: {\op B}^{\circ}\to{\op C}^{\circ}$
coincide,
and we consequently have $\widetilde{w'} = \widetilde{w''}\Rightarrow F G(\exp(w'')) = F(\exp(w')) = F(\exp(w''))$
because this projection determines $\widetilde{w'}: {\op B}^{\circ}\to{\op C}^{\circ}$
(respectively, $\widetilde{w''}: {\op B}^{\circ}\to{\op C}^{\circ}$)
as a morphism of graded $\La$-cooperad.

We therefore have the requested identity $F' G = F''$. This result completes the verifications
required by the construction of Proposition \ref{prop:geometric Linfty:morphisms},
so that we do get an $L_{\infty}$~morphism $U: W''[-1]\to W'[-1]$
between our dg Lie algebras.
\end{proof}

\begin{rem}\label{rem:biderivations to coderivations forgetful morphism formulas}
We can also give an explicit formula for the $L_{\infty}$~morphism of the previous proposition.
We fix $\alpha_1,\dots,\alpha_n\in\Hom_{\gr\Sigma\Seq}(\bS{\alg g}[-1],\overline{\op P})$.
We can extract the $n$-ary part of this $L_{\infty}$~morphism and associate an element
of the graded vector space $\beta\in\Hom_{\gr\La}(I{\op B},\overline{\op P})\cong\Hom_{\gr\Sigma}(\bS{\op B},\overline{\op P})$
to these elements as follows.
We use the construction of the proof of Proposition ??
to produce morphisms of graded $\La$-cooperads
\[
\alpha_1',\dots,\alpha_n': C^{\leq 1}(\alg g)\to{\op C}^{\circ}
\]
extending $\alpha_1,\dots,\alpha_n: \bS{\alg g}[-1]\to\overline{\op P}{}^{\circ}[1]$.
We then define $\beta$ by a formula of the form
\[
\beta(x_1\dots x_n)\propto\sum_{\sigma\in\Sigma_n}\pm\pi(\alpha_1'(x_{\sigma(1)})\cdots\alpha_n'(x_{\sigma(n)})),
\]
for any monomial $x_1\dots x_n\in C({\alg g})$, where $\pi: {\op C}^{\circ}\to\overline{\op P}{}^{\circ}[1]$
denotes the canonical projection associated to the cogenerating collection
of the cofree cooperad ${\op C}^{\circ} = {\op F}^c(\overline{\op P}{}^{\circ}[1])$.
(We just assume that $\beta$ vanishes on the components of weight $k\not=n$ of the Chevalley-Eilenberg complex.)
\end{rem}

\begin{rem}
The map of Maurer-Cartan elements alluded to in Proposition \ref{prop:biderivations to coderivations forgetful morphism}
is defined by sending a Maurer-Cartan element $\alpha$
to the sum:
\[
U_*(\alpha) = \sum_{n\geq 1} \frac{1}{n!} U_n(\alpha,\dots,\alpha).
\]
We note that this sum converges since $U_n(\alpha,\dots,\alpha)$ is zero except possibly on the component of weight $n$
of the symmetric algebra $C({\alg g})^{\circ} = S({\alg g}[-1])$.
\end{rem}

\subsection{Biderivation complexes and deformation complexes with coefficients in bicomodules}\label{subsec:biderivation complexes:bicomodules}
We may relate the biderivation complex $\BiDer_{\dg\La}({\op B},{\op C})$ defined in the previous section
to the deformation complexes with coefficients in bicomodules
introduced in Section \ref{subsec:coderivation complexes:bicomodules}.
To be explicit, we have the following statement.

\begin{lemm}\label{lemm:biderivation bicomodule complex}
Let ${\op B}$ and ${\op C}$ be as in Proposition \ref{prop:Lambda-biderivation Lie algebra}. There is an isomorphism of dg vector spaces
\[
\BiDer_{\dg\La}({\op B},{\op C})\cong K(\alg g[-1],{\op P}),
\]
where, to form the complex $K(\alg g[-1],{\op P})$, we use that the $\La$-cooperad in graded Lie coalgebras $\alg g$
inherits the structure of a bicomodule over the commutative cooperad $\Com^c$,
and hence over the dg $\La$-cooperad ${\op C}$ by restriction of structure
through the canonical morphism $\eta: \Com^c\to{\op C}$.
\end{lemm}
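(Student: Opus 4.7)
The plan is to exhibit the isomorphism explicitly by unwinding both sides and matching them termwise. First, since $\alg g = \La\otimes_\Sigma \bS\alg g$ by the good source Hopf $\La$-cooperad assumption, the universal property of the Kan extension gives
\[
\Hom_{\dg\La}(\alg g[-1],\overline{\op P}) \cong \Hom_{\dg\Sigma}(\bS\alg g[-1],\overline{\op P}),
\]
and by Proposition~\ref{prop:Lambda-biderivation Lie algebra} the right-hand side is precisely the underlying graded vector space of $\BiDer_{\dg\La}({\op B},{\op C}) \cong \DerL''$. So the two complexes coincide as graded vector spaces; the content of the lemma is that the differentials agree.

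Next I would compare differentials. On $K(\alg g[-1],{\op P}) = (\Hom_{\dg\La}(\alg g[-1],\overline{\op P}),\partial)$, the differential decomposes as the obvious hom-differential (induced by the internal differentials of $\alg g$ and $\overline{\op P}$) plus a twisting piece $\partial$ constructed from the bicomodule coaction together with the universal twisting morphism $\kappa : B({\op P}) \to {\op P}[1]$ of the bar–cobar Koszul pair. Since the ${\op C}$-bicomodule structure on $\alg g[-1]$ is obtained by restriction along $\eta : \Com^c \to {\op C}$ from the natural additive composition coproducts
\[
\Delta_* : \alg g(S) \to \alg g(S')\otimes \Com^c(S'') \ \oplus\ \Com^c(S')\otimes \alg g(S''),
\]
the composite $\kappa \circ \eta : \Com^c \to \overline{\op P}[1]$ factoring the twisting is exactly the restriction of the map $\pi_* : C^{\le 1}(\alg g) \to \Com^c \to {\op C} \to \overline{\op P}[1]$ appearing in Remark~\ref{rem:biderivation dg Lie structure formulas} to the $\Com^c$ summand.

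On the biderivation side, the explicit formula of Remark~\ref{rem:biderivation dg Lie structure formulas} reads
\[
d\alpha = \pm\, d_{\op P}\circ \alpha \ +\ \sum_{S = (S'\setminus\{*\})\sqcup S''} (\pi_*\otimes \alpha + \alpha\otimes \pi_*)\Delta_* .
\]
Because $\pi_*$ vanishes on the $\alg g[-1]$ summand of $C^{\le 1}(\alg g)$, only the two summands of $\Delta_*$ in which exactly one factor lies in $\alg g$ and the other in $\Com^c$ contribute; in each such summand $\alpha$ is applied to the $\alg g$-factor and $\pi_*|_{\Com^c}$ to the $\Com^c$-factor, and the tensor is then multiplied in ${\op P}$ using that ${\op C} = B({\op P})$ and ${\op B} = C(\alg g)$ land in a common target. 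This matches term-for-term the bicomodule-plus-$\kappa$ formula for $\partial$ on $K(\alg g[-1],{\op P})$, with the same operadic partial composition assembling the result in $\overline{\op P}$.

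The main obstacle is bookkeeping: one must check that the degree shifts $[-1]$ and $[1]$ are applied consistently on both sides, and that the Koszul signs produced by the geometric construction of Proposition~\ref{prop:biderivation Lie algebra} (via the reduced tree coproducts on $C^{\le 1}(\alg g)$) agree with the signs in the definition of $K({\op M},{\op P})$ from \cite{FW}; once the pairing of summands above is in place this is a routine but delicate sign check, after which the stated isomorphism of dg vector spaces follows.
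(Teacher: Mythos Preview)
Your proposal is correct and follows essentially the same approach as the paper: identify both sides with the graded hom-object $\DerL'' = \Hom_{\gr\Sigma}(\bS{\alg g}[-1],\overline{\op P})$ via the free $\La$-collection structure, then check that the differentials agree. The paper's own proof is extremely terse (it simply asserts that both sides are identified with $\DerL''$ as graded vector spaces and that one ``just checks'' the differentials match), whereas you have spelled out the differential comparison explicitly using the formula from Remark~\ref{rem:biderivation dg Lie structure formulas}; this is exactly the verification the paper leaves to the reader.
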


\begin{proof}
Both sides are identified with the hom-object $\DerL'' = \Hom_{\gr\Sigma}(\bS{\alg g}[-1],\overline{\op P})$
as graded vector spaces.
To complete the proof of our claim, we just check that the differential of the biderivation complex,
which we identify with the differential of the dg Lie algebra $\DerL''$
on this hom-object,
agrees with the differential of the complex $K(\alg g[-1],{\op P})$.
\end{proof}

We may note that the complex $K({\alg g}[-1],{\op P})$ is purely defined in terms of cooperads, without reference to the Hopf structure.
In fact, the Hopf structure enters into the definition of the Lie bracket of $\DerL''$,
but not into the differential.

We record one more important statement that we use in our subsequent constructions.

\begin{lemm}\label{lemm:biderivation bicomodule complex forgetful morphism}
In the previous lemma, we have inclusions of dg vector spaces
\begin{align*}
K(\alg g[-1],{\op P}) & \hookrightarrow K(I{\op B},{\op P})\cong\CoDer_{\dg\La}({\op B},{\op C}) \\
K(\alg g[-1],{\op P}_{\bo}) & \hookrightarrow K(I{\op B},{\op P}_{\bo})\cong\xCoDer_{\dg\La}({\op B},{\op C})
 \end{align*}
induced by the canonical projection morphism $I{\op B}\to\alg g[-1]$
in the Chevalley-Eilenberg complex ${\op B} = C(\alg g)$.
\end{lemm}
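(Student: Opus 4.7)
The plan is to produce the inclusions by precomposition along the canonical projection $p\colon I{\op B}\twoheadrightarrow \alg g[-1]$, which arises because ${\op B}=C(\alg g)$ satisfies $I{\op B}^{\circ}=S^+(\alg g[-1])$ and $p$ kills all summands of weight $>1$. First I would check that $p$ is a morphism of $\Com^c$-bicomodules when $I{\op B}$ is equipped with the $\Com^c$-bicomodule structure induced by its position as the augmentation coideal of the Hopf cooperad ${\op B}=C(\alg g)$ and $\alg g[-1]$ is equipped with the $\Com^c$-bicomodule structure underlying its cooperad-in-graded-Lie-coalgebras structure. This is straightforward: on $S^+(\alg g[-1])$ the reduced composition coproducts $\overline\Delta_*$ act as derivations of the product, and when restricted to the weight $1$ summand $\alg g[-1]$ they reproduce (the shift of) the $\Com^c$-bicomodule structure coming from the additive cooperad structure of $\alg g$ recorded in Section~\ref{subsec:biderivation complexes:plain Hopf cooperadsHopf cooperads}. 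Via the canonical morphism $\eta\colon\Com^c\to{\op C}$ both $I{\op B}$ and $\alg g[-1]$ inherit ${\op C}$-bicomodule structures for which $p$ is a map of ${\op C}$-bicomodules.

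Next I would invoke functoriality of the deformation complex $K(-,{\op P})$ (resp.\ $K(-,{\op P}_{\mathbf 1})$) in its first argument, as recalled at the end of Section~\ref{subsec:coderivation complexes:bicomodules}: a morphism of ${\op C}$-bicomodules $\alg g[-1]\to I{\op B}$\emph{-valued maps} pulls back to a morphism on the corresponding $K$-complexes. Since we only have a projection $p$ rather than a splitting of ${\op C}$-bicomodules, we use that $p^*$ is contravariantly induced by precomposition on $\Hom_{\dg\La}(-,\overline{\op P})$ and $\Hom_{\dg\La}(-,{\op P})$, giving dg maps
\[
p^*\colon K(\alg g[-1],{\op P})\lo K(I{\op B},{\op P}),\qquad p^*\colon K(\alg g[-1],{\op P}_{\mathbf 1})\lo K(I{\op B},{\op P}_{\mathbf 1}).
\]
Injectivity of $p^*$ is immediate from the fact that the inclusion $\iota\colon\alg g[-1]\hookrightarrow I{\op B}$ as the weight $1$ piece is a section of $p$ at the level of graded $\La$-collections, so $\iota^* p^* = \mathit{id}$.

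Finally I would identify the targets with the coderivation complexes via the isomorphisms
\[
K(I{\op B},{\op P})\cong\CoDer_{\dg\La}({\op B},{\op C}),\qquad K(I{\op B},{\op P}_{\mathbf 1})\cong\xCoDer_{\dg\La}({\op B},{\op C})
\]
already recorded at the end of Section~\ref{subsec:coderivation complexes:bicomodules}, thus producing the two inclusions of the statement. I would also note, in passing, compatibility with the biderivation identification of Lemma~\ref{lemm:biderivation bicomodule complex}: under $\BiDer_{\dg\La}({\op B},{\op C})\cong K(\alg g[-1],{\op P})$ the composite with $p^*$ realises, at the underlying graded vector space level, the forgetful map from biderivations to coderivations coming from Proposition~\ref{prop:biderivations to coderivations forgetful morphism}. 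The only nontrivial point in the plan is the check that $p$ is a morphism of $\Com^c$-bicomodules on the nose (not just up to homotopy); however, this reduces to the observation that the reduced coproducts on $I{\op B}=S^+(\alg g[-1])$ have at most one tensor factor in weight $1$ by the derivation property, making $p\otimes p$ and $p$ commute with $\overline\Delta_*$ in the required way, so no real obstacle arises.
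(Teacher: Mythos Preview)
Your approach is essentially the same as the paper's: the paper's proof is a two-line appeal to the fact that the projection $I{\op B}\to\alg g[-1]$ is a morphism of dg $\Com^c$-bicomodules and to the functoriality of $K(-,-)$. You supply considerably more detail than the paper does, in particular an explicit injectivity argument via the weight-one section $\iota$, which the paper does not spell out; this is a genuine addition and is correct.

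One small remark on phrasing: your closing justification in terms of ``$p\otimes p$ commuting with $\overline\Delta_*$'' is not quite the right formulation, since the $\Com^c$-bicomodule coactions on $I{\op B}$ arise by composing the cooperad coproduct with the augmentation ${\op B}\to\Com^c$ on \emph{one} factor only. The cleaner argument is the one you already have implicitly: the Hopf cooperad coproducts on $C(\alg g)$ preserve the symmetric-algebra weight, so the resulting $\Com^c$-bicomodule coaction on $I{\op B}=S^+(\alg g[-1])$ preserves weight, and hence commutes with the weight-one projection $p$. This is a cosmetic point and does not affect the validity of your plan.
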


\begin{proof}
We merely use that the canonical projection morphism $C(\alg g)\to\alg g[-1]$ defines a morphism of dg bicomodules
over the cooperad $\Com^c$ and the functoriality properties
of the construction of the deformation complexes
with coefficients in bicomodules.
\end{proof}

\part{The homotopy theory of Hopf cooperads and mapping spaces}\label{part:mapping spaces}
We prove in this part that the biderivation complexes of dg Hopf cooperads studied in the previous section
can be used to determine the homotopy of mapping spaces
of dg Hopf cooperads.
To be explicit, we establish that the nerve of these dg Lie algebras of biderivations
are weakly equivalent, as simplicial sets, to the mapping spaces
which we associate to our dg Hopf cooperads.
We apply this statement to certain models of $E_n$-operads in the category of dg Hopf cooperads
in order to get the result of Theorem~\ref{thm:nerve to mapping spaces}.

We mentioned in the introduction of this paper that the category of dg Hopf cooperads
defines a model for the rational homotopy of operads
in simplicial sets.
We can therefore use our constructions of mapping spaces on the category dg Hopf cooperads
to compute mapping spaces associated to the rationalization
of the little discs operads
in the category of topological spaces.
We give brief recollections on this subject in the first section of this part.
We also recall the definition of the model structure on the category of dg Hopf cooperads
that we use in our constructions.

To be precise, we still deal with dg Hopf $\La$-cooperads in order to get a model
for the rational homotopy of operads in topological spaces (simplicial sets)
satisfying ${\op P}(0) = *$.
We therefore review the definition of the model structure
that we associate to this category of dg Hopf cooperads.

In the second section of this part, we explain an explicit construction of fibrant resolutions
and of a simplicial framing functor on the category of dg Hopf $\La$-cooperads.
We use these constructions to get an explicit definition of mapping spaces of dg Hopf $\La$-cooperads
and, eventually, to define the weak-equivalence between these mapping spaces
and the nerve of the dg Lie algebras of biderivations.
We establish this result in the third section.

\section{Recollections on the rational homotopy theory of operads}\label{sec:rational homotopy}
The rational homotopy theory of operads has been studied by the first author in the book \cite{Fr}.
We will mostly follow the conventions of this reference unless otherwise noted.
For the convenience of the reader, we summarize the results of this paper that we use in the sequel.
To be specific, we recall the definition of the model structure on the category of $\La$-operads
in topological spaces (respectively, in simplicial sets),
we recall the definition of the model structure on the category of dg Hopf $\La$-cooperads,
and we review criteria for the (co)fibrancy of objects and morphisms
in these categories.

We want to emphasize that the homotopy theory of \cite{Fr} is developed for a subcategory of the category of Hopf $\La$-cooperads
considered in the previous part.
To be explicit, we first consider the category $\dg^*\Op^c_{01}$ (respectively, $\dg^*\Hopf\Op^c_{01}$
formed by the dg cooperads (respectively, the dg Hopf cooperads) ${\op C}$
\begin{itemize}
\item which are defined in the category of non-negatively cochain graded dg vector spaces $\dg^*\Vect$
(as opposed to the category of general dg vector spaces $\dg\Vect$ that we consider in the previous part)
\item and which satisfy the conditions ${\op C}(0) = 0$, ${\op C}(1) = \K$
(whereas we only assume ${\op C}(0) = 0$ in the previous part).
\end{itemize}
Then we consider the category $\dg^*\La\Op^c_{01}$ (respectively, $\dg^*\Hopf\La\Op^c_{01}$
formed by the dg $\La$-cooperads (respectively, the dg Hopf $\La$-cooperads)
that satisfy the same conditions.
Recall that any dg $\La$-cooperad is equipped with a coaugmentation
over the commutative cooperad,
but we do not mention this extra structure for simplicity. We therefore shorten the notation of the category of coaugmented dg $\La$-cooperads
by $\dg^*\La\Op^c_{01} = \Com^c/\dg^*\La\Op^c_{01}$. Recall also that, in the case of a Hopf $\La$-cooperad ${\op A}$,
this coaugmentation is defined by the unit morphism $\eta: \K\to{\op A}(r)$
of the commutative algebra ${\op A}(r)$
in each arity $r>0$.

In the sequel, we have to be careful that we only apply the statements of this section
to cooperads
that satisfy the above assumptions.
In fact, this technical complication makes a detour necessary when we prove that the dg Lie algebra of biderivations
associated to our dg Hopf cooperad models of $E_n$-operads reduce to the graph complexes
considered in the introduction.

In addition to the above categories of cooperads (respectively, of Hopf cooperads),
we consider the category $\dg^*\Sigma\Seq_{>1}^c$ (respectively, $\dg^*\Hopf\Sigma\Seq^c_{01}$)
formed by the $\Sigma$-collections (respectively, the Hopf $\Sigma$-collections) in $\dg^*\Vect$
such that ${\op M}(0) = {\op M}(1) = 0$,
and the category $\dg^*\La\Seq_{>1}^c$ (respectively, $\dg^*\Hopf\La\Seq^c_{01}$)
formed by the (covariant) $\La$-collections in $\dg^*\Vect$
that satisfy the same conditions ${\op M}(0) = {\op M}(1) = 0$.
Recall that we use the phrase ``Hopf $\Sigma$-collection''
to refer to a $\Sigma$-collection in a category of unital commutative algebras
in a base category, and we similarly use the phrase ``Hopf $\La$-collection''
for a $\La$-collection in unital commutative algebras.

For our purposes, we also consider the undercategory $\overline{\Com}{}^c/\dg^*\La\Seq_{>1}^c$
whose objects are the (covariant) $\La$-collections ${\op M}\in\dg^*\La\Seq_{>1}^c$
equipped with a coaugmentation $\eta: \overline{\Com}{}^c\to{\op M}$
over the coaugmentation coideal of the commutative cooperad $\overline{\Com}{}^c$.
In the case of Hopf $\La$-collections, we have an identity $\dg^*\Hopf\La\Seq_{>1}^c = \overline{\Com}{}^c/\dg^*\Hopf\La\Seq_{>1}^c$,
because any object ${\op A}\in\dg^*\Hopf\La\Seq_{>1}^c$ inherits a canonical coaugmentation $\eta: \overline{\Com}{}^c\to{\op A}$
which is defined by the unit morphism $\eta: \K\to{\op A}(r)$
of the commutative algebra ${\op A}(r)$
in each arity $r>0$ (as in the Hopf cooperad case).
The cofree cooperad functor ${\op F}^c: {\op M}\mapsto{\op F}^c({\op M})$ defines a right adjoint
of the coaugmentation coideal functor $\overline{U}: \dg^*\Op_{01}^c\to\dg^*\Sigma\Seq_{>1}^c$
which carries any cooperad ${\op C}$ in the category $\dg^*\Op_{01}^c$
to the collection $\overline{\op C}$ such that $\overline{\op C}(0) = \overline{\op C}(1) = 0$
and $\overline{\op C}(r) = {\op C}(r)$
for $r>1$.
This adjunction $\overline{U}: \dg^*\Op_{01}^c\leftrightarrows\dg^*\Seq_{>1}^c :{\op F}^c$
lifts to and adjunction $\overline{U}: \dg^*\Hopf\Op_{01}^c\leftrightarrows\dg^*\Hopf\Sigma\Seq_{>1}^c :{\op F}^c$
between the category of Hopf cooperad in $\dg^*\Vect$
and the category of Hopf $\Sigma$-collections.
The cofree cooperad functor ${\op F}^c: {\op M}\mapsto{\op F}^c({\op M})$
also lifts to the category of $\La$-cooperads
so that we get adjunction relations $\overline{U}: \dg^*\La\Op_{01}^c\leftrightarrows\overline{\Com}{}^c/\dg^*\La\Seq_{>1}^c :{\op F}^c$
and $\overline{U}: \dg^*\Hopf\La\Op_{01}^c\leftrightarrows\dg^*\Hopf\La\Seq_{>1}^c :{\op F}^c$
when we pass to this setting.

The following statement summarizes the model category constructions of \cite{Fr}
that we use in this part.

\begin{thm}[{\cite[Chapters II.9, II.11]{Fr}, see also \cite[section 0]{FW} for an overview}]\label{thm:model categories}
The category of $\Sigma$-collections $\dg^*\Sigma\Seq_{>1}^c$ (respectively, of $\La$-collections $\dg^*\La\Seq_{>1}^c$)
is equipped with a model structure
such that:
\begin{itemize}
\item
the weak-equivalences are the morphisms $\phi: {\op M}\stackrel{\sim}{\to}{\op N}$
that form a weak-equivalence of dg vector spaces (a quasi-isomorphism) in each arity $\phi: {\op M}(r)\stackrel{\sim}{\to}{\op N}(r)$,
\item
the fibrations are the morphisms that are surjective in all degrees,
\item
and the cofibrations
are characterized by the left lifting property with respect to the class
of acyclic fibrations of this model structure.
\end{itemize}

The category of cooperads $\dg^*\Sigma\Op_{01}^c$
is equipped with a model structure
such that:
\begin{itemize}
\item
the weak-equivalences are the morphisms $\phi: {\op C}\stackrel{\sim}{\to}{\op D}$
that form a weak-equivalence of dg vector spaces (a quasi-isomorphism) in each arity $\phi: {\op C}(r)\stackrel{\sim}{\to}{\op D}(r)$,
\item
the cofibrations are the morphisms whose image
under the coaugmentation coideal functor $\overline{U}: {\op C}\mapsto\overline{\op C}$
defines a cofibration in the category of $\Sigma$-collections $\dg^*\Sigma\Seq_{>1}^c$
\item
and the fibrations are characterized by the right lifting property with respect to the class
of acyclic cofibrations of this model structure.
\end{itemize}

The category of $\La$-cooperads $\dg^*\La\Op_{01}^c$
is equipped with a model structure
such that:
\begin{itemize}
\item
the weak-equivalences are the morphisms $\phi: {\op C}\stackrel{\sim}{\to}{\op D}$
that form a weak-equivalence of dg vector spaces (a quasi-isomorphism) in each arity $\phi: {\op C}(r)\stackrel{\sim}{\to}{\op D}(r)$,
\item
the fibrations are the morphisms that form a fibration in the undercategory $\Com^c/\dg^*\Op_{01}^c$
formed by the category of ordinary cooperads equipped with a coaugmentation
over the commutative cooperad,
\item
and the cofibrations are characterized by the right lifting property with respect to the class
of acyclic fibrations of this model structure.
\end{itemize}

The category of Hopf cooperads $\dg^*\Hopf\Sigma\Op_{01}^c$ (respectively, of Hopf $\La$-cooperads $\dg^*\Hopf\La\Op_{01}^c$)
is equipped with model structures
such that:
\begin{itemize}
\item
the weak-equivalences are the morphisms $\phi: {\op C}\stackrel{\sim}{\to}{\op D}$
that form a weak-equivalence of dg vector spaces (a quasi-isomorphism) in each arity $\phi: {\op C}(r)\stackrel{\sim}{\to}{\op D}(r)$,
\item
the fibrations are the morphisms that form a fibration
in the category of cooperads $\dg^*\Sigma\Op_{01}^c$ (respectively, of $\La$-cooperads $\dg^*\La\Op_{01}^c$)
when we forget about Hopf structures,
\item
and the cofibrations
are characterized by the left lifting property with respect to the class
of acyclic fibrations of this model structure.\qed
\end{itemize}
\end{thm}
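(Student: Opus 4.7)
The plan is to establish these model structures by a cascade of transfer arguments, starting from the base category $\dg^*\Vect$ and proceeding through collections, cooperads, $\La$-cooperads, and finally Hopf $\La$-cooperads. Throughout, the weak equivalences and (in most cases) fibrations are defined arity-wise, so the core technical work consists in verifying the existence of the requisite factorizations and lifting properties.

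First I would construct the model structure on $\dg^*\Sigma\Seq^c_{>1}$. Since $\Sigma\Seq^c_{>1}$ is a product of module categories over the group algebras $\K[\Sigma_r]$ (in characteristic zero, these are semisimple), the standard projective model structure on non-negatively graded cochain complexes transports arity-wise, yielding the stated description of weak equivalences, fibrations, and cofibrations. For $\La$-collections, the adjunction with $\Sigma$-collections given by restriction along $\Sigma\hookrightarrow\La$ (left adjoint: Kan extension $\La\otimes_{\Sigma}-$) admits a transferred model structure; here one must check that Kan extension preserves acyclic cofibrations, which holds because the Kan extension is exact in characteristic zero.

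Next I would handle the cooperad model structure via the adjunction $\overline{U}\dashv{\op F}^c$ between $\dg^*\Op^c_{01}$ and $\dg^*\Sigma\Seq^c_{>1}$. The natural approach is to apply a right-transfer theorem: take weak equivalences and cofibrations to be detected by $\overline{U}$, and produce fibrations by the right lifting property. The principal technical obstruction is the factorization axiom for cofibration/acyclic-fibration factorizations, which requires an explicit small-object construction adapted to the cofree cooperad. One shows that the cofree cooperad functor preserves filtered colimits of appropriate shape, and that the generating (acyclic) cofibrations of collections, when adjointly transferred, yield the correct factorizations; this uses the free graded cooperad $\op F^c({\op M})$ presented as a sum over trees with vertices indexed by ${\op M}$. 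The extension to $\La$-cooperads proceeds identically via the adjunction with $\overline{\Com}{}^c/\dg^*\La\Seq^c_{>1}$, after verifying that the coaugmentation $\overline{\Com}{}^c\to{\op C}$ interacts well with the free construction.

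Finally, for Hopf cooperads and Hopf $\La$-cooperads, I would once more right-transfer: declare weak equivalences and fibrations to be those of the underlying (plain) cooperad, with cofibrations defined by the left lifting property. The main obstacle, and where the bulk of the work concentrates, is the construction of functorial cofibrant replacements compatible with both the cooperad and the commutative-algebra structures. The strategy is to present a Hopf cooperad as an algebra over a suitable coloured operad encoding both structures, reducing the question to the existence of a model structure on algebras over this operad; cofibrant replacements are then built by a cell-by-cell attachment procedure, with the cells being free Hopf $\La$-cooperads cogenerated by collections of cochain complexes, and acyclicity of the attached cells is tracked by an explicit spectral-sequence computation. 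A secondary difficulty is verifying the factorization into an acyclic cofibration followed by a fibration: one needs a path object on $\dg^*\Hopf\La\Op^c_{01}$, which can be built by tensoring with the Sullivan polynomial de Rham algebra $\Omega^*(\Delta^1)$ arity-wise, but one must verify arity-wise that the result remains a Hopf $\La$-cooperad with the correct compatibility with $\Com^c$. Once this path object is in place, the standard Quillen arguments produce the remaining axioms, and the statements of the theorem follow.
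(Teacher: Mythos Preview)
The paper does not prove this theorem: it is stated with a citation to \cite[Chapters II.9, II.11]{Fr} and \cite[Section 0]{FW}, and closed with a \qed. There is no argument in the present paper to compare against; the result is imported as a black box.

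Your outline is a reasonable high-level sketch of the transfer strategy that underlies the cited reference, but one point deserves correction. For the cooperad model structure you write ``right-transfer theorem: take weak equivalences and cofibrations to be detected by $\overline{U}$''. Note that $\overline{U}$ is the \emph{left} adjoint (with right adjoint the cofree cooperad functor $\op F^c$), so a model structure in which cofibrations are created by $\overline{U}$ is a \emph{left-induced} (left-transferred) model structure, not a right-transferred one. Left-induced structures are substantially harder to establish than right-induced ones: the standard small-object argument runs in the wrong direction, and one typically needs either an acyclicity argument for cofree objects or a cosmall-object/Garner-type argument. Your sketch (``small-object construction adapted to the cofree cooperad'') gestures at this but understates the difficulty; in the cited reference this is handled by bespoke arguments specific to the tree expansion of $\op F^c$ and the conilpotence hypotheses, not by a generic transfer theorem. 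By contrast, your treatment of the Hopf layer as a right transfer (fibrations and weak equivalences created by forgetting the Hopf structure) is the correct direction and matches the cited approach.
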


\begin{rem}\label{rem:Quillen adjunctions}
In fact, we may see that a morphism defines a cofibration in the category of $\Sigma$-collections if and only if this morphism
is injective in positive degrees (as soon as we assume that the ground ring is a field of characteristic zero).

Recall also that we use the notation $\La\otimes_{\Sigma}-$
for the usual Kan extension functor from the category of $\Sigma$-collections
to the category of $\La$-collections.
From the definition of our model structures, we easily deduce that the obvious forgetful functor and this Kan extension
functor define a Quillen adjunction $\La\otimes_{\Sigma}-: \dg^*\Sigma\Seq_{>1}^c\leftrightarrows\dg^*\La\Seq_{>1}^c :\text{Forgetful}$
between the model category of dg $\Sigma$-collections $\dg^*\Sigma\Seq_{>1}^c$
and the model category of dg $\La$-collections $\dg^*\Sigma\Seq_{>1}^c$.
Furthermore, we can lift the Kan extension functor to the category of dg cooperads (respectively, of dg Hopf cooperads)
to get a counterpart of this Quillen adjunction at the dg cooperad (respectively, dg Hopf cooperad)
level.

The obvious forgetful functor from the category of dg Hopf cooperads
to the category of dg cooperads
also admits a left adjoint, defined by an appropriate application
on the classical symmetric algebra functor on dg vector spaces.
The definition of our model structure on dg Hopf cooperads implies that these functors define
a Quillen adjunction too, and we have a similar result
for the model category of dg Hopf $\La$-cooperads.
We refer to~\cite[Chapters II.9, II.11]{Fr} and to the overview of~\cite[section 0]{FW} cited in our theorem
for more details on these statements.
\end{rem}

In what follows, we also use the following fibrancy criteria:

\begin{prop}[{see \cite[Proposition II.9.2.9]{Fr}}]\label{prop:fibrant Hopf Lambda-cooperads}
If a dg Hopf $\La$-cooperad ${\op C}\in\dg^*\Hopf\La\Op_{01}^c$ is cofree
as an ordinary graded cooperad (explicitly, if we have ${\op C}^{\circ} = {\op F}^c({\op M})$
for some $\Sigma$-collection ${\op M}\in\gr\Sigma\Seq_{>1}^c$
when we forget about the Hopf structure, the $\La$-structure,
and the differential),
then ${\op C}$ is fibrant as an object of the model category of dg Hopf $\La$-cooperads $\dg^*\Hopf\La\Op_{01}^c$.
\end{prop}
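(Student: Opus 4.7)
The plan is to unwind the definition of fibrations through the tower of model structures set up in Theorem \ref{thm:model categories} and then reduce the problem to an obstruction-theoretic extension argument. First, I would use that fibrations in $\dg^*\Hopf\La\Op_{01}^c$ are created by the forgetful functor to $\dg^*\La\Op_{01}^c$, whose fibrations are in turn created by the forgetful functor to the coaugmented cooperad category $\Com^c/\dg^*\Op_{01}^c$. Consequently, it suffices to prove that ${\op C}$ is fibrant as an object of $\Com^c/\dg^*\Op_{01}^c$; the Hopf and $\La$-structures of ${\op C}$ play no role in the lifting argument.

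Next, I would set up the lifting problem explicitly. Suppose $\iota: {\op A}\hookrightarrow{\op B}$ is an acyclic cofibration in $\Com^c/\dg^*\Op_{01}^c$ and $f: {\op A}\to{\op C}$ is any morphism. By the definition of cofibrations in $\dg^*\Op_{01}^c$, applying the coaugmentation coideal functor $\overline{U}$ produces an injective quasi-isomorphism $\overline{\iota}: \overline{\op A}\hookrightarrow\overline{\op B}$ in $\dg^*\Sigma\Seq_{>1}^c$. Using the adjunction $\overline{U}\dashv{\op F}^c$ at the graded level together with the hypothesis ${\op C}^{\circ}={\op F}^c({\op M})$, I would reinterpret the required lift $g: {\op B}\to{\op C}$ as an extension of the graded $\Sigma$-collection morphism $\overline{f}: \overline{\op A}\to{\op M}$ adjoint to $f$, subject to the constraint that the resulting cooperad morphism intertwines the codifferentials.

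The main technical step, which I expect to be the principal obstacle, is to build this extension by induction with respect to the weight filtration of the cofree cooperad ${\op F}^c({\op M})$ (where the weight counts the number of cogenerators appearing in a treewise tensor). The codifferential on ${\op C}$ decomposes, via projection onto ${\op M}$, into the internal differential of ${\op M}$ plus a twisting piece built from the composition coproducts of strictly lower weight. At each inductive stage, the obstruction to extending the partial lift is a cocycle in a twisted hom-complex of the form $\Hom(\overline{\op B}/\overline{\op A},{\op M})$; because $\overline{\op B}/\overline{\op A}$ is acyclic as a $\Sigma$-collection (the quotient of an acyclic cofibration by its source) and ${\op M}$ is a graded $\Sigma$-collection concentrated in non-negative degrees, every such cocycle is exact, so the obstruction vanishes and the induction proceeds. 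The verification that the obstruction actually is closed in this twisted complex uses the coassociativity of the cooperad structure together with the Maurer--Cartan-type equation satisfied by the twisting piece.

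A subsidiary but essential point, which requires care in the bookkeeping, is to ensure that the successive lifts remain compatible with the coaugmentation over $\Com^c$ (so that we stay within $\Com^c/\dg^*\Op_{01}^c$). Thanks to the conditions ${\op C}(0)=0$ and ${\op C}(1)=\K$, the coaugmentation coideal $\overline{\op C}$ recovers all of ${\op C}$ in arities $>1$ while ${\op C}(1)$ is canonically the ground field, and the coaugmentation of ${\op B}$ provides a canonical choice in arity one that is automatically preserved. This removes the coaugmentation data as a source of additional obstructions and completes the argument.
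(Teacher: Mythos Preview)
Your reduction step is exactly what the paper does: fibrations in $\dg^*\Hopf\La\Op_{01}^c$ are created by the forgetful functor, so fibrancy of ${\op C}$ in the Hopf $\La$-category follows at once from fibrancy of its underlying dg cooperad. The paper then simply cites \cite[Proposition II.9.2.9]{Fr} for the latter and stops.

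Where you diverge is that you go on to sketch an obstruction-theoretic proof of the cited fibrancy result itself, via the weight filtration on ${\op F}^c({\op M})$ and extension against acyclic cofibrations. This is the standard strategy and is essentially how the cited reference proceeds, so your outline is sound; the paper merely chooses not to reproduce it. One point in your sketch that would need care if you carried it out in full: cofibrations of $\Sigma$-collections in $\dg^*\Sigma\Seq_{>1}^c$ are only required to be injective in \emph{positive} degrees (Remark~\ref{rem:Quillen adjunctions}), so the quotient $\overline{\op B}/\overline{\op A}$ is not literally the cokernel of an injection, and the acyclicity of the obstruction hom-complex requires a small additional argument using the non-negative grading. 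This is handled in the reference, but your phrasing ``the quotient of an acyclic cofibration by its source'' glosses over it.
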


\begin{proof}
This result is proved for objects of the category of dg cooperads $\dg^*\Op_{01}^c$ in the cited reference,
but ${\op C}$ is fibrant in the model category of dg Hopf $\La$-cooperads $\dg^*\Hopf\La\Op_{01}^c$
as soon as this object is fibrant in the model category of dg cooperads $\dg^*\Op_{01}^c$
by definition of our model structure.
The conclusion of the proposition follows.
\end{proof}

\begin{prop}[{see \cite[Proposition II.9.2.10]{Fr}}]\label{prop:fibrations of Hopf Lambda-cooperads}
Let $\psi: {\op C}\to{\op D}$ be a morphism of dg Hopf $\La$-cooperads.
We assume that we have an identity ${\op C}^{\circ} = {\op F}^c({\op M})$ (respectively, ${\op D}^{\circ} = {\op F}^c({\op N})$)
for some collection ${\op M}\in\gr\Sigma\Seq_{>1}^c$ (respectively, ${\op M}\in\gr\Sigma\Seq_{>1}^c$)
when we forget about the Hopf structure, the $\La$-cooperad structure
and the differential
as in Proposition~\ref{prop:fibrant Hopf Lambda-cooperads}.
We also assume that $\psi = \psi_f$ is given by the image of a morphism of $\Sigma$-collections $f: {\op M}\to{\op N}$
under the cofree cooperad functor ${\op F}^c(-)$
when we forget about the Hopf structure, the $\La$-cooperad structure
and the differential
and that this morphism $f$ is surjective in each arity and in every degree.
Then $\psi$ is a fibration in the model category of dg Hopf $\La$-cooperads $\dg^*\Hopf\La\Op_{01}^c$.
\end{prop}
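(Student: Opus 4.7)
The plan is to verify that $\psi$ has the right lifting property against every acyclic cofibration in the appropriate category. As in the proof of Proposition \ref{prop:fibrant Hopf Lambda-cooperads}, I would first reduce the statement: by the definitions in Theorem \ref{thm:model categories}, a morphism in $\dg^*\Hopf\La\Op_{01}^c$ is a fibration if and only if it is a fibration in $\dg^*\La\Op_{01}^c$, which in turn holds if and only if it is a fibration in the undercategory $\Com^c/\dg^*\Op_{01}^c$ of coaugmented dg cooperads. So it suffices to check the right lifting property of $\psi$ against acyclic cofibrations of coaugmented dg cooperads.

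Fix such an acyclic cofibration $i: {\op A}\hookrightarrow{\op B}$ together with maps $\phi: {\op A}\to{\op C}$ and $\phi': {\op B}\to{\op D}$ making a commutative square with $i$ and $\psi$. By the characterization of cofibrations in Theorem \ref{thm:model categories}, the morphism $\overline{U}(i): \overline{\op A}\to\overline{\op B}$ is an acyclic cofibration in $\dg^*\Sigma\Seq_{>1}^c$. Using the adjunction $\overline{U}\dashv{\op F}^c$ and the hypotheses ${\op C}^\circ = {\op F}^c({\op M})$, ${\op D}^\circ = {\op F}^c({\op N})$ and $\psi^\circ = {\op F}^c(f)$ at the level of graded cooperads, I would translate the sought lift $\widetilde\phi: {\op B}\to{\op C}$ into a map of graded $\Sigma$-collections $\widetilde\alpha: \overline{\op B}\to{\op M}$ subject to two compatibility conditions coming from the commuting triangles (namely $\widetilde\alpha\circ\overline{U}(i) = \alpha$ and $f\circ\widetilde\alpha = \alpha'$, where $\alpha,\alpha'$ correspond to $\phi,\phi'$), together with a further equation that encodes compatibility of $\widetilde\phi$ with the differentials of ${\op B}$ and ${\op C}$.

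To solve this system, I would decompose the differential on ${\op C}^\circ = {\op F}^c({\op M})$ as the cofree extension of $d_{\op M}$ plus a twisting coderivation whose projection onto the cogenerating collection ${\op M}$ vanishes in its linear part, and similarly for ${\op D}$. Under this decomposition, the differential compatibility condition becomes a recursion along a natural filtration of $\overline{\op B}$ (e.g.\ by arity, or by the weight transported from ${\op F}^c({\op M})$). At each stage of the induction, the hypothesis that $f$ is surjective in every arity and every degree makes $f$ a fibration of dg $\Sigma$-collections, and combined with $\overline{U}(i)$ being an acyclic cofibration in $\dg^*\Sigma\Seq_{>1}^c$, the standard model-category axiom in $\dg^*\Sigma\Seq_{>1}^c$ yields the needed correction term.

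The main obstacle is verifying that the obstruction classes appearing at successive stages of the recursion are indeed cycles relative to the differentials, so that the acyclicity provided by $\overline{U}(i)$ can be invoked to annihilate them, and further that the stage-by-stage corrections assemble coherently into a global morphism $\widetilde\alpha$ satisfying all compatibilities simultaneously. This amounts to a careful obstruction-theoretic bookkeeping on the cofree cooperad structure, and the corresponding computation is performed in full in \cite[Proposition II.9.2.10]{Fr}.
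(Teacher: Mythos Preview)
Your proposal is correct and follows essentially the same strategy as the paper. Both arguments reduce the claim, via the definition of fibrations in Theorem~\ref{thm:model categories}, from $\dg^*\Hopf\La\Op_{01}^c$ through $\dg^*\La\Op_{01}^c$ to the undercategory $\Com^c/\dg^*\Op_{01}^c$, and then defer to \cite[Proposition II.9.2.10]{Fr} for the actual lifting argument in the dg cooperad setting. The only difference is that the paper's proof is extremely terse (it performs the reduction in one sentence and cites the reference), whereas you additionally sketch the obstruction-theoretic argument that the cited reference carries out; this extra material is accurate in spirit but not needed for the present paper, which treats the cooperad-level statement as a black box.
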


\begin{proof}
This result is, like the previous statement, proved for morphisms of the category of dg cooperads $\dg^*\Op_{01}^c$ in the cited reference.
Nevertheless, we can again use that $\psi$ defines a fibration in the model category of dg Hopf $\La$-cooperads $\dg^*\Hopf\La\Op_{01}^c$
as soon as this morphism defines a fibration in the model category of dg cooperads $\dg^*\Op_{01}^c$
to get our conclusion.
\end{proof}

In parallel to the category of dg Hopf $\La$-cooperads,
we consider the category $\TopCat\La\Op_{\varnothing *}$ (respectively, $s\La\Op_{\varnothing *} = s\SetCat\La\Op_{\varnothing *}$)
formed by the $\La$-operads in topological spaces (respectively, in simplicial sets) ${\op P}$
such that ${\op P}(0) = \varnothing$, ${\op P}(1) = *$.
The general correspondence of Section~\ref{subsec:Lambda-operads} between the structure of a $\La$-operad
and the structure of an operad equipped with a distinguished element
in arity zero
gives an isomorphism between this category of reduced $\La$-operads $\TopCat\La\Op_{\varnothing *}$ (respectively, $s\La\Op_{\varnothing *}$)
and the category formed by the operads in the ordinary sense
that satisfy ${\op P}(0) = {\op P}(1) = *$.
The categories $\TopCat\La\Op_{\varnothing *}$ and $s\La\Op_{\varnothing *}$
inherit Quillen equivalent model structures,
with the Quillen equivalence $|-|: s\La\Op_{\varnothing *}\leftrightarrows\TopCat\La\Op_{\varnothing *} :S_{\bullet}$
induced by the geometric realization functor $|-|: K\mapsto|K|$
on the category of simplicial sets $s\mathrm{Set}$
and the singular complex functor $S_{\bullet}(X) = \Mor_{\TopCat}(\Delta^{\bullet},X)$
on the category of topological spaces $\TopCat$.
Hence, we can equivalently work in the category of topological spaces and in the category of simplicial sets
for the homotopy theory of operads.

We go back to the definition of this model category of $\La$-operads in topological spaces (respectively, in simplicial sets)
in Part~\ref{part:rationalization}.
We mainly deal with algebraic models of operads that we form in the category of dg Hopf $\La$-cooperads for the moment.
Recall simply that we equip our model category of $\La$-operads in topological spaces (respectively, in simplicial sets)
with the obvious class of weak-equivalences $\phi: {\op P}\stackrel{\sim}{\to}{\op Q}$,
consisting of the morphisms of operads that define a weak-equivalence
of topological spaces (respectively, of simplicial sets) $\phi: {\op P}(r)\stackrel{\sim}{\to}{\op Q}(r)$
in each arity $r>0$.

We then have the following statement:

\begin{thm}[{\cite[Chapter II.12]{Fr}}]\label{thm:rationalization adjunction}
We have a Quillen adjunction $G_\bullet: \dg^*\Hopf\La\Op_{01}^c\leftrightarrows s\La\Op_{\varnothing *}^{op} :\Omega_{\sharp}$
between the model category of dg Hopf $\La$-cooperads $\dg^*\Hopf\La\Op_{01}^c$
and the model category of $\La$-operads in simplicial sets $s\La\Op_{\varnothing *}$.
Let ${LG}_{\bullet}$ (respectively, ${R\Omega}_{\sharp}$)
denotes the left (respectively, right) derived functor of $LG_{\bullet}$ (respectively, ${R\Omega}_{\sharp}$).
Let ${\op P}\in s\La\Op_{\varnothing *}^{op}$.
The object ${\op P}^{\Q} := {LG}_\bullet{R\Omega}_{\sharp}{\op P}$ forms an operad in simplicial sets whose components ${\op P}^{\Q}(r)$
are equivalent to the Sullivan rationalization ${\op P}(r)^{\Q}$
of the spaces ${\op P}(r)$
in the homotopy category of simplicial sets
provided that each of these spaces has a rational cohomology $H({\op P}(r),\Q)$
which forms a finitely generated $\Q$ vector space
in each degree.\qed
\end{thm}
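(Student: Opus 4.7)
The plan is to construct the two functors, verify the adjunction, check the Quillen property by reducing to an arity-by-arity verification, and then invoke classical Sullivan rational homotopy theory to identify the derived composite with rationalization. Since the statement is cited from~\cite{Fr}, I would essentially reproduce the outline of Chapter II.12 there, adapted to our conventions.

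First, I would define the Sullivan functor $\Omega_{\sharp}: s\La\Op_{\varnothing *}\to(\dg^*\Hopf\La\Op_{01}^c)^{op}$. On a simplicial set $K$, the usual Sullivan functor $\APL(K)$ returns a commutative dg algebra, contravariantly. To promote this to operads, one observes that $\APL$ is a lax symmetric monoidal functor (via the Eilenberg--Zilber shuffle map composed with the commutative product), so it transforms operads in simplicial sets into cooperads in commutative dg algebras, i.e.\ dg Hopf cooperads. One must take some care with the $\La$-structure: a reduced $\La$-operad ${\op P}$ satisfies ${\op P}(0)=\ast$, ${\op P}(1)=\ast$, and the $\La$-restriction operators become corestriction operators after applying $\APL$; this produces a dg Hopf $\La$-cooperad with $\Omega_{\sharp}{\op P}(0)=0$ and $\Omega_{\sharp}{\op P}(1)=\K$, living in $\dg^*\Hopf\La\Op_{01}^c$. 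Second, I would define $G_\bullet$ as the simplicial object of operad-morphism sets into the standard Sullivan simplicial commutative dg algebra $\Omega^{\bullet} := \APL(\Delta^{\bullet})$. More precisely, $G_n({\op A}) = \Mor_{\dg^*\Hopf\La\Op_{01}^c}({\op A},\Omega^n\otimes\Com^c)$ with the appropriate cofibrant replacement of $\Omega^n\otimes\Com^c$ as a target. The adjunction relation then follows from the usual adjunction $\APL\dashv G_\bullet$ for commutative dg algebras applied arity-wise, together with the standard manipulations with operadic structure.

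Next, I would verify the Quillen property: $\Omega_{\sharp}$ sends cofibrations (of $\La$-operads) to fibrations (of dg Hopf $\La$-cooperads) and acyclic cofibrations to acyclic fibrations. By the fibrancy criteria of Propositions~\ref{prop:fibrant Hopf Lambda-cooperads} and~\ref{prop:fibrations of Hopf Lambda-cooperads}, and the characterization of model structures in Theorem~\ref{thm:model categories}, the fibrations and weak equivalences on both sides are detected arity-wise at the level of underlying objects (up to forgetting the Hopf/$\La$ structure, modulo the cofreeness hypothesis which is preserved by $\Omega_{\sharp}$ on cofibrant $\La$-operads). The main check then reduces to the classical Bousfield--Gugenheim result that $\APL$ is the right adjoint of a Quillen adjunction between simplicial sets and commutative dg algebras. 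To propagate this from the level of algebras to the level of operads, I would use the cofree structure that cofibrant dg Hopf $\La$-cooperads and cofibrant $\La$-operads enjoy, together with the compatibility between $\APL$ and the monoidal structure (Eilenberg--Zilber quasi-isomorphisms) to handle the treewise composition operations.

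Finally, to identify ${\op P}^{\Q} = LG_{\bullet}R\Omega_{\sharp}{\op P}$ with the Sullivan rationalization arity-wise, I would take a cofibrant replacement of ${\op P}$ in $s\La\Op_{\varnothing *}$ and a fibrant replacement of $\Omega_{\sharp}{\op P}$ in $\dg^*\Hopf\La\Op_{01}^c$, then argue that in each arity $r$ the resulting simplicial set $G_\bullet R\Omega_{\sharp}{\op P}(r)$ is weakly equivalent to the Bousfield--Gugenheim rationalization of ${\op P}(r)$. The finite-type hypothesis on $H({\op P}(r),\Q)$ is exactly what is needed to apply the classical Sullivan--Bousfield--Gugenheim theorem, which asserts that the derived unit of the $\APL\dashv G_\bullet$ adjunction applied to a space with finite-type $\Q$-cohomology is the Sullivan $\Q$-localization. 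The hard part here, and the step I would expect to demand the most care, is proving that cofibrant replacements in $s\La\Op_{\varnothing *}$ can be taken componentwise nice enough (e.g.\ with finite-type cohomology preserved) so that the arity-wise Sullivan theorem transfers to the operadic setting, and that the operadic cofibrant replacement does not destroy the rationalization property on individual components. This is handled in~\cite[Chapter II.12]{Fr} by carefully building Reedy-type cofibrant replacements of operads that remain ``good'' in each arity.
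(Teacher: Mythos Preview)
The paper does not prove this theorem: it is stated with a citation to \cite[Chapter II.12]{Fr} and closed with a \qed{} box, so there is no argument in the present paper to compare against. Your sketch is a plausible reconstruction of the strategy in that reference, and the overall architecture (build the adjunction, check the Quillen property aritywise, invoke Bousfield--Gugenheim for the rationalization identification) is correct in spirit.

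There is, however, a genuine technical gap in your construction of $\Omega_{\sharp}$. You write that $\APL$ is lax symmetric monoidal and that this suffices to send operads to Hopf cooperads. The issue is the direction of the monoidal comparison maps. The operadic composition $\circ_i: {\op P}(k)\times{\op P}(l)\to{\op P}(k+l-1)$ becomes, after applying the contravariant functor $\APL$, a map $\APL({\op P}(k+l-1))\to\APL({\op P}(k)\times{\op P}(l))$, and to land in a cooperad one needs a natural map $\APL(X\times Y)\to\APL(X)\otimes\APL(Y)$. But the K\"unneth map goes the other way, and it is only a quasi-isomorphism, not an isomorphism. So applying $\APL$ aritywise does not literally produce a dg Hopf cooperad. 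This is precisely the obstacle that forces Fresse in \cite[Chapter II.12]{Fr} to define $\Omega_{\sharp}$ by a more elaborate construction (a suitable operadic coend, or equivalently a left Kan extension along the tautological cooperad in simplicial commutative algebras), rather than by naive aritywise application of $\APL$. Your definition of $G_\bullet$ has a related imprecision: the formula you wrote does not type-check as a simplicial $\La$-operad. Once $\Omega_{\sharp}$ is correctly defined, $G_\bullet$ is forced by adjunction, and the remaining steps of your outline go through essentially as you indicate.
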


Recall that the little discs operads $\lD_n$ do not reduce to a point in arity $1$.
Nevertheless, we have models of $E_n$-operads in topological spaces
that fulfill this property (this is the case of the Fulton-MacPherson
operads for instance).
For our purpose, we consider a cofibrant model of $E_n$-operad in the category of $\La$-operads in simplicial sets $\mE_n$,
by using our Quillen equivalence between topological operads and simplicial operads.
Then we can take the image of this operad $\mE_n$
under the functor $\Omega_{\sharp} : s\La\Op_{\varnothing *}\rightarrow\dg^*\Hopf\La\Op_{01}^c$
to define our model of the class of $E_n$-operads
in the category of dg Hopf $\La$-cooperads.
To be explicit, we set $\E_n^c = \Omega_{\sharp}(\mE_n)$, for each $n\leq 1$.

Now, the formality of the $E_n$-operads implies that this object $\E_n^c$ is weakly-equivalent
to the dual cooperad of the associative operad $\e_1^c = \Ass^c$ in the case $n=1$,
and to the dual cooperad of the $n$-Poisson operad $\e_n^c = \Poiss_n^c$
when $n\geq 2$.
Hence, we can use these objects $\e_n^c$ instead of $\E_n^c = \Omega_{\sharp}(\mE_n)$
as our working models for the class of $E_n$-operads
in the category of dg Hopf $\La$-cooperads.

For our purpose, we also recall the following statement from \cite{Fr}.

\begin{thm}[{\cite[Theorem II.14.1.7 and Proposition II.14.1.15]{Fr}}]\label{thm:Drinfeld Kohno model}
Let $\stp_n$ be the cooperad in graded Lie coalgebras
formed by the dual Lie coalgebras
of the graded Drinfeld-Kohno Lie algebras
$$\mathfrak{p}_n(r) = L(t_{ij}=(-1)^nt_{ji},1\leq i<j\leq r)/\langle [t_{ij},t_{kl}],[t_{ij},t_{ik}+t_{jk}]\rangle,$$
where we have a generating element $t_{ij}$ of lower degree $n-2$,
associated to each pair $1\leq i<j\leq r$.
The Chevalley-Eilenberg cochain complex of this cooperad $C(\stp_n)$
defines a cofibrant resolution of the object $\e_n^c$
in the category of dg Hopf $\La$-cooperads,
for any $n\geq 2$.\qed
\end{thm}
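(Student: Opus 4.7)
The plan is to verify the two assertions separately: that $C(\stp_n)$ is quasi-isomorphic to $\e_n^c$ as a dg Hopf $\La$-cooperad, and that $C(\stp_n)$ is cofibrant in the model category $\dg^*\Hopf\La\Op^c_{01}$.

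First I would set up the Hopf $\La$-cooperad structure on $C(\stp_n)$. Dualizing the operadic composition structure on $\alg p_n$, given by the ``cabling'' maps $\circ_i : \alg p_n(m)\times\alg p_n(n)\to\alg p_n(m+n-1)$ that send a generator $t_{kl}$ to a sum of generators labelled by the blown-up indices, one obtains cocomposition morphisms $\Delta_i : \stp_n(m+n-1)\to\stp_n(m)\oplus\stp_n(n)$ of graded Lie coalgebras. The forgetful maps $\alg p_n(S\sqcup\{*\})\to\alg p_n(S)$ (which kill the generators $t_{i*}$) dualize to corestriction operators $\eta_S : \stp_n(S)\to\stp_n(S\sqcup\{*\})$, endowing $\stp_n$ with the structure of a $\La$-cooperad in graded Lie coalgebras. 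Applying the functorial construction of the Chevalley-Eilenberg complex in the sense of Section~\ref{subsec:biderivation complexes:plain Hopf cooperadsHopf cooperads} then produces $C(\stp_n)$ as a dg Hopf $\La$-cooperad. In fact, $C(\stp_n)$ is a good source Hopf $\La$-cooperad in the sense of Definition~\ref{defn:good Hopf Lambda-cooperads}, because $\stp_n$ is freely generated as a $\La$-collection by the $\Sigma$-collection $\bS\stp_n$ obtained as the kernel of the corestriction operators.

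To construct the quasi-isomorphism $C(\stp_n)\stackrel{\sim}{\to}\e_n^c$, I would use the classical identification of the cohomology of the Drinfeld--Kohno Lie algebras: by a theorem of Kohno (generalizing Arnold for $n=2$), the cohomology $H^*(\alg p_n(r))$ is isomorphic to $H^*(F(\R^n,r),\K) = \e_n^c(r)$ as a graded commutative algebra, with the classes $t_{ij}^*$ mapping to the Arnold/$\omega_{ij}$ classes. This isomorphism is compatible with the cooperad structure (coming from operadic composition of configurations) and with the $\La$-structure (coming from forgetting points). The natural map would be constructed by choosing, in each arity, a dg algebra map $C(\stp_n(r))\to\e_n^c(r)$ that sends the generating cohomology classes to the corresponding Arnold classes, and then verifying the coherence with all structure operations. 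The result of Proposition~\ref{prop:Lambda-biderivation Lie algebra}, which identifies morphisms of Hopf $\La$-cooperads out of $C(\stp_n)$ with certain $\Sigma$-collection maps $\bS\stp_n[-1]\to\overline{\op P}[1]$ satisfying the Maurer--Cartan equation, will reduce this to a finite combinatorial check in low arities.

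For cofibrancy, I would apply the general fact from \cite{Fr} that a Hopf $\La$-cooperad of the form $C(\alg g)$ with $\alg g = \La\otimes_\Sigma\bS\alg g$ free as a $\La$-collection is cofibrant, provided the generating $\Sigma$-collection $\bS\alg g$ admits a filtration by free generators in non-decreasing arities. Concretely, $C(\stp_n)$ is freely generated as a graded commutative algebra in each arity by $\stp_n[-1]$, the $\La$-structure is itself free, and the differential comes from the Lie cobracket whose image in each generator lies in lower-weight products; this gives the structure of a quasi-free object in the category of dg Hopf $\La$-cooperads. Invoking the cofibrancy criterion for quasi-free dg Hopf $\La$-cooperads built on a free $\La$-collection concludes this step.

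The main obstacle I expect is not the cofibrancy (which is rather formal once the free $\La$-collection structure is in place) but checking that the quasi-isomorphism $C(\stp_n)\to\e_n^c$ is genuinely a morphism of \emph{Hopf} $\La$-cooperads, i.e.\ compatible with all composition coproducts, corestrictions, and algebra products simultaneously. The cleanest approach is to bypass this by using the Koszul duality pairing $\Poiss_n^{\vee} = \Poiss_n^c\{-n\}$, Theorem~\ref{thm:model categories} and Proposition~\ref{prop:Lambda-biderivation Lie algebra}: the cobar--Koszul resolution and the Chevalley--Eilenberg construction fit into a commutative square that forces the needed compatibilities automatically, so the only residual content is the arity-$2$ identification, which is a direct computation with the two generators $t_{12}$ and the arity-$2$ Poisson bracket and product.
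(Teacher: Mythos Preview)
The paper does not prove this theorem: it is stated with a citation to \cite[Theorem II.14.1.7 and Proposition II.14.1.15]{Fr} and closed immediately with a \qed, so there is no argument in the paper to compare your proposal against. Your sketch is a reasonable outline of the content of the cited reference --- identifying $H^*(\alg p_n(r))$ with $\e_n^c(r)$ via the Arnold presentation, and invoking a quasi-free cofibrancy criterion for $C(\alg g)$ when $\alg g$ is free as a $\La$-collection --- and this is indeed roughly how the result is established in \cite{Fr}. One caution: your appeal to Proposition~\ref{prop:Lambda-biderivation Lie algebra} to construct the map $C(\stp_n)\to\e_n^c$ is circular in spirit, since that proposition requires the target to be a good \emph{target} Hopf $\La$-cooperad (i.e.\ of the form $B(\op P)$), which $\e_n^c$ is not on the nose; in practice the morphism is built directly by sending each cogenerator $t_{ij}^*$ to the corresponding Arnold class and checking compatibility by hand, as in the cited reference.
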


\section{Fibrant replacements and simplicial frames for dg Hopf \texorpdfstring{$\La$}{Lambda} cooperads}\label{sec:fibrant resolutions}
In this section, we introduce a fibrant replacement functor in the category of dg Hopf cooperads and in the category of dg Hopf $\La$-cooperads.
Essentially, our construction is a variant (and a dual) of the construction of cofibrant replacements
of operads in monoidal categories
described by Berger and Moerdijk in \cite{BM}.
We revisit this construction which we need to adapt to the setting of cooperads in unital commutative algebras.
We also have to integrate $\La$-structures in the result of the construction
when we deal with dg Hopf $\La$-cooperads.
We devote a preliminary subsection to the definition of (the dual of) an interval object $I$
on which our construction depends.
We address the definition of the fibrant replacement functor on the category of dg Hopf cooperads afterwards.
We also explain the definition of a simplicial framing functor which extends
this fibrant replacement functor on the category of dg Hopf cooperads.
We devote Sections~\ref{subsec:fibrant resolutions:Hopf cooperads}-\ref{subsec:fibrant resolutions:simplicial frames}
to these constructions.
We eventually prove that the fibrant replacement functor and the simplicial framing functor lift to the category of dg Hopf $\La$-cooperads.
We address this extension of our constructions in subsection~\ref{sec:fibrant resolutions:Hopf Lambda-cooperads}.

\subsection{The interval object}\label{rem:counit}\label{subsec:fibrant resolutions:interval object}
Recall that the Berger-Moerdijk construction depends on a choice of a Hopf interval $I$
in the chosen monoidal category (see \cite[Definition 4.1]{BM}).
We dualize the requirements of this reference.
We take for $I$ the commutative dg algebra of polynomial forms on the unit interval $I = \K[t,dt]$
with the de Rham differential.
We have natural algebra maps $d_0,d_1: I\to\K$ by evaluation at the endpoints $t=0$ and $t=1$
and a coassociative coproduct
\[
m^*: I\to I\otimes I
\]
given by the pullback of the multiplication map
\begin{equation}\label{eq:Icoprod}
\begin{aligned}
m: [0,1]\times [0,1]\to [0,1] \\
(s,t)\mapsto 1-(1-s)(1-t)\,.
\end{aligned}
\end{equation}
The evaluation at the endpoint $t=0$ in $I$ defines a counit for this coproduct $m^*$.
The evaluation at the other endpoint $t=1$ in $I$ fits in the following diagram
\[
\begin{tikzcd}
I\otimes I\ar{ddr}[swap]{\mathit{id}\otimes ev_{t=1}} &
I \ar{d}{ev_{t=1}}\ar{r}{m^*}\ar{l}[swap]{m^*} &
I\otimes I\ar{ddl}{ev_{t=1}\otimes\mathit{id}} \\
& \K\ar{d} & \\
& I &
\end{tikzcd},
\]
so that the (dual of the) conditions of \cite[Definition 4.1]{BM}, required for the Berger-Moerdijk construction,
are satisfied.

\subsection{A fibrant resolution for dg Hopf cooperads}\label{subsec:fibrant resolutions:Hopf cooperads}
Let ${\op C}$ be a dg Hopf cooperad such that ${\op C}(0)=0$, ${\op C}(1)=\K$ \footnote{Recall that such a cooperad is automatically conilpotent.}.
We do not assume that ${\op C}$ is equipped with a $\La$-structure for the moment.
We construct a dg Hopf cooperad $W{\op C}$ together with a quasi-isomorphism ${\op C}\stackrel\simeq
\longrightarrow  W{\op C}$
such that $W{\op C}$ is cofree as a graded cooperad.
We actually prove that $W{\op C}$ is the bar construction of an augmented dg operad.
If we informally regard ${\op C}$ as the ``cooperad of differential forms'' on a topological operad ${\op P}$,
then $W{\op C}$ models the ``differential forms''
on the Boardman-Vogt W-construction of ${\op P}$, hence the notation $W{\op C}$.
We explain the definition of this dg Hopf cooperad $W{\op C}$ in the next paragraph and we establish its properties afterwards.

\begin{const}\label{const:W-construction}
Let $S$ be any finite set.
Recall that $\mT_S$ is the set of trees with $|S|$ leafs, labelled (uniquely) by the elements of $S$.

For such a tree $T$, and for a $\Sigma$-collection ${\op M}$,
we form the treewise tensor product
\[
{\op M}(T) := \bigotimes_{v\in VT} {\op M}(star(v)),
\]
where $v$ runs over the set of vertices $VT$ of the tree $T$ and $star(v)$ denotes the set of ingoing edges
of any such vertex $v$
in $T$.
These treewise tensor products are the same as the objects, formerly denoted by ${\op F}^c_T({\op M})$,
that occur in the expansion of the cofree cooperad ${\op F}^c({\op M})$.

For technical reasons, we also consider the set $\mT_S'\subset\mT_S$ formed by those trees
whose vertices have at least two ingoing edges.
We consider the category of trees $\mTc_S$ which has $ob\mTc_S = \mT_S'$ as object set
and whose morphisms are generated by the edge contractions
together with the isomorphisms of trees that respect the leaf labelling (see \cite[Sections B.0, C.0]{Fr}).

Recall that we use the notation $\overline{\op C}$ for the coaugmentation coideal of our dg cooperad ${\op C}$
which, under the assumption ${\op C}(0) = 0$, ${\op C}(1) = \K$,
is given by $\overline{\op C}(0) = \overline{\op C}(1) = 0$
and $\overline{\op C}(r) = {\op C}(r)$
for $r\geq 2$.
We have $\overline{\op C}(T) = {\op C}(T)$ when $T\in\mT_S'$.
Note that we have ${\op C}(T)\in\dgcAlg$,
where $\dgcAlg$ denotes the category of unital commutative dg algebras,
since we assume that ${\op C}$
is a dg Hopf cooperad.
The mapping $T\mapsto\overline{\op C}(T) = {\op C}(T)$ extends to a contravariant functor ${\op C}: \mTc_S^{op}\to\dgcAlg$.
The morphism ${\op C}(T/e)\to{\op C}(T)$
associated to an edge contraction $T\to T/e$
is defined by the performance of the composition coproduct $\Delta_e: {\op C}(star(v))\to{\op C}(star(v'))\otimes{\op C}(star(v''))$
where $v'$ (respectively, $v''$) is the source (respectively, the target) of the edge $e$ in $T$
and $v$ is the vertex of the tree $T/e$ which we obtain by merging these vertices
in our edge contraction process (see \cite[Paragraph C.1.6]{Fr}).

Besides this functor $T\mapsto\overline{\op C}(T)$, we consider the covariant functor $E: \mTc_S\to\dgcAlg$
such that
\[
E(T) := \otimes_{e\in ET} I_e
\]
for each tree $T\in\mT_S'$, where we take a tensor product over the set $ET$ of internal edges of $T$
of copies of the Hopf interval of the previous subsection $I_e = \K[t,dt]$.
(We say that an edge is internal when this edge connects two vertices.)
The isomorphisms of trees act by the obvious renaming operation of the edges on the indexing sets
of these tensor products.
The morphism $E(T)\to E(T/e)$ associated to an edge contraction $T\to T/e$
is defined by the performance of the evaluation morphism $ev_{t=0}: I\to\K$
on the factor $I_e = I$ associated to the edge $e\in ET$
in the tensor product $E(T)$.
Now, we define the dg Hopf $\Sigma$-collection underlying our (desired) dg Hopf cooperad $W{\op C}$
as the following end in the category of commutative dg algebras:
\[
W{\op C}(S) = \int_{T\in\mTc_S}\overline{\op C}(T)\otimes E(T) = \int_{T\in\mTc_S}{\op C}(T)\otimes E(T).
\]
Note that for $S=\{*\}$ a one-point set, the category $\mTc_S$ is empty and we obtain $W{\op C}(S) = \K$.
We may also note that the automorphism group of every object is trivial in this category $\mTc_S$.
We could accordingly replace $\mTc_S$ by an equivalent category with no isomorphism in this end construction,
by picking a representative of every isomorphism class of tree in $\mTc_S$,
but we do not really need this more rigid construction, and we therefore do not make such a choice.

The elements of $W{\op C}(S)$ can be identified with functions $\xi: T\mapsto\xi(T)$ that assign to each tree $T\in\mT'_S$
a ``decoration'' $\xi(T)$ in the commutative dg algebra ${\op C}(T)\otimes E(T)$.
Intuitively, we assume that each vertex is decorated by an element of ${\op C}$,
and each edge by a polynomial differential form on the unit interval.
Then -implicitly in the above end construction- we require that the following properties hold.
\begin{itemize}
\item (Equivariance Condition) The function $\xi$ is invariant under isomorphisms of trees in the obvious sense.
\item (Contraction Condition) Let $e\in ET$ be an internal edge in a tree $T$.
Let $v'$ (respectively, $v''$) be the source (respectively, the target) of this edge $e$ in $T$.
Let $v$ be the vertex of the tree $T/e$ which we obtain by merging these vertices
in our edge contraction process.
Then the values of $\xi$ on $T$ and $T/e$ are related by the formula
\[
\Delta_e\xi(T/e) = \ev_{t=0}^e\xi(T),
\]
where we use the notation $\Delta_e$ to denote the composition coproduct
applied to the decoration of the vertex $v$
in $\xi(T/e)\in{\op C}(T/e)$,
and $\ev_{t=0}^e$ is the evaluation at $t=0$,
applied to the decoration
of the edge $e$
in $\xi(T)\in{\op C}(T)$.
\end{itemize}
The differential on $W{\op C}$ is the one induced by the differentials on ${\op C}$ and $I$.
The commutative algebra structure is given by the pointwise multiplication of the functions $\xi: T\mapsto\xi(T)$
in the commutative dg algebras ${\op C}(T)\otimes E(T)$.

We now describe the cooperadic composition coproducts on our object $W{\op C}$.
We aim to define a map
\beq{eq:temp10}
\Delta_*: W{\op C}(S)\to W{\op C}(S'\sqcup\{*\})\otimes W{\op C}(S''),
\eeq
for each decomposition $S = S'\sqcup S''$ of a set $S$.
We use that the set of isomorphism classes of the category $\mT_S'$ is finite for each set $S$.
We deduce from this observation that the target of our composition coproduct \eqref{eq:temp10} is spanned by functions
defined on pairs of trees $(T',T'')\in\mT_{S'\sqcup\{*\}}'\times T''\in\mT_{S''}'$
and which satisfy our conditions with respect to both variables $T'$ and $T''$.
Let $\xi\in W{\op C}(S)$. We set
\[
(\Delta_*\xi)(T',T'') = \ev_{t=1}^{e_*}\xi(T'\circ_* T''),
\]
for each pair of trees $(T',T'')$, where $T = T'\circ_* T''$ is the tree obtained by grafting the root of $T''$
to the leaf of $T'$ indexed by the composition mark $*$,
and we take the evaluation at $t=1$ of the decoration of the internal edge $e_*\in ET$
produced by this grafting process in the tensor $\xi(T) = \xi(T'\circ_* T'')$.
We readily check that this composition coproduct is well-defined.
We immediately get that these composition coproducts fulfill the equivariance, counit and coassociativity relations of cooperads
too.
\end{const}

We have a canonical morphism of dg Hopf cooperads $\rho: {\op C}\to W{\op C}$ which carries any element $c\in\overline{\op C}(S)$
to the function such that $\rho(c)(T) = \overline{\Delta}_T(c)\otimes 1^{\otimes ET}$,
where, for each $T\in\mT_S'$, we consider the image of $c$ under the reduced treewise coproduct
of our cooperad $\overline{\Delta}_T: \overline{\op C}(S)\to\overline{\op C}(T)$
and we take the constant decoration $l_e\equiv 1$
of the edges of our tree $e\in ET$
in $I$.

\begin{prop}\label{prop:W-construction quasi-iso}
The above morphism $\rho: {\op C}\to W{\op C}$ is a quasi-isomorphism of dg Hopf cooperads.
\end{prop}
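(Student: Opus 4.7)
The plan is to establish the quasi-isomorphism arity by arity and then note that $\rho$ is a Hopf cooperad morphism by construction. For each finite set $S$, I would define a projection $\pi_S\colon W{\op C}(S)\to {\op C}(S)$ by evaluation on the corolla $T_S\in\mT'_S$ (the unique tree with one vertex and no internal edges, for which $E(T_S)=\K$). Since the reduced treewise coproduct $\overline{\Delta}_{T_S}$ restricted to the corolla is the identity, the composite $\pi_S\rho_S$ is the identity of ${\op C}(S)$. Thus $\rho_S$ is a split monomorphism of graded vector spaces and it suffices to prove that the kernel of $\pi_S$ is acyclic. (Note that $\pi_S$ is not a morphism of cooperads, since its interaction with the cooperadic composition coproducts requires evaluation at $t=1$ on a newly created edge, but this is irrelevant at the chain-level.)

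The key analytic input is that the Hopf interval $I=\K[t,dt]$ admits a standard contracting homotopy $h\colon I\to I$ of degree $-1$ satisfying $dh+hd=\mathit{id}-\iota_0\,\ev_{t=0}$, where $\iota_0\colon\K\hookrightarrow I$ is the constant inclusion; explicitly $h(p(t))=0$ and $h(p(t)dt)=\int_0^t p(s)\,ds$. The plan is to lift $h$ edge-by-edge to a contracting homotopy $H$ on $\ker(\pi_S)$ such that $dH+Hd=\mathit{id}$ there. To organize this, I would filter $W{\op C}(S)$ by a ``tree complexity'' filtration, for example by the number of internal edges of the trees on which $\xi$ is supported, together with a secondary grading by the number of $dt$ factors; the associated graded decouples into tensor products involving copies of $I$, each of which is contractible via $h$, so the $E^1$ page of the resulting spectral sequence reduces to ${\op C}(S)$ (the corolla contribution) and the spectral sequence collapses. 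The boundedness of $|S|$ ensures convergence since $\mTc_S$ has only finitely many isomorphism classes.

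The main obstacle is bookkeeping: the end defining $W{\op C}(S)$ imposes the contraction relations $\Delta_e \xi(T/e)=\ev^e_{t=0}\xi(T)$, so the homotopy $H$ must be defined compatibly with these. The clean way to do this is to induct on the maximal number of internal edges of trees contributing to $\xi$, choosing at each stage a preferred edge (say, the outgoing edge closest to the root among the internal edges with non-trivial $dt$-decoration) and applying $h$ only there; the compatibility of the contracting direction (toward $t=0$) of $h$ with the contraction direction of the end is what makes the edge-by-edge homotopy descend to a well-defined map on the end. Alternatively, one can describe $W{\op C}(S)$ concretely by picking a representative of each isomorphism class of $\mTc_S$ and constructing $H$ by a choice of total ordering on edges of each tree, verified to be compatible with contractions.

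Finally, the verification that $\rho$ is a morphism of dg Hopf cooperads is a routine check from the construction: $\rho$ is induced arity-wise by the reduced treewise coproducts $\overline{\Delta}_T$ of ${\op C}$ with constant decoration $1$ on each edge. Preservation of the commutative algebra product is immediate since $\overline{\Delta}_T$ is an algebra map and $\rho(c)$ involves only the multiplicative unit of $I$; preservation of the cooperadic composition coproducts follows from the compatibility $\ev_{t=1}(1)=1$ combined with the coassociativity of the treewise coproducts in the cooperad ${\op C}$.
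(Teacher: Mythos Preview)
Your instinct---that the contractibility of $I$ relative to $\K$ should reduce $W{\op C}(S)$ to the corolla contribution---is correct, and your spectral-sequence outline can be made to work. But the paper's proof is more direct and sidesteps the bookkeeping you flag as the main obstacle. Rather than constructing a contracting homotopy compatible with the end relations or running a spectral sequence, the paper uses the splitting $I=\K 1\oplus I'$ with $I'=\ker(\ev_{t=0})$ acyclic to write $E(T)=\bigoplus_{E'\subset ET}\bigotimes_{e\in E'}I'_e$, and then observes that the end relations themselves furnish the decomposition: the component of $\xi(T)$ in ${\op C}(T)\otimes\bigotimes_{e\in E'}I'_e$ is determined, via the contraction condition $\Delta_e\xi(T/e)=\ev^e_{t=0}\xi(T)$, by the corresponding component of $\xi(T')$ on the tree $T'$ obtained by contracting all edges outside $E'$. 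This yields an isomorphism of dg vector spaces $W{\op C}(S)\cong\prod_{[T]}{\op C}(T)\otimes\bigotimes_{e\in ET}I'_e$, and the acyclicity of $I'$ kills every factor except the corolla.

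The advantage of the paper's route is that no homotopy has to be threaded through the end: the splitting $I=\K\oplus I'$ is already compatible with the structure maps of the diagram (since $\ev_{t=0}$ is the identity on $\K 1$ and zero on $I'$), so the product decomposition falls out automatically. Your filtration $F^k=\{\xi:\xi(T)=0\text{ for }|ET|<k\}$ does work, and its associated graded is exactly $\prod_{|ET|=k}{\op C}(T)\otimes(I')^{\otimes k}$---but establishing this identification is essentially the paper's observation, and once you have it the filtration is visibly split, making the spectral-sequence layer superfluous. One small slip in your sketch: the associated graded involves $I'$, not $I$; the end condition forces the edge decorations into $\ker(\ev_{t=0})$ once the value on the contracted tree vanishes.
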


\begin{proof}
We use the splitting
\[
I = \K[t,dt]\cong\K 1\oplus I',
\]
where $I'\subset I$ is the acyclic dg vector space formed by the polynomial differential forms that vanish at $t=0$.
We have
\[
\otimes_{e\in ET} I_e\cong\oplus_{E'\subset ET}(\otimes_{e\in E'} I'_e),
\]
for each tree $T\in\mT_S'$, where the direct sum on the right-hand side runs over the subsets $E'\subset ET$
of the set of internal edges of our tree $ET$,
and we consider for anu such $E'$ a tensor product of copies
of the above acyclic dg vector space $I'_e = I'$.
We use this splitting to produce a decomposition $\xi(T) = \sum_{E'\subset ET}\tilde{\xi}(T)_{E'}$,
with $\tilde{\xi}(T)_{E'}\in{\op C}(T)\otimes(\otimes_{e\in E'} I'_e)$,
for any term $\xi(T)\in{\op C}(T)\otimes E(T)$
of a function $\xi: T\mapsto\xi(T)$
that represents an element of our end $W{\op C}$
in our construction.

We can associate any subset $E'\subset ET$ as above to a tree morphism $T\to T'$,
where $T'$ is formed by contracting the edges $\xi\in ET\setminus E'$
in the tree $T$. We then have $E' = ET'$.
We use that $\ev_{t=0}^e$ is the identity on $K1\subset I_e$ and vanishes on $I_e'$.
We deduce from this observation and the equalizer relations in our construction of $W{\op C}$
that the term $\tilde{\xi}(T)_{ET'}\in{\op C}(T)\otimes(\otimes_{e\in ET'} I'_e)$
in the decomposition of $\xi(T)$
is equal to the image of $\tilde{\xi}(T')_{ET'}\in{\op C}(T')\otimes(\otimes_{e\in ET'} I'_e)$
under the morphism ${\op C}(T')\otimes(\otimes_{e\in ET'} I'_e)\to{\op C}(T)\otimes(\otimes_{e\in ET'} I'_e)$
which we deduce from the functoriality of the tensor products ${\op C}(T)$
with respect to tree morphisms.
We easily check that we can retrieve a well-defined element of $W{\op C}(T)$
from a collection of elements $\tilde{\xi}(T)_{ET'}$, $T\in\mT_S'$,
by using this correspondence.
We accordingly have an isomorphism of dg vector spaces
\[
W{\op C}(S)\cong\prod_{[T]}{\op C}(T)\otimes(\otimes_{e\in ET} I_e'),
\]
where the product on the right-hand side runs over a set of representatives of isomorphism classes of trees $T\in\mT_S'$
(recall that the automorphism group of every object is trivial in our category of trees equipped with a fixed leaf labelling $\mTc_S$).

The acyclicity of the dg vector space $I_e'$ implies that the factors of this cartesian product are acyclic for all trees $T$
such that $ET\not=\emptyset$.
The relation $ET=\emptyset$ implies that $T$ is a corolla (a tree with a single vertex),
and we have in this case ${\op C}(T) = {\op C}(S)$.
The canonical map $\rho: {\op C}\to W{\op C}$ is given by the identity on this factor ${\op C}(T) = {\op C}(S)$.
The conclusion follows.
\end{proof}

\begin{lemm}\label{lemm:W-construction cofree structure}
The object $W{\op C}$ is cofree as a graded cooperad. We explicitly have $W{\op C}^{\circ}\cong{\op F}^c(\oW{\op C}[1])$
for some graded $\Sigma$-collection $\oW{\op C}[1]$, which explicitly consists of the elements $\xi\in W{\op C}(S)$
whose reduced composition coproducts vanish in $W{\op C}$.
\end{lemm}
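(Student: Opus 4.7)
The plan is to exploit the splitting $I = I_1 \oplus \K 1$ of the Hopf interval, where $I_1 = \ker(\ev_{t=1}: I\to\K)$ consists of polynomial forms vanishing at $t=1$. Since the cooperadic coproduct $\Delta_*$ on $W{\op C}$ is defined in Construction \ref{const:W-construction} by evaluation at $t=1$ on the grafting edge, the reduced coproducts vanish on an element $\xi \in W{\op C}(S)$ precisely when $\xi(T) \in {\op C}(T) \otimes \bigotimes_{e \in ET} I_{1,e}$ for every tree $T \in \mT'_S$ with $ET \neq \emptyset$, while on corollas no condition is imposed. I would therefore define $\oW{\op C}[1] \subset W{\op C}$ to be the subcollection cut out by this condition, and check directly that it agrees with the description in the lemma statement.

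To establish $W{\op C}^{\circ} \cong {\op F}^c(\oW{\op C}[1])$, I would construct a map $\Phi: {\op F}^c(\oW{\op C}[1])(S) \to W{\op C}(S)$ as follows. A treewise tensor indexed by $T_0 \in \mT'_S$, given by cogenerators $\zeta_v \in \oW{\op C}(\mathrm{star}(v))$ for each $v \in VT_0$, is sent to the function $\xi: T \mapsto \xi(T)$ whose value on $T$ is computed only when the tree $T$ admits a (unique, up to isomorphism) contraction morphism $T \twoheadrightarrow T_0$ obtained by contracting a subset $E'' \subset ET$; in that case $\xi(T)$ is the tensor assembled from the $\zeta_v$'s according to how each vertex of $T_0$ unfolds into a subtree of $T$ along $E''$, with the edges of $E''$ carrying the $I_1$-decorations inherited from the $\zeta_v$'s, the edges of $ET \setminus E''$ receiving the constant form $1 \in \K \subset I$, and the contraction relations providing the vertex decorations on ${\op C}(T)$. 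Bijectivity of $\Phi$ follows from the edgewise direct sum decomposition
\[
{\op C}(T) \otimes E(T) \cong \bigoplus_{E' \subset ET} {\op C}(T) \otimes \Bigl(\bigotimes_{e \in E'} I_{1,e}\Bigr) \otimes \Bigl(\bigotimes_{e \in ET \setminus E'} \K\Bigr),
\]
together with the observation (parallel to the one used in the proof of Proposition \ref{prop:W-construction quasi-iso}) that the end conditions reduce a global section $\xi$ to an independent choice, for each ``uncut'' tree structure $T/E'' = T_0$, of an element of the $E''$-interior summand, and such a choice is precisely a treewise tensor of $\oW{\op C}$-decorations.

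Finally, I would verify that $\Phi$ intertwines the composition coproducts of $W{\op C}$ with the standard cofree cooperadic coproducts on ${\op F}^c(\oW{\op C}[1])$: both cut a decorated tree along one edge and set the decoration of that edge to $1$ via $\ev_{t=1}$, which is the identity on the $\K$-component of $I_e$ and zero on $I_{1,e}$. The main obstacle will be the combinatorial bookkeeping surrounding the category $\mTc_S$: one must match the contraction relations of the end (phrased in terms of $\ev_{t=0}$ on cut edges and cooperadic coproducts on ${\op C}$) with the functoriality relations implicit in the direct-sum indexing over $\mT'_S$ that defines the cofree cooperad, and verify that the decomposition $\Phi$ is invariant under tree isomorphisms. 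Once this invariance is in hand, identifying the two structures on the nose is routine.
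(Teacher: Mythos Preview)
Your identification of $\oW{\op C}[1]$ as the elements whose edge decorations all lie in $I_1=\ker(\ev_{t=1})$ is correct and matches the paper. The gap is in the splitting you use and in the map $\Phi$ built from it. The decomposition $I=I_1\oplus\K 1$ is \emph{not} compatible with the functor $E$: the contraction maps are given by $\ev_{t=0}$, which neither kills $I_1$ (for instance $1-t\in I_1$ has $\ev_{t=0}(1-t)=1$) nor kills the complement $\K 1$. Hence the edgewise decomposition of ${\op C}(T)\otimes E(T)$ does not pass to the end, and the ``independence'' claim you invoke by analogy with Proposition~\ref{prop:W-construction quasi-iso} fails; there the splitting was $I=\K 1\oplus I'$ with $I'=\ker(\ev_{t=0})$, which is why that argument went through. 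Concretely, your map $\Phi$ does not land in $W{\op C}$: take $T_0$ a two-vertex tree with internal edge $e$, and cogenerators $\zeta_1,\zeta_2$ on the corollas. Your $\xi$ has $\xi(T_0)=\zeta_1\otimes 1_e\otimes\zeta_2$ and $\xi(T_0/e)=0$, but the contraction condition demands $\ev^e_{t=0}\xi(T_0)=\Delta_e\xi(T_0/e)=0$, while $\ev_{t=0}(1)=1$ gives a nonzero left-hand side.

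The paper's fix is exactly to choose a retraction onto $I_1$ that \emph{does} commute with $\ev_{t=0}$, namely $\pi''(p)=p(t,dt)-t\,p(1,0)$; equivalently, the complement to $I_1$ is taken to be $\K t$ rather than $\K 1$. The paper then goes in the opposite direction from you: it builds the cooperad morphism $\psi:W{\op C}\to{\op F}^c(\oW{\op C}[1])$ induced by the projection $\pi$, gets injectivity for free from injectivity on primitives, and for surjectivity constructs a preimage of a $T'$-shaped tensor by decorating the edges of $T'$ with $t$ (not $1$), so that $\ev_{t=0}(t)=0$ makes the contraction condition hold automatically across those edges. Your construction can be repaired by making this same substitution $1\rightsquigarrow t$ on the connecting edges, but then the bijectivity argument no longer reduces to a clean direct-sum bookkeeping and you are essentially forced into the paper's route.
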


\begin{proof}
By the construction of the composition coproducts of the cooperad, we have $\xi\in\oW{\op C}[1](S)$
if and only if for every tree $T\in\mT_S'$ and for every edge $e\in ET$,
we have $\ev_{t=1}^e\xi(T)=0$.
Hence, the decoration that $\xi$ assigns to a tree $T$ must be so that the edge decoration belong to the subspace $I''\subset I$
of differential forms that vanish at the endpoint $t=1$.
Let $E''(T) = \otimes_{e\in ET} I_e''$ be the tensor product of copies of this subspace $I''_e = I''$
inside $E(T) = \otimes_{e\in ET} I_e$.
We accordingly have the end formula
\[
\oW{\op C}(S) = \int_{T\in\mTc_S}{\op C}(T)\otimes E''(T)[1],
\]
for every indexing set such that $|S|\geq 2$, where we use that the graded vector spaces $E''(T)\subset E(T)$
are preserved by the action of the category $\mTc_S$
on the objects $E(T)$.

We consider the map $\pi'': I\to I''$ which carries any polynomial $p(t,dt)\in I = \K[t,dt]$
to the polynomial $\pi''(p) = \tilde{p}(t,dt)$
such that $\tilde{p}(t,dt) = p(t,dt) - t p(1,0)$.
We clearly have $\ev_{t=0}\tilde{p}(t,dt) = \ev_{t=0}p(t,dt)$.
We deduce from this relation that the morphism $\pi''_*: E(T)\to E''(T)$
induced by this projection map intertwines the action
of the edge contractions $T\to T/e$
on our objects.
We accordingly have a natural retraction
\[
\pi: W{\op C}\to\oW{\op C}[1]
\]
of the embedding $\oW{\op C}[1]\subset W{\op C}$ induced by these morphisms $\pi''_*: E(T)\to E''(T)$ on our end.

We form the morphism of graded cooperads $\psi: W{\op C}\to{\op F}^c(\oW{\op C}[1])$
induced by this morphism
of graded $\Sigma$-collections $\pi: W{\op C}\to\oW{\op C}[1]$.
The collection $\oW{\op C}[1]$ inside $W{\op C}$ is defined as a cooperadic analogue of the vector space of primitive elements in a coalgebra.
The injectivity of our morphism $\psi: W{\op C}\to{\op F}^c(\oW{\op C}[1])$ on these primitives
implies that $\psi$ is injective itself for general reasons.
We are therefore left to proving that $\psi$ is surjective.

We fix a tree $T'\in\mT_S'$ and an element $\xi$ in the summand ${\op F}^c_{T'}(\oW{\op C}[1])$ shaped on this tree $T'$
in the cofree cooperad ${\op F}^c(\oW{\op C}[1])$.
We can regard $\xi$ as a collection of tensors $\xi(T) = \sum_k(\otimes_{u\in VT'}\xi_u^k(T_u))$,
indexed by tree morphisms $f: T\to T'$,
where $T_u$ is the subtree of $T$ defined by the preimage of the vertex $u\in VT'$
under our map $f: T\to T'$ (see~\cite[Proposition B.0.3 and Paragraph C.0.8]{Fr}),
and we assume $\xi_u^k(T_u)\in{\op C}(T_u)\otimes E''(T_u)$.
We also require that $\xi(T)$ fulfills the defining relations of our end factor-wise
in this tensor product over $VT'$.

We use that, for any tree $T\in\mT_S'$, we have at most one morphism $f: T\to T'$ as above,
with the tree $T'$ as target (see~\cite[Theorem B.0.6]{Fr}).
In this case, for an edge $e\in ET$, we have either $e\in T_u$, for some vertex $u\in VT$,
or $e$ represents the preimage of an edge $e'\in ET'$
under our map $f: T\to T'$.
We consider the function $\tilde{\xi}: T\mapsto\tilde{\xi}(T)$ such that $\tilde{\xi}(T) = 0$ if a morphism $f: T\to T'$ does not exist,
and $\tilde{\xi}(T) = \xi(T)\otimes(\otimes_{e\in f^{-1}(ET')} t)$ otherwise,
where we complete the edge decoration of the tensors $\xi_u^k(T_u)\in{\op C}(T_u)\otimes E''(T_u)$
by taking the polynomials $p_e(t,dt) = t$, for these edges $e\in f^{-1}(ET')$
that correspond to the preimage of an edge $e'\in ET'$
under our map $f: T\to T'$.
We easily check that this function $\tilde{\xi}: T\mapsto\tilde{\xi}(T)$ gives a well-defined element of $W{\op C}$
and we moreover have the identity $\psi(\tilde{\xi}) = \xi$
in the cofree cooperad ${\op F}^c(\oW{\op C}[1])$.

We conclude that our morphism $\psi: W{\op C}\to{\op F}^c(\oW{\op C}[1])$ is also surjective
in addition to being injective, and hence, defines an isomorphism
as requested.
\end{proof}

We now claim that the collection $\oW{\op C}_{\bo}$, which we obtain by adding a unit term $\oW{\op C}_{\bo}(1) = \K 1$ to $\oW{\op C}$,
forms an augmented operad.
We focus on the definition of the reduced operadic composition operations
\[
\circ_* : \oW{\op C}(S\sqcup\{*\})\otimes\oW{\op C}(S')\to\oW{\op C}(S\sqcup S'),
\]
since the composition products with the unit term $\oW{\op C}_{\bo}(1) = \K$
are forced by the unit axioms.
We use that a tree $T\in\mT_{S\sqcup S'}$ admits at most one decomposition $T\cong T'\circ_* T''$,
where $T'\in\mT_{S'\sqcup\{*\}}$, $T''\in\mT_{S''}$,
and we again consider the operation of grafting the root of $T''$
to the leaf of $T'$ indexed by the composition mark $*$.
We say that the tree $T$ is decomposable if such a decomposition exists.
We fix $\xi'\in\oW{\op C}(S'\sqcup\{*\})$ and $\xi''\in\oW{\op C}(S'')$.
We define $\xi'\circ_*\xi''\in\oW{\op C}(S\sqcup S')$
by the formula
\[
(\xi'\circ_*\xi'')(T) = \begin{cases} (-1)^{|\xi'|+1}\xi'(T')\otimes(dt)_{e_*}\otimes\xi''(T''), & \text{if $T$ is decomposable}, \\
0, & \text{otherwise}, \end{cases}
\]
for each tree $T\in \mT_{S\sqcup S'}$, where we assign the polynomial $p_{e_*}(t,dt) = dt$
to the edge $e_*$ that we produce by grafting the root of $T''$
to the leaf of $T'$ indexed by $*$
in the tree $T$.
We just use that $\xi'(T')$ (respectively, $\xi''(T'')$) provides a decoration of the vertices and edges of $T'$ (respectively, $T''$).
We put the edge labelling and these decorations together to get $(\xi'\circ_*\xi'')(T)$.
We readily check that this construction returns a well-defined element of~$\oW{\op C}(S\sqcup S')$.
We also readily check that these composition products satisfy the axioms
of operads.

We then have the following strengthening of Lemma \ref{lemm:W-construction cofree structure}.

\begin{lemm}\label{lemm:W-construction bar structure}
We have an isomorphism of dg cooperads
\[
W{\op C}\cong B(\oW{\op C}_{\bo}),
\]
where we consider the bar construction of the operad $\oW{\op C}_{\bo}$ on the right hand side.
\end{lemm}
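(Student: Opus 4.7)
The plan is to build on Lemma \ref{lemm:W-construction cofree structure} in order to extend the isomorphism of graded cooperads $\psi: W{\op C}^{\circ}\cong{\op F}^c(\oW{\op C}[1])$ to an isomorphism of dg cooperads $\psi: W{\op C}\cong B(\oW{\op C}_{\bo})$. Since $\overline{\oW{\op C}_{\bo}} = \oW{\op C}$, the right-hand side agrees with ${\op F}^c(\oW{\op C}[1])$ as a graded cooperad, so only the comparison of the differentials remains. Because both differentials are coderivations of this cofree graded cooperad structure, it suffices to check that their composites with the canonical projection $\pi: W{\op C}\to\oW{\op C}[1]$ coincide.

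First I would record that the bar differential $d_B$ on $B(\oW{\op C}_{\bo})$ projects to $\oW{\op C}[1]$ as (i) the internal differential of $\oW{\op C}$ on the weight-one summand, (ii) the composition product $\circ_*: \oW{\op C}(S'\sqcup\{*\})\otimes\oW{\op C}(S'')\to\oW{\op C}(S'\sqcup S'')$ on the weight-two summand, and (iii) zero on summands of weight $\geq 3$. This is the general shape of a bar differential on the cofree cooperad ${\op F}^c(\overline{\op P}{}^{\circ}[1])$.

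Next I would use the explicit description of $\psi^{-1}$ given in the proof of Lemma \ref{lemm:W-construction cofree structure}: a decorated tree $(T',(\xi'_u)_{u\in VT'})$ with $\xi'_u\in\oW{\op C}(star(u))$ corresponds to the element $\tilde\xi\in W{\op C}$ supported on those $T\in\mT'_S$ admitting a (unique) morphism $f: T\to T'$, where $\tilde\xi(T) = (\otimes_u\xi'_u(T_u))\otimes(\otimes_{e\in f^{-1}(ET')}t_e)$; the remaining edges of $T$ live inside a subtree $T_u$ and are already decorated by elements of $I''$. The crucial computational fact is then $\pi''(t) = t - t\cdot\ev_{t=1}(t) = 0$, while $\pi''(dt) = dt$ since $\ev_{t=1}(dt) = 0$. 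Applying $d_{W{\op C}}$ to $\tilde\xi$ produces, Leibniz-wise, contributions of two types: (a) a vertex-type term where $d$ hits some $\xi'_u(T_u)$, which leaves the inflated edges decorated by $t$; (b) an edge-type term where $d$ hits one of the inflated decorations, replacing it by $d(t) = dt$ while the remaining inflated edges still carry $t$. Applying $\pi$ factor-wise, any surviving $t$-decorated inflated edge is killed by $\pi''$.

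From this the three projection components compare as follows. In weight one, $T'$ is a corolla with no inflated edges, so $\tilde\xi = \xi'$ and $\pi(d_{W{\op C}}\tilde\xi) = \pi(d\xi') = d\xi'$, since $d$ preserves $\oW{\op C}$ (because $d(I'')\subset I''$); this reproduces the internal differential. In weight two, $T'$ has a unique internal edge $e_*$: the vertex-type contributions vanish because the edge $e_*$ retains decoration $t$, while the unique edge-type contribution yields $\pm\xi'_1(T_1)\otimes\xi'_2(T_2)\otimes(dt)_{e_*}$, which is precisely the composition product $\xi'_1\circ_*\xi'_2$ defined before the statement of the lemma. In weight $\geq 3$, any Leibniz term still has at least one inflated edge decorated by $t$, hence is annihilated by $\pi$. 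The signs arising from $d(t) = dt$ match those in the definition $(\xi'\circ_*\xi'')(T) = (-1)^{|\xi'|+1}\cdots$ by a direct bookkeeping. The main obstacle is precisely this sign and combinatorial verification in weight two: once it is in place the matching of the projections to cogenerators is immediate, and the general theory of coderivations on cofree cooperads then forces the two differentials on ${\op F}^c(\oW{\op C}[1])$ to coincide, yielding the required isomorphism $W{\op C}\cong B(\oW{\op C}_{\bo})$ of dg cooperads.
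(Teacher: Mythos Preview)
Your proposal is correct and follows essentially the same approach as the paper. Both arguments reduce the comparison of differentials to the projection onto the cogenerating collection $\oW{\op C}[1]$ and both hinge on the behaviour of $\pi''$ on the edge decorations; the only difference is organizational: you compute $\pi\circ d_{W{\op C}}\circ\psi^{-1}$ on weight-homogeneous elements of ${\op F}^c(\oW{\op C}[1])$ using the explicit inverse (so the relevant edge decorations are $t$ and the key facts are $\pi''(t)=0$, $\pi''(dt)=dt$), whereas the paper works with an arbitrary $\xi\in W{\op C}$ and expresses the same content as the commutator identity $d\pi''(p)-\pi''(dp)=-p(1,0)\,dt$, which produces the composition product term via the evaluation $\ev_{t=1}$ in the coproduct.
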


\begin{proof}
We use the isomorphism $\psi: W{\op C}^{\circ}\stackrel{\cong}{\to}{\op F}^c(\oW{\op C}[1])$ of the proof of Lemma \ref{lemm:W-construction cofree structure}.
We mainly check that this isomorphism carries the differential of the dg cooperad
to the differential of the bar construction $B(\oW{\op C}_{\bo})$.
We are left to proving this relation on the cogenerating collection $\oW{\op C}[1]$
of the cofree cooperad $B(\oW{\op C}_{\bo})^{\circ} = {\op F}^c(\oW{\op C}[1])$.
We then consider the morphism $\pi: W{\op C}^{\circ}\to\oW{\op C}[1]$
which we define by applying the map $\pi''(p(t,dt)) = p(t,dt) - t p(1,0)$
to the edge decoration of any element $\xi\in W{\op C}(S)$
in the proof of Lemma \ref{lemm:W-construction cofree structure}.
We get that, in terms of this map $\pi$, our identity of differentials
is equivalent to the relation
\beq{eq:temp11}
d\pi(\xi) = \pi(d\xi) + \pm\sum_{(\xi)}\pi(\xi')\circ_*\pi(\xi''),
\eeq
for any $\xi\in W{\op C}(S)$, where we use Sweedler's notation to depict the composition coproducts
of $\xi$ in $W{\op C}$.
We have
\beq{eq:temp12}
d\pi''(p) = d(p(t,dt) - t p(1,0)) = dp(t,dt) - p(1,0) dt\quad\text{and}\quad\pi''(dp(t,dt)) = dp(t,dt),
\eeq
for any $p = p(t,dt)\in I$.
We easily check that the term with an edge decoration $p(1,0) dt$ produced by such a differential $d\pi''(p)$
of an edge labeling in the expansion
of $d\pi(\xi)$
corresponds to a term of a composition product $\sum_{(\xi)}\pi(\xi')\circ_*\pi(\xi'')$, since the composition coproducts $\Delta_*(\xi)$
involves the substitution operation $\ev_{t=1}^e(p(t,dt)) = p(1,0)$
of such edge labelling, and the composition product $\circ_*$
insert a factor $dt$ at the same position.
We readily get that the remaining terms of the expansion of $d\pi(\xi)$
corresponds to terms of $\pi(d\xi)$ too. The conclusion follows.
\end{proof}

\begin{rem}\label{rem:W-construction bar structure}
We can deduce from our statements that we have a canonical quasi-isomorphism of dg operads
\[
\Omega({\op C})\stackrel{\sim}{\to}\Omega(W{\op C})\cong\Omega(B(\oW{\op C}_{\bo}))\stackrel{\sim}{\to}\oW{\op C}_{\bo},
\]
where we take the image of the quasi-isomorphism ${\op C}\stackrel{\sim}{\to}W{\op C}$ of Proposition \ref{prop:W-construction quasi-iso}
under the cobar functor to get the first arrow of this composition,
and the counit of the bar-cobar adjunction
to get the last arrow.
\end{rem}

\subsection{A simplicial frame for dg Hopf cooperads}\label{subsec:fibrant resolutions:simplicial frames}
We now explain the definition of a simplicial frame $W{\op C}^{\Delta^{\bullet}}$ of the object $W{\op C}$
in the category of dg Hopf cooperads.

\begin{const}\label{const:W-construction framing}
To carry out this construction, we consider an extension of the functor $E: T\mapsto E(T)$
of the previous section.
To be explicit, for each $n\in\N$, and for each finite set $S$, we consider the functor
\[
E^{\Delta^n}: \mTc_S\to\dgcAlg
\]
such that
\[
E^{\Delta^n}(T) = (\otimes_{e\in ET} I_e)\otimes(\otimes_{v\in VT}\APL(\Delta^n)_v),
\]
for each tree $T\in\mT_S'$, where $\APL(\Delta^n)_v = \APL(\Delta^n)$ is a copy of the commutative dg algebra of polynomial differential forms
on the $n$-simplex $\Delta^n$ which we assign to each vertex $v\in VT$.
We consider the obvious action of isomorphisms of trees on this collection.
We assume that the morphism $E^{\Delta^n}(T)\to E^{\Delta^n}(T/e)$ associated to an edge contraction $T\to T/e$
is induced by the evaluation map $\ev_{t=0}: I_e\to\K$
on the factor $I_e = I$ associated to our edge,
and by the multiplication operation $m_{v' v''}: \APL(\Delta^n)_{v'}\otimes\APL(\Delta^n)_{v''}\to\APL(\Delta^n)_{v}$
on the factors $\APL(\Delta^n)_{v'}$ and $\APL(\Delta^n)_{v''}$
associated to the target $v'$ and to the source $v''$ of this edge $e$ in $T$,
where $v$ is the vertex of $T/e$ produced by the merging of $v'$
and $v''$
in the result of the edge contraction operation.

We immediately see that these functors $E^{\Delta^n}: T\mapsto E^{\Delta^n}(T)$, $n\in\N$,
inherit the simplicial structure operators from the commutative dg algebras
of polynomial differential forms $\APL(\Delta^n)$, $n\in\N$.
We then define the simplicial dg Hopf collection $W{\op C}^{\Delta^{\bullet}}$
by the end
\[
W{\op C}^{\Delta^{\bullet}}(S) = \int_{T\in\mTc_S}{\op C}(T)\otimes E^{\Delta^{\bullet}}(T),
\]
for each finite set $S$.

The elements of this end can still be identified with functions $\xi: T\mapsto\xi(T)$
that assign to each tree $T\in\mT'_S$
a ``decoration'' $\xi(T)$ in the commutative dg algebra ${\op C}(T)\otimes E^{\Delta^{\bullet}}(T)$.
Intuitively, we assume that each vertex is decorated by an element of ${\op C}\otimes\APL(\Delta^n)$,
and each edge by a polynomial differential form on the unit interval.
Then we also require that the following properties hold.
\begin{itemize}
\item (Equivariance Condition) The function $\xi$ is invariant under isomorphisms of trees in the obvious sense.
\item (Contraction Condition) Let $e\in ET$ be an internal edge in a tree $T$.
Let $v'$ (respectively, $v''$) be the source (respectively, the target) of this edge $e$ in $T$.
Let $v$ be the vertex of the tree $T/e$ which we obtain by merging these vertices
in our edge contraction process.
Then the values of $\xi$ on $T$ and $T/e$ are related by the formula
\[
\Delta_e\xi(T/e) = (\ev_{t=0}^e\otimes m_{v' v''})\xi(T),
\]
where we use the notation $\Delta_e$ to denote the composition coproduct
applied to the decoration of the vertex $v$
in $\xi(T/e)\in{\op C}(T/e)$,
and $\ev_{t=0}^e$ is the evaluation at $t=0$,
applied to the decoration
of the edge $e$
in $\xi(T)\in{\op C}(T)$, while $m_{v' v''}: \APL(\Delta^n)_{v'}\otimes\APL(\Delta^n)_{v''}\to\APL(\Delta^n)_{v}$
is the multiplication on the factors $\APL(\Delta^n)_{v'}$ and $\APL(\Delta^n)_{v''}$
associated to the target $v'$
and to the source $v''$ of this edge $e$ in $T$,
as in our previous definition.
\end{itemize}
The differential on $W{\op C}^{\Delta^{\bullet}}$ is induced by the differentials of ${\op C}$, $I$, and $\APL(\Delta^{\bullet})$.
The commutative algebra structure is again given by the pointwise multiplication of the functions $\xi: T\mapsto\xi(T)$
in the commutative dg algebras ${\op C}(T)\otimes E^{\Delta^n}(T)$.

The cooperadic composition coproducts on $W{\op C}^{\Delta^{\bullet}}$
is the same as in the case of the dg Hopf cooperad $W{\op C}$
in the previous subsection. Note simply that the definition of these cooperadic composition coproducts
only involves the edge decorations.
\end{const}

We have the following analogues of the results of the previous subsection for $W{\op C}^{\Delta^{\bullet}}$.

\begin{lemm}\label{lemm:W-construction framing bar structure}
Each object $W{\op C}^{\Delta^n}$ is cofree as a graded cooperad.
We explicitly have $(W{\op C}^{\Delta^n})^{\circ}\cong{\op F}^c(\oW{\op C}^{\Delta^n}[1])$
for some graded $\Sigma$-collection $\oW{\op C}^{\Delta^n}[1]$,
which explicitly consists of the elements $\xi\in W{\op C}(S)$
whose reduced composition coproducts
vanish in $W{\op C}^{\Delta^n}$.
Furthermore, the dg collection $\oW{\op C}^{\Delta^n}_{\bo}$, which we obtain by adding a unit to $\oW{\op C}^{\Delta^n}$
in arity one,
carries an operad structure, and we have an identity in the category of dg cooperads
\[
W{\op C}^{\Delta^n} = B(\oW{\op C}^{\Delta^n}_{\bo}),
\]
where we consider the bar construction of $\oW{\op C}^{\Delta^n}_{\bo}$ on the right hand side.
\end{lemm}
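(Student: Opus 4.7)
The plan is to mimic the proofs of Lemma \ref{lemm:W-construction cofree structure} and Lemma \ref{lemm:W-construction bar structure} essentially verbatim, with the extra factors $\APL(\Delta^n)_v$ at vertices treated as passive decorations that do not interfere with the cooperadic or operadic combinatorics. The key observation is that the composition coproducts of $W{\op C}^{\Delta^{\bullet}}$ are defined exactly as for $W{\op C}$, involving only the evaluation $\ev_{t=1}^e$ on edge decorations, while the $\APL(\Delta^n)$ factors on vertices are carried along unchanged. Hence the analysis of primitives and of the bar structure is parallel, with $\APL(\Delta^n)$ absorbed into the vertex labels.

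First I would define $\oW{\op C}^{\Delta^n}[1](S) \subset W{\op C}^{\Delta^n}(S)$ as the subspace of those $\xi$ for which $\ev_{t=1}^e\xi(T)=0$ for every tree $T$ and edge $e\in ET$. Using the splitting $I = \K 1 \oplus I''$ where $I'' = \ker(\ev_{t=1})$, together with the retraction $\pi''(p(t,dt)) = p(t,dt) - tp(1,0)$, which commutes with $\ev_{t=0}$ and hence with the edge contraction maps, I would define a retraction $\pi: W{\op C}^{\Delta^n} \to \oW{\op C}^{\Delta^n}[1]$ by applying $\pi''$ to each edge decoration while leaving the vertex decorations in ${\op C}(T)\otimes(\otimes_{v}\APL(\Delta^n)_v)$ untouched. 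This induces a morphism of graded cooperads $\psi: W{\op C}^{\Delta^n} \to {\op F}^c(\oW{\op C}^{\Delta^n}[1])$. The same reasoning as in Lemma \ref{lemm:W-construction cofree structure}, namely that $\psi$ is injective on primitives and any element of ${\op F}^c_{T'}(\oW{\op C}^{\Delta^n}[1])$ can be lifted by decorating the newly contracted edges with the polynomial $t$, shows that $\psi$ is an isomorphism. The only change from the non-parametric case is that the tensor decomposition runs over ${\op C}(T)\otimes E^{\Delta^n}(T)\cong \prod_{[T]}{\op C}(T)\otimes(\otimes_e I_e')\otimes(\otimes_v\APL(\Delta^n)_v)$, but the vertex factors play no role in the cofree cooperad argument.

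Next I would equip $\oW{\op C}^{\Delta^n}_{\bo}$ with the operad structure given by the same grafting formula as for $\oW{\op C}_{\bo}$:
\[
(\xi'\circ_*\xi'')(T) = \begin{cases} (-1)^{|\xi'|+1}\xi'(T')\otimes (dt)_{e_*}\otimes \xi''(T''), & T = T'\circ_* T'' \text{ decomposable}, \\ 0, & \text{otherwise}, \end{cases}
\]
where now $\xi'(T')$ and $\xi''(T'')$ carry vertex labels in $\APL(\Delta^n)$ that concatenate naturally into the vertex labels of $T$. Associativity and equivariance follow exactly as in the unparametrized case.

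Finally, to identify $W{\op C}^{\Delta^n}$ with the bar construction $B(\oW{\op C}^{\Delta^n}_{\bo})$, I would check that $\psi$ intertwines the differentials. It suffices to verify on primitives that
\[
d\pi(\xi) - \pi(d\xi) = \pm\sum_{(\xi)} \pi(\xi')\circ_* \pi(\xi''),
\]
and the computation reduces, via the identity $d\pi''(p) = dp - p(1,0)dt$ used in the proof of Lemma \ref{lemm:W-construction bar structure}, to matching the term with edge decoration $p(1,0)dt$ against a composition product. The new piece of the differential, coming from the vertex factors $\APL(\Delta^n)_v$, only contributes to the internal differential on $\oW{\op C}^{\Delta^n}$ (i.e.\ the linear part of the bar differential), since the multiplications $m_{v'v''}$ enter only through the end relation, not through the differential on a primitive element. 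The main (mild) obstacle is bookkeeping: making sure the vertex-multiplication maps $m_{v'v''}$ appearing in the contraction condition are compatible with the way composition products in $\oW{\op C}^{\Delta^n}_{\bo}$ juxtapose vertex labels; this works precisely because composition products insert $dt$ on the new edge, so the corresponding vertices remain separated in the tree $T = T'\circ_* T''$ and no $\APL(\Delta^n)$ multiplication is imposed on them.
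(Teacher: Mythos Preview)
Your proposal is correct and follows exactly the approach the paper takes: the paper's proof consists of the single sentence ``The proof is the same as for $W{\op C}$,'' and you have carried out precisely that repetition of the arguments of Lemma~\ref{lemm:W-construction cofree structure} and Lemma~\ref{lemm:W-construction bar structure}, correctly observing that the $\APL(\Delta^n)$ vertex decorations are passive with respect to the coproduct and operad structures since those only involve the edge factors.
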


\begin{proof}
The proof is the same as for $W{\op C}$.
\end{proof}

We may also observe that the simplicial structure operators of the simplicial dg Hopf cooperad $W{\op C}^{\Delta^{\bullet}}$
preserve the cogenerating collection $\oW{\op C}^{\Delta^{\bullet}}$
defined in this lemma, and are identified with morphisms
of cofree cooperads
when we forget about differentials.
We use this observation in the proof of the following lemma.

\begin{lemm}\label{lemm:W-construction framing fibrations}
The restriction morphism $i^*: W{\op C}^{\Delta^n}\to W{\op C}^{\partial\Delta^n}$, where we use the notation $W{\op C}^{\partial\Delta^n}$
for the matching object of the simplicial dg Hopf cooperad $W{\op C}^{\Delta^{\bullet}}$,
is a fibration for all $n>0$.
\end{lemm}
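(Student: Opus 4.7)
The plan is to exhibit $i^*$ as the image, under the cofree cooperad functor, of a surjective morphism of cogenerating $\Sigma$-collections, and then to invoke the analogue of Proposition~\ref{prop:fibrations of Hopf Lambda-cooperads} for the model category of dg Hopf cooperads (which, by the same proof, detects fibrations from the underlying cooperad).

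First, I would observe that the construction of Construction~\ref{const:W-construction framing} is functorial in $[n]\in\Delta$, so that $W{\op C}^{\Delta^\bullet}$ is a simplicial object in dg Hopf cooperads. By Lemma~\ref{lemm:W-construction framing bar structure}, each $W{\op C}^{\Delta^n}$ is cofree as a graded cooperad, cogenerated by the subcollection $\oW{\op C}^{\Delta^n}[1]$ consisting of the elements whose reduced composition coproducts vanish. This characterization is simplicially natural, so the subcollections $\oW{\op C}^{\Delta^\bullet}[1]$ assemble into a simplicial graded $\Sigma$-collection. Taking matching objects in the category of dg Hopf cooperads is a limit, and since the underlying cooperad functor and the cofree cooperad functor both preserve limits (the latter being a right adjoint), I get canonical isomorphisms
\[
W{\op C}^{\partial\Delta^n}\cong {\op F}^c\bigl(\oW{\op C}^{\partial\Delta^n}[1]\bigr),
\]
where $\oW{\op C}^{\partial\Delta^n}$ is the matching object of the simplicial $\Sigma$-collection $\oW{\op C}^{\Delta^\bullet}$ formed arity-wise. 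Moreover, the restriction morphism $i^*$ is identified, as a map of graded cooperads, with the image under ${\op F}^c(-)$ of the natural map $j: \oW{\op C}^{\Delta^n}\to\oW{\op C}^{\partial\Delta^n}$.

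Next, I would verify that $j$ is surjective arity-wise and in every degree. For each arity $S$, using the explicit end formula of Construction~\ref{const:W-construction framing} and the fact that the end is computed in $\dgcAlg$, the map $j$ assigns to a function $\xi$ decorating trees with forms in $\APL(\Delta^n)$ at the vertices (and forms on $I$ at the edges) the restriction of those vertex-decorations to $\partial\Delta^n$. Since the restriction map $\APL(\Delta^n)\to\APL(\partial\Delta^n)$ is surjective (a standard fact of Sullivan theory---degeneracy sections provide an explicit splitting at the level of polynomial forms), I can lift any compatible boundary decoration vertex-by-vertex. The subtlety is that the lifting must be compatible with the equivariance and contraction relations that cut out the end; but the contraction relation involves the multiplication $m_{v'v''}$ of two vertex-decorations and the edge evaluation $\ev_{t=0}^e$, and both operations commute with restriction to $\partial\Delta^n$, so a vertex-wise lift of any boundary-compatible system automatically satisfies the end relations. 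Using a section $s:\APL(\partial\Delta^n)\to \APL(\Delta^n)$ of algebras over the identity of $\K$ is not needed; one only needs a linear section, which exists since we work over a field. This proves the surjectivity of $j$ in each arity and in every (non-negative) degree.

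Finally, I would apply (the Hopf cooperad version of) Proposition~\ref{prop:fibrations of Hopf Lambda-cooperads} to conclude that $i^*= {\op F}^c(j)$ is a fibration in the model category of dg Hopf cooperads. The main obstacle I anticipate is the careful bookkeeping in step two, namely checking that the matching object in dg Hopf cooperads really is computed by the cofree cooperad on the matching object of the cogenerating collection---this relies on the cofree cooperad functor being a right adjoint (for which the relevant adjunction is the free-forgetful adjunction between Hopf $\Sigma$-sequences and Hopf cooperads recalled in Remark~\ref{rem:Quillen adjunctions}) and on the matching object of a simplicial object being expressible as a limit over a Reedy subcategory, a limit that the underlying cooperad functor commutes with since it is a right adjoint to the symmetric algebra functor in the Hopf setting.
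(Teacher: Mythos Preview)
Your proposal follows the same approach as the paper: identify the cofree structure of source and target, recognize $i^*$ as ${\op F}^c$ applied to the cogenerator map induced by $\APL(\Delta^n)\to\APL(\partial\Delta^n)$, argue surjectivity, and apply Proposition~\ref{prop:fibrations of Hopf Lambda-cooperads}. Where you identify the matching object via the right-adjoint argument, the paper does so directly by replacing $\APL(\Delta^n)$ with $\APL(\partial\Delta^n)$ in Construction~\ref{const:W-construction framing} (using that $\APL(\partial\Delta^n)$ is itself the matching object of $\APL(\Delta^\bullet)$); this is a cosmetic difference.

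One step of your argument is flawed, however. The claim that ``a vertex-wise lift of any boundary-compatible system automatically satisfies the end relations'' is false: the fact that the contraction operations commute with restriction to $\partial\Delta^n$ only tells you that both sides of the relation $\Delta_e\xi(T/e) = (\ev_{t=0}^e\otimes m_{v'v''})\xi(T)$ agree \emph{after} restricting, not that they agree over $\Delta^n$. A linear section of $\APL(\Delta^n)\to\APL(\partial\Delta^n)$ is never multiplicative for $n\geq 1$, so applying it vertex-wise will not preserve the product $m_{v'v''}$ appearing in the contraction relation. The paper is equally terse at this point---it simply asserts that surjectivity on cogenerators follows from surjectivity of $\APL(\Delta^n)\to\APL(\partial\Delta^n)$---so this is a detail glossed over in both arguments rather than a defect peculiar to yours; a rigorous treatment would proceed by induction on the number of internal edges, correcting the lift at each stage using elements in the kernel of restriction paired with an edge form such as $1-t\in I''$.
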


\begin{proof}
We refer to~\cite[Paragraph 3.1.15 and Paragraph 3.2.8]{Fr} for detailed recollections on the definition of the matching objects
associated to a simplicial object in a category and for a detailed analysis of the notion of a matching object
in the context of simplicial frames.
We can identify this matching object $W{\op C}^{\partial\Delta^n}$, which is given by an appropriate limit in the category of dg Hopf cooperads,
with the object that we obtain by replacing the dg algebras $\APL(\Delta^n)$
in the construction of $W{\op C}^{\Delta^n}$
by the dg algebras of polynomal forms $\APL(\partial\Delta^n)$
associated to the boundary $\partial\Delta^n$
of the simplex $\Delta^n$.
We have in particular $(W{\op C}^{\partial\Delta^n})^{\circ}\cong{\op F}^c(\oW{\op C}^{\partial\Delta^n}[1])$
for an analogous definition of a $\Sigma$-collection $\oW{\op C}^{\partial\Delta^n}$,
and the morphism $i^*: W{\op C}^{\Delta^n}\to W{\op C}^{\partial\Delta^n}$
is induced by the restriction map $i^*: \APL(\Delta^n)\to\APL(\partial\Delta^n)$
at the level of cogenerators.
We deduce from this observation that this morphism $i^*: W{\op C}^{\Delta^n}\to W{\op C}^{\partial\Delta^n}$
is defined by a morphism of graded cofree cooperads
which is surjective at the cogenerator level,
because the restriction map $i^*: \APL(\Delta^n)\to\APL(\partial\Delta^n)$
has this property. We then use the result of Proposition \ref{prop:fibrations of Hopf Lambda-cooperads}
to get the conclusion of this lemma.
\end{proof}

We have the following additional observation.

\begin{lemm}\label{lemm:W-construction framing quasi-isomorphism}
The morphism $W{\op C} = W{\op C}^{\Delta^0}\to W{\op C}^{\Delta^n}$ induced by the simplicial map $\epsilon: \underline{n}\to\underline{0}$
is a quasi-isomorphism.
\end{lemm}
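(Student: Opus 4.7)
The proof plan adapts the argument from Proposition~\ref{prop:W-construction quasi-iso}. The new input is that, because $\Delta^n$ is contractible, $H^\ast(\APL(\Delta^n))=\K$, and hence the iterated multiplication $m^{(k)}\colon \APL(\Delta^n)^{\otimes k}\to\APL(\Delta^n)$ is a quasi-isomorphism for every $k\geq 1$, generalising the unit quasi-isomorphism $\K\to\APL(\Delta^n)$.

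I would begin by applying the splitting $I=\K\cdot 1\oplus I'$, with $I'$ the acyclic subcomplex of forms vanishing at $t=0$, at each edge decoration in the defining end. For every $\xi\in W{\op C}^{\Delta^n}(S)$, this yields a decomposition $\xi(T)=\sum_{E'\subset ET}\tilde\xi(T)_{E'}$ with $\tilde\xi(T)_{E'}\in {\op C}(T)\otimes(\otimes_{e\in E'}I_e')\otimes\APL(\Delta^n)^{\otimes|VT|}$. The contraction relation $\Delta_e\xi(T/e)=(\ev_{t=0}^e\otimes m_{v'v''})\xi(T)$, combined with the facts that $\ev_{t=0}|_{\K\cdot 1}=\mathrm{id}$ and $\ev_{t=0}|_{I'}=0$, implies that for each edge $e\notin E'$ the component $\tilde\xi(T)_{E'}$ is constrained to $\tilde\xi(T/e)_{E'}$ via the composition coproducts on ${\op C}$ and the vertex multiplications on $\APL(\Delta^n)$.

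Next, I would equip $W{\op C}^{\Delta^n}(S)$, and similarly $W{\op C}(S)$, with a bounded, complete decreasing filtration, for instance by the total size of the ``$E'=\emptyset$'' supports. The filtration is finite in each arity since $\mTc_S$ has finitely many isomorphism classes of objects. The acyclicity of $I_e'$ kills every $(T,E')$-contribution with $E'\neq\emptyset$ at the $E_1$-page of the resulting spectral sequence, leaving only the ``fully-contracted'' pieces. For each tree $T$ with $|VT|$ vertices, the piece $\tilde\xi(T)_{E'=\emptyset}$ lies in ${\op C}(T)\otimes\APL(\Delta^n)^{\otimes|VT|}$ and is constrained by the corolla value $\xi(T_{\mathrm{cor}})\in{\op C}(S)\otimes\APL(\Delta^n)$ through the iterated composition coproduct on ${\op C}$ and the iterated multiplication $m^{(|VT|)}$ on $\APL(\Delta^n)$; since $m^{(|VT|)}$ is a quasi-isomorphism, its kernel is acyclic, and the only cohomological contribution comes from the corolla itself, yielding $H^\ast({\op C}(S)\otimes\APL(\Delta^n))\cong H^\ast({\op C}(S))$.

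The canonical map $W{\op C}(S)\to W{\op C}^{\Delta^n}(S)$ preserves the filtration and induces on the $E_1$-page the map $H^\ast({\op C}(S))\otimes H^\ast(\K)\to H^\ast({\op C}(S))\otimes H^\ast(\APL(\Delta^n))$, which is an isomorphism since $H^\ast(\APL(\Delta^n))=\K$. Convergence of the spectral sequence then yields the claim. The principal technical difficulty, compared to Proposition~\ref{prop:W-construction quasi-iso}, is the absence of a clean product decomposition of the end: the vertex multiplications $m_{v'v''}$ on $\APL(\Delta^n)$ are not invertible, and the heart of the argument is to verify that the kernels of the iterated multiplications $m^{(k)}$ do not contribute to cohomology, so that the $E'\subsetneq ET$ components collapse correctly on the $E_1$-page.
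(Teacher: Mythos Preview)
Your approach is in a reasonable direction, but the paper's proof is considerably simpler and more direct: it builds an explicit contracting homotopy. One picks coordinates $\APL(\Delta^n)\cong\K[u_1,\dots,u_n,du_1,\dots,du_n]$ and lets $\iota$ be contraction with the Euler vector field, so that $(d\iota+\iota d)\alpha=(\text{total polynomial degree})\cdot\alpha$ on $\APL(\Delta^n)$. Since $\iota$ is a derivation for the product on $\APL(\Delta^n)$, it extends (as a derivation over the vertex decorations $\otimes_{v\in VT}\APL(\Delta^n)_v$) to a well-defined operator on the end $W{\op C}^{\Delta^n}(S)$, and the same homotopy identity holds there. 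The map $W{\op C}\to W{\op C}^{\Delta^n}$ is the inclusion of the polynomial-degree-zero summand, so this homotopy retracts the target onto the source. No spectral sequence is needed.

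Your proposal has a real gap at the filtration/spectral sequence step. The filtration ``by total size of the $E'=\emptyset$ supports'' is not precisely defined, and more seriously, the end does \emph{not} split into a product after the $I=\K\oplus I'$ splitting the way it did in Proposition~\ref{prop:W-construction quasi-iso}: the equalizer relations now involve the non-injective multiplication $m_{v'v''}$ on the $\APL(\Delta^n)$ factors, so the components $\tilde\xi(T)_{E'}$ for different $T$ (even with $E'=\emptyset$) are not determined by a single corolla value but only constrained through the kernel of $m^{(|VT|)}$. You recognise this, but the claim that these kernels ``do not contribute to cohomology'' is not something one can simply invoke inside a limit; one would need an explicit filtration whose associated graded exhibits these kernels as direct summands, and you have not provided one. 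Making this route rigorous would require an inductive analysis of the diagram $T\mapsto {\op C}(T)\otimes\APL(\Delta^n)^{\otimes VT}$ over $\mTc_S$, which is substantially more work than the paper's two-line homotopy argument.
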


\begin{proof}
The morphism of the lemma is identified with the map $W{\op C}\to W{\op C}^{\Delta^n}$
which assigns a constant decoration $1\in\APL(\Delta^n)$
to the vertices of trees
in the expression of the elements
of $W{\op C}$.
Pick some identification $\APL(\Delta^n)\cong\K[u_1,\dots,u_n,du_1,\dots,du_n]$
and define the total degree of an element of $\APL(\Delta^n)$
as the number of variables $u_i$ occurring in the expression of this element.
Let $\iota$ be the contraction with the Euler vector field, so that for any homogeneous element $\alpha\in\APL(\Delta^n)$
we have the relation
\[
(d\iota+\iota d)\alpha =(\text{total degree})\cdot\alpha.
\]
We extend this contraction $\iota$ to $W{\op C}^{\Delta^n}$ as a derivation of the decoration of the vertices $\otimes_{v\in VT}\APL(\Delta^n)_v$
in the definition of $W{\op C}^{\Delta^n}$ (simply note that $\iota$ map preserves the defining of our end).
We then have the relation
\[
(d\iota+\iota d)\xi =(\text{total degree of forms})\cdot\xi,
\]
for every element $\xi\in W{\op C}^{\Delta^n}$, where we consider the sum of the total degrees of the form
that decorate the vertices in the expression
of $\xi$.
We can identify our map $W{\op C}\to W{\op C}^{\Delta^n}$ with the obvious embedding that identifies $W{\op C}$
with the summand of the object $W{\op C}^{\Delta^n}$
spanned by the elements for which this total degree is zero.
We therefore deduce the result of the lemma from the above homotopy relation.
\end{proof}

We obtain the following concluding statement.

\begin{thm}\label{thm:W-construction framing}
The simplicial dg Hopf operad $W{\op C}^{\Delta^{\bullet}}$ is a simplicial frame for $W{\op C}$.
\end{thm}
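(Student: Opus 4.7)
The plan is to verify directly the three defining properties of a simplicial frame (in the sense of \cite[Paragraph 3.2.8]{Fr} or the standard references on framings in model categories) for the simplicial object $W{\op C}^{\Delta^{\bullet}}$ in the model category of dg Hopf cooperads of Theorem~\ref{thm:model categories}. First, I would check that $W{\op C}^{\Delta^0} = W{\op C}$. This is essentially tautological from Construction~\ref{const:W-construction framing}: plugging $n=0$ gives $\APL(\Delta^0) = \K$, so the factor $\otimes_{v\in VT}\APL(\Delta^0)_v$ collapses to $\K$ and we recover the end formula defining $W{\op C}$ in Construction~\ref{const:W-construction}. The simplicial structure is inherited from the Sullivan dg algebras $\APL(\Delta^{\bullet})$ as explained in the construction, so there is nothing to verify there.

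Next, I would deduce the weak equivalence condition on degeneracies from Lemma~\ref{lemm:W-construction framing quasi-isomorphism}. The lemma shows that the map $W{\op C}\to W{\op C}^{\Delta^n}$ induced by the unique simplicial operator $\epsilon: \underline{n}\to\underline{0}$ is a quasi-isomorphism, and by the definition of weak equivalences in Theorem~\ref{thm:model categories} (quasi-isomorphisms of underlying dg vector spaces arity-wise) this is a weak equivalence of dg Hopf cooperads. It follows that every degeneracy $s: W{\op C}^{\Delta^n}\to W{\op C}^{\Delta^{n+k}}$ is a weak equivalence, by the two-out-of-three property applied to the commutative triangle with vertex $W{\op C}^{\Delta^0} = W{\op C}$.

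Finally, I would invoke Lemma~\ref{lemm:W-construction framing fibrations}, which shows that the restriction morphism $i^*: W{\op C}^{\Delta^n}\to W{\op C}^{\partial\Delta^n}$ to the matching object is a fibration for $n>0$. Combined with the previous step, this gives the Reedy fibrancy condition of a simplicial frame. The three verifications together imply that $W{\op C}^{\Delta^{\bullet}}$ is a simplicial frame on $W{\op C}$.

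None of the steps should present a substantial obstacle given what has already been established, since the three supporting lemmas provide each of the required pieces. The only subtlety is to confirm that the simplicial frame axioms one wishes to match are precisely those recalled in \cite[Paragraph 3.2.8]{Fr}, and to check that the comparison map $W{\op C}^{\Delta^0}\to W{\op C}$ used in the definition of a frame agrees with the identity under the natural isomorphism identifying the $n=0$ instance of Construction~\ref{const:W-construction framing} with Construction~\ref{const:W-construction}; this is immediate from a direct inspection of the end formulas.
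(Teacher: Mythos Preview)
Your proposal is correct and follows essentially the same approach as the paper, which simply notes that the defining properties of a simplicial frame have been checked in the previous lemmas. Your version is a more explicit unpacking of how Lemmas~\ref{lemm:W-construction framing fibrations} and~\ref{lemm:W-construction framing quasi-isomorphism} (together with the tautological identification $W{\op C}^{\Delta^0}\cong W{\op C}$) supply each required axiom, but the argument is the same.
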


\begin{proof}
We just checked that $W{\op C}^{\Delta^{\bullet}}$ fulfills the defining properties
of a simplicial frame in the previous lemmas.
\end{proof}

\subsection{Fibrant resolutions and simplicial frames for dg Hopf \texorpdfstring{$\La$}{Lambda}-cooperads}\label{sec:fibrant resolutions:Hopf Lambda-cooperads}
We now check that the constructions of the previous subsection extend to the category of dg Hopf $\La$-cooperads.
We first have the following statement.

\begin{prop}\label{prop:W-construction Lambda-structure}
If ${\op C}$ is a reduced dg Hopf $\La$-cooperad, then the dg Hopf cooperad $W{\op C}$ inherits a natural $\La$-structure
such that the morphism
\[
\rho: {\op C}\to W{\op C}
\]
defined in Proposition~\ref{prop:W-construction quasi-iso} is a quasi-isomorphism of dg Hopf $\La$-cooperads.
\end{prop}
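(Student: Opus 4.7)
The plan is to equip $W{\op C}$ with corestriction operators $\eta_S : W{\op C}(S) \to W{\op C}(S \sqcup \{*\})$ defined directly on the end description of $W{\op C}$, using the corestriction operators of ${\op C}$ together with the Hopf interval coproduct $m^* : I \to I \otimes I$ constructed in Section~\ref{subsec:fibrant resolutions:interval object}. For an element $\xi \in W{\op C}(S)$ and a tree $T \in \mT_{S \sqcup \{*\}}'$, I will define $(\eta_S \xi)(T)$ by distinguishing cases according to the number $k$ of ingoing edges of the vertex $v_*$ of $T$ adjacent to the leaf $*$. When $k \geq 3$, removing the leaf $*$ and its incident edge produces a tree $T' \in \mT_S'$ with the same set of internal edges as $T$, and I will set $(\eta_S \xi)(T)$ equal to the element obtained from $\xi(T')$ by applying the ${\op C}$-corestriction $\eta_{\mathrm{star}_{T'}(v_*)}$ to the ${\op C}$-decoration of the vertex $v_*$ and leaving all other decorations unchanged. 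When $k = 2$, removing the leaf $*$ would strand $v_*$ with a single ingoing edge, so I will instead collapse $v_*$ (deleting $v_*$ and merging its remaining ingoing edge with its outgoing edge into a single edge $e_{\mathrm{new}}$) to obtain a tree $T'' \in \mT_S'$; the formula will then decorate $v_*$ in $T$ with the unit $1 \in {\op C}(2)$ and split the decoration of $e_{\mathrm{new}}$ in $\xi(T'')$ using $m^*$ to produce the decorations of the two edges of $T$ incident to $v_*$ (with the remaining vertex and edge decorations inherited from $\xi(T'')$).

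Once the definition is in place I will proceed to a sequence of verifications. First, I will check that the formula respects the equivariance and contraction axioms defining the end, so that $\eta_S$ is well-defined as a map $W{\op C}(S) \to W{\op C}(S \sqcup \{*\})$. Next, I will verify that each $\eta_S$ is a morphism of commutative dg algebras, which is essentially automatic since all the constituent operations (the corestrictions of the Hopf $\La$-cooperad ${\op C}$, the unit of the algebra ${\op C}(2)$, and the coproduct $m^*$ on $I$) are themselves morphisms of commutative dg algebras by construction of ${\op C}$ and of the Hopf interval. Compatibility with the differential on $W{\op C}$ similarly reduces to the chain-map property of these constituents and the de Rham property of $d$ on $I$. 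The functoriality in $S$, encoding the full covariant $\La$-diagram structure, will follow from the corresponding properties of ${\op C}$ by a direct combinatorial unwinding.

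The remaining and most delicate verification is the compatibility of $\eta_S$ with the cooperadic composition coproducts of $W{\op C}$, which is the analogue in our setting of the identity~\eqref{eq:Lacompatibility}. The composition coproduct $\Delta_*$ on $W{\op C}$ is defined by evaluating the decoration of an internal edge at $t = 1$, while $m^*$ on the Hopf interval was chosen precisely so that $\mathrm{ev}_{t=1}$ is compatible with it through the relation recalled around~\eqref{eq:Icoprod}; this is exactly what will make the two cases $k \geq 3$ and $k = 2$ in the definition of $\eta_S$ interact correctly with the ways a tree $T \in \mT_{S \sqcup \{*\}}'$ can decompose as $T' \circ_* T''$ under $\Delta_*$. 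The main obstacle will therefore be to carry out this tree-combinatorial case analysis cleanly, matching the contributions from each branch of the definition of $\eta_S$ against the terms produced by $\Delta_*$, and confirming that the ``unit-plus-$m^*$'' contribution of the $k = 2$ case reassembles correctly when one of the factors of the composition coproduct is itself decomposed. Once the $\La$-structure on $W{\op C}$ is in place, the last check, namely that $\rho : {\op C} \to W{\op C}$ preserves the corestrictions and the coaugmentation over $\Com^c$, is immediate by inspecting both $\rho$ and $\eta_S$ on the single-vertex trees that span the image of $\rho$.
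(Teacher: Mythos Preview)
Your approach is exactly the paper's: define $\eta_S$ on $W{\op C}$ tree-by-tree, splitting into the cases $k\geq 3$ (apply the corestriction of ${\op C}$ at the vertex $v_*$) and $k=2$ (insert the unit $1\in{\op C}(2)$ at $v_*$ and use $m^*$ to split the edge decoration), and then run the verifications you list. Two points deserve care, however.

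First, your $k=2$ formula is incomplete as stated. You write ``split the decoration of $e_{\mathrm{new}}$ in $\xi(T'')$ using $m^*$'', but when one of the two edges incident to $v_*$ in $T$ is a leaf or the root, the merged edge $e_{\mathrm{new}}$ in $T''$ is \emph{not} internal, so it carries no $I$-decoration in $\xi(T'')$ and there is nothing to split. The paper handles these degenerate subcases separately: when exactly one of $e_0$ (outgoing) or $e_1$ (remaining ingoing) is internal in $T$, one decorates that internal edge by the unit $1\in I$ and takes the remaining decorations from $\xi(T'')$. Equivalently, one can adopt the convention that leaves and the root carry the phantom decoration $1\in I$, after which your single formula works uniformly because $m^*(1)=1\otimes 1$. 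Either way you must say something, since otherwise the formula is undefined on a nonempty set of trees.

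Second, your final claim that the compatibility of $\rho$ with $\eta_S$ is ``immediate by inspecting both on the single-vertex trees that span the image of $\rho$'' undersells what is needed. The image of $\rho$ consists of functions with \emph{all} edge decorations equal to $1$, and such an element is indeed determined by its value on the corolla; but to know that $\eta_S(\rho(c))$ again has all edge decorations equal to $1$ you must use $m^*(1)=1\otimes 1$ in the $k=2$ branch, and to match the vertex decorations on an arbitrary tree you need the compatibility relation~\eqref{eq:Lacompatibility} between the reduced treewise coproducts $\overline{\Delta}_T$ and the corestrictions of ${\op C}$. The paper invokes both facts explicitly.
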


\begin{proof}
We use that a $\La$-structure on $W{\op C}$ is determined by corestriction operators
\[
\eta_S : W{\op C}(S)\to W{\op C}(S\sqcup\{*\}),
\]
for any finite set $S$.
In the case $|S| = 1$, this corestriction operator is identified with the unit morphism $\eta_S: \K\to W{\op C}(S\sqcup\{*\})$ of the dg algebra $W{\op C}(S\sqcup\{*\})$.
We therefore assume $|S|\geq 2$ from now on.
We fix $\xi\in W{\op C}(S)$. We proceed as follows to define the collection $\eta_S(\xi)(T)$, $T\in\mT_{S\sqcup\{*\}}'$,
that represent the image of $\xi$ under this map $\eta_S$.

For each tree $T\in\mT_{S\sqcup\{*\}}$, we consider the vertex $v\in VT$ connected to the leaf indexed by $*$.
We distinguish the following cases and subcases.
\begin{itemize}
\item
If $v$ has at least three ingoing edges, then we consider the tree $T'$ which we obtain by removing this leaf indexed by $*$ from $T'$,
and we set
\[
\eta_S(\xi)(T) = (\eta_{star(v)}\otimes\mathit{id}\otimes\dots\otimes\mathit{id})(\xi(T')),
\]
where we apply the restriction operator $\eta_{star(v)}$ to the decoration of the vertex $v$ in $T'$,
assuming that we put this decoration in front of the tensor $\xi(T')$.
\item
If $v$ has two ingoing edges, one which is the leaf $e_*$ indexed by $*$, and the other one which we denote by $e_1$,
then we consider the tree $T''$ which we obtain by withdrawing both the leaf $e_*$ and the vertex $v$ from $T$,
and by gluing $e_1$ onto the outgoing edge $e_0$ of $v$ in $T$.
Let $e$ be the edge of $T''$ that we deduce from this merging operation.
\begin{itemize}
\item
If both $e_0$ and $e_1$ form internal edges of $T$, then we set:
\[
\eta_S(\xi)(T) = b_v\otimes m_e^*\xi(T''),
\]
where we take the element $b_v = b = \eta(1)\in{\op C}(2)$ associated to the unit morphism of our Hopf cooperad $\eta: \Com^c\to{\op C}$
as a decoration of the vertex $v$ in $T$,
and $m_e^*$ denotes the performance of the coproduct operation $m_e^* = m^*: I_e\to I_{e_0}\otimes I_{e_1}$
on the factor $I_e = I$ that give the decoration of the edge $e$ in $\xi(T'')$.
The latter operation gives the decoration of the edges $e_0$ and $e_1$ in $\eta_S(\xi)(T)$.
\item
If $e_0$ is an internal edge of $T$, but $e_1$ is a leaf like $e_*$, then we set:
\[
\eta_S(\xi)(T) = b_v\otimes 1_{e_0}\otimes\xi(T''),
\]
where we again take the element $b_v = \eta(1)\in{\op C}(2)$ as a decoration of the vertex $v$ in $T$,
while we insert an algebra unit $1_{e_0} = 1\in I$ to get the decoration
of the edge $e_0$ associated to our element.
\item
If $e_1$ is an internal edge, but $e_0$ is the root of $T$, then we set:
\[
\eta_S(\xi)(T) = b_v\otimes 1_{e_1}\otimes\xi(T''),
\]
where we still take the element $b_v = \eta(1)\in{\op C}(2)$ as a decoration of the vertex $v$ in $T$,
but we now take the algebra unit $1_{e_1} = 1\in I$ to decorate the edge $e_1$.
\end{itemize}
\end{itemize}
Note that the case where both $e_0$ is the root and $e_1$ is an internal edge
is excluded since we assume $|S|\geq 2$.
In fact, we can reduces the subcases of the second part of this definition to the first subcase by adopting the convention
that the leafs and the root of a tree are decorated by unit elements $1\in I$
in our description of $W{\op C}$ (just observe that our coproduct satisfies $m^*(1) = 1\otimes 1$
for the algebra unit $1\in I$).
Besides we can apply this simplification to extend our construction to the degenerate case $|S| = 1$. We retrieve that our corestriction operator is identified
with the unit morphism of our algebra in this case.

We check that this construction returns a well-defined element $\eta_S(\xi)$ of the end $W{\op C}(S\sqcup\{*\})$.
We fix an internal edge $e'$ of the tree $T\in\mT_{S\sqcup\{*\}}$. Let $w'$ (respectively, $w''$)
be the source (respectively, the target) of this edge $e'$
in $T$. Let $w$ be the vertex of $T/e'$ produced by the merging of $w'$ and $w''$
when we perform the contraction of the edge $e'$ in $T$.
We have to check that the performance of the composition coproduct $\Delta_*: {\op C}(star(w))\to{\op C}(star(w'))\otimes{\op C}(star(w''))$
on the decoration of the vertex $w$ in $\eta_S(\xi(T/e'))$
agrees with the result of the evaluation $t_{e'} = 0$ of the decoration of the edge $e'$
in $\eta_S(\xi(T))$.

We use the same notation as in our definition of $\eta_S(\xi)(T)$ and we still denote by $v$ the target of the leaf indexed by $*$ in the tree $T$.
If $e'$ is not incident to $v$, then $\eta_S(\xi)$ satisfies the required compatibility relation for this choice of $T$ and $e'$, since so does $\xi$.
If $e'$ is incident to $v$ in $T$, so that we have either $v = w'$ or $v = w''$,
then the requested compatibility relations reduce to the compatibility relations between the composition coproduct
and the corestriction operators of the $\La$-cooperad ${\op C}$.
In the degenerate case where $v$ has two ingoing edges, among which one is the leaf $*$,
we additionally use that the evaluation operation $\ev_{t=0}$
defines a counit of the coproduct $m_e^* = m^*: I_e\to I_{e_0}\otimes I_{e_1}$ (see Section \ref{rem:counit})
to check the coherence of the edge decorations.

We conclude that our mapping $\eta_S: \xi\mapsto\eta_S(\xi)$ gives a well-defined corestriction operator $\eta_S : W{\op C}(S)\to W{\op C}(S\sqcup\{*\})$,
which is obviously a morphism of dg algebras by construction.
We easily check that this $\La$-diagram structure on $W{\op C}$ is also compatible with our cooperad composition coproducts
by using the compatibility relations between the evaluation operations $\ev_{t=1}$
and the coproduct $m^*: I\to I\otimes I$ in our commutative dg algebra (see Section \ref{rem:counit}).

Recall that the morphism $\rho: {\op C}\to W{\op C}$ carries any element $c\in{\op C}(S)$ to the function
such that $\rho(c)(T) = \overline{\Delta}_T(c)\otimes 1^{\otimes ET}$,
where, for each $T\in\mT_S'$, we consider the image of $c$ under the reduced treewise coproduct
of our cooperad $\overline{\Delta}_T: \overline{\op C}(S)\to\overline{\op C}(T)$.
We use the compatibility relation between these reduced treewise coproducts and the corestriction operators,
and that the unit $1\in I$ is group-like $m^*(1) = 1\otimes 1$
in $I$
to establish that this map $\rho: c\mapsto\rho(c)$
intertwines the action of the corestriction operators
on our objects.
This verification finishes the proof of this proposition.
\end{proof}

We can extend the construction of the previous proposition to the simplicial frame $W{\op C}^{\Delta^{\bullet}}$
of Section~\ref{subsec:fibrant resolutions:simplicial frames}.
We record this result in the following statement.

\begin{prop}\label{prop:W-construction framing Lambda-structure}
If ${\op C}$ is a reduced dg Hopf $\La$-cooperad, then we also have a natural $\La$-structure on the dg Hopf cooperads $W{\op C}^{\Delta^n}$
of Section~\ref{subsec:fibrant resolutions:simplicial frames}
so that $W{\op C}^{\Delta^{\bullet}}$ defines a simplicial frame for the object $W{\op C}$
in the category of dg Hopf $\La$-cooperads.
\end{prop}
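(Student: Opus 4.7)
The plan is to mimic the proof of Proposition~\ref{prop:W-construction Lambda-structure} while carrying along the $\APL(\Delta^n)$-decorations at vertices. First I would define the corestriction operators $\eta_S: W{\op C}^{\Delta^n}(S) \to W{\op C}^{\Delta^n}(S \sqcup \{*\})$ on an element $\xi$ by the same case-by-case formulas as in Proposition~\ref{prop:W-construction Lambda-structure}. Namely, for a tree $T \in \mT_{S\sqcup\{*\}}'$ with $v$ the vertex of $T$ incident to the leaf indexed by $*$: when $v$ has at least three ingoing edges, I apply the corestriction operator $\eta_{star(v)}$ of ${\op C}$ to the ${\op C}$-decoration at $v$ in $\xi(T')$ (where $T'$ is obtained by deleting the leaf $*$), leaving the $\APL(\Delta^n)_v$-decoration untouched; when $v$ has exactly two ingoing edges, I contract $v$ to obtain $T''$, decorate the new vertex $v$ of $T$ by the element $b = \eta(1) \in {\op C}(2)$ on the ${\op C}$-side and by the algebra unit $1 \in \APL(\Delta^n)$ on the $\APL(\Delta^n)$-side, and carry over the decorations of $\xi(T'')$ on the remaining vertices, applying the coproduct $m^*$ on the relevant edge as required.

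Then I would verify that this returns a well-defined element of the end $W{\op C}^{\Delta^n}(S \sqcup \{*\})$. The only new ingredient as compared to Proposition~\ref{prop:W-construction Lambda-structure} is the compatibility with the vertex-decoration multiplication $m_{v'v''}: \APL(\Delta^n)_{v'} \otimes \APL(\Delta^n)_{v''} \to \APL(\Delta^n)_v$ that occurs when contracting internal edges. When the contracted edge is not incident to the newly inserted $b$-vertex, the verification is identical to the one given for $W{\op C}$. When it is incident to the $b$-vertex, the verification reduces to the fact that $1 \in \APL(\Delta^n)$ is a unit for multiplication, so that multiplying the $1$-decoration by the decoration of the adjacent vertex produces the correct decoration on the merged vertex. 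Compatibility with the cooperad composition coproducts proceeds exactly as in the proof of Proposition~\ref{prop:W-construction Lambda-structure} (since these coproducts only involve the edge decorations $\ev_{t=1}^{e_*}$), and compatibility with the simplicial structure operators of $\APL(\Delta^\bullet)$ is immediate, because those operators act factor-wise on the $\APL(\Delta^n)$-decorations and fix the unit $1$.

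Finally I would establish the simplicial frame property in the $\La$-cooperad setting. The weak equivalence $W{\op C} = W{\op C}^{\Delta^0} \to W{\op C}^{\Delta^n}$ is already known at the level of dg vector spaces by Lemma~\ref{lemm:W-construction framing quasi-isomorphism}, and weak-equivalences of dg Hopf $\La$-cooperads are detected on the underlying arity-wise dg vector spaces, so the conclusion transfers verbatim. For the matching-object fibrations $i^*: W{\op C}^{\Delta^n} \to W{\op C}^{\partial\Delta^n}$, the proof of Lemma~\ref{lemm:W-construction framing fibrations} shows that they are induced by morphisms of cofree graded cooperads which are surjective at the level of the cogenerating $\Sigma$-collection; Proposition~\ref{prop:fibrations of Hopf Lambda-cooperads} then applies verbatim in the $\La$-setting to yield the desired fibrations.

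The main (although minor) obstacle is to ensure coherence at the degenerate $b$-insertion case: the choice of $1 \in \APL(\Delta^n)$ as the decoration of the inserted $b$-vertex is forced simultaneously by the edge-contraction condition of the end and by the counit relation $\ev_{t=0} \circ m^* = \mathrm{id}$ governing the edge decorations; one must check that no inconsistency arises when two distinct edges incident to the inserted $b$-vertex are simultaneously contracted, which ultimately reduces to commutativity and associativity of multiplication in $\APL(\Delta^n)$ together with the identities $m \cdot 1 = 1 \cdot m = m$.
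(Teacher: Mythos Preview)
Your proposal is correct and follows essentially the same approach as the paper: define the corestriction operators by the formulas of Proposition~\ref{prop:W-construction Lambda-structure}, inserting the unit $1\in\APL(\Delta^n)$ at the degenerate $b$-vertex, and then reduce the simplicial frame verification to the already-established Hopf cooperad case via the characterization of weak equivalences and fibrations. The one point the paper makes explicit that you leave implicit is that the forgetful functor from dg Hopf $\La$-cooperads to dg Hopf cooperads creates limits, so that the matching object $W{\op C}^{\partial\Delta^n}$ computed in the $\La$-category agrees with the one you already analyzed in Lemma~\ref{lemm:W-construction framing fibrations}; without this, your invocation of Proposition~\ref{prop:fibrations of Hopf Lambda-cooperads} for the map $i^*$ would be comparing against the wrong target.
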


\begin{proof}
We define the corestriction operators $\eta_S: W{\op C}^{\Delta^n}(S)\to W{\op C}^{\Delta^n}(S\sqcup\{*\})$
by the same construction as in the case of the dg Hopf cooperad $W{\op C}$
in the proof of Proposition~\ref{prop:W-construction Lambda-structure}.
We just take the constant $1\in\APL(\Delta^n)$ to define the decoration of the vertex $v\in VT$ connected to the leaf indexed by the mark $*$
in the degenerate case where this vertex $v$ has only two ingoing edges
in the tree $T$.
We can carry over the verifications of the proof of Proposition \ref{prop:W-construction Lambda-structure}
without change so that this construction
returns a well-defined $\La$-structure on the dg Hopf cooperads $W{\op C}^{\Delta^n}$.
We use that a morphism is a weak-equivalence (respectively, a fibration) in the category of dg Hopf $\La$-cooperads if and only if this morphism
defines a weak-equivalence (respectively, a fibration) in the category of dg Hopf cooperads (see Proposition~\ref{prop:fibrations of Hopf Lambda-cooperads})
and that this forgetful functor creates limits
to conclude that $W{\op C}^{\Delta^{\bullet}}$ still forms a simplicial frame
of the object $W{\op C}$
in the category of dg Hopf $\La$-cooperads.
\end{proof}

\begin{lemm}\label{lemm:W-construction BLambda-structure}
The $\La$-structure on the dg Hopf cooperad $W{\op C}$ (respectively, $\oW{\op C}^{\Delta^n}$) is identified with the result of the construction of a $\La$-structure
on the bar construction $W{\op C} = B(\oW{\op C}_{\bo})$ (respectively, $W{\op C}^{\Delta^n} = B(\oW{\op C}^{\Delta^n}_{\bo})$)
from a $B\La$ structure associated to the operad $\oW{\op C}_{\bo}$ (respectively, $\oW{\op C}^{\Delta^n}_{\bo}$).
\end{lemm}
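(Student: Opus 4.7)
The plan is to exhibit explicitly a $B\La$ structure on the operad $\oW{\op C}_{\bo}$ and then verify, via a direct comparison on cogenerators, that the $\La$-structure on $B(\oW{\op C}_{\bo})$ induced by Proposition~\ref{prop:BLambda-operad bar construction} coincides with the $\La$-structure on $W{\op C}$ supplied by Proposition~\ref{prop:W-construction Lambda-structure}. We will work out the non-simplicial case and then observe that the same argument applies verbatim for $W{\op C}^{\Delta^n}$.

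First I would define the $B\La$ data on $\oW{\op C}_{\bo}$. For the coaugmentation piece, I would take $u: \overline{\Com}{}^c\to\oW{\op C}[1]$ to be the composite of the coaugmentation $\overline{\Com}{}^c\to\overline{\op C}$ of the dg Hopf $\La$-cooperad ${\op C}$ with the natural projection $\overline{\op C}\to\oW{\op C}[1]$ coming from the embedding ${\op C}\hookrightarrow W{\op C}$ of Proposition~\ref{prop:W-construction quasi-iso} and the projector $\pi: W{\op C}\to\oW{\op C}[1]$ built from $\pi''(p)=p-tp(1,0)$ in the proof of Lemma~\ref{lemm:W-construction cofree structure}. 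For the corestriction operators, I would set $\eta_S: \oW{\op C}(S)\to\oW{\op C}(S\sqcup\{*\})$ by the only non-degenerate case in the construction of Proposition~\ref{prop:W-construction Lambda-structure}: namely, for $\xi\in\oW{\op C}(S)$ and $T\in\mT'_{S\sqcup\{*\}}$ with distinguished vertex $v$ attached to the leaf $*$, we put $\eta_S(\xi)(T) = (\eta_{star(v)}\otimes\mathit{id}^{\otimes(|VT|-1)})(\xi(T'))$ when $v$ has at least three ingoing edges in $T$, and $\eta_S(\xi)(T)=0$ otherwise; here $T'$ is $T$ with the leaf $*$ removed, and we also take the edge decorations to lie in $I''$ so that the output indeed lands in $\oW{\op C}$.

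Next I would verify the axioms of a $B\La$ operad for this data. The $\La$-functoriality of $\eta_S$ and the derivation property with respect to the grafting-type operadic composition of $\oW{\op C}_{\bo}$ (described just before Lemma~\ref{lemm:W-construction bar structure}) are immediate from the formula, because grafting creates an internal edge decorated by $dt$ at which the two composition products are shaped. The Maurer--Cartan equation for $u$ simply expresses that $\eta: \Com^c\to W{\op C}$ is a morphism of dg cooperads, which is built into Proposition~\ref{prop:W-construction Lambda-structure}. The relation $d\circ\eta_S-\eta_S\circ d=\ad_b$ is the most delicate point: to prove it one tracks the differential of $W{\op C}$ through the definitions, recognizing the contributions of the three degenerate subcases of Proposition~\ref{prop:W-construction Lambda-structure} (where $v$ has two ingoing edges) as precisely the components of $\ad_b$ after passing to the cogenerating collection via $\pi$. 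This is where the choice $\pi''(p)=p-tp(1,0)$ is essential: the subtracted $tp(1,0)$ term produces, under the differential, exactly the $b$-insertions required.

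Finally I would match the two $\La$-structures on $W{\op C}\cong B(\oW{\op C}_{\bo})$. By the universal property of the cofree cooperad, two coderivations on $\op F^c(\oW{\op C}[1])$ that intertwine with the $\La$-action coincide as soon as their compositions with the projection onto the cogenerators $\oW{\op C}[1]$ agree; hence, by formula~\eref{eq:Lacompatibility} applied to both sides, it is enough to show that our $\eta_S$ and $u$ reproduce, on cogenerators, the restriction of the $\La$-corestriction operator of Proposition~\ref{prop:W-construction Lambda-structure}. This is a case-by-case comparison: the non-degenerate case of Proposition~\ref{prop:W-construction Lambda-structure} is recovered by the main term of \eref{eq:Lacompatibility}, the degenerate subcase (i) with both adjacent edges internal is recovered by the $b$-term $b\otimes\overline{\Delta}_{T''}(x)$ of \eref{eq:Lacompatibility}, and the remaining degenerate subcases (with a boundary leaf or the root) are killed after projection by $\pi$ because the decoration $1\in I$ on an outer edge is annihilated by $\pi''$. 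The same formulas apply without change in the presence of the vertex decorations by $\APL(\Delta^n)$, which yields the analogous statement for $W{\op C}^{\Delta^n}=B(\oW{\op C}^{\Delta^n}_{\bo})$.
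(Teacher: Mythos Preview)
Your overall strategy—exhibit a $B\La$ structure on $\oW{\op C}_{\bo}$ and then match the induced $\La$-structure on $B(\oW{\op C}_{\bo})$ with the one on $W{\op C}$—is reasonable, and your map $u$ agrees with the paper's. But your choice of corestriction operators $\eta_S$ on $\oW{\op C}$ is wrong, and this breaks the final matching step.

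You claim that the degenerate subcases of Proposition~\ref{prop:W-construction Lambda-structure} (where the vertex $v$ adjacent to the leaf $*$ has only two ingoing edges and one of $e_0,e_1$ is a leaf or the root) are ``killed after projection by $\pi$ because the decoration $1\in I$ on an outer edge is annihilated by $\pi''$.'' This is false: in those subcases the decoration $1$ is placed on the \emph{internal} edge, and $\pi''(1)=1-t\neq 0$. Likewise, in the subcase where both $e_0,e_1$ are internal, one checks directly that $m^*(I'')\subset I''\otimes I''$, so the decorations are fixed by $\pi$. Hence the composite $\pi\circ\eta_S^{W{\op C}}|_{\oW{\op C}[1]}$ does \emph{not} vanish on trees with $v$ bivalent, contrary to your definition of $\eta_S$. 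Since the $B\La$ data underlying a given $\La$-structure on a cofree bar cooperad is uniquely recovered by this restriction-projection procedure, your $\eta_S$ cannot produce the $\La$-structure of Proposition~\ref{prop:W-construction Lambda-structure}, and the case-by-case comparison you sketch at the end does not go through.

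The paper avoids this trap by reversing the logic: it \emph{defines} the corestriction on $\oW{\op C}$ as the composite
\[
\oW{\op C}(S)[1]\hookrightarrow W{\op C}(S)\xrightarrow{\eta_S}W{\op C}(S\sqcup\{*\})\xrightarrow{\pi}\oW{\op C}(S\sqcup\{*\})[1],
\]
so that the degenerate contributions are built in automatically, and then observes that the $B\La$ coherence constraints are \emph{equivalent} to the already-verified compatibility of the $\La$-structure on $W{\op C}$ with the bar differential. No separate verification of $d_{\op P}\circ\eta_S-\eta_S\circ d_{\op P}=\ad_b$ is needed.
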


\begin{proof}
We equip the collection $\oW{\op C}$ with the covariant $\La$-diagram structure
such that we have a commutative diagram
\[
\begin{tikzcd}
\oW{\op C}(S)[1]\ar[hookrightarrow]{d}\ar[dashed]{r} & \oW{\op C}(S\sqcup\{*\})[1] \\
W{\op C}(S)\ar{r}{\eta_S} & W{\op C}(S\sqcup\{*\})\ar{u}
\end{tikzcd},
\]
for each corestriction operator $\eta_S$, where we consider the canonical embedding $\oW{\op C}[1]\subset W{\op C}$
and the projection map $W{\op C}\to\oW{\op C}[1]$
constructed in the proof of Lemma~\ref{lemm:W-construction cofree structure}.
We similarly take the composite
\[
\K = \Com^c(S)\stackrel{\eta}{\to} W{\op C}(S)\to\oW{\op C}(S)[1],
\]
where we consider the unit morphism of dg algebra $W{\op C}(S)$ for each set of arity $|S|\geq 2$,
to get a map of $\La$-collections $u\in\Hom_{\gr\La}(\overline{\Com}{}^c,\oW{\op C})$
of degree $-1$.
We just check that the corestriction operators defined in the proof of Proposition~\ref{prop:BLambda-operad bar construction}
agree with the result of the construction of (the proof of) Proposition~\ref{prop:W-construction Lambda-structure},
when we use the identity $W{\op C}^{\circ} = {\op F}^c(\oW{\op C}[1])$, and we provide the collection $\oW{\op C}[1]$
with this coaugmented $\La$-diagram structure.
We trivially get that the coaugmentation $\eta: \Com^c\to W{\op C}$ is identified with the cooperad morphism
that lifts our coaugmentation morphism on the cogenerating collection $\oW{\op C}[1]$
too.
We can use that the coherence constraints in the definition of a $B\La$ structure are equivalent to the compatibility of the $\La$-structure
of the graded cooperad $W{\op C}^{\circ} = {\op F}^c(\oW{\op C}[1])$
with the differential of the bar construction
to complete our verifications.

We argue similarly when we consider the simplicial dg Hopf cooperad $W{\op C}^{\Delta^{\bullet}}$.
We may note that our $B\La$ structure is preserved by the simplicial structure
of our object in this case.
\end{proof}

\section{Mapping spaces through biderivations and the proof of Theorem \ref{thm:nerve to mapping spaces}}\label{sec:HGC nerve to mapping spaces}
We now prove the claim of Theorem \ref{thm:nerve to mapping spaces}, the weak-equivalence between the nerve of the dg Lie algebra of biderivations
associated to our dg Hopf cooperad models
of $E_n$-operads
and the mapping spaces associated to (the rationalization of) these $E_n$-operads
in the category of simplicial sets.
We pick a model $\mE_n$ of the operad of little $n$-discs $\lD_n$
in the category of $\La$-operads
in simplicial sets (thus, a cofibrant model of $E_n$-operads in simplicial sets),
as we explain in Section~\ref{sec:rational homotopy}.
Recall that we define the rationalization of this operad by $\mE_n^{\Q} = LG_{\bullet} R\Omega_{\sharp}\mE_n$,
where we consider the derived functors
of the adjoint Quillen functors $G_\bullet: \dg^*\Hopf\La\Op_{01}^c\leftrightarrows s\La\Op_{\varnothing *}^{op} :\Omega_{\sharp}$
between the model category of dg Hopf $\La$-cooperads $\dg^*\Hopf\La\Op_{01}^c$
and the model category of $\La$-operads in simplicial sets $s\La\Op_{\varnothing *}$.
Recall also that we have a weak-equivalence in the category of dg Hopf cooperads $R\Omega_{\sharp}\mE_n\simeq\e_n^c$,
where we set $\e_1^c = \Ass^c$ and $\e_n^c = \Poiss_n^c$ for $n\geq 2$ (see Section~\ref{sec:rational homotopy}).
We then have the following statement.

\begin{prop}\label{prop:Hopf En-cooperad mapping spaces}
We have a weak-equivalence of simplicial sets
\[
\Map_{s\La\Op_0}^h(\mE_m,\mE_n^{\Q})\simeq\Map_{\dg^*\Hopf\La\Op_{01}^c}^h(\e_n^c,\e_m^c),
\]
where we consider the mapping space associated to the objects $\e_m^c$ and $\e_n^c$
in the (homotopy) category of dg Hopf $\La$-cooperads
on the right-hand side.

We have an identity of simplicial sets
\[
\Map_{\dg^*\Hopf\La\Op_{01}^c}^h(\e_n^c,{\op C}) = \Mor_{\dg^*\Hopf\La\Op_{01}^c}(C(\stp_n),W{\op C}^{\Delta^{\bullet}}),
\]
for any $n\geq 2$ and for every dg Hopf $\La$-cooperad ${\op C}$, where we consider the Chevalley-Eilenberg complex $C(\stp_n)$
on the dual cooperad of the graded Drinfeld-Kohno Lie algebra operad $\stp_n$,
and we take the simplicial Hopf $\La$-cooperad $W{\op C}^{\Delta^{\bullet}}$
of Section~\ref{subsec:fibrant resolutions:simplicial frames}.
\end{prop}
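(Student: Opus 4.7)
The plan is to derive both statements from the Quillen adjunction and the cofibrant/fibrant resolution machinery assembled earlier in the paper; no essentially new computation is needed, and the work is primarily one of bookkeeping.

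For the weak equivalence in the first claim, the strategy is to invoke the Quillen adjunction $G_\bullet\dashv\Omega_{\sharp}$ of Theorem~\ref{thm:rationalization adjunction}. Since the left adjoint $G_\bullet$ lands in the opposite category of simplicial $\La$-operads, the derived adjunction yields
\[
\Map_{s\La\Op_0}^h(\mE_m,LG_\bullet R\Omega_{\sharp}\mE_n)\simeq\Map_{\dg^*\Hopf\La\Op_{01}^c}^h(R\Omega_{\sharp}\mE_n,R\Omega_{\sharp}\mE_m).
\]
The definition $\mE_n^{\Q}:=LG_\bullet R\Omega_{\sharp}\mE_n$ and the formality weak equivalences $R\Omega_{\sharp}\mE_n\simeq\e_n^c$, $R\Omega_{\sharp}\mE_m\simeq\e_m^c$ recalled in Section~\ref{sec:rational homotopy} then yield the stated identification. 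One only has to check that $\mE_m$ is cofibrant in $s\La\Op_{\varnothing *}$ (which we arranged when picking our model of $E_m$-operad) and that homotopy invariance of mapping spaces can be applied to the chosen replacements of $\e_n^c$ and $\e_m^c$, both of which follow from the model structures of Theorem~\ref{thm:model categories}.

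For the second claim, the approach is to exhibit $C(\stp_n)$ as a cofibrant resolution of $\e_n^c$ and $W{\op C}^{\Delta^\bullet}$ as a simplicial frame on a fibrant replacement $W{\op C}$ of ${\op C}$, so that the homotopy mapping space is computed by the ordinary morphism set of the right-hand side. That $C(\stp_n)$ is cofibrant and quasi-isomorphic to $\e_n^c$ is exactly the content of Theorem~\ref{thm:Drinfeld Kohno model}. The second ingredient requires assembling several preceding results: Proposition~\ref{prop:W-construction quasi-iso} gives the quasi-isomorphism $\rho:{\op C}\stackrel{\sim}{\to}W{\op C}$, which Proposition~\ref{prop:W-construction Lambda-structure} lifts to a quasi-isomorphism of dg Hopf $\La$-cooperads; Lemma~\ref{lemm:W-construction cofree structure} shows that $W{\op C}^\circ$ is cofree as a graded cooperad, so that Proposition~\ref{prop:fibrant Hopf Lambda-cooperads} gives the fibrancy of $W{\op C}$; and Theorem~\ref{thm:W-construction framing}, enhanced by Proposition~\ref{prop:W-construction framing Lambda-structure}, identifies $W{\op C}^{\Delta^\bullet}$ as a simplicial frame on $W{\op C}$ in $\dg^*\Hopf\La\Op_{01}^c$.

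The main subtlety—not difficult, but requiring attention—is to verify that every construction remains compatible with the $\La$-structure throughout, so that the resolutions and framings produced in Section~\ref{sec:fibrant resolutions} live in $\dg^*\Hopf\La\Op_{01}^c$ rather than merely in the category of dg Hopf cooperads. Once these structural verifications are in place, the identity is just the standard model-categorical expression of a homotopy mapping space in terms of a cofibrant source, a fibrant target and a simplicial frame, and the proof concludes by assembling these inputs with no further calculation.
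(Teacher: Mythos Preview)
Your proposal is correct and follows essentially the same approach as the paper's proof: the first assertion is obtained from the Quillen adjunction $G_\bullet\dashv\Omega_{\sharp}$ together with the formality equivalences $R\Omega_{\sharp}\mE_n\simeq\e_n^c$, and the second assertion is obtained by plugging in the cofibrant resolution $C(\stp_n)$ of $\e_n^c$ and the simplicial frame $W{\op C}^{\Delta^\bullet}$ on the fibrant replacement $W{\op C}$. Your write-up is in fact more explicit than the paper's in tracing the chain of references needed to verify fibrancy and the $\La$-compatibility of the framing.
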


\begin{proof}[Explanations]
The first assertion of this proposition follows from the relations
\[
\Map_{s\La\Op_{\varnothing *}}^h(\mE_m,\mE_n^{\Q}) = \Map_{s\La\Op_{\varnothing *}}^h(\mE_m,{LG}_{\bullet}{R\Omega}_{\sharp}\mE_n)
\simeq\Map_{\dg^*\Hopf\La\Op_{01}^c}^h(\Omega_{\sharp}\mE_n,\Omega_{\sharp}\mE_m)
\simeq\Map_{\dg^*\Hopf\La\Op_{01}^c}^h(\e_n^c,\e_m^c)
\]
which we deduce from our Quillen adjunction relation and the equivalence $R\Omega_{\sharp}\mE_n\simeq\e_n^c$
in the homotopy category of dg Hopf $\La$-cooperads.

The second assertion of the proposition follows from the definition of mapping spaces $\Map_{\dg^*\Hopf\La\Op_{01}^c}({\op B},{\op C})$
as morphisms sets $\Mor_{\dg^*\Hopf\La\Op_{01}^c}(\hat{\op B},\hat{\op C}{}^{\Delta^{\bullet}})$,
where we consider a cofibrant resolution $\hat{\op B}$ of the object ${\op B}$ on the source
and a simplicial frame $\hat{\op C}{}^{\Delta^{\bullet}}$
of a fibrant resolution $\hat{\op C}$
of the object ${\op C}$
on the target.
In our case, we just take $\hat{\op C} = W{\op C}$ and this simplicial frame $\hat{\op C}{}^{\Delta^{\bullet}} = W{\op C}^{\Delta^{\bullet}}$
constructed in the previous section. We also use that the Chevalley-Eilenberg complex $C(\stp_n)$
forms a cofibrant resolution of the dg Hopf $\La$-cooperad $\e_n^c$
when $n\geq 2$ (see Theorem~\ref{thm:Drinfeld Kohno model}).
\end{proof}

We now study mapping spaces of this general form
\[
\Map_{\dg^*\Hopf\La\Op_{01}^c}^h(C({\alg g}),{\op C}) = \Mor_{\dg^*\Hopf\La\Op_{01}^c}(C({\alg g}),W{\op C}^{\Delta^{\bullet}}),
\]
where ${\alg g}$ is a $\La$-cooperad in graded Lie coalgebras which is free as a $\La$-collection in graded vector spaces.
We explicitly have ${\alg g} = \La\otimes_{\Sigma}\bS{\alg g}$,
for some $\Sigma$-collection $\bS{\alg g}$.
We also assume that the collection of dg Lie coalgebras ${\alg g}(r)$ underlying our $\La$-cooperad ${\alg g}$
is equipped with a weight grading which is preserved by the structure operations
of our object, as in Remark~\ref{rem:Lambda-biderivation Lie algebra filtration}. (Recall that the cooperad $\stp_n$
satisfies these assumptions.) We have the following statement:

\begin{lemm}\label{lemm:simplicial Hopf cooperad biderivations}
We have an isomorphism of simplicial sets
\[
\Mor_{\dg^*\Hopf\La\Op_{01}^c}(C({\alg g}),W{\op C}^{\Delta^{\bullet}})\cong\MC(\BiDer_{\dg\La}(C({\alg g}),W{\op C}^{\Delta^{\bullet}})),
\]
where we consider the set of Maurer-Cartan elements in the simplicial dg Lie algebra of biderivations
associated to the dg Hopf $\La$-cooperad ${\op B} = C({\alg g})$
and to the simplicial dg Hopf $\La$-cooperad $W{\op C}^{\Delta^{\bullet}}$.
\end{lemm}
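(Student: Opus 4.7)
The plan is to apply Proposition~\ref{prop:Lambda-biderivation Lie algebra} simplicial degree by simplicial degree, and then promote the resulting bijection of sets to an isomorphism of simplicial sets by checking compatibility with the simplicial structure.

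First I would verify that for each $n\in\N$, the pair $(C({\alg g}), W{\op C}^{\Delta^n})$ satisfies the hypotheses of Proposition~\ref{prop:Lambda-biderivation Lie algebra}. The source $C({\alg g})$ is a good source Hopf $\La$-cooperad in the sense of Definition~\ref{defn:good Hopf Lambda-cooperads} by hypothesis on ${\alg g}$ (the $\La$-structure on $C({\alg g})$ is induced functorially by that on ${\alg g}=\La\otimes_\Sigma\bS{\alg g}$). The target $W{\op C}^{\Delta^n}$ is a good target Hopf $\La$-cooperad: Lemma~\ref{lemm:W-construction framing bar structure} identifies it with the bar construction $B(\oW{\op C}^{\Delta^n}_{\bo})$ of an augmented dg operad, and Lemma~\ref{lemm:W-construction BLambda-structure} shows that the $\La$-cooperad structure coming from Proposition~\ref{prop:W-construction framing Lambda-structure} coincides with the one associated to a natural $B\La$-structure on $\oW{\op C}^{\Delta^n}_{\bo}$. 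Proposition~\ref{prop:Lambda-biderivation Lie algebra} then yields for each $n$ a bijection
\[
\Mor_{\dg^*\Hopf\La\Op_{01}^c}(C({\alg g}), W{\op C}^{\Delta^n})\;\cong\;\MC\bigl(\BiDer_{\dg\La}(C({\alg g}), W{\op C}^{\Delta^n})\bigr),
\]
sending a morphism $\phi$ to its associated Maurer-Cartan element $\alpha''_\phi$.

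Next I would verify that this bijection is natural with respect to the cosimplicial structure on $W{\op C}^{\Delta^{\bullet}}$. The face and degeneracy operators of $W{\op C}^{\Delta^{\bullet}}$ act only on the polynomial differential form decorations $\APL(\Delta^n)_v$ attached to vertices in Construction~\ref{const:W-construction framing}; by inspection they preserve both the cofree cooperad structure on cogenerators $\oW{\op C}^{\Delta^\bullet}[1]$ and the $B\La$-structure of Lemma~\ref{lemm:W-construction BLambda-structure}, since the distinguished element $b$ and the corestriction operators $\eta_S$ are defined using only unit elements and the multiplication on $\APL(\Delta^\bullet)$, which are themselves natural in $\Delta^\bullet$. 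Hence every simplicial operator of $W{\op C}^{\Delta^{\bullet}}$ defines a morphism of good target Hopf $\La$-cooperads in the sense required by the bifunctoriality of $\BiDer_{\dg\La}(-,-)$ noted after Definition~\ref{defn:canonical Lambda-biderivation complex}. The correspondence of Proposition~\ref{prop:Lambda-biderivation Lie algebra} is natural in the target with respect to such morphisms (the map $\alpha''_\phi$ is obtained by projecting $\phi$ onto cogenerators and restricting to $\bS{\alg g}[-1]$, both of which are functorial operations), so the bijections assemble into an isomorphism of simplicial sets.

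The main expected obstacle is bookkeeping the naturality in the simplicial direction: I need to check that the simplicial operators on $W{\op C}^{\Delta^{\bullet}}$ genuinely restrict to morphisms of the underlying augmented dg operads $\oW{\op C}^{\Delta^n}_{\bo}$ that are compatible with the $B\La$-structure, so that they induce maps of the biderivation dg Lie algebras through which the Maurer-Cartan correspondence is preserved. This amounts to tracing through the identifications of Lemmas~\ref{lemm:W-construction framing bar structure} and~\ref{lemm:W-construction BLambda-structure} in families, which is straightforward since all the relevant constructions are performed ``pointwise in $\Delta^\bullet$'' on the polynomial form decorations. Once this compatibility is established, the levelwise bijection is automatically an isomorphism of simplicial sets, as claimed.
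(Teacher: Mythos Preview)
Your proposal is correct and follows essentially the same approach as the paper: apply Proposition~\ref{prop:Lambda-biderivation Lie algebra} in each simplicial degree, after verifying that $W{\op C}^{\Delta^n}$ is a good target Hopf $\La$-cooperad via Lemma~\ref{lemm:W-construction framing bar structure} and Lemma~\ref{lemm:W-construction BLambda-structure}. The paper's proof is a two-sentence remark calling this a ``formal corollary'' of Proposition~\ref{prop:Lambda-biderivation Lie algebra}; you have spelled out the simplicial naturality check that the paper leaves implicit, which is a reasonable elaboration but not a different argument.
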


\begin{proof}
This lemma is a formal corollary of the result of Proposition~\ref{prop:Lambda-biderivation Lie algebra}.
Note simply that the dg Hopf $\La$-cooperads $W{\op C}^{\Delta^n}$, $n\in\N$, satisfy the assumptions
of this proposition
by the results of Lemma~\ref{lemm:W-construction framing bar structure}
and of Lemma~\ref{lemm:W-construction BLambda-structure}.
\end{proof}

Recall that the complexes of biderivations associated to a source cooperad ${\op B} = C({\alg g})$ form complete graded dg Lie algebras
in the sense of Definition~\ref{defn:complete graded Linfty algebras}
when we assume that our $\La$-cooperad in graded Lie coalgebras ${\alg g}$
is equipped with a weight grading (see Remark~\ref{rem:Lambda-biderivation Lie algebra filtration}).
We may now form another simplicial object in the category of (complete graded) dg Lie algebras
by taking the (completed) tensor product of the dg Lie algebras
of biderivations associated to the pair $(C(\alg g),W{\op C})$
with the commutative dg algebras of polynomial forms on the simplices $\APL(\Delta^{\bullet})$:
\[
\DerL\hat{\otimes}\APL(\Delta^{\bullet}) = \BiDer_{\dg\La}(C(\alg g),W{\op C})\hat{\otimes}\APL(\Delta^{\bullet})
\]
(see Appendix~\ref{sec:Linfty algebras}).
We have a natural map
\[
\phi^{\bullet}: \BiDer_{\dg\La}(C({\alg g}),W{\op C}^{\Delta^{\bullet}})\to\BiDer_{\dg\La}(C({\alg g}),W{\op C})\hat{\otimes}\APL(\Delta^{\bullet})
\]
which we get by multiplying all decorations of vertices of the form $\APL(\Delta^{\bullet})$
in the target of a derivation with values in $W{\op C}^{\Delta^{\bullet}}$.
We have the following statement.

\begin{lemm}\label{lemm:Hopf cooperad biderivation framing}
The above map $\phi^{\bullet}$ is a morphism of simplicial complete graded dg Lie algebras
which is a quasi-isomorphism dimensionwise.
\end{lemm}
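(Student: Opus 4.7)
The plan is to realize the map $\phi^\bullet$ as being induced, via the biderivation identification of Proposition \ref{prop:Lambda-biderivation Lie algebra}, by a natural morphism $\mu^\bullet : W{\op C}^{\Delta^\bullet} \to W{\op C}\otimes\APL(\Delta^\bullet)$ of simplicial collections of commutative dg algebras, and then to establish the dimensionwise quasi-isomorphism by a spectral sequence argument based on the arity filtration.

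First I would define $\mu^n$ explicitly. By the end formula of Construction \ref{const:W-construction framing}, an element of the commutative dg algebra $W{\op C}^{\Delta^n}(S)$ assigns to each tree $T\in\mT_S'$ a decoration in ${\op C}(T)\otimes\bigotimes_{e\in ET}I_e\otimes\bigotimes_{v\in VT}\APL(\Delta^n)_v$. Using the iterated multiplication of the commutative dg algebra $\APL(\Delta^n)$ to collapse all the vertex factors into a single copy yields a natural map
\[
\mu^n_T : {\op C}(T)\otimes\bigotimes_{e\in ET}I_e\otimes\bigotimes_{v\in VT}\APL(\Delta^n)_v\longrightarrow{\op C}(T)\otimes\bigotimes_{e\in ET}I_e\otimes\APL(\Delta^n).
\]
Compatibility with edge contractions holds because the vertex multiplications $m_{v'v''}$ present in the end structure become part of the iterated product, while the edge evaluation $\ev^e_{t=0}$ is unchanged. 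Compatibility with the $\La$-corestriction operators of Proposition \ref{prop:W-construction Lambda-structure} follows similarly, using that the constant $1\in\APL(\Delta^n)$ introduced at a newly created binary vertex serves as a unit under multiplication; compatibility with the simplicial structure and the cooperadic composition coproducts is immediate. Passing to cogenerators and then to biderivations via Proposition \ref{prop:Lambda-biderivation Lie algebra} recovers $\phi^\bullet$, and the compatibility with the Lie bracket, the differential, and the simplicial structure then follows from the explicit formulas of Remark \ref{rem:biderivation dg Lie structure formulas}, since the bracket and differential are built out of structure operations on ${\op C}$ and the commutative product of the target, both of which are preserved by $\mu^n$.

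For the quasi-isomorphism statement, I would fix $n$ and apply on both sides the descending complete arity filtration
\[
\mathcal F^k = \prod_{r\geq k}\Hom_{\gr\Sigma_r}(\bS{\alg g}(r),-)
\]
used in the proof of Proposition \ref{prop:Lambda-biderivation complex homotopy invariance}. The map $\phi^n$ preserves this filtration and the $E^0$ differential is induced only by the internal differentials of $\bS{\alg g}(r)$ and of the coefficient object, so it suffices to check that for each arity $r$ the map $\mu^n_r : \oW{\op C}_{\bo}^{\Delta^n}(r)\to\oW{\op C}_{\bo}(r)\otimes\APL(\Delta^n)$ is a quasi-isomorphism of dg $\Sigma_r$-modules. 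This in turn reduces, via a secondary filtration of the cogenerator object by the number of vertices of the trees, to the claim that for every $k\geq 1$ the iterated multiplication $\APL(\Delta^n)^{\otimes k}\to\APL(\Delta^n)$ is a quasi-isomorphism, which is immediate from the contractibility of the simplex: both sides are acyclic resolutions of $\K$ and the multiplication sends $1^{\otimes k}$ to $1$.

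The main obstacle I anticipate is making the secondary filtration rigorous in the presence of the end over $\mTc_S$. I would handle this by observing, as in Construction \ref{const:W-construction}, that every object of $\mTc_S$ has trivial automorphism group and that the set of isomorphism classes of trees with a given leaf set is finite in the cogenerator description of Lemma \ref{lemm:W-construction cofree structure}, so that the end decomposes as a product indexed by representatives of isomorphism classes of trees and the vertex-count filtration has finite and well-behaved associated graded pieces. Standard convergence of the resulting two-step spectral sequence (completeness of the arity filtration, bounded filtration by vertex count in each arity) then yields the dimensionwise quasi-isomorphism, and completes the proof.
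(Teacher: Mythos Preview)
Your construction of $\mu^n$ matches how the paper describes $\phi^\bullet$, and your handling of the dg Lie structure is along the right lines. The divergence is in the quasi-isomorphism argument, where you work harder than necessary and the secondary step has a gap.

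The paper bypasses any direct analysis of cogenerators via a two-out-of-three argument on the square
\[
\begin{tikzcd}
\BiDer_{\dg\La}(C({\alg g}),W{\op C}^{\Delta^0})\ar{r}{\cong}\ar{d} &
\BiDer_{\dg\La}(C({\alg g}),W{\op C})\hat{\otimes}\APL(\Delta^0)\ar{d} \\
\BiDer_{\dg\La}(C({\alg g}),W{\op C}^{\Delta^n})\ar{r}{\phi^n} &
\BiDer_{\dg\La}(C({\alg g}),W{\op C})\hat{\otimes}\APL(\Delta^n)
\end{tikzcd}
\]
induced by $\underline{n}\to\underline{0}$: the top row is the identity since $\APL(\Delta^0)=\K$, the left vertical is a quasi-isomorphism by Lemma~\ref{lemm:W-construction framing quasi-isomorphism} combined with the homotopy invariance of Proposition~\ref{prop:Lambda-biderivation complex homotopy invariance}, and the right vertical is a quasi-isomorphism because $\K\to\APL(\Delta^n)$ is one. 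No secondary filtration is needed.

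In your approach, the claim that the end over $\mTc_S$ ``decomposes as a product indexed by representatives of isomorphism classes of trees'' is not justified by what you wrote. Triviality of automorphisms and finiteness of objects do not eliminate the edge-contraction morphisms of $\mTc_S$, which impose equalizer relations between trees with different vertex counts. The product decomposition obtained in the proof of Proposition~\ref{prop:W-construction quasi-iso} depends on the explicit splitting $I=\K 1\oplus I'$ together with the fact that $\ev_{t=0}$ annihilates $I'$; for $W{\op C}^{\Delta^n}$ the contraction structure maps additionally multiply the two adjacent $\APL(\Delta^n)$ vertex factors, and since that multiplication $\APL(\Delta^n)^{\otimes 2}\to\APL(\Delta^n)$ is not split, the analogous product decomposition does not go through. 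Your vertex-count filtration and the reduction to $\APL(\Delta^n)^{\otimes k}\to\APL(\Delta^n)$ are therefore not justified as written. The cleanest repair is precisely the paper's square, which you may equally run at the level of the cogenerating operads $\oW{\op C}^{\Delta^n}_{\bo}$ after your first (arity) spectral sequence if you prefer to keep your overall structure.
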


\begin{proof}
We easily check, from the construction of Section~\ref{sec:biderivation complexes}, that our map preserves the dg Lie algebra
structures attached to our objects.
We have a commutative diagram
\[
\begin{tikzcd}
\BiDer_{\dg\La}(C({\alg g}),W{\op C}^{\Delta^0})\ar{r}{\cong}\ar{d} &
\BiDer_{\dg\La}(C({\alg g}),W{\op C})\hat{\otimes}\APL(\Delta^0)\ar{d} \\
\BiDer_{\dg\La}(C({\alg g}),W{\op C}^{\Delta^n})\ar{r} &
\BiDer_{\dg\La}(C({\alg g}),W{\op C})\hat{\otimes}\APL(\Delta^n)
\end{tikzcd}
\]
by the functoriality of our construction, for each dimension $n\in\N$, where the vertical maps are given by the simplicial operators
associated to the map $\underline{n}\to\underline{0}$
in the simplicial category. We use that $W{\op C} = W{\op C}^{\Delta^0}\to W{\op C}^{\Delta^n}$
defines a quasi-isomorphism and the homotopy invariance of the dg Lie algebras of biderivations $\BiDer_{\dg\La}(C({\alg g}),-)$
for dg Hopf $\La$-cooperads of this form (see Proposition \ref{prop:Lambda-biderivation complex homotopy invariance})
to check that the vertical morphism on the left-hand side
of this diagram is a quasi-isomorphism.
We use that $\K = \APL(\Delta^0)\to\APL(\Delta^n)$ defines a quasi-isomorphism
as well to check that the vertical morphism on the right-hand side
of our diagram is a quasi-isomorphism too.
We conclude that the lower horizontal morphism is a quasi-isomorphism too,
which is the assertion of the lemma.
\end{proof}

We deduce the following statement from the results of these lemmas.

\begin{thm}\label{thm:Hopf cooperad biderivation nerve}
Let ${\alg g}$ be a $\La$-cooperad in graded Lie coalgebras which is free as a $\La$-collection in graded vector spaces.
Let ${\op C}$ be any dg Hopf $\La$-cooperad.
We have an equivalence in the homotopy category of simplicial sets
\[
\Map_{\dg^*\Hopf\La\Op_{01}^c}^h(C({\alg g}),{\op C})\simeq\MC_\bullet(\BiDer_{\dg\La}(C({\alg g}),W{\op C})),
\]
where we consider the nerve of the complete graded dg Lie algebra of biderivations $\BiDer_{\dg\La}(C({\alg g}),W{\op C})$
on the right hand side.
\end{thm}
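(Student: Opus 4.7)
The plan is to concatenate the three main results established in this section with the definition of the nerve of a complete graded dg Lie algebra. First I would observe that, by the second assertion of Proposition~\ref{prop:Hopf En-cooperad mapping spaces} applied to an arbitrary cooperad $C({\alg g})$ in place of $C(\stp_n)$, we may model the derived mapping space by the simplicial morphism set
\[
\Map^h_{\dg^*\Hopf\La\Op_{01}^c}(C({\alg g}),{\op C}) \simeq \Mor_{\dg^*\Hopf\La\Op_{01}^c}(C({\alg g}),W{\op C}^{\Delta^\bullet}),
\]
using that $C({\alg g})$ is cofibrant in our model category under the assumption that ${\alg g}$ is free as a $\La$-collection, that $W{\op C}$ is a fibrant replacement of ${\op C}$ by Proposition~\ref{prop:fibrant Hopf Lambda-cooperads} and Lemma~\ref{lemm:W-construction cofree structure}, and that $W{\op C}^{\Delta^\bullet}$ is a simplicial frame of $W{\op C}$ by Proposition~\ref{prop:W-construction framing Lambda-structure}.

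Next, Lemma~\ref{lemm:simplicial Hopf cooperad biderivations} identifies this simplicial morphism set dimensionwise with the set of Maurer--Cartan elements in the biderivation dg Lie algebra, yielding an isomorphism of simplicial sets
\[
\Mor_{\dg^*\Hopf\La\Op_{01}^c}(C({\alg g}),W{\op C}^{\Delta^\bullet}) \cong \MC(\BiDer_{\dg\La}(C({\alg g}),W{\op C}^{\Delta^\bullet})).
\]
By Lemma~\ref{lemm:Hopf cooperad biderivation framing}, the map $\phi^\bullet$ is a dimensionwise quasi-isomorphism of simplicial complete graded dg Lie algebras with target $\BiDer_{\dg\La}(C({\alg g}),W{\op C})\hat{\otimes}\APL(\Delta^\bullet)$, and it preserves the weight grading induced by that of $\bS{\alg g}$ (see Remark~\ref{rem:Lambda-biderivation Lie algebra filtration}).

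Applying the Goldman--Millson type result recalled in Theorem~\ref{thm:GoldmanMillson}, such a weight-preserving quasi-isomorphism of complete graded dg Lie algebras induces a weak equivalence of Maurer--Cartan simplicial sets, so we obtain
\[
\MC(\BiDer_{\dg\La}(C({\alg g}),W{\op C}^{\Delta^\bullet})) \simeq \MC(\BiDer_{\dg\La}(C({\alg g}),W{\op C})\hat{\otimes}\APL(\Delta^\bullet)).
\]
The right-hand side is, by the very definition of the nerve of a complete graded dg Lie algebra recalled in Appendix~\ref{sec:Linfty algebras}, precisely $\MC_\bullet(\BiDer_{\dg\La}(C({\alg g}),W{\op C}))$. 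Splicing these identifications produces the desired weak equivalence.

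The main potential obstacle is the verification that $C({\alg g})$ is cofibrant in $\dg^*\Hopf\La\Op_{01}^c$ under the sole hypothesis that ${\alg g}$ is free as a $\La$-collection, since this cofibrancy was only explicitly invoked in the particular case $C(\stp_n)$ covered by Theorem~\ref{thm:Drinfeld Kohno model}. I would address this by factoring through the Quillen adjunctions of Remark~\ref{rem:Quillen adjunctions}: the freeness of ${\alg g} = \La\otimes_\Sigma\bS{\alg g}$ makes $C({\alg g})$ the image of $\bS{\alg g}$ under the composite of the left Quillen functors passing from $\Sigma$-collections to $\La$-collections, then to $\La$-cooperads in graded Lie coalgebras, and finally to Hopf $\La$-cooperads via the Chevalley--Eilenberg construction, so its cofibrancy reduces to the (easy) cofibrancy of $\bS{\alg g}$ in dg $\Sigma$-collections. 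A secondary check, largely formal, is that the Goldman--Millson weak equivalence of nerves applies to our weight grading; this is immediate since both sides carry the grading inherited from $\bS{\alg g}$ and $\phi^\bullet$ is weight-homogeneous by construction.
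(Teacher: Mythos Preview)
Your proposal is correct and follows essentially the same route as the paper's proof: the identification of the mapping space with $\Mor(C({\alg g}),W{\op C}^{\Delta^\bullet})$, Lemma~\ref{lemm:simplicial Hopf cooperad biderivations}, Lemma~\ref{lemm:Hopf cooperad biderivation framing}, and the Goldman--Millson theorem (with Remark~\ref{rem:graded quasi-isomorphisms}) are exactly the ingredients the paper invokes, and in the same order. Your added discussion of the cofibrancy of $C({\alg g})$ is extra care about an assumption the paper leaves implicit in the text preceding the lemmas; the paper does not spell this out in its (two-line) proof.
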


\begin{proof}
We use the identity of Lemma~\ref{lemm:simplicial Hopf cooperad biderivations}
and that the quasi-isomorphism of Lemma~\ref{lemm:Hopf cooperad biderivation framing}
induces a weak-equivalence of simplicial sets
on the nerve of our complete graded dg Lie algebras (see Theorem~\ref{thm:GoldmanMillson}
and Remark~\ref{rem:graded quasi-isomorphisms}).
\end{proof}

We can now complete the:

\begin{proof}[Proof of Theorem \ref{thm:nerve to mapping spaces}]
We apply the above theorem to the dual cooperad of the graded Drinfeld-Kohno Lie algebra operad ${\alg g} = \stp_n$
and to the dg Hopf $\La$-cooperad ${\op C} = \e_m^c$.
Then we just use the result of Proposition~\ref{prop:Hopf En-cooperad mapping spaces}
to get the weak-equivalence of Theorem \ref{thm:nerve to mapping spaces}
\[
\Map^h(\lD_m,\lD_n^{\Q})\simeq\MC_\bullet(\Def(\E_n^c,\E_m^c)),
\]
where we take the deformation complex $\Def(\E_n^c,\E_m^c) = \BiDer_{\dg\La}(\check{\E}_n^c,\hat{\E}_m^c)$
determined by the choice $\check{\E}_n^c = C(\stp_n)$,
for the cofibrant model of the dg Hopf $\La$-cooperad $\E_n^c = R\Omega_{\sharp}{\op E}_n$,
and by the choice $\hat{\E}_m^c = W(\e_m^c)$,
for the fibrant model of the dg Hopf $\La$-cooperad $\E_m^c = R\Omega_{\sharp}{\op E}_m$.
(Recall also that we have the relation $\Map^h(\lD_m,\lD_n^{\Q})\simeq\Map_{s\La\Op_0}^h(\mE_m,\mE_n^{\Q})$
by the Quillen equivalence between topological $\La$-operads
and simplicial $\La$-operads.)
\end{proof}

\part{The applications of graph complexes}\label{part:graph complexes}
We complete the proof of the main results of this paper in this part.
To be explicit, we establish that the dg Lie algebra of biderivations $\Def(\E_n^c,\E_m^c) = \BiDer_{\dg\La}(C(\stp_n),W{\e_m})$,
considered in the previous section,
is quasi-isomorphic to the hairy graph complex $\HGC_{m,n}$
as an $L_{\infty}$~algebra.
Then we can use the result of Theorem~\ref{thm:nerve to mapping spaces}, established in the previous part,
and the homotopy invariance
of the nerve of $L_{\infty}$~algebras
under $L_{\infty}$~equivalences,
to get the weak-equivalence $\Map^h(\lD_m,\lD_n^{\Q})\simeq\MC_\bullet(\HGC_{m,n})$
asserted by the results of Theorem~\ref{thm:main topological statement}
and Theorem~\ref{thm:main topological statement:Shoikhet case}.
We address these verifications in the second section of this part, after preliminary recollections on the subject of graph complexes,
to which the first section of the part is devoted.

We then give the detailed proof of the corollaries of Theorem~\ref{thm:main topological statement} and Theorem~\ref{thm:main topological statement:Shoikhet case},
notably the results of Corollary~\ref{cor:codimension zero case}
and Corollary~\ref{cor:codimension one case}
about the homotopy of the mapping spaces $\Map^h(\lD_m,\lD_n^{\Q})$
in the special cases $m=n$ and $m=n-1$.
We devote the third section of this part to these questions.

\section{Recollections on graph complexes and graph operads}\label{sec:graphs}
In this section we recall the construction of the Kontsevich graph complexes $\GC_n$, of the hairy graph complexes $\HGC_{m,n}$,
and of Kontsevich's graph operad $\Graphs_n$.
We will be brief, referring the reader to \cite{K2} or \cite{Will} for more details.

We recall the definition of the operads $\Graphs_n$ and of the Kontsevich graph complexes $\GC_n$ in the first subsection of the section.
We review the definition of the hairy graph complex and we explain an interpretation
of hairy graphs in terms of coderivations of $\La$-cooperads afterwards,
in the second subsection of this section.
We also recall the definition of the Shoikhet $L_{\infty}$~structure on the complex of hairy graphs $\HGC_{1,n}$.
We devote the third subsection to this subject.
We review the connection between the Kontsevich graph complexes $\GC_n$ and the complexes of hairy graphs $\HGC_{n,n}$
to conclude these recollections. We treat this question in the fourth subsection.

\subsection{The Kontsevich graph complexes and the graph operads}\label{subsec:graphs:graph operads}
The operads $\Graphs_n$ were introduced by Kontsevich in his proof of the formality of the little discs operads.
In fact, the operad $\Graphs_n$ is quasi-isomorphic to the $n$-Poisson operad $\Poiss_n$ when $n\geq 2$.
The dual cooperad of this graph operad accordingly forms a dg cooperad that is quasi-isomorphic to the dg cooperad $\e_n^c = \Poiss_n^c$
considered in the previous section.
In the sequel, we will mainly use this graph operad model of the $n$-Poisson operad
to relate the biderivation dg Lie algebras considered in the previous part
to the hairy graph complex.

We provide a survey on the definition of the operads $\Graphs_n$ in this section.
We also recall the definition of the Kontsevich graph complex $\GC_n$.
We start with the preliminary construction of an operad of graphs
which we explain in the next paragraph.

\begin{const}[{The operad of graphs}]\label{const:operad of graphs}
We use the notation $\gra_{r,k}$ for the set of directed graphs with $r$ vertices, numbered from $1$ to $r$,
and $k$ edges, numbered from $1$ to $k$.\footnote{We allow graphs with multiple edges and short loops (edges that connect a vertex to itself).}
We equip this space with an action of the group $\Sigma_r\times\Sigma_k\ltimes\Sigma_2^k$
by permuting the vertex and edge labels and changing the edge directions.
We then consider the graded vector space such that
\[
\Gra_n(r) = \oplus_k(\K\langle\gra_{r,k}\rangle[(n-1)k])_{\Sigma_k\ltimes\Sigma_2^k},
\]
where the action of $\Sigma_k$ involves the signature of permutations when $n$ is even
and the action of $\Sigma_2^k$ involves the signature when $n$ is odd.
We have natural composition products $\circ_i: \Gra_n(k)\otimes\Gra_n(l)\to\Gra_n(k+l-1)$,
so that the collection of these graded vector spaces $\Gra_n$
inherits the structure of an operad. In short, the composite of graphs $\alpha\circ_i\beta$
is defined by plugging the graph $\beta$ in the $i$th vertex of the graph $\alpha$
and by taking the sum of all possible reconnections of the dandling edges
of this vertex in $\alpha$ to vertices of $\beta$.
\end{const}

\begin{rem}\label{rem:operad of graphs degrees}
We see from the above definition that the degree of an element $\alpha$ in the operad of graphs $\Gra_n(r)$
can be determined by assuming that each edge contributes to the degree by $n-1$ (whereas a null degree is assigned to the vertices).
We accordingly have $|\alpha| = (n-1)k$, where $k$ is the number of edges of our graph $\alpha$.
\end{rem}

We use the following observation:

\begin{prop}\label{prop:Poisson operad and graphs}
We have a morphism of operads $\phi: \Poiss_n\to\Gra_n$
such that:
\begin{align*}
\phi(x_1 x_2) & = \begin{tikzpicture}[baseline=-.25ex]
\node[ext] at (0,0) {1};
\node[ext] at (0.5,0) {2};
\end{tikzpicture},
\\
\phi([x_1,x_2]) & = \begin{tikzpicture}[baseline=-.25ex]
\node[ext] (v) at (0,0) {1};
\node[ext] (w) at (0.5,0) {2};
\draw (v) edge (w);
\end{tikzpicture},
\end{align*}
for each $n\geq 2$.\qed
\end{prop}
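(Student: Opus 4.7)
The plan is to use the standard quadratic presentation of $\Poiss_n$ and verify the relations one by one in $\Gra_n$. Recall that $\Poiss_n$ is generated by a binary product $\mu = x_1\cdot x_2$ of degree $0$ (symmetric) and a Lie bracket $\lambda = [x_1,x_2]$ of degree $n-1$ (antisymmetric for $n$ odd, symmetric for $n$ even in the graded sense), modulo the associativity of $\mu$, the Jacobi identity for $\lambda$, and the Poisson distribution (Leibniz) relation $[x_1\cdot x_2,x_3]=[x_1,x_3]\cdot x_2\pm x_1\cdot[x_2,x_3]$. Hence $\phi$ is uniquely specified by the two generator values given in the statement, and the only thing to verify is that these values satisfy the four relations in $\Gra_n$.

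First, I check the $\Sigma_2$-equivariance of the generators. The graph $\phi(\mu)$ with two labeled vertices and no edge is visibly invariant under swapping vertex labels, matching the commutativity of $\mu$. For $\phi(\lambda)$, swapping the vertex labels $1\leftrightarrow 2$ is by definition the same as reversing the orientation of the unique edge; by the sign conventions recalled in Construction \ref{const:operad of graphs}, this produces the sign $(-1)^{n-1}$ (the $\Sigma_2^k$-action carries a signature when $n$ is odd, none when $n$ is even), which is exactly the sign required by the graded (anti)symmetry of $\lambda$.

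Next, the associativity of $\mu$ is immediate: both $\phi(\mu)\circ_1\phi(\mu)$ and $\phi(\mu)\circ_2\phi(\mu)$ yield the totally disconnected graph on three labeled vertices, because no edges are incident to the vertex being blown up, so the Kontsevich reconnection sum is a single term. The Jacobi identity for $\phi(\lambda)$ reduces, after expanding $\phi(\lambda)\circ_1\phi(\lambda)$ in $\Gra_n(3)$, to the assertion that the cyclic sum of the three path graphs on $\{1,2,3\}$ with a marked edge at the middle vertex vanishes; one verifies this by inspecting how the three permuted summands pair up, with the signs dictated by edge-orientation conventions producing the cancellation.

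The central check, and expected main obstacle, is the Leibniz relation. Here the characteristic feature of Kontsevich's composition enters: when forming $\phi(\lambda)\circ_1\phi(\mu)$, one inserts the two disjoint vertices of $\phi(\mu)$ at vertex $1$ of $\phi(\lambda)$ and sums over all ways of reconnecting the edge (formerly attached to vertex $1$) to either of the two inserted vertices. This sum consists of exactly two graphs, which under the Koszul sign rule are identified with $\phi([x_1,x_3])\cdot\phi(x_2)+\phi(x_1)\cdot\phi([x_2,x_3])$ (with the correct sign depending on the parity of $n-1$). The bookkeeping of signs from edge orientations, edge labelings, and the degree shift $[(n-1)k]$ in the definition of $\Gra_n$ is the only delicate point; once one fixes a convention for the representative of the graph $\phi(\lambda)$ (choice of edge orientation and edge label), the desired equality follows from a direct expansion. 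This completes the verification that $\phi$ descends from the free operad to $\Poiss_n$ and hence defines a morphism of operads $\phi\colon\Poiss_n\to\Gra_n$.
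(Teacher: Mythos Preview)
Your verification by checking the quadratic relations of $\Poiss_n$ in $\Gra_n$ is the standard argument and is essentially correct; the paper itself offers no proof and simply marks the statement with a \qed, treating it as well known from the literature (cf.\ Kontsevich~\cite{K2}, Willwacher~\cite{Will}).

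One small slip: under the conventions of Construction~\ref{const:operad of graphs}, swapping the two vertex labels on the single-edge graph (equivalently, reversing the edge) produces the sign $(-1)^n$, not $(-1)^{n-1}$; your parenthetical description and your conclusion are already consistent with the correct sign, so this is only a typo in the displayed exponent. Your Leibniz check is the heart of the matter and is handled correctly: the reconnection sum in $\phi(\lambda)\circ_1\phi(\mu)$ is what makes the distribution relation hold on the nose in $\Gra_n$. For the Jacobi identity, which you leave implicit, the point is that each of the three ``path'' graphs on $\{1,2,3\}$ occurs twice in the cyclic sum with opposite signs, the cancellation coming from the signature on edge labelings (for $n$ even) or edge orientations (for $n$ odd) combined with the signs in the graded Jacobi relation.
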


We are going to consider the following prolongments of the operad morphism
of this proposition:
\[
\hoLie_n\to\Lie_n\to\Poiss_n\to\Gra_n.
\]
Recall that $\Lie_n\subset\Poiss_n$ denotes the operad generated by the graded Lie bracket $[-,-]$ inside $\Poiss_n$,
and we use the notation $\hoLie_n = \Omega(\Com^c\{-n\})$ and $\hoPoiss_n = \Omega(\Poiss_n^c\{-n\})$
for the Koszul resolution of these operads (see Section~\ref{subsec:Koszul duality}).
We actually use the morphism $\hoLie_n\to\Gra_n$ to define the Kontsevich graph complex.
We review the definition of this object in the next paragraph.

\begin{const}[{The Kontsevich graph complexes}]\label{const:GC}
We first define the full graph complex $\fGC_n$ as the operadic deformation complex:
\[
\fGC_n = \Def_{\dg\Op}(\hoLie_n\to\Gra_n),
\]
which is an extension of the complex of operadic derivations $\Der_{\dg\Sigma}(\hoLie_n\to\Gra_n)$
associated to the morphism $\hoLie_n\to\Gra_n$.

In short, the latter dg vector space $\Der_{\dg\Sigma}(\hoLie_n\to\Gra_n)$
is dual to the dg vector spaces
of coderivations considered in Section~\ref{sec:coderivation complexes}.
Recall that we have $\hoLie_n = \Omega(\Com^c\{-n\})$
by construction of the operad $\hoLie_n$,
where we consider the cobar construction
on the $n$-fold operadic desuspension of the commutative cooperad $\Com^c$.
To go further, we can dualize the constructions of Proposition~\ref{prop:coderivation lie algebra}
in order to identify the shifted derivation complex $\Der_{\dg\Sigma}(\hoLie_n\to\Gra_n)[-1]$
with the twisted dg Lie algebra structure associated to the graded hom-object
of $\Sigma$-collections $\DerL = \Hom_{\gr\Sigma}(\overline{\Com}{}^c\{-n\},\Gra_n)$,
We formally have $\Der_{\dg\Sigma}(\hoLie_n\to\Gra_n) = \DerL^{\alpha_{\phi}}$,
for some Maurer-Cartan element $\alpha_{\phi}$
associated to our morphism $\phi: \hoLie_n\to\Gra_n$.
We should specify that the derivations are not required to preserve augmentation ideals in the context of operads,
because we do not necessarily assume that operads, in contrast to cooperads,
come equipped with an augmentation over the unit operad.
We therefore keep the full operad $\Gra_n$ in the above expression of the dg Lie algebra of derivations.
Note however that this subtlety is not significant at this stage,
because the collection $\overline{\Com}{}^c$
vanishes in arity $r\leq 1$.

We actually have
\[
\Def_{\dg\Op}(\hoLie_n\to\Gra_n) = \widehat{\DerL}{}^{\alpha_{\phi}}
\]
when we consider our deformation complex, where we consider a natural extension of the dg Lie algebra $\DerL$
such that
\[
\widehat{\DerL} = \Hom_{\gr\Sigma}(\Com^c\{-n\},\Gra_n).
\]
Thus, in comparison to the coderivation and deformation complexes of cooperads,
which we study in Section~\ref{sec:coderivation complexes},
we add a component $\K = \Hom(\Com^c\{-n\}(1),\Gra_n(1))$
to our graded vector space $\DerL$. We may actually identify this deformation complex $\widehat{\DerL}$
with the extended complex
of derivations $\xDer_{\dg\Sigma}(\hoLie_n\to\Gra_n) = \Der_{\dg\Sigma}(\hoLie_n^+,\Gra_n)$,
where we consider the extended operad $\hoLie_n^+$
of Section~\ref{subsec:coderivation complexes:coderivations}. (To check this identity, we crucially need the convention that operad derivations
are not required to preserve augmentation ideals.)

We can now use that $\Com^c\{-n\}(r)$ is identified with the trivial representation $\Com^c\{-n\}(r) = \K$
of the symmetric group $\Sigma_r$ when $n$ is even (respectively, with the signature representation
when $n$ is odd) to give an explicit description
of this dg Lie algebra $\fGC_n$.
We just get that $\fGC_n$ is identified with the graded vector space spanned by formal series of graphs
with indistinguishable vertices (usually colored in black in our pictures)
together with the differential defined by the blow-up operation
\[
\begin{tikzpicture}[scale=.5,baseline=-.5ex]
\node[int] (v1) at (0,0){};
\draw (v1) -- +(-0.8,0.8);
\draw (v1) -- +(-0.8,0);
\draw (v1) -- +(-0.8,-0.8);
\draw (v1) -- +(0.8,0.8);
\draw (v1) -- +(0.8,0);
\draw (v1) -- +(0.8,-0.8);
\end{tikzpicture}
\quad\mapsto
\quad\begin{tikzpicture}[scale=.5,baseline=-.5ex]
\node[int] (v1) at (-0.5,0){};
\node[int] (v2) at (0.5,0){};
\draw (v1) -- +(-0.8,0.8);
\draw (v1) -- +(-0.8,0);
\draw (v1) -- +(-0.8,-0.8);
\draw (v1) edge (v2);
\draw (v2) -- +(0.8,0.8);
\draw (v2) -- +(0.8,0);
\draw (v2) -- +(0.8,-0.8);
\end{tikzpicture}
\]
on the vertices of graphs.
We can determine the Lie bracket of $\fGC_n$ by the commutator
\[
[\alpha,\beta] = \alpha\bullet\beta -(-1)^{|\alpha||\beta|}\beta\bullet\alpha
\]
of a pre-Lie composition operation of the form
\beq{eq:prelie}
\alpha\bullet\beta = \sum_v\alpha\bullet_v\beta,
\eeq
where $\alpha\bullet_v\beta$ is the sum of the graphs that we obtain by plugging the graph $\beta$
into a vertex $v$ of $\alpha$ and by reconnecting the incident edges of this vertex $v$
in $\alpha$ to vertices of $\beta$ in all possible ways.
The differential is identified as the Lie bracket with the two-vertex graph
\[
\begin{tikzpicture}
\node[int](v) at (0,0) {};
\node[int](w) at (0.5,0) {};
\draw (v) edge (w);
\end{tikzpicture},
\]
which represents the Maurer-Cartan element that corresponds to our morphism $\hoLie_n\to\Gra_n$
in $\fGC_n$.

We immediately see that the graded vector space spanned by (formal series of) connected graphs with at least bivalent vertices
is preserved by the differential and the Lie bracket of this graph complex $\fGC_n$,
and so does the graded vector space spanned by (formal series of) connected graphs with at least trivalent vertices.
We use the notation $\GC_n^2$ for the former dg Lie algebra, which consists of connected graphs with at least bivalent vertices
inside the full graph complex $\fGC_n$, and we actually define the Kontsevich graph complex $\GC_n$
as the latter dg Lie algebra,
which consists of the connected graphs with at least trivalent vertices.
\end{const}

\begin{rem}\label{rem:GC degrees}
We can determine the degree of an element $\alpha$ in the graph complex $\GC_n$ (or in $\GC_n^2$)
by taking the assumption that each edge contributes to the degree by $n-1$
each vertex contributes by $-n$, and by adding $n$
to the obtained total degree.
We accordingly have $|\alpha| = (n-1)k - n r + n$, where $k$ is the number of edges of our graph
and $r$ is the number of vertices. (The term $(n-1)k$ corresponds to the grading of the operad of graphs $\Gra_n$,
and the terms $- n r + n$ come from the graded vector space $\Com^c\{-n\}(r)$,
which is concentrated in degree $n(r-1)$ by definition
of the operadic suspension.)
\end{rem}

We mainly deal with the complexes $\GC_n^2$ and $\GC_n$ in what follows, rather than with the full graph complex $\fGC_n$.
We have the following proposition:

\begin{prop}[{see \cite[Proposition 3.4]{Will}}]\label{prop:graph complex homology}
We have an identity:
\[
H(\GC_n^2) = H(\GC_n)\oplus\bigoplus_{r\equiv 2n-3\mymod{4}}\K L_r
\]
where $L_r$ denotes the (homology class of the) loop graph with $r$ vertices:
\[
L_r = \begin{tikzpicture}[baseline=-.65ex]
\node[int] (v1) at (0:1) {};
\node[int] (v2) at (72:1) {};
\node[int] (v3) at (144:1) {};
\node[int] (v4) at (216:1) {};
\node (v5) at (-72:1) {$\cdots$};
\draw (v1) edge (v2) edge (v5) (v3) edge (v2) edge (v4) (v4) edge (v5);
\end{tikzpicture}
\qquad\text{($r$ vertices and $r$ edges)},
\]
so that we have the degree formula $|L_r| = n-r$, for any $r\geq 1$.\qed
\end{prop}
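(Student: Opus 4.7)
The plan is to work with the projection $\pi \colon \GC_n^2 \to \GC_n$ that sends a graph with all vertices of valency $\geq 3$ to itself and kills any graph containing a bivalent vertex. First I would check that $\pi$ is a chain map: the $\GC_n^2$-differential of a graph in $\GC_n$ decomposes as the $\GC_n$-differential (from splits of vertices of valency $\geq 4$ into two vertices of valency $\geq 3$) plus terms with a new bivalent vertex (from splits of trivalent vertices), and the latter are killed by $\pi$; conversely, the differential never reduces the number of bivalent vertices, so $\pi$ annihilates $d\Gamma$ whenever $\Gamma$ already has one. This yields a short exact sequence
\begin{equation*}
0 \to K \to \GC_n^2 \xrightarrow{\pi} \GC_n \to 0
\end{equation*}
with $K = \ker\pi$ the subcomplex of graphs containing at least one bivalent vertex.

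Next, I would show that the inclusion $I := \mathrm{span}\{L_r : r \geq 1\} \hookrightarrow K$ is a quasi-isomorphism and compute $H(I)$. For the first part, I would construct a degree $+1$ operator $h$ on $K/I$ that removes a bivalent vertex at a chosen end of a maximal chain of bivalent vertices adjacent to a vertex of valency $\geq 3$, and verify a relation of the form $dh + hd = N \cdot \mathrm{id}$ on $K/I$, where $N$ is a combinatorial statistic invertible on any graph in $K/I$ (for instance the number of endpoints of maximal bivalent chains attached to valency-$\geq 3$ vertices, which is always positive on $K/I$). For $H(I)$, a dihedral symmetry analysis of $L_r$ --- combining the $\Sigma_k$-signs on edges for $n$ even, or the $\Sigma_2^k$-signs on edge orientations for $n$ odd, with the operadic suspension signs from $\Com^c\{-n\}$ --- yields $L_r = 0$ unless $r \equiv 2n-3 \pmod 4$; and when $L_r \neq 0$, the cyclic symmetry collapses the sum defining $dL_r$ to $dL_r = \pm r L_{r+1}$, which vanishes because $r+1$ lies in the other residue class modulo $4$. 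Thus $H(K) = H(I) = \bigoplus_{r \equiv 2n-3 \,(4)} \K L_r$.

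The main obstacle will be the construction and verification of the chain homotopy $h$: the $\GC_n^2$-differential produces new bivalent vertices from trivalent splits, so the interaction of $h$ and $d$ across the filtration of $K/I$ by bivalent-vertex count is subtle, and the sign conventions must be chosen to produce the claimed cancellation. To conclude the proof of the proposition, it then remains to verify that the connecting homomorphism $\delta \colon H(\GC_n) \to H(K)$ in the long exact sequence vanishes: lifting a class $[\gamma] \in H(\GC_n)$ to the same graph $\gamma$ in $\GC_n^2$, the image $d\gamma \in K$ consists entirely of graphs with exactly one bivalent vertex adjacent to a vertex of valency $\geq 3$, hence lies in $K/I$ and is null-homologous by the quasi-isomorphism $I \hookrightarrow K$. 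This splits the long exact sequence and produces the desired decomposition $H(\GC_n^2) = H(\GC_n) \oplus \bigoplus_{r \equiv 2n-3 \,(4)} \K L_r$.
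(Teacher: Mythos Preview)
The paper does not prove this proposition; it is stated with a reference to \cite[Proposition~3.4]{Will} and no further argument. Your outline follows the standard route and is largely sound, but two points deserve comment.

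First, you are working harder than necessary on the connecting homomorphism. The paper records in Construction~\ref{const:GC} that $\GC_n$ is \emph{preserved} by the differential of $\GC_n^2$; it is a subcomplex, so your short exact sequence splits as a sequence of complexes and $\delta=0$ for free. Concretely, the bivalent term created by splitting a vertex $v$ and isolating a single incident edge $e$ is the same graph as the one created by splitting the other endpoint of $e$ and isolating $e$, and these two contributions cancel. (This also handles the one-edge splits of $\geq 4$-valent vertices, which you omitted from your description of the ``bivalent terms''.) So for a $\GC_n$-cycle $\gamma$ one already has $d_{\GC_n^2}\gamma=0$, and your final paragraph becomes unnecessary. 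Second, there is a genuine gap in the homotopy step: your $h$ is not an endomorphism of $K/I$, since when $\Gamma$ has a single bivalent vertex sitting between two $\geq 3$-valent vertices, removing it lands in $\GC_n$. The standard repair is to filter $K'\cong K/I$ by the number of $\geq 3$-valent vertices; on the associated graded only the splittings that insert a new bivalent vertex survive, and over each fixed trivalent core one obtains an acyclic ``chain-subdivision'' complex for which an honest contracting homotopy exists within the fibre.
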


We use constructions that parallel the definitions of the graph complexes $\GC_n\subset\GC_n^2\subset\fGC_n$ in the previous paragraph
in order to form a full graph operad $\fGraphs_n$ from the operad of graphs $\Gra_n$,
and suboperads $\Graphs_n\subset\Graphs_n^2\subset\fGraphs_n$,
among which the graph operad $\Graphs_n$
that we consider in the introduction of this subsection.
We briefly review the definitions of these operads
in the next paragraph.

\begin{const}[{The graph operads}]\label{const:graph operads}
We first define by the full graph operad as the twisted operad
\[
\fGraphs_n = \Tw\Gra_n,
\]
which governs the structure attached to a twisted dg vector spaces $A^{\alpha}$
such that $A$ is an algebra over the operad of graphs $\Gra_n$
and $\alpha$ is a Maurer-Cartan element in $A$ (see \cite{DolWill}).
We just use that any algebra over the operad of graphs $\Gra_n$ inherits a natural Lie algebra structure (up to suspension)
by restriction of structure through the operad morphism $\Lie_n\to\Gra_n$
when we consider Maurer-Cartan elements in $A$.

We can represent the elements of the operad $\fGraphs_n$ as formal series of graphs $\alpha\in\Gra_n$
where we informally fill vertices to mark the insertion of our Maurer-Cartan element
in the operations of the operad of graphs $\Gra_n$.
We accordingly get that the elements of the graded vector space $\fGraphs_n(r)$ are represented by (formal series of) graphs
with external vertices, numbered from $1$ to $r$, and indistinguishable internal
vertices, which we usually mark in black in what follows.
We equip this graded vector space $\fGraphs_n(r)$ with the differential defined by the same blow-up operation of graph vertices
as in the case of the full graph complex $\fGC_n$,
except that we may now consider the blow-up of external vertices into an external vertex
and an internal vertex, in addition to the blow-up of internal vertices:
\[
\begin{tikzpicture}[scale=.5,baseline=-.5ex]
\node[int] (v1) at (0,0){};
\draw (v1) -- +(-0.8,0.8);
\draw (v1) -- +(-0.8,0);
\draw (v1) -- +(-0.8,-0.8);
\draw (v1) -- +(0.8,0.8);
\draw (v1) -- +(0.8,0);
\draw (v1) -- +(0.8,-0.8);
\end{tikzpicture}
\quad\mapsto
\quad\begin{tikzpicture}[scale=.5,baseline=-.5ex]
\node[int] (v1) at (-0.5,0){};
\node[int] (v2) at (0.5,0){};
\draw (v1) -- +(-0.8,0.8);
\draw (v1) -- +(-0.8,0);
\draw (v1) -- +(-0.8,-0.8);
\draw (v1) edge (v2);
\draw (v2) -- +(0.8,0.8);
\draw (v2) -- +(0.8,0);
\draw (v2) -- +(0.8,-0.8);
\end{tikzpicture},
\qquad\begin{tikzpicture}[scale=.5,baseline=-.5ex]
\node[ext] (v1) at (0,0){i};
\draw (v1) -- +(1,1.6);
\draw (v1) -- +(1,-1.6);
\draw (v1) -- +(1,0.8);
\draw (v1) -- +(1,0);
\draw (v1) -- +(1,-0.8);
\end{tikzpicture}
\quad\mapsto
\quad\begin{tikzpicture}[scale=.5,baseline=-.5ex]
\node[ext] (v1) at (-0.5,0){i};
\node[int] (v2) at (0.5,0){};
\draw (v1) -- +(1,1.6);
\draw (v1) -- +(1,-1.6);
\draw (v1) edge (v2);
\draw (v2) -- +(0.8,0.8);
\draw (v2) -- +(0.8,0);
\draw (v2) -- +(0.8,-0.8);
\end{tikzpicture}\, .
\]
We equip $\fGraphs_n$ with the operadic composition products $\circ_i: \fGraphs_n(k)\otimes\fGraphs_n(l)\to\fGraphs_n(k+l-1)$
inherited from the operad of graphs $\Gra_n$.
We have an obvious identity $\fGC_n = \fGraphs_n(0)$ when we consider the component of arity zero of this operad $\fGC_n$.

We then define $\Graphs_n^2$ as the suboperad of the full graph operad $\fGraphs_n$ spanned by (formal series of) graphs
with at least bivalent vertices and no connected component consisting entirely of internal vertices,
whereas we define the graph operad $\Graphs_n$ as the operad spanned by (formal series of) graphs with at least trivalent vertices
inside the latter operad $\Graphs_n^2$.
We just check that these subobjects $\Graphs_n\subset\Graphs_n^2\subset\fGraphs_n$
are preserved by the differential and the operadic composition
operations of the full graph operad.

We have an action of the dg Lie algebra $\fGC_n$ by operad derivations on $\fGraphs_n$
which we define by an obvious generalization of the definition of the Lie bracket
of graphs in $\fGC_n$.
We easily check that this action restricts to an action of the dg Lie algebra $\GC_n^2$ on the suboperad $\Graphs_n^2\subset\fGraphs_n$
and to an action of the dg Lie algebra $\GC_n$ on $\Graphs_n\subset\Graphs_n^2$.
We easily check, besides, that the multiplicative group $\K^{\times}$
acts by operad automorphisms
on $\Graphs_n^2$ (respectively, on $\Graphs_n$)
by the operation
\[
\alpha\mapsto\lambda^{\omega(\alpha)}\cdot\alpha,
\]
for any graph $\alpha\in\Graphs_n^2(r)$ (respectively, $\alpha\in\Graphs_n(r)$), for any scalar $\lambda\in\K^{\times}$,
and where set $\omega(\alpha) = (\text{number of vertices})-(\text{number of edges})$.
\end{const}

\begin{rem}\label{rem:graph operad degrees}
We can determine the degree of an element $\alpha$ in the graph operad $\Graphs_n$ (or $\Graphs_n^2$)
by taking the assumption that each edge contributes to the degree by $n-1$
and each internal vertex contributes by $-n$.
We accordingly have $|\alpha| = (n-1)k - n l$, where $k$ is the number of edges of our graph
and $l$ is the number of internal vertices. (The term $(n-1)k$ corresponds to the grading of the operad of graphs $\Gra_n$,
and the term $- n l$ comes from the twisting procedure.)
\end{rem}

We now have an operad morphisms $\Poiss_n\to\Graphs_n$ which is given by the same formulas
as the morphism $\Poiss_n\to\Gra_n$
of Proposition~\ref{prop:Poisson operad and graphs}.
We then have the following statement:

\begin{prop}[{see \cite{K2,LV,Will}}]\label{prop:Poisson to graph operad quasi-iso}
The morphisms
\[
\Poiss_n\to\Graphs_n\hookrightarrow\Graphs_n^2
\]
are quasi-isomorphisms.\qed
\end{prop}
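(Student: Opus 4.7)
My plan is to establish the two quasi-isomorphisms separately.

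First I would treat the inclusion $\Graphs_n \hookrightarrow \Graphs_n^2$. Note that by definition $\Graphs_n^2$ excludes graphs with connected components consisting entirely of internal vertices, so the ``loop classes'' $L_r$ of Proposition~\ref{prop:graph complex homology}, which obstruct the analogous quasi-isomorphism at the level of $\GC_n$, are excluded here. I would work arity-by-arity and consider the quotient complex $Q_n(r) = \Graphs_n^2(r)/\Graphs_n(r)$, which is spanned by graphs having at least one internal vertex of valence $\leq 2$. I would filter by the number of trivalent or higher-valence vertices and analyze the associated graded. The internal vertices of valence $\leq 2$ organize into chains (with possible short loops) attached to vertices of higher valence, and I would show by a pairing/contraction argument that each such chain configuration either pairs up with a configuration differing by contracting one bivalent vertex into an edge, or else (for chains whose ends are both external or attached to the same vertex) is paired with itself up to a sign that makes it acyclic. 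The exclusion of purely internal connected components is exactly what prevents the loop-graph obstruction from appearing. This shows $Q_n(r)$ is acyclic.

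Next I would prove that $\Poiss_n \to \Graphs_n$ is a quasi-isomorphism. I would filter $\Graphs_n(r)$ by the number of internal vertices. On the $E^0$ page, only the part of the differential that splits an internal vertex survives (the part that splits an external vertex strictly decreases the number of internal vertices by the opposite sign convention — one must check which piece is preserved on $E^0$ depending on how the filtration is indexed). One must then identify $E^1$ with $\Poiss_n(r)$: the remaining subcomplex is the Chevalley--Eilenberg-type complex built from trees of internal vertices connected to the external ones, and using the Koszul self-duality relation $\Poiss_n^\vee = \Poiss_n^c\{-n\}$ one identifies its homology with the operad of $n$-Poisson algebras. Concretely, graphs with no internal vertices are products (over edges) of the generating bracket operation, modulo the Jacobi and Leibniz relations which are precisely the boundaries produced by pairs of internal vertices.

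Finally I would verify that the operad morphism $\Poiss_n \to \Graphs_n$ really induces the composition of these identifications, by tracking the images of the two generating operations $x_1 x_2$ and $[x_1,x_2]$ as in Proposition~\ref{prop:Poisson operad and graphs}, and confirming on $E^1$ that these map to the expected classes.

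The hard part will be the identification of the $E^1$ page in the second step with $\Poiss_n$: this requires careful combinatorial bookkeeping of signs (which differ between $n$ even and $n$ odd because of the parity conventions in the definition of $\Gra_n$), as well as a rigorous matching of the ``vertex-splitting'' differential on graphs with internal vertices only with the differential of an appropriate Chevalley--Eilenberg or cobar complex whose homology is $\Poiss_n$. The first step, by contrast, is a relatively formal acyclicity argument once one has isolated why the loop-graph classes do not enter.
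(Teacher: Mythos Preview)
The paper does not give its own proof; the proposition is stated with references to \cite{K2,LV,Will} and closed with a \qed. Your sketch must therefore be judged on its own.

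Your treatment of $\Graphs_n\hookrightarrow\Graphs_n^2$ is fine: the chain-contraction argument on strings of bivalent internal vertices is standard, and you correctly identify that the exclusion of purely internal connected components is what prevents the loop classes of Proposition~\ref{prop:graph complex homology} from obstructing acyclicity of the quotient.

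The argument for $\Poiss_n\to\Graphs_n$ has a real gap. You filter by the number of internal vertices and claim that on $E^0$ only the internal-vertex-splitting part of the differential survives. But revisit Construction~\ref{const:graph operads}: splitting an internal vertex into two internal vertices increases the internal-vertex count by one, and splitting an external vertex into an external--internal pair \emph{also} increases it by one. Both pieces of the differential shift the filtration degree by the same amount, so neither is killed on the associated graded; your parenthetical assertion that the external-splitting piece ``strictly decreases the number of internal vertices'' is simply incorrect. The filtration you propose is therefore degenerate for the purpose you intend, and the $E^1$-identification with $\Poiss_n$ does not go through as written. The arguments in the cited references proceed differently --- either via configuration-space integrals (Kontsevich) or by exploiting the free-commutative-algebra structure of $\stG_n(r)$ on internally connected graphs together with a different spectral sequence (as in \cite{Will}) --- and neither reduces to a plain internal-vertex-count filtration.
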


We also consider dual cooperads of the graph operads in our constructions, just as we deal with the Poisson cooperad
rather than with the Poisson operad
when we take our model for the rational homotopy of the little discs operads.
We examine the definition of these objects in the next paragraph.

\begin{const}[The graph cooperads]\label{const:graph cooperads}
We use that each graded vector space $\Graphs_n(r)$ is equipped with a complete weight grading,
since we defined $\Graphs_n(r)$ as the graded vector space spanned by formal series
of graphs with $r$ external vertices
and an arbitrary number of internal vertices (which provide the weight grading).
We consider the natural filtration associated to this weight grading.
We immediately see that this filtration is preserved by the differential and the operad
structure of our objects.
We can therefore take the continuous dual of the dg vector spaces $\Graphs_n(r)$
to get a collection of (filtered) dg vector spaces $\stG_n(r)$
equipped with a cooperad structure
so that we have the identity $\Graphs_n = (\stG_n)^*$
when we take the dual of this cooperad in dg vector spaces to go back to dg operads.
We use the same construction in the case of the graph operad $\Graphs_n$
to get a variant of this graph cooperad $\stG_n^2$
such that $\Graphs_n^2 = (\stG_n^2)^*$.

We can describe $\stG_n(r)$ as the graded vector space spanned by graphs with $r$ external vertices
and an arbitrary number of internal vertices, with the same restrictions on our graphs
as in the case of the graph operad $\Graphs_n(r)$
(the vertices are at least trivalent and we have no connected component entirely made of internal vertices).
We just consider finite linear combinations of graphs, as opposed to the formal series of the graph operad,
when we deal with this cooperad $\stG_n$.
We define the differential of graphs in $\stG_n(r)$ by the adjoint maps of the blow-up operations
considered in Construction~\ref{const:graph operads} (thus, this differential is given by edge contraction operations
that either merge internal vertices together or merge an internal vertex
with an external vertex). 
We similarly define the composition coproducts of this cooperad $\stG_n$
by the adjoint maps of the insertion operations
considered in Construction~\ref{const:graph operads} (thus, these composition coproducts
are given by subgraph extraction operations).
We get a similar picture for the variant $\stG_n^2$
of this cooperad $\stG_n$.

We may observe that the dg vector spaces $\stG_n(r)$ inherit a commutative algebra structure
so that the collection of these objects $\stG_n$
actually form a dg Hopf cooperad.
We formally define the product of the commutative dg algebra $\stG_n(r)$
by the sum of graphs along external vertices.
We get by duality that the dg vector spaces $\Graphs_n(r)$ which form the graph operad
inherit a cocommutative dg coalgebra structure in the complete sense
so that the graph operad $\Graphs_n$
actually forms a complete dg Hopf operad (an operad in the category of complete cocommutative dg coalgebras).
We have a similarly defined dg Hopf cooperad structure on the variant $\stG_n^2$
of the dg cooperad $\stG_n$
and a corresponding complete dg Hopf operad structure on the variant $\Graphs_n^2$
of the graph operad $\Graphs_n$.

We have not been precise about the component of arity zero of the graphs operads $\Graphs_n$ and $\Graphs_n^2$
so far. We may actually see that our constructions make sense for the arity zero component
of these operads
as well and return the definition $\Graphs_n(0) = \Graphs_n^2(0) = \K$.
We therefore get a cooperad with the ground field $\K$
as component of arity zero when we take the dual
of these objects.
We prefer to consider the $\La$-cooperad structure equivalent to this ordinary cooperad structure
associated to the dual of the graph operad $\Graphs_n$ (respectively, $\Graphs_n^2$)
in what follows.
We therefore drop the component of arity zero from these objects and we regard $\stG_n$ (respectively, $\stG_n^2$)
as a dg Hopf $\La$-cooperad instead.
We can explicitly depict the corestriction operator $\eta_S$ associated to this cooperad as the addition
of an isolated external vertex labelled by the composition mark $*$
of our operation.

We mentioned in Construction~\ref{const:graph operads} that the dg Lie algebra $\GC_n$ (respectively, $\GC_n^2$) acts on
the graph operad $\Graphs_n$ (respectively, $\Graphs_n^2$)
by operad derivations. We can readily dualize this construction
in order to pass to cooperads.
We actually get that the dg Lie algebra $\GC_n$ (respectively, $\GC_n^2$) acts on our object $\stG_n$ (respectively, $\stG_n^2$)
by biderivations of Hopf $\La$-cooperads.
\end{const}

We can also dualize the definition of the operad morphisms of Proposition~\ref{prop:Poisson to graph operad quasi-iso}.
We easily check that these morphisms preserve the additional commutative algebra structure
which we consider in the previous paragraph.
We therefore have the following statement:

\begin{prop}\label{prop:Poisson to graph cooperad quasi-iso}
The quasi-isomorphisms of Proposition~\ref{prop:Poisson to graph operad quasi-iso}
are dual to quasi-isomorphisms of dg Hopf $\La$-cooperads
\[
\stG_n^2\stackrel{\simeq}{\to}\stG_n\stackrel{\simeq}{\to}\Poiss_n^c,
\]
where $\Poiss_n^c$ denotes the dual cooperad of the Poisson operad as usual.\qed
\end{prop}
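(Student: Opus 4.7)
My plan is to deduce the assertion from Proposition~\ref{prop:Poisson to graph operad quasi-iso} by a dualization argument. The only new content is to check that the operad maps $\Poiss_n\to\Graphs_n\hookrightarrow\Graphs_n^2$ are compatible with all the extra structure in sight, and then to verify that this compatibility survives the passage to continuous duals. I will split the verification into three steps: Hopf compatibility, $\La$-compatibility, and preservation of the quasi-isomorphism property under duality.

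First, I would check that $\phi: \Poiss_n\to\Graphs_n$ (and its composite into $\Graphs_n^2$) is a morphism of (complete) dg Hopf operads, where the Hopf structure on $\Poiss_n$ is the standard diagonal one on generators and the Hopf structure on $\Graphs_n^{(2)}$ is the one described in Construction~\ref{const:graph cooperads}. By freeness of $\Poiss_n$ it suffices to test this on the two generators. The image $\phi(x_1x_2)$ is the graph with two external vertices and no edges, which is the unit $1\in\Graphs_n(2)$ for the algebra $\stG_n(2)^*$ and hence is group-like for its dual coalgebra. The image $\phi([x_1,x_2])$ carries one edge and no internal vertices, so the only non-trivial splitting of its edge-set puts the edge entirely on one side; this produces precisely the primitive coproduct $\phi([x_1,x_2])\otimes 1+1\otimes\phi([x_1,x_2])$, matching the primitivity of $[x_1,x_2]\in\Poiss_n$. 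Thus $\phi$ is a morphism of dg Hopf operads, and the inclusion $\Graphs_n\hookrightarrow\Graphs_n^2$ is tautologically so.

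Second, I would observe that all three operads have component of arity zero equal to $\K$, and hence are equivalent to $\La$-operads in the sense of Section~\ref{subsec:Lambda-operads}. The $\La$-structure is determined by composition with the arity-zero element, which, in terms of the graph operads, corresponds to deleting an external vertex (the adjoint of the corestriction operator described in Construction~\ref{const:graph cooperads}). Since $\phi$ sends the Poisson products and brackets to graphs whose external legs are all present, and since the inclusion $\Graphs_n\hookrightarrow\Graphs_n^2$ trivially commutes with vertex deletion, both morphisms respect $\La$-structures.

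Third, I would dualize. Each $\Graphs_n^{(2)}(r)$ is complete with respect to the descending filtration by number of internal vertices, and in each filtration degree the quotient is a finite-dimensional $\K$-vector space (by the at-least-trivalent and no-internal-components constraints, together with connectedness considerations), and $\Poiss_n(r)$ is finite-dimensional in each arity. Taking continuous duals therefore yields morphisms of dg Hopf $\La$-cooperads $\stG_n^2\to\stG_n\to\Poiss_n^c$ going in the opposite direction. The main (and essentially only) technical point is to verify that quasi-isomorphy is preserved; this follows from a spectral sequence argument applied to the filtration by internal vertices: the filtration is complete and exhaustive, the associated graded pieces are finite-dimensional in each degree, and the dual of a quasi-isomorphism of such filtered complexes is again a quasi-isomorphism on the $E_1$-page, hence on the abutment. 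I expect this bookkeeping with the filtration and the finite-type conditions to be the main (though entirely routine) obstacle; the Hopf and $\La$-compatibility checks are immediate on generators.
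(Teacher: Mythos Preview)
Your proposal is correct and follows essentially the same approach as the paper. The paper treats this proposition as immediate (marking it with \qed after the one-sentence remark ``We easily check that these morphisms preserve the additional commutative algebra structure which we consider in the previous paragraph''), with the relevant structures on the continuous duals already spelled out in Construction~\ref{const:graph cooperads}; your three-step verification simply makes explicit the routine checks that the paper leaves to the reader.
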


\subsection{Hairy graph complexes and coderivation complexes}\label{subsec:graphs:HGC}
We recall the definition of the hairy graph complex in this subsection and we review the relationship
between this object and the deformation complexes of operads.
We devote the next paragraph to these recollections.
We skip most verifications and details. We refer to the articles \cite{LambrechtsTurchin, Turchin1, Turchin2, Turchin3},
where the hairy graph complexes is defined and studied,
for more details on our claims.

We check in a second step that we can dualize the correspondence between the hairy graph complex and the deformation complex of operads
in order to retrieve the coderivation complexes of dg $\La$-cooperads
studied in Section~\ref{sec:coderivation complexes}.
We address this construction in the second part
of this subsection.

\begin{const}[{The hairy graph complexes}]\label{const:HGC}
We first define a hairy analogue of the full graph complex of Construction~\ref{const:GC}.
We then consider the deformation complex of the morphism
\[
\hoPoiss_m\stackrel{*}{\to}\Graphs_n
\]
defined by the composite
\[
\hoPoiss_m\to\Poiss_m\stackrel{*}{\to}\Poiss_n\to\Graphs_n,
\]
where $\Poiss_m\stackrel{*}{\to}\Poiss_n$ is the natural morphism of operads which preserves the representative
of the commutative product operation in our graded Poisson operads
and carries the Lie bracket to zero.
We have a factorization diagram
\[
\begin{tikzcd}
\Poiss_m\ar{rr}{*}\ar{dr}{} && \Poiss_n \\
& \Com\ar{ur} &
\end{tikzcd}
\]
in the category of operads. We may also see that this morphism $\Poiss_m\stackrel{*}{\to}\Poiss_n$
represents the morphism induced by the embedding of the little $m$-discs operad $\lD_m$
into the little $n$-discs operad $\lD_n$
in homology when $n>m>1$.

We still define the deformation complex of this operad morphism $\hoPoiss_m\stackrel{*}{\to}\Graphs_n$
as a degree shift of the corresponding extended derivation complex
\[
\Def_{\dg\Op}(\hoPoiss_m\stackrel{*}{\to}\Graphs_n) = \xDer_{\dg\Sigma}(\hoPoiss_m\stackrel{*}{\to}\Graphs_n)[1],
\]
which is the derivation complex
\[
\xDer_{\dg\Sigma}(\hoPoiss_m\stackrel{*}{\to}\Graphs_n) = \Der_{\dg\Sigma}(\hoPoiss_m^+\stackrel{*}{\to}\Graphs_n)
\]
associated to the extended operad $\hoPoiss_m^+$ (see Section~\ref{subsec:coderivation complexes:coderivations}).
This deformation complex $\DerL = \Def_{\dg\Op}(\hoPoiss_m\stackrel{*}{\to}\Graphs_n)$
forms a dg Lie algebra,
and we may check again that the underlying graded vector space of this object
is isomorphic to the hom-object
\[
\DerL = \Hom_{\gr\Sigma}(\Poiss_m^c\{-m\},\Graphs_n)
\]
in the category of $\Sigma$-collections, where we use the identity $\hoPoiss_m = \Omega(\Poiss_m^c\{-m\})$ (see Section~\ref{subsec:Koszul duality}).
We still adopt the convention that operad derivations are not required to preserve augmentation ideals (as opposed to the coderivations of cooperads)
and we use this convention to retrieve the whole graph operad $\Graphs_n$ (and not the augmentation ideal)
in this expression.

We now define the full hairy graph complex $\fHGC_{m,n}$
as the dg Lie subalgebra
of the deformation complex $\DerL = \Def_{\dg\Op}(\hoPoiss_m\stackrel{*}{\to}\Graphs_n)$
which, in the hom-object $\Hom_{\gr\Sigma}(\Poiss_m^c\{-m\},\Graphs_n)$,
consists of the maps $\alpha\in\Hom_{\gr\Sigma}(\Poiss_m^c\{-m\},\Graphs_n)$
that:
\begin{itemize}
\item factor through the projection $\Poiss_m^c\{-m\}\to\Com^c\{-m\}$,
\item and land in the subcollection $\Graphs_n'\subset\Graphs_n$ consisting of (formal series of) graphs with univalent external vertices
inside the graph operad.
\end{itemize}
We just check that this graded vector subspace is preserved by the dg Lie algebra
structure of the deformation complex, and hence inherits a dg Lie algebra structure.

We use that $\Com^c\{-m\}(r)$ is identified with the trivial representation $\Com^c\{-m\}(r) = \K$
of the symmetric group $\Sigma_r$ when $m$ is even (respectively, with the signature representation
when $m$ is odd) to give an explicit description
of this dg Lie algebra $\fHGC_{m,n}$
as in the case of the Kontsevich graph complex in Construction~\ref{const:GC}.
We get that $\fHGC_{m,n}$ is identified with the graded vector space spanned by formal series of graphs in $\Graphs_n'$
where the external univalent vertices are made indistinguishable.
We can just omit to mark these external vertices in our pictures.
We only keep the incident edges of these vertices, which we call the hairs of the graph.
The differential of the complex $\fHGC_{m,n}$
is then given combinatorially by the same blow-up operation
on the internal vertices
of graphs
as in the case of the Kontsevich graph complex.



We can determine the Lie bracket of two graphs in the full hairy graph complex $\fHGC_{m,n}$
by attaching a hair of one graph to a vertex
of the other in all possible ways
as we already explained in the introduction of this paper (see~\cite{TW}):
\[
\left[
\begin{tikzpicture}[baseline=-.8ex]
\node[draw,circle] (v) at (0,.3) {$\alpha$};
\draw (v) edge +(-.5,-.7) edge +(0,-.7) edge +(.5,-.7);
\end{tikzpicture}
,
\begin{tikzpicture}[baseline=-.65ex]
\node[draw,circle] (v) at (0,.3) {$\beta$};
\draw (v) edge +(-.5,-.7) edge +(0,-.7) edge +(.5,-.7);
\end{tikzpicture}
\right]
=
\sum
\begin{tikzpicture}[baseline=-.8ex]
\node[draw,circle] (v) at (0,1) {$\alpha$};
\node[draw,circle] (w) at (.8,.3) {$\beta$};
\draw (v) edge +(-.5,-.7) edge +(0,-.7) edge (w);
\draw (w) edge +(-.5,-.7) edge +(0,-.7) edge +(.5,-.7);
\end{tikzpicture}
\pm
\sum
\begin{tikzpicture}[baseline=-.8ex]
\node[draw,circle] (v) at (0,1) {$\beta$};
\node[draw,circle] (w) at (.8,.3) {$\alpha$};
\draw (v) edge +(-.5,-.7) edge +(0,-.7) edge (w);
\draw (w) edge +(-.5,-.7) edge +(0,-.7) edge +(.5,-.7);
\end{tikzpicture}\, .
\]

We now define the hairy graph complex $\HGC_{m,n}$ as the subcomplex of the full hairy graph complex $\fHGC_{m,n}$
spanned by the (formal series of) connected graphs.
We just check that this complex $\HGC_{m,n}$ is preserved by the differential and the Lie bracket on $\fHGC_{m,n}$,
which is immediate from our picture.
We moreover have an identity:
\[
\fHGC_{m,n} = \widehat{S}{}^+(\HGC_{m,n}[-m])[m],
\]
where $\widehat{S}^+$ denotes the completed symmetric algebra without constant term (we just identify
an element of the full hairy graph complex with the formal product
of its connected component).
We may note that the natural projection to the connected part in the full graph complex $\fHGC_{m,n}\to\HGC_{m,n}$
is a morphism of dg Lie algebras.

We can also define a variant $\fHGC_{m,n}^2$ of the full hairy graph complex $\fHGC_{m,n}$, together with a variant $\HGC_{m,n}^2$
of the subcomplex of connected hairy graphs $\HGC_{m,n}\subset\fHGC_{m,n}$, by taking the variant $\Graphs_n^2$
of the graph operad $\Graphs_n$
in our constructions and by allowing bivalent (internal) vertices
(while we assume that these vertices are at least trivalent in the complexes $\HGC_{m,n}$ and $\fHGC_{m,n}$).
\end{const}

\begin{rem}\label{rem:HGC degrees}
We can determine the degree of an element $\alpha$ in the hairy graph complex $\HGC_{m,n}$
by taking the assumption that each internal edge contributes to the degree by $n-1$,
each internal vertex contributes by $-n$,
each hair contributes by $n-1$,
each external vertex contributes by $-m$,
and by adding $m$ to the obtained total degree. In the case where each hair is connected
to one external vertex (which is true as soon as our graph
is connected and is not reduced to an isolated
hair with no internal vertices) and we forget about the external vertices,
then we can equivalently assume that each hair contributes by $n-m-1$
to the degree of our graph.
Thus, in the latter case, we have $|\alpha| = (n-1)k - n l + (n-m-1) h + m$,
where $k$ is the number of edges of our graph, $l$ is the number of internal vertices,
and $h$ is the number of hairs,
while we get $|\alpha| = n-m-1$ in the degenerate case $k=l=0$ and $h=1$.
(The terms $(n-1)(k+h) - n l = (n-1) k + (n-1) h - n l$
come from the grading of the graph operad $\Graphs_n$
whereas the terms $- m h + m$ come from the graded vector space $\Com^c\{-m\}(h)$,
as in the case of the Kontsevich graph complexes.)
\end{rem}

We may see that the embedding $\fHGC_{m,n}\hookrightarrow\fHGC_{m,n}^2$ defines a quasi-isomorphism,
just like the operad embedding $\Graphs_{m,n}\hookrightarrow\Graphs_{m,n}^2$,
and we obviously have the same result at the level of the hairy graph complexes $\HGC_{m,n}\stackrel{\simeq}{\to}\HGC_{m,n}^2$.
We also get that the embedding of the full hairy graph complex $\fHGC_{m,n}$
in the operadic deformation complex $\Def_{\dg\Op}(\hoPoiss_m\stackrel{*}{\to}\Graphs_n)$
defines a quasi-isomorphism
of dg Lie algebras $\fHGC_{m,n}\stackrel{\simeq}{\to}\Def_{\dg\Op}(\hoPoiss_m\stackrel{*}{\to}\Graphs_n)$
(see for instance~\cite{Turchin2,Turchin3, TW}).
We use the following counterpart of this statement in the framework of $\La$-cooperads and coderivations:

\begin{thm}[{compare with the statements of \cite{Turchin2,Turchin3,TW}}]\label{thm:HGC to extended coderivations quasi-iso}
We have a quasi-isomorphism of dg Lie algebras
\[
\fHGC_{m,n}\stackrel{\simeq}{\to}\xCoDer_{\dg\La}(\stG_n,B(\Poiss_m\{m\})),
\]
where we use that the operad ${\op P} = \Poiss_m\{m\}$ has a $B\La$ structure (see Example~\ref{ex:BLambda-en-operads})
to provide the bar construction ${\op C} = B(\Poiss_m\{m\})$
with the structure of a $\La$-cooperad,
and we use that the graph cooperad $\stG_n$ forms a good source $\La$-cooperad in the sense of Definition~\ref{defn:good Lambda-cooperads}
to give a sense to the extended coderivation complex
of the right-hand side.
\end{thm}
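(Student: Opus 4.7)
The plan is to realize the quasi-isomorphism as a zigzag going through the operadic deformation complex $\Def_{\dg\Op}(\hoPoiss_m \stackrel{*}{\to} \Graphs_n)$. Recall from Construction~\ref{const:HGC} that the full hairy graph complex $\fHGC_{m,n}$ is defined as the dg Lie subalgebra of this operadic deformation complex consisting of maps that factor through $\Com^c\{-m\}$ and land in the sub-collection $\Graphs_n' \subset \Graphs_n$ of graphs with univalent external vertices. As I will take as a black box, the inclusion $\fHGC_{m,n} \hookrightarrow \Def_{\dg\Op}(\hoPoiss_m \stackrel{*}{\to} \Graphs_n)$ is already known to be a quasi-isomorphism of dg Lie algebras (this is standard, proved in essentially this form in~\cite{Turchin2, Turchin3, TW}). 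Hence the heart of the proof will be to construct a natural quasi-isomorphism of dg Lie algebras
\[
\Def_{\dg\Op}(\hoPoiss_m \stackrel{*}{\to} \Graphs_n) \stackrel{\simeq}{\to} \xCoDer_{\dg\La}(\stG_n, B(\Poiss_m\{m\})).
\]

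The construction of this comparison morphism rests on two complementary instances of Koszul duality. First, using $\hoPoiss_m = \Omega(\Poiss_m^c\{-m\})$, the bar-cobar adjunction (dual to the correspondence in Proposition~\ref{prop:coderivation lie algebra}) identifies the extended operadic derivation complex $\xDer_{\dg\Sigma}(\hoPoiss_m \stackrel{*}{\to} \Graphs_n)$ with a twisted hom-complex $\Hom_{\gr\Sigma}(\Poiss_m^c\{-m\}, \Graphs_n)$, the twist being by the Maurer-Cartan element corresponding to the structure morphism. Second, by Proposition~\ref{prop:Lambda-coderivation Lie algebra} and Remark~\ref{rem:extended Lambda-coderivation Lie algebra} applied to our good source $\La$-cooperad $\stG_n$ and good target $\La$-cooperad $B(\Poiss_m\{m\})$, the extended coderivation complex is identified with
\[
\xCoDer_{\dg\La}(\stG_n, B(\Poiss_m\{m\}))[-1] \cong \Hom_{\gr\Sigma}(\bS\stG_n, \Poiss_m\{m\}_{\bo}),
\]
with a twisted differential as well. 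The arity-wise pairing between the graph cooperad $\stG_n$ and the graph operad $\Graphs_n$ together with the Koszul identification $\Poiss_m\{m\} = (\Poiss_m^c\{-m\})^\vee$ make it possible to interpret both sides as dual incarnations of the same twisted hom-complex. This yields a canonical comparison morphism of graded vector spaces, which I will then check preserves both the differentials (the two twisting Maurer-Cartan elements being dual to one another) and the dg Lie brackets (matching the pre-Lie composition on operadic derivations with the pre-Lie structure underlying the bracket on the coderivation complex of Proposition~\ref{prop:Lambda-coderivation Lie algebra}).

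The main technical obstacle will be verifying that this comparison morphism is a quasi-isomorphism. I plan to proceed via a spectral sequence argument analogous to the one in the proof of Proposition~\ref{prop:Lambda-coderivation complex homotopy invariance}: both complexes carry descending filtrations indexed by the arity $r$, and the associated $E_0$-pages reduce to pairing-type hom-complexes on the respective arity components, which are canonically identified via the duality $\stG_n(r) \leftrightarrow \Graphs_n(r)$ and $\Poiss_m^c\{-m\}(r) \leftrightarrow \Poiss_m\{m\}(r)$. The isomorphism at $E_1$ then yields the desired quasi-isomorphism on the abutments, both filtrations being complete and exhaustive. The subtle points to handle carefully are (i) the matching of the ``extended'' direction, i.e.\ the contribution of the adjoined unit $\bo \in \Poiss_m\{m\}_{\bo}$ encoding the extra arity-one unary piece on the coderivation side with the ``$+$''-extension on the derivation side (using the observation of Remark~\ref{rem:extended coderivation Lie algebra}); and (ii) the compatibility with the $\La$-structure, where the generating $\Sigma$-collection $\bS\stG_n$ consists exactly of graphs with no isolated external vertex, dually matching the condition that defines $\Graphs_n'$ (up to the relaxation from univalent to at-least-univalent external vertices, which is an equivalence by the formality of $\e_n$ and the acyclicity argument of Lemma~\ref{lemm:extended Lambda-coderivation complex:acyclicity}).
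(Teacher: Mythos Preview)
Your factorization strategy has a genuine gap at the second arrow. Dualizing the operadic deformation complex does give a natural identification
\[
\Def_{\dg\Op}(\hoPoiss_m\stackrel{*}{\to}\Graphs_n)\;\cong\;\Hom_{\gr\Sigma}(\stG_n,\Poiss_m\{m\})^{\alpha_*},
\]
but this is (essentially) the \emph{$\Sigma$-version} $\xCoDer_{\dg\Sigma}(\stG_n,B(\Poiss_m\{m\}))$, not the $\La$-version you want. The complex $\xCoDer_{\dg\La}$ is the proper dg Lie subalgebra $\Hom_{\gr\La}(I\stG_n,\Poiss_m\{m\})\cong\Hom_{\gr\Sigma}(\bS\stG_n,\Poiss_m\{m\})$ sitting inside the $\Sigma$-version, and there is no natural retraction onto it. So the arrow $\Def_{\dg\Op}\to\xCoDer_{\dg\La}$ you describe does not exist as a map of dg Lie algebras. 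Your point (ii) does not rescue this: the condition ``no isolated external vertex'' defining $\bS\stG_n$ is not dual to the condition ``univalent external vertices'' defining $\Graphs_n'$, and invoking Lemma~\ref{lemm:extended Lambda-coderivation complex:acyclicity} here is a non sequitur (that lemma compares ordinary and extended coderivation complexes, not $\Sigma$- and $\La$-versions).

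The paper's approach avoids this by not factoring through $\Def_{\dg\Op}$ at all. It defines the map $\fHGC_{m,n}\to\xCoDer_{\dg\La}$ directly: a hairy graph $\alpha$ is sent to the homomorphism $u_\alpha\in\Hom_{\gr\Sigma}(I\stG_n,\overline{\Poiss}_m\{m\})$ with $u_\alpha(\gamma)=1$ if $\gamma$ equals $\alpha$ after forgetting the external vertex labels, and $0$ otherwise (landing in $\Com\{m\}\subset\Poiss_m\{m\}$). One then checks \emph{by hand} that each $u_\alpha$ is $\La$-equivariant---this is the crucial step your factorization cannot supply, and it works precisely because hairy graphs have indistinguishable external vertices. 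After verifying that the differential and bracket match, the quasi-isomorphism follows from a spectral sequence on the filtration by number of edges (not arity), adapting the argument from the cited references. What you can salvage from your plan is this: the composite $\fHGC_{m,n}\hookrightarrow\Def_{\dg\Op}\cong\xCoDer_{\dg\Sigma}$ does factor through $\xCoDer_{\dg\La}$, but proving that factorization requires exactly the direct verification the paper performs.
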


\begin{proof}
We just take the composite of the morphism $\stG_n\to\Poiss_n^c$ of Proposition~\ref{prop:Poisson to graph cooperad quasi-iso}
with the cooperad morphism $\Poiss_n^c\to\Com^c$
dual to the operad embedding $\Com\hookrightarrow\Poiss_n$
to provide the graph cooperad $\stG_n$
with an augmentation over the commutative cooperad $\Com^c$.
We can identify the kernel $I(\stG_n)$ of this canonical augmentation morphism $\stG_n\to\Poiss_n^c\to\Com^c$
with the subcollection of the graph cooperad $\stG_n$
spanned the graphs that have at least one edge,
whereas the commutative cooperad $\Com^c$ is identified with the subcollection of empty graphs inside $\stG_n$
(the graphs which entirely consists of isolated external vertices).
We moreover have an obvious relation $I(\stG_n) = \La\otimes_{\Sigma}\bS\stG_n$ where we consider the subcollection
of the graph cooperad $\bS(\stG_n)\subset\stG_n$ spanned by the graphs
which have no isolated external vertices.
We accordingly get that $\stG_n$ forms a good source $\La$-cooperad in the sense of Definition~\ref{defn:good Lambda-cooperads}
as asserted in our statement.

We deduce from the observations of Example \ref{ex:BLambda-en-operads} that ${\op C} = B(\Poiss_m\{m\})$
also forms a good target $\La$-cooperad in the sense
of Definition~\ref{defn:good Lambda-cooperads}.
We can therefore apply the result of Proposition~\ref{prop:coderivation lie algebra}
to the extended coderivation complex of our theorem
\[
\widehat{\DerL} = \xCoDer_{\dg\La}(\stG_n,B(\Poiss_m\{m\})).
\]
We explicitly get that this extended coderivation complex forms a dg Lie algebra with an underlying graded vector space
such that:
\[
\widehat{\DerL}\cong\Hom_{\gr\La}(I(\stG_n),\overline{\Poiss}_m\{m\})\subset\Hom_{\gr\Sigma}(I(\stG_n),\overline{\Poiss}_m\{m\}).
\]

Recall that, when we define this complex $\widehat{\DerL}$, we more precisely consider the extended complex of coderivations
of the composite morphism $\stG_n\to\Com^c\to B(\Poiss_m\{m\}$ (with a degree shift).
We have the duality relation
\[
\hoPoiss_m^* = \Omega(\Poiss_m^c\{-m\})^*\cong B(\Poiss_m\{m\})
\]
and we can readily identify this morphism $\stG_n\stackrel{*}{\to}B(\Poiss_m\{m\})$
with the adjoint of the morphism $\hoPoiss_m^* = \Omega(\Poiss_m^c\{-m\})\stackrel{*}{\to}\Graphs_n$
which we consider in the definition of the full graph complex.
We just dualize the definition of the mapping $\fHGC_{m,n}\to\Def_{\dg\Op}(\hoPoiss_m,\Graphs_n)$ to associate an element
of the hom-object $u_{\alpha}\in\Hom_{\gr\Sigma}(I(\stG_n),\overline{\Poiss}_m\{m\})$
to any hairy graph $\alpha$.
In short, for any graph $\gamma\in I(\stG_n)(r)$, we set $u_{\alpha}(\gamma) = 1$ if we have the relation $\alpha = \gamma$
when we forget about the numbering of the external vertices
in the graph $\gamma$, and $u_{\alpha}(\gamma) = 0$
otherwise. We use this pairing to associate an element of the vector space $\K[m(r-1)] = \Com\{m\}(r)\subset\Poiss_m\{m\}(r)$
to any graph $\gamma\in I(\stG_n)(r)$.
We easily check that this map $u_{\alpha}$ preserves the $\La$-diagram structure attached to our objects
when $\alpha$ belongs to the complex $\fHGC_{m,n}$.
We therefore have $u_{\alpha}\in\Hom_{\gr\La}(I(\stG_n),\overline{\Poiss}_m\{m\})$.
We also see that the differential and the Lie bracket correspond, for such maps, to the differential and the Lie bracket
of the hairy graph complex.

This examination gives the mapping of the theorem. We can adapt the arguments of the cited references
to check that this mapping defines a quasi-isomorphism.
In short, we use the spectral sequence associated to the complete descending filtration by the number of edges of graphs
in the graph cooperad $\stG_n$,
and we check that our morphism induces an isomorphism on the $E^1$ page.
\end{proof}

\subsection{The Shoikhet $L_{\infty}$~structure and the complexes of coderivations on the associative cooperad}\label{subsec:graphs:Shoikhet structure}
Recall that we have $\e_m = \Poiss_m$ when $m\geq 2$, whereas $\e_1$ is identified with the associative operad $\Ass$.
Thus, the previous theorem actually gives information about the extended coderivation complex $\xCoDer_{\dg\La}(\stG_n,B(\e_m\{m\}))$
when $m\geq 2$,
but we need to adapt our constructions in order to extend our results in the case $m=1$.

We still have a morphism
\[
\Ass\stackrel{*}{\to}\Graphs_n,
\]
which we define by the composite $\Ass\to\Com\to\Poiss_n\to\Graphs_n$, where we now consider the usual operad morphism
from the associative operad to the commutative operad
instead of the morphism $\Poiss_m\to\Com$
of the previous subsection.
We have not been precise about the range validity of our constructions in the previous subsection.
We may actually see that our statements make sense for $m=1$ and remain also valid in this case,
when we consider the operad $\Poiss = \Poiss_1$
that governs the usual category of (ungraded) Poisson algebras.
We use that the operad $\Ass$ is equipped with a filtration (the Hodge filtration)
such that $\Poiss = \gr\Ass$ (see for instance \cite[section 3]{TW}).
We can observe that the collections $\Ass$ and $\Poiss$, though not isomorphic as operads,
are isomorphic as bimodules over the Lie operad $\Lie$,
while the differential of our complex only depends on this bimodule structure.
We therefore have the following extension
of the quasi-isomorphism claim
of Theorem~\ref{thm:HGC to extended coderivations quasi-iso}:

\begin{prop}[{see again \cite{Turchin2,Turchin3,TW}}]\label{prop:HGC to extended coderivations quasi-iso:Shoikhet case}
We have a quasi-isomorphism of dg vector spaces
\[
\fHGC_{1,n}\stackrel{\simeq}{\to}\xCoDer_{\dg\La}(\stG_n,B(\Ass\{1\})),
\]
where, as in Theorem~\ref{thm:HGC to extended coderivations quasi-iso},
we use that the operad ${\op P} = \Ass\{1\}$ has a $B\La$ structure (see Example~\ref{ex:BLambda-en-operads})
to provide the bar construction ${\op C} = B(\Ass\{1\})$ with the structure of a $\La$-cooperad
and to give a sense to the extended coderivation complex
of the right-hand side.\qed
\end{prop}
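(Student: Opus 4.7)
The plan is to mirror the proof of Theorem~\ref{thm:HGC to extended coderivations quasi-iso} in the case $m=1$, replacing the operad $\Poiss_m\{m\}$ by $\Ass\{1\}$ throughout, and then to compare the resulting complex with the one obtained for $\Poiss_1\{1\}$ via the Hodge filtration. First I would check that $\Ass\{1\}$ inherits a $B\La$ operad structure by specializing Example~\ref{ex:BLambda-en-operads} to $n=1$, which gives the $\La$-cooperad structure on ${\op C} = B(\Ass\{1\})$ needed to define the right-hand side. Since we already observed in the proof of Theorem~\ref{thm:HGC to extended coderivations quasi-iso} that $\stG_n$ is a good source $\La$-cooperad (with $I(\stG_n) = \La\otimes_\Sigma \bS\stG_n$), Proposition~\ref{prop:coderivation lie algebra} applies to give
\[
\xCoDer_{\dg\La}(\stG_n, B(\Ass\{1\})) \cong \Hom_{\gr\La}(I(\stG_n), \overline{\Ass}\{1\})
\]
as a graded vector space, equipped with a twisting differential induced by the canonical morphism $*: \stG_n \to B(\Ass\{1\})$ dual to $\hoAss^* \to \Graphs_n$.

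Next I would construct the map of the proposition by the same pairing prescription as in Theorem~\ref{thm:HGC to extended coderivations quasi-iso}: to a connected hairy graph $\alpha \in \fHGC_{1,n}$ with $r$ hairs I associate the homomorphism $u_\alpha$ that sends a graph $\gamma \in I(\stG_n)(r)$ to $1$ (in the appropriate degree component of $\Com\{1\}(r) \subset \Ass\{1\}(r)$) when $\gamma$ agrees with $\alpha$ after forgetting the labels of the external vertices, and to $0$ otherwise. The fact that $u_\alpha$ lands in the commutative part $\Com\{1\} \subset \Ass\{1\}$ reflects the factorization $\Ass \to \Com \to \Poiss_n \to \Graphs_n$ underlying the morphism $*$. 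One verifies at once that $u_\alpha$ is $\La$-equivariant.

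The crucial observation is the one flagged in the excerpt: the differential on $\xCoDer_{\dg\La}(\stG_n, {\op P})$ for ${\op P} = \Ass\{1\}$ or ${\op P} = \Poiss_1\{1\}$ only sees the structure of ${\op P}$ as a bimodule over $\Lie\{1\} \subset {\op P}$, because the twisting cocycle associated to $*$ factors through $\Com\{1\}$ and the relevant composition operations in the bar differential of $B({\op P})$ are those involving the bracket $b\in{\op P}(2)$. Now the Hodge filtration on $\Ass$ admits a splitting as a filtration of $\Lie$-bimodules in characteristic zero (essentially the Poincar\'e--Birkhoff--Witt theorem), giving a $\Lie\{1\}$-bimodule isomorphism $\Ass\{1\} \cong \Poiss_1\{1\}$. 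This induces a dg vector space isomorphism between the extended coderivation complexes for $\Ass\{1\}$ and $\Poiss_1\{1\}$, through which our map $\alpha \mapsto u_\alpha$ is carried to the corresponding map of the $m=1$ case of Theorem~\ref{thm:HGC to extended coderivations quasi-iso}.

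The quasi-isomorphism assertion then reduces to that theorem, whose proof in turn uses the complete descending filtration by the number of edges of graphs in $\stG_n$, together with an $E^1$-page computation matching the hairy graph differential. The main obstacle in carrying the argument out carefully is checking that the $\Lie$-bimodule isomorphism $\Ass\{1\} \cong \Poiss_1\{1\}$ really does intertwine the twisting cocycles arising from the two choices of basepoint morphism $*$ (so that the two dg structures on the hom-object coincide on the nose, not merely up to a filtered perturbation), and this is precisely where the Shoikhet deformation of the dg Lie structure enters; the statement is consequently only an equivalence of dg vector spaces, not of dg Lie algebras, since the Lie bracket on the left picks up the Shoikhet correction terms on passing to $\Ass$.
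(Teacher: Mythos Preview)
Your proposal is correct and follows essentially the same approach as the paper. The paper's argument (given in the paragraph immediately preceding the proposition, since the proposition itself carries a \qed) is precisely that $\Ass$ and $\Poiss$ are isomorphic as $\Lie$-bimodules via the Hodge filtration/PBW splitting, and that the differential of the coderivation complex depends only on this bimodule structure, so the $m=1$ case of Theorem~\ref{thm:HGC to extended coderivations quasi-iso} carries over verbatim; your write-up spells out the map construction and the verification in more detail but rests on the same two observations.
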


We however need to revise our comparison statement in order to deal with the Lie algebra structure of the extended coderivation complex
of this proposition, because this Lie algebra structure depends on the full operad structure
of the associative operad $\Ass$.
We may in principle transport the Lie algebra structure of our extended coderivation complex
into an $L_{\infty}$~algebra structure on the hairy graph complex $\fHGC_{1,n}$
through the quasi-isomorphism of this proposition.
We actually have explicit formulas for this $L_{\infty}$~structure, which are given in \cite{WillDefQ}.
We just record the main outcomes of the construction of this reference.

We use, as a preliminary observation, that the graph cooperad $\stG_n$
is equipped with a weight grading given by the function $\omega(\alpha) = (\text{number of edges}) - (\text{number of internal vertices})$,
for any graph $\alpha\in\stG_n(r)$.
We easily check that this weight grading is preserved by the differential of the graph cooperad (we have the relation $\omega(d\alpha) = \omega(\alpha)$,
for any graph $\alpha\in\stG_n(r)$),
by the commutative product (we have $\omega(\alpha\cdot\beta) = \omega(\alpha) + \omega(\beta)$,
for all $\alpha,\beta\in\stG_n(r)$),
by the action of permutations (we have $\omega(s\alpha) = \omega(\alpha)$, for all $\alpha\in\stG_n(r)$, and for any permutation $s\in\Sigma_r$),
and by the composition coproducts of our cooperad (if $\Delta_*(\alpha) = \sum_{(\alpha)}\alpha'\otimes\alpha''$
denotes the expansion of the composition coproduct of an element $\alpha\in\stG_n(S)$,
then we have $\omega(\alpha) = \omega(\alpha') + \omega(\alpha'')$,
for each term $\alpha'\otimes\alpha''$ of this expansion, which can be assumed to be of homogeneous weight).
We have an induced complete weight grading $\widehat{\DerL} = \prod_{d\geq 1}\widehat{\DerL}_d$ on the dg Lie algebra
of extended coderivations $\widehat{\DerL} = \xCoDer_{\dg\La}(\stG_n,B(\Ass\{1\}))$,
where $\widehat{\DerL}_d$ consists of the coderivations that vanish on graphs
of weight $\omega(\alpha)\not=d$.
We see that the hairy graph complexes $\fHGC_{1,n}$ and $\HGC_{1,n}$ inherit a weight grading too.
We note that we have $\omega(\alpha)>0$ for any element of the augmentation ideal of the graph cooperad $\alpha\in I(\stG_n)(r)$,
because any such graph has at least one edge, and the trivalence condition
of our definition implies that we have more edges
than internal vertices
in a graph.
We therefore get that the weight grading of our extended coderivation complex starts in weight $d=1$,
and we have a similar result for the hairy graph complexes.

\begin{thm}[{see \cite{WillDefQ}}]\label{thm:HGC Shoikhet structure}
There is an $L_{\infty}$~algebra structure on the full hairy graph complex $\fHGC_{1,n}$, called the \emph{Shoikhet $L_{\infty}$~structure},
such that the following properties hold.
\begin{itemize}
\item
The $L_{\infty}$~structure is compatible with the complete weight grading (in the sense of Definition \ref{defn:complete graded Linfty algebras}).
\item
The subcomplex of connected hairy graphs $\HGC_{1,n}\subset\fHGC_{1,n}$ is preserved by this $L_{\infty}$~structure
and forms an $L_{\infty}$~subalgebra of the full hairy graph complex $\fHGC_{1,n}$
therefore.
\item
There is an $L_{\infty}$~quasi-isomorphism
\[
U: \fHGC_{1,n}\stackrel{\simeq}{\to}\xCoDer_{\dg\La}(\stG_n,B(\Ass\{1\}))
\]
compatible with the complete weight grading in the sense of Definition \ref{defn:complete graded Linfty algebras}.
Furthermore, the linear component of this $L_{\infty}$~quasi-isomorphism $U_1$
agrees with the map of Proposition \ref{prop:HGC to extended coderivations quasi-iso:Shoikhet case}.
\item
There is an $L_{\infty}$~morphism $V: \fHGC_{1,n}\to\HGC_{1,n}$ compatible with the weight grading,
whose linear component $V_1$ is the canonical projection
onto the subcomplex $\HGC_{1,n}$.\qed
\end{itemize}
\end{thm}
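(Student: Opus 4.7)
The plan is to construct the Shoikhet $L_{\infty}$ structure by homotopy transfer of the dg Lie algebra structure on $\xCoDer_{\dg\La}(\stG_n,B(\Ass\{1\}))$ along the quasi-isomorphism of Proposition \ref{prop:HGC to extended coderivations quasi-iso:Shoikhet case}. Explicit combinatorial formulas are given in \cite{WillDefQ}, but for the purposes of this paper we just need the four stated properties. First, I would choose contraction data: a section $s$ of the quasi-isomorphism of Proposition \ref{prop:HGC to extended coderivations quasi-iso:Shoikhet case} together with a homotopy $h$ on $\xCoDer_{\dg\La}(\stG_n,B(\Ass\{1\}))$ satisfying the side conditions needed for homotopy transfer. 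Since the quasi-isomorphism was established via a spectral sequence on the number of edges of graphs in $\stG_n$, and since this filtration (and the associated inclusion of $\fHGC_{1,n}$ into the coderivation complex) respects the weight grading, one can construct a contraction in which every piece preserves the weight grading.

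Next I would apply the standard homotopy transfer theorem for $L_{\infty}$ algebras (in the form of Kadeishvili-Markl) to the dg Lie algebra on the right-hand side using this contraction. The output is an $L_{\infty}$ structure on $\fHGC_{1,n}$ together with an $L_{\infty}$ quasi-isomorphism
\[
U: \fHGC_{1,n}\stackrel{\simeq}{\to}\xCoDer_{\dg\La}(\stG_n,B(\Ass\{1\})),
\]
whose linear component $U_1$ is, by construction, the inclusion of Proposition \ref{prop:HGC to extended coderivations quasi-iso:Shoikhet case}. Compatibility of $U$ and of the transferred structure with the complete weight grading follows formally from the fact that each datum in the contraction preserves the grading, because the $n$-ary bracket and the $n$-ary component of $U$ are given by sums over rooted trees with $n$ leaves where the edges are decorated by $h$, the leaves by $s$ (or by the inclusion), the root by the projection (or by the bracket on the right), and internal vertices by the dg Lie bracket on the right.

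For the second bullet, one must verify that $\HGC_{1,n}\subset\fHGC_{1,n}$ is closed under the transferred $L_{\infty}$ operations. This will follow from inspection of the combinatorial form of the transferred brackets, which, as in the dg Lie case of Construction \ref{const:HGC}, insert hairs of one graph into vertices of another; applied to connected hairy graphs, such operations produce connected hairy graphs. Equivalently, one uses the natural identification $\fHGC_{1,n}\cong\widehat{S}{}^+(\HGC_{1,n}[-1])[1]$, together with the fact that the dg Lie structure on the extended coderivation complex preserves this "completed symmetric algebra on connected hairy graphs" structure up to homotopy.

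For the final bullet, I would construct $V: \fHGC_{1,n}\to\HGC_{1,n}$ by taking its $k$-ary component to be the projection onto the connected part of the $k$-fold symmetric product in the identification $\fHGC_{1,n}\cong\widehat{S}{}^+(\HGC_{1,n}[-1])[1]$, twisted by the relevant $L_{\infty}$ operations needed to enforce the $L_{\infty}$ morphism relations. Compatibility with the weight grading is automatic since all building blocks respect it. The main obstacle is the second bullet: establishing that the transferred brackets really preserve connectedness is not a formal consequence of the homotopy transfer theorem, and requires either an explicit combinatorial verification using the formulas of \cite{WillDefQ} or a model-categorical argument showing that the projection onto connected graphs admits an $L_{\infty}$ lift; checking the full set of $L_{\infty}$ morphism relations for $V$ is closely linked, and I would defer the detailed formulas to the spin-off paper \cite{WillDefQ}.
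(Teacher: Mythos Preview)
The paper does not give its own proof of this theorem: the statement is imported wholesale from \cite{WillDefQ} and marked with a \qed, with only Remark~\ref{rem:HGC Shoikhet structure} afterwards noting that the $\La$-compatibility was not explicitly treated in that reference but can be checked. So there is no in-paper argument to compare your proposal against.

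That said, your sketch via homotopy transfer is the natural strategy and is essentially what \cite{WillDefQ} does. Two comments. First, you correctly flag the genuine content: the second bullet (preservation of $\HGC_{1,n}$ under the transferred brackets) and the fourth bullet (existence of the $L_\infty$ retraction $V$) are not formal consequences of homotopy transfer and require the explicit combinatorial analysis carried out in \cite{WillDefQ}; your deferral to that reference is appropriate. Second, your handling of the weight-grading compatibility is on the right track, but note that the contraction data must be chosen to preserve the grading \emph{exactly}, not just up to filtration, in order to get a complete graded $L_\infty$ algebra in the sense of Definition~\ref{defn:complete graded Linfty algebras} rather than merely a filtered one; this is possible here because the spectral sequence argument in the proof of Theorem~\ref{thm:HGC to extended coderivations quasi-iso} (and its $m=1$ analogue) already respects the weight decomposition arity-wise.
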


In the follow-up, we use the notation $\fHGC'_{1,n}$ when we equip the hairy graph complex
with the Shoikhet $L_{\infty}$~structure of this theorem in order to distinguish this object
from the ordinary dg Lie algebra structure
of Construction~\ref{const:HGC}, and we adopt the same convention for the $L_{\infty}$~subalgebra
of connected hairy graphs $\HGC'_{1,n}\subset\fHGC'_{1,n}$.

\begin{rem}\label{rem:HGC Shoikhet structure}
In fact, the $\La$-cooperad structures of our objects are not considered in the article \cite{WillDefQ},
but we may check that the $L_{\infty}$~morphism $U$ constructed in this reference takes values in the (extended) complex
of coderivations of $\La$-cooperads $\xCoDer_{\dg\La}(\stG_n,B(\Ass\{1\}))$,
and induces an $L_{\infty}$~quasi-isomorphism
with values in this object,
as we do in the proof of Theorem~\ref{thm:HGC to extended coderivations quasi-iso}
in the case of the extended coderivation complexes $\xCoDer_{\dg\La}(\stG_n,B(\Poiss_m\{m\}))$
associated to the Poisson operads $\Poiss_m$.

Let us mention that the results of Theorem~\ref{thm:HGC Shoikhet structure}
also hold for the variants $\HGC_{1,n}^2\subset\fHGC_{1,n}^2$
of the graph complexes considered in this statement. We use this observation in the next subsection.
\end{rem}

\subsection{From the Kontsevich graph complexes to hairy graphs}\label{subsec:graphs:GC}
We survey comparison results that relate the Kontsevich graph complexes to the hairy graph complexes
in the case $m = n-1,n$ in this subsection.

We first assume $m=n$. We can check that the line graph
\beq{eq:linegraph}
L = \begin{tikzpicture}[baseline=-.65ex]
\draw (0,0) -- (1,0);
\end{tikzpicture}
\eeq
defines a Maurer-Cartan element in the hairy graph complex $\HGC_{n,n}^2$.
We have a morphism of dg vector spaces
\beq{eq:temp20}
\K L\oplus\GC_n^2[1]\to\HGC_{m,n}^2
\eeq
which is the trivial inclusion on the first summand, and which maps any graph $\gamma\in\GC_n^2$
in the second summand to the sum of graphs $\gamma_1$
that we may obtain by attaching a hair to a vertex of the graph $\gamma$:
\beq{eq:temp20mapping}
\gamma\mapsto\gamma_1 = \sum_v\begin{tikzpicture}[baseline=-.65ex]
\node (v) at (0,.3) {$\gamma$};
\draw (v) edge +(0,-.5);
\end{tikzpicture}
\qquad\text{(attach a hair at vertex $v$)}.
\eeq
We immediately see that the image of this map \eqref{eq:temp20} commutes with the line graph $L$ in $\HGC_{n,n}^2$.
We deduce from this observation that this map induces a morphism with values in the twisted hairy graph complex $(\HGC_{n,n}^2)^L$.
We then have the following statement:

\begin{prop}[{see \cite[Proposition 2.2.9]{FW}}]\label{prop:GC to HGC quasi-iso}
The map \eqref{eq:temp20} (for $m=n$) induces a quasi-isomorphism
\[
\K L\oplus\GC_n^2[1]\stackrel{\simeq}{\to}(\HGC_{n,n}^2)^L,
\]
where we consider the twisted hairy graph complex $(\HGC_{n,n}^2)^L$
associated to the line graph \eqref{eq:linegraph}.\qed
\end{prop}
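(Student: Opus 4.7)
The plan is to verify that the map is a chain map into the twisted complex, and then compare the homologies of both sides via a filtration argument organised by loop order and hair count.

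First I would verify the chain map property. The element $L \in \HGC_{n,n}^2$ sits in degree $n-m-1 = -1$ when $m=n$, which is the correct degree for a Maurer-Cartan element in the convention of the paper. One has $dL = 0$ (there are no internal vertices to blow up) and $[L,L] = 0$ (attaching a hair endpoint of one line to an internal vertex of the other produces nothing, because $L$ has no internal vertex), so the twisting $d_L := d + [L,-]$ is indeed square-zero. The image $L$ of the first summand $\K L$ is then trivially $d_L$-closed. For $\gamma \in \GC_n^2$, one computes $d_L(\gamma_1) = d(\gamma_1) + [L,\gamma_1]$: the first summand is the internal-vertex blow-up of $\gamma_1$, while $[L,\gamma_1]$ unfolds as the sum, over internal vertices $v$ of $\gamma$, of graphs obtained by attaching one hair endpoint of $L$ to $v$ and leaving the other as a dangling hair. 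A direct match shows these combine, up to signs, to $(d\gamma)_1$, since blowing up a vertex of $\gamma$ and then attaching a hair coincides with first attaching $L$ and reinterpreting the pendant edge as the outgoing edge of the blow-up.

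Next I would impose a descending complete filtration on $(\HGC_{n,n}^2)^L$ by loop order $g$, equivalently by (number of internal edges plus number of hairs) minus (number of internal vertices). Both $d$ and $[L,-]$ preserve this quantity, so the filtration is respected by $d_L$. At each fixed $g$, I would then filter further by the number of hairs: $d$ preserves hair count, whereas $[L,-]$ strictly raises it by one, so the $E^0$ differential reduces to the internal-vertex blow-up $d$ acting on the subcomplex of graphs with fixed loop order and fixed hair count.

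The main obstacle will be computing this $E^0$ page and showing that the map induces an isomorphism upon passing to $E^\infty$. The image of the map is concentrated in very low hair counts — the summand $\K L$ sits in the lowest-hair corner at loop order zero, while $\GC_n^2[1]$ maps into the subcomplex of graphs with one hair, summed over vertex placements. One must therefore prove that, for each loop order $g \geq 1$, the part of $\HGC_{n,n}^{2,g\text{-loop}}$ with two or more hairs is acyclic, so that the cohomology survives only in the one-hair column; and this column is then identified via the vertex-summation map with $\GC_n^{2,g\text{-loop}}[1]$. The acyclicity will require an explicit contracting homotopy built by selecting an ordered pair of hairs and gluing them into a single internal edge, which reverses one step of the $[L,-]$ action; verifying that this is a genuine chain homotopy, handling the Koszul signs correctly, and separately treating the low-loop-order corner where $\K L$ and the loop graphs of Proposition~\ref{prop:graph complex homology} appear, is the technical heart of the argument.
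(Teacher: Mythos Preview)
The paper does not supply its own proof here; it cites \cite[Proposition~2.2.9]{FW} and places a \qed. Your chain-map verification is fine, but the spectral-sequence strategy has a genuine gap.

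You filter so that the $E^0$ differential is the blow-up $d$, and then claim that for each loop order $g\geq 1$ the part with two or more hairs is acyclic. Under $d$ alone this is false: already in loop order one the hedgehog classes $H_k$ of~\eqref{eq:HGC1loop} survive for infinitely many $k$, so the untwisted hairy graph homology is not concentrated in the one-hair column. Your proposed contracting homotopy (glue a pair of hairs into an internal edge) does not help within your own framework either: that operation raises the loop order by one, so it does not act on the fixed-loop-order pieces you set up, and it does not reverse $[L,-]$, which changes the hair count by one rather than two. The standard argument runs the filtration the other way --- by the number of internal vertices --- so that $[L,-]$ is the $E^0$ differential. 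For a fixed internal core $\gamma$ with vertex set $V$, the hairs carry odd degree $n-m-1=-1$ when $m=n$, so the hair configurations form the positive part of an exterior algebra on $V$, on which $[L,-]$ acts as multiplication by $\sum_{v\in V}e_v$. This Koszul complex has one-dimensional homology, spanned by that sum, which is precisely $\gamma_1$; the $E^1$ differential induced by $d$ then matches the differential on $\GC_n^2[1]$, and the spectral sequence collapses at $E^2$.
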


We can improve this result in order to handle the natural Lie algebra structure associated to the twisted hairy graph complex $(\HGC_{n,n}^2)^L$:

\begin{thm}[{see \cite{WillTriv}}]\label{thm:GC abelian Lie structure}
The quasi-isomorphism $\K L\oplus\GC_n^2[1]\stackrel{\simeq}{\to}(\HGC_{n,n}^2)^L$ of Proposition \ref{prop:GC to HGC quasi-iso}
can be extended to an $L_{\infty}$~quasi-isomorphism,
where we assume that $\K L\oplus\GC_n^2[1]$ is equipped with a trivial $L_{\infty}$~structure.
(In particular, the twisted complex $(\HGC_{n,n}^2)^L$ is formal as an $L_{\infty}$~algebra.)
Furthermore, the components of this $L_{\infty}$~quasi-isomorphism preserves the complete gradings by loop order on both sides.\qed
\end{thm}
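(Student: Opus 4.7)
The objective is to extend the linear quasi-isomorphism $\phi_1 : V := \K L\oplus\GC_n^2[1]\to(\HGC_{n,n}^2)^L$ of Proposition~\ref{prop:GC to HGC quasi-iso} to an $L_\infty$-quasi-isomorphism $\phi = (\phi_k)_{k\geq 1}$, where $V$ is viewed as an abelian graded Lie algebra. This is a formality assertion for $(\HGC_{n,n}^2)^L$, and my plan is to combine explicit combinatorial primitives in low arity with obstruction theory controlled by the loop order filtration in higher arity.

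The first step is to verify, by a direct combinatorial calculation, that for any $\gamma_1,\gamma_2\in\GC_n^2$ the bracket $[\phi_1(\gamma_1),\phi_1(\gamma_2)]$ is exact in $(\HGC_{n,n}^2)^L$. Since $\phi_1(\gamma_i)$ is a sum of graphs obtained by attaching a single hair to a vertex of $\gamma_i$, the bracket is a sum over pairs of vertices $(v_1,v_2)\in V\gamma_1\times V\gamma_2$ of graphs $\gamma_1\cup_e\gamma_2$ with a single connecting edge $e$. The twisted differential $d+[L,-]$ has a part that creates such connecting edges out of pairs of hairs, so an explicit ``edge-splitting'' primitive $h(\gamma_1,\gamma_2)$ can be written down, consisting of the same two graphs joined by two parallel hairs rather than a single edge. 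This primitive defines the quadratic component $\phi_2$, and its symmetry and equivariance properties extend immediately from those of $\phi_1$.

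The second step is an inductive construction of the higher components $\phi_k$ using the loop-order filtration. Because the Lie bracket is additive in loop order, the $L_\infty$-relation at arity $k$ expresses the obstruction $O_k\in\Hom(\Lambda^k V,(\HGC_{n,n}^2)^L)$ as a universal polynomial in the previously constructed $\phi_j$, $j<k$, and the loop order of $O_k(\gamma_1,\ldots,\gamma_k)$ equals the sum of loop orders of the $\gamma_i$. Since the complex $\HGC_{n,n}^2$ is finite-dimensional in each fixed loop order and arity, one can work loop-order by loop-order; checking that each $O_k$ is a cocycle is formal from the $L_\infty$-relations of lower arity, so the only genuine content is to verify that the cohomology class $[O_k]\in H^*((\HGC_{n,n}^2)^L)$ vanishes.

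The main obstacle, and the principal work of the proof, is the vanishing of these cohomology classes $[O_k]$. Two routes seem available. The first is to appeal to the explicit quasi-isomorphism of Proposition~\ref{prop:GC to HGC quasi-iso}, which implies that the obstruction cohomology lies in the image of $\phi_1$, and then to exhibit a symmetric combinatorial formula (built from trees of connecting edges joining $k$ graphs $\gamma_1,\ldots,\gamma_k$ at vertex sites) whose coboundary kills $O_k$ in $\HGC_{n,n}^2$ but lifts to a genuine primitive in $(\HGC_{n,n}^2)^L$ after a correction involving $L$. The second, and cleaner, route is to fix a contracting homotopy realizing the quasi-isomorphism $\phi_1$ as part of a deformation retract and apply the homotopy transfer theorem: this yields an $L_\infty$-structure on $V$ quasi-isomorphic to $(\HGC_{n,n}^2)^L$, and one must then show, using the absence of internal operations on $V$ and the explicit shape of the contracting homotopy (which removes hairs one at a time and hence strictly decreases a combinatorial complexity), that all transferred brackets of arity $\geq 2$ vanish. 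Either approach makes it manifest that all components preserve the loop-order grading, as required.
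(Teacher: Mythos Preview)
The paper does not prove this theorem: it is stated with a terminal \qed\ and attributed to the reference \cite{WillTriv}, so there is no ``paper's own proof'' to compare against. What I can do is assess your proposal on its merits and against what the cited reference actually does.

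Your overall strategy---homotopy transfer or, equivalently, inductive killing of obstruction classes---is the natural first attempt, but it has a genuine gap at the decisive step. Homotopy transfer always produces \emph{some} $L_\infty$-structure on $V=\K L\oplus\GC_n^2[1]$ together with an $L_\infty$-quasi-isomorphism; the entire content of the theorem is that this transferred structure is \emph{trivial}. Your justification for triviality (``the contracting homotopy removes hairs one at a time and hence strictly decreases a combinatorial complexity'') does not do the job: the transferred $k$-ary bracket is a sum over binary trees of iterated compositions of the inclusion, the Lie bracket, and the homotopy, and a monotone complexity argument on the homotopy alone says nothing about whether these tree-sums land in the kernel of the projection $p$. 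Equivalently, in your Route~1 language, the obstruction $O_k$ is automatically a cocycle, but the quasi-isomorphism of Proposition~\ref{prop:GC to HGC quasi-iso} tells you its class lies in $H(\GC_n^2)$, not that it vanishes there; your proposal to ``exhibit a symmetric combinatorial formula'' is a hope, not an argument. Your description of $\phi_2$ is also unclear as written: elements of $\HGC_{n,n}^2$ are connected, and ``two graphs joined by two parallel hairs'' does not describe a connected hairy graph, since hairs are free ends rather than connecting edges.

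The argument in \cite{WillTriv} takes a different route: rather than attacking the obstructions directly, it exploits an auxiliary \emph{pre-Lie} structure underlying the Lie bracket on $(\HGC_{n,n}^2)^L$ (this is the ``pre-Lie pairs'' of the title). The point is that the twisted bracket arises from a pre-Lie product for which one can write down an explicit homotopy trivialization at the pre-Lie level, and this is strictly stronger than what one needs, yielding the $L_\infty$-formality together with the loop-order compatibility. If you want to complete your approach, you would need to supply a comparable mechanism---some extra structure or grading argument that forces the transferred brackets (not merely the homotopy) to vanish.
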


We now consider the case $m=n-1$. We review some facts from \cite{TW}.
We can check that the element
\beq{eq:tripodMC}
T := \sum_{k\geq 1}\underbrace{\begin{tikzpicture}[baseline=-.65ex]
\node[int] (v) at (0,0) {};
\draw (v) edge +(-.5,-.5)  edge +(-.3,-.5) edge +(0,-.5) edge +(.3, -.5) edge +(.5,-.5);
\end{tikzpicture}
}_{2k+1}
\in\HGC_{n-1,n}^2.
\eeq
defines a Maurer-Cartan element in $\HGC_{n-1,n}^2$.
We then have a morphism of dg vector spaces
\beq{eq:temp21}
\K T'\oplus\GC_n^2[1]\to(\HGC_{n-1,n}^2)^T
\eeq
which carries the element $T'$ in the first summand
to the sum
\[
T' = \sum_{k\geq 1} (2k+1)\underbrace{\begin{tikzpicture}[baseline=-.65ex]
\node[int] (v) at (0,0) {};
\draw (v) edge +(-.5,-.5)  edge +(-.3,-.5) edge +(0,-.5) edge +(.3, -.5) edge +(.5,-.5);
\end{tikzpicture}}_{2k+1},
\]
and which maps any graph $\gamma\in\GC_n^2$ in the second summand to the following series,
where, for each $k\geq 1$,
we consider the sum of all possible attachments of $2k+1$ hairs
to the graph $\gamma$:
\begin{equation}\label{eq:gamma_img}
\gamma\mapsto\sum_{k\geq 0}\frac{1}{4^k}\sum\underbrace{\begin{tikzpicture}[baseline=-.65ex]
\node (v) at (0,0) {$\gamma$};
\draw (v) edge +(-.5,-.5)  edge +(-.3,-.5) edge +(0,-.5) edge +(.3, -.5) edge +(.5,-.5);
\end{tikzpicture}}_{2k+1}.
\end{equation}

We have the following theorem established in the cited reference:

\begin{thm}[{see \cite{TW}}]\label{thm:cerf lemma quasi-iso}
The map \eqref{eq:gamma_img} defines a quasi-isomorphism of dg vector spaces $\K T'\oplus\GC_n^2[1]\stackrel{\simeq}{\to}(\HGC_{n-1,n}^2)^T$.\qed
\end{thm}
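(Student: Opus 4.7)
The plan is to break the statement into two parts: (a) verify that the formula \eqref{eq:gamma_img} (together with its value on $T'$) defines a chain map with respect to the twisted differential $d_T = d + [T,-]$ on the target, and (b) show this chain map induces an isomorphism on homology, via a spectral sequence that degenerates on the $E^1$-page to the untwisted comparison of Proposition \ref{prop:GC to HGC quasi-iso}.

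Call the map $\psi$. For Step (a), I would first check that $T$ is indeed Maurer-Cartan: the bracket $[T,T]$ decomposes into two kinds of terms obtained by attaching a hair of one star to the central vertex of another; the odd number $2k+1$ of hairs together with the symmetry properties of stars (which depend on the parities of $m,n$) makes these terms cancel in pairs, exactly as in \cite{TW}. Then one computes $d_T \psi(\gamma)$ for $\gamma \in \GC_n^2$. Expanding the bracket $[T, \psi(\gamma)]$ gives two kinds of terms: (i) a leg of a star attaches to a vertex of $\gamma$, producing a new internal vertex adjacent to $\gamma$ carrying $2k$ residual hairs; (ii) a hair of $\psi(\gamma)$ attaches to the central vertex of a star, producing essentially the same graph. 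The key combinatorial identity is that the sum of (i) and (ii), against the coefficient $\frac{1}{4^k}$ and the multiplicity $(2k+1)$ coming from the choice of leg, equals exactly $\psi(d\gamma)$ after using the antisymmetry of odd-star edge-contractions; the factor $4$ appears because adding one extra hair by a star attachment replaces one hair by $2k+2$ edge-endpoints on a new vertex. The contribution of $T'$ handles the constant-in-$\gamma$ piece: $\psi(T')$ is precisely the Euler-type derivative of the Maurer-Cartan series $T$ along its scaling, and $d_T \psi(T') = 0$ by the MC equation differentiated in this direction.

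For Step (b), I would equip both sides with a complete descending filtration. On the target, a natural choice is the filtration by $F^p = \{ \text{graphs with at least } p \text{ internal vertices} \}$, or an equivalent filtration that is increased by the bracket with $T$ (which always adds at least one internal vertex) and is preserved by the untwisted differential $d$ (which consists of edge contractions and splittings preserving $|V|$ up to the usual shift). On the source, filter $\GC_n^2[1]$ analogously by the number of vertices, with $\K T'$ in the top filtration degree. The associated graded $\mathrm{gr}\,(\HGC_{n-1,n}^2)^T$ is just the untwisted complex $\HGC_{n-1,n}^2$ with its ordinary differential, and $\psi$ reduces on $E^0$ to (the line-graph summand being replaced by the tripod summand $T'$, and) the hair-attachment map $\gamma \mapsto \sum_v \gamma_v$ of Proposition \ref{prop:GC to HGC quasi-iso}. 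Because $\HGC_{n-1,n}^2$ and $\GC_n^2$ both split by loop order into finite-dimensional pieces, the filtration is complete and exhaustive on each loop-order summand, so the spectral sequence converges; the untwisted statement (applied to $m = n-1$, with the roles of $L$ and the tripod interchanged — this is exactly the codimension-one analogue whose proof runs in parallel to \cite[Prop.~2.2.9]{FW}) identifies the induced map on $E^1$ with an isomorphism.

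The hard step will be Step (a): bookkeeping the coefficient $\frac{1}{4^k}$ together with the signs coming from the parities of $m$ and $n$, and recognizing the resulting combinatorial identity as the differentiated Maurer-Cartan equation for $T$. Getting the symmetrization factors right for stars with $2k+1$ legs — and verifying that every term produced by $d_T \psi(\gamma)$ in which two hairs of a single star attach to the same vertex of $\gamma$, or in which $\gamma$ produces a tadpole, cancels in pairs — is the real combinatorial substance of the argument; once this is in hand, Step (b) is a fairly standard spectral sequence reduction to the already-known untwisted comparison.
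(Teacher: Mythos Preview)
The paper does not prove this theorem: the statement carries a \qed\ and the citation ``see \cite{TW}'', and no argument is given in the body. So there is no proof here to compare against; the result is imported wholesale from \cite{TW}.

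That said, your Step~(b) contains a real gap. You propose to filter by the number of internal vertices so that the associated graded becomes the untwisted complex $\HGC_{n-1,n}^2$, but the untwisted differential $d$ is vertex splitting and therefore \emph{also} increases the internal vertex count by one, just as $[T,-]$ does; this filtration does not separate the two. If instead you filter by the number of hairs (which $d$ preserves while $[T,-]$ raises it by $\geq 2$), the $E^0$-page is indeed the untwisted $(\HGC_{n-1,n}^2,d)$, but its homology is much larger than $\K T'\oplus H(\GC_n^2)[1]$ --- already the one-loop formula \eqref{eq:HGC1loop} produces infinitely many classes with arbitrarily many hairs, none of which are in the image of $\psi$ on $E^0$. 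So the induced map on $E^1$ is not an isomorphism and the spectral sequence does not collapse there. You also cannot invoke Proposition~\ref{prop:GC to HGC quasi-iso} as an ``untwisted statement'': that proposition is about the \emph{twisted} complex $(\HGC_{n,n}^2)^L$, not about an untwisted hairy graph complex, and there is no codimension-one analogue of it available prior to the theorem you are trying to prove. The argument in \cite{TW} does not proceed by reducing to an untwisted comparison; it requires a genuinely different analysis of the twisted complex.
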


We can again improve this result in order to handle the natural Lie algebra structure associated to the twisted hairy graph complex $(\HGC_{n-1,n}^2)^T$:

\begin{thm}[{see \cite{WillTriv}}]\label{thm:cerf lemma abelian Lie structure}
The quasi-isomorphism $\K T'\oplus\GC_n^2[1]\stackrel{\simeq}{\to}(\HGC_{n-1,n}^2)^T$ of Theorem \ref{thm:cerf lemma quasi-iso}
can be extended to an $L_{\infty}$~quasi-isomorphism,
where we again assume that $\K T'\oplus\GC_n^2[1]$ is equipped with a trivial $L_{\infty}$~structure.
The components of this $L_{\infty}$~quasi-isomorphism also preserves the complete gradings by loop order on both sides.\qed
\end{thm}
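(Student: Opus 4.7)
The plan is to upgrade the linear quasi-isomorphism of Theorem \ref{thm:cerf lemma quasi-iso} to an $L_\infty$-quasi-isomorphism by homotopy transfer, and then to prove that the transferred $L_\infty$-structure on $\K T'\oplus\GC_n^2[1]$ is not merely formal but identically trivial. The argument is modeled on the proof of Theorem \ref{thm:GC abelian Lie structure} for the case $m=n$, but the combinatorics is more delicate because of the weight factors $1/4^k$ appearing in \eqref{eq:gamma_img}.

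First, I would assemble the side data for homotopy transfer: the map $i:\K T'\oplus\GC_n^2[1]\to(\HGC_{n-1,n}^2)^T$ given by \eqref{eq:temp21}, a projection $p$ splitting $i$ up to homotopy, and a chain homotopy $h$ on $(\HGC_{n-1,n}^2)^T$ with $\mathit{id}-ip = dh+hd$. Such a splitting exists because, after filtering by the number of hairs and passing to the associated graded, the proof of Theorem \ref{thm:cerf lemma quasi-iso} produces an explicit deformation retract onto the image of $i$; one can lift this to a retract on the filtered complex by the usual spectral sequence comparison. Homotopy transfer then yields an $L_\infty$-structure $\{\ell_k\}_{k\ge 1}$ on $\K T'\oplus\GC_n^2[1]$ together with an $L_\infty$-quasi-isomorphism extending $i$, whose $k$-ary component is a sum over binary rooted trees with $k$ leaves of compositions of $p$, $h$, and the bracket of $(\HGC_{n-1,n}^2)^T$.

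The main step is to show that $\ell_k\equiv 0$ for all $k\ge 2$. For $\ell_2$, one computes directly: given $\gamma_1,\gamma_2\in\GC_n^2[1]$ with images $i(\gamma_1),i(\gamma_2)$ given by the weighted sums of hair-attachments \eqref{eq:gamma_img}, the bracket $[i(\gamma_1),i(\gamma_2)]$ in $(\HGC_{n-1,n}^2)^T$ is the sum over grafting a hair of one to a vertex of the other; when projected by $p$ this lands in a subcomplex where it must be shown to be exact. The key arithmetic identity to verify is that, modulo the twisting differential $[T,-]$, the bracket telescopes: the coefficient $1/4^k$ is precisely the unique weight making $\sum_k \frac{1}{4^k} \sum_{2k+1\text{ hairs}}$ a Maurer--Cartan deformation of $i$ under $[T,-]$, and this forces $p\bigl[i(\gamma_1),i(\gamma_2)\bigr]=0$. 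Brackets involving $T'$ are handled similarly, using that $T'=\partial_\epsilon T$ for a formal rescaling $T\mapsto\epsilon T$ and that the derivative of the Maurer--Cartan equation provides the needed primitive. Once $\ell_2$ is shown to vanish, the inductive vanishing of higher $\ell_k$ can be carried out by using the same homotopy $h$ together with the observation that all trees contributing to $\ell_k$ factor through $\ell_2$-like subtrees. This combinatorial-arithmetic cancellation is the main obstacle, since it requires the explicit structure of the projection and homotopy, not just their existence.

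Finally, I would verify compatibility with the loop-order filtration. Each graph summand in $T$ has loop order zero, so $[T,-]$ preserves loop order and the twisted complex $(\HGC_{n-1,n}^2)^T$ inherits the loop-order filtration from $\HGC_{n-1,n}^2$. The attachment operation in \eqref{eq:gamma_img} adds hairs but no internal cycles, hence preserves loop order, and the homotopy $h$ can be built to respect the loop-order grading because the deformation retract on each associated graded piece is constructed hair-by-hair. Consequently each transferred $L_\infty$-component $\ell_k$ and each component of the $L_\infty$-quasi-isomorphism decomposes as a product over loop orders, completing the proof.
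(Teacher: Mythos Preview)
The paper does not prove this theorem; it is quoted from \cite{WillTriv} with a \qed\ and no argument. So there is no ``paper's own proof'' to compare against, and your proposal must stand on its own.

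Your plan via homotopy transfer is reasonable in outline, but the inductive step has a genuine gap. You write that ``once $\ell_2$ is shown to vanish, the inductive vanishing of higher $\ell_k$ can be carried out \dots\ [since] all trees contributing to $\ell_k$ factor through $\ell_2$-like subtrees.'' This is not how homotopy transfer works: the transferred operation $\ell_k$ is a sum over binary trees whose leaves carry $i$, whose root carries $p$, and whose internal edges carry $h$; the internal subtrees are of the form $h\circ[-,-]$, not $p\circ[-,-]$. Hence $\ell_2=p\,[i(-),i(-)]=0$ says nothing about $h\,[i(-),i(-)]$, which is precisely what feeds into $\ell_3$ and higher. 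Vanishing of $\ell_2$ does not propagate. To make a transfer argument work you would need to exhibit a homotopy $h$ with much stronger properties (e.g.\ side conditions plus an explicit primitive for $[i(-),i(-)]$ in the image of $i$, or a filtration under which every tree contribution is seen to vanish for degree or combinatorial reasons), and you have not done so.

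Your argument for $\ell_2=0$ is also incomplete: the weights $1/4^k$ are determined by the requirement that $i$ be a \emph{chain map} for the $T$-twisted differential, which is a statement about $d_T\circ i - i\circ d$, not about $p\,[i(-),i(-)]$. These are different equations; the former does not by itself force the latter. The actual proof in \cite{WillTriv} proceeds through an auxiliary \emph{pre-Lie} structure (as the title of that paper indicates): one exhibits a pre-Lie product refining the bracket, together with a contracting homotopy compatible with it, and uses this to produce the $L_\infty$-morphism directly rather than by blind transfer. That structural input is what is missing from your sketch.
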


We consider the case $m=1$ aside. We may consider the dg vector space $(\HGC_{1,2}^2)^T$
either as a dg Lie algebra with the ``standard'' dg Lie structure
of the hairy graph complexes,
or as an $L_{\infty}$~algebra, with the Shoikhet $L_{\infty}$~structure
of Theorem \ref{thm:HGC Shoikhet structure}.
To avoid ambiguity, we adopt the notation $\HGC'{}_{1,2}^2$ to refer to the dg vector space $\HGC_{1,2}^2$
equipped with the Shoikhet $L_{\infty}$~structure, as we explain at the end of Section~\ref{subsec:graphs:Shoikhet structure}.
We use that the above Maurer-Cartan element $T$ admits a deformation $\tilde{T}\in\HGC'{}_{1,2}^2$
in this $L_{\infty}$~algebra $\HGC'{}_{1,2}^2$ (see \cite{WillTriv}).

We then have the following theorem, which is established in the cited reference:

\begin{thm}[{see \cite{WillTriv}}]\label{thm:cerf lemma abelian Lie structure:Shoikhet case}
We have an $L_{\infty}$~quasi-isomorphism:
\[
\K[1]\oplus\GC_2^2[1]\stackrel{\simeq}{\to}(\HGC'{}_{1,2}^2)^{\tilde{T}},
\]
where we again assume that $\K[1]\oplus\GC_2^2[1]$ is equipped with a trivial $L_{\infty}$~structure.
(In particular, the twisted complex $(\HGC'{}_{1,2}^2)^{\tilde{T}}$ is formal as an $L_{\infty}$~algebra.)
Furthermore, the components of this $L_{\infty}$~quasi-isomorphism
preserves the complete gradings by number of edges minus number of internal vertices on both sides.\qed
\end{thm}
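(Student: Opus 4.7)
The plan is to mirror the strategy used in the proof of Theorem \ref{thm:cerf lemma abelian Lie structure} (the analogous statement for $m = n-1 \geq 2$), while accounting for the fact that $\HGC_{1,2}^2$ is equipped with the Shoikhet $L_{\infty}$~structure rather than a plain dg Lie structure. The key technical input is that, by Theorem \ref{thm:HGC Shoikhet structure}, the Shoikhet structure is a formal deformation of the standard dg Lie structure which strictly increases the weight grading $\omega = (\text{edges}) - (\text{internal vertices})$, so all the needed vanishings can be organized by a spectral sequence associated with this filtration.

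First, I would deform $T$ into $\tilde{T}$. The tripod series $T$ of \eqref{eq:tripodMC} is Maurer--Cartan in the plain dg Lie algebra $\HGC_{1,2}^2$; I would construct $\tilde{T} = T + \sum_{k \geq 1}\delta_k$ order by order in the weight grading by solving the Shoikhet Maurer--Cartan equation. At each stage the obstruction lies in a weight-homogeneous subspace of $H(\HGC_{1,2}^2,[T,-])$, and by Theorem \ref{thm:cerf lemma quasi-iso} this cohomology is identified with a shift of $\K T' \oplus \GC_2^2[1]$, which is concentrated in weight components that cannot receive the obstruction for degree reasons. Thus the inductive construction goes through.

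Second, I would produce the $L_{\infty}$~quasi-isomorphism. I start from the linear map
\[
\iota: \K[1] \oplus \GC_2^2[1] \to (\HGC'{}_{1,2}^2)^{\tilde T}
\]
which sends the generator of $\K[1]$ to $T'$ and uses the attachment formula \eqref{eq:gamma_img} on the second summand. The linear term of the Shoikhet bracket twisted by $\tilde T$ agrees, on the associated graded for the weight filtration, with the standard dg Lie differential twisted by $T$, so by Theorem \ref{thm:cerf lemma quasi-iso} the map $\iota$ is a quasi-isomorphism. I would then use the homotopy transfer theorem to transfer the Shoikhet structure along $\iota$, producing an $L_{\infty}$~structure on $\K[1] \oplus \GC_2^2[1]$ together with an $L_{\infty}$~extension of $\iota$. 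It remains to show that this transferred structure is trivial.

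The main obstacle is exactly this last step. The strategy is to filter both sides by weight and to compare the spectral sequences: on the $E^0$-page the Shoikhet higher brackets collapse to the standard dg Lie brackets, so the $E^1$-differential equals the one whose $L_{\infty}$~formality is asserted by Theorem \ref{thm:cerf lemma abelian Lie structure} (applied with a formal parameter, or through its proof machinery in \cite{WillTriv}). Higher brackets coming from the Shoikhet deformation would strictly raise weight, and an examination of the bi-graded pieces of $\K[1] \oplus \GC_2^2[1]$ shows that no non-trivial $n$-ary bracket with $n \geq 2$ can land in the image of $\iota$ within the admissible weight range without producing a boundary. This forces the transferred higher brackets to be $L_{\infty}$-gauge equivalent to zero, which concludes the argument and simultaneously establishes the claimed compatibility with the weight grading.
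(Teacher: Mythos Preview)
The paper does not prove this theorem. The statement is attributed to \cite{WillTriv} in the theorem header, and the trailing \qed\ marks it as quoted without proof; the only commentary the paper adds is the remark immediately following the statement that the $L_\infty$~morphism here is a deformation of the one in Theorem~\ref{thm:cerf lemma abelian Lie structure}, differing by terms that strictly increase loop order. So there is no in-paper argument to compare your proposal against.

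That said, your sketch is broadly in the right spirit but has a real gap in the final step. The construction of $\tilde T$ by order-by-order obstruction theory and the use of homotopy transfer to produce an $L_\infty$~extension of $\iota$ are standard and plausible. The weak point is your claim that the transferred higher brackets on $\K[1]\oplus\GC_2^2[1]$ are forced to vanish ``for degree reasons'' in the weight grading. You assert that no non-trivial $n$-ary bracket with $n\geq 2$ can land in the admissible weight range, but you give no concrete degree count to back this up, and in fact the source $\GC_2^2$ has classes in a wide range of (degree, weight) pairs, so a naive weight argument does not obviously exclude all higher brackets. The actual proof in \cite{WillTriv} relies on a more delicate mechanism (a pre-Lie pair structure) to kill these brackets; a bare spectral sequence comparison with Theorem~\ref{thm:cerf lemma abelian Lie structure} is not enough, since that theorem itself is non-trivial and its proof does not proceed by simple degree vanishing either. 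If you want to make your outline into a proof, you would need to either reproduce the pre-Lie argument from \cite{WillTriv} or give an explicit and complete degree/weight analysis showing the target of every potential transferred bracket is zero.
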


Let us mention that the $L_{\infty}$~morphism of this theorem is a deformation of the $L_{\infty}$~morphism of Theorem \ref{thm:cerf lemma abelian Lie structure},
in the sense that the difference consists of terms which strictly increase the loop order.
The results of Theorems \ref{thm:GC abelian Lie structure}-\ref{thm:cerf lemma abelian Lie structure:Shoikhet case}
remain actually valid if we replace the Maurer-Cartan elements
of these statements
with more complicated Maurer-Cartan elements, as long as the coefficient
of the line graph $L = \begin{tikzpicture}[scale=.5, baseline=-.65ex]
\draw (0,0)--(1,0);
\end{tikzpicture}$ (respectively, of the tripod graph $Y = \begin{tikzpicture}[scale=.3, baseline=-.65ex]
\node[int] (v) at (0,0){};
\draw (v) -- +(90:1) (v) -- ++(210:1) (v) -- ++(-30:1);
\end{tikzpicture}$
in the case $m=n-1$)
is non-zero in the expansion
of these elements (see \cite{WillTriv}). We moreover have the following statement:

\begin{thm}[see \cite{WillTriv}]\label{thm:cerf lemma:Shoikhet case}
For any $\lambda\in\Q$, there exists a Maurer-Cartan element in $\HGC'{}_{1,2}^2$, defined over the field of rationals,
and such that the coefficient of the tripod graph in the expansion $Y = \begin{tikzpicture}[scale=.3, baseline=-.65ex]
\node[int] (v) at (0,0){};
\draw (v) -- +(90:1) (v) -- ++(210:1) (v) -- ++(-30:1);
\end{tikzpicture}$
in the expansion of this element is the scalar $\lambda$.\qed
\end{thm}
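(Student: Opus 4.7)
The plan is to produce, for each $\lambda \in \Q$, an explicit Maurer--Cartan element $M_\lambda \in \HGC'{}_{1,2}^2$ of rational type whose expansion has $Y$-coefficient equal to $\lambda$. The strategy rests on the translation between MC elements in the hairy graph complex and morphisms of dg Hopf $\La$-cooperads, combined with the natural $\Q^\times$-action by rescaling of the Poisson bracket.

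First, I would use the Shoikhet $L_\infty$-quasi-isomorphism $U\colon \fHGC'{}_{1,2} \stackrel{\simeq}{\to} \xCoDer_{\dg\La}(\stG_2, B(\Ass\{1\}))$ of Theorem \ref{thm:HGC Shoikhet structure}, together with the identifications of Lemma \ref{lemm:biderivation bicomodule complex} and Proposition \ref{prop:Lambda-coderivation Lie algebra}, to reinterpret MC elements of $\HGC'{}_{1,2}^2$ in terms of morphisms of dg $\La$-cooperads $\stG_2 \to B(\Ass\{1\})$. The weight (loop-order) compatibility of $U$ ensures that $U$ induces a bijection on gauge equivalence classes of MC elements and is compatible with the coefficient-extraction for the generators of the loop-order-$0$ part of $\HGC_{1,2}$, whose $\Q$-span is $\Q\cdot Y$ by formula \eqref{eq:HGC0loop}. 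Through the quasi-isomorphisms $\stG_2 \simeq \e_2^c$ of Proposition \ref{prop:Poisson to graph cooperad quasi-iso} and Koszul duality, the set of MC elements modulo gauge is thereby put in bijection with homotopy classes of morphisms of dg Hopf $\La$-cooperads $\e_2^c \to \e_1^c$.

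Second, I would exhibit an explicit one-parameter family of such morphisms. The canonical inclusion $i\colon \lD_1 \hookrightarrow \lD_2$ produces, after rational models, a map $i^*\colon \e_2^c \to \e_1^c$; by Remark \ref{rem:1/4} (and the normalization of the tripod), the corresponding MC element has $Y$-coefficient $1/4$. For any $\mu \in \Q^\times$, there is an automorphism $\rho_\mu$ of the operad $\e_2 = \Poiss_2$ that fixes the commutative product and rescales the Lie bracket by $\mu$ (this is compatible with the Poisson relation since both sides of the Leibniz identity are bilinear in the bracket). Dualizing yields a cooperad automorphism $\rho_\mu^c$ of $\e_2^c$, and the family $\phi_\mu := i^* \circ \rho_\mu^c\colon \e_2^c \to \e_1^c$ provides a rational $\Q^\times$-family of morphisms of dg Hopf $\La$-cooperads. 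The zero morphism $*\colon \e_2^c \to \Com^c \to \e_1^c$ (corresponding to the basepoint, i.e.\ the trivial MC element) handles the case $\lambda = 0$.

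Third, I would verify that the $Y$-coefficient of the MC element $M_\mu := U^{-1}_*(\phi_\mu)$ depends linearly on $\mu$, specifically $M_\mu$ has $Y$-coefficient $\mu/4$. This follows from the fact that, under the correspondence of step one, the hair of a graph encodes the Lie bracket on the $\Poiss_2$ side: the tripod $Y$ contributes a term built from one bracket, so rescaling the bracket by $\mu$ rescales the $Y$-contribution by $\mu$, while higher-loop-order graphs in the expansion of $M_\mu$ carry strictly higher powers of $\mu$ (and in particular do not affect the $Y$-coefficient). Letting $\mu := 4\lambda$ produces the desired MC element. The main obstacle is the last step: the rescaling argument must be tracked through the Shoikhet formality morphism $U$, which is given by intricate explicit formulas in \cite{WillDefQ}, and one must verify that the bracket-rescaling automorphism $\rho_\mu^c$ is transported to the rescaling of hairs weighted appropriately by their number. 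I would carry this out by filtering by loop order (using that $U$ is compatible with the loop-order grading per Theorem \ref{thm:HGC Shoikhet structure}) and inspecting only the loop-order-zero part, where the comparison reduces to the pairing between $Y$ and the generator of $\Com^c\{-1\}(3)$ used in the proof of Theorem \ref{thm:HGC to extended coderivations quasi-iso}.
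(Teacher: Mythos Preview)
The paper does not give a proof of this theorem: the statement ends with a \qed immediately after the citation ``see \cite{WillTriv}'', meaning the result is imported wholesale from that reference and used as a black box (specifically, in the proof of Corollary~\ref{cor:codimension one case} for the case $n=2$). There is therefore no argument in the present paper to compare your proposal against.

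As an independent attempt, your strategy---start from the Maurer--Cartan element corresponding to the canonical inclusion $\lD_1\hookrightarrow\lD_2$ (whose tripod coefficient is $1/4$ by Remark~\ref{rem:1/4}) and then rescale via the bracket-rescaling automorphism $\rho_\mu^c$ of $\e_2^c$---is a natural idea, and the paper itself records the relevant $\K^\times$-action on $\Graphs_n^2$ in Construction~\ref{const:graph operads}. However, the crucial claim that the $Y$-coefficient scales \emph{linearly} in $\mu$ is not adequately justified. You correctly identify this as the main obstacle, but the resolution you offer (restrict to loop order zero and invoke weight-compatibility of $U$ from Theorem~\ref{thm:HGC Shoikhet structure}) does not settle it: what you need is to track the rescaling automorphism through the entire zigzag relating $\Mor_{\dg\Hopf\La\Op^c}(\e_2^c,\e_1^c)$ to $\MC(\HGC'{}_{1,2}{}^2)$, including the passage through $\xCoDer_{\dg\La}(\stG_2,B(\Ass\{1\}))$ and the Shoikhet quasi-inverse, and to verify that the induced action on the tripod coefficient is by $\mu^1$ rather than by some higher power $\mu^k$. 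If the exponent were even, your argument would fail to produce negative tripod coefficients from the positive value $1/4$. The loop-order-zero reduction tells you that only $V_1$ (the linear part of a quasi-inverse to $U$) contributes to the tripod coefficient, but you still must compute how $\rho_\mu^c$ acts on the weight-$2$ piece of the coderivation complex and check that $V_1$ intertwines this with the linear scaling on $\Q\cdot Y$. This computation is where the actual content lies, and it is not carried out in your proposal.
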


\section{From biderivations to hairy graphs, the proof of Theorem \ref{thm:main algebraic statement}
and of Theorems~\ref{thm:main topological statement}-\ref{thm:main topological statement:Shoikhet case}}\label{sec:biderivations to HGC}
The main purpose of this section is to establish the result of Theorem \ref{thm:main algebraic statement}
which gives the connection between the biderivation dg Lie algebras
associated to our dg Hopf cooperad models
of $E_n$-operads in Section~\ref{sec:HGC nerve to mapping spaces}
and the dg Lie algebras of hairy graphs (the Shoikhet $L_{\infty}$~algebra in the case of $E_1$-operads).

To be specific, we go back to the biderivation complex $\DerL = \BiDer_{\dg\La}(C(\stp_n),W(\e_m^c))$
where we take the Chevalley-Eilenberg complex on the dual cooperad of the graded Drinfeld-Kohno Lie algebra operad
as a source object ${\op B} = C(\stp_n)$, for any $n\geq 2$,
and the image of the cooperad $\e_m^c$
under the fibrant resolution functor of Section~\ref{sec:fibrant resolutions}
as a target object ${\op C} = W(\e_m^c)$, for any $m\geq 1$.
Recall that ${\op B} = C(\stp_n)$ forms a good source Hopf $\La$-cooperad
while ${\op C} = W(\e_m^c)$ forms a good target Hopf $\La$-cooperad
so that the results of Section~\ref{subsec:biderivation complexes:biderivations}
apply to this biderivation complex $\DerL = \BiDer_{\dg\La}(C(\stp_n),W(\e_m^c))$.

In a first step, we produce a zigzag of $L_{\infty}$~morphisms between this dg Lie algebra of biderivations
and the complex of hairy graphs $\HGC_{m,n}$ (which we equip with the Shoikhet $L_{\infty}$~algebra
structure in the case $m=1$). We treat the cases $m>1$ and $m=1$ separately, in the first and second subsections
of this section respectively.
In a second step, we prove that these zigzags of $L_{\infty}$~morphisms give the $L_{\infty}$~quasi-isomorphisms
asserted by the claims of Theorem \ref{thm:main algebraic statement}. We establish this result in our third subsection.

To complete our results, we prove that our $L_{\infty}$~quasi-isomorphisms induce an equivalence of simplicial sets
on the nerve of our dg Lie algebras (respectively, between the nerve of our dg Lie algebra of biderivations
and the Shoikhet $L_{\infty}$~algebra in the case $m=1$)
so that we can deduce the result of Theorem~\ref{thm:main topological statement}-\ref{thm:main topological statement:Shoikhet case}
(about the mapping spaces associated to the topological operads of little discs)
from the statement of Theorem \ref{thm:nerve to mapping spaces},
established in the previous section,
and from the algebraic result of Theorem \ref{thm:main algebraic statement}.
We address this question in the fourth subsection of this section.

\subsection{The $L_{\infty}$~morphism in the case $n,m>1$}\label{subsec:biderivations to HGC:morphism}
In this subsection, we explain the definition of an $L_{\infty}$~morphism that relates the dg Lie algebra
of biderivations $\DerL = \BiDer_{\dg\La}(C(\stp_n),W(\e_m^c))$,
to the hairy graph complexes $\HGC_{m,n}$
when $m\geq 2$.

First, by Proposition \ref{prop:biderivations to coderivations forgetful morphism}, we have an $L_{\infty}$~morphism
\[
\BiDer_{\dg\La}(C(\stp_n),W(\e_m^c))\to \CoDer_{\dg\La}(C(\stp_n),W(\e_m^c))
\]
This map is not a quasi-isomorphism, but we will see below that it induces an injection on homology.
Recall that we have an isomorphism $W(\e_m^c)\cong B(\oW(\e_m^c)_{\bo})$ (see Lemma \ref{lemm:W-construction bar structure}).
In Remark~\ref{rem:W-construction bar structure}, we observed that this isomorphism
gives rise to a weak-equivalence $\Omega(\e_m^c)\stackrel{\simeq}{\to}\oW(\e_m^c)_{\bo}$
by the bar-cobar adjunction.
Hence, we have a zigzag of quasi-isomorphisms of dg operads
\[
\oW(\e_m^c)_{\bo}\stackrel{\simeq}{\leftarrow}\Omega(\e_m^c)\stackrel{\simeq}{\rightarrow}\e_m\{m\},
\]
where the morphism on the right-hand side is given by the Koszul duality (see Section~\ref{subsec:Koszul duality}
and Example~\ref{ex:BLambda-en-operads}).
Recall also that this Koszul duality morphism defines a morphism of operads
equipped with a $B\La$-structure (see again Example~\ref{ex:BLambda-en-operads}).
We can also easily check that the quasi-isomorphism on the left-hand side of this zigzag satisfies this property too
when we equip the operad $\oW(\e_m^c)_{\bo}$ with the $B\La$-structure
of Lemma~\ref{lemm:W-construction BLambda-structure}.
We accordingly get that our zigzag of quasi-isomorphisms of dg operads
gives a zigzag of quasi-isomorphisms of dg $\La$-cooperads
when we apply the bar construction:
\[
W(\e_m^c) = B(\oW(\e_m^c)_{\bo})\stackrel{\simeq}{\leftarrow}B\Omega(\e_m^c)\stackrel{\simeq}{\rightarrow}B(\e_m\{m\}),
\]
and, as a consequence, we have a zigzag of quasi-isomorphisms of dg Lie algebras
when we pass to coderivation complexes:
\[
\CoDer_{\dg\La}(C(\stp_n),W(\e_m^c))\stackrel{\simeq}{\leftarrow}\CoDer_{\dg\La}(C(\stp_n),B(\Omega(\e_m^c)))
\stackrel{\simeq}{\rightarrow}\CoDer_{\dg\La}(C(\stp_n),B(\e_m\{m\})).
\]

Recall that $C(\stp_n)$ and $\stG_n$ are both quasi-isomorphic to the $n$-Poisson cooperad $\e_n^c = \Poiss_n^c$
as dg Hopf $\La$-cooperads
when $n\geq 2$.
Hence, we can also form a zigzag of quasi-isomorphisms of dg $\La$-cooperads
\[
C(\stp_n)\stackrel{\simeq}{\rightarrow}\e_n^c\stackrel{\simeq}{\leftarrow}\stG_n
\]
that we insert in our coderivation complexes to produce a zig-zag of quasi-isomorphisms of dg Lie algebras
\[
\CoDer_{\dg\La}(C(\stp_n),B(\e_m\{m\}))\stackrel{\simeq}{\leftarrow}\CoDer_{\dg\La}(\e_n^c , B(\Omega(\e_m^c)))
\stackrel{\simeq}{\rightarrow}\CoDer_{\dg\La}(\stG_n, B(\Omega(\e_m^c)))
\]
since all of these objects $C(\stp_n)$, $\e_n^c = \Poiss_n^c$ and $\stG_n$
form good source $\La$-cooperads (see the proof of Proposition~\ref{prop:biderivations to coderivations forgetful morphism}
for the case of the cooperad ${\op B} = C(\stp_n)$,
the observations of \cite[Paragraphs 0.10-0.11]{FW}
for the case of the $n$-Poisson cooperad ${\op B} = \Poiss_n^c$,
and Construction~\ref{const:graph cooperads}
for the case of the graph cooperad  ${\op B} = \stG_n$).

The dg Lie algebra on the right-hand side of this zigzag
is quasi-isomorphic to its extended version
by Lemma \ref{lemm:extended Lambda-coderivation complex:acyclicity}:
\[
\CoDer_{\dg\La}(\stG_n,B(\Omega(\e_m^c)))\stackrel{\simeq}{\to}\xCoDer_{\dg\La}(\stG_n,B(\Omega(\e_m^c))).
\]
Furthermore, we have a quasi-isomorphism of dg Lie algebras
\[
\fHGC_{m,n}\stackrel{\simeq}{\to}\xCoDer_{\dg\La}(\stG_n,B(\Omega(\e_m^c)))
\]
by Theorem \ref{thm:HGC to extended coderivations quasi-iso}. We just take the canonical projection
\[
\fHGC_{m,n}\to\HGC_{m,n}
\]
which is a morphism of dg Lie algebras as we observed in Section~\ref{subsec:graphs:HGC} (but not a quasi-isomorphism again) to complete this construction
and to get the following lengthy zigzag of $L_{\infty}$~morphisms:
\begin{multline}\label{eq:m2zigzag}
\BiDer_{\dg\La}(C(\stp_n),W(\e_m^c))\to\CoDer_{\dg\La}(C(\stp_n),W(\e_m^c))\\
\begin{aligned}
& \stackrel{\simeq}{\leftarrow}\CoDer_{\dg\La}(C(\stp_n),B(\Omega(\e_m^c)))
\stackrel{\simeq}{\to}\CoDer_{\dg\La}(C(\stp_n),B(\e_m\{m\}))\\
& \stackrel{\simeq}{\leftarrow}\CoDer_{\dg\La}(\e_n^c,B(\Omega(\e_m^c)))
\stackrel{\simeq}{\to}\CoDer_{\dg\La}(\stG_n,B(\e_m\{m\}))
\end{aligned}\\
\stackrel{\simeq}{\to}\xCoDer_{\dg\La}(\stG_n,B(\e_m\{m\}))\stackrel{\simeq}{\leftarrow}\fHGC_{m,n}\to\HGC_{m,n}.
\end{multline}

\subsection{The $L_{\infty}$~morphism in the case $m=1$}\label{subsec:biderivations to HGC:morphism Shoikhet case}
The construction of the first pieces of the zigzag \eqref{eq:m2zigzag} goes through without changes
when we assume $m=1$
up to the appearance of the hairy graph complex $\fHGC_{1,n}$.

Then we have to consider the $L_{\infty}$~quasi-isomorphism
of Theorem \ref{thm:HGC Shoikhet structure}
\[
\fHGC_{1,n}'\stackrel{\simeq}{\to}\xCoDer_{\dg\La}(\stG_n,B(\e_1\{1\})),
\]
where $\fHGC_{1,n}'$ is the full hairy graph complex equipped with the Shoikhet $L_{\infty}$~structure.
By Theorem \ref{thm:HGC Shoikhet structure}, the complex of connected hairy graphs forms an $L_{\infty}$~subalgebra $\HGC_{1,n}'\subset\fHGC_{1,n}'$
of this $L_{\infty}$~algebra $\fHGC_{1,n}'$
and the canonical projection
\[
\fHGC_{1,n}'\to\HGC_{1,n}'
\]
can be extended to an $L_{\infty}$~morphism. (Recall that the projection itself is not compatible with the $L_{\infty}$~operations.)
Thus, our construction now returns a zigzag of $L_{\infty}$~morphisms
of the following form:
\begin{multline}\label{eq:m1zigzag}
\BiDer_{\dg\La}(C(\stp_n),W(\e_1^c))\to\CoDer_{\dg\La}(C(\stp_n),W(\e_1^c))\\
\begin{aligned}
& \stackrel{\simeq}{\leftarrow}\CoDer_{\dg\La}(C(\stp_n),B(\Omega(\e_1^c)))
\stackrel{\simeq}{\to}\CoDer_{\dg\La}(C(\stp_n),B(\e_1\{1\}))\\
& \stackrel{\simeq}{\leftarrow}\CoDer_{\dg\La}(\e_n^c,B(\Omega(\e_1^c)))
\stackrel{\simeq}{\to}\CoDer_{\dg\La}(\stG_n,B(\e_1\{1\}))
\end{aligned}\\
\stackrel{\simeq}{\to}\xCoDer_{\dg\La}(\stG_n,B(\e_1\{1\}))\stackrel{\simeq}{\leftarrow}\fHGC_{1,n}'\to\HGC_{1,n}',
\end{multline}
where we take the Shoikhet $L_{\infty}$~algebra $\HGC_{1,n}'$
rather than the usual dg Lie algebra of hairy graphs
as target object.

\subsection{The quasi-isomorphism property and the proof of Theorem \ref{thm:main algebraic statement}}\label{subsec:biderivations to HGC:main algebraic theorem}
We now check that:

\begin{lemm}\label{lemm:biderivations to HGC quasi-iso}
The zigzag of $L_{\infty}$~morphisms \eqref{eq:m2zigzag} defines an $L_{\infty}$~quasi-isomorphism:
\[
\BiDer_{\dg\La}(C(\stp_n),W(\e_m^c))\stackrel{\simeq}{\to}\HGC_{m,n},
\]
for all $m,n\geq 2$, and the zigzag of $L_{\infty}$~morphisms \eqref{eq:m1zigzag} defines an $L_{\infty}$~quasi-isomorphism
similarly:
\[
\BiDer_{\dg\La}(C(\stp_n),W(\e_1^c))\stackrel{\simeq}{\to}\HGC_{1,n}',
\]
for all $n\geq 2$.
\end{lemm}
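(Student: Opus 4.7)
The plan is to verify the quasi-isomorphism claim by separating the easy middle of each zigzag from the subtle maps at its two ends. First I will confirm that all the arrows marked $\simeq$ in \eqref{eq:m2zigzag} and \eqref{eq:m1zigzag} are $L_\infty$ quasi-isomorphisms. The three target-varying arrows come from Proposition \ref{prop:Lambda-coderivation complex homotopy invariance} applied to the $B\La$-operad quasi-isomorphisms $\Omega(\e_m^c) \stackrel{\simeq}{\to} \oW(\e_m^c)_{\bo}$ (Lemma \ref{lemm:W-construction BLambda-structure} together with Remark \ref{rem:W-construction bar structure}) and $\Omega(\e_m^c) \stackrel{\simeq}{\to} \e_m\{m\}$ (Example \ref{ex:BLambda-en-operads}). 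The two source-varying arrows follow from the same proposition applied to the formality equivalences $C(\stp_n) \stackrel{\simeq}{\to} \e_n^c \stackrel{\simeq}{\leftarrow} \stG_n$ of Theorem \ref{thm:Drinfeld Kohno model} and Proposition \ref{prop:Poisson to graph cooperad quasi-iso}. The ordinary-to-extended coderivation comparison is Lemma \ref{lemm:extended Lambda-coderivation complex:acyclicity}, which applies since $\bS\stG_n(1) = 0$. Finally the hairy-graph comparison is Theorem \ref{thm:HGC to extended coderivations quasi-iso} in the case $m \geq 2$ and the linear component of the Shoikhet $L_\infty$ quasi-isomorphism of Theorem \ref{thm:HGC Shoikhet structure} in the case $m = 1$.

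It remains to analyze the composite of the two non-quasi-isomorphism edge maps: the forgetful $L_\infty$ morphism $U: \BiDer_{\dg\La}(C(\stp_n),W(\e_m^c)) \to \CoDer_{\dg\La}(C(\stp_n),W(\e_m^c))$ of Proposition \ref{prop:biderivations to coderivations forgetful morphism}, and the projection $p: \fHGC_{m,n} \to \HGC_{m,n}$ (respectively $\fHGC_{1,n}' \to \HGC_{1,n}'$ in the Shoikhet case). The linear component of $U$ is the inclusion $K(\stp_n[-1], \oW(\e_m^c)_{\bo}) \hookrightarrow K(I\cdot C(\stp_n), \oW(\e_m^c)_{\bo})$ induced by the canonical projection from the augmentation ideal of the Chevalley-Eilenberg complex to its generators (Lemma \ref{lemm:biderivation bicomodule complex forgetful morphism}), while the $n$-th higher $L_\infty$ component for $n \geq 2$ takes values in the weight-$n$ part of the symmetric algebra $C(\stp_n) = S(\stp_n[-1])$ (Remark \ref{rem:biderivations to coderivations forgetful morphism formulas}). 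Dually, the projection $p$ is the projection onto the weight-$1$ summand of the decomposition $\fHGC_{m,n} = \widehat{S}{}^{+}(\HGC_{m,n}[-m])[m]$ by number of connected components. The key point is to show that these two weight gradings --- symmetric-algebra weight on the coderivation side, and number of connected components on the hairy graph side --- correspond under the middle zigzag.

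Once this weight correspondence is established, the quasi-isomorphism claim follows by a spectral sequence argument comparing the associated graded pieces, since both weight-$1$ components are identified, via Lemma \ref{lemm:biderivation bicomodule complex} and the weight-$1$ part of Theorem \ref{thm:HGC to extended coderivations quasi-iso}, with the single explicit complex $K(\stp_n[-1], \e_m\{m\}) \simeq \HGC_{m,n}$; this latter identification amounts to reading off the coefficient of each generator $t_{ij} \in \mathfrak{p}_n(r)$ as an edge between labeled external vertices of a connected hairy graph. The main obstacle is precisely the verification of the weight-grading compatibility across the middle zigzag: the good source $\La$-cooperads $C(\stp_n)$ and $\stG_n$ carry very different natural gradings (symmetric-algebra weight for the first, edges-minus-internal-vertices for the second), and reconciling these through the formality chain $C(\stp_n) \simeq \e_n^c \simeq \stG_n$ requires a careful spectral-sequence inspection of the model-level construction of Theorem \ref{thm:HGC to extended coderivations quasi-iso}. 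In the case $m = 1$, the same argument proceeds with the Shoikhet $L_\infty$ structure, for which the analogous compatibility with the complete weight grading is granted by Theorem \ref{thm:HGC Shoikhet structure}.
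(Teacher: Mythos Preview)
Your outline correctly disposes of the arrows marked $\simeq$ and correctly identifies the crux as the interaction between the forgetful map $U$ at the top and the projection $p$ at the bottom. However, the proposed resolution --- matching a ``symmetric-algebra weight'' on $C(\stp_n)$ against a ``connected-component count'' on $\stG_n$ through the formality zigzag --- does not work. The morphisms $C(\stp_n)\to\e_n^c\leftarrow\stG_n$ do not intertwine these gradings: for instance a connected graph in $\stG_n(r)$ with several edges among external vertices maps to a monomial of high Arnold weight in $\e_n^c(r)$, not to a weight-$1$ element; and conversely a single symmetric-algebra generator $\xi\in\stp_n(r)[-1]$ dual to a long Lie bracket maps to a high-weight element of $\e_n^c(r)$ as well. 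So there is no common grading on $K(I(-),\e_m\{m\})$ through which your spectral-sequence comparison could run, and the ``main obstacle'' you flag is in fact fatal to this line of argument.

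The paper circumvents the difficulty by inserting the Harrison complex $B_{Harr}(-)$ as a functorial bridge. For any augmented commutative dg algebra $A$ one has a factorization $IA\to B_{Harr}(A)\to IA/IA^2$, and the second arrow is a quasi-isomorphism when $A$ is cofibrant; in particular $B_{Harr}(C(\stp_n))\simeq\stp_n[-1]$. Because $B_{Harr}$ is functorial in the Hopf cooperad, the formality zigzag induces
\[
K(\stp_n[-1],\e_m\{m\})\xleftarrow{\simeq}K(B_{Harr}(C(\stp_n)),\e_m\{m\})
\xleftarrow{\simeq}K(B_{Harr}(\e_n^c),\e_m\{m\})
\xrightarrow{\simeq}K(B_{Harr}(\stG_n),\e_m\{m\}),
\]
and a result of \cite{FW} identifies $\HGC_{m,n}\simeq K(B_{Harr}(\stG_n),\e_m\{m\}_{\bo})$. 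These are assembled into a single commutative diagram whose right rim is the zigzag \eqref{eq:m2zigzag} and whose left column consists entirely of quasi-isomorphisms; commutativity then forces the composite zigzag to be a quasi-isomorphism. The Harrison complex is the missing ingredient in your proposal: it replaces the non-existent weight correspondence by the derived indecomposables functor, which \emph{is} natural in the Hopf cooperad and hence transports cleanly across the formality zigzag.
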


\begin{proof}
We are left with proving that the $L_{\infty}$~morphisms defined by our zigzags \eqref{eq:m2zigzag} and \eqref{eq:m1zigzag}
reduce to quasi-isomorphisms of dg vector spaces when we take the linear part of these morphisms.
We can consequently forget about $L_{\infty}$~structures in our first  zigzag \eqref{eq:m2zigzag}
since our claim concerns the zigzag of morphisms of dg vector spaces
underlying our objects.

Recall that we have identities $\fHGC_{1,n}' = \fHGC_{1,n}$ and $\HGC_{1,n}' = \HGC_{1,n}$ in the category of dg vector spaces.
By the construction of Theorem~\ref{thm:HGC Shoikhet structure},
the linear part of the $L_{\infty}$~morphisms $\xCoDer_{\dg\La}(\stG_n,B(\e_1\{1\}))\stackrel{\simeq}{\leftarrow}\fHGC_{1,n}'\rightarrow\HGC_{1,n}'$
which we consider in our second zigzag in the case $m=1$
reduce to the morphisms of dg vector spaces $\xCoDer_{\dg\La}(\stG_n,B(\e_1\{1\}))\stackrel{\simeq}{\leftarrow}\fHGC_{1,n}\rightarrow\HGC_{1,n}$
considered in \eqref{eq:m1zigzag}.
Hence, we can treat both cases of our theorem $m>1$ and $m=1$ together by proving that the zigzag \eqref{eq:m2zigzag},
where we now consider any pair such that $n\geq 2$ and $m\geq 1$,
reduces to a quasi-isomorphism in the category of dg vector spaces.

This zigzag \eqref{eq:m2zigzag} fits into a commutative diagram of dg vector spaces
depicted in Figure~\ref{eq:bigdiagram}.
\begin{figure}[t]
\[
\begin{tikzcd}
\BiDer_{\dg\La}(C(\stp_n),W(\e_m^c))\ar[bend left]{drr}\ar{d}{\cong} && \\
K(\stp_n[-1],\oW(\e_m^c)_{\bo})\ar{r} &
K(IC(\stp_n),\oW(\e_m^c)_{\bo})\ar{r}{\cong} &
\CoDer_{\dg\La}(C(\stp_n),W(\e_m^c)) \\
K(\stp_n[-1],\Omega(\e_m^c))\ar{r}\ar[swap]{u}{\simeq}\ar{d}{\simeq} &
K(IC(\stp_n),\Omega(\e_m^c))\ar{r}{\cong}\ar[dashed,swap]{u}{\simeq}\ar[dashed]{d}{\simeq} &
\CoDer_{\dg\La}(C(\stp_n),B(\Omega(\e_m^c)))\ar[swap]{u}{\simeq}\ar{d}{\simeq} \\
K(\stp_n[-1],\e_m\{m\})\ar{r}\ar{d}{\simeq} &
K(IC(\stp_n),\e_m\{m\})\ar{r}{\cong} &
\CoDer_{\dg\La}(C(\stp_n),B(\e_m\{m\})) \\
K(B_{Harr}(C(\stp_n)),\e_m\{m\})\ar{ur} && \\
K(B_{Harr}(\e_n^c),\e_m\{m\})\ar{d}{\simeq}\ar[swap]{u}{\simeq}\ar{r} &
K(I\e_n^c,\e_m\{m\})\ar{r}{\cong}\ar[dashed]{d}{\simeq}\ar[dashed,swap]{uu}{\simeq} &
\CoDer_{\dg\La}(\e_n^c,B(\e_m\{m\}))\ar{d}{\simeq}\ar[swap]{uu}{\simeq} \\
K(B_{Harr}(\stG_n),\e_m\{m\})\ar{r}\ar{d}{\simeq} &
K(I(\stG_n),\e_m\{m\})\ar{r}{\cong}\ar[dashed]{d}{\simeq} &
\CoDer_{\dg\La}(\stG_n,B(\e_m\{m\}))\ar{d}{\simeq} \\
K(B_{Harr}(\stG_n),\e_m\{m\}_{\bo})\ar{r} &
K(I(\stG_n),\e_m\{m\}_{\bo})\ar{r}{\cong} &
\xCoDer_{\dg\La}(\stG_n,B(\e_m\{m\})) \\
\HGC_{m,n}\ar[swap]{u}{\simeq}\ar{rr} &&
\fHGC_{m,n}\ar[swap]{u}{\simeq}\ar[bend left]{ll}
\end{tikzcd}
\]
\caption{}\label{eq:bigdiagram}
\end{figure}
To be more precise, when we form this diagram, we claim that the triangular, quadrangular, pentagonal, and hexagonal tiles commute,
while the bottom curved morphism defines a retraction of the bottom horizontal arrow $\HGC_{m,n}$.
The zig-zag \eqref{eq:m2zigzag} corresponds to the right outer rim composites
of this diagram.

The notation $B_{Harr}$ in this diagram refers to the Harrison complex of augmented commutative dg algebras.
Briefly recall that this complex $B_{Harr}(A)$ is defined by the (degree shift of the) cofree Lie coalgebra $L^c(IA[1])$
on the dg vector space $IA[1]$, where $IA$ denotes the augmentation ideal
of our commutative dg algebra $A$,
together with a differential which is the unique coderivation $d: L^c(IA[1])\to L^c(IA[1])$ whose projection onto $IA[1]$
reduces to the map $L^c_2(IA[1])\to IA[1]$ induced by the product of our algebra $A$,
where $L^c_2(IA[1])$ denotes the homogeneous component of weight $2$
of the cofree Lie coalgebra $L^c(IA[1])$.
To be precise, we explicitly have $B_{Harr}(A)^{\circ} = L^c(IA^{\circ}[1])[-1]$
when we forget about differential.
In our diagram, we apply this complex to the components of dg Hopf $\La$-cooperads
such as ${\op B} = C(\stp_n),\e_n^c,\stG_n$
arity-wise.
The outcome of this construction inherits the structure of a dg $\La$-bicomodule over the dg $\La$-cooperad
underlying our dg Hopf $\La$-cooperad ${\op B}$ (we refer to~\cite{FW} for details on this structure result),
and we apply the deformation complex construction
of Section \ref{subsec:coderivation complexes:bicomodules}
to this object.

In our constructions, we also use that the canonical morphism $IA\to IA/IA^2$
with values in the indecomposable quotient $IA/IA^2$
of an augmented commutative algebra $A$
admits a factorization $IA\to B_{Harr}(A)\to IA/IA^2$, where the first morphism is given by the identity between $IA$
and the component $L^c_1(IA^{\circ}[1])[-1]$ of the Harrison complex,
whereas the second morphism is given by the identity between $IA/IA^2$
and the cokernel of the top component of Harrison differential $L^c_2(IA[1])\stackrel{d}{\to}L^c_1(IA[1])$.
Recall also that the latter morphism defines a quasi-isomorphism
when $A$ is a cofibrant object of the category of commutative dg algebras.\footnote{We emphasize that this quasi-isomorphism is a quasi-isomorphism of dg vector spaces, it is not necessarily compatible with the Lie coalgebra structures.}
In the case of a dg Hopf cooperad ${\op B}$, we may see that this construction returns a sequence
of morphisms $I{\op B}\to B_{Harr}({\op B})\to I{\op B}/I{\op B}^2$
in the category of dg $\La$-bicomodules
over ${\op B}$.

We now explain the definitions of the arrows of the diagram with full details from the top to the bottom.
The topmost vertical arrow is the isomorphism provided by Lemma \ref{lemm:biderivation bicomodule complex}.
The next two vertical arrows on the left (and on the right) are induced by the zigzag
of operad morphisms
$\oW(\e_m^c)_{\bo}\stackrel{\simeq}{\leftarrow}\Omega(\e_m^c)\stackrel{\simeq}{\rightarrow}\e_m\{m\}$.
The horizontal arrows so far are just the inclusions provided by the construction of Lemma \ref{lemm:biderivation bicomodule complex forgetful morphism}.
For convenience, we have inserted the general isomorphisms $K(I{\op B},{\op P})\cong\CoDer_{\dg\La}({\op B},B({\op P}))$
which occur in the construction of these inclusions Lemma \ref{lemm:biderivation bicomodule complex forgetful morphism},
and we have marked the vertical morphisms
which connect the intermediate objects $K(I{\op B},{\op P})$
in dash.
In fact, we keep taking these isomorphisms $K(I{\op B},{\op P})\cong\CoDer_{\dg\La}({\op B},B({\op P}))$
all along our diagram on the right-hand side,
and the dashed morphisms represents the counterpart, on these complexes $K(I{\op B},{\op P})$,
of the vertical quasi-isomorphisms that form the middle part
of our zig-zag \eqref{eq:m2zigzag}.

To get the next triangle of the diagram, we apply the functor $K(-,\e_m\{m\})$
to the morphisms of dg $\La$-bicomodules $IC(\stp_n)\to B_{Harr}(C(\stp_n))\to IC(\stp_n)/IC(\stp_n)^2$
associated to the dg $\La$-Hopf cooperad ${\op B} = C(\stp_n)$,
for which we have the obvious identity $IC(\stp_n)/IC(\stp_n)^2 = \stp_n$.

The fifth and the sixth vertical arrows on the left-hand side of our diagram are
the morphisms of deformations complexes induced by zigzag
of quasi-isomorphisms
of dg Hopf $\La$-operads $C(\stp_n)\xrightarrow{\simeq}\e_n^c\xleftarrow{\simeq}\stG_n$,
just as the corresponding morphisms which we consider on the right-hand side
of our diagram.
The horizontal arrows that we take in the left-hand side squares at this level of our diagram
are the morphisms of dg $\La$-bicomodules $I{\op B}\to B_{Harr}({\op B})$
associated to the dg $\La$-Hopf cooperads ${\op B} = \e_n^c,C(\stp_n)$,
just as the diagonal arrow which corresponds to the case ${\op B} = C(\stp_n)$
of this construction.

The next vertical arrows of our diagram are given by the obvious inclusion of operads $\e_m\{m\}$ into $\e_m\{m\}_{\bo}$,
and we just transport the previous horizontal morphisms to this level,
by using the (bi)functoriality of the deformation complex $K(-,-)$.

The last vertical quasi-isomorphism on the left-hand side of the diagram has been constructed in \cite{FW}.
The corresponding quasi-isomorphism on the right is the one given by our zigzag constructions.

The commutativity of this diagram implies that the zigzag \eqref{eq:m2zigzag} reduces to the vertical composites of the left-hand side.
The conclusion that this zigzag defines a quasi-isomorphism therefore follows from the observation
that all vertical morphisms on the left are quasi-isomorphisms as indicated in the diagram.
The proof of our lemma is now complete.
\end{proof}

Furthermore, the verification of this lemma finishes the proof of Theorem \ref{thm:main algebraic statement}
when, in the deformation complex $\Def(\E_n^c,\E_m^c) = \BiDer_{\dg\La}(\check{\E}_n^c,\hat{\E}_m^c)$,
we take $\check{\E}_n^c = C(\stp_n)$
for the cofibrant model of the dg Hopf $\La$-cooperad $\E_n^c = R\Omega_{\sharp}{\op E}_n$
and $\hat{\E}_m^c = W(\e_m^c)$,
for the fibrant model of the dg Hopf $\La$-cooperad $\E_m^c = R\Omega_{\sharp}{\op E}_m$
as in the proof of Proposition~\ref{prop:Hopf En-cooperad mapping spaces}.
\hfill\qed
\\

\subsection{Comparison of the nerves and the proof of Theorem \ref{thm:main topological statement}}\label{subsec:biderivations to HGC:main topological theorem}
The goal of this subsection is to establish that the $L_{\infty}$~quasi-isomorphisms constructed in the previous subsections
induce an equivalence of simplicial sets
between the nerve of the dg Lie algebras of biderivations
associated to our models of $E_n$-operads
and the nerve of the dg Lie algebras of hairy graphs
(respectively, the nerve of the Shoikhet $L_{\infty}$~algebra in the case $m=1$).
To be explicit, we check the following lemma:

\begin{lemm}\label{lemm:biderivation to HGC nerve equivalence}
The $L_{\infty}$~quasi-isomorphisms of Lemma~\ref{lemm:biderivations to HGC quasi-iso} induce a weak-equivalence
of simplicial sets
when we pass to the nerve of our dg Lie algebras (respectively, of the Shoikhet $L_{\infty}$~algebra in the case $m=1$).
Thus, we have a weak-equivalence of simplicial sets
of the form
\[
\MC_{\bullet}(\BiDer_{\dg\La}(C(\stp_n),W(\e_m^c)))\stackrel{\simeq}{\to}\MC_{\bullet}(\HGC_{m,n})
\]
in the case $m,n\geq 2$, where $\HGC_{m,n}$ is equipped with the standard Lie algebra structure,
and a weak-equivalence of simplicial sets
of the form
\[
\MC_{\bullet}(\BiDer_{\dg\La}(C(\stp_n),W(\e_1^c)))\stackrel{\simeq}{\to}\MC_{\bullet}(\HGC_{1,n}')
\]
in the case $m=1$, $n\geq 2$, where we consider the dg vector of hairy graphs equipped with the Shoikhet $L_{\infty}$~algebra structure $\HGC_{1,n}'$.
\end{lemm}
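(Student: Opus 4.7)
The plan is to reduce the statement to the Goldman--Millson type invariance theorem (Theorem~\ref{thm:GoldmanMillson}) for the nerve functor $\MC_\bullet(-)$ applied to complete graded $L_\infty$-algebras. Recall that this result asserts that any $L_\infty$-quasi-isomorphism of complete graded $L_\infty$-algebras that respects the grading induces a weak equivalence of simplicial sets on nerves. Thus my task is to equip every object appearing in the zigzags \eqref{eq:m2zigzag} and \eqref{eq:m1zigzag} with a complete descending filtration (coming from a weight grading), and to verify that each of the zigzag arrows is compatible with these filtrations.

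First I would set up the weight gradings. On the source $\BiDer_{\dg\La}(C(\stp_n),W(\e_m^c))$ the weight grading comes from Remark~\ref{rem:Lambda-biderivation Lie algebra filtration}, since $\stp_n$ is generated by the $t_{ij}$ of homogeneous weight one and $\bS\stp_n$ sits as a subobject of homogeneous weight inside $\stp_n$. On each of the intermediate coderivation complexes $\CoDer_{\dg\La}({\op B},B({\op P}))$ and $\xCoDer_{\dg\La}({\op B},B({\op P}))$ appearing in the zigzag, I would use the corresponding weight grading on the source $\La$-cooperad ${\op B} = C(\stp_n), \e_n^c, \stG_n$, together with the natural grading of the cooperad by ``weight'' (arity, number of edges, or number of generators, depending on the case); see the analogous construction of the filtration $\mF^k$ in the proof of Proposition~\ref{prop:Lambda-coderivation complex homotopy invariance}. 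On the hairy graph complex $\HGC_{m,n}$ I would take the grading by loop order (equivalently, by first Betti number of the graph), which is clearly complete and preserved by the standard dg Lie bracket. In the case $m=1$, the compatibility of the Shoikhet $L_\infty$-structure and of the projection $\fHGC_{1,n}' \to \HGC_{1,n}'$ with such a complete grading is already part of the statement of Theorem~\ref{thm:HGC Shoikhet structure}.

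Next I would check that each arrow in the zigzags~\eqref{eq:m2zigzag} and \eqref{eq:m1zigzag} preserves these complete gradings. The forgetful $L_\infty$-morphism from biderivations to coderivations of Proposition~\ref{prop:biderivations to coderivations forgetful morphism} is weight-homogeneous by the explicit formula of Remark~\ref{rem:biderivations to coderivations forgetful morphism formulas}. The morphisms induced by the zigzag of $B\La$-operads $\oW(\e_m^c)_{\bo} \xleftarrow{\simeq} \Omega(\e_m^c) \xrightarrow{\simeq} \e_m\{m\}$ and by the zigzag of $\La$-Hopf cooperads $C(\stp_n) \xrightarrow{\simeq} \e_n^c \xleftarrow{\simeq} \stG_n$ are induced by maps of generating collections and therefore are automatically weight-preserving. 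The comparison $\CoDer \to \xCoDer$ of Lemma~\ref{lemm:extended Lambda-coderivation complex:acyclicity} is the inclusion of a direct summand, again weight-preserving. Finally, the quasi-isomorphisms $\fHGC_{m,n} \xrightarrow{\simeq} \xCoDer_{\dg\La}(\stG_n, B(\e_m\{m\}))$ of Theorem~\ref{thm:HGC to extended coderivations quasi-iso} and, in the case $m=1$, the Shoikhet $L_\infty$-quasi-isomorphism of Theorem~\ref{thm:HGC Shoikhet structure}, are constructed arity- and edge-wise and so send graphs of fixed loop order into elements of matching weight on the other side; the projection $\fHGC_{m,n} \to \HGC_{m,n}$ (resp.\ its Shoikhet $L_\infty$ variant) is tautologically loop-order preserving.

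Once these compatibilities are in place, the conclusion is formal. I would apply Theorem~\ref{thm:GoldmanMillson} to each $L_\infty$-quasi-isomorphism in the zigzag (using Remark~\ref{rem:graded quasi-isomorphisms} to handle the fact that quasi-isomorphy may a priori be verified only after passing to the associated graded of the weight filtration, as in Proposition~\ref{prop:Lambda-coderivation complex homotopy invariance}), obtaining a zigzag of weak equivalences of simplicial sets between $\MC_\bullet(\BiDer_{\dg\La}(C(\stp_n),W(\e_m^c)))$ and $\MC_\bullet(\HGC_{m,n})$ (or $\MC_\bullet(\HGC_{1,n}')$ in the Shoikhet case). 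Composing these yields the asserted weak equivalence. The main subtlety I anticipate is verifying that the chosen weight filtrations are simultaneously \emph{complete} and compatible with all intermediate steps of the zigzag --- in particular checking that the loop-order filtration on $\HGC_{m,n}$ really corresponds, under the quasi-isomorphism with the extended coderivation complex, to the weight filtration inherited from $\stG_n$; this is essentially the same bookkeeping already carried out in the proof of Lemma~\ref{lemm:biderivations to HGC quasi-iso} and in the references~\cite{Turchin2,Turchin3,TW}, so no new difficulty should arise.
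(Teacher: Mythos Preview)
Your overall strategy---equip everything with a complete grading and invoke Theorem~\ref{thm:GoldmanMillson} via Remark~\ref{rem:graded quasi-isomorphisms}---is exactly what the paper does. The gap is in the specific gradings you propose: they are not mutually compatible under the zigzag maps, and some are not even compatible with the internal structures.

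Concretely: taking loop order on $\HGC_{m,n}$ and $\fHGC_{m,n}$ does not match the grading you put on $\xCoDer_{\dg\La}(\stG_n,B(\e_m\{m\}))$. Under the map of Theorem~\ref{thm:HGC to extended coderivations quasi-iso}, a hairy graph $\alpha$ with $h$ hairs, $k$ internal edges, $l$ internal vertices is sent to a coderivation supported on graphs $\gamma\in\stG_n(h)$ with $\omega(\gamma)=(k+h)-l$, while its loop order is $g=k-l+1$; these differ by $h-1$, so the map is not weight-preserving for your choices. Similarly, ``number of edges'' on $\stG_n$ is not preserved by the differential (edge contraction decreases it), so it is not even a valid dg grading; and the arity filtration from Proposition~\ref{prop:Lambda-coderivation complex homotopy invariance} is not obviously compatible with the Lie bracket in the way Definition~\ref{defn:complete graded Linfty algebras} requires. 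You correctly flag this as ``the main subtlety'', but it is not mere bookkeeping from Lemma~\ref{lemm:biderivations to HGC quasi-iso}---that lemma concerns only dg vector spaces and never mentions loop order.

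The paper resolves this by using a \emph{single} weight function $\omega=(\text{edges})-(\text{internal vertices})$ uniformly: on $\stG_n$ as just described, on $\e_n^c$ as the number of Lie brackets (equivalently the number of Arnold generators $e_{ij}$), on $\stp_n$ and hence $C(\stp_n)$ via $\omega(t_{ij})=1$ extended multiplicatively, and on the hairy graph complexes by the same edge-minus-vertex count. One then checks directly that the Hopf $\La$-cooperad maps $C(\stp_n)\to\e_n^c\leftarrow\stG_n$ preserve $\omega$, so all induced maps on (co/bi)derivation complexes do too; that the map $\fHGC_{m,n}\to\xCoDer_{\dg\La}(\stG_n,B(\e_m\{m\}))$ is $\omega$-preserving by construction; and that Theorem~\ref{thm:HGC Shoikhet structure} already asserts compatibility of the Shoikhet structure with this same $\omega$-grading. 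Replace your loop-order and arity choices with this uniform $\omega$, and the rest of your argument goes through.
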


\begin{proof}
We check that our dg Lie algebras can be equipped with a weight grading compatible
with the $L_{\infty}$~quasi-isomorphisms
of Lemma~\ref{lemm:biderivations to HGC quasi-iso},
and we rely on the result of Theorem~\ref{thm:GoldmanMillson}, together with the observation
of Remark~\ref{rem:graded quasi-isomorphisms},
to establish this lemma. (We just need to adapt our arguments in the case $m=1$,
when we deal with the Shoikhet $L_{\infty}$~structure,
because we get an $L_{\infty}$~algebra equipped with a filtration
rather than with a weight grading
in this case.)

In Section~\ref{sec:graphs}, we explained that the graph cooperad $\stG_n$
is equipped with a weight grading given by the function
\[
\omega(\alpha) = (\text{number of edges}) - (\text{number of internal vertices}),
\]
for any graph $\alpha\in\stG_n(r)$.
Recall that this weight grading is preserved by the differential of the graph cooperad (we have the relation $\omega(d\alpha) = \omega(\alpha)$,
for any graph $\alpha\in\stG_n(r)$),
by the commutative product (we have $\omega(\alpha\cdot\beta) = \omega(\alpha) + \omega(\beta)$,
for all $\alpha,\beta\in\stG_n(r)$),
by the action of permutations (we have $\omega(s\alpha) = \omega(\alpha)$, for all $\alpha\in\stG_n(r)$, and for any permutation $s\in\Sigma_r$),
and by the composition coproducts of our cooperad (if $\Delta_*(\alpha) = \sum_{(\alpha)}\alpha'\otimes\alpha''$
denotes the expansion of the composition coproduct of an element $\alpha\in\stG_n(S)$,
then we have $\omega(\alpha) = \omega(\alpha') + \omega(\alpha'')$,
for each term $\alpha'\otimes\alpha''$ of this expansion).

We use that the cooperad $\e_n^c = \Poiss_n^c$ is equipped with an analogous weight grading.
For this purpose, we first consider the weight grading of the $n$-Poisson operad $\Poiss_n$
by the number of Lie brackets in the expression of Poisson monomials.
We equivalently determine the weight of Poisson monomials
by the function $\omega: p\mapsto \omega(p)$
such that
\[
\omega(x_1 x_2) = 0,\quad \omega([x_1,x_2]) = 1,\quad\text{and}\quad \omega(p\circ_i q) = \omega(p)+\omega(q),
\]
for any composition product of elements of homogeneous weight $p\in\Poiss_n(k)$, $q\in\Poiss_n(l)$.
We just transport this weight grading to our cooperad $\e_n^c = \Poiss_n^c$ by duality, so that we have the relation $\omega(p^*) = \omega(p)$
for any element $p^*$ of the dual basis of the basis of Poisson monomials in $\Poiss_n(r)$.
We easily check that this weight grading on $\e_n^c$ is preserved by the commutative algebra products,
by the action of permutations, and by the composition coproducts of our cooperad,
like the weight grading of the graph cooperad.
We can equivalently determine the weight grading on $\e_n^c(r)$ from the Arnold presentation, which asserts that the algebra $\e_n^c(r)$
is given by a quotient of a free graded symmetric algebra $S(e_{ij},1\leq i\not=j\leq r)$
by a homogeneous ideal generated by quadratic relations. (Recall that we have $\e_n^c(r) = H^*(\lD_n(r),\K)$, for each $r>0$.
The Arnold presentation concerns the cohomology algebras of the configuration spaces of points
in the Euclidean space $\R^n$, which are the same as the cohomology algebras of the little discs spaces $\lD_n(r)$
by homotopy invariance of the cohomology.)
We actually have $\omega(e_{ij}) = 1$, for each pair $i\not=j$.

We can also provide the graded Drinfeld-Kohno Lie algebra operad ${\alg p}_n$ with a weight grading.
We then use the Drinfeld-Kohno presentation of the Lie algebras ${\alg p}_n(r)$
as a quotient of the free graded Lie algebras $L(t_{ij},1\leq i\not=j\leq r)$
by a homogeneous ideal generated by quadratic relations.
We just assign the weight $\omega(t_{ij}) = 1$ to each generator $t_{ij}\in{\alg p}_n(r)$ in this presentation,
as in the case of the Arnold presentation.
This weight grading is preserved by the Lie algebra structure by construction, by the action of permutations,
and by the additive operadic composition products attached to our object.
We again transport this weight grading to the dual cooperad of this operad $\stp_n$, and we equip the cooperad $C(\stp_n)$
defined by the Chevalley-Eilenberg cochain complex of the graded Lie coalgebras $\stp_n(r)$
with the weight grading such that $\omega(\alpha_1\cdots\alpha_l) = \omega(\alpha_1)+\dots+\omega(\alpha_l)$,
for any product of elements $\alpha_1,\dots,\alpha_l\in\stp_n(r)$
in the Chevalley-Eilenberg cochain complex.
We easily check that this weight grading on $C(\stp_n)$ is preserved by the commutative algebra products,
by the action of permutations, and by the composition coproducts of our cooperad,
in the same sense as in the case of the graph cooperad
and of the $n$-Poisson cooperad.

We provide the dg Lie algebras of coderivations and of biderivations of the diagram of Figure~\ref{eq:bigdiagram}
with the complete weight grading $\widehat{\DerL} = \prod_{d\geq 1}\widehat{\DerL}_d$
such that $\widehat{\DerL}_d$
consists of the coderivations (respectively, biderivations) that vanish on the elements
of weight $\omega(\alpha)\not=d$.
We may still see that we have the relation $\omega(\alpha)>0$
for any element in the augmentation ideal $I{\op B}(r)$
of the algebras ${\op B}(r)$
that form our cooperads ${\op B} = \stG_n,\e_n^c,C(\stp_n)$.
We therefore get that the weight grading of our objects starts in weight $d=1$
as indicated in our decomposition formula.

We use that the hairy graph complexes $\fHGC_{m,n}$ and $\HGC_{m,n}$ inherit a complete weight grading too,
like the hairy graph complexes $\fHGC_{1,n}$ and $\HGC_{1,n}$
which we consider in Section~\ref{subsec:graphs:Shoikhet structure} (we can actually use the same formula as in the case
of the graph cooperad
to define the weight grading of a graph in these complexes $\fHGC_{m,n}$ and $\HGC_{m,n}$).
We assume in particular that the bottom vertical morphisms of the diagram of Figure~\ref{eq:bigdiagram}
preserve the weight grading by construction.
We may also note that the morphisms of dg Hopf $\La$-cooperads $C(\stp_n)\stackrel{\simeq}{\rightarrow}\e_n^c\stackrel{\simeq}{\leftarrow}\stG_n$,
which we use in the definition of the vertical morphism of our diagrams preserve the weight grading too.
We deduce from this observation that the other vertical morphisms
in our diagrams preserve the complete weight grading
of our coderivation
and biderivation complexes too.
We easily check that this is also the case of the corresponding horizontal morphisms,
because these morphisms are given by natural (weight preserving) restriction operations
at the coefficient level.
We conclude that all morphisms in our diagram preserve the complete weight grading that we attach to our objects.

We can pick $L_{\infty}$~quasi-inverses of the upwards $L_{\infty}$~quasi-isomorphisms of (the right hand column of) our diagram
that preserves the complete weight grading (use the standard effective constructions of $L_{\infty}$~quasi-inverses).
We then get an $L_{\infty}$~morphism $\BiDer_{\dg\La}(C(\stp_n),W(\e_m^c))\to\HGC_{m,n}$
(an $L_{\infty}$~quasi-isomorphism by the result of Lemma~\ref{lemm:biderivations to HGC quasi-iso})
which is compatible with the complete weight grading
when we compose these maps.

This verification immediately enables us to apply the result of Theorem~\ref{thm:GoldmanMillson},
together with the observation of Remark~\ref{rem:graded quasi-isomorphisms},
in order to get the conclusion of the lemma in the case $m>1$.
If $m=1$, then we have to replace the complexes $\fHGC_{1,n}$ and $\HGC_{1,n}$
by the Shokheit $L_{\infty}$~algebras $\fHGC_{1,n}'$ and $\HGC_{1,n}'$.
In this case, we just use the assertions of Theorem \ref{thm:HGC Shoikhet structure} to get the weight grading preserving $L_{\infty}$~quasi-isomorphisms
required by our arguments, and the conclusion follows the same way as in the case $m>1$.
\end{proof}

This lemma, together with the statement of Theorem \ref{thm:nerve to mapping spaces}, established in Section \ref{sec:HGC nerve to mapping spaces},
implies that the mapping spaces associated to the topological operads of little discs
are weakly equivalent to the nerve of the dg Lie algebra of hairy graphs (respectively, to the Shoikhet $L_{\infty}$~algebra)
as asserted in Theorem~\ref{thm:main topological statement}-\ref{thm:main topological statement:Shoikhet case},
and hence, completes the proof of these statements,
as expected.
\hfill\qed

\section{The structure of the nerve of \texorpdfstring{$\HGC_{m,n}$}{HGC\_m,n} and the proof of the corollaries of Theorem \ref{thm:main topological statement}}\label{sec:HGC nerve}
Theorem \ref{thm:main topological statement} allows us to deduce homotopical properties of the mapping spaces $\Map(\lD_m,\lD_n^{\Q})$
from the examination of combinatorial properties of the hairy graph complexes $\HGC_{m,n}$.
The purpose of this section is to establish the corollaries of the introduction of the paper by using this combinatorial reduction.
Thus, we mostly forget about the homotopy of the $E_n$-operads in this section, and we focus on the study of the graph complexes themselves.

To summarize the main outcomes of this study, we first check the claims of Corollary \ref{cor:mapping spaces homotopy} and Corollary \ref{cor:mapping spaces homotopy:low degrees},
about the homotopy of our mapping spaces $\Map(\lD_m,\lD_n^{\Q})$
in the case $n-m\geq 2$.
Then we prove the claims of Corollary \ref{cor:codimension zero case}, Corollary \ref{cor:codimension zero case:low degrees} and Corollary \ref{cor:codimension zero null component},
about the homotopy of the connected components of the mapping spaces $\Map(\lD_n,\lD_n^{\Q})$
and their relationship to the Kontsevich graph complex $\GC_n$,
and the parallel claims of Corollary \ref{cor:codimension one case}
and Corollary \ref{cor:codimension one null component},
about the connected components of the mapping spaces $\Map(\lD_{n-1},\lD_n^{\Q})$.
We also check the observation of Remark \ref{rem:1/4},
about the connected component of the canonical map $\lD_{n-1}\to\lD_n$.

We devote the following subsections to these verifications, which we address
in the order of this summary. Throughout this section, we use Berglund's theorem,
which asserts that the homotopy of the nerve of an $L_{\infty}$~algebra $\DerL$
at a base point $\alpha\in\MC(\DerL)$
is given by the homology of the twisted $L_{\infty}$~algebra $\DerL$,
up to a degree shift by one (see Theorem~\ref{thm:berglund}).

\subsection{Proof of Corollary \ref{cor:mapping spaces homotopy}}\label{subsec:HGC nerve:homotopy}
We can check that the complex $\HGC_{m,n}$ is concentrated in degrees $\geq 1$ when $n-m\geq 2$ and $m\geq 1$
by counting the contribution of the internal vertices, of the internal edges, and of the hairs
in the degree of a graph (as in Remark~\ref{rem:HGC degrees}), and by using the assumption that each internal vertex
has at least three incident edges, each internal edge is incident to two vertices,
and each hair is incident to one vertex (see \cite[Proof of Proposition 2.2.7]{FW} for details).
We deduce from this observation that the simplicial set $\MC_{\bullet}(\HGC_{m,n})$
has a unique vertex, given by the zero element in the degree $0$ component
of the hairy graph complex $\HGC_{m,n}$.
Hence, this simplicial set $\MC_{\bullet}(\HGC_{m,n})$ is connected as soon as $n-m\geq 2$, for all $m\geq 1$,
and so is the mapping space $\Map(\lD_m,\lD_n^{\Q})$
by Theorem \ref{thm:main topological statement}-\ref{thm:main topological statement:Shoikhet case}.

Then we can apply the result of Theorem~\ref{thm:berglund} to get the second claim of Corollary \ref{cor:mapping spaces homotopy},
about the computation of the homotopy groups
of the spaces $\Map(\lD_m,\lD_n^{\Q})\simeq\MC_{\bullet}(\HGC_{m,n})$
in terms of the homology of the hairy graph complex $\HGC_{m,n}$.
From the vanishing of the complex $\HGC_{m,n}$ in degree $0$,
we also get the relation $\pi_1\Map(\lD_m,\lD_n^{\Q}) = H_0(\HGC_{m,n}) = 0$,
so that the space $\Map(\lD_m,\lD_n^{\Q})$ is simply connected (and not only connected)
when $n-m\geq 2$, as claimed in Corollary \ref{cor:mapping spaces homotopy},
\hfill\qed

\subsection{Proof of Corollary \ref{cor:mapping spaces homotopy:low degrees}}\label{subsec:HGC nerve:low degree homotopy}
We count the contribution of the internal vertices, of the internal edges,
and of the hairs in the degree of a (connected) hairy graph.
We now assume that our graph $\alpha$ has loop order $g$,
so that we have $k-l = g-1$,
where $k$ is the number of internal edges of our graph and $l$ is the number of internal vertices.
We moreover have $3l\leq 2k+h\Leftrightarrow l\leq 2(g-1)+h$, where $h$ is the number of hairs of our graph,
as soon as we assume that each internal vertex
has at least $3$ incident edges.
We also have $h\geq 1$ by definition of the hairy graph complex.
We then obtain
\begin{align*}
|\alpha| & = (n-1)k - n l + (n-m-1)h + m = (n-1)(g-1) - l + (n-m-1)h + m\\
& \geq (n-1)(g-1) - 2(g-1) + (n-m-2)h + m\\
& \geq (n-3)(g-1) + (n-2).
\end{align*}
We accordingly have $|\alpha|\geq 3n-8$ if $g\geq 3$, so that the components of loop order $g\leq 2$ of the hairy graph complex $\HGC_{m,n}$
capture the whole homology group $H_{i-1}(\HGC_{m,n})$, and hence the whole homotopy group $\pi_i\Map(\lD_m,\lD_n)$
by the result of Corollary \ref{cor:mapping spaces homotopy}, as soon as $i-1<3n-8\Leftrightarrow i\leq 3n-8$. The conclusion of Corollary \ref{cor:mapping spaces homotopy:low degrees} follows.
\hfill\qed

\subsection{Proof of Corollary \ref{cor:codimension zero case}}\label{subsec:HGC nerve:GC homotopy}

We consider the map $F: \MC(\HGC_{m,n})\to\K$ which, to any Maurer-Cartan element $\gamma\in\MC(\HGC_{m,n})$,
assigns the coefficient of the line graph $L = \begin{tikzpicture}[baseline=-.65ex]
\draw (0,0) -- (1,0);
\end{tikzpicture}$
in the expansion of $\gamma$ as a formal series of graphs in the hairy graph complex.
We may check that this mapping corresponds to the map $F: \pi_0\Map(\lD_n,\lD_n)\to\Q$
considered in Corollary \ref{cor:codimension zero case}.

We now consider the element such that $\alpha = \lambda L$ in the hairy graph complex, where we assume $\lambda\in\Q^{\times}$.
We still trivially have $\alpha\in\MC(\HGC_{n,n})$, and by using the translation operation $\gamma\mapsto\alpha+\gamma$,
we can identify the simplicial set $F^{-1}(\lambda)\subset\MC_\bullet(\HGC_{n,n})$
with the nerve of the dg Lie algebra
\[
\HGC_{n,n}^{\sim}\subset\HGC_{n,n}^{\alpha}
\]
formed by the formal series of graphs in which the line graph has a null coefficient.

We may see, on the other hand, that the line graph spans the component of loop order zero of the complex $\HGC_{n,n}$.
Indeed, this subcomplex $\HGC_{n,n}^{0-\text{loop}}$, which consists of the part of loop order zero in the hairy graph complex $\HGC_{n,n}$,
is identified with the complex spanned by trees with the indistinguishable external vertices on the leaves. If such a tree is not the line graph, it necessarily has an (internal) vertex. Since by assumption vertices have valence at least three, there is necessarily a vertex with more than one hair.
But by symmetry such trees vanish (hairs are odd objects), and hence the line graph is the only allowed graph of loop order 0.

Hence $\HGC_{n,n}^\sim$ is the part of loop order $\geq 1$ of the twisted dg Lie algebra $\HGC_{n,n}^{\alpha}$,
\[
\HGC_{n,n}^\sim = (\HGC_{n,n}^{\alpha})^{\geq 1-\text{loop}}.
\]

In our previous statements, we equip the hairy graph complex with the complete weight grading defined by the weight function
such that $\omega(\gamma) = (\text{number of edges}) - (\text{number of internal vertices})$
for any graph $\gamma\in\HGC_{n,n}$.
The Lie bracket with our Maurer-Cartan element $\alpha = \lambda L$
increases this weight by one.
Hence, we get that this weight grading determines a complete descending filtration on the twisted complex $\HGC_{n,n}^{\alpha}$.
To simplify our subsequent verifications, we replace this filtration
by the grading by the loop order, which we can determine by the function
such that $g(\gamma) = (\text{number of internal edges}) - (\text{number of internal vertices}) + 1$,
for any graph $\gamma\in\HGC_{n,n}$.
We have $\omega(\gamma)\geq g(\gamma)$, so that the loop order determines a filtration of our dg Lie algebra
which does not change the nerve by the result of Theorem~\ref{thm:nerve equivalences}.
We moreover see that both the Lie bracket and the differential of the twisted complex $\HGC_{n,n}^{\alpha}$
preserve the loop order.
We accordingly get that the loop order determines a complete weight decomposition (and not only a filtration)
of our dg Lie algebras.
We can then apply the result of Theorem \ref{thm:GoldmanMillson} to conclude that the above quasi-isomorphism of dg Lie algebras
induces a weak-equivalence of simplicial sets
at the nerve level:
\[
\MC_\bullet((\HGC_{n,n}^{\alpha})^{\geq 1-\text{loop}})\stackrel{\simeq}{\to}\MC_{\bullet}(\HGC_{n,n}^\sim) = F^{-1}(\lambda).
\]

We consider the extension of our dg Lie algebra
\[
(\HGC_{n,n}^{\alpha})^{\geq 1-\text{loop}}\subset(\HGC_{n,n}^{2,\alpha})^{\geq 1-\text{loop}}
\]
which we define by allowing graphs with bivalent internal vertices
in the construction of our object.
We get that this inclusion defines a quasi-isomorphism of (complete graded) dg Lie algebras, just like the canonical embedding $\HGC_{n,n}\subset\HGC_{n,n}^2$
when we consider the whole complexes of connected hairy graphs,
and we therefore have the following extra weak-equivalence of simplicial sets
at the nerve level:
\[
\MC_\bullet((\HGC_{n,n}^{\alpha})^{\geq 1-\text{loop}})\stackrel{\simeq}{\to}\MC_\bullet((\HGC_{n,n}^{2,\alpha})^{\geq 1-\text{loop}})
\]
(we use the result of Theorem \ref{thm:GoldmanMillson} again).
We then use the quasi-isomorphism of dg Lie algebras of Theorem~\ref{thm:GC abelian Lie structure}
\[
\K L\oplus\GC_n^2[1]\stackrel{\simeq}{\to}(\HGC_{n,n}^2)^{\alpha},
\]
where we assume that $\K L\oplus\GC_n^2[1]$ represents an abelian dg Lie algebra (equipped with a trivial Lie bracket).
Recall that we need the assumption $\lambda\not=0$ to get this result.
We can obviously provide the graph complex $\GC_n^2$
with the same grading by the loop order as our twisted hairy graph complex $(\HGC_{n,n}^{2,\alpha})^{\geq 1-\text{loop}}$,
and we immediately see that the above quasi-isomorphism induces a quasi-isomorphism
of complete graded dg Lie algebras
\[
\GC_n^2[1]\stackrel{\simeq}{\to}(\HGC_{n,n}^{2,\alpha})^{\geq 1-\text{loop}}
\]
when we drop the part of loop order zero.
We moreover have an obvious formality quasi-isomorphism of dg Lie algebras
\[
H(\GC_n^2)[1]\stackrel{\simeq}{\to}\GC_n^2[1]
\]
since $\GC_n^2[1]$ is equipped with an abelian dg Lie algebra structure (pick representatives of a basis of homology classes
in the graph complex $\GC_n^2[1]$).
We can assume that our formality quasi-isomorphism preserves the loop order, and we can again apply the result of Theorem \ref{thm:GoldmanMillson}
to establish that we have weak-equivalences at the nerve level:
\[
\MC_\bullet(H(\GC_n^2[1]))\stackrel{\simeq}{\to}\MC_\bullet(\GC_n^2[1])\stackrel{\simeq}{\to}\MC_\bullet((\HGC_{n,n}^{2,\alpha})^{\geq 1-\text{loop}}).
\]

By putting all our weak-equivalences together, we conclude that we have a weak-equivalence
of simplicial sets
\[
F^{-1}(\lambda)\simeq\MC_\bullet(H(\GC_n^2)[1]),
\]
for each value of the parameter $\lambda\in\Q^{\times}$.

The list in the statement of our corollary merely spells out the outcome of the homology of degree zero $H_0(\GC_n^2)$
depending on the value of the dimension $n\geq 2$.
In short, we can check that the complex $\GC_n$ with at most trivalent vertices vanishes in degree $<n$ when $n\geq 3$.
Then we deduce from the result of Proposition~\ref{prop:graph complex homology}
that the homology of $\GC_n$
in degree $0$
reduces to the vector space spanned by the loop graph $L_n$
when $n-1\equiv 0\mymod{4}$
and is trivial otherwise.
In the case, we have $H_0(\GC_2^2)\cong\grt_1$ by \cite{Will}, where $\grt_1$ is the part of weight $\geq 1$
of the graded Grothendieck-Teichm\"uller Lie algebra defined by Drinfeld,
and we have an set-theoretic identity between this Lie algebra
and the corresponding group $\GRT_1$. (In fact, this correspondence with the Grothendieck-Teichm\"uller group
is obtained by another method in the reference~\cite{Fr}
cited in the statement of Corollary \ref{cor:codimension zero case}.)

To complete these verifications, let us observe that we have an immediate identity $\pi_k(F^{-1}(\lambda),p_{\lambda})\cong H_k(\GC_n^2)$,
for any choice of base point $p_{\lambda}\in F^{-1}(\lambda)$, since we assume that the dg Lie algebra $H(\GC_n^2)[1]$
is equipped with a trivial dg Lie structure
when form the simplicial sets $F^{-1}(\lambda)\simeq\MC_\bullet(H(\GC_n^2)[1])$.
(Thus, we consider a simple particular case of the result of Theorem \ref{thm:berglund}.)
This observation completes the verification of the claims of Corollary~\ref{cor:codimension zero case}.
\hfill\qed

\subsection{Proof of Corollary \ref{cor:codimension zero case:low degrees}}\label{subsec:HGC nerve:GC low degree homotopy}
We first assume $n=2$. In this case, we can actually deduce our claims from the quasi-isomorphism $\HGC_{2,2}\simeq K(\stp_2^*[-1],\e_2\{2\})$
which occurs in the course of our proof of Theorem~\ref{thm:main algebraic statement}
in Section~\ref{subsec:biderivations to HGC:main algebraic theorem}.
Indeed, the standard Drinfeld-Kohno Lie algebras ${\alg p}(r) = {\alg p}_2(r)$ (and hence, the dual Lie coalgebra $\stp_2(r)$)
are concentrated in degree zero, whereas the components of the suspended operad $\e_2\{2\}(r) = \e_2(r)[2(1-r)]$
are concentrated in degree $\leq 1-r$.
We use a spectral sequence argument to conclude that the twisted complex $\HGC_{2,2}^{\alpha}$
has no homology in degree $>0$, for any Maurer-Cartan element $\alpha\in\MC(\HGC_{2,2})$.
We moreover have the identity $K(\stp_2^*[-1],\e_2\{2\})_0 = \Q$, and we get $H_0(\HGC_{2,2}^{\alpha}) = \Q$.
We can actually take the loop graph $L_1$ to get a representative of a generating class of this homology $H_0(\HGC_{2,2}^{\alpha}) = \Q$
in the complex $\GC_2^2[1]$.
We then use the results established by the proof of Corollary \ref{cor:codimension zero case}
to conclude that the homotopy groups $\pi_i(\Map^h(\lD_2,\lD_2^{\Q}),p_{\lambda})$
vanish in degree $i>1$.

We now assume $n>2$. We adapt the degree counting arguments which the proof of Corollary \ref{cor:mapping spaces homotopy:low degrees}
to the case of the Kontsevich graph complex $\GC_n$.
We then have the degree formula $|\alpha| = (n-1)k - n l + n = (n-1)(g-1) - l + n$,
for each graph $\alpha$ with $k$ edges and $l$ vertices,
and where we use the notation $g$ for the genus,
so that we have $g-1 = k-l$.
We still have $3l\leq 2k\Leftrightarrow l\leq 2(g-1)$, since we assume that each internal vertex
in our graph has at least $3$ incident edges.
We accordingly get:
\begin{align*}
|\alpha| & = (n-1)(g-1) - l + n\\
& \geq (n-1)(g-1) - 2(g-1) + n\\
& \geq (n-3)(g-1) + n = (n-3)g + 3.
\end{align*}
We conclude that the components of loop order $g\leq d$, where $d$ is any fixed bound,
capture the whole homology groups $H_i(\GC_n)$
of the graph complex $\GC_n$
in degree $i\leq (n-3)(d+1) + 2$.
We apply this observation to $d = 8,10$, and we use the result of Proposition~\ref{prop:graph complex homology}
(together with the previously obtained identity between the homology of the graph complex $\GC_n^2$
and the homotopy groups of the spaces $F^{-1}(\lambda)\subset\Map^h(\lD_n,\lD_n^{\Q})$, for $\lambda\not=0$)
to check the claims of Corollary \ref{cor:codimension zero case:low degrees} in the case $n>2$.
\hfill\qed

\subsection{Proof of Corollary \ref{cor:codimension zero null component}}\label{subsec:HGC nerve:zero component homotopy}
The Maurer-Cartan element of the hairy graph complex $\HGC_{n,n}$ that corresponds to the map $*$ is $0$.
Hence, from the weak-equivalence $\Map^h(\lD_n,\lD_n^{\Q})\simeq\MC_{\bullet}(\HGC_{n,n})$ of our main theorem in the case $m=n$,
and from the general statement of Theorem \ref{thm:berglund},
we get the identity $\pi_i(\Map^h(\lD_n,\lD_n^{\Q}),*) = H_{i-1}(\HGC_{n,n})$
asserted in our Corollary \ref{cor:codimension zero null component},
for any $i\geq 1$.

The second assertion of our corollary, about the homotopy groups of the space $\Map^h(\lD_n,\lD_n^{\Q})$
at other base points of the simplicial set $F^{-1}(0)$,
follows from a spectral sequence argument,
by observing that any other Maurer-Cartan element of the hairy graph complex $\HGC_{n,n}$
forms a deformation of the null element.
\hfill\qed

%

\subsection{Proof of Corollary \ref{cor:codimension one case}}\label{subsec:HGC nerve:cerf homotopy}
In a preliminary step, we explain the definition of a function $J: \pi_0\MC_\bullet(\HGC_{n-1,n})\to\Q$
which corresponds to the locally constant function
of our corollary when we pass to the mapping space $\Map(\lD_{n-1},\lD_n)$.
To any Maurer-Cartan element $\gamma\in\MC(\HGC_{n-1,n})$,
we assign the coefficient of the tripod graph $Y = \begin{tikzpicture}[scale=.3, baseline=-.65ex]
\node[int] (v) at (0,0){};
\draw (v) -- +(90:1) (v) -- ++(210:1) (v) -- ++(-30:1);
\end{tikzpicture}$
in the expansion of $\gamma$ as a formal series of graphs in the hairy graph complex.

We aim to check that this mapping $J: \gamma\mapsto J(\gamma)$ induces a well defined map
on the homotopy class set
$$\pi_0\MC_\bullet(\HGC_{n-1,n}) = \MC(\HGC_{n-1,n})/\simeq\, ,$$
where $\simeq$ is the equivalence relation such that $d_0\rho\simeq d_1\rho$
for any $1$-simplex $\gamma\in\MC_1(\HGC_{n-1,n})$.
We may set $\rho = \alpha(t) + \beta(t) dt$, where we assume $\alpha(t)\in\HGC_{n-1,n}\hat{\otimes}\Q[t]$
and $\beta(t)\in\HGC_{n-1,n}\hat{\otimes}\Q[t]$,
with the same definition of the completed tensor product as in the case of the extended Lie algebra $\HGC_{n-1,n}\hat{\otimes}\Omega(\Delta^1)$.
We have $d_0\rho = \alpha(0)$, $d_1\rho = \alpha(1)$, and for this element $\rho\in\HGC_{n-1,n}\hat{\otimes}\Omega(\Delta^1)$
the Maurer-Cartan equation implies $\alpha'(t) = d\beta(t) + [\alpha(t),\beta(t)]$,
where we use the notation $\alpha'(t)$ for the derivative of the function $\alpha: t\mapsto\alpha(t)$,
whereas $d\alpha(t)$ denotes the point-wise the application of the differential
of the graph complex to $\beta(t)$.
We accordingly have the relation
\[
\alpha(1) - \alpha(0) = \int_0^1 (d\beta(t) + [\alpha(t),\beta(t)]) dt.
\]
in the hairy graph complex $\HGC_{n-1,n}$.


It is clear the tripod graph can not arise in the differential of another graph, and neither in the Lie bracket of two other graphs in the dg Lie algebra $\HGC_{n-1,n}$.
More precisely, since the loop order is never reduced by the differential and the bracket, the other graph(s) would need to be of loop order zero. Since by symmetry the line graph is zero in the complex $\HGC_{n-1,n}$, any those graph(s) must have at least one vertex, and hence their differential and bracket must necessarily have at least two vertices, while the tripod $Y$ has only one.

We deduce from this observation
that the expression $d\beta(t) + [\alpha(t),\beta(t)]$ in the above integral
has no tripod graph component.
We therefore get that $d_1\rho = \alpha(1)$ and $d_0\rho = \alpha(1)$
have the same tripod graph component, and hence that our map $J: \gamma\mapsto J(\gamma)$
satisfies the relation $J(d_1\rho) = J(d_0\rho)$,
as required.

The proof of the claims of our corollary is now analogous to the verification of the claims of Corollary \ref{cor:codimension zero case}.
Nevertheless, we have to distinguish the cases $n=2$ and $n\geq 3$,
since we have to consider the Shoikhet $L_{\infty}$~structure (and not the standard dg Lie algebra) on the complex $\HGC_{1,2}$
in the case $n=2$.

We address the generic case $n\geq 3$ first. Let $\lambda\in\Q^{\times}$. We set
\[
\alpha :=  \sum_{k\geq 1} \lambda^k 
\underbrace{\begin{tikzpicture}[baseline=-.65ex]
\node[int] (v) at (0,0) {};
\draw (v) edge +(-.5,-.5)  edge +(-.3,-.5) edge +(0,-.5) edge +(.3, -.5) edge +(.5,-.5);
\end{tikzpicture}
}_{2k+1}
\in\HGC_{n-1,n}^2,
\]
where we consider the obvious rescaling of the element $T$ of \eqref{eq:tripodMC}.
We then have $J(\alpha) = \lambda$, and we can again use the translation operation $\gamma\mapsto\gamma+\alpha$
to identify the simplicial set $J^{-1}(\lambda)\subset\MC_{\bullet}(\HGC_{n-1,n})$
with the nerve of the dg Lie algebra
\[
\HGC_{n-1,n}^{\sim}\subset\HGC_{n-1,n}^{\alpha}
\]
formed by the formal series of graphs in which the tripod graph has a null coefficient.

We can now see that the tripod graph spans the homology of the component of loop order zero
of the complex $H(\HGC_{n-1,n})$.
Concretely, this component is spanned by at least trivalent trees with even hairs, with differential being given by vertex splitting.
The cobar construction of the cocommutative cooperad $\Omega(\Com^c)$ (whose definition is briefly reviewed in Section~\ref{subsec:coderivation complexes:BLambda-structures})
has the same expression,
but we assume that the trees have a root in this case and that the ingoing leaves
are numbered (from $1$ to $r$ when we consider the component of arity $r$
of this operad $\Omega(\Com^c)$).
We have the precise identity $\HGC_{n-1,n}^{0-\text{loop}} = \prod_{r\geq 2}(\Omega(\Com^c)(r)_{\Sigma_{r+1}}$,
where we consider an extension of the natural action of the group $\Sigma_r$ on $\Omega(\Com^c)(r)$
to make the root equivalent to the other undistinguished leaves
in trees. (This extension is the cyclic structure of the operad $\Omega(\Com^c)$
defined by Getzler-Kapranov in \cite{GetzlerKapranov}.)
The operad $\Omega(\Com^c)$ is quasi-isomorphic to the Lie operad $\Lie\{-1\}$
by the Koszul duality of operads, and we therefore have $H(\HGC_{n-1,n}^{0-\text{loop}}) = \bigoplus_{r\geq 2}\Lie\{-1\}(r)_{\Sigma_{r+1}}$.
Now we observe that $\Lie\{-1\}(r)_{\Sigma_r}$ is identified with the vector space of Lie monomials of weight $r$
on one odd variable, which is null when $r\geq 3$.
We conclude that $H(\HGC_{n-1,n}^{0-\text{loop}})$
reduces to the vector space spanned by the tripod graph
in weight $r=2$.
We then use a straightforward spectral sequence argument (by using the filtration by the number of hairs)
to get the identity $H(\HGC_{n-1,n}^{\alpha})^{0-\text{loop}} \cong \K Y$
when we consider the twisted complex.

Then we deduce from this identity that the embedding
\[
(\HGC_{n-1,n}^{\alpha})^{\geq 1-\text{loop}}\subset\HGC_{n-1,n}^\sim,
\]
where we consider the part of loop order $\geq 1$ of the twisted dg Lie algebra $\HGC_{n-1,n}^{\alpha}$,
defines a quasi-isomorphism of dg Lie algebras.
We equip our twisted complexes with the grading by the loop order again,
rather than the weight grading $\omega(\gamma) = (\text{number of edges}) - (\text{number of internal vertices})$,
which we have considered so far. We just observe, as in our proof of Corollary \ref{cor:codimension zero case}, that this change of filtration
does not change the nerve of our dg Lie algebras.
Recall that we determine this loop order grading by the function
such that $g(\gamma) = (\text{number of internal edges}) - (\text{number of internal vertices}) + 1$,
for any graph $\gamma\in\HGC_{n-1,n}$.
We use that the differential and the Lie bracket of our twisted Lie algebras are homogeneous with respect to the loop order.
We can therefore apply the result of Theorem \ref{thm:GoldmanMillson} to the above quasi-isomorphism
to conclude that we have a weak-equivalence of simplicial sets
at the nerve level:
\[
\MC_\bullet((\HGC_{n-1,n}^{\alpha})^{\geq 1-\text{loop}})\stackrel{\simeq}{\to}\MC_{\bullet}(\HGC_{n-1,n}^\sim) = J^{-1}(\lambda).
\]

We again consider the extension of this dg Lie algebra
\[
(\HGC_{n-1,n}^{\alpha})^{\geq 1-\text{loop}}\subset(\HGC_{n-1,n}^{2,\alpha})^{\geq 1-\text{loop}}
\]
which we define by allowing graphs with bivalent internal vertices
in the construction of our object.
We still get that this inclusion defines a quasi-isomorphism of (complete graded) dg Lie algebras, just like the canonical embedding $\HGC_{n-1,n}\subset\HGC_{n-1,n}^2$
when we consider the whole complexes of connected hairy graphs,
and we therefore have the following extra weak-equivalence of simplicial sets
at the nerve level:
\[
\MC_\bullet((\HGC_{n-1,n}^{\alpha})^{\geq 1-\text{loop}})\stackrel{\simeq}{\to}\MC_\bullet((\HGC_{n-1,n}^{2,\alpha})^{\geq 1-\text{loop}})
\]
(by the result of Theorem \ref{thm:GoldmanMillson} again).
We then use the quasi-isomorphism of dg Lie algebras of Theorem~\ref{thm:cerf lemma abelian Lie structure}
\[
\K T'\oplus\GC_n^2[1]\stackrel{\simeq}{\to}(\HGC_{n-1,n}^2)^{\alpha},
\]
where we assume that $\K T'\oplus\GC_n^2[1]$ represents an abelian dg Lie algebra (equipped with a trivial Lie bracket).
We assume that the graph complex $\GC_n^2$ is equipped with the loop order as in the proof of Corollary \ref{cor:codimension zero case},
and we again immediately see that the above quasi-isomorphism induces a quasi-isomorphism
of complete graded dg Lie algebras
\[
\GC_n^2[1]\stackrel{\simeq}{\to}(\HGC_{n-1,n}^{2,\alpha})^{\geq 1-\text{loop}}
\]
when we drop the part of loop order zero of our complexes.
We consider the formality quasi-isomorphism of dg Lie algebras
\[
H(\GC_n^2)[1]\stackrel{\simeq}{\to}\GC_n^2[1]
\]
again, and we apply the result of Theorem \ref{thm:GoldmanMillson} to establish these quasi-isomorphisms
give weak-equivalences at the nerve level:
\[
\MC_\bullet(H(\GC_n^2[1]))\stackrel{\simeq}{\to}\MC_\bullet(\GC_n^2[1])\stackrel{\simeq}{\to}\MC_\bullet((\HGC_{n-1,n}^{2,\alpha})^{\geq 1-\text{loop}}).
\]

By putting all our weak-equivalences together, we conclude that we have a weak-equivalence
of simplicial sets
\[
J^{-1}(\lambda)\simeq\MC_\bullet(H(\GC_n^2)[1]),
\]
for each value of the parameter $\lambda\in\Q^{\times}$.

We now turn to the case $n=2$. We then use the result of Theorem \ref{thm:cerf lemma:Shoikhet case}
to get the existence of a Maurer-Cartan element $\alpha$
such that $J(\alpha)=\lambda$.
We then use the same arguments as in the case $n\geq 3$
to establish the existence of a weak-equivalence of simplicial sets
\[
J^{-1}(\lambda)\simeq\MC_\bullet((\HGC'{}_{1,2}^{2,\alpha})^{\geq 1-\text{loop}}),
\]
where we again use the notation $\HGC'{}_{1,2}^2$ for the complex $\HGC_{1,2}^2$
equipped with the Shoikhet $L_{\infty}$~structure.
We then consider the $L_{\infty}$~quasi-isomorphism of Theorem \ref{thm:cerf lemma abelian Lie structure:Shoikhet case}
\[
\K[1]\oplus\GC_2^2[1]\stackrel{\simeq}{\to}(\HGC'{}_{1,2}^2)^{\alpha},
\]
where we still assume that $\K[1]\oplus\GC_n^2[1]$ represents an abelian dg Lie algebra (equipped with a trivial Lie bracket).
We again obtain an $L_{\infty}$~quasi-isomorphism
\[
\GC_2^2[1]\stackrel{\simeq}{\to}(\HGC'{}_{1,2}^{2,\alpha})^{\geq 1-\text{loop}}
\]
when we withdraw the components of loop order zero.
We use that this quasi-isomorphisms agrees with the quasi-isomorphisms of Theorem \ref{thm:cerf lemma abelian Lie structure}
up to terms that strictly increase the loop order (see our observations following the statement
of Theorem \ref{thm:cerf lemma abelian Lie structure:Shoikhet case}).
We use this observation to apply Theorem \ref{thm:GoldmanMillson} to our quasi-isomorphisms.
We eventually conclude that we have a weak-equivalence of simplicial sets
\[
J^{-1}(\lambda)\simeq\MC_\bullet(H(\GC_2^2)[1])
\]
in this case $n=2$ as well, as we expect.\hfill\qed

\subsection{Proof of the observation of Remark \ref{rem:1/4}}\label{subsec:HGC nerve:canonical embedding component}
The canonical embedding $\lD_{n-1}\hookrightarrow\lD_n$ corresponds to a morphism of dg Hopf $\La$-cooperads
in our model, and this morphism is represented by a Maurer-Cartan element $m$
in the dg Lie algebra $\BiDer_{\dg\La}(C(\stp_n),W(\e_{n-1}^c))$.
We can also forget about the Hopf structure and consider the morphism of dg $\La$-cooperads
underlying our model.
By Proposition \ref{prop:biderivations to coderivations forgetful morphism}, this forgetful operation is realized by the morphism
of filtered complete dg Lie algebras (of the same proposition)
\[
\BiDer_{\dg\La}(C(\stp_n),W(\e_{n-1}^c))\to\CoDer_{\dg\La}(C(\stp_n),W(\e_{n-1}^c))
\]
when we pass to the complex of biderivations. This morphism sends our Maurer-Cartan element $m$
to some Maurer-Cartan element $m'$ on the right.

We go back to the proof of Lemma~\ref{lemm:biderivations to HGC quasi-iso},
where we check that the dg Lie algebra of biderivations
on the source of this forgetful map
is $L_{\infty}$~quasi-isomorphic to $\HGC_{n-1,n}$,
while the dg Lie algebra of coderivations on the target is quasi-isomorphic to $\fHGC_{n-1,n}$.
We just need to assume that hairy graph complexes are equipped with the Shoikhet $L_{\infty}$~structure
in the case $n=2$.
The $L_{\infty}$~morphisms in our construction preserve the complete filtrations which we attach to our objects,
and hence, can be used to transfer Maurer-Cartan elements.
We get that $m$ corresponds to some Maurer-Cartan element $\hat{m}$ in the complex of connected hairy graphs $\HGC_{n-1,n}$,
while $m'$ corresponds to some Maurer-Cartan element $\hat{m}'$ in the full hairy graph complex $\fHGC_{n-1,n}$.

We use that the composite of the above forgetful map with our $L_{\infty}$~quasi-isomorphism
determines an $L_{\infty}$~morphism from $\HGC_{n-1,n}$ to $\fHGC_{n-1,n}$.
We deduce from an immediate inspection of our construction in the proof of Lemma~\ref{lemm:biderivations to HGC quasi-iso}
that the linear component of this $L_{\infty}$~morphism
is the canonical inclusion $\HGC_{n-1,n}\hookrightarrow\fHGC_{n-1,n}$.

Recall that the complete filtration which we associate to our graph complexes is determined by the weight function
such that $\omega(\gamma) = (\text{number of edges}) - (\text{number of internal vertices})$,
for any graph $\gamma\in\fHGC_{n-1,n}$.
In the previous proof, we observed that this weight function satisfies $\omega(\gamma)\geq 2$ for any graph,
and that this lower bound is reached by the tripod graph $Y$.
These observations imply that the components of order $>1$ of our $L_{\infty}$~morphism
from $\HGC_{n-1,n}$ to $\fHGC_{n-1,n}$
can not touch the tripod graph component.
Hence, the coefficient of the tripod graph in the expansion of the Maurer-Cartan element $\hat{m}\in\HGC_{n-1,n}$
is the same as the coefficient of the tripod graph
in the expansion of $\hat{m}'$.

By \cite[Lemma 8]{TW}, the coefficient of the tripod graph in the latter Maurer-Cartan element $\hat{m}'$
is $1/4$.\footnote{In fact, the statement in loc. cit. is about the real version of $\hat{m}'$,
obtained by changing the ground field to $\R$.
However, since the coefficient of the tripod graph cannot be altered by gauge transformations,
it follows that the same result has to be true over $\Q$.}
We therefore have $\hat{m}\in J^{-1}(1/4)$, and this result completes the proof of the claim of Remark \ref{rem:1/4}.
\hfill\qed

\subsection{Proof of Corollary \ref{cor:codimension one null component}}\label{subsec:HGC nerve:cerf zero component homotopy}
The proof of the assertion of Corollary \ref{cor:codimension one null component} about the homotopy groups of the mapping space $\Map^h(\lD_{n-1},\lD_n)$
at the trivial map $*$
as base point is strictly parallel to the proof of the analogous statement of Corollary \ref{cor:codimension zero null component}
for the homotopy groups of the mapping space $\Map^h(\lD_n,\lD_n)$
at the trivial map $*$. We use that the Maurer-Cartan element of the hairy graph complex $\HGC_{n-1,n}$
that corresponds to this map $*$ is $0$.
We then use the weak-equivalence $\Map^h(\lD_{n-1},\lD_n^{\Q})\simeq\MC_{\bullet}(\HGC_{n-1,n})$
given by the result of our main theorem
and the general statement of Theorem \ref{thm:berglund}
to get the identity $\pi_i(\Map^h(\lD_{n-1},\lD_n^{\Q}),*) = H_{i-1}(\HGC_{n-1,n})$
asserted in our corollary,
for any $i\geq 1$.

We can similarly use the same arguments as in the proof of Corollary \ref{cor:codimension zero null component}
to establish our assertions about the homotopy groups
of the other connected components
of the space $J^{-1}(0)$.
\hfill\qed

\part{Mapping spaces of truncated operads in connection with the embedding calculus}\label{part:rationalization}
The Goodwillie-Weiss calculus can be used to relate the mapping spaces associated to the little discs operads $\Map^h(\lD_m,\lD_n)$
to the embedding spaces $\Emb_{\partial}(\D^m,\D^n)$ as we explain in the introduction of this article.
The main purpose of this part is to prove that our result about the mapping spaces $\Map^h(\lD_m,\lD_n^{\Q})$,
where we take the rationalization of the little $n$-discs operad as a target $\lD_n^{\Q}$,
can be used to determine the rationalization of the spaces $\Map^h(\lD_m,\lD_n)$
in a suitable range of dimension,
and hence, to give information about the rational homotopy of the embedding spaces $\Emb_{\partial}(\D^m,\D^n)$.

To be more precise, recall that the Goodwillie-Weiss calculus expresses the homotopy type of a space
of embeddings modulo immersions $\Embbar_{\partial}(\D^m,\D^n)$
in terms of the limit of a tower of polynomial approximations $T_k\Embbar_{\partial}(\D^m,\D^n)$, $k\geq 0$.
In fact, we study a parallel tower decomposition of our operadic mapping spaces $\Map^h_{\leq k}(\lD_m|_{\leq k},\lD_n|_{\leq k})$, $k\geq 0$,
by using truncation functors on the category of operads ${\op P}\mapsto{\op P}|_{\leq k}$.
We establish our comparison result for the rationalization of this tower of mapping spaces.
We devote most of this part to the analysis of this tower of operadic mapping spaces therefore.
We only address the consequences of our results for the Goodwillie-Weiss calculus of embedding spaces
as a conclusion of this study. We refer to the introduction of this paper for a more detailed
survey of the connections between our results
and the Goodwillie-Weiss calculus.

\section{Truncated operads and the rationalization of mapping spaces}\label{sec:rationalization}
The truncated operads ${\op P}|_{\leq k}$, which we consider to define our tower of mapping spaces,
are defined by forgetting about the components ${\op P}(r)$
of arity $r>k$ in our objects.
We can consider an analogous truncation operation for $\La$-operads, dg $\La$-cooperads, dg $\La$-cooperads and collections.
We briefly review the applications of homotopy theory methods
to truncated operads in a preliminary subsection.
We establish our comparison result for the rationalization of the mapping spaces of the truncated operads of little discs afterwards,
and we state the truncated analogues of the results obtained in the previous part of this paper to conclude
this study.

We mostly consider topological operads rather than simplicial operads in this part. Therefore we emphasize
the applications of our constructions for topological operads. Recall simply that the model category of $\La$-operads
in topological spaces is Quillen equivalent to the model category of $\La$-operads
in simplicial sets, so that the mapping spaces which we associate to operads
in topological spaces are weakly-equivalent
to the mapping which we may associate
to the corresponding objects in the category of operads in simplicial sets.

\subsection{Truncated operads and mapping spaces}\label{subsec:rationalization:truncated operads}
We explicitly define a $k$-truncated $\La$-operad in topological spaces as the structure formed by a finite collection ${\op P}(1),\dots,{\op P}(k)$,
where each object ${\op P}(r)$ is a topological space equipped with an action of the symmetric group $\Sigma_r$, for $r = 1,\dots,k$,
together with restriction operators $u^*: {\op P}(n)\to{\op P}(m)$
and composition products $\circ_i: {\op P}(m)\times{\op P}(n)\to{\op P}(m+n-1)$
which we may form in the range of arity $r\leq k$
where our collection ${\op P}$
is defined.
We also assume that a $k$-truncated $\La$-operad is equipped with an operadic unit $1\in{\op P}(1)$,
and that the equivariance, unit and associativity axioms of $\La$-operads
hold for $k$-truncated $\La$-operads in the range
of definition of our objects.
We can also regard the underlying collection of a $k$-truncated operad as a contravariant diagram
over the category $\Lambda_{\leq k}$
which has the ordinals $\underline{r} = \{1,\dots,r\}$ such that $1\leq r\leq k$
as objects and the injective maps between these ordinals
as morphisms.
We can still define the structure of a $k$-truncated plain (symmetric) operad by forgetting the action
of the restriction operators $u^*: {\op P}(n)\to{\op P}(m)$
in this definition.
We use the same definitions in the context of simplicial sets.

We adopt the notation $\TopCat\La\Op_{\leq k}$ (respectively, $\TopCat\Op_{\leq k}$)
for the category of $k$-truncated $\La$-operads (respectively, of $k$-truncated plain symmetric operads) in topological spaces,
for any $k\geq 1$.
We similarly use the notation $s\La\Op_{\leq k} = s\SetCat\La\Op_{\leq k}$ (respectively, $s\Op_{\leq k} = s\SetCat\Op_{\leq k}$)
for the category of $k$-truncated $\La$-operads (respectively, of $k$-truncated plain symmetric operads) in simplicial sets. Recall also from Section~\ref{sec:rational homotopy} that the categories of $\La$-operads
in topological spaces and in simplicial sets are denoted   by $\TopCat\La\Op_\varnothing$
and $s\La\Op_\varnothing = s\SetCat\La\Op_\varnothing$, respectively.

We can adapt the definition of the model structure of $\La$-operads in \cite[Section II.8.4]{Fr}
in the context of $k$-truncated $\La$-operads.
In short, we provide the category of $k$-truncated $\La$-operads in topological spaces with the model structure
where the weak-equivalences are the morphisms of $k$-truncated $\La$-operads
which define a weak-equivalence in the category of topological spaces arity-wise,
whereas the fibrations are the morphisms that define a fibration
in the Reedy model category of $\Lambda_{\leq k}$-diagrams
in topological spaces.
We then consider a natural generalization of the notion of a Reedy model structure
in the context of categories of diagrams over indexing categories
with non-trivial isomorphism sets.
We characterize the cofibrations of our model category of $k$-truncated $\La$-operads
by the left lifting property with respect to the class
of acyclic fibrations.
We still have a model structure on the category of $k$-truncated plain symmetric operads $\TopCat\Op_{\leq k}$ (see~\cite[Section 2]{FNote}).
We just forget about the restriction operators in this case and we assume that a morphism defines a fibration
in $\TopCat\Op_{\leq k}$ if this morphism forms a fibration
in the projective model category of $k$-truncated $\Sigma$-collections
which underlies this model category of operads.

Let ${\op P}^{\varnothing}$ be the $k$-truncated symmetric operad which we obtain by the forgetting about the action of the restriction
operators in a $k$-truncated $\La$-operad and by taking ${\op P}(0) = \varnothing$ in arity zero.

We have the following observation:

\begin{prop}\label{prop:cofibrant truncated operads}
A $k$-truncated operad ${\op P}$ is cofibrant in $\TopCat\La\Op_{\leq k}$ if and only if the $k$-truncated operad ${\op P}^{\varnothing}$
is cofibrant in $\TopCat\Op_{\leq k}$,
and a morphism of $k$-truncated operads $f: {\op P}\to{\op Q}$ similarly defines a cofibration in $\TopCat\La\Op_{\leq k}$
if and only if the associated morphism of $k$-truncated operads $f: {\op P}^{\varnothing}\to{\op Q}^{\varnothing}$
forms a cofibration in $\TopCat\Op_{\leq k}$.
\end{prop}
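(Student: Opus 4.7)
The plan is to reduce cofibrancy in both categories to cell presentations built from matching systems of generating cofibrations, and to transport these presentations across the functor $(-)^{\varnothing}$.

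First, I would identify the generating (acyclic) cofibrations on both sides via adjoint transfer. The model structure on $\TopCat\Op_{\leq k}$ is obtained by transfer along the free-forgetful adjunction from the projective model structure on $k$-truncated $\Sigma$-collections, so its generating (acyclic) cofibrations take the form $\mathrm{F}^{\Sigma}(i)$, where $\mathrm{F}^{\Sigma}$ is the free $k$-truncated symmetric operad functor and $i$ ranges through generating projective (acyclic) cofibrations of $\Sigma$-collections. For $\TopCat\La\Op_{\leq k}$, following the method of~\cite[Section~II.8.4]{Fr}, the generating (acyclic) cofibrations take the form $\mathrm{F}^{\Lambda}(j)$, where $\mathrm{F}^{\Lambda}$ is the free $k$-truncated $\La$-operad functor and $j$ ranges through generating Reedy (acyclic) cofibrations of $\Lambda_{\leq k}$-diagrams. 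Up to retract, the latter are morphisms of the form $\La\otimes_{\Sigma}i$ obtained by Kan extension from the $\Sigma$-collection generators.

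Second, I would verify the compatibility formula
\[
\mathrm{F}^{\Lambda}(\La\otimes_{\Sigma}{\op M})^{\varnothing}\cong\mathrm{F}^{\Sigma}({\op M}),
\]
natural in the $\Sigma$-collection ${\op M}$. This is a direct treewise verification using the identification of $\La$-operads with pointed operads ${\op P}$ satisfying ${\op P}(0)=*$ and the treewise expansion of operadic coproducts recalled in~\cite[Section~A.5]{Fr}: the free $\La$-operad on ${\op M}$ differs from $\mathrm{F}^{\Sigma}({\op M})$ only by the freely adjoined arity-zero point and its images under the partial composition products, which is exactly the data that $(-)^{\varnothing}$ discards. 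In particular, the functor $(-)^{\varnothing}$ carries cell attachments along $\mathrm{F}^{\Lambda}(\La\otimes_{\Sigma}i)$ into cell attachments along $\mathrm{F}^{\Sigma}(i)$, and preserves pushouts and transfinite compositions, as it is a left adjoint (its right adjoint simply appends a disjoint arity-zero point).

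Third, I would conclude by applying the small object argument in both directions. A cofibrant object ${\op P}\in\TopCat\La\Op_{\leq k}$ is a retract of a transfinite composition of pushouts along generators $\mathrm{F}^{\Lambda}(\La\otimes_{\Sigma}i)$; applying $(-)^{\varnothing}$ and using Step~2 presents ${\op P}^{\varnothing}$ as a retract of a cell complex along $\mathrm{F}^{\Sigma}(i)$, hence cofibrant in $\TopCat\Op_{\leq k}$. For the converse, given a cell presentation of ${\op P}^{\varnothing}$, each attached cell $\mathrm{F}^{\Sigma}(i)\to{\op P}^{\varnothing}$ lifts uniquely to an attachment $\mathrm{F}^{\Lambda}(\La\otimes_{\Sigma}i)\to{\op P}$ in the $\La$-operad category, because the $\La$-structure on the target is already specified and the domain is free. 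The relative statement about morphisms of operads is obtained by applying the same small-object factorization argument to morphisms in place of the map from the initial object. The main obstacle lies in the converse direction, namely checking that lifting cell presentations interacts properly with retracts; this I would handle by establishing that $(-)^{\varnothing}$ is fully faithful on cell complexes built from the generators $\mathrm{F}^{\Lambda}(\La\otimes_{\Sigma}i)$, which follows by induction on the cell filtration using the compatibility formula of Step~2, in analogy with the non-truncated treatment of~\cite{FNote}.
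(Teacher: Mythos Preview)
The paper's own proof is a one-line reference: it simply says that the argument of \cite[Theorem~II.8.4.12]{Fr} adapts without change to the truncated setting. Your proposal is an attempt to spell out what such an adaptation might look like, and the overall shape---comparing cell presentations on the two sides via the free operad functors---is indeed the kind of argument one finds in the cited reference.

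There is, however, a genuine error in Step~2. You assert that $(-)^{\varnothing}$ is a left adjoint whose right adjoint ``simply appends a disjoint arity-zero point.'' This is not correct: given a plain operad ${\op Q}$ with ${\op Q}(0)=\varnothing$, there is no canonical way to define the composition maps $\circ_i:{\op Q}(r)\times\{*\}\to{\op Q}(r-1)$ needed to make ${\op Q}$ with an adjoined arity-zero point into a $\La$-operad. The forgetful functor $(-)^{\varnothing}$ is in fact a \emph{right} adjoint (its left adjoint freely adds a $\La$-structure, which is what your $\mathrm{F}^{\Lambda}(\La\otimes_{\Sigma}-)$ construction encodes at the level of collections), so it does not automatically preserve pushouts. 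Your compatibility formula $\mathrm{F}^{\Lambda}(\La\otimes_{\Sigma}{\op M})^{\varnothing}\cong\mathrm{F}^{\Sigma}({\op M})$ is plausible and is indeed close to the heart of the matter, but the passage from this formula to preservation of the relevant cell-attachment pushouts requires a direct treewise analysis of how the arity-zero element interacts with operadic pushouts, not an abstract adjoint-functor argument. This is precisely what the proof in \cite{Fr} does via the arity filtration $\ar^{\sharp}_k$ that the paper alludes to immediately after the proposition; once that analysis is in hand, the truncated case follows by the same reasoning restricted to arities $\leq k$.
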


\begin{proof}
We can easily adapt the proof of an analogue of this statement in \cite[Theorem~II.8.4.12]{Fr}
to the case of truncated operads.
\end{proof}

We now consider the obvious truncation functor $(-)|_{\leq k}: \TopCat\La\Op_{\varnothing}\to\TopCat\La\Op_{\leq k}$
which we get by forgetting about the components ${\op P}(r)$ of arity $r>k$
in the structure of a standard $\La$-operad ${\op P}\in\TopCat\La\Op_{\varnothing}$.
We can also consider the restriction of this truncation functor to the category of $(k+1)$-truncated $\La$-operads
$(-)|_{\leq k}: \TopCat\La\Op_{\leq k+1}\to\TopCat\La\Op_{\leq k}$.
We have the following statement, whose verification follows from straightforward categorical arguments:

\begin{prop}\label{prop:truncation Quillen adjunction}
The truncation functor $(-)|_{\leq k}: \TopCat\La\Op_{\varnothing}\to\TopCat\La\Op_{\leq k}$
admits a left adjoint $F: \TopCat\La\Op_{\leq k}\to\TopCat\La\Op_{\varnothing}$
and we have a Quillen adjunction
\[
F: \TopCat\La\Op_{\leq k}\rightleftarrows\TopCat\La\Op_{\varnothing} :(-)|_{\leq k}
\]
between the model category of $k$-truncated $\La$-operads $\TopCat\La\Op_{\leq k}$
and the model category of standard $\La$-operads $\TopCat\La\Op_{\varnothing}$.
The restriction of our truncation functor $(-)|_{\leq k}$ to the category of $(k+1)$-truncated $\La$-operads $\TopCat\La\Op_{\leq k+1}$
similarly admits a left adjoint $F_{k+1}: \TopCat\La\Op_{\leq k}\to\TopCat\La\Op_{\leq k+1}$
so that we have a Quillen adjunction between the model categories of truncated $\La$-operads of different levels:
\[
F_{k+1}: \TopCat\La\Op_{\leq k}\rightleftarrows\TopCat\La\Op_{\leq k+1} :(-)|_{\leq k},
\]
for all $k\geq 1$.\qed
\end{prop}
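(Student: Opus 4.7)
The plan is to verify this proposition by first constructing the left adjoint and then checking the Quillen adjunction conditions, both of which reduce to standard categorical manipulations on truncated diagrams.

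The first step is to construct the left adjoint $F: \TopCat\La\Op_{\leq k}\to\TopCat\La\Op_{\varnothing}$. Given a $k$-truncated $\La$-operad ${\op P}$, the natural way to produce a full $\La$-operad is to freely generate the components of arity $r>k$ by formal treewise composites of operations of arity $\leq k$, and then quotient by the relations expressing associativity, equivariance, unit, and the $\La$-structure axioms already present in ${\op P}$. Concretely, $F({\op P})(r)$ would be defined as a colimit over rooted trees with $r$ leaves whose vertices have valence $\leq k$, decorated by elements of the corresponding components of ${\op P}$. For arities $r\leq k$, the unit map of the adjunction ${\op P}\to F({\op P})|_{\leq k}$ would be an isomorphism by construction, which automatically yields the hom-isomorphism $\Mor_{\TopCat\La\Op_\varnothing}(F({\op P}),{\op Q}) \cong \Mor_{\TopCat\La\Op_{\leq k}}({\op P},{\op Q}|_{\leq k})$. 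As an alternative one may simply invoke the adjoint functor theorem, since the truncation functor preserves all small limits (it is itself a right adjoint of nothing more subtle than restriction of a diagram category) and the category $\TopCat\La\Op_{\leq k}$ is locally presentable.

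The second step is to verify that $(-)|_{\leq k}$ is right Quillen. By the definition of our model structures on $\TopCat\La\Op_{\varnothing}$ and $\TopCat\La\Op_{\leq k}$, both the weak equivalences and the fibrations are characterized arity-wise in terms of conditions which only involve the components of arity $r$ for $1 \leq r \leq k$ in the truncated setting (and arity $r\geq 1$ in the untruncated setting, but the definition in arities $\leq k$ is the same on both sides). Consequently, a morphism $f: {\op P}\to{\op Q}$ of standard $\La$-operads is a (acyclic) fibration in $\TopCat\La\Op_{\varnothing}$ in particular makes $f|_{\leq k}: {\op P}|_{\leq k}\to{\op Q}|_{\leq k}$ a (acyclic) fibration in $\TopCat\La\Op_{\leq k}$. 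This is precisely the Quillen adjunction criterion for $(-)|_{\leq k}$ being right Quillen.

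The third step handles the analogous statement comparing $\TopCat\La\Op_{\leq k+1}$ and $\TopCat\La\Op_{\leq k}$. The construction of $F_{k+1}$ follows the same pattern as $F$, except that one only freely generates the arity $k+1$ component from treewise composites of operations of arity $\leq k$ (so the construction is in fact finite in this case and hence particularly straightforward), and leaves all components of arity $\leq k$ unchanged. The verification that $(-)|_{\leq k}$ is right Quillen is identical to the argument above since weak equivalences and fibrations are again characterized in arities $\leq k$ on both sides of the adjunction. No step in this verification presents a genuine obstacle; the most delicate point is only the bookkeeping in the explicit construction of the left adjoint $F$, in order to ensure that the $\La$-structure and the equivariance relations are correctly encoded in the treewise colimit, which is why we prefer to invoke the adjoint functor theorem to short-circuit this verification.
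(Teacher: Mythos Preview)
Your proposal is correct and matches the paper's intent. The paper does not actually give a proof of this proposition: the statement ends with a \qed{} symbol, and the preceding sentence simply says the verification ``follows from straightforward categorical arguments.'' Your sketch fills in precisely those straightforward arguments --- existence of the left adjoint (via the adjoint functor theorem or an explicit free construction) and preservation of (acyclic) fibrations by the truncation functor --- which is exactly what the authors have in mind.

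One minor refinement: when you write that fibrations are ``characterized arity-wise in terms of conditions which only involve the components of arity $r$,'' this is slightly imprecise, since the Reedy fibration condition in arity $r$ involves the matching object $M{\op O}(r)$, which depends on components of arity strictly less than $r$. But your conclusion is unaffected: the matching object for arity $r\leq k$ is computed identically in $\TopCat\La\Op_{\varnothing}$ and in $\TopCat\La\Op_{\leq k}$, so the fibration condition in arities $\leq k$ is inherited under truncation, as you say.
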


We deduce from this result that the truncation ${\op P}|_{\leq k}$ of a fibrant object of our model category of $\La$-operads ${\op P}$
is still fibrant in the model category of $k$-truncated operads.
We also have the following additional properties:

\begin{prop}\label{prop:truncation cofibrant operads}
If ${\op P}$ is a cofibrant object of the model category of $\La$-operads $\TopCat\La\Op$, then its truncation ${\op P}|_{\leq k}$
still forms a cofibrant object in the model category of $k$-truncated $\La$-operads $\TopCat\La\Op_{\leq k}$.
Furthermore, the morphism $F({\op P}|_{\leq k})\to{\op P}$
which is given by the unit of our Quillen adjunction in Proposition~\ref{prop:truncation Quillen adjunction}
defines a cofibration in the model category of $\La$-operads,
and so does the morphism $F_{k+1}({\op P}|_{\leq k})\to{\op P}_{\leq k+1}$
which we get in the category of $k$-truncated $\La$-operads,
for every $k\geq 1$.
\end{prop}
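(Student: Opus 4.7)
The strategy is to proceed by a cellular induction on a cell decomposition of the cofibrant operad ${\op P}$, exploiting the explicit description of the left adjoint $F$ and the way it interacts with free operad cells.

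First, I would invoke Proposition~\ref{prop:cofibrant truncated operads} to reduce both assertions to the setting of plain symmetric operads with ${\op P}(0) = \varnothing$. Indeed, the $\La$-structure does not affect the detection of cofibrancy or of cofibrations, so after this reduction we may work in the model categories $\TopCat\Op_{\leq k}$ and $\TopCat\Op$, noting that the truncation functor and its left adjoint $F$ both preserve the vanishing condition in arity zero. Since cofibrant operads are retracts of cell complexes built by pushouts along generating cofibrations of the form $F_{\Op}(\Sigma_r \cdot i): F_{\Op}(\Sigma_r \cdot X)\to F_{\Op}(\Sigma_r \cdot Y)$, where $i: X\to Y$ is a generating cofibration in topological spaces and $F_{\Op}$ denotes the free operad functor on a $\Sigma$-collection concentrated in a single arity, it suffices, by a retract argument, to establish the statement for such cell complexes, presented as a transfinite composition ${\op P} = \colim_\alpha {\op P}_\alpha$ of cell attachments starting from the initial operad.

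Second, I would analyze how cell attachments interact with truncation and with the left adjoint $F$. The key observation is that for $r \leq k$, the free operad $F_{\Op}(\Sigma_r \cdot X)$ truncates to the free $k$-truncated operad $F_{\Op,\leq k}(\Sigma_r \cdot X)$ on the same generating collection, and the adjoint $F$ sends this free truncated operad back to $F_{\Op}(\Sigma_r \cdot X)$ by the universal property of free objects under the adjunction in Proposition~\ref{prop:truncation Quillen adjunction}. For $r > k$, the free operad $F_{\Op}(\Sigma_r \cdot X)$ truncates to the initial $k$-truncated operad, since all composites of arity-$r$ generators have arity at least $r > k$. Consequently, at a cell attachment step ${\op P}_\alpha \to {\op P}_{\alpha+1}$ of arity $r$, the induced map ${\op P}_\alpha|_{\leq k} \to {\op P}_{\alpha+1}|_{\leq k}$ is either a pushout along a generating cofibration of $\TopCat\Op_{\leq k}$, when $r \leq k$, or an identity, when $r > k$. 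In either case it is a cofibration of $k$-truncated operads, and assembling these yields that ${\op P}|_{\leq k}$ is itself a cell complex in $\TopCat\Op_{\leq k}$, hence cofibrant.

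Finally, for the assertion about the unit map $F({\op P}|_{\leq k})\to {\op P}$, I would run the same induction and prove that at each cell attachment the comparison $F({\op P}_\alpha|_{\leq k}) \to {\op P}_\alpha$ is a cofibration. In the case $r \leq k$, the induction step is a pushout of a cofibration, using the compatibility of $F$ with free operad cells noted above; in the case $r > k$, the truncation is unchanged, so $F({\op P}_{\alpha+1}|_{\leq k}) = F({\op P}_\alpha|_{\leq k})$, and the extension $F({\op P}_\alpha|_{\leq k}) \to {\op P}_{\alpha+1}$ is the composite of the previous unit map with the cofibration ${\op P}_\alpha \to {\op P}_{\alpha+1}$. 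Since cofibrations are closed under pushouts, transfinite composition, and retracts, we conclude; the same argument, restricted to arities $\leq k+1$, gives the statement for $F_{k+1}({\op P}|_{\leq k}) \to {\op P}|_{\leq k+1}$. The main subtlety I anticipate is making precise the compatibility of $F$ with cell attachments in arities $\leq k$; this relies on the tree-wise description of the left adjoint $F$ from a $k$-truncated operad and the verification that $F$ acts as the identity on the bottom $k$ arities, so that the pushout squares before and after applying $F$ match up correctly.
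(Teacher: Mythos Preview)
Your approach is correct and is essentially a detailed unpacking of what the paper does. The paper's own proof is a single sentence: it invokes Proposition~\ref{prop:cofibrant truncated operads} (exactly your first step, reducing from $\La$-operads to plain symmetric operads with $\op P(0)=\varnothing$) and then, in the paragraph following the proposition, identifies $F({\op P}|_{\leq k})$ with the arity filtration $\ar^{\sharp}_k{\op P}$ of \cite[Proof of Theorem~II.8.4.12]{Fr}, deferring the actual verification to that reference. Your cellular induction is precisely what that reference carries out, so the two arguments agree in substance; you are simply writing out the details rather than citing them.

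One small point to tighten: your induction tacitly uses that truncation $(-)|_{\leq k}$ preserves the pushouts defining cell attachments (so that the truncated cell complex is again a cell complex in $\TopCat\Op_{\leq k}$). This is true, but you should say why. The slickest justification is that, for operads with $\op P(0)=\varnothing$, truncation also admits a \emph{right} adjoint (a coskeleton sending $\op Q$ to the operad agreeing with $\op Q$ in arities $\leq k$ and equal to the terminal space in higher arities), hence preserves all colimits. Alternatively, the direct tree-counting argument works: in a tree with $n$ leaves and no nullary vertices, every vertex has arity $\leq n$, so an operation of arity $\leq k$ in the pushout never involves generators of arity $>k$; this makes both the $r>k$ case (truncation unchanged) and the $r\leq k$ case (truncated pushout equals pushout of truncations) transparent. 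Once this is in place, the pasting lemma for pushouts gives that $F({\op P}_{\alpha+1}|_{\leq k})\to {\op P}_{\alpha+1}$ is a cobase change of $F({\op P}_\alpha|_{\leq k})\to{\op P}_\alpha$ in the $r\leq k$ case, completing your induction.
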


\begin{proof}
This result can be deduced from the characterization of the cofibrations given in Proposition~\ref{prop:cofibrant truncated operads}.
\end{proof}

We may actually identify the image $F({\op P}|_{\leq k})$ of a $\La$-operad ${\op P}\in\TopCat\La\Op_{\varnothing}$
under the composite of the adjoint functors of Proposition~\ref{prop:truncation Quillen adjunction}
with the image of our operad under the operadic enhancements of the arity-wise filtration
functors $\ar^{\sharp}_k: {\op P}\mapsto\ar^{\sharp}_k{\op P}$
defined in~\cite[Proof of Theorem 8.4.12]{Fr}. The result of the above proposition can therefore be regarded
as a variant of the statements given in the verifications of this reference,
which imply that the morphism $\ar^{\sharp}_k{\op P}\to{\op P}$
is a cofibration when ${\op P}$ is a cofibrant object of the model category of $\La$-operads. Let us also recall from loc. cit. that
\beq{equ:limFres}
 \op P = \colim_k \ar^{\sharp}_k{\op P} = \colim_k F({\op P}|_{\leq k}).
\eeq
In fact we have $F({\op P}|_{\leq k})(r) = \op P(r)$ for $k>r$ if $\op P(0)$ is the empty set.

\begin{const}\label{const:truncated mapping spaces}
Let ${\op P}$ and ${\op Q}$ be $\La$-operads. We set:
\[
\Map_{\leq k}^h({\op P},{\op Q}) = \Map^h({\op P}|_{\leq k},{\op Q}|_{\leq k}),
\]
for each $k\geq 1$, where we consider the (derived) mapping space of the objects ${\op P}|_{\leq k}$ and ${\op Q}|_{\leq k}$
in the model category of $k$-truncated $\La$-operads $\TopCat\La\Op_{\leq k}$.
We can use the Quillen adjunctions of the previous definition
to get a tower of maps of mapping spaces
\[
\begin{tikzcd}
\Map_{\leq 1}^h({\op P},{\op Q}) & \Map_{\leq 2}^h({\op P},{\op Q})\ar{l} & \cdots\ar{l} & \Map_{\leq k}^h({\op P},{\op Q})\ar{l} & \cdots\ar{l} \\
& \Map^h({\op P},{\op Q})\ar{lu}\ar{u}\ar{rru} &&&
\end{tikzcd}
\]
and we moreover have:
\[
\Map^h({\op P},{\op Q}) \simeq \holim_k\Map_{\leq k}^h({\op P},{\op Q}).
\]

To be more explicit, recall that we have by definition $\Map^h({\op P},{\op Q}) = \Map(\check{\op P},\hat{\op Q})$,
where we consider the mapping space (in the ordinary sense)
associated to a cofibrant replacement $\check{\op P}$
of the object ${\op P}$
in the model category of $\La$-operads,
and to a fibrant replacement $\hat{\op Q}$
of the object ${\op Q}$.
The result of Proposition~\ref{prop:truncation cofibrant operads} implies that $\check{\op P}|_{\leq k}$
still forms a cofibrant object in the category of $k$-truncated operads,
for any $k\geq 1$, while we get that $\hat{\op Q}|_{\leq k}$
is fibrant by the general properties of Quillen adjunctions.
Thus, we also have the relation $\Map_{\leq k}^h({\op P},{\op Q}) = \Map_{\leq k}(\check{\op P},\hat{\op Q})$
at each stage or our tower, where we again set $\Map_{\leq k}(\check{\op P},\hat{\op Q}) = \Map(\check{\op P}|_{\leq k},\hat{\op Q}|_{\leq k})$.
The horizontal maps connecting these mapping spaces
are given by the composites
\[
\Map_{\leq k+1}(\check{\op P},\hat{\op Q})\to
\Map_{\leq k+1}(F_k(\check{\op P}|_{\leq k}),\hat{\op Q})
\cong
\Map_{\leq k}(\check{\op P},\hat{\op Q})\, ,
\]
where the isomorphism follows from the adjunction, while the first map is induced by the counit
of our adjunction
\[
F_k(\check{\op P}|_{\leq k})\to\check{\op P}_{\leq k+1}.
\]
Recall that this morphism is also a cofibration by the result of Proposition~\ref{prop:truncation cofibrant operads}.
This result and the general properties of mapping spaces in model categories
imply that the above map of mapping spaces
defines a fibration in the category of simplicial sets.
We therefore have the identity
$$
\Map^h({\op P},{\op Q})
= \Map(\check{\op P},\hat{\op Q})
\stackrel{\eqref{equ:limFres}} =
 \Map(\colim_k F(\check{\op P}|_{\leq k}),\hat{\op Q})
= \lim_k\Map_{\leq k}(\check{\op P},\hat{\op Q})
\simeq \holim_k\Map_{\leq k}^h({\op P},{\op Q})
$$
when we take the mapping space associated to our cofibrant and fibrant replacements
of the $\La$-operads ${\op P}$ and ${\op Q}$.
\end{const}

To a $k$-truncated $\La$-operad ${\op P}$, we may associate a $k$-truncated symmetric operad in the ordinary sense ${\op P}^+$
with the one point set as component of arity zero ${\op P}^+(0) = *$.
We now regard this $k$-truncated operad ${\op P}^+$ as an object of the category of $k$-truncated symmetric operads
with an arbitrary object in arity zero.
We consider the mapping space $\Map_{\leq k}^h({\op P}^+,{\op Q}^+)$
associated to such objects in this category of $k$-truncated symmetric operads.
We have the following observation

\begin{prop}[{see \cite[Theorem 2']{FNote}}]\label{prop:mapping spaces of unitary operads}
For any ${\op P},{\op Q}\in\TopCat\La\Op_{\leq k}$, we have a weak-equivalence of mapping spaces
\[
\Map_{\leq k}^h({\op P},{\op Q})\simeq\Map^h_{\leq k}({\op P}^+,{\op Q}^+),
\]
where the mapping space on the left-hand side is formed in the category of $k$-truncated $\La$-operads $\TopCat\La\Op_{\leq k}$
whereas the mapping space on the right-hand side is formed in the category of $k$-truncated symmetric operads
in topological spaces with an arbitrary component in arity zero.\qed
\end{prop}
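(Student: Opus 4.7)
The plan is to reduce Proposition~\ref{prop:mapping spaces of unitary operads} to the structural identification between $\La$-operads and symmetric operads with a distinguished arity zero element. Recall from Section~\ref{subsec:Lambda-operads} that the functor ${\op P}\mapsto{\op P}^+$, where ${\op P}^+(0) = *$ and ${\op P}^+(r) = {\op P}(r)$ for $r\geq 1$ (with composition at the basepoint given by the restriction operators), defines an isomorphism of categories between $\TopCat\La\Op_{\leq k}$ and the full subcategory of $\TopCat\Op_{\leq k}$ on objects ${\op R}$ with ${\op R}(0) = *$. At the set-theoretic level, this already gives a bijection $\Mor_{\TopCat\La\Op_{\leq k}}({\op P},{\op Q})\cong\Mor_{\TopCat\Op_{\leq k}}({\op P}^+,{\op Q}^+)$, since any symmetric operad morphism into ${\op Q}^+$ sends elements of arity zero to the unique point of ${\op Q}^+(0) = *$, and hence is determined by its behavior in arities $\geq 1$ together with compatibility with the composition-at-the-basepoint operations, which is exactly the definition of a $\La$-operad morphism.

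The first concrete step would be to choose a cofibrant replacement $\check{{\op P}}$ of ${\op P}$ in $\TopCat\La\Op_{\leq k}$ and a simplicial frame $\hat{{\op Q}}^{\Delta^\bullet}$ of a fibrant replacement of ${\op Q}$ in $\TopCat\La\Op_{\leq k}$, so that $\Map^h_{\leq k}({\op P},{\op Q}) = \Mor_{\TopCat\La\Op_{\leq k}}(\check{{\op P}},\hat{{\op Q}}^{\Delta^\bullet})$. Applying the $(-)^+$ functor levelwise, we get a simplicial set $\Mor_{\TopCat\Op_{\leq k}}(\check{{\op P}}{}^+,(\hat{{\op Q}}^{\Delta^\bullet})^+)$ which, by the morphism bijection recalled above, is isomorphic to $\Map^h_{\leq k}({\op P},{\op Q})$. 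The remaining task is to identify this simplicial set with the derived mapping space $\Map^h_{\leq k}({\op P}^+,{\op Q}^+)$ in $\TopCat\Op_{\leq k}$.

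This identification requires verifying that $\check{{\op P}}{}^+$ is sufficiently cofibrant, and that $(\hat{{\op Q}}^{\Delta^\bullet})^+$ is sufficiently fibrant and framed, to compute the mapping space in the larger category. The fibrancy and framing on the target side is immediate because limits and matching objects in $\TopCat\Op_{\leq k}$ restrict to limits and matching objects in the subcategory of operads with arity zero equal to a point (the terminal space is fixed under the forgetful functor). On the source side, cofibrancy of $\check{{\op P}}{}^+$ in $\TopCat\Op_{\leq k}$ can be deduced from Proposition~\ref{prop:cofibrant truncated operads}: the underlying symmetric operad $\check{{\op P}}{}^\varnothing$ is cofibrant in $\TopCat\Op_{\leq k}$, and $\check{{\op P}}{}^+$ differs from $\check{{\op P}}{}^\varnothing$ only by attaching a terminal space in arity zero — a modification that, mapping into a target whose arity zero component is also a point, has no effect on the resulting mapping space.

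The main obstacle will be the cofibrancy issue in the previous paragraph: Proposition~\ref{prop:cofibrant truncated operads} controls the $\varnothing$-variant, not directly the $+$-variant, and a genuinely cofibrant object of $\TopCat\Op_{\leq k}$ with ${\op R}(0) = *$ is a more restrictive notion. The clean way to circumvent this is to argue that the mapping space $\Map_{\TopCat\Op_{\leq k}}(-,{\op Q}^+)$ only depends on the source through its components in arities $1,\ldots,k$ and the $\La$-diagram structure extracted by composition with the basepoint, because the contribution of arity zero is absorbed by the contractibility of $\Map({\op R}(0),*) = *$. Making this precise — perhaps via a small-object-style cofibrant approximation of $\check{{\op P}}{}^+$ in $\TopCat\Op_{\leq k}$ and a homotopy-invariance argument — is the technical heart of the proof, and is precisely what is carried out in \cite{FNote}.
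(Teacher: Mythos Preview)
The paper does not prove this proposition: it is stated with a citation to \cite[Theorem~2']{FNote} and closed with a \qed, so there is no argument in the paper to compare against. Your sketch is a reasonable outline of the strategy actually used in that reference---reduce to the categorical isomorphism ${\op P}\mapsto{\op P}^+$, then check that suitable (co)fibrant replacements and simplicial frames transfer---and you correctly flag that the cofibrancy of $\check{\op P}{}^+$ in the larger category is the nontrivial point and defer it to \cite{FNote}. Since the paper itself defers the entire statement, your proposal is appropriate as written; just be aware that what you call ``the technical heart'' is genuinely the whole content of the cited theorem, not a minor lemma.
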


We need this proposition to relate our statements to the results obtained for the spaces of embeddings,
because the claims of Theorem~\ref{thm:tower delooping} are established
for the categories of $k$-truncated operads with an arbitrary component of arity zero.

\subsection{The mapping spaces of truncated operads and the proof of Theorem~\ref{thm:rational mapping space towers}}\label{subsec:rationalization:mapping spaces}
We now tackle the proof of Theorem~\ref{thm:rational mapping space towers},
about the rationalization of the mapping spaces $\Map^h(\lD_m,\lD_n)$
associated to the little discs operads.
We use the tower of mapping spaces of truncated operads defined in the proof subsection.
The idea is to establish our comparison statement level-wise, by studying the homotopy of the fibers of the maps
in our tower in order to extend the result obtained for one level to the next level.

We assume $m\geq 1$ and $n-m\geq 2$.
In a preliminary step, we have to pick a cofibrant replacement $\check{\lD}_m$
of the operad on the source of our mapping space $\lD_m$,
and a fibrant replacement $\hat{\lD}_n$
of the operad on the target $\lD_n$. Besides, we need to fix a rationalization $\hat{\lD}_n^{\Q}$
of the operad $\hat{\lD}_n\simeq\lD_n$.
We use specific choices which we explain in the next paragraph.
We tackle the proof of our theorem afterwards.

\begin{const}[The Fulton-MacPherson operad and the cofibrant and fibrant replacements of the little discs operads]\label{const:Fulton-MacPherson operads}
We actually take
\[
\check{\lD}_m = \FM_m,
\]
where $\FM_m$ is the Fulton-MacPherson operad. This operad is weakly-equivalent to the little $m$-discs operad,
and is cofibrant as a topological $\La$-operad by the characterization of the cofibrant objects
of the category of $\La$-operads \cite[Theorem~II.8.4.12]{Fr} (see also \ref{prop:cofibrant truncated operads}),
and because the operad $\FM_m^{\varnothing}$, where we take $\FM_m^{\varnothing}(0) = \varnothing$,
forms a cofibrant object in the category of ordinary symmetric operads (see~\cite{Salvatore}).

In what follows, we mainly use that the spaces $\FM_m(r)$ underlying this operad $\FM_m$
are manifolds with corners of dimension $m(r-1)-1$, and that the subspace of composite operations in $\FM_m(r)$
is identified with the boundary $\partial\FM_m(r)$
of this manifold structure.
Recall also that we have a natural operad embedding $\FM_{m}\hookrightarrow\FM_{m+1}$,
which is equivalent to the canonical embedding of the little discs operads $\lD_{m}\hookrightarrow\lD_{m+1}$
in the homotopy category of operads, for each $m\geq 1$.

In principle, the rationalization of the operad of little $n$-discs $\lD_n^{\Q}$, which we form in the category of simplicial sets first
by using the general construction of \cite[Sections II.10.2, II.12.2]{Fr}
and which we transport to the category of topological spaces afterwards (by using the geometric realization functor),
is only related to the operad of little $n$-discs by a zigzag $\lD_n\stackrel{\simeq}{\leftarrow}\cdot\rightarrow\lD_n^{\Q}$.
For the cofibrant $\La$-operad $\check{\lD}_n = \FM_n$, we can get a direct morphism $\FM_n\to\lD_n^{\Q}$
with values in this operad $\lD_n^{\Q}$.
We pick a factorization $\FM_n\rightarrowtail\FM_n^{\Q}\stackrel{\simeq}{\rightarrow}\lD_n^{\Q}$,
where the first morphism is a cofibration in the category of $\La$-operads.

Then we consider the Kontsevich-Sinha operad $\KSi_n$, which is a variant of the Fulton-MacPherson operad $\FM_n$,
such that we have an embedding $\KSi_{n-1}\hookrightarrow\KSi_n$, for each $n\geq 2$,
and where $\KSi_1 = \Ass$~\cite{Sinha}.
We have homotopy equivalences $\FM_n\stackrel{\simeq}{\to}\KSi_n$, for all $n\geq 1$, which commute with the embeddings
that link the sequence of the Fulton-MacPherson operads together and the parallel sequence
of the Kontsevich-Sinha operads.
Let $\KSi_n^{\Q}$ be the operad which we obtain by taking the pushout of our rationalization morphism $\FM_n\rightarrowtail\FM_n^{\Q}$
along the weak-equivalence $\FM_n\stackrel{\simeq}{\to}\KSi_n$
in the diagram:
\[
\begin{tikzcd}
\FM_m\ar{r}\ar{d}{\simeq} & \FM_n\ar{r}\ar{d}{\simeq} & \FM_n^{\Q}\ar[dashed]{d}{\simeq} \\
\KSi_m\ar{r} & \KSi_n\ar[dashed]{r} & \KSi_n^{\Q}
\end{tikzcd}.
\]
The morphism $\FM_n^{\Q}\to\KSi_n^{\Q}$ which we get in this pushout construction
is a weak-equivalence because the model category of $\La$-operads
in topological spaces satisfies good left properness properties,
like the model category of ordinary symmetric operads (see~\cite{BM}).

Finally, we can use general model category constructions to get a fibrant object $\hat{\KSi}_n^{\Q}$
together a weak-equivalence $\KSi_n^{\Q}\stackrel{\simeq}{\to}\hat{\KSi}_n^{\Q}$
in the model category of topological $\La$-operads,
and we can pick a factorization $\KSi_n\stackrel{\simeq}{\to}\hat{\KSi}_n\twoheadrightarrow\hat{\KSi}_n^{\Q}$
of the composite morphism $\KSi_n\to\KSi_n^{\Q}\stackrel{\simeq}{\to}\hat{\KSi}_n^{\Q}$
such that the morphism $\hat{\KSi}_n\to\hat{\KSi}_n^{\Q}$ is a fibration.
The object $\hat{\KSi}_n$, which we obtain by this construction,
is clearly fibrant.
We actually take:
\begin{align*}
\hat{\lD}_n & = \hat{\KSi}_n
\intertext{to get our fibrant replacement of the operad of little $n$-discs $\lD_n$, and we similarly take:}
\hat{\lD}_n^{\Q} & = \hat{\KSi}_n^{\Q},
\end{align*}
to define our fibrant model of the rationalization.
\end{const}

Recall that $\KSi_1 = \Ass$. Thus, we can take an obvious prolongment of the inclusion $\Ass = \KSi_1\hookrightarrow\KSi_n$
to our fibrant models of the little $n$-discs operad $\hat{\lD}_n = \hat{\KSi}_n$
and of the rational operad $\hat{\lD}_n^{\Q} = \hat{\KSi}_n^{\Q}$
to provide these objects with a coaugmentation over the associative operad $\Ass$:
\[
\Ass = \KSi_1\to\hat{\KSi}_n\to\hat{\KSi}_n^{\Q}.
\]
In general, we say that a $\La$-operad ${\op P}$ is multiplicative when we have such a morphism $\Ass\to{\op P}$
from the associative operad $\Ass$ to ${\op P}$.
We use this property in our proof of the claims of Theorem~\ref{thm:rational mapping space towers}.

To be specific, we use that the existence of a morphism of $\La$-operads $\Ass\to{\op P}$
implies that the collections of spaces ${\op P}(r)$ underlying our operads ${\op P} = \hat{\KSi}_n,\hat{\KSi}_n^{\Q}$
inherit the structure of a cosimplicial object in the category of topological spaces.
The component ${\op P}(k)$ represents the term of cosimplicial dimension $k$
of this cosimplicial object
when $k\geq 1$, and we just take the base point $*$
in cosimplicial dimension zero.
The codegeneracies $s^j: {\op P}(k+1)\to{\op P}(k)$, $j = 0,\dots,k$,
are defined by the corestriction operators
given with our $\La$-structure,
while the cofaces $d^i: {\op P}(k-1)\to{\op P}(k)$
are given by the composition operations
$d^0(p) = \mu\circ_1 p$, $d^i(p) = p\circ_i\mu$, for $i = 1,\dots,n-1$,
and $d^k(p) = \mu\circ_2 p$, for any $p\in{\op P}(k)$,
where $\mu\in{\op P}(2)$ represents the image of the generating operation
of the associative operad $\Ass$
in our object. We just take $d^0(*) = d^1(*) = 1$, where $1$ is the unit of our operad, to extend these coface operations
to the base point in cosimplicial dimension zero.

To go further, we need to review the explicit definition of the notion of a fibrant object in the category of $\La$-operads.
To each $\La$-operad ${\op O}$, we associate a sequence of matching objects $M{\op O}(r)$,
which we define by the limits of the subcubical diagrams
\[
M{\op O}(r) = \lim_{S\subsetneq\{1,\ldots,r\}}{\op O}(S),
\]
whose arrows ${\op O}(S')\to{\op O}(S)$, which are defined for all pairs of nested subsets $S\subset S'\subsetneq\{1,\ldots,r\}$,
are given by the restriction operators of the $\La$-structure attached to our object ${\op O}$.
In this expression, we are not precise about the term associated to the empty set $S = \emptyset$ in our diagram.
By convention, we can assume that this term is the one-point set $*$. (Thus, we actually take the extension ${\op O}^+$
of our operad ${\op O}$ when we form this diagram.)
Equivalently, we may forget about this term and shape our limit on the diagram spanned by the non-empty sets
such that $\emptyset\subsetneq S\subsetneq\{1,\ldots,r\}$.
We have a canonical matching map
\[
m: {\op O}(r)\to M{\op O}(r),
\]
for each $r>0$, which is given by the restriction operator ${\op O}(r)\to{\op O}(S)$ associated to the embedding $S\hookrightarrow\{1,\ldots,r\}$
on each term of our limit.
The $\La$-operad ${\op O}$ is fibrant precisely when these matching maps define fibrations
in the model category of topological spaces, for every $r>0$.

In our case ${\op O} = \hat{\KSi}_n,\hat{\KSi}_n^{\Q}$, we can identify the above matching objects $M{\op O}(r)$
with the matching spaces of the cosimplicial structure which we attach to our object
in the sense of \cite[Paragraph X.4.5]{BK}.
Furthermore, each space ${\op O}(r)$ can be equipped with a canonical base point.
Indeed, we consider the element of the associative operad $\mu_r\in\Ass(r)$
that corresponds to the $r$-fold product operation $\mu_r(x_1,\dots,x_r) = x_1\cdot\ldots\cdot x_r$
in the structure of a monoid. We just take the image of this operation under the morphism $\Ass\to{\op O}$
which we attach to our object ${\op O} = \hat{\KSi}_n,\hat{\KSi}_n^{\Q}$
in order to get our base point in the space ${\op O}(r)$,
for each $r>0$.
Let us note that the spaces ${\op O}(r) = \hat{\KSi}_n(r),\hat{\KSi}_n^{\Q}(r)$ are simply connected since we assume $n\geq m+2\geq 3$.

These base points are clearly preserved by the restriction operators and, as a consequence, determine a base point in the matching object.
Then we consider the fiber of the matching map $m: {\op O}(r)\to M{\op O}(r)$ at this base point
and we use the notation $\bN{\op O}(r)$ for this object.
The fibration condition implies that we have the relation:
\[
\pi_*\bN{\op O}(r) = \bN\pi_*{\op O}(r),
\]
for each $r>0$, where on the right-hand side, we take the component of degree $r$ of the conormalized complex
of the cosimplicial group $\pi_*{\op O}(-)$ (see \cite[Proposition X.6.3]{BK}). We have the following lemma:

\begin{lemm}\label{lemm:matching map fiber connectedness}
The homotopy groups $\pi_*\bN{\op O}(r) = \bN\pi_*{\op O}(r)$ of the fiber $\bN{\op O}(r)$ of the matching map $m: {\op O}(r)\to M{\op O}(r)$,
for ${\op O} = \hat{\KSi}_n,\hat{\KSi}_n^{\Q}$, are trivial in degrees $*\leq(n-2)(r-1)$.
\end{lemm}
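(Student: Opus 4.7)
The plan is to reduce the topological assertion about $\pi_*\bN{\op O}(r)$ to a purely algebraic statement about the conormalized complex $\bN\pi_*{\op O}(r)$, and then to establish the degree estimate by combining the connectivity of configuration spaces with a weight/homogeneity argument in the cosimplicial abelian group. Since we assume $n \geq m+2 \geq 3$, the spaces $\hat{\KSi}_n(r) \simeq F(r,\R^n)$ are $(n-2)$-connected, hence simply connected, for every $r\geq 1$; and the same holds for $\hat{\KSi}_n^{\Q}(r)$ by construction (the rationalization preserves connectivity of simply connected spaces). The fibration property of the matching map $m\colon {\op O}(r)\to M{\op O}(r)$, together with this simple connectedness, guarantees that $\pi_*{\op O}(-)$ forms a cosimplicial abelian group and justifies the identity $\pi_*\bN{\op O}(r)=\bN\pi_*{\op O}(r)$ already recorded in the excerpt.

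The heart of the argument is then to show $\bN\pi_k{\op O}(r)=0$ for $k\leq (n-2)(r-1)$. I would split this into two ranges. For $k\leq n-2$, the vanishing is immediate from the $(n-2)$-connectedness: $\pi_k{\op O}(r)$ itself is zero, so its conormalization vanishes as well. For $k\geq n-1$, I would proceed by induction on $r$ using the Fadell--Neuwirth fibration $F(r,\R^n)\to F(r-1,\R^n)$, whose fiber is $\R^n\setminus\{r{-}1\text{ points}\}\simeq\bigvee_{r-1}S^{n-1}$. Unpacking the cosimplicial structure from the multiplication $\mu\in{\op O}(2)$ and the $\La$-corestrictions, an element of $\bN\pi_k{\op O}(r)$ must die under every codegeneracy $s^j$, which on the level of configurations amounts to being killed by every operation of forgetting a point. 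Such a class is therefore supported in the "fiber direction" at each stage of the Fadell--Neuwirth tower; each such stage contributes at least $n-1$ to the minimal degree in which a non-trivial surviving class can appear, and subtracting one for the one point the multiplication $\mu$ builds in for free gives the estimate $k\geq (n-1)(r-1)+\text{something}\geq(n-2)(r-1)+1$. Alternatively, one may invoke the by-now-standard identification of the $E^2$-page of the Bousfield--Kan spectral sequence for the cosimplicial space ${\op O}(-)$ with an explicit Hochschild-type complex for $\e_n$ (cf.\ Sinha and Arone--Turchin), and read off the degree estimate from the weight grading on that complex.

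For the rational version ${\op O}=\hat{\KSi}_n^{\Q}$, the same argument applies; one may alternatively replace the induction by formality of $E_n$-operads, which reduces the computation of $\bN\pi_*{\op O}(-)\otimes\Q$ to an algebraic calculation in the cooperad $\e_n^c$ where the weight (arity) grading directly yields the bound $(n-2)(r-1)$.

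The main obstacle will be the second step: making the degree estimate on surviving conormalized classes precise. The connectivity bound $k\geq n-1$ at each fibration stage is easy, but controlling how the codegeneracies (which involve both the $\La$-restrictions and the multiplication $\mu$) interact with the Fadell--Neuwirth fibrations requires care. I expect the cleanest route is to phrase the whole computation on the level of the $E^1$-page of the Bousfield--Kan spectral sequence associated to ${\op O}(-)$, where the cosimplicial structure becomes tractable, and then pass back to homotopy groups using the $(n-2)$-connectedness of each ${\op O}(r)$.
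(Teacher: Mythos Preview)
Your strategy is essentially the paper's, and the ingredients you name (Fadell--Neuwirth fibrations with fiber $\bigvee_{r-1}S^{n-1}$, the conormalized part as the joint kernel of all codegeneracies) are exactly what is used. But the step you yourself flag as the obstacle---making the degree estimate on surviving conormalized classes precise---is where the actual content lies, and your proposal does not close it. The heuristic ``each stage of the Fadell--Neuwirth tower contributes at least $n-1$, so $k\geq(n-1)(r-1)+\text{something}$'' is not an argument: the tower is a sequence of fibrations, not a product, and homotopy of a wedge of spheres is not concentrated in a single degree, so there is no obvious way to ``accumulate'' connectivity along the tower in this fashion.

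The paper resolves this by an explicit identification
\[
\bN\pi_*\hat{\KSi}_n(r)\cong\bigoplus_{w\in B_{r-1}}\pi_*(S^{|w|(n-2)+1}),
\]
where $B_{r-1}$ consists of the basis Lie words in the free Lie algebra on $r-1$ generators in which \emph{every} generator appears at least once. The argument is: the kernel of a single restriction operator is $\pi_*(\bigvee_{r-1}S^{n-1})$; Hilton's theorem decomposes this as a direct sum over a Hall basis of Lie words, with the summand for $w$ equal to $\pi_*(S^{|w|(n-2)+1})$; imposing the remaining restrictions selects exactly those summands whose word uses all generators. Since such a word has length $|w|\geq r-1$, the minimal sphere dimension is $(r-1)(n-2)+1$, and the bound follows immediately from $\pi_k(S^d)=0$ for $k<d$. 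The paper cites \cite[Proposition~7.2]{BCKS} for the case $n=3$ and notes the same proof works for all $n\geq 3$. For the rational case the paper also gives a parallel direct argument via the Drinfeld--Kohno Lie algebras $\mathfrak{p}_n(r)$, where $\bN\mathfrak{p}_n(r)$ is identified with the span of brackets in the free Lie algebra on the $t_{1j}$ in which every generator appears---the same combinatorics, now purely algebraic. Your formality/weight-grading remark points in this direction but does not carry it out.
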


\begin{proof}
We have
\begin{equation}\label{eq:tensor_Q}
\pi_*\hat{\KSi}_n^{\Q}(r) = \pi_*\hat{\KSi}_n(r)\otimes\Q\Rightarrow\bN\pi_*\hat{\KSi}_n^{\Q}(r) = \bN\pi_*\hat{\KSi}_n(r)\otimes \Q
\end{equation}
by definition of our rationalization functor on operads. We therefore can only look at  the case $\mO = \hat{\KSi}_n$.

However, for simplicity let us consider first  the rational case. Up to a shift in degree by one,
$\pi_*\hat{\KSi}_n(r)\otimes\Q$ is the Drinfeld-Kohno Lie algebra $\mathfrak{p}_n(r)$ described
by Theorem~\ref{thm:Drinfeld Kohno model}. Its normalized part $\bN\mathfrak{p}_n(r)$ is the intersection
of the kernels of the restriction maps $s_i\colon \mathfrak{p}_n(r)\to \mathfrak{p}_n(r-1)$, $i=1\ldots r$, induced by
the $\La$-structure. It is easy to see that the kernel of $s_1$ is the free Lie algebra generated by $t_{1j}$,
$j=1\ldots r$. The part $\bN\mathfrak{p}_n(r)$ can be described as the subspace of this free Lie algebra spanned by
brackets in which every generator $t_{1j}$, $j=1\ldots r$, appears at least once. The smallest possible
degree in which this space is non-zero corresponds to the brackets in which every generator appears exactly once, which implies the result.

Let us now focus on the case $\mO = \hat{\KSi}_n$.
We use that we have an identity:
\begin{equation}\label{eq:norm_explicit}
\bN\pi_*\hat{\KSi}_n(r) = \bigoplus_{w\in B_{r-1}}\pi_*(S^{|w|(n-2)+1}),
\end{equation}
where $B_{r-1}$ is the subset of a monomial basis set of a free Lie algebra with $r-1$ generators that consists only of Lie words $w$ in which every generator appears at least once.  By $|w|$
one denotes the length of $w$.
We refer to \cite[Proposition~7.2]{BCKS} for a proof of this relation in the case $n=3$.
We can use the same proof for every $n\geq 3$.
In short, one first observes that the fiber of a single restriction operator $\hat{\KSi}_n(r)\to\hat{\KSi}_n(r-1)$
is equivalent to a wedge of spheres $\vee_{r-1} S^{n-1}$
in the homotopy category of spaces.
Then we can use Hilton's Theorem~\cite{Hilton} to express the homotopy groups
of this wedge as a direct sum over a monomial basis
of the free Lie algebra.
If we take the intersection of the kernels of all restriction operators,
then we just keep the terms of this decomposition
which we associate to the basis elements
in which every generator  appears.

We immediately get the conclusion of the lemma from this relation \eref{eq:norm_explicit}, since the homotopy groups of a sphere
vanish below the dimension.
\end{proof}

We use this lemma in the proof of the following proposition:

\begin{prop}\label{prop:truncated mapping spaces connectedness}
We still assume $m\geq 1$ and $n-m\geq 2$. We then have $\Map_{\leq 1}^h(\lD_m,\lD_n)\simeq\Map_{\leq 1}^h(\lD_m,\lD_n^{\Q})\simeq *$,
while the natural restriction maps
\begin{align*}
\Map_{\leq k}^h(\lD_m,\lD_n) & \to\Map_{\leq k-1}^h(\lD_m,\lD_n) \\
\text{and}\quad\Map_{\leq k}^h(\lD_m,\lD_n^{\Q}) & \to\Map_{\leq k-1}^h(\lD_m,\lD_n^{\Q})
\end{align*}
are $((n-m-2)(k-1)+1)$-connected when $k\geq 2$.
\end{prop}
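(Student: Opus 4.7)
The plan is to establish both claims simultaneously by induction on $k$, exploiting the explicit cofibrant and fibrant models from Construction~\ref{const:Fulton-MacPherson operads}, the tower structure of Construction~\ref{const:truncated mapping spaces}, and the connectivity estimate for the fibers of the matching map given by Lemma~\ref{lemm:matching map fiber connectedness}. I expect the same argument to work identically for both the integral target $\hat{\lD}_n = \hat{\KSi}_n$ and the rational target $\hat{\lD}_n^{\Q} = \hat{\KSi}_n^{\Q}$, since \eqref{eq:tensor_Q} and Lemma~\ref{lemm:matching map fiber connectedness} give exactly the same connectivity bound for $\bN\hat{\KSi}_n^{\Q}(k)$ as for $\bN\hat{\KSi}_n(k)$.

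For the base case $k=1$, a $1$-truncated $\La$-operad reduces to its component of arity one equipped with a basepoint (the operadic unit). Since $\FM_m(1)$, $\hat{\KSi}_n(1)$ and $\hat{\KSi}_n^{\Q}(1)$ are all contractible (they are one-point configurations modulo translation and scaling), the derived mapping space between such $1$-truncated operads is contractible.

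For the inductive step, I would identify the homotopy fiber of the restriction map $\Map_{\leq k}(\FM_m, \hat{\KSi}_n) \to \Map_{\leq k-1}(\FM_m, \hat{\KSi}_n)$ above a given morphism $\phi$. Using the Quillen adjunction $F_k \dashv (-)|_{\leq k-1}$ of Proposition~\ref{prop:truncation Quillen adjunction} and the fact that $F_k(\FM_m|_{\leq k-1})|_{\leq k} \to \FM_m|_{\leq k}$ is a cofibration (Proposition~\ref{prop:truncation cofibrant operads}), together with fibrancy of $\hat{\KSi}_n|_{\leq k}$, this fiber is a relative $\Sigma_k$-equivariant mapping space: the space of $\Sigma_k$-equivariant maps $\FM_m(k) \to \hat{\KSi}_n(k)$ which extend $\phi$ on the ``boundary'' (the image of $F_k(\FM_m|_{\leq k-1})|_{\leq k}$ in arity $k$, which for the Fulton-MacPherson operad coincides with the stratum of composite configurations $\partial\FM_m(k)$) and project to a prescribed map on $M\hat{\KSi}_n(k)$ via the matching map. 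Because the $\Sigma_k$-action on $\FM_m(k)$ is free, this is equivalent to a section space over the pair $(\FM_m(k)/\Sigma_k, \partial\FM_m(k)/\Sigma_k)$ with coefficients in a fibration whose fiber is $\bN\hat{\KSi}_n(k)$.

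Then I would apply a standard obstruction-theoretic estimate: since $\FM_m(k)$ is a manifold with corners of dimension $m(k-1)-1$ and $\bN\hat{\KSi}_n(k)$ is $(n-2)(k-1)$-connected by Lemma~\ref{lemm:matching map fiber connectedness}, the space of extensions is at least
\[
(n-2)(k-1) - (m(k-1)-1) = (n-m-2)(k-1) + 1
\]
connected. This bound applies at every base-point of the fiber (using that $n-m\geq 2$ and $k\geq 2$ ensure simple connectivity of the relevant spaces, so we may compute with homotopy fibers uniformly), yielding the claimed connectivity for the restriction map. The same argument gives the rational version, and the resulting tower argument, together with the contractibility at level $k=1$, recovers the full statement.

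The main obstacle will be a careful identification of the fiber as the above relative section space: one must verify that the relative cell attaching $F_k(\FM_m|_{\leq k-1})|_{\leq k}\to\FM_m|_{\leq k}$ is realized at the arity $k$ component by the $\Sigma_k$-pair $(\FM_m(k),\partial\FM_m(k))$, which requires making explicit how the $\La$- and operadic composition operators on the lower truncation are freely adjoined by $F_k$ and how this exhausts the composite stratum in $\FM_m(k)$. The remaining obstruction-theoretic and basepoint-tracking details are standard once this identification is secured.
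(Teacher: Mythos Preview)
Your proposal is correct and follows essentially the same route as the paper: the paper likewise identifies the fiber of the restriction map over a basepoint $\alpha$ with the space of $\Sigma_k$-equivariant lifts $\FM_m(k)\to{\op O}(k)$ in the square with top $\partial\FM_m(k)\to{\op O}(k)$ and bottom $\FM_m(k)\to M{\op O}(k)$, passes to the quotient by the free $\Sigma_k$-action to obtain a relative section space over $(\FM_m(k)/\Sigma_k,\partial\FM_m(k)/\Sigma_k)$ with fiber $\bN{\op O}(k)$, and then reads off the connectivity $(n-m-2)(k-1)+1$ from $\dim\FM_m(k)=m(k-1)-1$ and Lemma~\ref{lemm:matching map fiber connectedness}. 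The only cosmetic difference is that the paper handles $k=1$ by noting that $\FM_m|_{\leq 1}$ is the one-point operad (rather than appealing to contractibility of arity-one components), and it makes the induction hypothesis (simple connectivity of $\Map_{\leq k-1}$) explicit when choosing basepoints.
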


\begin{proof}
We still take the Fulton-MacPherson operad $\FM_m$ as a cofibrant replacement $\check{\lD}_m = \FM_m$ of the little $m$-discs operad $\lD_m$,
and the operad $\hat{\lD}_n = \hat{\KSi}_n$ (respectively, $\hat{\lD}_n^{\Q} = \hat{\KSi}_n^{\Q}$),
which we define from the Kontsevich-Sinha operad in Construction~\ref{const:Fulton-MacPherson operads},
as a fibrant replacement of the little $n$-discs operad $\lD_n$ (respectively, of the rationalization
of the little $n$-discs operad $\lD_n^{\Q}$).
We then have $\Map_{\leq k}^h(\lD_m,\lD_n) = \Map_{\leq k}(\FM_m,\hat{\KSi}_n)$
and $\Map_{\leq k}^h(\lD_m,\lD_n^{\Q}) = \Map_{\leq k}(\FM_m,\hat{\KSi}_n^{\Q})$,
for any $k\geq 1$.
We treat the case of both mapping spaces in parallel and we set ${\op O} = \hat{\KSi}_n,\hat{\KSi}_n^{\Q}$.

The relation $\Map_{\leq 1}(\FM_m,{\op O})\simeq *$ is immediate, since the truncated operad $\FM_m|_{\leq 1}$
has only one component, defined by the one-point set
of the operadic unit in arity one.
Thus, we now check that the fiber of the map
\begin{equation}\label{eq:restrict}
\Map_{\leq k}(\FM_m,{\op O})\twoheadrightarrow\Map_{\leq k-1}(\FM_m,{\op O})
\end{equation}
is $((n-m-2)(k-1)+1)$-connected, for any $k\geq 2$, as asserted in our proposition.

Recall that this map is a fibration (see Construction~\ref{const:truncated mapping spaces}).
Moreover, we can assume that the target of this map is connected (actually simply connected)
by induction (recall that we assume $n-m\geq 2$).
We fix a base point, represented by a map
\[
\FM_m|_{\leq k-1}\stackrel{\alpha}{\to}{\op O}|_{\leq k-1},
\]
in this mapping space. We can identify the fiber of our map~\eqref{eq:restrict} over this base point with the space
formed by the $\Sigma_k$ equivariant maps of topological spaces $\rho: \FM_m(k)\to{\op O}(k)$
that make the diagram
\[
\begin{tikzcd}
\partial\FM_m(k)\ar[hookrightarrow]{d}\ar{r} & {\op O}(k)\ar{d}{m} \\
\FM_m(k)\ar[dashed]{ru}{\rho}\ar{r}{f} & M{\op O}(k)
\end{tikzcd}
\]
commute. Recall that $\FM_m(k)$ is a manifold with corners and that $\partial\FM_m(k)$
represents the subspace of decomposable operations
in this component $\FM_m(k)$
of our operad.
The factors of such composites operations in $\FM_m(k)$ necessarily belong to components $\FM_m(l)$ of arity $l<k$
in our operad, because of the arity grading of the composition products of an operad,
and because $\FM_m(1)$ is reduced to the one point set
formed by the operadic unit.
The upper horizontal arrow in our diagram is obtained by applying our map $\alpha$
on each factor of these composite operations
in our operad.
The lower horizontal arrow $f$ is given by the composite
\[
\FM_m(k)\stackrel{m}{\to}M\FM_m(k)\stackrel{M\alpha}{\to}M{\op O}(k),
\]
where we take the matching map associated to the object $\FM_m(k)$ followed by the map induced by our morphism $\alpha: \FM_m(S)\to{\op O}(S)$
on the terms of the matching limit, for $S\subsetneq\{1,\dots,k\}$.
We equivalently get that the fiber of our map~\eqref{eq:restrict}
can be described as the space of $\Sigma_k$ equivariant sections
  of the fibration $f^*m\colon f^*{\op O}(k)\to\FM_m(k)$ that are fixed on $\partial\FM_m(k)$.
  The fiber of $f^*m$ is the same as the fiber of $m$ and is equivalent to $\bN{\op O}(k)$. Notice that the symmetric group action on $\FM_m(k)$  is free. Since being a Serre fibration is a local property, the
  map $f^*{\op O}(k)/\Sigma_k\to\FM_m(k)/\Sigma_k$ is also a fibration. We obtain that our space is  the space of sections of this fiber bundle over $\FM_m(k)/\Sigma_k$ with the same fibers
  (equivalent to $\bN{\op O}(k)$) and  fixed on
  $\partial\FM_m(k)/\Sigma_k$. We have that the dimension of $\FM_m(k)$ is $m(k-1)-1$. Applying Lemma~\ref{lemm:matching map fiber connectedness}  the space of sections in question has connectivity $(n-m-2)(k-1)+1$, as asserted in our proposition.

%
\end{proof}

\begin{rem}\label{rem:finiteness of pi}
In the proof of Proposition~\ref{prop:truncated mapping spaces connectedness}, we expressed the fiber $F_k$ of
each map $\Map_{\leq k}^h(\lD_m,\lD_n) \to\Map_{\leq k-1}^h(\lD_m,\lD_n)$ as a certain relative  space of
sections of a fibration over a finite complex, such that the connectivity of the fibers is higher than the dimension of the source. The fiber is $\bN\lD_n(k)$ whose homotopy groups are finitely generated. By standard homotopy techniques,
the homotopy groups $\pi_*F_k$ also have the same property. As a consequence the groups $\pi_* \Map_{\leq k}^h(\lD_m,\lD_n)$, $n-m\geq 2$, and $\pi_*\Map^h(\lD_m,\lD_n)$, $n-m > 2$, are all finitely generated (and abelian).
\end{rem}

\begin{prop}\label{prop:mapping spaces rationalization}
In the context of Proposition~\ref{prop:truncated mapping spaces connectedness},
we moreover get that the operadic rationalization morphism $\lD_n\to\lD_n^{\Q}$
induces a rational equivalence of mapping spaces
\[
\Map_{\leq k}^h(\lD_m,\lD_n)\stackrel{\simeq_{\Q}}{\to}\Map_{\leq k}^h(\lD_m,\lD_n^{\Q}),
\]
for each $k\geq 1$.
\end{prop}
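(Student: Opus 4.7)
The plan is to prove this by induction on $k$, comparing the Bousfield--Kan-type towers of truncated mapping spaces level by level. The base case $k=1$ is trivial, since $\FM_m|_{\leq 1}$ has only the operadic unit in arity one, so both $\Map_{\leq 1}^h(\lD_m,\lD_n)$ and $\Map_{\leq 1}^h(\lD_m,\lD_n^{\Q})$ are contractible by Proposition~\ref{prop:truncated mapping spaces connectedness}.

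For the inductive step, consider the commutative ladder of fibrations of Construction~\ref{const:truncated mapping spaces}:
\[
\begin{tikzcd}
F_k \ar{r}\ar{d} & \Map_{\leq k}^h(\lD_m,\lD_n) \ar{r}\ar{d} & \Map_{\leq k-1}^h(\lD_m,\lD_n)\ar{d} \\
F_k^{\Q} \ar{r} & \Map_{\leq k}^h(\lD_m,\lD_n^{\Q}) \ar{r} & \Map_{\leq k-1}^h(\lD_m,\lD_n^{\Q}).
\end{tikzcd}
\]
By induction, the right-hand vertical map is a rational equivalence, and by Proposition~\ref{prop:truncated mapping spaces connectedness} both base spaces are simply connected (since $n-m\geq 2$ and $k\geq 2$). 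The total spaces are also simply connected, and all homotopy groups involved are finitely generated by Remark~\ref{rem:finiteness of pi}. Hence, by the five-lemma applied to rational homotopy groups (using that rationalization is exact on finitely generated abelian groups), it suffices to prove that the induced map on fibers $F_k\to F_k^{\Q}$ is a rational equivalence.

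Now recall from the proof of Proposition~\ref{prop:truncated mapping spaces connectedness} that $F_k$ (resp.\ $F_k^{\Q}$) is the space of $\Sigma_k$-equivariant sections, fixed on $\partial\FM_m(k)$, of the pulled-back fibration $f^*\hat{\KSi}_n(k)\to\FM_m(k)$ (resp.\ $f^*\hat{\KSi}_n^{\Q}(k)\to\FM_m(k)$), whose fibers are $\bN\hat{\KSi}_n(k)$ (resp.\ $\bN\hat{\KSi}_n^{\Q}(k)$). By the construction of Construction~\ref{const:Fulton-MacPherson operads} and the relation~\eqref{eq:tensor_Q} in the proof of Lemma~\ref{lemm:matching map fiber connectedness}, the fiber $\bN\hat{\KSi}_n^{\Q}(k)$ is the rationalization of $\bN\hat{\KSi}_n(k)$. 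Since the $\Sigma_k$-action on $\FM_m(k)$ is free, both $F_k$ and $F_k^{\Q}$ descend to spaces of ordinary sections of fibrations over the finite CW pair $(\FM_m(k)/\Sigma_k,\partial\FM_m(k)/\Sigma_k)$, which has dimension $m(k-1)-1$.

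The main remaining step — and the principal technical point — is the following rational homotopy theory statement: for a finite CW pair $(X,A)$ of dimension $d$ and a fibration $\xi\to X$ with simply connected fiber $F$ of finite $\Q$-type and connectivity $\geq d$, fiberwise rationalization $\xi\to\xi^{\Q}$ induces a rationalization on relative section spaces. In our case $\mathrm{conn}(\bN\hat{\KSi}_n(k))\geq (n-2)(k-1)\geq m(k-1)-1$ by Lemma~\ref{lemm:matching map fiber connectedness} combined with the assumption $n-m\geq 2$, so this hypothesis holds. One proves the statement by induction on a relative cell decomposition of $(X,A)$: attaching a $d'$-cell produces a pullback square of section spaces whose effect on homotopy is controlled by $\pi_*(\Omega^{d'}F)$, and rationalization of $F$ commutes with loop spaces and with the homotopy pullback under the given connectivity/finite-type hypotheses. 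The hard part of this whole argument will therefore be verifying that this obstruction-theoretic rationalization result applies in the relative, fibered setting, and ensuring that the rationalization of $\bN\hat{\KSi}_n(k)$ really computes the fiber of the fiberwise rationalization of $\xi$; the latter follows from the construction of $\hat{\lD}_n^{\Q}$ in Construction~\ref{const:Fulton-MacPherson operads}, which is arity-wise a fibrewise rationalization on matching-map fibers.
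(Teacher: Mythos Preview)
Your proposal is correct and follows essentially the same approach as the paper's proof: induction on $k$, the same commutative square of fibrations, reduction to the map of fibers $F_k\to F_k^{\Q}$, identification of these fibers as relative section spaces over $\FM_m(k)/\Sigma_k$, and the connectivity estimate of Lemma~\ref{lemm:matching map fiber connectedness} together with~\eqref{eq:tensor_Q}. The paper's proof is terser at the final step---it simply asserts that a fiberwise rational equivalence with fiber connectivity exceeding the base dimension yields a rational equivalence of section spaces---whereas you spell out the obstruction-theoretic justification and invoke Remark~\ref{rem:finiteness of pi} and the five-lemma explicitly; this is additional detail rather than a different argument.
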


\begin{proof}
We keep the conventions of the proof of the previous proposition. We prove our claim by induction on $k\geq 1$.
We already observed, in the proof of Proposition~\ref{prop:truncated mapping spaces connectedness},
that our results
imply that the spaces $\Map_{\leq k-1}(\FM_m,{\op O})$, where ${\op O} = \hat{\KSi}_n,\hat{\KSi}_n^{\Q}$,
are simply connected.
We then form the commutative square
\[
\begin{tikzcd}
\Map_{\leq k}(\FM_m,\hat{\KSi}_n)\ar{d}\ar{r} & \Map_{\leq k}(\FM_m,\hat{\KSi}_n^{\Q})\ar{d} \\
\Map_{\leq k-1}(\FM_m,\hat{\KSi}_n)\ar{r}{\simeq_\Q} & \Map_{\leq k-1}(\FM_m,\hat{\KSi}_n^{\Q})
\end{tikzcd},
\]
where we assume by induction that the operadic rationalization morphism $\hat{\KSi}_n\to\hat{\KSi}_n^{\Q}$
induces a rational equivalence at the $(k-1)$st level of our tower of mapping
spaces.
  The vertical arrows are fibrations. Let $F_k$ and $F_k^{\Q}$  denote the fibers of the left and right vertical arrows, respectively.  
   By the proof of Proposition~\ref{prop:truncated mapping spaces connectedness}, these fibers are described as certain spaces of sections over $\FM_m(k)/\Sigma_k$. By Lemma~\ref{lemm:matching map fiber connectedness} and equation~\eqref{eq:tensor_Q},  the induced morphism of the corresponding fibrations over $\FM_m(k)/\Sigma_k$ induces a rational equivalence of fibers which are higher connected than the dimension of the base
  $\FM_m(k)/\Sigma_k$. This implies that $F_k\to F_k^{\Q}$ is a rational equivalence.
  Applying induction, the upper horizontal arrow of the square above is also a rational equivalence.
%
\end{proof}

\begin{proof}[Proof of Theorem~\ref{thm:rational mapping space towers}]
We are left with showing the last statement of the theorem.
Recall that we have $\Map(\check{\op P},\hat{\op Q}) = \lim_k\Map_{\leq k}(\check{\op P},\hat{\op Q})$,
for any pair $(\check{\op P},\hat{\op Q})$, where $\check{\op P}$ is a cofibrant $\La$-operad
and $\hat{\op Q}$ is a fibrant $\La$-operad (see Construction~\ref{const:truncated mapping spaces}).
In the context of the little discs operads, the results of Proposition~\ref{prop:truncated mapping spaces connectedness}
imply that the map $\Map^h(\lD_m,{\op O})\to\Map^h_{\leq k}(\lD_m,{\op O})$,
where ${\op O} = \lD_n,\lD_n^{\Q}$, induces an isomorphism on the homotopy groups
when $k$ is large enough with respect to the degree,
at least when we assume $n-m>2$.
Thus, we deduce from the result of Proposition~\ref{prop:mapping spaces rationalization}
that the operadic rationalization morphism $\lD_n\to\lD_n^{\Q}$
induces a rational equivalence at the limit of our mapping spaces
\[
\Map^h(\lD_m,\lD_n)\xrightarrow{\simeq_{\Q}}\Map^h(\lD_m,\lD_n^{\Q}),
\]
when $n-m>2$.

In the case $n-m=2$, we might not get a rational equivalence at this level
because the rationalization functor does not commute with limits
on the category of abelian groups
in general.
Nevertheless, the result of Proposition~\ref{prop:mapping spaces rationalization} implies that we do have the levelwise rational equivalences
asserted by the claims of Theorem~\ref{thm:rational mapping space towers}.
\end{proof}

\subsection{The biderivation complexes of truncated cooperads, the proof of Theorem~\ref{thm:nerve to rational mapping spaces}, and of Theorem~\ref{thm:nerve to truncated mapping spaces1}}\label{subsec:rationalization:truncated biderivation complexes}
We now draw the consequences of the results of the previous sections for the description
of the rational homotopy type
of the operadic mapping spaces $\Map^h(\lD_m,\lD_n)$.
We can readily complete the

\begin{proof}[Proof of Theorem~\ref{thm:nerve to rational mapping spaces}]
If we assume $n-m\geq 3$, then we have a chain of (rational) equivalences
\[
\Map^h(\lD_m,\lD_n)\simeq_{\Q}\Map^h(\lD_m,\lD_n^{\Q})\simeq\MC_{\bullet}(\Def(\E_n^c,\E_m^c))\simeq\MC_{\bullet}(\HGC_{m,n})
\]
which are respectively given by the results of Theorem~\ref{thm:rational mapping space towers},
of Theorem~\ref{thm:nerve to mapping spaces}, and of Theorem~\ref{thm:main algebraic statement},
and we just compose these (rational) equivalence to get the assertion
of Theorem~\ref{thm:nerve to rational mapping spaces}.
\end{proof}

We briefly examine the truncated counterparts of our main statements to conclude this study. We consider an obvious
analogue for dg Hopf $\La$-cooperads of the truncation functors $(-)|_{\leq k}$ which we associate
to the category of $\La$-operads.
We can still provide the category of $k$-truncated cochain dg Hopf $\La$-cooperads $\dg^*\Hopf\La\Op_{\leq k}^c$
with a model structure
so that the functor $(-)|_{\leq k}$ forms the left adjoint
of a Quillen adjunction
\[
(-)_{\leq k}: \dg^*\Hopf\La\Op_{01}^c\rightleftarrows\dg^*\Hopf\La\Op_{\leq k}^c :G,
\]
for any $k\geq 1$. We moreover have an analogue of the notion of biderivation for $k$-truncated dg Hopf $\La$-cooperads
and we can associate a truncated deformation complex $\Def_{\leq k}({\op B},{\op C})$
to any pair of cochain dg Hopf $\La$-cooperads linked by a morphism $*: {\op B}\to{\op C}$.
We then have the following truncated analogue of the result
of Theorem~\ref{thm:nerve to mapping spaces}:

\begin{thm}\label{thm:nerve to truncated mapping spaces}
We have a weak equivalence of simplicial sets
\[
\Map^h_{\leq k}(\lD_m,\lD_n^{\Q})\simeq\MC_\bullet(\Def_{\leq k}(\E_n^c,\E_m^c)),
\]
for each $k\geq 1$.
\end{thm}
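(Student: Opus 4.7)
The plan is to adapt the proof of Theorem~\ref{thm:nerve to mapping spaces}, given in Section~\ref{sec:HGC nerve to mapping spaces}, to the truncated setting. The first step is to set up the analogue of the rationalization Quillen adjunction of Theorem~\ref{thm:rationalization adjunction} at the $k$-truncated level. To this end, I would verify that the obvious truncation functors assemble into a commutative square of Quillen adjunctions
\[
\begin{tikzcd}
\dg^*\Hopf\La\Op_{01}^c\ar[shift left]{r}{G_\bullet}\ar[shift right]{d}[swap]{(-)|_{\leq k}} & s\La\Op_{\varnothing *}^{op}\ar[shift left]{l}{\Omega_{\sharp}}\ar[shift right]{d}[swap]{(-)|_{\leq k}} \\
\dg^*\Hopf\La\Op_{\leq k}^c\ar[shift right]{u}[swap]{G}\ar[shift left]{r}{G_\bullet^{\leq k}} & s\La\Op_{\leq k}^{op}\ar[shift left]{l}{\Omega_{\sharp}^{\leq k}}\ar[shift right]{u}[swap]{F}
\end{tikzcd},
\]
compatible with the truncation Quillen adjunctions of Proposition~\ref{prop:truncation Quillen adjunction} on both sides. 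From this and the argument of Proposition~\ref{prop:Hopf En-cooperad mapping spaces}, one then obtains a weak equivalence
\[
\Map_{s\La\Op_{\leq k}}^h(\mE_m|_{\leq k},(\mE_n|_{\leq k})^{\Q})\simeq\Map_{\dg^*\Hopf\La\Op_{\leq k}^c}^h(\e_n^c|_{\leq k},\e_m^c|_{\leq k}),
\]
where the rationalization on the left is computed in the truncated category and agrees with the truncation of the standard rationalization (by the commutativity of the square and by Proposition~\ref{prop:truncation cofibrant operads}, which ensures that truncations of cofibrant operads remain cofibrant).

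Next I would construct an explicit cofibrant-fibrant model of the mapping space on the Hopf cooperad side. The Chevalley--Eilenberg resolution $C(\stp_n)$ of Theorem~\ref{thm:Drinfeld Kohno model} restricts to a cofibrant resolution $C(\stp_n)|_{\leq k}$ of $\e_n^c|_{\leq k}$ in $\dg^*\Hopf\La\Op_{\leq k}^c$, since Proposition~\ref{prop:truncation cofibrant operads} has an evident cooperadic dual. For the target, I would check that the $W$-construction of Section~\ref{sec:fibrant resolutions} commutes with truncation in a strong enough sense: the end formula defining $W(\e_m^c)$ is already indexed by trees with a fixed leaf set, so the truncation $W(\e_m^c)|_{\leq k}$ coincides with the $W$-construction of $\e_m^c|_{\leq k}$ in the truncated category, and the proofs of Proposition~\ref{prop:W-construction quasi-iso}, Lemma~\ref{lemm:W-construction framing fibrations} and Proposition~\ref{prop:W-construction framing Lambda-structure} all go through verbatim to yield a fibrant simplicial frame $W(\e_m^c)|_{\leq k}^{\Delta^\bullet}$. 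Similarly, the good source/good target Hopf $\La$-cooperad structure of Definition~\ref{defn:good Hopf Lambda-cooperads} makes sense in the truncated setting, and Proposition~\ref{prop:Lambda-biderivation Lie algebra} together with its proof carry over to produce a truncated dg Lie algebra of biderivations $\BiDer_{\dg\La,\leq k}(C(\stp_n)|_{\leq k}, W(\e_m^c)|_{\leq k})$; this is the object we denote $\Def_{\leq k}(\E_n^c,\E_m^c)$.

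Finally, I would combine these ingredients exactly as in the untruncated proof. The analogue of Lemma~\ref{lemm:simplicial Hopf cooperad biderivations} identifies the morphism set $\Mor_{\dg^*\Hopf\La\Op_{\leq k}^c}(C(\stp_n)|_{\leq k}, W(\e_m^c)|_{\leq k}^{\Delta^\bullet})$ with the Maurer--Cartan set of the simplicial dg Lie algebra of truncated biderivations, and the map
\[
\phi^\bullet_{\leq k}: \BiDer_{\dg\La,\leq k}(C(\stp_n)|_{\leq k}, W(\e_m^c)|_{\leq k}^{\Delta^\bullet})\to\Def_{\leq k}(\E_n^c,\E_m^c)\hat{\otimes}\APL(\Delta^\bullet)
\]
is a dimensionwise quasi-isomorphism of simplicial complete graded dg Lie algebras by the same spectral sequence argument as in Lemma~\ref{lemm:Hopf cooperad biderivation framing}, noting that the weight grading of Remark~\ref{rem:Lambda-biderivation Lie algebra filtration} truncates correctly because the relevant products are finite in each fixed arity $\leq k$. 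Applying Theorem~\ref{thm:GoldmanMillson} then yields
\[
\Map_{\dg^*\Hopf\La\Op_{\leq k}^c}^h(\e_n^c|_{\leq k},\e_m^c|_{\leq k})\simeq\MC_\bullet(\Def_{\leq k}(\E_n^c,\E_m^c)),
\]
which combined with the first step completes the proof.

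The main obstacle I anticipate is the first step, namely checking that the truncation square of Quillen adjunctions really implements the rational homotopy type at the truncated level — concretely, that the derived counit of the truncation--prolongation adjunction is compatible with the rationalization in the sense needed to identify $\Map_{\leq k}^h(\lD_m,\lD_n^{\Q})$ (defined via Construction~\ref{const:truncated mapping spaces} applied to cofibrant and fibrant replacements of $\lD_m, \lD_n^\Q$ in the full category) with the mapping space computed entirely inside the $k$-truncated model category. This amounts to knowing that the prolongation functor $F$ of Proposition~\ref{prop:truncation Quillen adjunction} preserves weak equivalences of cofibrant objects at the level of the derived rationalization, which should follow from the arity-wise nature of weak equivalences but requires some care because the rationalization functor $G_\bullet R\Omega_\sharp$ on the full category need not commute with arbitrary colimits.
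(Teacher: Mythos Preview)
Your proposal is correct and follows essentially the same route as the paper's own proof, which simply asserts that the constructions of Proposition~\ref{prop:Lambda-biderivation Lie algebra}, Proposition~\ref{prop:W-construction Lambda-structure}, and Proposition~\ref{prop:W-construction framing Lambda-structure} carry over to the truncated setting by careful inspection, and that $W({\op C})|_{\leq k}$ gives a fibrant replacement of ${\op C}|_{\leq k}$. Your write-up is considerably more detailed than the paper's, and the obstacle you flag about the truncated rationalization square is handled in the paper only implicitly, via the truncated Quillen adjunction $(-)|_{\leq k}: \dg^*\Hopf\La\Op_{01}^c\rightleftarrows\dg^*\Hopf\La\Op_{\leq k}^c :G$ set up just before the theorem.
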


\begin{proof}
We deduce this result from truncated versions of the results of Proposition~\ref{prop:Lambda-biderivation Lie algebra},
Proposition~\ref{prop:W-construction Lambda-structure},
and Proposition~\ref{prop:W-construction framing Lambda-structure}.
We just check by a careful inspection that our constructions
remain valid in the truncated setting.
We get in particular that, for a cochain dg Hopf $\La$-cooperad ${\op C}$, the $k$-truncation of the object $W{\op C}$
forms a fibrant replacement of the $k$-truncated dg Hopf $\La$-cooperad ${\op C}|_{\leq k}$.
\end{proof}

Theorem~\ref{thm:nerve to truncated mapping spaces1} occurs as a corollary of this statement.
\hfill\qed

We can also form a truncated version of the diagram of Figure~\ref{eq:bigdiagram},
except that we do not have any truncated version
of the hairy graph complexes,
and hence of the last row of this diagram.
However, we still have the following statement:

\begin{thm}\label{thm:truncated deformation complex}
For $n\geq m\geq 1$ and $n\geq 2$, the dg Lie algebra $\Def_{\leq k}(\E_n^c,\E_m^c)$ is $L_{\infty}$~quasi-isomorphic
to the biderivation dg Lie algebra $\BiDer_{\dg\La}(C(\stp_n)|_{\leq k},W(\e_m^c)|_{\leq k})$.
If we forget about dg Lie algebra structures, then we also get that $\Def_{\leq k}(\E_n^c,\E_m^c)$
is quasi-isomorphic to the truncated version $K(\stp_n|_{\leq k},\e_m\{m\}|_{\leq k})$
of the complex $K(\stp_n,\e_m\{m\})$
that occurs in the diagram of Figure~\ref{eq:bigdiagram}
 \end{thm}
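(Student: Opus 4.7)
The plan is to establish this theorem by adapting the proof of Lemma~\ref{lemm:biderivations to HGC quasi-iso} to the truncated setting. The strategy is to reproduce the commutative diagram of Figure~\ref{eq:bigdiagram}, with every cooperad, collection, and Hom-object truncated to arity $\leq k$, while stopping before the bottom row (the hairy graph complex $\HGC_{m,n}$ has no natural truncation). The first assertion, that $\Def_{\leq k}(\E_n^c,\E_m^c)\simeq\BiDer_{\dg\La}(C(\stp_n)|_{\leq k},W(\e_m^c)|_{\leq k})$ as $L_\infty$~algebras, will follow from a truncated analogue of Proposition~\ref{prop:Hopf En-cooperad mapping spaces}. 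Namely, since $(-)|_{\leq k}$ is a left Quillen functor on dg Hopf $\La$-cooperads, the object $C(\stp_n)|_{\leq k}$ remains cofibrant in the truncated category, while a direct inspection of Proposition~\ref{prop:fibrant Hopf Lambda-cooperads} and Proposition~\ref{prop:fibrations of Hopf Lambda-cooperads} shows that $W(\e_m^c)|_{\leq k}$ is still fibrant, because its cofree cooperadic structure (Lemma~\ref{lemm:W-construction cofree structure}) restricts arity-wise.

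For the second assertion, I would verify that each construction appearing in Figure~\ref{eq:bigdiagram}, namely the Chevalley-Eilenberg functor $C(-)$, the Harrison complex $B_{Harr}(-)$, the bar and cobar constructions $B(-)$ and $\Omega(-)$, and the Hom-functors underlying $K(-,-)$, $\CoDer_{\dg\La}(-,-)$, and $\BiDer_{\dg\La}(-,-)$, operates arity by arity and hence commutes with truncation in the appropriate sense. The vertical morphisms on the left-hand side of Figure~\ref{eq:bigdiagram} are induced by three families of arity-wise quasi-isomorphisms: the operad quasi-isomorphisms $\oW(\e_m^c)_{\bo}\stackrel{\simeq}{\leftarrow}\Omega(\e_m^c)\stackrel{\simeq}{\to}\e_m\{m\}$, the Hopf $\La$-cooperad quasi-isomorphisms $C(\stp_n)\stackrel{\simeq}{\to}\e_n^c\stackrel{\simeq}{\leftarrow}\stG_n$, and the Harrison morphisms $I{\op B}\to B_{Harr}({\op B})\to I{\op B}/I{\op B}^2$ which are quasi-isomorphisms in each arity when ${\op B}$ is cofibrant as a commutative algebra in each arity. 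Since truncation preserves arity-wise quasi-isomorphisms between components of arity $\leq k$, every vertical morphism in the left column of the truncated diagram remains a quasi-isomorphism, and the same conclusion extends to the $L_\infty$~structures via the (truncated) forgetful morphism provided by Proposition~\ref{prop:biderivations to coderivations forgetful morphism}.

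The main obstacle I foresee is purely technical: one needs the spectral sequence argument hidden inside the proof of Theorem~\ref{thm:HGC to extended coderivations quasi-iso}, which identifies the relevant coderivation complex with the hairy graph complex via a filtration by number of edges, to descend cleanly to the truncated setting. This is manageable because the deformation complexes $K(\stp_n|_{\leq k},\e_m\{m\}|_{\leq k})$ are finite products of components of fixed arity, for which the spectral sequences associated to these filtrations converge without ambiguity. Composing the resulting chain of truncated left-column quasi-isomorphisms yields the desired identification $\Def_{\leq k}(\E_n^c,\E_m^c)\simeq K(\stp_n|_{\leq k},\e_m\{m\}|_{\leq k})$ at the level of dg vector spaces, precisely as in the untruncated case, but stopping before the passage to $\HGC_{m,n}$ that is no longer available.
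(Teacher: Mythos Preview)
Your approach is correct and follows the same overall philosophy as the paper, but you are doing far more work than necessary. The paper's proof of the second statement uses only the top three rows of the \emph{left column} of Figure~\ref{eq:bigdiagram}: it simply writes down the short zigzag
\[
\BiDer_{\dg\La}(C(\stp_n)|_{\leq k},W(\e_m^c)|_{\leq k}) \cong K(\stp_n|_{\leq k},\oW(\e_m^c)_{\bo}|_{\leq k})
\stackrel{\simeq}{\leftarrow} K(\stp_n|_{\leq k},\Omega(\e_m^c)|_{\leq k})
\stackrel{\simeq}{\rightarrow} K(\stp_n|_{\leq k},\e_m\{m\}|_{\leq k}),
\]
which only requires the truncated versions of Lemma~\ref{lemm:biderivation bicomodule complex} and of the operadic zigzag $\oW(\e_m^c)_{\bo}\stackrel{\simeq}{\leftarrow}\Omega(\e_m^c)\stackrel{\simeq}{\rightarrow}\e_m\{m\}$. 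There is no need to bring in the Harrison complex $B_{Harr}$, the graph cooperad $\stG_n$, or the extended coderivation complexes; in particular, the spectral sequence from Theorem~\ref{thm:HGC to extended coderivations quasi-iso} that you flag as the ``main obstacle'' plays no role whatsoever here, since that theorem compares $\fHGC_{m,n}$ with a coderivation complex built from $\stG_n$, and neither object enters the statement you are proving. Your longer route would still reach the destination, but the paper's shortcut avoids every technical concern you raise.
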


\begin{proof}[Explanations]
The first statement of this theorem follows from the (truncated) formality of the little discs operads
and from the truncated version the result
of Proposition~\ref{prop:W-construction Lambda-structure}.

Recall that we have $K(\stp_n,\e_m\{m\}) = (\Hom_{\dg\La}(\stp_n,\e_m\{m\}),\partial)$ by construction.
By duality, we also have the relation:
\[
\Hom_{\dg\La}(\stp_n,\e_m\{m\})\cong\int_{\underline{r}\in\Lambda}\hat{\alg p}_n(r)\otimes\e_m\{m\}(r)
\cong\prod_{1\leq r}\bS\hat{\alg p}_n(r)\hat{\otimes}_{\Sigma_r}\e_m\{m\}(r),
\]
where we again consider the generating $\Sigma$-collection $\bS{\alg p}_n$
of the $\La$-collection underlying the Drinfeld-Kohno Lie algebra operad ${\alg p}_n$.
To define the truncated complex $K(\stp_n|_{\leq k},\e_m\{m\}|_{\leq k})$
of the proposition,
we just keep the factors of arity $r\leq k$
of this cartesian product
and we forget about the other factors.

The second statement is obtained by using a truncated version of the diagram of Figure~\ref{eq:bigdiagram}. We explicitly get a zigzag of quasi-isomorphisms
\[
\BiDer_{\dg\La}(C(\stp_n)|_{\leq k},W(\e_m^c)|_{\leq k}) = K(\stp_n|_{\leq k},\oW(\e_m^c)|_{\leq k})
\stackrel{\simeq}{\leftarrow} K(\stp_n|_{\leq k},\Omega(\e_m^c)|_{\leq k})
\stackrel{\simeq}{\rightarrow} K(\stp_n|_{\leq k},\e_m\{m\}|_{\leq k})
\]
from which the relation of our statement follows.
\end{proof}

The complex $K(\stp_n,\e_m\{m\})$, $m\geq 2$, was actually defined in the form given in the proof of this definition in~\cite{Turchin3}
under the name ${\mathcal K}^{m,n}_\pi$ (see~\cite[Section~5]{Turchin3}).
In the case $m=1$, this complex is identified with the $k$-totalization of the Scannell-Sinha cosimplicial complex~\cite{ScannellSinha}.

The results of Theorem~\ref{thm:tower delooping}, Theorem~\ref{thm:nerve to truncated mapping spaces},
and Theorem~\ref{thm:truncated deformation complex}
imply:

\begin{cor}\label{cor:embedding spaces tower}
For $n-m\geq 2$, the rational homotopy groups $\pi_*(T_k\Embbar_{\partial}(\D^m,\D^n))\otimes\Q$
of the Goodwillie-Weiss Taylor tower of the space $\Embbar_{\partial}(\D^m,\D^n)$
can be determined by:
\[
\pi_*(T_k\Embbar_{\partial}(\D^m,\D^n))\otimes\Q\simeq H_{*+m}(K(\stp_n|_{\leq k},\e_m\{m\}|_{\leq k})),
\]
where we consider the shifted homology of the truncated deformation complex $K(\stp_n|_{\leq k},\e_m\{m\}|_{\leq k})$
on the right hand side.\qed
 \end{cor}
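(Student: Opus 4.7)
The plan is to chain together the three cited theorems, with a degree-shifting step provided by Berglund's nerve theorem, and with simple connectedness used to suppress basepoint issues. First I would invoke Theorem~\ref{thm:tower delooping} to replace the Taylor stage by the $(m+1)$-fold loop space of the truncated operadic mapping space:
\[
T_k\Embbar_{\partial}(\D^m,\D^n)\simeq\Omega^{m+1}\Map_{\leq k}^h(\lD_m,\lD_n).
\]
Since $n-m\geq 2$, Theorem~\ref{thm:rational mapping space towers} ensures that both $\Map_{\leq k}^h(\lD_m,\lD_n)$ and $\Map_{\leq k}^h(\lD_m,\lD_n^{\Q})$ are simply connected, and that the natural map between them is a rational equivalence. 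Applying $\Omega^{m+1}$ and rationalising homotopy groups then gives
\[
\pi_*(T_k\Embbar_{\partial}(\D^m,\D^n))\otimes\Q\;\cong\;\pi_{*+m+1}\bigl(\Map_{\leq k}^h(\lD_m,\lD_n^{\Q})\bigr).
\]

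Next I would apply Theorem~\ref{thm:nerve to truncated mapping spaces} to identify the rational truncated mapping space with the nerve of the truncated deformation dg Lie algebra,
\[
\Map_{\leq k}^h(\lD_m,\lD_n^{\Q})\;\simeq\;\MC_\bullet(\Def_{\leq k}(\E_n^c,\E_m^c)).
\]
Because the source is simply connected, the homotopy groups of the nerve are independent of the choice of basepoint, so I may compute them at the zero Maurer–Cartan element (corresponding to the canonical map $*$). Berglund's theorem (Theorem~\ref{thm:berglund}) then yields
\[
\pi_i\bigl(\MC_\bullet(\Def_{\leq k}(\E_n^c,\E_m^c)),0\bigr)\;\cong\;H_{i-1}\bigl(\Def_{\leq k}(\E_n^c,\E_m^c)\bigr),
\]
with no twisting of the differential since we sit at the zero basepoint. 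Finally, Theorem~\ref{thm:truncated deformation complex} supplies a quasi-isomorphism of dg vector spaces
\[
\Def_{\leq k}(\E_n^c,\E_m^c)\;\simeq\;K(\stp_n|_{\leq k},\e_m\{m\}|_{\leq k}).
\]
Taking $i=*+m+1$ so that $i-1=*+m$, the composition of these equivalences gives the stated identification.

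The steps above are largely formal assembly; the one point that requires care is the justification for transporting the rational equivalence of Theorem~\ref{thm:rational mapping space towers} through the $(m+1)$-fold looping and through the rationalisation of $\pi_*$. This is unproblematic here because the truncated mapping spaces are simply connected with finitely generated homotopy groups (cf.\ Remark~\ref{rem:finiteness of pi}), so $\pi_{*+m+1}(\Omega^{m+1}(-))\otimes\Q=\pi_*(-)\otimes\Q$ and rationalisation commutes with the looping. A second, minor, point is the identification of the basepoint on the Maurer–Cartan side: one must check that the basepoint picked out by the $(m+1)$-fold looping lies in the connected component of the zero Maurer–Cartan element, which follows from simple connectedness of the ambient mapping space. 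Once these checks are in place, the degree bookkeeping closes the argument with no further input.
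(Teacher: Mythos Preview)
Your proposal is correct and follows exactly the route the paper intends: the paper records this corollary with a bare \qed after the line ``The results of Theorem~\ref{thm:tower delooping}, Theorem~\ref{thm:nerve to truncated mapping spaces}, and Theorem~\ref{thm:truncated deformation complex} imply,'' and your argument is precisely the chain one writes out to fill in those words, together with Berglund's theorem for the degree shift. Your added care about Theorem~\ref{thm:rational mapping space towers}, simple connectedness, and finite generation (Remark~\ref{rem:finiteness of pi}) supplies details the paper leaves implicit.
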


%
%
%

\part*{Appendices}

\begin{appendix}

\section{Filtered complete $L_{\infty}$~algebras and Maurer--Cartan elements}\label{sec:Linfty algebras}
In this appendix, we recall the definition of the set of solutions of the Maurer--Cartan equation in an $L_{\infty}$~algebra
and of the nerve of an $L_{\infty}$~algebra.
In passing, we review the homotopy invariance properties of the nerve of $L_{\infty}$~algebras
and we recall the applications of twisted $L_{\infty}$~algebra structures
to the computation of the homotopy of these simplicial sets.

In Part~\ref{part:biderivation complexes}, we use that the structure of an $L_{\infty}$~algebra on a graded vector space $\DerL$
is determined by a coderivation
\[
D: S^+(\DerL[1])\to S^+(\DerL[1]),
\]
such that $\deg(D) = -1$ and $D^2 = 0$,
where we consider the non-counital coalgebra of symmetric tensors $S^+(\DerL[1])$
on the graded vector space $V = \DerL[1]$.
For simplicity, we can also regard $D$ as a coderivation defined on the whole symmetric coalgebra $D: S(\DerL[1])\to S(\DerL[1])$,
with the assumption that $D$ vanishes over the unit tensor $1\in S(\DerL[1])$ (see Section \ref{subsec:geometric Linfty:structures}).
The conditions $\deg(D) = -1$ and $D^2 = 0$ amounts to the requirement that $D$ defines a differential on the symmetric coalgebra $S(V)$
cogenerated by $V = \DerL[1]$.

Let $S_r(V)$ denote the component of weight $r$ of the symmetric algebra $S(V)$ on any graded vector space $V$.
The coderivation property implies that $D$ is fully determined by a map $\mu: S(\DerL[1])\to\DerL[1]$
such that $\mu_* = \pi D$, where $\pi: S(\DerL[1])\to\DerL[1]$ denotes the canonical projection,
and equivalently, by a collection of maps $\mu_r: S_r(\DerL[1])\to\DerL[1]$
such that
\[
\mu_r = \pi D|_{S_r(\DerL[1])},
\]
for $r\geq 1$ (with the convention $D(1) = 0\Leftrightarrow\mu_0 = 0$ for $r=0$)
The relation $D^2 = 0$ can be expressed in terms of the operations $\mu_r$ and is equivalent to the assumptions that the operation $\mu_1: \DerL[1]\to\DerL[1]$
defines a differential on $\DerL[1]$, the operation $\mu_2: S_2(\DerL[1])\to\DerL[1]$ commute with this differential $d = \mu_1$
on $\DerL[1]$,
and the higher operations $\mu_r$, $r\geq 3$,
define chain-homotopies for higher versions of the Jacobi identity in $\DerL[1]$.

Recall also that an $L_{\infty}$~morphism of $L_{\infty}$~algebras is defined by a morphism
of coaugmented dg coalgebras
\[
\phi: (S(\DerL[1]),D)\to(S(\DerL'[1]),D'),
\]
where $D: S(\DerL[1])\to S(\DerL[1])$ (respectively, $D': S(\DerL'[1])\to S(\DerL'[1])$)
is the coderivation that determines the $L_{\infty}$~structure on our source object $\DerL$ (respectively, on our target object $\DerL'$).

The cofree coalgebra property of the symmetric coalgebra $S(\DerL'[1])$
implies that such a morphism $\phi$
is fully determined by a map $U_*: S^+(\DerL[1])\to\DerL'[1]$
such that $U_* = \pi\phi$, where we consider the canonical projection $\pi: S(\DerL'[1])\to\DerL'[1]$ again.
Equivalently, we get that $\phi$ is determined by a collection of maps $U_r: S_r(\DerL[1])\to\DerL'[1]$
such that
\[
U_r = \pi\phi|_{S_r(\DerL[1])},
\]
for $r\geq 1$ (with the convention $\phi(1) = 1\Leftrightarrow U_0 = 1$ for $r=0$).
The relation $D'\phi = \phi D$, which expresses the preservation of differentials by this morphism $\phi$,
can be expressed in terms of these maps $U_r$,
and of the operations $\mu_r = \pi D|_{S_r(\DerL[1])}$ (respectively, $\mu'_r = \pi D'|_{S_r(\DerL'[1])}$)
that determine the $L_{\infty}$ algebra structure of our source object $\DerL$
(respectively, of our target object $\DerL'$).
In short, we get that $U_1: \DerL[1]\to\DerL'[1]$ carries the differential $\mu_1$ on the graded vector space $\DerL[1]$
to the differential $\mu'_1$ on the graded vector space $\DerL'[1]$,
while the higher maps $U_r: S_r(\DerL[1])\rightarrow\DerL'$, $r\geq 2$, define chain-homotopies
which formalize a preservation of Lie brackets
in the homotopy sense.

In this appendix, we use the representation of the structure of an $L_{\infty}$~algebra in terms of a collection
of operations $\mu_r: S_r(\DerL[1])\to\DerL[1]$, $r\geq 1$,
rather that the definition in terms of a differential on the symmetric algebra $D: S(\DerL[1])\to S(\DerL[1])$.
Thus, in what follows, we generally specify the structure of an $L_{\infty}$~algebra
by a pair
\[
(\DerL;\mu_1,\mu_2,\ldots),
\]
where $\DerL$ is our graded vector space, and $\mu_1,\mu_2,\ldots$ is the corresponding collection
of $\mu_r: S_r(\DerL[1])\to\DerL[1]$, $r\geq 1$.
Besides, we use the representation of an $L_{\infty}$~morphism in terms of a collection of maps $U_r: S_r(\DerL[1])\to\DerL'[1]$, $r\geq 1$,
rather that the definition in terms of a morphism of coaugmented dg coalgebras $\phi: (S(\DerL[1]),D)\to(S(\DerL'[1]),D')$
and we therefore use an expression of the form
\[
U: \DerL\to\DerL'
\]
to specify an $L_{\infty}$~morphism.

Recall that the operation $\mu_1$ in an $L_{\infty}$~algebra $\DerL$ defines a differential on $\DerL[1]$.
By desuspension, we get that the object $\DerL$ inherits a dg vector space structure with the differential $d: \DerL\to\DerL$
given by the (degree shift of this) operation
\[
d = \mu_1.
\]
In general, we refer to this dg vector space as the underlying dg vector space
of our $L_{\infty}$~algebra $\DerL$. We similarly get that the map $U_1$ in the definition of an $L_{\infty}$~morphism $U: \DerL\to\DerL'$
defines a morphism of dg vector spaces
\[
U_1: \DerL\to\DerL'
\]
between the underlying dg vector spaces
of our $L_{\infty}$~algebras $(\DerL,\mu_1)$ and $(\DerL',\mu'_1)$. We refer to this morphism of dg vector spaces $U_1$
as the linear part of our $L_{\infty}$~morphism of $L_{\infty}$~algebras $U: \DerL\to\DerL'$.

Under these conventions, we get that the structure of a dg Lie algebra $\DerL$ is identified with an $L_{\infty}$~structure
such that $\mu_1 = d$ is given by the (degree shift of the) differential $d: \DerL\to\DerL$
on our object $\DerL$, while $\mu_2$ is the Lie bracket
and $\mu_r = 0$ for $r\geq 3$. Similarly, a morphism of dg Lie algebras in the ordinary sense
is identified with an $L_{\infty}$~morphism
such that $U_r = 0$ for $r\geq 2$.
Note that we may deal with $L_{\infty}$~morphisms of dg Lie algebras, which we define as $L_{\infty}$~morphisms $U: \DerL\to\DerL'$
between the $L_{\infty}$~algebras
associated to the dg Lie algebras $\DerL$ and $\DerL'$.
In an orthogonal direction, we may also consider strict morphisms of $L_{\infty}$~algebras (as opposed to general $L_{\infty}$~morphisms),
which are identified with $L_{\infty}$~morphisms of $L_{\infty}$~algebras $U: \DerL\to\DerL'$
satisfying $U_r = 0$ for $r\geq 2$.

Recall also that, by convention, an $L_{\infty}$~morphism of $L_{\infty}$~algebras $U: \DerL\to\DerL'$ is called an $L_{\infty}$~quasi-isomorphism
when the linear part of this $L_{\infty}$~morphism is a quasi-isomorphism
of dg vector spaces $U_1: \DerL\stackrel{\simeq}{\to}\DerL'$.
In general, we use an `$\simeq$' mark in the expression of a $L_{\infty}$~morphism
to specify this quasi-isomorphism property:
\[
U: \DerL\stackrel{\simeq}{\to}\DerL'.
\]
In our constructions, we use that the $L_{\infty}$~quasi-isomorphisms are invertible, as opposed to the strict quasi-isomorphisms of $L_{\infty}$~algebras,
which are only invertible as $L_{\infty}$~quasi-isomorphisms.

\medskip
We generally deal with $L_{\infty}$~algebras equipped with a complete filtered structure when we use the nerve construction.
We make the definition of this notion of a filtered complete $L_{\infty}$~algebra
precise first.




\begin{defn}\label{defn:complete filtered Linfty algebras}
We say that an $L_{\infty}$~algebra $(\DerL;\mu_1,\mu_2,\dots)$ is a filtered complete $L_{\infty}$~algebra
when the dg vector space $\DerL$
is equipped with a descending filtration $\DerL = \mF^1\DerL\supset\mF^2\DerL\supset\cdots$
such that $\DerL = \lim_k \DerL/\mF^k\DerL$
and the $L_{\infty}$~operations $\mu_r: S(\DerL[1])\rightarrow\DerL$, $r\geq 1$,
preserve this filtration
in the sense that we have the relation
\[
\mu_r(\mF^{k_1}\DerL,\dots,\mF^{k_r}\DerL)\subset\mF^{k_1+\dots+k_r}\DerL,
\]
for every $k_1,\dots,k_r\geq 1$.
We also say that an $L_{\infty}$~morphism $U: \DerL\to \DerL'$ between filtered complete $L_{\infty}$~algebras $\DerL$ and $\DerL'$
is an $L_{\infty}$~morphism of filtered complete $L_{\infty}$~algebras
when the components of this morphism $U_r: S(\DerL[1])\rightarrow\DerL'$, $r\geq 1$,
preserve the filtration associated to our objects
in the sense that we have the relation
\[
U_r(\mF^{k_1}\DerL,\dots,\mF^{k_r}\DerL)\subset\mF^{k_1+\dots+k_r}\DerL',
\]
for every $k_1,\dots,k_r\geq 1$.
\end{defn}

Recall now that a Maurer--Cartan element in a filtered complete $L_{\infty}$~algebra $\DerL$
is a degree $-1$ element $\alpha\in\DerL$
that satisfies the following $L_{\infty}$~version of the Maurer--Cartan equation:
\[
\sum_{n=1}^\infty\frac{1}{n!}\mu_n(\alpha,\dots,\alpha)=0
\]
Note that the infinite sum in this equation converges by the completeness and the compatibility of the filtration
with the $L_{\infty}$~structure
in our Definition \ref{defn:complete filtered Linfty algebras}. We denote the set of Maurer--Cartan elements by $\MC(\DerL)$.

We readily see that an $L_{\infty}$~morphism of filtered complete $L_{\infty}$~algebras $U:\DerL\to \DerL'$
induces a map on the sets of Maurer--Cartan elements $U_*: \MC(\DerL)\to\MC(\DerL')$
such that
\[
U_*(\alpha) = \sum_{n=1}^\infty \frac{1}{n!} U_n(\alpha,\dots,\alpha),
\]
for $\alpha\in\MC(\DerL)$.
Note again that the infinite sum in this formula converges by completeness of the filtration on $\DerL'$
and by compatibility of $U$ with the filtrations
attached to our objects.

If we have a Maurer-Cartan element $\alpha\in\MC(\DerL)$, then we can build a filtered complete $L_{\infty}$~algebra $\DerL^{\alpha}$
by \emph{twisting} the $L_{\infty}$~operations on $\DerL$.
To be explicit, we define this filtered complete $L_{\infty}$~algebra by providing the graded vector space $\DerL^{\alpha}=\DerL$
with the $L_{\infty}$~operations $\mu_1',\mu_2',\dots$
such that:
\[
\mu_r'(-,\dots, -) = \sum_{n\geq 0} \frac{1}{n!} \mu_{r+n}(-,\dots,-,\underbrace{\alpha,\dots,\alpha}_{n\times}).
\]


Let $A$ be differential graded commutative algebra and let $\DerL$ be a filtered complete $L_{\infty}$~algebra.
Then the completed tensor product
\[
\DerL\hat\otimes A := \lim_n \left(\left(\DerL/\mF^n\DerL\right)\otimes A\right)
\]
carries a natural filtered complete $L_{\infty}$~structure, by $A$-linear extension of the $L_{\infty}$~structure on $\DerL$.
Now we define the \emph{nerve} of a filtered complete $L_{\infty}$~algebra $\DerL$
as the simplicial set
\beq{equ:MCbulletdef}
\MC_{\bullet}(\DerL) = \MC(\DerL\hat\otimes\Omega(\Delta^{\bullet}))
\eeq
whose $n$-simplices are the Maurer-Cartan elements in the filtered complete $L_{\infty}$~algebra $\DerL\hat\otimes\Omega(\Delta^n)$,
where $\Omega(\Delta^n)$ is the commutative dg algebra of polynomial differential forms on the $n$-simplex $\Delta^n$.
The simplicial structure of this object is defined by pulling back the cosimplicial structure
on the simplices $\Delta^{\bullet}$.

The important result for us is the following filtered version of the Goldman-Millson Theorem,
which we borrow from \cite{DolRog,Yalin}:

\begin{thm}[Filtered Goldman-Millson Theorem, \cite{DolRog}, \cite{Yalin}]\label{thm:GoldmanMillson}
Let $U: \DerL \to \DerL'$ be an $L_{\infty}$~morphism of filtered complete $L_{\infty}$~algebras.
Suppose that the restrictions $U_1 : \mF^n\DerL\to\mF^n\DerL'$
are quasi-isomorphisms for all $n=1,2,\dots$.
Then the map $\MC_\bullet(\DerL) \to \MC_\bullet(\DerL')$
induced by $U$ is a weak equivalence of simplicial sets.
\end{thm}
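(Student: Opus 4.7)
The plan is to reduce the statement to a tower of nilpotent $L_\infty$-algebras and then argue inductively along the associated graded of the filtration. By completeness, we have $\DerL = \lim_n \DerL/\mF^n\DerL$ and the compatibility of the $L_\infty$-structure with the filtration ensures that each quotient $\DerL/\mF^n\DerL$ is a nilpotent $L_\infty$-algebra, so the Maurer--Cartan sums occurring in $\MC_\bullet(\DerL/\mF^n\DerL\hat\otimes\Omega(\Delta^\bullet))$ are in fact finite. One first checks that the nerve functor commutes with these inverse limits, namely that the canonical map
\[
\MC_\bullet(\DerL)\longrightarrow \lim_n \MC_\bullet(\DerL/\mF^n\DerL)
\]
is an isomorphism of simplicial sets; this is immediate from the definition \eqref{equ:MCbulletdef} together with the fact that $\hat\otimes\Omega(\Delta^k)$ preserves the inverse limit in each simplicial degree.

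Next, for each $n$, I would consider the short exact sequence of $L_\infty$-algebras
\[
0\to \mF^n\DerL/\mF^{n+1}\DerL\to \DerL/\mF^{n+1}\DerL\to \DerL/\mF^n\DerL\to 0.
\]
The key structural observation is that the sub-quotient $\mF^n\DerL/\mF^{n+1}\DerL$ is an \emph{abelian} $L_\infty$-algebra: all brackets $\mu_r$ for $r\geq 2$ land in $\mF^{2n}\DerL\subset \mF^{n+1}\DerL$ by the filtration compatibility, so they vanish on the quotient. Consequently $\mF^n\DerL/\mF^{n+1}\DerL$ is simply a dg vector space and its nerve is, by direct inspection, the simplicial abelian group associated via Dold--Kan to the truncation in nonpositive degrees of the underlying chain complex; in particular $\pi_i\MC_\bullet(\mF^n\DerL/\mF^{n+1}\DerL)\cong H_{-i-1}(\mF^n\DerL/\mF^{n+1}\DerL)$ for $i\geq 0$ in the appropriate shift. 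Furthermore the projection
\[
\MC_\bullet(\DerL/\mF^{n+1}\DerL)\twoheadrightarrow \MC_\bullet(\DerL/\mF^n\DerL)
\]
is a Kan fibration with fiber weakly equivalent to $\MC_\bullet(\mF^n\DerL/\mF^{n+1}\DerL)$; this is the standard computation for extensions of nilpotent $L_\infty$-algebras (see Getzler).

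With these ingredients in place, the argument proceeds by induction on $n$ to show that $U$ induces a weak equivalence $\MC_\bullet(\DerL/\mF^n\DerL)\stackrel{\simeq}{\to}\MC_\bullet(\DerL'/\mF^n\DerL')$. The hypothesis that $U_1: \mF^n\DerL\to \mF^n\DerL'$ is a quasi-isomorphism for every $n$ implies, by the five lemma applied to the short exact sequences of dg vector spaces, that $U_1$ induces a quasi-isomorphism on every sub-quotient $\mF^n\DerL/\mF^{n+1}\DerL$. Via Dold--Kan this yields a weak equivalence on the fibers of the above fibrations, and the long exact sequence of homotopy groups together with the inductive hypothesis on the base then gives the inductive step. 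Finally, passing to the inverse limit over $n$, one invokes that the tower $\{\MC_\bullet(\DerL/\mF^n\DerL)\}_n$ is a tower of fibrations with weakly equivalent fibers to conclude that $\lim_n\MC_\bullet(\DerL/\mF^n\DerL)\to \lim_n\MC_\bullet(\DerL'/\mF^n\DerL')$ is a weak equivalence, using a Milnor-type $\lim^1$ argument.

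The main obstacle, and the place where one has to be careful, is the identification of the fiber of the projection $\MC_\bullet(\DerL/\mF^{n+1}\DerL)\to \MC_\bullet(\DerL/\mF^n\DerL)$ with $\MC_\bullet$ of the abelian piece up to weak equivalence, and the verification that the projection is in fact a Kan fibration. A naive pointwise identification of the fiber involves Maurer--Cartan elements twisted by a chosen lift of a given base Maurer--Cartan element, and the twisted $L_\infty$-structure modifies the operations on $\mF^n\DerL/\mF^{n+1}\DerL$ by terms that vanish by the filtration/abelian argument above, so the fiber remains abelian; making this precise and natural across simplicial degrees is the technical heart of the proof, handled in detail in Dolgushev--Rogers and Yalin.
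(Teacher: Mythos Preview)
The paper does not give its own proof of this theorem: it is stated with an explicit attribution to Dolgushev--Rogers \cite{DolRog} and Yalin \cite{Yalin} (``which we borrow from \cite{DolRog,Yalin}''), and no argument follows the statement. So there is no proof in the paper to compare against.

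Your sketch is, in outline, the standard argument from those references: pass to the tower of nilpotent quotients $\DerL/\mF^n\DerL$, use that the subquotients $\mF^n\DerL/\mF^{n+1}\DerL$ are abelian so that the hypothesis on $U_1$ controls the fibers, and run an induction up the tower together with a limit argument. The points you flag as requiring care (Kan fibration property of the projections, identification of fibers with Maurer--Cartan spaces of the abelian pieces, and the $\lim^1$ issue at the limit) are exactly the technical content of the cited papers, and your acknowledgment that these are handled there is appropriate. One small correction: in your homotopy group formula for the abelian case you should have $\pi_i\MC_\bullet(A)\cong H_{i-1}(A)$ (positive shift), consistent with Berglund's Theorem~\ref{thm:berglund}, rather than $H_{-i-1}$.
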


In fact, most filtered complete $L_{\infty}$~algebras considered in this paper have the following simple form:

\begin{defn}\label{defn:complete graded Linfty algebras}
We say that an $L_{\infty}$~algebra $(\DerL; \mu_1,\mu_2,\dots)$ is a complete graded $L_{\infty}$~algebra
when the graded vector space $\DerL$
is equipped with a product decomposition $\DerL =\prod_{k\geq 1} \DerL_k$
which is preserved by the $L_{\infty}$~operations $\mu_r: S(\DerL[1])\rightarrow\DerL$, $r\geq 1$,
in the sense that we have the relation
\[
\mu_r(\DerL_{k_1},\dots,\DerL_{k_r})\subset\DerL_{k_1+\dots+k_r},
\]
for every $k_1,\dots,k_r\geq 1$.
We also say that an $L_{\infty}$~morphism $U: \DerL\to \DerL'$ between graded complete $L_{\infty}$~algebras $\DerL$ and $\DerL'$
is an $L_{\infty}$~morphism of graded complete $L_{\infty}$~algebras
when the components of this morphism $U_r: S(\DerL[1])\rightarrow\DerL'$, $r\geq 1$,
preserve the grading attached to our objects
in the sense that we have the relation
\[
U_r(\DerL_{k_1},\dots,\DerL_{k_r})\subset\DerL'_{k_1+\dots+k_r},
\]
for every $k_1,\dots,k_r\geq 1$.

We can obviously use these definitions in the context of dg Lie algebras. We just say that $\DerL$ is a complete graded dg Lie algebra
when we have the relations $d(\DerL_k)\subset\DerL_k$ and $[\DerL_k,\DerL_l]\subset\DerL_{k+l}$
for the differential $\mu_1 = d$ and the Lie bracket operation $\mu_2 = [-,-]$
attached to our object so that $\DerL$ forms a complete graded $L_{\infty}$-algebra
when we regard the dg Lie algebras as particular examples of $L_{\infty}$-algebra structures.
\end{defn}

In some cases, we use the phrase ``(complete) weight grading'' to distinguish a complete grading
from the grading of the internal dg vector space structure
of our objects.

\begin{rem}\label{rem:graded quasi-isomorphisms}
We immediately see that a graded complete $L_{\infty}$~algebra $\DerL$
inherits a filtered complete $L_{\infty}$~algebra structure
with the filtration defined by $\mF^k\DerL=\prod_{l\geq k}\DerL_l$,
for all $k\geq 1$.
Note that the condition of Theorem \ref{thm:GoldmanMillson} is satisfied as soon as $U_1$ is a quasi-isomorphism of dg vector spaces
when we assume that $U$ is a morphism of graded complete $L_{\infty}$~algebras.
\end{rem}

The set of vertices of the nerve of a filtered complete $L_{\infty}$~algebra $\MC_0(\DerL)$
is identified with the set of Maurer-Cartan elements in $\DerL$
since we have $\Omega(\Delta^0) = \K\Rightarrow\MC_0(\DerL) = \MC(\DerL\hat\otimes\Omega(\Delta^0)) = \MC(\DerL)$.
We can use the following result to compute the homotopy groups of the nerve of a filtered complete $L_{\infty}$~algebra.

\begin{thm}[A. Berglund \cite{Be}]\label{thm:berglund}
Let $\DerL$ be any filtered complete $L_{\infty}$~algebra.
The homotopy groups of $\MC_\bullet(\DerL)$ at any basepoint $\tau\in\MC(\DerL)$
satisfy:
\[
\pi_k(\MC_\bullet(\DerL),\tau)\simeq H_{k-1}(\DerL^\tau),
\]
for $k = 1,2,\dots$, where we consider the twisted $L_{\infty}$~algebra $\DerL^\tau$.\footnote{Note that we do not require that the filtered complete $L_{\infty}$~algebra $\DerL$
satisfies any finiteness nor boundedness assumptions at this point.}
Furthermore, the group law of the homotopy group $\pi_k(\MC_\bullet(\DerL)$
corresponds to the addition in the homology $H_{k-1}(\DerL^\tau)$
when we assume $k\geq 2$.
In the case $k=1$, this group law is given by the Baker-Campbell-Hausdorff formula
in the homology group $H_0(\DerL^\tau)$.

If we also assume that $\DerL$ forms a dg vector space of finite type, then the Chevalley-Eilenberg complex $C(\DerL^\tau_{\geq 0})$,
where $\DerL^\tau_{\geq 0}$ is the truncation of the dg Lie algebra $\DerL^\tau$ in degree $*\geq 0$,
is a Sullivan model for the connected component of the space $\MC_\bullet(\DerL)$
at the basepoint $\tau\in\MC(\DerL)$.
\end{thm}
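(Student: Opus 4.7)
The plan is to reduce the statement to a computation in the simpler setting of an abelian $L_\infty$-algebra, and then to identify the result via the Dold-Kan correspondence. First I would handle the change of basepoint. The twisting construction $\DerL \rightsquigarrow \DerL^\tau$ extends naturally to tensor products with commutative dg algebras, and satisfies $(\DerL \hat\otimes \Omega(\Delta^\bullet))^\tau = \DerL^\tau \hat\otimes \Omega(\Delta^\bullet)$, where on the right $\tau$ is viewed as a constant form. The map $\alpha \mapsto \alpha - \tau$ gives a bijection between the Maurer--Cartan set of the twisted $L_\infty$-algebra and that of the original one, sending $0$ to $\tau$, and this bijection extends to an isomorphism of the pointed simplicial sets $(\MC_\bullet(\DerL^\tau),0) \cong (\MC_\bullet(\DerL),\tau)$. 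Thus it suffices to treat the case $\tau = 0$ and prove $\pi_k(\MC_\bullet(\DerL),0) \cong H_{k-1}(\DerL)$.

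Second, I would describe the homotopy groups combinatorially. A based $k$-simplex of $\MC_\bullet(\DerL)$ is a Maurer--Cartan element $\alpha \in \DerL \hat\otimes \Omega(\Delta^k)$ whose restriction to the boundary $\partial\Delta^k$ is zero; two such represent the same class in $\pi_k$ iff they are connected by a Maurer--Cartan element on $\DerL \hat\otimes \Omega(\Delta^{k+1})$ vanishing on the part of $\partial\Delta^{k+1}$ that collapses to the basepoint. The technical heart of the argument is then a homotopy transfer reduction. Using Dupont's explicit contraction of $\Omega(\Delta^\bullet)$ onto the simplicial cochains $C^*(\Delta^\bullet)$, together with the standard homotopy transfer theorem for $L_\infty$-algebras, one obtains a filtered $L_\infty$-quasi-isomorphism
\[
\DerL \hat\otimes C^*(\Delta^k) \stackrel{\simeq}{\to} \DerL \hat\otimes \Omega(\Delta^k)
\]
compatible with boundary restrictions. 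By the filtered Goldman--Millson theorem (Theorem~\ref{thm:GoldmanMillson}) this induces weak equivalences of Maurer--Cartan simplicial sets and preserves the relative structure that defines $\pi_k$.

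Third, I would observe that the transferred structure on $\DerL \hat\otimes C^*(\Delta^k)$ behaves, modulo boundary conditions, as an \emph{abelian} structure as far as $\pi_k$ with $k \geq 2$ is concerned: all higher transferred brackets $\mu_r^{\mathrm{tr}}(\alpha_1,\dots,\alpha_r)$ with $\alpha_i$ of positive simplicial degree land in lower filtration of the Dupont decomposition, so they contribute only to higher corrections that vanish after passing to homology. What remains is the dg vector space $\DerL$, and the computation reduces to $\pi_k$ of the Dold--Kan nerve of the good truncation $\tau_{\geq 0}\DerL[-1]$, yielding $H_{k-1}(\DerL)$. The addition on $\pi_k$ for $k \geq 2$ matches the sum in $H_{k-1}(\DerL)$ because the shuffle product of simplices corresponds to addition of cocycles.

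The main obstacle will be the case $k=1$, where the gauge action and the Baker--Campbell--Hausdorff law enter. Here one must show that two Maurer--Cartan elements of $\DerL \hat\otimes \Omega(\Delta^1)$ vanishing at the endpoints can be concatenated, up to Maurer--Cartan homotopy on $\DerL \hat\otimes \Omega(\Delta^2)$, in a way whose image in $H_0(\DerL)$ realizes the BCH product; this requires an explicit composition of Maurer--Cartan paths and the verification that the resulting operation on homotopy classes agrees with $\exp$ of the Lie bracket in degree zero. For the final Sullivan model statement, under the finite type hypothesis one dualizes the Chevalley--Eilenberg coalgebra construction and checks, using the same transfer and Berglund's earlier results on rational models of Maurer--Cartan spaces, that $C(\DerL^\tau_{\geq 0})$ computes the rational cohomology and realizes the requisite minimal (or at least Sullivan) model of the connected component.
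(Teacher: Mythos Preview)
The paper does not give a proof of this theorem: it is stated as a result of Berglund, cited from \cite{Be}, and used as a black box throughout (see its applications in Sections~\ref{subsec:HGC nerve:homotopy}, \ref{subsec:HGC nerve:GC homotopy}, \ref{subsec:HGC nerve:zero component homotopy}, and in the proof of Theorem~\ref{thm:nerve equivalences}). There is therefore nothing in the paper to compare your proposal against.

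That said, your sketch is a reasonable outline of how Berglund's argument actually runs: the reduction to $\tau=0$ by twisting, the use of Dupont's contraction of $\Omega(\Delta^\bullet)$ onto simplicial cochains together with homotopy transfer, and the identification of the resulting simplicial object via Dold--Kan are the main ingredients of \cite{Be} (building on Getzler's work). Your third step is somewhat imprecise as written --- the claim that the transferred higher brackets ``vanish after passing to homology'' needs to be made sharp, and this is where the real work in Berglund's paper lies --- but the overall strategy is correct. For the purposes of the present paper, however, you should simply cite \cite{Be} rather than reprove the result.
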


Let us observe that our definition of the nerve $\MC_\bullet(\DerL)$ depends on the filtration used to equip $\DerL$
with a complete filtered $L_{\infty}$~structure,
because this filtration occurs in the completed tensor product in \eqref{equ:MCbulletdef}.
However, the dependence on the choice of filtration is inessential, as the following result shows. We learned the result from V. Dolgushev.

\begin{thm}[Dolgushev (private communication)]\label{thm:nerve equivalences}
Let $\DerL$ be an $L_{\infty}$~algebra. We assume that $\DerL$ is equipped with comparable filtrations $\mF^n\DerL\subset\mG^n\DerL$
such that we have the relation $\DerL\cong\lim_n\DerL/\mF^n\DerL\cong\lim_n\DerL/\mG^n\DerL$
for both filtrations.
Then we have a weak-equivalence
\[
\MC_{\bullet}^{\mF}(\DerL)\stackrel{\simeq}{\to}\MC_{\bullet}^{\mG}(\DerL)
\]
between the nerves $\MC_{\bullet}^{\mF}(\DerL)$ and $\MC_{\bullet}^{\mG}(\DerL)$ determined by these complete filtered $L_{\infty}$~algebra structures
on $\DerL$.
\end{thm}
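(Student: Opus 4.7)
The strategy is to exploit that the underlying $L_{\infty}$-algebra is the same for both filtrations, so that the identity map provides a natural comparison morphism, and then to verify the weak equivalence by checking bijectivity on $\pi_{0}$ and isomorphism on higher homotopy groups. Since $\mF^{n}\DerL\subset\mG^{n}\DerL$ for all $n\geq 1$, the identity map $\mathrm{id}:\DerL\to\DerL$ defines a strict morphism of filtered complete $L_{\infty}$-algebras from $(\DerL,\mF)$ to $(\DerL,\mG)$. Tensoring with $\Omega(\Delta^{\bullet})$ and taking Maurer--Cartan elements yields the canonical map $\MC_{\bullet}^{\mF}(\DerL)\to\MC_{\bullet}^{\mG}(\DerL)$. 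On 0-simplices, both sides are equal to the set $\MC(\DerL)$, because $\DerL\hat{\otimes}_{\mF}\K=\lim_{n}\DerL/\mF^{n}\DerL=\DerL$ by $\mF$-completeness, and similarly for $\mG$; the comparison map is then the identity on $\MC_{0}$.

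The first substantive step is to verify that the comparison map induces an isomorphism on $\pi_{k}$ at every basepoint $\tau\in\MC(\DerL)$ for $k\geq 1$. By Theorem \ref{thm:berglund}, one has $\pi_{k}(\MC_{\bullet}^{\mF}(\DerL),\tau)\cong H_{k-1}(\DerL^{\tau})$ and likewise $\pi_{k}(\MC_{\bullet}^{\mG}(\DerL),\tau)\cong H_{k-1}(\DerL^{\tau})$, and the right-hand side depends only on the (un-filtered) underlying $L_{\infty}$-algebra structure on $\DerL$ together with $\tau$, not on the filtration. Naturality of Berglund's identification with respect to strict morphisms of filtered complete $L_{\infty}$-algebras ensures that our comparison map corresponds, under these identifications, to the identity of $H_{k-1}(\DerL^{\tau})$.

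The remaining and most delicate step is to check that the map is a bijection on $\pi_{0}$. The map on the underlying set $\MC(\DerL)$ is the identity, so surjectivity of $\pi_{0}\MC_{\bullet}^{\mF}(\DerL)\to\pi_{0}\MC_{\bullet}^{\mG}(\DerL)$ is automatic. For injectivity, one must show that whenever $\alpha,\beta\in\MC(\DerL)$ are connected by a $\mG$-convergent 1-simplex $\rho\in\MC(\DerL\hat{\otimes}_{\mG}\Omega(\Delta^{1}))$, they are also connected by an $\mF$-convergent 1-simplex. I would establish this by a tower-theoretic obstruction argument: writing $\MC_{\bullet}^{\mF}(\DerL)=\lim_{n}\MC_{\bullet}(\DerL/\mF^{n}\DerL)$ and $\MC_{\bullet}^{\mG}(\DerL)=\lim_{n}\MC_{\bullet}(\DerL/\mG^{n}\DerL)$ as inverse limits of nerves of nilpotent $L_{\infty}$-algebras, one lifts $\rho$ inductively through the tower, using that the fiber of the quotient map $\DerL/\mF^{n}\DerL\twoheadrightarrow\DerL/\mG^{n}\DerL$ is the nilpotent piece $\mG^{n}\DerL/\mF^{n}\DerL$, and applying the standard lifting theory for Maurer--Cartan elements along surjective morphisms of nilpotent $L_{\infty}$-algebras with abelian kernel (modified, if necessary, by gauge transformations at each stage) to produce a compatible family $\tilde\rho_{n}$ yielding the desired $\mF$-1-simplex in the limit.

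The main obstacle is the $\pi_{0}$ injectivity argument: although each individual lifting step along a nilpotent extension is standard, assembling them into a coherent tower of Maurer--Cartan elements requires care, since each modification at stage $n$ may perturb the chosen gauge at stage $n-1$, so one must either fix compatible gauges by a diagonal process or argue via a Milnor-type $\lim^{1}$ vanishing for the relevant tower of gauge groupoids. An alternative, potentially cleaner, route would be to combine Berglund's theorem (applied at all basepoints) with a direct Whitehead-type argument on Kan complexes, treating the path-component question as a special case of the same tower obstruction theory, or to apply the filtered Goldman--Millson theorem (Theorem \ref{thm:GoldmanMillson}) to a cleverly constructed intermediate $L_{\infty}$-algebra that interpolates between the two filtrations.
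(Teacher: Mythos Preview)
Your overall architecture matches the paper: the comparison map comes from the identity, $\MC_0$ is literally the same set on both sides, and higher homotopy groups are handled via Berglund's theorem $\pi_k\cong H_{k-1}(\DerL^\tau)$, which is filtration-independent. That part is fine.

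The divergence, and the place where your proposal has a genuine gap, is the $\pi_0$ argument. You propose a tower/obstruction argument through the quotients $\DerL/\mF^n\DerL$ and $\DerL/\mG^n\DerL$, lifting a $\mG$-1-simplex stage by stage with gauge corrections, and you correctly flag that the coherence of these lifts (and possible $\lim^1$ issues) is the hard part. You do not actually carry this out, and it is not obvious that it goes through without further input.

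The paper avoids this entirely with a much more direct trick. Given a $\mG$-convergent 1-simplex $\gamma=\alpha(t)+\beta(t)\,dt$, one first invokes \cite[Lemma~B.2]{DolRog} to replace $\gamma$ by a gauge-equivalent 1-simplex in which $\beta(t)\equiv\beta_0$ is \emph{constant}. The Maurer--Cartan equation then forces the power-series coefficients of $\alpha(t)=\sum_k\alpha_k t^k$ to satisfy the recursion
\[
(k+1)\alpha_{k+1}=\sum_{r\geq 1}\sum_{k_1+\dots+k_r=k}\frac{1}{r!}\,\mu_{r+1}(\alpha_{k_1},\dots,\alpha_{k_r},\beta_0).
\]
Since $\beta_0\in\mF^1\DerL$ and the $L_\infty$-operations are compatible with the $\mF$-filtration, an immediate induction gives $\alpha_k\in\mF^k\DerL$ for all $k$. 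Thus the very same 1-simplex is already $\mF$-convergent, and injectivity on $\pi_0$ follows with no tower argument at all. The key idea you are missing is this normalization to constant $\beta$; once you have it, the recursion does all the work and the coherence issues you worried about never arise.
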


\begin{proof}
The condition $\mF^n\DerL\subset\mG^n\DerL$ implies that the identity map on $\DerL$
induces a morphism
of complete filtered $L_{\infty}$-algebras
$\lim_n\DerL/\mF^n\DerL\otimes\Omega(\Delta^{\bullet})\to\lim_n\DerL/\mG^n\DerL\otimes\Omega(\Delta^{\bullet})$.
We take the mapping induced by this morphism on Maurer-Cartan elements.
We just get the identity map of the set $\MC(\DerL)$
at the vertex level.

We now consider a $1$-simplex $\gamma\in\MC_1^{\mG}(\DerL)$.
We may write this element as
\[
\gamma = \alpha(t) + \beta(t) dt,
\]
where we assume $\alpha(t),\beta(t)\in\lim_n\DerL/\mG^n\DerL\otimes\K[[t]]$.
We can also express $\alpha(t)$ as a formal power series
\[
\alpha(t) = \sum_k\alpha_k t^k
\]
with the condition that we have $\alpha_k\in\mG^{n_k}\DerL$
with $n_k\to\infty$ when $k\to\infty$.
We get a similar representation for $\beta(t)$.
We then get that the Maurer-Cartan equation
is equivalent to the system
of equations
\[
\sum_{n=1}^\infty\frac{1}{n!}\mu_n(\alpha(t),\dots,\alpha(t)) = 0,
\quad\alpha'(t) = \sum_{n=1}^{\infty}\frac{1}{(n-1)!}\mu_n(\alpha(t),\dots,\alpha(t),\beta(t)),
\]
where we use the notation $\mu_r(-,\dots,-)$ for the maps induced by the internal operations of our $L_{\infty}$-algebra $\DerL$,
whereas $\alpha'(t)$ denote the derivative of the power series $\alpha(t)$
with respect to the $t$ variable.
We can assume that $\beta(t)$ is a constant power series $\beta(t)\equiv\beta_0$ by \cite[Lemma B.2]{DolRog}.
We then use the relation
\[
(k+1)\alpha_{k+1} = \sum_{r=1}^{\infty}\sum_{k_1+\dots+k_r=k} \frac{1}{r!}\mu_{r+1}(\alpha_{k_1},\dots,\alpha_{k_r},\beta_0),
\]
which we deduce from the second equation of our system for $k\geq 0$,
and the relation $\mu_{r+1}(\mF^{k_1}\DerL,\dots,\mF^{k_r}\DerL,\mF^1\DerL)\subset\mF^{k_1+\dots+k_r+1}\DerL$,
which we deduce from the compatility of the $L_{\infty}$-operations with respect to the $\mF$-filtration,
to establish by induction that we have $\alpha_k\in\mF^k\DerL$,
for each $k\geq 1$. These relations imply that our $1$-simplex $\gamma = \alpha(t) + \beta(t) dt\in\MC_1^{\mF}(\DerL)$
forms a well-defined element of the set $\MC_1^{\mG}(\DerL)$
as well.
We conclude from this argument line that our map induces a bijection
at the $\pi_0$-level:
\[
\pi_0(\MC_{\bullet}^{\mF}(\DerL))\stackrel{\cong}{\to}\pi_0(\MC_{\bullet}^{\mG}(\DerL)).
\]

We now fix a base point $\alpha\in\MC(\DerL)$. We then have the relations
\[
\pi_k(\MC_{\bullet}^{\mF}(\DerL),\alpha)\cong H_{k-1}(\DerL^{\alpha})\cong\pi_k(\MC_{\bullet}^{\mG}(\DerL)),
\]
for any $k\geq 1$, by Theorem \ref{thm:berglund}.
We accordingly get that our map induces a bijection at the homotopy group level
\[
\pi_k(\MC_{\bullet}^{\mF}(\DerL))\stackrel{\cong}{\to}\pi_k(\MC_{\bullet}^{\mG}(\DerL))
\]
in all degrees $k\geq 1$, and for any choice of base point $\alpha\in\MC(\DerL)$.
We conclude that our map defines a weak-equivalence as asserted in our statement.
\end{proof}

\end{appendix}

\begin{bibdiv}
\begin{biblist}

\bib{Turchin2}{article}{
author={Arone, G.},
author={Turchin, V.},
     TITLE = {On the rational homology of high-dimensional analogues of
              spaces of long knots},
   JOURNAL = {Geom. Topol.},
    VOLUME = {18},
      YEAR = {2014},
    NUMBER = {3},
     PAGES = {1261--1322},
}

\bib{Turchin3}{article}{
author={Arone, G.},
author={Turchin, V.},
title= {Graph-complexes computing the rational homotopy of high dimensional
  analogues of spaces of long knots},
journal={Ann. Inst. Fourier},
volume={65},
year={2015},
number={1},
pages={1--62},
}

\bib{BM}{article}{
    AUTHOR = {Berger, Clemens},
    author={Moerdijk, Ieke},
     TITLE = {The {B}oardman-{V}ogt resolution of operads in monoidal model
              categories},
   JOURNAL = {Topology},
    VOLUME = {45},
      YEAR = {2006},
    NUMBER = {5},
     PAGES = {807--849},
      ISSN = {0040-9383},
}

\bib{Be}{article}{
author={Berglund, Alexander},
title={Rational homotopy theory of mapping spaces via Lie theory for $L_\infty$-algebras},
journal={Homology Homotopy Appl.},
volume={17},
number={2},
year={2015},
pages={343--369},
}

\bib{WBdB}{article}{
    AUTHOR = {Boavida de Brito, Pedro},
    author={Weiss, Michael},
     TITLE = {Manifold calculus and homotopy sheaves},
   JOURNAL = {Homology Homotopy Appl.},
    VOLUME = {15},
      YEAR = {2013},
    NUMBER = {2},
     PAGES = {361--383},
      ISSN = {1532-0073},
}

\bib{WBdB2}{article}{
AUTHOR = {Boavida de Brito, Pedro},
    author={Weiss, Michael},
    title={Spaces of smooth embeddings and configuration categories},
    year={2015},
    note={arXiv:1502.01640},
}

\bib{BK}{book}{
author={Bousfield, A.K.},
author={Kan, D.},
title={Homotopy limits, completions and localizations},
number={304},
series={Lecture Notes in Mathematics},
publisher={Springer-Verlag},
address={Berlin-New York},
year= {1972},
note={v+348 pp.},
}

\bib{Br-2}{article}{
   author={Brown, Francis C. S.},
   title={{Mixed Tate Motives over Spec($\Z$)}},
   journal={Annals of Math.},
   volume={175},
   date={2012},
   number={2},
   pages={949--976},
}

\bib{Budney}{article}{
author={Budney, Ryan},
title={A family of embedding spaces},
conference={
title={Groups, homotopy and configuration spaces},
address={Tokyo},
date={2005}
},
  book={
      series={Geom. \&  Topol. Monogr.},
      volume={13},
      publisher={Geom. Topol. Publ.},
      place={Coventry},
      date={2008},
      pages={41--83},
   },
   }

\bib{BCKS}{article}{
author={Budney, Ryan},
author={Conant, James},
author={Koytcheff, Robin},
author={Sinha, Dev},
title={Embedding calculus knot invariants are of finite type},
year={2014},
note={arXiv:1411.1832},
}

%

\bib{Cerf}{book}{
author={Cerf, Jean},
title={Sur les diff\'eomorphismes de la sph\`ere de dimension trois ($\Gamma_4=0$)},
number={53},
series={Lecture Notes in Mathematics},
publisher={Springer-Verlag},
address={Berlin-New York},
year= {1968},
note={(French) xii+133 pp.},
}

%

\bib{CCT}{article}{
author={Conant, J.},
author={Costello, J.},
author={Turchin, V.},
author={Weed, P.},
title={Two-loop part of the rational homotopy of spaces of long embeddings},
journal={J. Knot Theory Ramifications},
volume={23},
date={2014},
number={4},
pages={1450018},
}

\bib{DTT}{article}{
    AUTHOR = {Dolgushev, Vasiliy},
    author= {Tamarkin, Dmitry},
    author={Tsygan, Boris},
     TITLE = {Formality theorems for {H}ochschild complexes and their
              applications},
   JOURNAL = {Lett. Math. Phys.},
    VOLUME = {90},
      YEAR = {2009},
    NUMBER = {1-3},
     PAGES = {103--136},
      ISSN = {0377-9017},
}

\bib{DolWill}{article}{
author={Dolgushev, Vasily},
author={Willwacher, Thomas},
title={Operadic Twisting -- with an application to Deligne's conjecture},
journal={J. of Pure and Appl. Alg.},
volume={219},
year={2015},
number={5},
pages={1349--1428},
}

\bib{DolRog}{article}{
author={Dolgushev, Vasily},
author={Rogers, Christopher L.},
     TITLE = {A version of the {G}oldman-{M}illson theorem for filtered
              {$L_{\infty}$}-algebras},
   JOURNAL = {J. Algebra},
    VOLUME = {430},
      YEAR = {2015},
     PAGES = {260--302},
}

\bib{Dr}{article}{
   author={Drinfel{\cprime}d, V. G.},
   title={On quasitriangular quasi-Hopf algebras and on a group that is
   closely connected with ${\rm Gal}(\overline{\bf Q}/{\bf Q})$},
   language={Russian},
   journal={Algebra i Analiz},
   volume={2},
   date={1990},
   number={4},
   pages={149--181},
   issn={0234-0852},
   translation={
      journal={Leningrad Math. J.},
      volume={2},
      date={1991},
      number={4},
      pages={829--860},
      issn={1048-9924},
   },
   review={\MR{1080203 (92f:16047)}},
}

\bib{DucT}{article}{
author={J. Ducoulombier and V. Turchin},
year={2017},
title={Delooping manifold calculus tower on a closed disc, article in preparation},
}

\bib{DwyerHess0}{article}{
author= {W.  Dwyer and K.  Hess},
year={2012},
title={Long knots and maps between operads},
journal={Geom. Topol.},
volume= {16},
number={2},
pages={919--955}
}

\bib{DwyerHess}{article}{
author={W. Dwyer and K. Hess},
year={2015},
title={Personal communication, article in preparation},
}

\bib{Fr}{book}{
author={Fresse, Benoit},
title={Homotopy of Operads and Grothendieck-Teichm\"uller Groups},
series= {Mathematical Surveys and Monographs},
publisher= {American Mathematical Society},
year={to appear},
note={book manuscript available at \url{http://math.univ-lille1.fr/~fresse/OperadHomotopyBook/}},
}

\bib{FNote}{article}{
author={Fresse, B.},
author={Turchin, V.},
author={Willwacher, T.},
title={The homotopy category of unitary operads as a full subcategory of the homotopy category of all operads},
note={arXiv:1703.??},
year={2017},
}

\bib{FW}{article}{
author={Fresse, B.},
author={Willwacher, T.},
title={The intrinsic formality of $E_n$-operads},
note={arXiv:1503.08699},
year={2015},
}

\bib{GetzJones}{article}{
author={Getzler, Ezra},
author={Jones, J. D. S.},
title={Operads, homotopy algebra and iterated integrals for double loop spaces},
note={arXiv:hep-th/9403055},
year={1994},
}

\bib{GetzlerKapranov}{article}{
author={Getzler, E.},
author={Kapranov, M. M.},
title={Cyclic operads and cyclic homology},
conference={
  title={Geometry, topology, \& physics},
  },
 book={
   series={Conf. Proc. Lecture Notes Geom. Topology, IV},
   publisher={Int. Press},
   place={Cambridge, MA},
},
pages={167--201},
date={1995},
}

\bib{Goodwillie}{article}{
author={Goodwillie, Thomas G.},
title={Calculus II. Analytic functors},
journal={K-Theory},
volume={5},
year={1991/92},
number={4},
pages={295-?332},
}

\bib{GW}{article}{
    AUTHOR = {Goodwillie, Thomas G.},
    author={ Weiss, Michael},
     TITLE = {Embeddings from the point of view of immersion theory. {II}},
   JOURNAL = {Geom. Topol.},
    VOLUME = {3},
      YEAR = {1999},
     PAGES = {103--118 (electronic)},
      ISSN = {1465-3060},
}

\bibitem{Haefliger1}{article}{
author={Haefliger, Andr\'e},
title={Knotted $(4k-1)$-spheres in $6k$-space},
journal={Ann.  Math.},
volume={75},
number={2},
year={1965},
pages={452-466},
}

\bibitem{Haefliger2}{article}{
author={Haefliger, Andr\'e},
title={Differentiable embeddings of $S^n$ in $S^{n+q}$ for $q>2$},
journal={Ann. Math.},
volume={83},
number={3},
year={1966},
pages={402--436}.
}

\bib{Hilton}{article}{
author={Hilton, P. J.},
title={On the homotopy groups of the union of spheres},
journal={Journal of the London Mathematical Society. Second Series},
volume={30},
number={2},
year={1955},
pages={154--172},
}

\bib{KWZ}{article}{
author={Khoroshkin, A.},
author={Willwacher, T.},
author={\v Zivkovi\'c, M.},
title={Differentials on graph complexes},
journal={Adv. Math.},
volume={307},
pages={1184--1214},
year={2017},
}

\bib{KWZ2}{article}{
author={Khoroshkin, A.},
author={Willwacher, T.},
author={\v Zivkovi\'c, M.},
title={Differentials on graph complexes II - Hairy Graphs},
note={arxiv:1508.01281},
year={2015},
}

\bib{kneissler1}{article}{
    AUTHOR = {Kneissler, Jan A.},
     TITLE = {On spaces of connected graphs. {I}. {P}roperties of ladders},
 BOOKTITLE = {Knots in {H}ellas '98 ({D}elphi)},
    SERIES = {Ser. Knots Everything},
    VOLUME = {24},
     PAGES = {252--273},
 PUBLISHER = {World Sci. Publ., River Edge, NJ},
      YEAR = {2000},
}
\bib{kneissler2}{article}{
    AUTHOR = {Kneissler, Jan A.},
     TITLE = {On spaces of connected graphs. {II}. {R}elations in the
              algebra {$\Lambda$}},
      NOTE = {Knots in Hellas '98, Vol. 3 (Delphi)},
   JOURNAL = {J. Knot Theory Ramifications},
    VOLUME = {10},
      YEAR = {2001},
    NUMBER = {5},
     PAGES = {667--674},
      ISSN = {0218-2165},
}
\bib{kneissler3}{article}{
    AUTHOR = {Kneissler, Jan A.},
     TITLE = {On spaces of connected graphs. {III}. {T}he ladder filtration},
      NOTE = {Knots in Hellas '98, Vol. 3 (Delphi)},
   JOURNAL = {J. Knot Theory Ramifications},
    VOLUME = {10},
      YEAR = {2001},
    NUMBER = {5},
     PAGES = {675--686},
      ISSN = {0218-2165},
}

\bib{Kformal}{article}{
    AUTHOR = {Kontsevich, Maxim},
     TITLE = {Formal (non)commutative symplectic geometry},
 BOOKTITLE = {The {G}el$'$fand {M}athematical {S}eminars, 1990--1992},
     PAGES = {173--187},
 PUBLISHER = {Birkh\"auser Boston, Boston, MA},
      YEAR = {1993},
}
\bib{K3}{article}{
   author={Kontsevich, Maxim},
   title={Feynman diagrams and low-dimensional topology},
   conference={
      title={First European Congress of Mathematics, Vol.\ II},
      address={Paris},
      date={1992},
   },
   book={
      series={Progr. Math.},
      volume={120},
      publisher={Birkh\"auser},
      place={Basel},
   },
   date={1994},
   pages={97--121},
   review={\MR{1341841 (96h:57027)}},
}

\bib{K2}{article}{
   author={Kontsevich, Maxim},
   title={Operads and motives in deformation quantization},
   note={Mosh\'e Flato (1937--1998)},
   journal={Lett. Math. Phys.},
   volume={48},
   date={1999},
   number={1},
   pages={35--72},
   issn={0377-9017},
   review={\MR{1718044 (2000j:53119)}},
   doi={10.1023/A:1007555725247},
}

\bib{K1}{article}{
  author={Kontsevich, Maxim},
  title={Deformation quantization of Poisson manifolds},
  journal={Lett. Math. Phys.},
  volume={66},
  date={2003},
  number={3},
  pages={157--216},
  issn={0377-9017},
  review={\MR{2062626 (2005i:53122)}},
}

\bib{LambrechtsTurchin}{article} {
    AUTHOR = {Lambrechts,Pascal},
    author={Turchin,  Victor },
     TITLE = {Homotopy graph-complex for configuration and knot spaces},
   JOURNAL = {Trans. Amer. Math. Soc.},
    VOLUME = {361},
      YEAR = {2009},
    NUMBER = {1},
     PAGES = {207--222},
      ISSN = {0002-9947},
 }

 \bib{LVformal}{article}{
    AUTHOR = {Lambrechts, Pascal},
    author={Voli{\'c}, Ismar},
     TITLE = {Formality of the little {$N$}-disks operad},
   JOURNAL = {Mem. Amer. Math. Soc.},
    VOLUME = {230},
      YEAR = {2014},
    NUMBER = {1079},
     PAGES = {viii+116},
}

 \bib{LV}{book}{
author={Loday, J.-L. },
author={Vallette, B.},
title={Algebraic Operads},
number={346},
series= {Grundlehren der mathematischen Wissenschaften},
 publisher= {Springer},
 address={Berlin},
 year={2012},
}

\bib{May}{book}{
    AUTHOR = {May, J. P.},
     TITLE = {The geometry of iterated loop spaces},
      NOTE = {Lectures Notes in Mathematics, Vol. 271},
 PUBLISHER = {Springer-Verlag, Berlin-New York},
      YEAR = {1972},
     PAGES = {viii+175},
}

\bib{Salvatore}{article}{
author={Salvatore, Paolo},
title={Configuration spaces with summable labels},
conference={
  title={Cohomological methods in homotopy theory},
  address={Bellaterra},
  date={1998},
  },
 book={
   series={Progr. Math.},
   volume={196},
   publisher={Birkh\"auser},
   place={Basel},
},
pages={375--395},
date={2001},
}

\bib{ScannellSinha}{article}{
author={Scannell, Kevin},
author={Sinha, Dev},
title={A one-dimensional embedding complex},
journal={J. Pure Appl. Algebra},
year={2002},
volume={170},
number={1},
pages={93–-107},
}

\bib{Shoikhet}{article}{
author={Shoikhet, Boris},
title={An $L_{\infty}$~algebra structure on polyvector fields},
eprint={arXiv:0805.3363},
date={2008},
}

\bib{Sinha}{article}{
author={Sinha, Dev},
title={Operads and knot spaces},
journal={J. Amer. Math. Soc. },
volume={19},
year={2006},
number={2},
pages={461--486},
}

\bib{Turchin1}{article}{
    AUTHOR = {Turchin, Victor},
     TITLE = {Hodge-type decomposition in the homology of long knots},
   JOURNAL = {J. Topol.},
    VOLUME = {3},
      YEAR = {2010},
    NUMBER = {3},
     PAGES = {487--534},
      ISSN = {1753-8416},
}

\bib{Turchin4}{article}{
author={Turchin, Victor},
title={Context-free manifold calculus and the Fulton-MacPherson operad},
journal={Algebr. Geom. Topol.},,
year={2013},
volume={13},
number={3},
pages={1243--1271}
}

\bib{Turchin5}{article}{
author={Turchin, Victor},
title={Delooping totalization of a multiplicative operad},
journal={J. Homotopy Relat. Struct.},
year={2014},
volume={9},
number={2},
pages={349--418}
}

\bib{TW}{article}{
author= {Turchin, Victor},
author={Willwacher, Thomas},
title={Relative (non-)formality of the little cubes operads and the algebraic Cerf Lemma},
year={2014},
note={arXiv:1409.0163},
}

\bib{TW2}{article}{
author= {Turchin, Victor},
author={Willwacher, Thomas},
title={Commutative hairy graphs  and representations of $\mathit{Out}(F_r)$},
year={2016},
note={arXiv:1603.08855, to appear in J. Top.},
}

\bib{vogel}{article}{
    AUTHOR = {Vogel, Pierre},
     TITLE = {Algebraic structures on modules of diagrams},
   JOURNAL = {J. Pure Appl. Algebra},
    VOLUME = {215},
      YEAR = {2011},
    NUMBER = {6},
     PAGES = {1292--1339},
      ISSN = {0022-4049},
}

\bib{Weiss}{article}{
    AUTHOR = {Weiss, Michael},
     TITLE = {Embeddings from the point of view of immersion theory. {I}},
   JOURNAL = {Geom. Topol.},
    VOLUME = {3},
      YEAR = {1999},
     PAGES = {67--101 (electronic)},
      ISSN = {1465-3060},
}

\bib{Weiss2}{article}{
author= {Weiss, Michael},
title={Truncated operads and simplicial spaces},
year={2015},
note={arXiv:1503.06977},
}

\bib{Will}{article}{
  author={Willwacher, Thomas},
  title={M. Kontsevich's graph complex and the Grothendieck-Teichm\"uller Lie algebra},
  journal={Invent. Math.},
  volume={ 200},
  number={3},
  pages={ 671--760},
  year={ 2015},
}

\bib{WillInfty}{article}{
author={Willwacher, Thomas},
title={The homotopy braces formality morphism},
journal={Duke Math. J.},
volume={165},
number={10},
year={2016},
pages={1815--1964},
}

\bib{WillOriented}{article}{
    AUTHOR = {Willwacher, Thomas},
     TITLE = {The oriented graph complexes},
   JOURNAL = {Comm. Math. Phys.},
    VOLUME = {334},
      YEAR = {2015},
    NUMBER = {3},
     PAGES = {1649--1666},
      ISSN = {0010-3616},
}

\bib{WillDefQ}{article}{
author={Willwacher, Thomas},
title={Deformation quantization and the Gerstenhaber structure on the homology of knot spaces},
year={2015},
note={ arXiv:1506.07078},
}

\bib{WillTriv}{article}{
author={Willwacher, Thomas},
title={Pre-Lie pairs and triviality of the Lie bracket on the twisted hairy graph complexes},
year={2017},
note={ arXiv:1702.04504 },
}

\bib{Yalin}{article}{
author = {Yalin, Sinan},
title={Maurer-Cartan spaces of filtered $L_{\infty}$-algebras},
journal={J. Homotopy Relat. Struct.},
volume={11},
number={3},
year={2016},
pages={375--407},
}

\end{biblist}
\end{bibdiv}

\end{document}